\def\XXint#1#2#3{{\setbox0=\hbox{$#1{#2#3}{\int}$ }
\vcenter{\hbox{$#2#3$ }}\kern-.56\wd0}}
\newcommand{\eps}{\varepsilon}
\newcommand{\N}{\mathbb{N}}
\newcommand{\R}{\mathbb{R}}
\newcommand{\Z}{\mathbb{Z}}
\newcommand{\sign}{\textnormal{sign}\,}
\newcommand{\ceils}[1]{\lceil{#1}\rceil}
\newcommand{\dist}{{\rm dist}\, }
\newcommand{\supp}{{\rm supp}\, }
\newcommand{\cH}{\mathcal{H}}
\newcommand{\cX}{\mathcal{X}}
\newcommand{\cZ}{\mathcal{Z}}
\newcommand{\zz}{\mathbf{z}}
\newcommand{\epk}{{\widetilde \eps_k}}
\newcommand{\Rk}{{\widetilde R_k}}
\newcommand{\Rh}{{\hat R_k}}
\newcommand{\zk}{{\widetilde \zz_k}}
\newcommand{\cttb}{\beta_\circ}
\newcommand{\cttc}{\chi}
\newcommand{\Sp}{\mathbb{S}}
\newcommand{\ep}{\varepsilon}
\newcommand{\HH}{\operatorname{H}}
\newtheorem{theorem}{Theorem}[section]
\newtheorem{proposition}[theorem]{Proposition}
\newtheorem{lemma}[theorem]{Lemma}
\newtheorem{conjecture}[theorem]{Conjecture}
\theoremstyle{definition}
\newtheorem{definition}[theorem]{Definition}
\newtheorem{remark}[theorem]{Remark}
\renewenvironment{abstract}{
  \par\smallskip
  \noindent\textsc{Abstract.}\ %
}{
  \par\medskip
}
\newcommand{\addressa}[1]{\gdef\@addressa{#1}}
\newcommand{\emaila}[1]{\gdef\@emaila{\url{#1}}}
\newcommand{\addressb}[1]{\gdef\@addressb{#1}}
\newcommand{\emailb}[1]{\gdef\@emailb{\url{#1}}}
\newcommand{\@endstuff}{\par\vspace{\baselineskip}\noindent
\begin{tabular}{@{}l}\scshape\@addressa\\\textit{E-mail address:} \@emaila\end{tabular} 

\vspace{12pt} \noindent
\begin{tabular}{@{}l}\scshape\@addressb\\ \textit{E-mail address:} \@emailb\end{tabular}
}
\begin{document}

\setlength{\abovedisplayskip}{6pt}          
\setlength{\belowdisplayskip}{6pt}          
\setlength{\abovedisplayshortskip}{6pt}     
\setlength{\belowdisplayshortskip}{6pt}     

\setlength{\parskip}{0pt plus 1pt}
\setlength{\parindent}{15pt}
\setlength{\abovedisplayskip}{5pt plus 1pt minus 2pt}
\setlength{\belowdisplayskip}{5pt plus 1pt minus 2pt}

\setlist[itemize]{itemsep=2pt, topsep=3pt, partopsep=1pt, parsep=1pt}
\setlist[enumerate]{noitemsep, topsep=2pt, partopsep=0pt, parsep=0pt}

\titlespacing{\section}{0pt}{10pt plus 2pt minus 2pt}{6pt plus 1pt minus 1pt}
\titlespacing{\subsection}{0pt}{8pt plus 2pt minus 2pt}{5pt plus 1pt minus 1pt}

\title{\huge 
On stable solutions to the Allen--Cahn equation with bounded energy density in $\R^4$
}
\author{Enric Florit-Simon and Joaquim Serra}
\date{}
\maketitle

\begin{abstract}
We show that stable solutions $u:\R^4\to (-1,1)$ to the Allen–Cahn equation with bounded energy density (or equivalently, with cubic energy growth) are one-dimensional.

This is known to entail important geometric consequences, such as robust curvature estimates for stable phase transitions, and the multiplicity one and Morse index conjectures of Marques--Neves for Allen--Cahn approximations of minimal hypersurfaces in closed 4-manifolds.
\end{abstract}

\addressa{Enric Florit-Simon \\ Department of Mathematics, ETH Z\"{u}rich \\ Rämistrasse 101, 8092 Zürich, Switzerland}
\emaila{enric.florit@math.ethz.ch}
\newcommand{\enric}[1]{{\color{green}{#1}}}

\addressb{Joaquim Serra \\ Department of Mathematics, ETH Z\"{u}rich \\ Rämistrasse 101, 8092 Zürich, Switzerland }
\emailb{joaquim.serra@math.ethz.ch}
\newcommand{\joaq}[1]{{\color{blue}{#1}}}

\tableofcontents

\section{Introduction}

\subsection{The Allen–Cahn equation and its connection to minimal surfaces}

The theory of phase transitions naturally leads to the study of the Ginzburg--Landau-type energy functional
\begin{equation}\label{first_time_energy}
    \mathcal{E}^\varepsilon(u, \Omega) = \frac{1}{\sigma_{n-1}}\int_{\Omega} \left( \frac{\varepsilon}{2}|\nabla u|^2 + \frac{1}{\varepsilon} W(u) \right) dx,
\end{equation}
defined for scalar fields $u: \Omega \to \mathbb{R}$, where $\Omega \subset \mathbb{R}^n$ is open and $\sigma_{n-1}$ is defined in \eqref{eq:sig(n-1
)def}. The potential $W$ is typically chosen to be a symmetric double-well potential. Let us fix for concreteness the standard quartic potential $W(u) = \frac{1}{4}(1 - u^2)^2$.

Critical points $u_\eps$ of \( \mathcal{E}_\varepsilon \) satisfy the \emph{Allen--Cahn (A--C) equation}:
\begin{equation}\label{eq:epaceqintro}
    -\varepsilon \Delta u_\varepsilon + \frac{1}{\varepsilon}W'(u_\varepsilon) = 0.
\end{equation}

Originally introduced in the 1970s as a phase-field model for binary alloys \cite{AC72}, the Allen--Cahn equation has since become central in the study of variational problems, nonlinear PDEs, and geometric analysis, due in large part to its deep connection with the theory of minimal surfaces. In particular, as $\varepsilon \downarrow 0$, the narrow interfacial regions $\{-0.9 < u_\varepsilon < 0.9\}$ converge (in a suitable sense) to minimal hypersurfaces.

This connection has been extensively explored since the 1980s through several major developments:

\begin{itemize}
    \item In the 1970s and 1980s, foundational results established the connection between phase transitions and minimal surfaces \cite{MM77, Modica85b, ModicaMonotonicity}. During this period—and motivated by this connection—De Giorgi proposed a famous conjecture \cite{DeG78} on the classification of monotone solutions to the Allen--Cahn equation, which remains largely open to this day.

    \item The development of a regularity theory for energy-minimizing solutions—culminating around 2010—revealed a full analogy with the classical theory of area-minimizing hypersurfaces \cite{CC95, CC06, Sav09, Sav10}.

    \item In parallel to these developments, the varifold-based framework for the Allen–Cahn equation \cite{Hut86, HT00, Ton05, TonegawaWickramasekera2012} established that sequences of solutions with bounded energy converge (in a weak sense) to generalized minimal surfaces---more precisely, integral stationary varifolds.

    \item From the 2000s onwards, gluing methods were used to construct solutions whose interfaces converge to prescribed minimal surfaces \cite{delPinoKowalczykPacard2005, Pacard2009AllenCahn, delPinoKowalczykWei2007, delPinoKowalczykWei2008, delPinoKowalczykPacardWei2013}. This approach led to the resolution of De Giorgi’s conjecture in high dimensions \cite{dPKW11} and to the construction of non-flat minimizing solutions in dimension~8 \cite{PW13, LWW17b}.

    \item More recently, a new and powerful twist on gluing techniques has emerged: they have been used to develop a fine regularity theory for stable solutions with multiple, nearly parallel transition layers \cite{WW18, WW19, CM20}.
\end{itemize}

\subsection{Stable phase transitions and the regularity question} 

The natural functional domain for the energy \eqref{first_time_energy} is that of bounded functions in the Sobolev space $H^1(\Omega)$. 

We say that a $H^1$ function $u_\varepsilon:\Omega\to [-1, 1]$  is a \emph{minimizer} of $\mathcal{E}^\varepsilon$ in $\Omega$ if
\begin{equation}\label{minimizerconding}
    \mathcal{E}^\varepsilon(u_\varepsilon, \Omega) \le \mathcal{E}^\varepsilon(u_\varepsilon + \xi, \Omega) \quad \text{for all } \xi \in C^1_c(\Omega).
\end{equation}
That is, minimizers are understood here as \emph{absolute minimizers}—functions that minimize the energy among all admissible competitors with the same boundary data.

However, both from physical and geometric perspectives, it is natural to consider \emph{local minimizers}: functions $u_\varepsilon$ that satisfy \eqref{minimizerconding}  but only for variations $\xi$ that are small enough in the $H^1$ norm.

From the physical viewpoint, local minimizers correspond to stable equilibrium configurations—states toward which dissipative evolution processes (such as those governed by the time-dependent Allen--Cahn equation \cite{AC72} or the Cahn--Hilliard equation \cite{CH58}, for which $\mathcal{E}_\varepsilon$ serves as a Lyapunov functional) may evolve. Absolute minimizers describe only a restricted subclass of such configurations.

From the geometric viewpoint, in the context of Allen--Cahn approximations to minimal hypersurfaces in closed Riemannian manifolds \cite{TonegawaWickramasekera2012, Guaraco2018, GasparGuaraco2018, GGWeyl, CM20, ChodoshMantoulidis2023}, the relevant solutions are those of finite Morse index.
In particular, they are local—but not absolute—minimizers (in the function space, as above), in appropriate subdomains.

It is a standard fact that if $u_\varepsilon : \Omega \to [-1, +1]$ is a local minimizer of $\mathcal{E}_\varepsilon$ in $\Omega$, then $u_\varepsilon \in C^2(\Omega)$, satisfies the Euler--Lagrange equation \eqref{eq:epaceqintro}, and, in addition, the \emph{stability inequality} holds:
\begin{equation}\label{stability_1stintro}
   \int_\Omega \varepsilon |\nabla \xi|^2 + \frac{1}{\varepsilon} W''(u_\varepsilon) \xi^2 \, dx \ge 0 \quad \text{for all } \xi \in C^1_c(\Omega).
\end{equation}
Solutions of \eqref{eq:epaceqintro} that satisfy \eqref{stability_1stintro} are called \emph{stable}.

Thus, stability is a necessary condition for local minimality (in the function space, as defined above). In practice, except in degenerate and hence non-generic situations, the two notions are essentially equivalent\footnote{Indeed, it is a standard fact that a solution of A--C in a bounded domain 
$\Omega$ is \emph{stable} if and only if the first Dirichlet eigenvalue  of the 
Jacobi operator $Jv := -\Delta v + W''(u)\, v$
is nonnegative. Moreover, except for the degenerate (and non-generic) case in 
which the first eigenvalue is zero, stability implies that the solution is a 
local minimizer.}.

Given the central role of stability---both in the modeling of physical phase transitions and in the geometric study of minimal hypersurfaces---a fundamental question arises concerning the regularity of stable phase transitions:
\begin{quote}
    In dimension $n \le 7$, do sequences of stable solutions of Allen--Cahn converge smoothly, possibly with multiplicity, to minimal hypersurfaces in the singular limit $\varepsilon \downarrow 0$?
\end{quote}

A more precise formulation is:
\begin{conjecture}\label{conj:regularity}
    Let $\varepsilon_j \downarrow 0$, and suppose that $\{u_{\varepsilon_j}\}$ is a sequence of stable solutions of \eqref{eq:epaceqintro} in a given domain $\Omega \subset \mathbb{R}^n$, with uniformly bounded energy. If $n \leq 7$,  the curvatures of the level sets of $u_{\varepsilon_j}$ within the interfacial regions $\{ -0.9 < u_{\varepsilon_j} < 0.9 \}$  are uniformly bounded along the sequence in compact subdomains of $\Omega$.
\end{conjecture}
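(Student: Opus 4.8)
The plan is to establish the curvature bound by contradiction, reducing it via a blow-up at the scale of maximal curvature to two Liouville-type classifications---one for entire stable Allen--Cahn solutions and one for stable minimal hypersurfaces---with the dimension restriction $n\le7$ entering only through these.

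\emph{Contradiction and blow-up.} Suppose the conclusion fails: there are a compact $K\subset\Omega$ and points $x_j$ in the interface $\{-0.9<u_{\varepsilon_j}<0.9\}\cap K$ with $|A_j(x_j)|\to\infty$, where $A_j$ is the second fundamental form of the level set of $u_{\varepsilon_j}$ through $x_j$. By the interior $\varepsilon$-regularity theory for solutions of bounded energy density (Caffarelli--C\'ordoba, Savin), which forces $u_{\varepsilon_j}$ to be $C^2$-close at scale $\varepsilon_j$ to a one-dimensional profile, there is an a priori interior bound $|A_j|\le C/\varepsilon_j$; thus $\hat\varepsilon_j:=\varepsilon_j|A_j(x_j)|$ stays bounded, and $\hat\varepsilon_j\to\hat\varepsilon_\infty\in[0,\infty)$ along a subsequence. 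A De Giorgi-type point selection lets us assume in addition that $|A_j|\le 2|A_j(x_j)|$ on $B_{r_j/|A_j(x_j)|}(x_j)$ for some $r_j\to\infty$. Rescaling at the curvature scale, $\hat u_j(z):=u_{\varepsilon_j}\big(x_j+z/|A_j(x_j)|\big)$ then solves $-\hat\varepsilon_j^{\,2}\Delta\hat u_j+W'(\hat u_j)=0$ on $B_{r_j}$, remains stable, keeps uniformly bounded energy density (the ambient energy bound passes, via the monotonicity formula, to a scale-invariant density bound), satisfies $|A^{\hat u_j}|\le2$ on $B_{r_j}$, and has $|A^{\hat u_j}(0)|=1$.

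\emph{The two limits.} If $\hat\varepsilon_\infty>0$, interior Schauder estimates (using $|\hat u_j|\le1$) give, along a further subsequence, $C^2_{\loc}(\R^n)$ convergence of $\hat u_j$ to an entire solution of $-\hat\varepsilon_\infty^{\,2}\Delta v+W'(v)=0$ with $|A^v(0)|=1$, i.e., after a harmless rescaling, an entire stable $v:\R^n\to(-1,1)$ solving $-\Delta v+W'(v)=0$ with bounded energy density that is \emph{non-flat}. This contradicts the one-dimensional classification of such solutions, which is classical for $n\le3$ and is the main theorem of this paper for $n=4$. If instead $\hat\varepsilon_\infty=0$, the $\hat u_j$ converge as varifolds to a stationary integral varifold $V$ in $\R^n$ of bounded density which is stable in the varifold sense (Hutchinson--Tonegawa, Tonegawa--Wickramasekera); for $n\le7$, Wickramasekera's regularity theorem makes $V$ a smooth embedded minimal hypersurface, and the local $C^2$ sheeting passes the interface curvature to the limit, so $V$ is non-flat. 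Since $V$ also has Euclidean volume growth, this contradicts the Schoen--Simon--Yau theorem for hypersurfaces of dimension $\le5$, i.e.\ for $n\le6$ (the case $n=7$ instead requiring a stable Bernstein theorem for $6$-dimensional hypersurfaces in $\R^7$). Together the two limits establish the conjecture for $n\le4$.

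\emph{Where the difficulty lies.} For $5\le n\le7$ the missing ingredient is the first limit: one must prove that every entire stable $v:\R^n\to(-1,1)$ with $-\Delta v+W'(v)=0$ and bounded energy density is one-dimensional. The route would be to blow $v$ down to a stable stationary integral cone, which for $n\le7$ is---by Wickramasekera together with Simons' nonexistence of singular stable minimal hypercones---a hyperplane of some integer multiplicity $m\ge1$. The case $m=1$ yields one-dimensionality via the sheeting theory and the Bernstein theorem for minimal graphs; the case $m\ge2$ forces $v$ to carry several nearly parallel transition layers, whose signed gaps satisfy a Jacobi-type system on $\R^{n-1}$ that the stability of $v$ should rule out by a Liouville theorem. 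Carrying this out for $n=5,6,7$---equivalently, proving the layer-interaction Liouville theorems on $\R^4$, $\R^5$, $\R^6$, which are of the same order of difficulty as interior curvature estimates for stable minimal hypersurfaces, and ultimately as the stable Bernstein problem, in those dimensions---is the genuine obstacle, and I would not expect an argument that sidesteps this multi-layer analysis.
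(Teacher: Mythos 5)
Your blow-up argument reproduces, in both structure and dichotomy, the paper's proof of \Cref{thm:curvestseps} (which proceeds via \Cref{prop:curvestindeps}): rescale at the point of nearly maximal curvature after a De Giorgi-type point selection, and split into the case where the rescaled Allen--Cahn parameter stays positive (reduce to the Liouville-type classification of entire stable solutions with bounded energy density, \Cref{mainthm} for $n=4$) and the case where it vanishes (reduce to a stable Bernstein theorem for minimal hypersurfaces with Euclidean area growth). You also correctly identify that this establishes \Cref{conj:regularity} only for $n\le4$, since the entire-solution classification is open for $n=5,6,7$, which is exactly the paper's scope. The one genuine divergence is in the $\hat\varepsilon_\infty=0$ branch: you pass to a stationary integral varifold via Hutchinson--Tonegawa and invoke Wickramasekera's regularity theorem before extracting curvature from a separate sheeting step; the paper instead observes that after the point selection the sheeting assumptions of \Cref{def:sheetassump} already hold, so the Wang--Wei $C^{2,\vartheta}$ estimates (\Cref{thm:sheetimpc2alpha}) and Arzel\`a--Ascoli give \emph{direct} $C^{2}_{\rm loc}$ convergence of the level sets to a complete, two-sided, stable minimal hypersurface $\Sigma$ with Euclidean area growth and $|{\rm I\negthinspace I}_\Sigma|(0)=1$, to which \cite{SS81} applies immediately. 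Your detour through Wickramasekera is valid but uses machinery the paper deliberately avoids (cf.\ \Cref{sec:stabselfcont}), and once you have the Wang--Wei sheeting---which you need anyway to pass curvature to the limit---the varifold step is redundant. Two small imprecisions: \cite{SS81} (rather than \cite{SSY75}) already covers all ambient dimensions $n\le7$ for this stable Bernstein statement with area growth, so no separate argument is needed for $n=7$ in this branch; and the obstruction for $5\le n\le7$ that the paper actually flags (\Cref{obstructionshd}) is the sharpness of the mean-curvature bound \eqref{eq:meancurvsheet}, which becomes too weak to drive an improvement-of-flatness iteration, rather than a Jacobi-system Liouville theorem per se---your framing is a reasonable alternative diagnosis, but it is not the one the argument here runs up against.
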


While the analogous question for embedded minimal hypersurfaces was resolved around 50 years ago in the classical works of Schoen, Simon, and Yau \cite{SSY75, SS81}---see also \cite{Wic14, Bel25} for important extensions removing a priori embeddedness assumptions---its counterpart in the context of phase transitions has remained elusive.

A positive answer was previously known only in dimension $n = 3$, as a consequence of the works \cite{AAC01, WW18, WW19, CM20} (see below for details). This paper establishes the result in dimension $n = 4$ and introduces ideas that may prove useful for addressing the remaining cases $n = 5, 6, 7$. The examples constructed in \cite{PW13} (see also \cite{LWW17b}) show that the conjecture cannot hold in dimensions $n \ge 8$, confirming that the restriction to dimensions $n \le 7$---originally motivated by the analogy with minimal surface theory---is indeed necessary.

For absolute minimizers, the regularity question is fully resolved through the theory developed in \cite{MM77, CC06,Sav09, Sav10}. However, these techniques do not carry over to the case of local minimizers, as they crucially depend on the convergence of absolute minimizers to classical minimal hypersurfaces with multiplicity one---a property that fails for local mimimizers.

Conjecture~\ref{conj:regularity} is known to have significant implications in geometric analysis. In particular, a positive resolution in a given dimension implies the validity of the Marques--Neves multiplicity one and Morse index conjectures \cite{MN16} for Allen--Cahn approximations in that same dimension, by the results in \cite{WW19, CM20}. See Section~\ref{sec:geomappintro} for a further discussion of this implication.

\subsection{The Wang–Wei reduction and the classification problem for stable solutions with bounded energy density}\label{sec:wwredclas}

Consider a solution $u : \mathbb{R}^n \to [-1, 1]$ of the Allen--Cahn equation with $\varepsilon = 1$:
\begin{equation}\label{eq:aceqintro}
    -\Delta u + W'(u) = 0 \quad \text{in } \mathbb{R}^n.
\end{equation}
For such a function, we define the \emph{energy density} on balls of radius $r > 0$ by
\begin{equation}\label{eq:densitydef}
    \mathbf{M}_r(u) := \frac{1}{r^{n-1}} \mathcal{E}^1(u, B_r)\,.
\end{equation}

By Modica's monotonicity formula \cite{Mod87, ModicaMonotonicity}, the map $r \mapsto \mathbf{M}_r(u)$ is nondecreasing in $r$.

We say that $u$ has \emph{bounded energy density} if ${\bf M}_\infty(u):=\lim_{r \to \infty} \mathbf{M}_r(u) < \infty$.

As we explain in more detail below (see Sections \ref{sec:wangwei} and \ref{sec:critsol}), the groundbreaking works of Wang--Wei \cite{WW18, WW19} and Chodosh--Mantoulidis \cite{ChodoshMantoulidis2023} reduce the regularity question for stable phase transitions, namely \Cref{conj:regularity}, to the following conjectural classification result:
\begin{conjecture}\label{conj:stabgiorgibnd}
    Let $u : \mathbb{R}^n \to [-1, 1]$ be a stable solution to \eqref{eq:aceqintro} with bounded energy density. Then, for $n \le 7$, $u$ must be one-dimensional,   that is either identically $\pm 1$ or of the form $u(x) = \tanh\left( \frac{e \cdot x - s_0}{\sqrt{2}} \right)$ for some unit vector $e \in \mathbb{S}^{n-1}$ and $s_0 \in \mathbb{R}$.
\end{conjecture}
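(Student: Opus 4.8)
The plan is to prove \Cref{conj:stabgiorgibnd} for all $n\le 7$ by combining three ingredients: (i) the Wang--Wei reduction, which converts a stable bounded-energy-density solution in $\R^n$ into a \emph{stable Toda-type system} (equivalently, a finite collection of nearly parallel transition layers whose interfaces interact through the Jacobi--Toda equations) on the limit interface $\St\subset\R^n$; (ii) a classification of \emph{stable minimal hypersurfaces} in $\R^n$ with bounded density, which for $n\le 7$ forces $\St$ to be a finite union of parallel hyperplanes; and (iii) a Liouville-type argument on the resulting Jacobi--Toda system over $\R^{n-1}$ showing that the only stable entire solutions are the trivial, constant-distance ones, so that there is exactly one layer and $u$ is one-dimensional. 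The induction on $n$ is implicit in (ii): the Simons-type argument bounding the dimension of stable minimal cones is exactly what caps the statement at $n\le 7$.

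\textbf{Step 1: blow-down and the structure of the limit interface.} First I would take the blow-down sequence $u_R(x):=u(Rx)$ as $R\to\infty$. Bounded energy density plus Modica monotonicity gives uniform energy bounds, so along a subsequence the interfaces $\{-0.9<u_R<0.9\}$ converge (in the varifold sense) to a stationary integral varifold $V_\infty$ in $\R^n$ which, by the stability passing to the limit (\`a la Tonegawa--Wickramasekera), is a \emph{stable} stationary integral varifold; being a cone by monotonicity and of bounded density, $V_\infty$ is a stable minimal cone. For $n\le 7$ the Simons--Schoen--Simon--Yau rigidity forces $V_\infty$ to be a hyperplane (with some integer multiplicity $m\ge1$). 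Pulling this information back to unit scale via the Wang--Wei machinery: since the blow-down is a single hyperplane of multiplicity $m$, $u$ itself has interface consisting of $m$ graphs over a fixed hyperplane $\{x_n=0\}$, with curvatures uniformly small; this is precisely the regime where the \cite{WW18,WW19} multi-layer expansion applies and produces an entire, ordered solution $(h_1\le\dots\le h_m):\R^{n-1}\to\R$ of the Jacobi--Toda system
\begin{equation}\label{eq:todaplan}
\Delta h_i \;=\; c_0\bigl(e^{-\sqrt2\,(h_i-h_{i-1})}-e^{-\sqrt2\,(h_{i+1}-h_i)}\bigr)\,+\,\text{(lower order)},\qquad i=1,\dots,m,
\end{equation}
with the convention $h_0=-\infty$, $h_{m+1}=+\infty$, and with stability of $u$ descending to stability of this system.

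\textbf{Step 2: Liouville for the stable Jacobi--Toda system.} The crux is to show $m=1$. Suppose $m\ge2$. Stability of the system \eqref{eq:todaplan} means the associated quadratic form $\sum_i\int|\nabla\varphi_i|^2 + \int \langle A(h)\varphi,\varphi\rangle\ge0$ for all $\varphi\in C^1_c$, where the interaction matrix $A(h)$ is (essentially) a weighted graph Laplacian with \emph{negative} off-diagonal interaction strengths $-\tau_i:=-c_1 e^{-\sqrt2(h_{i+1}-h_i)}<0$; equivalently the system is \emph{cooperative}. Testing the stability inequality with $\varphi_i=\psi$ the \emph{same} cutoff for every $i$ makes all interaction terms cancel in pairs, yielding $\int|\nabla\psi|^2\ge0$, which is vacuous; instead the right test functions are the differences. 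The plan is to test with $\varphi_i = (\text{sign pattern})\cdot \psi$ or, more efficiently, to use the substitution $w:=h_{i+1}-h_i$ for a fixed adjacent pair: $w$ solves an equation of the form $\Delta w = -g(w) + (\text{coupling to the other gaps})$ where $g(w)\sim 2\sqrt2 c_0 e^{-\sqrt2 w}>0$ is monotone, and stability forces the linearized operator $-\Delta - g'(w)$ (a Schr\"odinger operator with the \emph{nonpositive} potential $g'(w)=-2c_0 e^{-\sqrt2 w}<0$, hence a priori stable) to actually be \emph{critical} — it has $w_{x_k}=\p_k w$ as bounded entire solutions in its kernel unless $w$ is constant. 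Combining the $n-1\le 6$ coordinate directions with a dimension-dependent Liouville theorem for such critical Schr\"odinger operators (this is where $n-1\le 6$, i.e. $n\le 7$, is used again: the Berestycki--Caffarelli--Nirenberg/Savin-type classification of stable solutions of scalar semilinear equations with monotone nonlinearity holds precisely in the stable-cone range), $w$ must be constant in each coordinate, hence constant. But a constant gap $w=c$ forced into \eqref{eq:todaplan} gives $0=\Delta h_i = c_0(e^{-\sqrt2 c_{i-1}}-e^{-\sqrt2 c_i})$ with the boundary conventions $h_0=-\infty, h_{m+1}=+\infty$, which is impossible for the extreme layers (the term with $h_0$ or $h_{m+1}$ vanishes, leaving a strictly signed quantity). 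Hence $m=1$.

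\textbf{Step 3: conclusion.} With $m=1$ the solution $u$ has a single transition layer which is a graph over $\{x_n=0\}$ with the layer position $h:\R^{n-1}\to\R$ solving the \emph{scalar} Jacobi equation $\Delta h = (\text{curvature nonlinearity})(h,\nabla h,\nabla^2h)$ coming from the minimal-surface operator, and stability of $u$ gives stability of this minimal-graph-type equation over $\R^{n-1}$. Since the blow-down of the graph $\{x_n=h(x')\}$ is a stable minimal cone in $\R^n$, which for $n\le7$ is a hyperplane, and since the graph is entire with bounded density, the Bernstein-type theorem for stable minimal graphs (again valid in exactly this dimension range) forces $h$ to be affine; subtracting the affine part and rotating, $u$ depends on one variable only, and the one-dimensional ODE $-u''+W'(u)=0$ with $u\in[-1,1]$ has as its only bounded solutions the constants $\pm1$ and the translates/reflections of $\tanh\!\bigl((t-s_0)/\sqrt2\bigr)$. \textbf{Main obstacle.} The hard part is Step 2 — making the passage from stability of the full coupled Toda system to the scalar criticality statements rigorous: one must handle the coupling terms (which are genuine, not lower-order, between adjacent gaps) and the fact that $h_i-h_{i-1}\to\infty$ or stays bounded in different regions, so the "potentials" $e^{-\sqrt2 w}$ are not bounded below; controlling these requires the sharp a priori layer-separation and curvature-decay estimates of Wang--Wei together with a careful choice of test functions (logarithmic cutoffs of Moschini/Farina type adapted to the borderline dimension $n-1=6$). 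A secondary subtlety is ensuring that the Wang--Wei reduction in Step 1 is available in the \emph{non-minimizing} stable regime for all $n\le7$ and not only where it has been written down; this is the content of the "Wang--Wei reduction" as invoked in Section~\ref{sec:wwredclas}, and I would cite \cite{WW18,WW19,ChodoshMantoulidis2023} for it.
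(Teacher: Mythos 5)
Your proposal is not a valid proof, and it also diverges fundamentally from what the paper actually establishes. Let me flag the two most serious issues.

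\textbf{First, the scope.} The paper proves the conjecture only in dimension $n=4$ (\Cref{mainthm}); the cases $n=5,6,7$ remain open, and Section~\ref{obstructionshd} explains concretely why the present techniques do not extend (the Wang--Wei mean-curvature bound $\|\HH[g_i]\|_{L^\infty}\lesssim \ep/R^2$ combined with the stability lower bound $\epk^2\gtrsim \Rk^{-(n-3)}$ only yields a positive-power-of-$\epk$ gain on the mean curvature when $n=4$). Your proposal claims all of $n\le7$, which immediately signals that some step must be overclaimed.

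\textbf{Second, Step~1 is circular.} You argue: blow-down $\Rightarrow$ hyperplane of multiplicity $m$ (correct, via Simons/SSY for stable minimal cones) $\Rightarrow$ "$u$ itself has interface consisting of $m$ graphs over $\{x_n=0\}$ with curvatures uniformly small" $\Rightarrow$ Wang--Wei multilayer/Toda expansion applies. The middle implication is false. Modica monotonicity and the blow-down control the \emph{density} of $u$ at large scales, not the curvature of its level sets at unit scale; indeed the whole difficulty of the problem is that nothing a priori prevents a flat-looking blow-down from hiding necks or other high-curvature features at bounded scales. \Cref{thm:sheetimpc2alpha} is an \emph{a priori estimate}: it requires the sheeting assumptions of \Cref{def:sheetassump}, i.e.\ uniform $L^\infty$ curvature bounds on $\{|u|\le 0.9\}$, as a hypothesis. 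That hypothesis is exactly what \Cref{conj:stabgiorgibnd} (equivalently \Cref{conj:regularity}) is trying to prove. You cannot invoke the Toda reduction without first securing those curvature bounds, and your argument gives no mechanism for doing so. (Step~2 inherits this gap and, in addition, the "stability descends to the Toda system, hence the gaps are constant" argument is only a heuristic sketch: the linearized coupled system is not a scalar Schrödinger operator, the potentials are not uniformly bounded below, and the derivative $\partial_k w$ has no a priori $L^\infty$ control, so the Liouville step is not actually carried out.)

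By contrast, the paper sidesteps this circularity entirely with a different architecture: a continuous induction on the density at infinity $K_*$ (\Cref{def:induction}, \Cref{prop:critexist}) reduces everything to a single \emph{critical solution} whose rigid structure (\Cref{prop:W}) is deduced from the inductive hypothesis. Curvature control is then available exactly on "good balls", while "bad balls" are counted via a tangential stability inequality (\Cref{prop:tanstab2}, \Cref{prop:A'badcent}). The contradiction comes from an improvement-of-flatness scheme with carefully selected centers and scales (\Cref{lem:radcentselec}, \Cref{lem:NbdRknew}), a Whitney-type extension (\Cref{prop:extension}), a refined $L^1$ linearization (\Cref{prop:linHeq}), and a new monotonicity-type relation (\Cref{prop:moncontrexc}) that transports the improved excess back to the original scale. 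None of this matches your proposed route.
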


A positive answer to this conjecture is currently known only for $n = 2$, by Ghossoub and Gui \cite{GG98}, and for $n = 3$, by Alberti, Ambrosio, and Cabré \cite{AAC01, AC00}. Both results date back over 25 years, and the question has remained open in higher dimensions since then.

In this paper, we address the case $n = 4$.

It is worth noting that \Cref{conj:stabgiorgibnd} is a special case of the so-called \emph{stable} (or \emph{strong}\footnote{The term “strong De Giorgi conjecture” is justified by the fact that it is known to imply the original De Giorgi conjecture for monotone solutions of the Allen–Cahn equation in $\mathbb{R}^{n+1}$.}) De Giorgi conjecture, which asserts the same classification result even without the assumption of bounded energy density. This stronger version of the conjecture is completely open even in dimension $n=3$.

The minimal surface analogue of this stronger conjecture---namely, that any complete, two-sided, stable minimal hypersurface in $\mathbb{R}^n$ must be flat for $n \le 7$---is a classical result for $n=3$ \cite{FS80, DP79, Pog81} and has recently been established in the breakthrough works \cite{CL24} for $n = 4$ (see also \cite{CL23, CMR24}), \cite{CLMS24} for $n = 5$, and \cite{Maz24} for $n = 6$. The final case $n = 7$ remains open. Unfortunately, the powerful and delicate techniques from intrinsic differential geometry used in these proofs appear to be inapplicable to the setting of phase transitions (strong De Giorgi) or other similar variational scaling-dependent problems.

\subsection{Main result}
In this paper we establish the following:
\begin{theorem}[{\bf Main result: Classification in $\R^4$}]\label{mainthm} 
Let $u:\R^4\to [-1,1]$ be a stable solution to \eqref{eq:aceqintro} with bounded energy density. Then, $u$ is either identically $\pm 1$ or of the form $\tanh\left( \frac{e \cdot x - s_0}{\sqrt{2}} \right)$ for some unit vector $e\in\Sp^{n-1}$ and $s_0\in\R$.
\end{theorem}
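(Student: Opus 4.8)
The plan is to argue by contradiction through a \emph{critical solution}. Call a solution \emph{one-dimensional} if it is identically $\pm1$ or of the form $\tanh\big((e\cdot x-s_0)/\sqrt2\big)$, let $\cC$ be the set of stable solutions $u:\R^4\to[-1,1]$ with bounded energy density that are \emph{not} one-dimensional, and suppose $\cC\neq\varnothing$. Set $\Theta_0:=\inf_{u\in\cC}\mathbf M_\infty(u)$; Modica's monotonicity and the density lower bound for limiting varifolds give $\mathbf M_\infty(u)\ge1$ for nonconstant $u$, so $\Theta_0\ge1$. Given a minimizing sequence $u_k\in\cC$, I would renormalize each $u_k$ by a rigid motion of $\R^4$ so that a scale-invariant quantity detecting the failure of $\{u_k=0\}$ to be a hyperplane equals $1$ at the origin; interior elliptic estimates then yield $C^{2,\alpha}_{\loc}$-compactness, and along a subsequence $u_k\to u^*$ with $u^*$ stable, $\mathbf M_\infty(u^*)\le\Theta_0$ (lower semicontinuity of the Modica density, via the monotone limit), while $u^*$ stays non-one-dimensional because the normalized defect passes to the limit. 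Hence $u^*\in\cC$ with $\mathbf M_\infty(u^*)=\Theta_0$: a critical solution.

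Criticality buys two structural facts about $u^*$. First, every tangent of $u^*$ at a finite point is one-dimensional: such a tangent is a stable solution of density $\le\Theta_0$, and a non-one-dimensional one would either contradict minimality of $\Theta_0$ or, in the equality case, be a stationary cone of density $\Theta_0$ — a multiplicity-$\ge2$ hyperplane or a transverse union of hyperplanes, both excluded as unstable. Consequently the nodal set $\Gamma:=\{u^*=0\}$ is a smooth embedded hypersurface. Second, blowing $u^*$ down at infinity: by Modica's formula and bounded energy density, the rescalings $u^*(R\,\cdot)$ converge as $R\to\infty$ (Hutchinson--Tonegawa) to a stationary integral cone $\mathbf C$ with $\Theta(\mathbf C,0)=\mathbf M_\infty(u^*)=\Theta_0$; passing the stability inequality to the limit (Tonegawa--Wickramasekera) makes $\mathbf C$ stable, and — there being no triple-junction singularities — Wickramasekera's regularity theorem applies, so in $\R^4$ (where the varifold has dimension $3$) $\mathbf C$ is smooth everywhere, hence, being a cone, a hyperplane of integer multiplicity $Q=\Theta_0$. (Alternatively one argues by hand: separation of variables and Hardy give $\lambda_1(-\Delta_\Sigma-|A_\Sigma|^2)\ge-\tfrac14$ for the link $\Sigma^2\subset\bS^3$, whence $\int_\Sigma|A_\Sigma|^2\le\tfrac14\,\mathrm{Area}(\Sigma)$; the Gauss equation $|A_\Sigma|^2=2-2K_\Sigma$ and Gauss--Bonnet then force $\chi(\Sigma)>0$, and Almgren's theorem identifies $\Sigma$ with the equator.)

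If $Q=1$, the blow-down is a multiplicity-one hyperplane; together with smoothness of $\Gamma$ and one-dimensionality of all finite tangents, $\Gamma$ is then, outside a compact set, a single asymptotically flat graph, its level sets are asymptotically minimal graphs, $u^*$ is therefore (locally) a minimizer, and Savin's theorem ($n=4\le7$) forces $u^*$ to be one-dimensional — contradicting $u^*\in\cC$. It remains to rule out $Q\ge2$, the core difficulty. Then $\Gamma$ consists, at large scales, of $Q$ disjoint nearly-parallel smooth sheets $\Gamma_1,\dots,\Gamma_Q$ ordered across the limiting hyperplane, and the Wang--Wei reduction describes $u^*$ via their graph functions $\ell_1<\cdots<\ell_Q$ over the base $\R^3$, solving a Jacobi--Toda system $\mathcal M(\ell_i)=c_0\big(e^{-\sqrt2(\ell_i-\ell_{i-1})}-e^{-\sqrt2(\ell_{i+1}-\ell_i)}\big)+(\mathrm{err})$, with $\mathcal M$ the minimal-surface operator and attractive exponential interactions. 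Summing, the total width $w:=\ell_Q-\ell_1>0$ obeys $\mathcal M(w)\le-c_0\big(e^{-\sqrt2(\ell_2-\ell_1)}+e^{-\sqrt2(\ell_Q-\ell_{Q-1})}\big)+(\mathrm{err})<0$, so $w$ is a positive entire supersolution of an asymptotically-Laplace operator on $\R^3$; a Liouville-type theorem then forces $w$ constant, contradicting the strict sign. Therefore $\cC=\varnothing$, and the one-dimensional ODE analysis (bounded stable solutions of $-v''+W'(v)=0$ on $\R$ are $\pm1$ or $\tanh$) completes the proof.

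The main obstacle I anticipate is the $Q\ge2$ step: making the Wang--Wei multi-layer reduction quantitative on the noncompact base $\R^3$, so that the errors in the Jacobi--Toda system are genuinely lower order and strong enough to drive the supersolution/Liouville argument, is substantially more delicate than its bounded-domain counterparts — and is presumably what obstructs the cases $n=5,6,7$. Secondary difficulties are engineering the renormalization in the compactness step so that non-one-dimensionality survives in the limit, and verifying the minimizer/Savin input in the $Q=1$ case for stable (rather than a priori minimizing) solutions.
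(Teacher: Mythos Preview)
Your setup --- defining the critical density $\Theta_0$ and extracting a critical solution $u^*$ with $\mathbf M_\infty(u^*)=\Theta_0$ --- matches the paper's (their $K_*$), and your blow-down analysis showing the tangent cone at infinity is a multiplicity-$Q$ hyperplane is essentially the paper's large-scale flatness proposition. But the core of the argument, the $Q\ge2$ step, has two genuine gaps.

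\textbf{Gap 1: smoothness of $\Gamma$.} Your claim that $\Gamma=\{u^*=0\}$ is globally a smooth embedded hypersurface is unjustified. The Allen--Cahn equation with $\varepsilon=1$ is not scale-invariant, so there is no ``tangent solution at a finite point'' that is again an Allen--Cahn solution with density $\le\Theta_0$; blow-ups yield (via Hutchinson--Tonegawa) varifold tangent cones, not A--C solutions, while translates $u^*(\cdot+x_0)$ are A--C solutions but with density exactly $\Theta_0$, so minimality of $\Theta_0$ gives nothing. The paper does \emph{not} obtain global smoothness: it introduces the bad set $\cZ$ (unit balls carrying a definite amount of $\int\mathcal A^2|\nabla u|^2$), proves $\cZ\neq\varnothing$ for the critical solution, and the Wang--Wei sheeting is available only \emph{away} from $\cZ$. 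Carrying the bad set through the entire argument is unavoidable, and without global sheeting you have no global Jacobi--Toda system to work with.

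\textbf{Gap 2: the Liouville step.} Even granting a global Jacobi--Toda system on $\R^3$, your assertion that a positive entire supersolution of an asymptotically-Laplace operator on $\R^3$ must be constant is false: positive superharmonic functions on $\R^{m}$ are forced constant only for $m\le2$; for $m=3$ the fundamental solution furnishes counterexamples. This Liouville mechanism is exactly how the $n=3$ case (base $\R^2$) is handled in \cite{CM20}, and its failure on base $\R^3$ is precisely the known obstruction to extending that method to $n=4$ --- the obstruction you anticipate for $n\ge5$ already bites here.

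The paper's actual route after the critical-solution setup is entirely different: a \emph{tangential} stability inequality bounding the measure of the bad set by a height excess ${\bf H}$; a two-step selection of center and scale (maximising a penalised ratio ${\bf H}^2/{\bf K}^{2(1+\alpha)}$, then passing to a sub-ball where a Minkowski-type bad-ball count holds); a Whitney-type extension of the $K_*$ layer graphs across the projected bad set with quantitative $L^1$ mean-curvature control; an $L^1$ improvement-of-flatness iteration for each layer, then upgraded to a \emph{single} affine approximant for all $K_*$ layers via the uniform large-scale flatness at bad centers; and finally a new monotonicity-type inequality linking ${\bf H}$ to the density deficit $K_*-{\bf M}$, which transports the improved excess back to the original large scale and forces ${\bf H}$ to beat itself --- the contradiction.
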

In other words, we establish \Cref{conj:stabgiorgibnd} in dimension $n=4$.

Combining \Cref{mainthm} with the main result in \cite{WW19}, local curvature estimates for stable solutions directly follow. Indeed, let us define
\begin{align*}
        \mathcal A^2(u)&=\begin{cases}
            \frac{|D^2u|^2-|\nabla|\nabla u||^2}{|\nabla u|^2}\quad &\mbox{ if } \nabla u\neq 0\\
            0 &\mbox{ otherwise\,;}
        \end{cases}
        \qquad \quad \mbox{and}\qquad \mathcal A(u)=\left(\mathcal A^2(u)\right)^{1/2}.
    \end{align*}
It is easy to see that if $\nabla u(x)\neq 0$ then $\mathcal A^2(u)(x)=|{\rm I\negthinspace I}_{\{u=u(x)\}}|^2(x)+|\nabla^T \log|\nabla u||^2(x)$, where ${\rm I\negthinspace I}_{\{u=u(x)\}}$ is the second fundamental form of the level set $\{u=u(x)\}$ and $\nabla^T$ denotes the gradient in the directions tangent to $\{u=u(x)\}$.
\begin{theorem}[\textbf{Regularity for level sets, $\ep$-version}]\label{thm:curvestseps}
    Assume that $u_\ep:B_1\subset\R^4\to (-1,1)$ is a stable solution to $\ep$-Allen--Cahn, satisfying that $\mathcal E^\ep(u_\ep,B_1)\leq \Lambda$. Then, there are $\ep_0>0$ and $C$  depending only on $\Lambda$ such that if $\ep\leq \ep_0$, then
    \begin{equation}\label{eq:curvestind}
    |\nabla u_\ep|\geq \frac{1}{C\ep} \quad \mbox{and} \quad \mathcal A(u_\ep)\leq C \quad \mbox{ in } \, \{|u_\ep(x)|\leq 0.9\}\cap B_{1/2}\,.
\end{equation}
In particular, $\{u_\ep=t\}\cap B_{1/2}$ is then a smooth hypersurface for all $|t|\leq 0.9$, with
\begin{equation}\label{lvl2ffind}
    |{\rm I\negthinspace I}_{\{u_\ep=t\}}|\leq C \quad \mbox{ in } \, B_{1/2}.
\end{equation}
\end{theorem}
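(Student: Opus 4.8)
The plan is to obtain \Cref{thm:curvestseps} from the classification in \Cref{mainthm} by a blow-up (compactness--contradiction) argument of the type developed by Wang--Wei. In fact, once \Cref{mainthm} is in hand, \eqref{eq:curvestind} is an immediate consequence of the conditional curvature estimates of \cite{WW19} (see also \cite{WW18, ChodoshMantoulidis2023, CM20}), whose only missing ingredient in dimension $n=4$ is precisely the statement that stable entire solutions of \eqref{eq:aceqintro} with bounded energy density in $\R^4$ are one-dimensional. Let me nonetheless sketch the mechanism behind both bounds in \eqref{eq:curvestind}.

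First I would establish $|\nabla u_\ep|\ge \tfrac1{C\ep}$ on $\{|u_\ep|\le 0.9\}\cap B_{3/4}$ by contradiction. If this fails for every choice of $C$ and $\ep_0$, there are $\ep_j\downarrow 0$, stable solutions $u_j$ of $\ep_j$-Allen--Cahn with $\mathcal E^{\ep_j}(u_j,B_1)\le\Lambda$, and $x_j\in\{|u_j|\le 0.9\}\cap B_{3/4}$ with $\ep_j|\nabla u_j(x_j)|\to 0$. The rescalings $v_j(y):=u_j(x_j+\ep_j y)$ solve \eqref{eq:aceqintro} on balls exhausting $\R^4$, are stable, satisfy $|v_j|\le 1$ and $|v_j(0)|\le 0.9$, and --- by Modica's monotonicity formula centered at $x_j$ together with the energy bound --- obey $\mathbf M_r(v_j)\le C(\Lambda)$ for $r$ up to $\tfrac1{4\ep_j}\to\infty$. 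Interior elliptic estimates give $v_j\to v_\infty$ in $C^2_{\rm loc}$ along a subsequence, with $v_\infty:\R^4\to[-1,1]$ stable, $\mathbf M_\infty(v_\infty)<\infty$, and $|v_\infty(0)|\le 0.9$, so $v_\infty\not\equiv\pm 1$. By \Cref{mainthm}, $v_\infty=\tanh\big(\tfrac{e\cdot y-s_0}{\sqrt2}\big)$, hence $|\nabla v_\infty(0)|=\tfrac1{\sqrt2}(1-v_\infty(0)^2)>0$, which contradicts $\nabla v_j(0)=\ep_j\nabla u_j(x_j)\to 0$. (The same argument gives the bound over any $B_{1-\delta}$, with $C$ depending also on $\delta$.)

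Next I would prove $\mathcal A(u_\ep)\le C$ on $\{|u_\ep|\le 0.9\}\cap B_{1/2}$. The gradient bound together with the standard estimates $|\nabla u_\ep|\le C/\ep$ and $|D^2u_\ep|\le C/\ep^2$ shows $\mathcal A(u_\ep)\le |D^2u_\ep|/|\nabla u_\ep|\le C/\ep$ on $\{|u_\ep|\le0.9\}\cap B_{3/4}$, so $\mathcal A(u_\ep)$ is finite there. If the uniform bound fails, a point-selection --- maximizing $(\tfrac34-|x|)\,\mathcal A(u_j)(x)$ over $\{|u_j|\le0.9\}\cap B_{3/4}$ --- produces $\ep_j\downarrow 0$, stable $u_j$ with $\mathcal E^{\ep_j}(u_j,B_1)\le\Lambda$, points $x_j$ with $M_j:=\mathcal A(u_j)(x_j)\to\infty$, and radii $r_j$ with $r_jM_j\to\infty$ and $\mathcal A(u_j)\le 2M_j$ on $\{|u_j|\le0.9\}\cap B_{r_j}(x_j)\subset B_{3/4}$. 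Because $M_j\le C/\ep_j$, the rescaled width $\tilde\ep_j:=M_j\ep_j$ stays bounded, and the curvature-scale rescalings $w_j(y):=u_j(x_j+y/M_j)$ solve $\tilde\ep_j$-Allen--Cahn on balls exhausting $\R^4$, with $|w_j|\le1$, $|w_j(0)|\le0.9$, bounded energy density, $\mathcal A(w_j)(0)=1$, and $\mathcal A(w_j)\le 2$ on $\{|w_j|\le 0.9\}\cap B_{r_jM_j}(0)$. Passing to a subsequence there are two cases. If $\tilde\ep_j\to c\in(0,\infty)$, then $w_j\to w_\infty$ in $C^2_{\rm loc}$ with $w_\infty$ a stable bounded-energy-density solution of $c$-Allen--Cahn and $|w_\infty(0)|\le0.9$; rescaling by $c$ reduces to $\ep=1$ and \Cref{mainthm} forces $w_\infty$ to be one-dimensional, so $\mathcal A(w_\infty)\equiv 0$, contradicting $\mathcal A(w_j)(0)=1$ (the $C^2_{\rm loc}$ convergence being genuine near $0$ since $|\nabla w_\infty(0)|>0$). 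If $\tilde\ep_j\to 0$, one is in the minimal-surface regime of the Wang--Wei theory for (possibly multi-layered) stable transitions: the level sets $\{w_j=0\}$, which satisfy $|{\rm I\negthinspace I}|\le \mathcal A(w_j)\le 2$ and pass near $0$ (here the part of $\mathcal A$ other than $|{\rm I\negthinspace I}|$ is $O(\tilde\ep_j^2)$, as $\nabla w_j$ is normal to the level sets), subconverge to a complete stable minimal hypersurface $\Sigma\subset\R^4$ with Euclidean volume growth (from the energy density bound) and $|{\rm I\negthinspace I}_\Sigma|(0)=1$; this contradicts the classical curvature estimates for stable minimal hypersurfaces in $\R^4$ (Schoen--Simon--Yau, equivalently the regularity and compactness theory for stable codimension-one stationary varifolds), which apply thanks to the volume bound and force $\Sigma$ to be flat. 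This establishes \eqref{eq:curvestind}; the smoothness of $\{u_\ep=t\}\cap B_{1/2}$ for $|t|\le 0.9$ then follows from $|\nabla u_\ep|>0$ via the implicit function theorem, and $|{\rm I\negthinspace I}_{\{u_\ep=t\}}|\le \mathcal A(u_\ep)\le C$ gives \eqref{lvl2ffind}.

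The substantive input is \Cref{mainthm}, which I take as given; modulo it, the step I expect to be the main obstacle is the two-scale dichotomy in the curvature bound. Ruling out curvature concentration in the regime $\tilde\ep_j\to 0$ is where one genuinely leaves the world of $\ep=1$ solutions and must invoke the regularity and curvature theory for stable minimal hypersurfaces in $\R^4$; this is exactly where the bounded-energy-density hypothesis is indispensable, as it supplies the Euclidean volume growth that makes those estimates applicable in dimension $4$. Everything else --- propagating stability and the energy density bound through the rescalings, the Modica monotonicity bookkeeping, and the point-selection near $\partial B_{3/4}$ --- is routine and is already carried out in \cite{WW19} (see also \cite{WW18, ChodoshMantoulidis2023}); accordingly, the shortest rigorous proof is to combine \Cref{mainthm} with the conditional curvature estimate established there.
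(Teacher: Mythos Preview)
Your approach is essentially identical to the paper's: the paper deduces \Cref{thm:curvestseps} by observing that \Cref{mainthm} makes every density subcritical and then rerunning the contradiction/blow-up argument behind \Cref{prop:curvestindeps} (gradient lower bound by $\ep$-scale blow-up; curvature bound by point selection and the $\tilde\ep_j\to c>0$ vs.\ $\tilde\ep_j\to 0$ dichotomy, the latter handled via \cite{WW19} and \cite{SS81}). The one technical point you gloss over, and which the paper singles out explicitly, is that Modica's monotonicity formula (\Cref{lem:monformula}) is stated for entire solutions, whereas here $u_\ep$ lives only on $B_1$; to get the density bound ${\bf M}_r(u_\ep,x)\le C(\Lambda)$ at interior points one should invoke the local monotonicity formula with error terms from \cite[Proposition~3.4]{HT00} instead.
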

This shows the validity of \Cref{conj:regularity} in dimension $n=4$.

Some remarks are in order: 

\begin{remark}   
    The regularity theory for stable solutions with multiple flat, nearly parallel interfaces developed in~\cite{WW18, CM20, WW19} is a fundamental element in the proof of~\Cref{mainthm}. This is recalled in \Cref{sec:wangwei}.\\
    To go beyond the scope of existing techniques, we introduce several new ingredients that may be of independent interest, including:
    \begin{itemize}
        \item A ``continuous induction'' argument on the value of ${\bf M}_\infty$, the energy density at infinity of the solutions, which reduces \Cref{mainthm} to the classification of a single {\it critical solution} with remarkably rigid properties.
        \item A tangential form of the stability inequality, which controls ``bad balls'' with large curvature by a notion of height excess (i.e. flatness).
        \item A new monotonicity-type formula that relates the height excess back with the energy density.
    \end{itemize}
    \Cref{sec:strucproof} presents all of the ingredients mentioned above and provides a detailed outline of the core of the proof. The goal is to give the reader a clear sense of both the main difficulties inherent in the problem and the strategy we develop to overcome them.
    \end{remark}
    \begin{remark}\label{rem:analogies}
    It is noteworthy that a central part of our proof (outlined in~\Cref{sec:contradictionintro}) shows a striking formal resemblance to the argument developed in the first part of~\cite{CFFS25} dealing with Bernoulli's free boundary problem. 

    Even more intriguingly, although the overarching structure of these two arguments (Bernoulli part of~\cite{CFFS25} and \Cref{sec:contradictionintro} on this paper) aligns closely, the specific ingredients involved in our proof are entirely different. 

    The connection becomes apparent only {\em a posteriori}, once one identifies the appropriate dictionary to translate objects and scalings between the two seemingly unrelated problems. In writing~\Cref{sec:contradictionintro}, we made a deliberate effort to highlight this analogy.

    That said, we believe that leveraging this analogy in a meaningful way is quite nontrivial, requiring several entirely new ideas. From this perspective, the strategy we develop---outlined in detail in Section~\ref{sec:strucproof}---is genuinely original.

    Finally, we emphasize that although~\cite{CFFS25} also contains results on the ``free boundary Allen--Cahn'' problem, those are completely unrelated to our work. Only the part of the  Bernoulli part exhibits meaningful analogies with our approach.
\end{remark}

\begin{remark}
    Several parts of our proof remain valid in higher dimensions. In fact, a variant\footnote{This is explored in a work in preparation by the authors.} of our overall strategy yields the classification of embedded stable minimal hypersurfaces with Euclidean area growth up to dimension~$7$, independently of~\cite{SS81}.

    In this paper, the case $n = 4$ of \Cref{conj:stabgiorgibnd} is established by using the main results of~\cite{WW19} as a ``black box''.
    We emphasize that while the regularity theory in~\cite{WW19} is formulated in all dimensions, its main results---which are optimal, at least as currently formulated---do not allow us to carry out our strategy in dimensions $5 \leq n \leq 7$. Nonetheless, we believe it is possible that delving deeper into the proofs in~\cite{WW19} and extracting suitable versions of certain intermediate steps, one could combine them with ideas introduced in this paper to tackle the  higher-dimensional cases. For this reason, we have written parts of the paper in general dimension~$n$.

    We give a more technical comment on this dimensional obstruction  in~\Cref{obstructionshd}, after the overview of the proofs.
\end{remark} 

\subsection{Geometric applications: Min-max solutions and the multiplicity one and Morse index conjectures}\label{sec:geomappintro}
There has been a growing interest in using Allen--Cahn approximations to construct geometric objects with special properties, including minimal hypersurfaces on closed manifolds. In particular, we highlight the results in \cite{Guaraco2018,GasparGuaraco2018,GGWeyl}, building on \cite{TonegawaWickramasekera2012,Wic14}:
\begin{itemize}
    \item A remarkably simple min-max construction, of mountain-pass type, exhibits the existence of rich families of $\ep$-Allen--Cahn solutions on manifolds. More precisely, fixed a closed $n$-dimensional manifold $M$, for every $p\in\N$ one obtains (for $\ep>0$ sufficiently small) solutions $u_{\ep}^p:M\to (-1,1)$ with energy $\sim p^{1/n}$ and Morse index $\leq p$.
    \item Passing them to the limit (via \Cref{thm:HutchTon}, obtained in \cite{HT00}) as $\ep\to 0$, one obtains integral stationary varifolds (i.e. generalised minimal hypersurfaces) $\Sigma^p$. Moreover, the Morse index bounds mean that the $\Sigma^p$ are locally stable.
    \item Using \Cref{thm:tonwick} (obtained in \cite{TonegawaWickramasekera2012}, which uses the deep and powerful regularity theory for stable integral varifolds in \cite{Wic14}), the limits are then seen to be of optimal regularity (i.e., as regular as in the case of area minimisers).
\end{itemize}
A main issue in the strategy above is that, without \Cref{conj:regularity} (or equivalently \Cref{conj:stabgiorgibnd}), in the second bullet we are forced to pass the $u_\ep^p$ to the limit using \cite{HT00}, i.e. just as {\it critical points}---using stability essentially only for the limit objects. There is then no geometric control in the convergence, which allows degeneration to occur\footnote{On manifolds with boundary, the situation is even worse, as transition level sets could potentially collapse onto the boundary, even when the latter is not minimal \cite{LPS24}. In the prescribed mean curvature (i.e. inhomogeneous A--C equation) case, the transition level sets could fold into a zero mean curvature submanifold instead, losing all the curvature information in the limit \cite{BW21,MZ25}.}: Indeed, several sheets of $\{u_\ep=0\}$ could collapse onto the same limit, like catenoids converging to a hyperplane, losing all information on energy, index or topology coming from the $u_\ep^p$. The {\it multiplicity one} and {\it Morse index} conjectures\footnote{Originally formulated in the Almgren--Pitts min-max setting.} \cite{MN16} state that this should not happen generically, and they were first confirmed for $n=3$ in the breakthrough article \cite{CM20} (via Allen--Cahn approximation). More precisely, \cite{CM20} shows that on three-dimensional closed manifolds with a generic Riemannian metric, the $\Sigma^p$ above arise as smooth, multiplicity one limits of the level sets $\{u_{\ep}^p=0\}$, and they have area $\sim p^{1/3}$ and Morse index exactly $p$. In particular, they are all distinct. This shows a strong form of a famous conjecture of Yau on the existence of infinitely many minimal surfaces on closed, three-dimensional manifolds.\\

\cite{WW19} extended the local estimates required in \cite{CM20} to higher dimensions (up to 10, surprisingly), for stable solutions satisfying a-priori curvature estimates (i.e., assuming precisely the thesis of \Cref{conj:regularity}). As explained in \cite[Remark 1.4]{CM20} and \cite[Remark 10.9]{WW19}, the only present bottleneck to showing the multiplicity one and Morse index conjectures for Allen--Cahn approximations in dimension $n$, with $4\leq n\leq 7$, is then a positive answer to \Cref{conj:stabgiorgibnd} in that dimension. Our \Cref{mainthm} confirms it for $n=4$.

\subsection{Acknowledgements}
It is a pleasure to thank Guido De Philippis for interesting discussions on the topic of this paper.

Both authors were supported by the European Research Council under the Grant Agreement No 948029.

\section{Previous results from the literature}
\subsection{Monotonicity formula}
\begin{lemma}[\textbf{Modica inequality, \cite{Modica85}}]\label{lem:modicaineq}
Let $u:\R^n\to\R$ be a bounded A--C solution on all of $\R^n$. Then, $|u|\leq 1$, and the inequality is strict unless $u\equiv\pm 1$. Moreover,
    $$\frac{|\nabla u|^2(x)}{2}\leq W(u(x))\,.$$
\end{lemma}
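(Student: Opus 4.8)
\emph{Proof plan.} I would run Modica's classical scheme in three stages. \textbf{(i) The bound $|u|\le 1$.} Since $W(u)=\tfrac14(1-u^2)^2\ge 0$ and $W'(u)=u^3-u$, on the open set $\{u>1\}$ one has $\Delta(u-1)=u(u+1)(u-1)\ge u-1$ (because $u(u+1)>2$ there), so the bounded nonnegative function $(u-1)^+$ is a distributional subsolution of $\Delta w=w$ on all of $\R^n$. Comparing on large balls $B_R(x_0)$ with the radial solution of $\Delta h=h$ that equals $\sup(u-1)^+$ on $\partial B_R$ and is regular at the center — whose value at $x_0$ is an explicit ratio of modified Bessel functions and hence tends to $0$ as $R\to\infty$ — the maximum principle forces $(u-1)^+(x_0)=0$; applying this at every $x_0$, and then to $-u$, yields $|u|\le 1$, so in particular $W(u)\ge 0$ everywhere. \textbf{(ii) Strictness.} If $u(x_0)=1$ for some $x_0$, set $w:=1-u\ge 0$; then $\Delta w=-W'(u)=u(1+u)(1-u)=c(x)\,w$ with $c:=u(1+u)$ bounded and $c(x_0)=2>0$, so $c\ge 0$ on a ball $B_\rho(x_0)$, where $w\ge 0$ satisfies $\Delta w-c\,w=0$ and vanishes at the interior point $x_0$; the strong maximum principle gives $w\equiv 0$ on $B_\rho(x_0)$. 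Hence $\{u=1\}$ is open, and being closed and nonempty it is all of $\R^n$. The same argument applies to $u\equiv -1$, so either $u\equiv\pm 1$ or $|u|<1$ on $\R^n$.

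\textbf{(iii) The gradient bound, which is the heart of the matter.} Introduce the Modica $P$-function $\Phi:=\tfrac12|\nabla u|^2-W(u)$; the goal is $\Phi\le 0$. On $\{\nabla u=0\}$ this is immediate, since there $\Phi=-W(u)\le 0$ by (i). On the open set $\{\nabla u\ne 0\}$, a Bochner-type computation using $\Delta u=W'(u)$ gives $\Delta\Phi=|D^2u|^2-(\Delta u)^2$ together with $\nabla\Phi=(D^2u)\nabla u-(\Delta u)\nabla u$; feeding these into the sharpened Kato inequality obtained by decomposing $D^2u$ relative to the unit vector $\nabla u/|\nabla u|$, one obtains the linear elliptic inequality, with no zeroth-order term and locally bounded drift,
\[ \Delta\Phi-\frac{2W'(u)}{|\nabla u|^2}\,\langle\nabla u,\nabla\Phi\rangle\ge 0 \quad\text{on } \{\nabla u\ne 0\}. \]
Suppose now $c:=\sup_{\R^n}\Phi>0$. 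If this supremum is attained at some $x_0$, then $\Phi(x_0)=c>0$ forces $\nabla u(x_0)\ne 0$, and since $\{\Phi=c\}\subset\{\Phi>0\}\subset\{\nabla u\ne 0\}$ is open (by the strong maximum principle applied to the displayed inequality near any of its points), closed (by continuity), and nonempty, it equals $\R^n$; thus $|\nabla u|^2\equiv 2W(u)+2c\ge 2c>0$, and integrating $u$ along an integral curve of the bounded, complete unit field $\nabla u/|\nabla u|$ shows $u$ is unbounded — a contradiction. If the supremum is not attained, pick $x_j$ with $\Phi(x_j)\to c$, translate $u_j:=u(\cdot+x_j)$, and extract by interior elliptic estimates a locally $C^2$ limit $u_\infty$, again a bounded A--C solution, whose $P$-function attains the value $c$ at the origin, so $\sup\Phi_\infty=c$; the previous case applied to $u_\infty$ gives $c\le 0$, a contradiction. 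Hence $\Phi\le 0$, i.e. $\tfrac12|\nabla u|^2\le W(u)$.

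\textbf{Main obstacle.} The difficulty lies entirely in Step (iii), in two places. First, producing the \emph{correct} differential inequality for $\Phi$: the naive identity $\Delta\Phi=|D^2u|^2-(\Delta u)^2$ has no definite sign, and one genuinely needs the sharp form of the Kato inequality to absorb the excess into a first-order term proportional to $\langle\nabla u,\nabla\Phi\rangle$, so that the resulting operator has no zeroth-order term and a maximum principle applies. Second, running that maximum principle on the non-compact space $\R^n$ across the possibly large set $\{\nabla u=0\}$, where the drift degenerates; the points that make this work are that $\Phi\le 0$ automatically where $\nabla u=0$ (a positive maximum cannot hide there) and that a bounded entire solution cannot have $|\nabla u|$ bounded below — these, combined with the translation–compactness argument, are what let one avoid proving a quantitative interior gradient estimate.
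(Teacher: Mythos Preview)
The paper does not give its own proof of this lemma: it is stated as a classical result and attributed to \cite{Modica85}. Your proposal is correct and is precisely Modica's original argument --- the $P$-function $\Phi=\tfrac12|\nabla u|^2-W(u)$, the elliptic inequality $\Delta\Phi-\tfrac{2W'(u)}{|\nabla u|^2}\langle\nabla u,\nabla\Phi\rangle\ge 0$ on $\{\nabla u\ne 0\}$, and the strong maximum principle plus translation--compactness to handle the noncompact domain. One small cosmetic point: in Step~(ii) you could invoke the strong maximum principle globally rather than just on a small ball (since $c(x)=u(1+u)$ is bounded, not merely nonnegative near $x_0$; the strong maximum principle for $\Delta w = c\,w$ with bounded $c$ does not require a sign condition on $c$ when the nonnegative solution touches zero), which avoids the connectedness bookkeeping, but your version works as written.
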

Throughout this article, unless otherwise indicated we assume that $u:\R^n\to(-1,1)$ is a solution to A--C on all of $\R^n$, to make use of the Modica inequality. Define
\begin{equation}\label{eq:sig(n-1
)def}
    \sigma_{n-1}:=\omega_{n-1}\int_{-1}^1 \sqrt{2 W(s)}\,ds\,,\quad\mbox{where } \omega_{n-1}\mbox{ is the volume of the unit ball in } \R^{n-1}.
\end{equation}
Recall the definition of the monotonic energy density ${\bf M}_r$ in \eqref{eq:densitydef}. We will more generally denote ${\bf M}_r(u,x_0):={\bf M}_r(u(\cdot-x_0))$, and we omit $u$ from the notation whenever it is clear from the context.
    \begin{remark}\label{rmk:rescdens}
    For a solution $u$ of $\ep$-A--C instead, we naturally set ${\bf M}_r^\ep(u):=\frac{1}{r^{n-1}}\mathcal E^\ep(u,B_r)={\bf M}_{r/\ep}(u(\ep x))$. Unless otherwise stated, we will always work with $\ep=1$.
\end{remark}
\begin{lemma}[\textbf{Monotonicity formula, \cite{Modica85b}}]\label{lem:monformula}
    Let $u:\R^n\to[-1,1]$ be an A--C solution on all of $\R^n$. Then, ${\bf M}_r$ is monotone nondecreasing in $r$. More precisely,
    \begin{equation}\label{eq:monfor}
        \frac{d}{dr}{\bf M}_r(u)=\frac{1}{r^n}\int_{B_r}\frac{1}{\sigma_{n-1}}\left[W(u)-\frac{|\nabla u|^2}{2}\right]\,dx+\frac{1}{r^{n+1}}\int_{\partial B_r}\frac{1}{\sigma_{n-1}}(x\cdot\nabla u)^2\,d\cH^{n-1}(x)\,.
    \end{equation}
\end{lemma}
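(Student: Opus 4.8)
The plan is to obtain \eqref{eq:monfor} from a Pohozaev / stress--energy identity on balls, and then read off monotonicity directly from Modica's inequality. Throughout, recall that a bounded entire solution of \eqref{eq:aceqintro} is smooth by elliptic regularity, so every integration by parts below is legitimate; also, writing $E(r):=\int_{B_r}\big(\tfrac12|\nabla u|^2+W(u)\big)\,dx$, the coarea formula gives $E\in C^1$ with $E'(r)=\int_{\partial B_r}\big(\tfrac12|\nabla u|^2+W(u)\big)\,d\cH^{n-1}$, and from \eqref{eq:densitydef} we have ${\bf M}_r=\tfrac{1}{\sigma_{n-1}r^{n-1}}E(r)$, so that $\tfrac{d}{dr}{\bf M}_r=\tfrac{1}{\sigma_{n-1}r^n}\big(rE'(r)-(n-1)E(r)\big)$.

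First I would test \eqref{eq:aceqintro} against the radial field $x\cdot\nabla u$ and integrate over $B_r$. The potential term is exact, $W'(u)(x\cdot\nabla u)=x\cdot\nabla(W(u))$, so it integrates to $r\int_{\partial B_r}W(u)\,d\cH^{n-1}-n\int_{B_r}W(u)\,dx$. For the Laplacian term, one integration by parts yields the boundary contribution $-\int_{\partial B_r}(\partial_\nu u)(x\cdot\nabla u)=-\tfrac1r\int_{\partial B_r}(x\cdot\nabla u)^2$ (using $x\cdot\nabla u=r\,\partial_\nu u$ on $\partial B_r$) together with the interior integral $\int_{B_r}\nabla u\cdot\nabla(x\cdot\nabla u)\,dx$; the pointwise identity $\partial_j(x\cdot\nabla u)=\partial_j u+x\cdot\nabla(\partial_j u)$ rewrites this as $\int_{B_r}|\nabla u|^2\,dx+\int_{B_r}x\cdot\nabla\big(\tfrac12|\nabla u|^2\big)\,dx$, and one more integration by parts converts the last summand into $\tfrac r2\int_{\partial B_r}|\nabla u|^2-\tfrac n2\int_{B_r}|\nabla u|^2$. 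Collecting the pieces, the tested equation becomes the clean identity
\[
rE'(r)=\Big(\tfrac n2-1\Big)\int_{B_r}|\nabla u|^2\,dx+n\int_{B_r}W(u)\,dx+\frac1r\int_{\partial B_r}(x\cdot\nabla u)^2\,d\cH^{n-1}.
\]
(Equivalently, one may observe that the stress--energy tensor $T_{ij}:=\partial_i u\,\partial_j u-\delta_{ij}\big(\tfrac12|\nabla u|^2+W(u)\big)$ is divergence-free by \eqref{eq:aceqintro} and integrate $\partial_i(T_{ij}x_j)$ over $B_r$; this route gives the same identity.)

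Subtracting $(n-1)E(r)$ and keeping track of the coefficients of $\int_{B_r}|\nabla u|^2$ and $\int_{B_r}W(u)$ --- a short bookkeeping in which the gradient coefficient collapses to exactly $-\tfrac12$ --- produces
\[
rE'(r)-(n-1)E(r)=\int_{B_r}\Big(W(u)-\tfrac12|\nabla u|^2\Big)\,dx+\frac1r\int_{\partial B_r}(x\cdot\nabla u)^2\,d\cH^{n-1},
\]
and dividing by $\sigma_{n-1}r^n$ is precisely \eqref{eq:monfor}. Monotonicity then follows at once: the boundary integrand is nonnegative, while Modica's inequality (\Cref{lem:modicaineq}) gives $W(u)-\tfrac12|\nabla u|^2\ge0$ pointwise --- and this is the only place where it matters that $u$ solves \eqref{eq:aceqintro} on all of $\R^n$.

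There is no substantial obstacle here; the content is a classical computation. The only two points deserving care are the elliptic-regularity justification for the integrations by parts, and --- above all --- the arithmetic of assembling the several interior terms so that the coefficient of $\int_{B_r}|\nabla u|^2$ comes out exactly $-\tfrac12$: that precise cancellation is what makes the right-hand side of \eqref{eq:monfor} manifestly nonnegative, so it is the step I would verify most carefully.
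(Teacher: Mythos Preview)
Your proof is correct and is precisely the standard Pohozaev argument; the paper does not give its own proof of this lemma (it is cited from \cite{Modica85b}), so there is nothing to compare against. Your computation is in fact the same one the paper later runs in weighted form in the proof of \Cref{prop:moncontrexc}.
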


\subsection{Some results on stable solutions}
There is a (weaker) form of the stability inequality \eqref{stability_1stintro}, which closely resembles the one for minimal hypersurfaces.
\begin{proposition}[\textbf{Sternberg--Zumbrun inequality, \cite{SZ98}}]\label{prop:SZstab}
    Let $u:\R^n\to \R$ be a stable solution to A--C, and let $\eta\in C_c^1(\R^n)$. Then,
    \begin{equation}\label{eq:SZineq1}
        \int\mathcal A^2\eta^2\,|\nabla u|^2\leq\int |\nabla \eta|^2|\nabla u|^2\,.
    \end{equation}
\end{proposition}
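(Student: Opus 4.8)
The plan is to insert the test function $\xi = \eta\,|\nabla u|$ into the stability inequality \eqref{stability_1stintro} (with $\varepsilon=1$, which is the setting of \eqref{eq:aceqintro}) and to simplify the resulting quadratic form by a Bochner-type identity. The underlying point is that replacing a single directional derivative $\partial_e u$ by the modulus $|\nabla u|$ in the stability inequality gains exactly the term $\mathcal A^2|\nabla u|^2$, via the (Kato-type) gap $|D^2u|^2 - |\nabla|\nabla u||^2$.

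Concretely, I would first record the pointwise identities obtained by differentiating \eqref{eq:aceqintro}: squaring the gradient of $\Delta u = W'(u)$ gives $\tfrac12\Delta|\nabla u|^2 = |D^2u|^2 + W''(u)|\nabla u|^2$, hence on the open set $\{\nabla u\neq 0\}$, where $\phi:=|\nabla u|$ is smooth and positive,
\[
\phi\,\Delta\phi \;=\; |D^2u|^2 - |\nabla|\nabla u||^2 + W''(u)\,|\nabla u|^2 \;=\; \big(\mathcal A^2 + W''(u)\big)\,|\nabla u|^2,
\]
by the definition of $\mathcal A^2$. Then, with $\eta\in C^1_c(\R^n)$ and $\xi=\eta\phi$, expanding the square and integrating by parts,
\[
\int |\nabla(\eta\phi)|^2 + W''(u)\,\eta^2\phi^2
= \int |\nabla\eta|^2\phi^2 + \int \nabla\phi\cdot\nabla(\eta^2\phi) + W''(u)\eta^2\phi^2
= \int |\nabla\eta|^2\phi^2 - \int \eta^2\,\phi\big(\Delta\phi - W''(u)\phi\big),
\]
and the displayed identity turns the last integrand into $\eta^2\,\mathcal A^2|\nabla u|^2$. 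So stability reads $0\le \int|\nabla\eta|^2|\nabla u|^2 - \int\mathcal A^2\eta^2|\nabla u|^2$, which is \eqref{eq:SZineq1}.

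The only genuinely delicate step — and the one I expect to require the most care — is that $|\nabla u|$ is merely Lipschitz, not $C^1$, across the critical set $\{\nabla u = 0\}$, so $\eta|\nabla u|$ is not literally admissible and the Bochner identity holds only off that set. I would handle this in the standard way, running the argument with the smooth, strictly positive approximant $\phi_\delta := \sqrt{|\nabla u|^2 + \delta^2}$ in place of $\phi$. A direct computation gives $\phi_\delta\Delta\phi_\delta - W''(u)\phi_\delta^2 = |D^2u|^2 - \frac{|D^2u\,\nabla u|^2}{|\nabla u|^2+\delta^2} - W''(u)\,\delta^2$, where the middle term is dominated by $|D^2u|^2$ and converges a.e., as $\delta\downarrow 0$, to $|\nabla|\nabla u||^2$ (using $D^2u\,\nabla u = 0$ on $\{\nabla u=0\}$ and $\nabla|\nabla u| = 0$ a.e.\ there). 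Since $\eta$ has compact support and $|D^2u|^2, W''(u)\in L^\infty_{\mathrm{loc}}$, dominated convergence lets one pass to the limit $\delta\downarrow 0$ in the inequality obtained from $\xi=\eta\phi_\delta$, yielding \eqref{eq:SZineq1}. (Alternatively one could restrict all integrals to $\{|\nabla u|>\delta\}$, insert an extra cutoff, control the boundary terms by $|\nabla|\nabla u||\le |D^2u|$, and let $\delta\downarrow0$.)
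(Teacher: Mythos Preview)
Your proof is correct and follows the standard Sternberg--Zumbrun argument. The paper does not actually prove this proposition---it is stated as a known result from \cite{SZ98}---so there is no paper proof to compare against directly. That said, the paper does prove the closely related tangential version (\Cref{prop:tanstab2}) in \Cref{subsec:tanpfview}, and the method there is essentially the same as yours: test stability with $\xi = |\nabla^{e'}u|\eta$ (in place of your $|\nabla u|\eta$), differentiate the equation in each tangential direction, integrate by parts, and identify the Kato-type gap. Your handling of the regularity issue at $\{\nabla u=0\}$ via $\phi_\delta=\sqrt{|\nabla u|^2+\delta^2}$ is the standard and correct way to make the formal computation rigorous.
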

\begin{remark}\label{rmk:2ffsb}
    Assume that $\nabla u(x_0)\neq 0$. Then, in a Euclidean coordinate frame with $\frac{\nabla u}{|\nabla u|}(x_0)=e_1$, one has $|\nabla u|^2\mathcal A^2(x_0)=\sum_{j=2}^{n}\sum_{i=1}^n u_{ij}^2(x_0)$.
\end{remark}

We will repeatedly use:
\begin{lemma}\label{lem:A0imp1D}
    Let $u:\R^n\to\R$ be a solution to A--C, and assume that $\mathcal A\equiv 0$ in some open set $\Omega$. Then $u$ is one-dimensional in $\R^n$, i.e. $u=v(e\cdot x)$ for some $e\in\Sp^{n-1}$ and $v:\R\to\R$.
\end{lemma}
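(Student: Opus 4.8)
The plan is to first show that $u$ is one-dimensional on a small ball $B\subset\Omega$, and then to propagate this property to all of $\R^n$ via unique continuation applied to the linearized equation.

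\emph{Step 1 (local one-dimensionality).} If $\nabla u\equiv 0$ on $\Omega$, I pick any ball $B\subset\Omega$: then trivially $u_1\equiv\cdots\equiv u_{n-1}\equiv 0$ on $B$ and we may pass directly to Step 2. Otherwise, since $\{\nabla u=0\}$ is closed, there is a ball $B\subset\Omega$ with $\nabla u\neq 0$ throughout $B$, so that $e:=\nabla u/|\nabla u|$ is a smooth unit vector field on $B$. By Remark~\ref{rmk:2ffsb} (equivalently, by the formula for $\mathcal{A}^2$ recalled just before \Cref{thm:curvestseps}), the hypothesis $\mathcal{A}\equiv 0$ on $B$ forces $u_{ij}\equiv 0$ on $B$ whenever $i$ or $j$ is a direction orthogonal to $e$; combining this with the symmetry of the Hessian and with $\Delta u=W'(u)$, one gets $D^2u=W'(u)\,e\otimes e$ at every point of $B$. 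Then, differentiating $e_i=u_i/|\nabla u|$ and using that $\nabla u_k=W'(u)\,e_k\,e$ (a direct consequence of $D^2u=W'(u)\,e\otimes e$), a one-line computation gives $\partial_k e_i\equiv 0$ on $B$. Hence $e\equiv\nu$ is a constant unit vector on $B$; choosing Euclidean coordinates with $\nu=e_n$, this means $u_1\equiv\cdots\equiv u_{n-1}\equiv 0$ on $B$, i.e. $u$ depends only on $x_n$ on $B$.

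\emph{Step 2 (propagation to $\R^n$).} Differentiating the Allen--Cahn equation $-\Delta u+W'(u)=0$ in the direction $x_i$ shows that each partial derivative $u_i$ solves the linear elliptic equation $-\Delta u_i+W''(u)\,u_i=0$ in $\R^n$, whose potential $W''(u)$ belongs to $L^\infty(\R^n)$ because $|u|\le 1$. For $i=1,\dots,n-1$ we have $u_i\equiv 0$ on the open set $B$, so by the strong (or even just weak) unique continuation property for second-order elliptic operators with bounded potential (Aronszajn), $u_i\equiv 0$ on all of $\R^n$. Therefore $\nabla u$ is everywhere parallel to $e_n$, and integrating along segments (using that $\R^n$ is convex) yields $u(x)=v(e_n\cdot x)$ for some $v:\R\to\R$, which is exactly the claim.

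\emph{Expected main obstacle.} Frankly, no step here is deep: the single external input is the unique continuation property for $-\Delta+c$ with $c\in L^\infty$, which applies verbatim since the potential $W''(u)$ is bounded (and smooth). The only points that require a little care are the dichotomy at the start—whether or not $\nabla u$ vanishes identically on $\Omega$—and selecting a \emph{connected} ball $B$ on which the vector field $e$ is well defined, so that the identity $\mathcal{A}\equiv 0 \Rightarrow D^2u=W'(u)\,e\otimes e \Rightarrow \nabla e\equiv 0$ can be integrated to conclude $e$ is constant there. Once $u$ is known to be one-dimensional on such a $B$, the extension to $\R^n$ is forced.
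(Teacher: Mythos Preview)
Your proof is correct and follows essentially the same route as the paper's: locally, use $\mathcal A\equiv 0$ (where $\nabla u\neq 0$) to deduce that $e=\nabla u/|\nabla u|$ is constant on a ball, then propagate $\partial_w u\equiv 0$ for $w\perp e$ to all of $\R^n$ by unique continuation for $-\Delta v+W''(u)v=0$. The only cosmetic difference is that the paper verifies constancy of $e$ via the identity $\mathcal A^2=\|D(\nabla u/|\nabla u|)\|^2$ directly, whereas you pass through $D^2u=W'(u)\,e\otimes e$ first; both are equivalent one-line computations.
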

\begin{proof}
If $\nabla u\equiv 0$ in $\Omega$ then $u$ is constant by unique continuation. Otherwise, we find some open cube $Q\subset \Omega$ with $\nabla u\neq 0$.

\noindent {\bf Step 1.} $u$ is 1D in $Q$.\\
Indeed, we can compute
\begin{align}
    D \frac{\nabla u}{|\nabla u|}=\frac{D^2 u-\nabla|\nabla u| \otimes \frac{\nabla u}{|\nabla u|}}{|\nabla u|}\,,\quad\mbox{thus}\quad \mathcal A^2=\left\|D \frac{\nabla u}{|\nabla u|}\right\|^2\,,
\end{align}
which shows that $\frac{\nabla u}{|\nabla u|}$ is constant in $Q$ precisely when $\mathcal A\equiv 0$ there. Letting $e=\frac{\nabla u}{|\nabla u|}$, this means that $u=\tilde v(e\cdot x)$ in $Q$ for some $\tilde v:\R\to\R$.

\noindent {\bf Step 2.} Conclusion.
Given $w\in \Sp^{n-1}$ with $w\cdot e= 0$, we see that $\partial_w u\equiv 0$ in $Q$. By unique continuation, this shows that $\partial_w u\equiv 0$ in $\R^n$, thus $u= v(e\cdot x)$ in $\R^n$ for some $v:\R\to\R$.
\end{proof}

\begin{definition}
    Let $\phi:\R\to(-1,1)$ be defined as $\phi(s)=\tanh\big(\frac{s}{\sqrt{2}}\big)$; this is a monotone strictly increasing solution of the A--C equation in 1D, and ${\bf M}_\infty(\phi)=\lim_{R\to\infty}\mathcal E(\phi,B_R)=1$.
\end{definition}
Then, one has the following:

\begin{proposition}[\textbf{Classification in $\R$}]\label{prop:1dclas}
    Let $u:\R\to\R$ be a stable A--C solution. Then, $u$ is either $\pm 1$ or $\pm \phi(s-s_0)$ for some $s_0\in\R$. The same classification holds for solutions with $\lim_{R\to\infty}\mathcal E(u,B_R)<\infty$.
\end{proposition}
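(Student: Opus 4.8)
The plan is to reduce the whole statement to a phase‑plane analysis of the autonomous ODE $u''=W'(u)$, using the first integral, and to invoke stability only at one point, to discard the periodic orbits.

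\textit{Step 1: first integral and a priori bounds.} Multiplying $u''=W'(u)$ by $u'$ and integrating gives the conserved quantity $H:=\tfrac12(u')^2-W(u)$, constant along the solution, so $(u')^2=2\big(W(u)+H\big)$. Since $\sqrt{W(s)}\sim \tfrac{1}{\sqrt2}s^2$ grows super‑linearly, I would first show that \emph{any} solution defined on all of $\R$ is automatically bounded: on a component of $\{|u|>1\}$ the only possible critical points of $u$ occur where $W(u)+H=0$ with $|u|>1$, and a short ODE discussion there shows $|u'|\gtrsim u^2$ pushes $u$ to $\pm\infty$ in finite parameter length, contradicting that the solution is global. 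Once $u$ is bounded, either Modica's inequality \Cref{lem:modicaineq} or the same monotonicity trick forces $H\le 0$; examining the sublevel set $\{W(u)+H\ge 0\}$ then pins down the admissible range $H\in[-\tfrac14,0]$ and the shape of the orbit.

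\textit{Step 2: classification by the value of $H$.} For $H=0$ one has $(u')^2=\tfrac12(1-u^2)^2$; since $u'$ vanishes only where $|u|=1$, and $|u(s_0)|=1$ forces $u\equiv\pm1$ by ODE uniqueness, the sign of $u'$ is constant wherever $|u|<1$, and separating variables gives $u\equiv\pm1$ or $u(s)=\pm\tanh\!\big(\tfrac{s-s_0}{\sqrt2}\big)=\pm\phi(s-s_0)$ (the branch $|u|>1$ having been excluded in Step 1). For $H\in(-\tfrac14,0)$ the bounded orbit is a periodic oscillation of $u$ through $0$ between the turning points $\pm\sqrt{1-2\sqrt{-H}}\in(-1,1)$; for $H=-\tfrac14$ it degenerates to $u\equiv 0$; for $H<-\tfrac14$ there is no bounded orbit at all. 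Thus it only remains to rule out, via stability, the periodic orbits and the constant $u\equiv 0$.

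\textit{Step 3: stability kills the remaining orbits (the crux).} If $u$ is periodic, then $\psi:=u'$ is a nontrivial solution of the Jacobi equation $-\psi''+W''(u)\psi=0$ vanishing exactly at the turning points; between two consecutive zeros $s_1<s_2$ it is sign‑definite, hence it is the Dirichlet ground state of $J:=-\partial_{ss}+W''(u(\cdot))$ on $(s_1,s_2)$, so the first Dirichlet eigenvalue of $J$ there equals $0$. By strict monotonicity of the principal eigenvalue under domain enlargement, $J$ has a negative first Dirichlet eigenvalue on $(s_1-\delta,s_2)$; extending the corresponding eigenfunction by zero produces a compactly supported test function violating the stability inequality \eqref{stability_1stintro}. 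The case $u\equiv 0$ is the same argument with $W''(0)=-1$: $\sin(\cdot)$ oscillates, so $J=-\partial_{ss}-1$ has negative first Dirichlet eigenvalue on any interval of length $>\pi$. Hence $H=0$, and by Step 2, $u\in\{\pm1,\ \pm\phi(\cdot-s_0)\}$. Finally, for the finite‑energy version: $\int_\R\big(\tfrac12(u')^2+W(u)\big)<\infty$ forces $W(u)\in L^1(\R)$, but $H>0$ makes $|u|\to\infty$ and $H<0$ makes $W(u)\ge -H>0$ on all of $\R$, both incompatible with integrability (and $H>0$ is anyway excluded by Step 1); so $H=0$ and Step 2 applies, no stability needed (note $u\equiv 0$ has $H=-\tfrac14$ and infinite energy, hence is automatically excluded). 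I expect Step 3 to be the main obstacle, in that it is the only non‑elementary ingredient — a Sturm‑oscillation/eigenvalue‑monotonicity argument converting a sign‑changing Jacobi field into a failure of stability — while everything else is routine ODE phase‑plane analysis.
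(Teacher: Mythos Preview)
Your proposal is correct and is precisely the ``ODE analysis'' the paper alludes to without giving details; the paper supplies no proof beyond that one-line remark, so there is nothing to compare against. Your phase-plane classification via the first integral $H=\tfrac12(u')^2-W(u)$, the finite-time blow-up of unbounded orbits, and the Sturm/domain-monotonicity argument in Step~3 to eliminate periodic solutions and $u\equiv 0$ are all standard and sound.
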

This can be shown by an ODE analysis. The best known results in higher dimensions are:
\begin{theorem}[\textbf{Classification in $\R^2$, \cite{GG98}}]\label{thm:2dclas}
    Let $u:\R^2\to [-1,1]$ be a stable A--C solution. Then, $u$ is either $\pm 1$ or $\phi(a\cdot x+b)$ for some $a\in\Sp^1$ and $b\in\R$.
\end{theorem}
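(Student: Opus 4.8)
The plan is to run the classical Ghoussoub--Gui argument, adapted to the notation of this paper: combine the geometric (Sternberg--Zumbrun) form of stability, \Cref{prop:SZstab}, with a two-dimensional logarithmic cut-off to force $\mathcal A\equiv 0$, then invoke \Cref{lem:A0imp1D} to reduce to one space variable, and conclude with the ODE classification \Cref{prop:1dclas}. The only preliminary input is the a priori gradient bound: by the Modica inequality (\Cref{lem:modicaineq}), $\tfrac12|\nabla u|^2\le W(u)=\tfrac14(1-u^2)^2\le\tfrac14$, so $|\nabla u|^2\le\tfrac12$ on all of $\R^2$.

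Next I would exploit dimension two. For $R>1$, let $\eta_R$ be the (Lipschitz, compactly supported) radial function equal to $1$ on $B_{\sqrt R}$, equal to $0$ outside $B_R$, and equal to $2-\tfrac{2\log|x|}{\log R}$ on the annulus $B_R\setminus B_{\sqrt R}$, so that $|\nabla\eta_R|^2=\tfrac{4}{(\log R)^2|x|^2}$ there and $\int_{\R^2}|\nabla\eta_R|^2=\tfrac{4\pi}{\log R}$. Plugging $\eta_R$ into \eqref{eq:SZineq1} (which holds for Lipschitz test functions by density), bounding $|\nabla u|^2\le\tfrac12$ on the right-hand side and using $\eta_R\equiv1$ on $B_{\sqrt R}$ on the left, I get $\int_{B_{\sqrt R}}\mathcal A^2|\nabla u|^2\le\tfrac{2\pi}{\log R}$. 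Sending $R\to\infty$ gives $\int_{\R^2}\mathcal A^2|\nabla u|^2=0$, and since $\mathcal A$ vanishes by definition wherever $\nabla u=0$, this forces $\mathcal A\equiv0$ on $\R^2$.

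Then I would reduce to one dimension: by \Cref{lem:A0imp1D} (applied with $\Omega=\R^2$), $u=v(e\cdot x)$ for some $e\in\Sp^1$ and $v\in C^2(\R)$, $v:\R\to[-1,1]$, solving $-v''+W'(v)=0$; if $\nabla u\equiv0$ then $v$ is constant, and stability rules out $v\equiv0$ since $W''(0)=-1<0$. It remains to see that $v$ inherits stability so that \Cref{prop:1dclas} applies. For this I would write $x=(t,\tau)$ with $t=e\cdot x$ and test \eqref{stability_1stintro} for $u$ with $\xi(t,\tau)=\zeta(t)\psi_R(\tau)$, where $\zeta\in C^1_c(\R)$ and $\psi_R(\tau)=(1-|\tau|/R)_+$; since $\partial_t\xi$ and $\partial_\tau\xi$ are orthogonal, this gives
\[
\Big(\textstyle\int_\R\psi_R^2\Big)\!\int_\R\!\big(|\zeta'|^2+W''(v)\zeta^2\big)\,dt \;+\; \Big(\textstyle\int_\R|\psi_R'|^2\Big)\!\int_\R\!\zeta^2\,dt \;\ge\; 0,
\]
and dividing by $\int_\R\psi_R^2=2R/3$ and letting $R\to\infty$ (using $\int_\R|\psi_R'|^2/\int_\R\psi_R^2=3/R^2\to0$) yields $\int_\R|\zeta'|^2+W''(v)\zeta^2\ge0$ for all $\zeta\in C^1_c(\R)$, i.e.\ $v$ is a stable one-dimensional solution. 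By \Cref{prop:1dclas}, $v\equiv\pm1$ or $v=\pm\phi(\cdot-s_0)$; since $-\phi(s)=\phi(-s)$, the second case is $u=\phi(a\cdot x+b)$ with $a=\pm e\in\Sp^1$ and $b=\mp s_0$, and the first is $u\equiv\pm1$, which is exactly the asserted dichotomy.

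The one genuinely delicate point is the cut-off estimate, and it is precisely there that dimension two is used: in $\R^2$ the annular Dirichlet energy $\int_{B_R\setminus B_{\sqrt R}}|\nabla\eta_R|^2$ decays like $1/\log R$, whereas for $n\ge3$ no cut-off makes the analogous quantity go to zero, so some extra input --- the monotonicity formula, and ultimately the bounded-energy-density hypothesis --- becomes necessary in higher dimensions. Everything else above (the gradient bound, \Cref{lem:A0imp1D}, the inheritance of stability by $v$, and the ODE classification) is routine.
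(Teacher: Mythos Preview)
Your proof is correct and follows essentially the same approach the paper sketches: plug a logarithmic cutoff into the Sternberg--Zumbrun inequality \eqref{eq:SZineq1} to force $\mathcal A\equiv 0$, then apply \Cref{lem:A0imp1D} and \Cref{prop:1dclas}. You have merely filled in the details the paper omits (the explicit log-cutoff computation, the inheritance of stability by the one-dimensional profile, and the exclusion of $u\equiv 0$), and your closing remark correctly identifies why the bounded-energy-density hypothesis is unnecessary in $\R^2$ but enters in $\R^3$.
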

\begin{theorem}[\textbf{Classification in $\R^3$, \cite{AAC01, AC00}}]\label{thm:3dclasenas}
    Let $u:\R^3\to [-1,1]$ be a stable A--C solution, and assume moreover that ${\bf M}_\infty(u)<\infty$. Then, $u$ is either $\pm 1$ or $\phi(a\cdot x+b)$ for some $a\in\Sp^2$ and $b\in\R$.
\end{theorem}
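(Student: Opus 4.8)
The plan is to run a geometric Poincaré--type argument: feed a logarithmic cutoff into the Sternberg--Zumbrun inequality (\Cref{prop:SZstab}) and exploit that in $\R^3$ the finite energy density hypothesis forces exactly the borderline quadratic energy growth. First I would record that, by the monotonicity formula (\Cref{lem:monformula}), $\mathbf M_\rho(u)\le\mathbf M_\infty(u)=:\Lambda_0<\infty$ for every $\rho>0$, so $\frac1{\sigma_2}\int_{B_\rho}\big(\tfrac12|\nabla u|^2+W(u)\big)\le\Lambda_0\rho^2$; since $W\ge0$ this gives the bound $\int_{B_\rho}|\nabla u|^2\le\Lambda\rho^2$ with $\Lambda:=2\sigma_2\Lambda_0$. (Incidentally this already excludes the constant solution $u\equiv0$, whose energy would grow like $\rho^3$.)

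The heart of the argument would be to apply \eqref{eq:SZineq1} with the Lipschitz cutoff $\eta_R$ equal to $1$ on $B_R$, to $2-\tfrac{\log|x|}{\log R}$ on $B_{R^2}\setminus B_R$, and to $0$ outside $B_{R^2}$, so that $|\nabla\eta_R|=\tfrac1{|x|\log R}$ on the annulus. Splitting $B_{R^2}\setminus B_R$ into the $O(\log R)$ dyadic shells $A_j=B_{2^{j+1}}\setminus B_{2^j}$ and using $\int_{A_j}|\nabla u|^2\le\Lambda\,2^{2(j+1)}$ together with $|x|^{-2}\le 2^{-2j}$ on $A_j$, each shell contributes $\int_{A_j}|x|^{-2}|\nabla u|^2\le 4\Lambda$, whence $\int|\nabla\eta_R|^2|\nabla u|^2\le \tfrac{C\Lambda\log R}{(\log R)^2}=\tfrac{C\Lambda}{\log R}$. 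Plugging this into \eqref{eq:SZineq1} and letting $R\to\infty$ (for fixed $\rho$, $\eta_R\equiv1$ on $B_\rho$ once $R>\rho$) forces $\int_{B_\rho}\mathcal A^2|\nabla u|^2=0$ for all $\rho$, i.e. $\mathcal A^2|\nabla u|^2\equiv0$ on $\R^3$, so $\mathcal A\equiv0$ on the open set $\{\nabla u\ne0\}$.

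To finish: if $\{\nabla u\ne0\}=\varnothing$, then $u$ is a constant with $W'(u)=0$, hence $u\in\{0,\pm1\}$, and $u\equiv0$ is ruled out (by stability, or already by the energy bound), leaving $u\equiv\pm1$. Otherwise \Cref{lem:A0imp1D} gives $u=v(e\cdot x)$ for some $e\in\Sp^2$ and $v:\R\to\R$; a Fubini slicing of $\mathcal E(u,B_\rho)\le\Lambda_0\rho^2$ in the directions orthogonal to $e$ shows $\int_\R\big(\tfrac12|v'|^2+W(v)\big)<\infty$, so the finite-energy case of \Cref{prop:1dclas} forces $v=\pm1$ or $\pm\phi(\cdot-s_0)$; since $-\phi(s)=\phi(-s)$, in every case $u\equiv\pm1$ or $u=\phi(a\cdot x+b)$ for suitable $a\in\Sp^2$, $b\in\R$.

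The main obstacle is the dyadic count in the middle step, and it is precisely what pins the argument to $n\le3$: it closes only because in $\R^3$ the energy grows like $\rho^2$, so $\int_{B_{R^2}\setminus B_R}|x|^{-2}|\nabla u|^2$ grows merely logarithmically in $R$ and is beaten by the $(\log R)^{-2}$ supplied by the cutoff. In $\R^n$ with $n\ge4$, the bound $\int_{B_\rho}|\nabla u|^2\lesssim\rho^{n-1}$ makes the $j$-th shell contribute a factor $2^{j(n-3)}$, the weighted integral grows polynomially, and one can no longer extract $\mathcal A\equiv0$ this way --- precisely the gap that \Cref{mainthm} must bridge by entirely different means. (An equivalent classical route is the Ambrosio--Cabré Liouville theorem for $\mathrm{div}(|\nabla u|^2\nabla\varphi)=0$ applied to the quotients $\varphi=\partial_i u/\partial_j u$, which runs into the same dimensional restriction.)
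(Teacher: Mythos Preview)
Your proposal is correct and follows precisely the approach the paper sketches: plug a logarithmic cutoff into the Sternberg--Zumbrun inequality \eqref{eq:SZineq1}, use the quadratic energy growth in $\R^3$ to make the right-hand side vanish as $R\to\infty$, deduce $\mathcal A\equiv 0$, and invoke \Cref{lem:A0imp1D} and \Cref{prop:1dclas}. Your dyadic bookkeeping and the remark on why the argument breaks for $n\ge 4$ are both accurate.
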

A modern proof of these two results consists in plugging in a log-cutoff $\eta$ into \eqref{eq:SZineq1} to see that $\mathcal A$ vanishes, so that \Cref{lem:A0imp1D} reduces the results to \Cref{prop:1dclas}.\\

Finally, we note the following simple but remarkable result:
\begin{theorem}[{\bf Discrepancy decay, \cite[Proposition 2.4]{Vil22}}]\label{thm:villegas}
Let $u:\R^n\to [-1,1]$ be a stable solution to A--C. Then, there is $C=C(n)$ such that
\begin{equation}\label{eq:deltamod3}
    \frac{1}{R^{n-1}}\int_{B_R} W(u)-\frac{|\nabla u|^2}{2}\leq \frac{C}{R^{1/3}} \,.
\end{equation}
\end{theorem}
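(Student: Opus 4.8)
Write $\xi:=W(u)-\tfrac12|\nabla u|^2$. By \Cref{lem:modicaineq} one has $\xi\ge 0$, $\xi\le W(u)\le\tfrac14$, and $|\nabla u|^2\le 2W(u)\le\tfrac12$; moreover $\|D^2u\|_{\infty}\le C(n)$ by interior estimates. Put $D(R):=\tfrac1{\sigma_{n-1}R^{n-1}}\int_{B_R}\xi$, so the goal is $D(R)\le C(n)R^{-1/3}$. Two easy consequences of the monotonicity formula \eqref{eq:monfor}, both of whose right‑hand terms are nonnegative: integrating the discrepancy term over $[R,2R]$ gives $D(R)\le C_n({\bf M}_{2R}-{\bf M}_R)$, and $\int_1^R D(r)\,\tfrac{dr}{r}\le{\bf M}_R$; and since $\tfrac12|\nabla u|^2+W(u)=|\nabla u|^2+\xi\le 1$ pointwise, one has the crude bound ${\bf M}_r\le C(n)\,r$. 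On their own these only give $D(R)\le C(n)R$, and they cannot do better — the energy density ${\bf M}_r$ is not assumed bounded — so stability must be used to control $\xi$ itself.

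The stability input. On $\{\nabla u\neq 0\}$ let $\nu=\nabla u/|\nabla u|$. Differentiating and using $\Delta u=W'(u)$ gives the clean identity $\partial_\nu\xi=|\nabla u|^2H$, where $H=\operatorname{div}\nu$ is the mean curvature of the level set through the point, while the tangential part satisfies $|\nabla_T\xi|=|\nabla u|\,|\nabla_T|\nabla u||$. Since $H^2\le(n-1)|{\rm I\negthinspace I}|^2\le(n-1)\mathcal A^2$ and $|\nabla_T|\nabla u||^2\le\mathcal A^2|\nabla u|^2$, it follows that $|\nabla\xi|^2\le n\,\mathcal A^2|\nabla u|^4\le C(n)\,\mathcal A^2|\nabla u|^2$ pointwise (trivially also at critical points, where $\nabla\xi=(W'(u)I-D^2u)\nabla u=0$). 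Inserting a standard cutoff ($\eta\equiv1$ on $B_R$, supported in $B_{2R}$, $|\nabla\eta|\le 2/R$) into the Sternberg--Zumbrun inequality \eqref{eq:SZineq1} and using $|\nabla u|\le 1$ gives $\int_{B_R}\mathcal A^2|\nabla u|^2\le\tfrac4{R^2}\int_{B_{2R}}|\nabla u|^2\le C(n)R^{n-2}$, hence the \emph{universal} Dirichlet bound for the discrepancy, $\int_{B_R}|\nabla\xi|^2\le C(n)\,R^{n-2}$.

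Conclusion. It remains to turn (i) $0\le\xi\le\tfrac14$, (ii) the Dirichlet bound just proven, and (iii) the monotonicity constraints $D(R)\le C_n({\bf M}_{2R}-{\bf M}_R)$ and $\int_1^R D(r)\tfrac{dr}{r}\le{\bf M}_R\le C_n R$, into the decay $D(R)\le C(n)R^{-1/3}$. The natural scheme is an interpolation/iteration: a Fubini choice of a good radius $r_\ast\in[R,2R]$ with $\int_{\partial B_{r_\ast}}|\nabla\xi|^2\le C_nR^{n-3}$ lets one compare $\xi$ near $\partial B_{r_\ast}$ to a constant and feed this back into the monotonicity identities; optimizing the resulting family of estimates over a free amplitude parameter $\delta\in(0,1)$ should produce a bound of the shape $D(R)\le\delta+C_n\,\delta^{-2}R^{-1}$, and $\delta=R^{-1/3}$ then closes it. I expect this last balancing to be the main obstacle: under a bounded‑energy‑density hypothesis one has the soft argument that the dyadic gaps $\sum_k({\bf M}_{2^{k+1}}-{\bf M}_{2^k})$ are summable, whence $D(2^k)\to 0$ — but this yields neither a rate nor a uniform constant, and it is precisely the three‑way tension between the $L^\infty$ bound on $\xi$, the $L^2$ control on $\nabla\xi$ coming from stability, and the merely linear growth of ${\bf M}_r$ that pins down the exponent $1/3$.
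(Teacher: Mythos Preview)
The paper does not prove this theorem; it is quoted verbatim from \cite{Vil22} and used as a black box. So there is no in-paper argument to compare against.

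On the merits of your attempt: the preliminary reductions are correct. The identity $\partial_\nu\xi=|\nabla u|^2H$ and the bound $|\nabla_T\xi|=|\nabla u|\,|\nabla_T|\nabla u||$ are right, the pointwise estimate $|\nabla\xi|^2\le n\,\mathcal A^2|\nabla u|^4$ follows, and plugging a standard cutoff into \eqref{eq:SZineq1} does give $\int_{B_R}|\nabla\xi|^2\le C(n)R^{n-2}$. The monotonicity consequences you list are also correct.

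However, your ``Conclusion'' paragraph is not a proof: you describe a scheme (Fubini-choose a good sphere, compare $\xi$ to a constant, feed back into monotonicity, optimize over $\delta$) and state what inequality it \emph{should} produce, but you never derive $D(R)\le\delta+C\delta^{-2}R^{-1}$ from the available inputs. This is the entire content of the theorem, and you explicitly flag it as ``the main obstacle.'' In particular, the information you have assembled---$0\le\xi\le\tfrac14$, $\int_{B_R}|\nabla\xi|^2\le CR^{n-2}$, and ${\bf M}_r\le Cr$---does not by itself force $R^{-(n-1)}\int_{B_R}\xi\to 0$: a function $\xi$ that is constant on most of $B_R$ satisfies the first two, and the third is compatible with $u$ sitting at an intermediate value on large sets. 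Some further structural input (for instance exploiting $\Delta\xi=W'(u)^2-|D^2u|^2$, or a more refined use of stability tying the size of $\xi$ to the oscillation of $u$) is needed to close the loop, and your sketch does not supply it. As written, this is a promising outline with a genuine gap at the decisive step.
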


\subsection{Wang--Wei ``a-priori'' estimates}\label{sec:wangwei}
\begin{definition}[{\bf Sheeting assumptions}]\label{def:sheetassump}
Let $u:B_R\to (-1,1)$ be a stable $\ep$-A--C solution. We say that $u$ satisfies the sheeting assumptions in $B_R$ (with constant $C_1>0$) if
\begin{equation}\label{eq:sheetassump}
     \ep|\nabla u|\geq \frac{1}{C_1}\quad \mbox{and} \quad R|\mathcal{A}_{u}|\leq C_1 \quad \mbox{in } \{|u|\leq 0.9\}\cap B_R.
\end{equation}
\end{definition}

In the lemma below---and throughout the paper---we will adopt the following notations: Given a point $x \in \mathbb{R}^n$, we write $x' \in \mathbb{R}^{n-1}$ for its first $n-1$ coordinates, so that $x = (x', x_n)$. Moreover, given a set $\Omega \subset \mathbb{R}^{n-1}$ and a function $g : \Omega \to \mathbb{R}$, we denote by ${\rm graph}\, g$ the set of points in $\Omega \times \mathbb{R}$ satisfying $x_n = g(x')$.\\

Set $\mathcal C_R:=B_R'\times [-R,R]$, where $B_R'\subset \R^{n-1}$. Since $\mathcal A$ controls the curvatures of the level sets, one has the following standard lemma:
\begin{lemma}\label{lem:rjbagbda}
    In the setting of \Cref{def:sheetassump}, assume that $u(0)= t\in [-0.9,0.9]$. There are $\delta\in(0,1)$ and $C_2$, depending on $C_1$ and $n$, such that:\\
    Let $\{\Gamma^t_i\}_{i=1}^N$ denote the connected components of $\{u=t\}\cap B_{\delta^2 R}$. Then there are $g^t_1<...< g^t_N$, $g^t_i\in C^\infty(B_{\delta R}')$, such that---after choosing a suitable Euclidean coordinate frame:
    \begin{equation*}
        \Gamma_i^t={\rm graph} \,g_i^t\quad \mbox{in }\ \mathcal C_{\delta R}\quad\mbox{and}\quad |D g_i^t|+R|D^2 g_i^t|\leq C_2\,.
    \end{equation*}
\end{lemma}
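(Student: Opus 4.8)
The plan is to turn the pointwise curvature bound $R|\mathcal{A}_u| \le C_1$ into a graphical description of the level set $\{u=t\}$, exploiting that $\mathcal{A}$ controls both the second fundamental form of the level sets and the tangential variation of $\log|\nabla u|$. First I would fix a point $x_0 \in \{u=t\}\cap B_{\delta^2 R}$ and pick Euclidean coordinates so that $\tfrac{\nabla u}{|\nabla u|}(x_0)=e_n$. Since $\mathcal{A}^2 = \|D\tfrac{\nabla u}{|\nabla u|}\|^2$ (as computed in the proof of \Cref{lem:A0imp1D}) and $|\nabla u|^{-1}|\mathcal A| \le C_1/(\epsilon|\nabla u|\cdot R)\cdot \epsilon|\nabla u| \le C_1^2/R$ wherever $|u|\le 0.9$, integrating this gradient bound along paths of length $\lesssim \delta R$ shows that $\tfrac{\nabla u}{|\nabla u|}$ stays within a cone of small aperture around $e_n$ throughout $\{|u|\le 0.9\}\cap \mathcal C_{c\delta R}$; in particular $u_{x_n}=\partial_{x_n}u \ge c_0|\nabla u|>0$ there. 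This non-degeneracy of $u_{x_n}$ is the crucial input.

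Next, for each connected component $\Gamma$ of $\{u=t\}\cap B_{\delta^2 R}$ I would apply the implicit function theorem: because $u_{x_n}\ne 0$ on a neighborhood of $\Gamma$, locally $\Gamma = \{x_n = g(x')\}$ with $Dg = -(\nabla' u)/u_{x_n}$, and the cone bound on $\nabla u/|\nabla u|$ immediately gives $|Dg|\le C_2$. To globalize this to all of $B'_{\delta R}$ I would use a continuation argument: starting from $x_0$, the graph can be extended in the $x'$-directions as long as one remains in the region $\{|u|\le 0.9\}$ where the estimates hold; since $|Dg|\le C_2$, the graph over $B'_{\delta R}$ stays inside $\mathcal C_{C_2\delta R}$, and choosing $\delta$ small compared to $1/C_2$ keeps us inside $B_{\delta^2 R}$ where everything is controlled—so the graph does not stop before covering $B'_{\delta R}$. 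The bound $R|D^2 g|\le C_2$ then follows by differentiating the identity $u(x',g(x'))=t$: $D^2 g$ is a rational expression in $D^2 u$, $\nabla u$ and $Dg$ with $u_{x_n}$ in the denominator, and $R|D^2 u|/|\nabla u| \lesssim R\mathcal A + R|\nabla|\nabla u||/|\nabla u|$; the first term is $\le C_1$ by hypothesis and the second is controlled since $|\nabla^T\log|\nabla u||\le \mathcal A$ and the normal derivative of $\log|\nabla u|$ is controlled via the A--C equation $\Delta u = W'(u)$ together with $\epsilon|\nabla u|\ge 1/C_1$ and $|u|\le 0.9$ (so $|W'(u)|$ is bounded). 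Finally, the components $\Gamma^t_i$ are linearly ordered by the value of $g^t_i$ because they are disjoint graphs with uniformly bounded slope over the same connected base $B'_{\delta R}$, and there are finitely many of them since $u_{x_n}>0$ forces $\{u=t\}$ to be a disjoint union of such graphs in the cylinder, each separated from the next by a definite amount in the $x_n$-direction while all lying in $[-R,R]$.

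The main obstacle I anticipate is the globalization step: upgrading the purely local graphicality (from the implicit function theorem near a single point) to a graph over the full disk $B'_{\delta R}$ with uniform $C^{1,1}$ bounds, while ensuring the extended graph never leaves the good region $\{|u|\le 0.9\}\cap B_{\delta^2 R}$ where the curvature hypothesis is available. This requires carefully choosing the hierarchy of small constants ($\delta$ depending on $C_1, n$, then $C_2$ depending on $C_1, n$) so that the continuation cannot get stuck, and checking that distinct components cannot ``merge'' or accumulate—this uses the strict positivity $u_{x_n}\ge c_0|\nabla u|$ in an essential way. The rest is a routine but somewhat tedious bookkeeping of derivative estimates via the chain rule and the equation.
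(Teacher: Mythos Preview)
The paper does not prove this lemma; it is simply introduced as a ``standard'' consequence of the fact that $\mathcal{A}$ controls the curvatures of the level sets. Your overall strategy---fix the frame so that $\nabla u(0)/|\nabla u(0)|=e_n$, use $|D(\nabla u/|\nabla u|)|=\mathcal{A}\le C_1/R$ to control the tilt, apply the implicit function theorem, and continue the graph---is indeed the standard one and is correct in outline.

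Two steps need repair, though. First, the path-integration argument only controls $\nabla u/|\nabla u|$ within the connected component of $\{|u|\le 0.9\}$ containing $0$; distinct components of $\{u=t\}\cap B_{\delta^2 R}$ may lie in different components of $\{|u|\le 0.9\}$, separated by regions $\{|u|>0.9\}$ where no bound on $\mathcal{A}$ is assumed, so you cannot integrate across. To conclude that \emph{all} the $\Gamma_i^t$ are graphs in a \emph{single} frame one argues geometrically: two disjoint hypersurfaces with $|\II|\le C_1/R$, both meeting $B_{\delta^2 R}$, must have nearly parallel tangent planes, since otherwise extending each as an almost-flat sheet over a ball of radius $\sim R/C_1$ forces an intersection. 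Second---and this is the sharper error---your route to $R|D^2 g|\le C_2$ via bounding $R|D^2 u|/|\nabla u|$ fails: one only has $|\partial_\nu\log|\nabla u||=|u_{\nu\nu}|/|\nabla u|\lesssim |\Delta u|/|\nabla u|+\mathcal{A}$, and the equation gives $|\Delta u|/|\nabla u|\lesssim C_1/\varepsilon$, which after multiplying by $R$ is unbounded. The correct argument (and the one the paper's one-line justification points to) bypasses $D^2 u$ entirely: the numerator $u_{ij}+u_{in}g_j+u_{nj}g_i+u_{nn}g_ig_j$ is exactly $\tau_i\cdot D^2u\cdot\tau_j$ for the tangent vectors $\tau_i=e_i+g_ie_n$, which equals $-|\nabla u|\,\II(\tau_i,\tau_j)$; hence $|D^2 g|\le C(1+|Dg|^2)^{3/2}\mathcal{A}\le CC_1/R$ directly, without ever needing control on $u_{\nu\nu}$.
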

Wang and Wei showed that a-priori $C^2$ bounds can be upgraded to $C^{2,\vartheta}$ bounds, obtaining moreover improved separation and mean curvature estimates.
\begin{theorem}[\textbf{\cite{WW19}, $C^2$ implies $C^{2,\vartheta}$}]\label{thm:sheetimpc2alpha}
    Let $n\leq 10$. Let $u:B_R\to (-1,1)$ be a stable $\ep$-A--C solution, and assume the sheeting assumptions hold in $B_R$ for some $C_1$. Then, in the conclusion of \Cref{lem:rjbagbda}, for every $\vartheta\in(0,1)$ there are $C_2$ and $\delta_0>0$ depending additionally on $\vartheta$ such that if $\frac{\ep}{R}\leq \delta_0$, then
    \begin{equation}\label{eq:c2alphasheet}
        \|D^2 g_i^t\|_{L^\infty}+R^\theta[D^2 g_i^t]_{C^\vartheta}\leq \frac{C_2}{R}\,.
    \end{equation}
    Moreover, letting $\HH[f]:={\rm div}(\frac{\nabla f}{\sqrt{1+|\nabla f|^2}})$ denote the mean curvature operator,
    \begin{equation}\label{eq:meancurvsheet}
         \|\HH[g_i^t]\|_{L^\infty}+ \big[\HH[ g_i^t]\big]_{C^\vartheta}\leq \frac{\ep C_2}{R^2}\,.
    \end{equation}

    Additionally, the separation between layers satisfies
    \begin{equation}\label{eq:wwsepbnd}
        g_{i+1}^t-g_i^t\geq \frac{1+\vartheta}{2}\sqrt{2}\ep\log{\left(\frac{R}{\ep}\right)}.
    \end{equation}
\end{theorem}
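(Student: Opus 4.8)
The statement I am asked to prove is \Cref{thm:sheetimpc2alpha}, which upgrades the a-priori $C^2$ graph bounds of \Cref{lem:rjbagbda} to $C^{2,\vartheta}$ bounds together with the sharp mean-curvature decay \eqref{eq:meancurvsheet} and the logarithmic separation lower bound \eqref{eq:wwsepbnd}. This is quoted verbatim from \cite{WW19}, so the ``proof'' I would write is really a reconstruction of the Wang--Wei strategy; I would not reprove it from scratch. Let me sketch the architecture.

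\medskip
\noindent\textbf{Step 1: Fermi coordinates and the layer ODE.}
After the rescaling $x \mapsto x/\ep$ we may assume $\ep = 1$ and work in $B_{R/\ep}$ with $R/\ep$ large. By \Cref{lem:rjbagbda} the set $\{|u|\le 0.9\}$ is, in a fixed cylinder, a finite union of ordered graphs $\{g_i\}$ of class $C^2$ with $|D^2 g_i|\lesssim \ep/R$, so the transition region is a disjoint union of slabs of nearly-flat leaves. Introduce Fermi (signed-distance) coordinates $(y,t)$ relative to one leaf $\Gamma_i = \{x_n = g_i(x')\}$, and write $u$ in this chart. The Allen--Cahn equation becomes a perturbation of the one-dimensional equation $-\phi'' + W'(\phi) = 0$: schematically $-\partial_{tt}u + W'(u) = \HH_i\,\partial_t u + (\text{tangential + curvature error})$. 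The first key point is that, because each leaf is $C^2$-close to a hyperplane and the leaves are well-separated (by \Cref{lem:rjbagbda}), $u$ restricted to the $i$-th slab is $C^0$-close to a translate of $\pm\phi$; one then writes $u = \phi(t - h_i(y)) + \psi_i$ with $\psi_i$ orthogonal to $\phi'$ in $L^2(dt)$ on each fiber, and $h_i$ the ``true'' interface height. Elliptic estimates plus the spectral gap of the linearized operator $-\partial_{tt} + W''(\phi)$ on $\R$ (whose kernel is spanned by $\phi'$, and which is positive on the orthogonal complement) give $\|\psi_i\|_{C^{2}} \lesssim \|\HH_i\|_\infty + (\text{tangential derivatives of }h_i)^2 + (\text{separation error})$.

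\medskip
\noindent\textbf{Step 2: The reduced (Toda-type) system for the heights and the separation bound.}
Projecting the equation onto $\phi'$ in each fiber produces a coupled elliptic system for the heights $h_1 < \dots < h_N$ of Jacobi--Toda type: $\Delta_y h_i \approx c_0\big(e^{-\sqrt2(h_i - h_{i-1})} - e^{-\sqrt2(h_{i+1}-h_i)}\big) + (\text{quadratic/error terms})$, where the exponentials encode the interaction between consecutive $\tanh$ layers. Stability of $u$ translates, via the Sternberg--Zumbrun inequality \Cref{prop:SZstab} tested against the heights, into stability of this reduced system. A maximum-principle / barrier argument on the Toda system — this is the heart of \cite{WW19} — then forces the separations $h_{i+1} - h_i$ to be large: one shows that if two consecutive layers were closer than $\tfrac{1+\vartheta}{2}\sqrt2\log(R/\ep)$ somewhere, the interaction term would be too strong to be balanced by the (curvature-controlled) right-hand side on the stable system, contradicting stability. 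This yields \eqref{eq:wwsepbnd}. With the separation now known to be $\gtrsim \log(R/\ep)$, the interaction exponentials are $\lesssim (\ep/R)^{1+\vartheta}$, so the Toda right-hand side is negligible and $h_i$ is essentially harmonic with $C^2$ data; interior Schauder estimates give $\|D^2 h_i\|_\infty + R^\vartheta[D^2 h_i]_{C^\vartheta} \lesssim \ep/R$ and $\|\HH_i\|_\infty + [\HH_i]_{C^\vartheta}\lesssim \ep/R^2$, which is \eqref{eq:meancurvsheet}. Feeding this back into Step 1 upgrades $\psi_i$ to $C^{2,\vartheta}$ with the same smallness, and since $g_i$ and $h_i$ differ by an error controlled by $\|\psi_i\|$ and the curvature, \eqref{eq:c2alphasheet} follows.

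\medskip
\noindent\textbf{Step 3: Bootstrap and assembly.}
Combine: the graph description from \Cref{lem:rjbagbda} (existence of the $g_i^t$, $C^2$ bound), the Fermi-coordinate Lyapunov--Schmidt reduction of Step 1, the Toda-system analysis and separation estimate of Step 2, and standard Schauder bootstrapping to pass from $C^{2}$ to $C^{2,\vartheta}$ with quantitative constants depending only on $n$, $C_1$ and $\vartheta$; the hypothesis $\ep/R \le \delta_0$ is used to make all error terms (curvature, interaction, and the $\psi_i$ corrections) small enough to close the fixed-point/continuity argument. The restriction $n \le 10$ enters only through the validity of the a-priori curvature-to-graph step and the elliptic theory on the leaves in that dimension range.

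\medskip
\noindent\textbf{Main obstacle.}
The genuinely hard part is Step 2: extracting the \emph{sharp} logarithmic separation \eqref{eq:wwsepbnd} and the consequent $C^{2,\vartheta}$ regularity from stability alone. The difficulty is that the naive energy/elliptic estimates only give separations $\gtrsim \sqrt2\log(R/\ep)$ with a constant $1$ rather than $\tfrac{1+\vartheta}{2}\cdot\text{(something)}$ — wait, more precisely, the subtle point is that one must convert the stability inequality into a quantitative statement about the Jacobi--Toda system that is strong enough to beat the interaction term at the critical logarithmic scale, and simultaneously control the non-local and higher-order error terms in the reduction so they do not swamp this gain. This is exactly the technical core of \cite{WW19}, and in the present paper it is used as a black box; accordingly, I would not reproduce these estimates in full but cite \cite{WW19} for them, presenting only the reduction scheme above so the reader sees where the hypotheses enter.
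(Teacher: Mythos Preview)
The paper does not give its own proof of this theorem: it is stated as a black-box citation from \cite{WW19} (see the paper's explicit remarks that the Wang--Wei regularity theory is used as a component, and that \Cref{thm:sheetimpc2alpha} is applied without reproving it). Your proposal correctly recognizes this and offers a reasonable outline of the Wang--Wei Lyapunov--Schmidt/Toda-system strategy, which is an appropriate way to handle a cited result; there is nothing to compare against in the present paper.

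One minor comment: your remark that ``the restriction $n\le 10$ enters only through the validity of the a-priori curvature-to-graph step and the elliptic theory on the leaves'' is not quite accurate --- in \cite{WW19} the dimensional threshold arises from the stability analysis of the reduced Toda-type system (a Liouville/Jerison--Monneau type argument), not from the graph parametrization or standard Schauder theory, which are dimension-independent. Also, the informal ``wait, more precisely'' digression in your Step 2 should be cleaned up before inclusion.
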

\begin{remark}
    We emphasise that we will consider $\ep=1$ in the vast majority of the article.
\end{remark}

\subsection{Wang's classification result}
Wang developed an analogue of Allard's regularity theory for stationary varifolds in the Allen--Cahn case. In particular, he obtained the following theorem, which allows to give a new proof of Savin's result \cite{Sav09}.
\begin{theorem}[\textbf{\cite{Wang17}, Allard-type theorem for A--C}]\label{thm:ACallard}
    Let $u:\R^n\to [-1,1]$ be a solution to A--C. Then, there is $\delta=\delta(n)>0$ such that if ${\bf M}_\infty(u)\leq 1+\delta$, then $u$ is either $\pm 1$ or $\phi(a\cdot x+b)$ for some $a\in\Sp^{n-1}$ and $b\in\R$.
\end{theorem}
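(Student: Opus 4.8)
The plan is to deduce this Allard-type statement from a blow-down/compactness argument combined with the results already available, rather than by a direct perturbative estimate. First I would observe that by Modica's monotonicity formula (\Cref{lem:monformula}), the hypothesis ${\bf M}_\infty(u)\le 1+\delta$ forces ${\bf M}_r(u)\le 1+\delta$ for every $r>0$; in particular the energy density is pinched between $1$ (the density of the one-dimensional profile $\phi$, which is the minimal nontrivial value attained by the monotonicity formula lower bound) and $1+\delta$. The strategy is then a contradiction/compactness scheme: suppose there is a sequence $\delta_k\downarrow 0$ and stable solutions $u_k:\R^n\to[-1,1]$ with ${\bf M}_\infty(u_k)\le 1+\delta_k$ that are neither constant nor one-dimensional. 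After translating so that, say, $u_k(0)=0$ (possible unless $u_k\equiv\pm1$, which we exclude), one extracts a subsequential limit $u_k\to u_\infty$ in $C^2_{\loc}$; $u_\infty$ is again a stable A--C solution, with ${\bf M}_\infty(u_\infty)\le 1$ and $u_\infty(0)=0$, hence (by the monotonicity formula, since $1$ is exactly the density of $\phi$ and any lower value forces the discrepancy term and the radial-derivative term to vanish identically) $u_\infty$ must be one-dimensional: $u_\infty=\phi(e\cdot x+b)$.

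Next I would upgrade this $C^2_{\loc}$ convergence to the statement that the $u_k$ themselves satisfy the sheeting assumptions of \Cref{def:sheetassump} on arbitrarily large balls: since $u_\infty=\phi(e\cdot x+b)$ has $|\nabla u_\infty|>0$ and $\mathcal A(u_\infty)\equiv 0$ on $\{|u_\infty|\le 0.9\}$, the $C^2_{\loc}$ convergence gives, for any fixed $R$ and all large $k$, that $|\nabla u_k|\ge c>0$ and $|\mathcal A_{u_k}|\le C_1/R$ on $\{|u_k|\le 0.9\}\cap B_R$ — i.e. the sheeting assumptions hold in $B_R$ with a uniform constant $C_1$. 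Moreover the pinching ${\bf M}_\infty\le 1+\delta_k$ combined with \Cref{lem:rjbagbda} forces the number $N$ of sheets to be exactly one in $B_{\delta^2 R}$ for large $k$ (two or more parallel sheets would contribute energy density close to $2$, contradicting ${\bf M}_\infty<2$), so $\{u_k=t\}\cap B_{\delta^2R}$ is a single graph $g^t_k$ with $\|g^t_k\|_{C^2}\to 0$ as $k\to\infty$ (after rescaling, using that the limit is flat).

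The final step is to use the Sternberg--Zumbrun stability inequality (\Cref{prop:SZstab}) with a logarithmic cutoff to conclude that $\mathcal A_{u_k}\equiv 0$ for $k$ large, whence \Cref{lem:A0imp1D} gives that $u_k$ is one-dimensional — the desired contradiction. Concretely: since for large $k$ the sheeting assumptions hold on $B_{R_k}$ with $R_k\to\infty$ and a single nearly-flat sheet, one has the pointwise curvature bound $|\mathcal A_{u_k}|\le C/R_k\to 0$ on the transition region there, and the energy $\mathcal E(u_k,B_\rho)\le (1+\delta_k)\rho^{n-1}$ grows like that of a single hyperplane; plugging the standard log-cutoff $\eta_k$ (equal to $1$ on $B_{\sqrt{R_k}}$, decaying to $0$ on $B_{R_k}$) into \eqref{eq:SZineq1} and using that $\int_{B_{R_k}}|\nabla\eta_k|^2|\nabla u_k|^2 \lesssim (\log R_k)^{-1}\mathcal E(u_k,B_{R_k})\to 0$ while the left side controls $\int_{B_{\sqrt{R_k}}}\mathcal A^2 |\nabla u_k|^2$, we get $\mathcal A_{u_k}\equiv 0$ on a fixed ball once $k$ is large. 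I expect the main obstacle to be making the ``single sheet'' reduction and the log-cutoff quantitative enough: one must genuinely use the pinching ${\bf M}_\infty\le 1+\delta$ (not just finiteness) to rule out extra sheets and to ensure the energy on $B_{R_k}$ is close to $\sigma_{n-1}R_k^{n-1}$, so that the cutoff error term in \eqref{eq:SZineq1} is $o(1)$ rather than merely $O(1)$; the compactness and classification of the flat limit are comparatively routine given \Cref{lem:monformula,prop:1dclas}.
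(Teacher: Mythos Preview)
The paper does not give its own proof of this theorem: it is quoted from \cite{Wang17} and used only as a black box (e.g.\ in \Cref{sec:critsol}), so there is no in-paper argument to compare against.

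Your proposal has two genuine gaps.

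\textbf{Stability is not assumed.} The statement is for arbitrary bounded solutions of A--C, not stable ones (the paper emphasises this explicitly just before \Cref{def:induction}). Your entire final step rests on the Sternberg--Zumbrun inequality \eqref{eq:SZineq1}, which is a consequence of stability and is simply unavailable here. Without it there is nothing forcing $\mathcal A_{u_k}\equiv 0$.

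\textbf{Even granting stability, the log-cutoff step fails for $n\ge 4$.} Your claimed bound $\int_{B_{R_k}}|\nabla\eta_k|^2|\nabla u_k|^2\lesssim (\log R_k)^{-1}\mathcal E(u_k,B_{R_k})$ is not correct. For the log cutoff between $\sqrt{R}$ and $R$ one has $|\nabla\eta|^2\sim (|x|\log R)^{-2}$, and since for a single nearly flat sheet the energy lives on an $(n-1)$-dimensional slab, the right side of \eqref{eq:SZineq1} behaves like $(\log R)^{-2}\int_1^R r^{n-4}\,dr$, which diverges for $n\ge 4$. This is precisely why the log-cutoff proof of \Cref{thm:3dclasenas} stops at $n=3$.

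A smaller but related issue: your Step~2 asserts that the limit $u_\infty$ with ${\bf M}_\infty(u_\infty)\le 1$ and $u_\infty(0)=0$ must be one-dimensional ``by the monotonicity formula''. Note that ${\bf M}_r(u_\infty)\to 0$ as $r\to 0$, so ${\bf M}_r$ is not constant and the equality case of \Cref{lem:monformula} does not apply. Deducing one-dimensionality from ${\bf M}_\infty\le 1$ is essentially the $\delta=0$ case of the very theorem you are trying to prove; in Wang's paper this is handled via a genuine Allard-type $\eps$-regularity argument, not by monotonicity alone.
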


\section{The core of the proof}\label{sec:strucproof}
\subsection{Reduction to a critical solution}\label{sec:critsol}
{\bf Two regularity results.} \Cref{thm:sheetimpc2alpha} may be regarded as an \emph{a priori estimate}: it provides strong compactness and regularity information under suitable assumptions, but its applicability appears to rely on the very classification result we aim to establish. In contrast, \Cref{thm:ACallard} shows that the classification holds—in all dimensions and without assuming stability—for solutions whose densities at infinity are sufficiently close to $1$. This motivates:
\begin{definition}[{\bf Subcritical density}]\label{def:induction}
    Let $n\in\N$, $n\le 7$, and let $K\in\R_+$. We say that $K$ is a {\em subcritical density} in $\R^n$ if the only bounded stable solutions to the A--C equation in $\R^n$, with density at infinity ${\bf M}_\infty\leq K$, are $\pm 1$ and $\phi(a\cdot x+b)$ for some $a\in\Sp^{n-1}$ and $b\in\R$.
\end{definition}
With this perspective, \Cref{mainthm} amounts to showing that \emph{every} $K > 0$ is a subcritical density. Intuitively, our strategy is to prove \Cref{mainthm} via a form of \emph{continuous induction} on the density parameter $K$.

Using \Cref{thm:sheetimpc2alpha} we readily obtain regularity in regions of subcritical density (and just as in the Allard case, we get an extra $\delta$) -- this is a general feature in geometric variational problems:
\begin{theorem}[{\bf Regularity in subcritical regions}]\label{prop:curvestindeps}
    Let $n\leq 7$, and let $K$ be a subcritical density in $\R^n$. Let $u:\R^n\to(-1,1)$ be a stable solution to $\ep$-A--C. Then, there are $\delta=\delta(K)>0$ and $C=C(K)$ such that the following holds:\\
    Assume that
    $${\bf M}_R^\ep(u)\leq K+\delta\quad\quad \mbox{and}\quad\quad \frac{\ep}{R} \leq \delta\,.$$
    Then, the sheeting assumptions (recall \Cref{def:sheetassump}) hold in $B_{\delta R}$ with this $C$.
\end{theorem}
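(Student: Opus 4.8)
The plan is to argue by contradiction via a compactness/blow-up argument, using \Cref{thm:sheetimpc2alpha} as the main tool once we know the limit is one-dimensional. Suppose the statement fails: then for some subcritical density $K$ there is a sequence $\ep_j/R_j \to 0$ and stable $\ep_j$-A--C solutions $u_j : B_{R_j} \to (-1,1)$ with ${\bf M}^{\ep_j}_{R_j}(u_j) \le K + 1/j$, yet the sheeting assumptions fail in $B_{\delta R_j}$ for every fixed $\delta$ and every fixed constant $C$. After rescaling we may take $\ep_j = 1$, so $R_j \to \infty$, and the energy density bound reads ${\bf M}_{R_j}(u_j) \le K + 1/j$. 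First I would extract a subsequential limit: interior estimates for the A--C equation (the solutions are uniformly bounded by $1$, so $\|u_j\|_{C^{2,\alpha}_{\loc}}$ is bounded) give local $C^2_{\loc}(\R^n)$ convergence (along a subsequence) to a solution $u_\infty : \R^n \to [-1,1]$. Stability passes to the limit (the stability inequality \eqref{stability_1stintro} is stable under $C^2_{\loc}$ convergence of the coefficient $W''(u_j)$ and test functions are compactly supported), so $u_\infty$ is a stable solution on all of $\R^n$. By Modica monotonicity (\Cref{lem:monformula}) and the energy bound, ${\bf M}_r(u_\infty) = \lim_j {\bf M}_r(u_j) \le \limsup_j {\bf M}_{R_j}(u_j) \le K$ for every fixed $r$, hence ${\bf M}_\infty(u_\infty) \le K$. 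Since $K$ is a subcritical density, $u_\infty$ is either $\pm 1$ or a one-dimensional profile $\phi(a \cdot x + b)$.

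Next I would argue that this forces the sheeting assumptions to hold for large $j$, contradicting our assumption. In the case $u_\infty \equiv \pm 1$, we would have ${\bf M}_\infty(u_\infty) = 0 < 1 \le {\bf M}_r(u_j)$ for a fixed radius where the interface of $u_j$ is nonempty---but more directly, the vanishing of the energy density at infinity for the limit, combined with Modica monotonicity forcing ${\bf M}_r(u_j)$ to be small, would eventually push ${\bf M}_\infty(u_j)$ below $1 + \delta(n)$ and let us apply \Cref{thm:ACallard}; alternatively, if the interface of $u_j$ stays nonempty in a fixed ball one runs the blow-up there instead and excludes the constant case. In the case $u_\infty = \phi(a\cdot x + b)$, the limit has $|\nabla u_\infty| > 0$ everywhere on $\{|u_\infty| \le 0.9\}$ (indeed on all of $\R^n$), $\mathcal A(u_\infty) \equiv 0$, and the level sets $\{u_\infty = t\}$ are hyperplanes. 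By $C^2_{\loc}$ convergence, for $j$ large the level sets $\{u_j = t\}\cap B_{\delta R_j / 2}$ (after rescaling: $\{u_j = t\}\cap B_{\rho}$ for $\rho$ any fixed radius, then rescaled) satisfy $|\nabla u_j| \ge \tfrac12 |\nabla u_\infty| \ge c(K) > 0$ and $|\mathcal A_{u_j}| \le 1$ (say) on $\{|u_j|\le 0.9\}$ in a large fixed ball. The point is that both conditions in \eqref{eq:sheetassump} are \emph{scale-invariant} under the rescaling $x \mapsto R_j x$ that we performed ($\ep |\nabla u|$ and $R|\mathcal A|$ are the natural scale-invariant quantities), so once they hold on a fixed ball $B_\rho$ for the rescaled problem, they hold on $B_{\rho \ep_j / R_j}$... no---more carefully, one should run the blow-up at the *correct scale*: the natural thing is to not rescale to $\ep = 1$ but rather to dilate $u_j$ by $R_j$, i.e. consider $v_j(x) = u_j(R_j x)$ on $B_1$, which solves $(\ep_j/R_j)$-A--C with $\ep_j/R_j \to 0$; then $v_j \to v_\infty$ and $v_\infty$ inherits subcritical density hence is one-dimensional, and the sheeting assumptions \eqref{eq:sheetassump} for $u_j$ in $B_{\delta R_j}$ are exactly the statement that $v_j$ satisfies them in $B_\delta$---which follows from $C^2$-closeness to the one-dimensional $v_\infty$ plus the fact that for one-dimensional solutions $\mathcal A \equiv 0$ and $|\nabla \phi|$ is bounded below on $\{|\phi|\le 0.9\}$.

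I would therefore organize the contradiction around the dilated sequence $v_j(x) := u_j(R_j x)$ on $B_1$, solving $\tilde\ep_j$-A--C with $\tilde\ep_j = \ep_j/R_j \to 0$, with ${\bf M}^{\tilde\ep_j}_1(v_j) = {\bf M}^{\ep_j}_{R_j}(u_j) \le K + 1/j$. Blow-up is delicate because $\tilde\ep_j \to 0$, so $v_j$ does not converge in $C^2$; instead I would pass to the limit in the \emph{rescaled} solutions at the $\ep$-scale: fix a sequence of points $p_j$ in $\{|v_j|\le 0.9\}\cap B_{1/2}$ where the sheeting conclusion is worst-violated, and consider $w_j(y) := v_j(p_j + \tilde\ep_j y)$, which solves $1$-A--C on an exhausting sequence of balls; by interior estimates $w_j \to w_\infty$ in $C^2_{\loc}(\R^n)$, a stable $1$-A--C solution with ${\bf M}_\infty(w_\infty) \le K$ (the density bound passes to this blow-up by monotonicity and the scaling of ${\bf M}$), hence $w_\infty$ is one-dimensional and non-constant (since $|w_\infty(0)|\le 0.9$), so $w_\infty = \phi(a\cdot y + b)$; this gives $|\nabla w_j|(0) \to |\nabla\phi| > 0$ and $\mathcal A_{w_j}(0) \to 0$, i.e. $\tilde\ep_j|\nabla v_j|(p_j) \to |\nabla\phi(b)| \ge c > 0$ and $|\mathcal A_{v_j}|(p_j) \to 0 \le 1$ after rescaling back by $\tilde\ep_j$ (using scale-invariance of $\tilde\ep|\nabla u|$ and $\mathcal A$ at the relevant scale: $R\mathcal A$ in the ball $B_{\delta R}$ translates to $\mathcal A_{w_j} \cdot (\delta R_j/\tilde\ep_j)$, which requires the curvature decay to be at the right rate---this is where a little care is needed). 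This contradicts the choice of $p_j$ as a worst-violation point, and the argument additionally produces the gain $\delta = \delta(K)$ just as in the proof of \Cref{thm:ACallard}-type statements. The main obstacle is this last bookkeeping step: matching the scale-invariant sheeting quantities $\tilde\ep|\nabla u|$ and $R|\mathcal A|$ across the two-stage rescaling, and ensuring the worst-violation point $p_j$ stays in the interior $B_{1/2}$ rather than escaping to the boundary---this is handled by a standard interior-point/"smallest scale'' selection (à la Simon) or simply by working on a slightly larger ball and using compactness of $\overline{B_{1/2}}$.
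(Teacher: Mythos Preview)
Your $\ep$-scale blow-up $w_j(y) = v_j(p_j + \tilde\ep_j y)$ correctly handles the gradient lower bound $\ep|\nabla u|\ge c$ (this is exactly Step~1 of the paper's proof). The gap is in the curvature bound. You need $R|\mathcal A|\le C$, i.e.\ $\mathcal A\le C/R$, while your blow-up only yields $\mathcal A_{w_j}(0)\to 0$, i.e.\ $\ep_j\,\mathcal A_{u_j}(p_j)\to 0$. Since $R_j/\ep_j\to\infty$, this says nothing about $R_j\,\mathcal A_{u_j}(p_j)$. You yourself flag this (``this is where a little care is needed''), but the issue is not bookkeeping: it is a genuine missing scale. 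There is no mechanism in your argument that forces curvature decay at the scale $R$ rather than $\ep$.

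The paper's resolution (Step~2 there) is a point-selection at the \emph{curvature} scale, not the $\ep$-scale. One picks $y_i$ nearly maximizing ${\rm dist}(y_i,\partial B)\,\mathcal A(u_i)(y_i)$ and rescales by $1/A_i$ with $A_i:=\mathcal A(u_i)(y_i)$, so that the rescaled solution $\widetilde u_i$ has $\mathcal A(\widetilde u_i)(0)=1$ and $\mathcal A(\widetilde u_i)\le 4$ on expanding domains. The new Allen--Cahn parameter is $\widetilde\ep_i=\ep_i A_i$, and one must split into two cases. If $\widetilde\ep_i$ stays bounded away from $0$, interior elliptic estimates give $C^2_{\rm loc}$ convergence to a $1$-A--C solution with density $\le K$, hence one-dimensional, hence $\mathcal A\equiv 0$---contradiction. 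If $\widetilde\ep_i\to 0$, the sheeting assumptions now \emph{do} hold (curvature bounded by $4$, gradient lower bound from Step~1), so \Cref{thm:sheetimpc2alpha} applies and gives $C^{2,\vartheta}$ convergence of the level sets to a complete stable minimal hypersurface with Euclidean area growth; by \cite{SS81} this is a union of parallel hyperplanes, hence has zero second fundamental form---again contradicting $\mathcal A(\widetilde u_i)(0)=1$. Your proposal misses both the curvature-scale point selection and, crucially, the second case where the limit is a minimal hypersurface rather than an A--C solution and one must invoke Wang--Wei plus Schoen--Simon.
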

\begin{remark}
    We have stated this result for $\ep$-A--C solutions in general just for ready applicability when performing rescaling arguments; unless otherwise stated, everything else will be only for solutions to A--C with parameter $1$.
\end{remark}
\begin{proof}[Sketch of proof (see detailed proof in \Cref{subsec:regpfview} below)]
The reasoning closely follows that of \cite[Corollary 1.3]{WW19}.

We proceed in the spirit of B.~White \cite{white2016lecturesminimalsurfacetheory}. Suppose, for contradiction, that there exists a sequence of solutions $u_i$ with $\delta_i \to 0$ as in the statement, yet with no uniform curvature bounds. By zooming in at points where the curvature is nearly maximal, we obtain a rescaled sequence $\widetilde{u}_i$ that now has \emph{uniform curvature bounds} on expanding domains. 

At this point, \Cref{thm:sheetimpc2alpha} applies: the $C^{2,\vartheta}$ bounds combined with the Arzelà--Ascoli theorem yield $C^2$ convergence to a global solution of the Allen--Cahn equation or to a complete minimal hypersurface. This limit object has density bounded by $K$ and unit curvature at the origin. Then, recalling \Cref{def:induction}---or using the flatness of complete, stable minimal hypersufaces with bounded density \cite{SSY75, SS81}---this leads to a contradiction.
\end{proof}

Examining the proof carefully, we find another natural condition with which we already know the curvature estimates. A similar observation (in a somewhat different form) was already suggested by Wang--Wei.
\begin{theorem}[\textbf{Regularity in good balls}]\label{thm:goodimpsheet}
    Let $n\leq 7$. Let $u:\R^n\to(-1,1)$ be a stable solution to A--C, with ${\bf M}_R(u)\leq C_0$. Then, there are constants $\delta_{bad}>0$, $C$ and $R_0$, depending only on $C_0$, such that the following holds.\\
    Assume that $R\geq R_0$, and that
    \begin{equation}\label{prox1d}
        \int_{B_1(x)}\mathcal A_u^2|\nabla u|^2<\delta_{bad}\quad\quad\mbox{for every}\ x\in B_R\cap\{|u|\leq 0.9\}\,.
    \end{equation}
    Then, the sheeting assumptions (recall \Cref{def:sheetassump}) hold in $B_{\frac{R}{2}}$ with this $C$.
\end{theorem}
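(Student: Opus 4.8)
The plan is to run a compactness/contradiction argument in the spirit of B.~White, exactly as in the proof of \Cref{prop:curvestindeps}, but now starting from the hypothesis \eqref{prox1d} instead of a subcritical-density bound. Suppose the conclusion fails: there exist $C_0$, a sequence $R_j\to\infty$, and stable solutions $u_j:\R^n\to(-1,1)$ with ${\bf M}_{R_j}(u_j)\le C_0$, satisfying \eqref{prox1d} with a fixed $\delta_{bad}$ (to be chosen small), yet for which the sheeting assumptions fail in $B_{R_j/2}$ with every constant $C$. Concretely, there are points where either $|\nabla u_j|$ is too small or $R_j|\mathcal A_{u_j}|$ is too large on $\{|u_j|\le 0.9\}\cap B_{R_j/2}$. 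Rescale and translate so as to zoom in at a point of (nearly) maximal curvature: set $\widetilde u_j(x)=u_j(x_j+\lambda_j x)$ where $\lambda_j^{-1}\sim \mathcal A_{u_j}(x_j)\to\infty$ (a standard point-selection lemma — e.g. the one used in \Cref{prop:curvestindeps} — guarantees $x_j$ can be chosen so that the curvature of $\widetilde u_j$ is $\le 2$ say on balls $B_{\rho_j}$ with $\rho_j\to\infty$, and $=1$ at the origin). These $\widetilde u_j$ are stable solutions of $\ep_j$-A--C on expanding domains with \emph{uniform} curvature bounds, so \Cref{thm:sheetimpc2alpha} applies: one gets $C^{2,\vartheta}$ estimates on the graphical sheets, and by Arzelà--Ascoli a subsequence converges in $C^2_{\loc}$ to a limit $u_\infty$, which is either a global stable A--C solution or (if $\ep_j/\rho_j\not\to 0$ fails, i.e. the layers spread out) a complete embedded minimal hypersurface in $\R^n$; in either case $u_\infty$ has $\mathcal A_{u_\infty}(0)=1$, so it is \emph{not} one-dimensional and not a hyperplane.

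The key new point — and the main obstacle — is extracting a contradiction from this limit using only \eqref{prox1d}, without a density bound below the critical one. Here is where \eqref{prox1d} enters: the quantity $\int_{B_1(x)}\mathcal A^2|\nabla u|^2$ is scale-invariant under the parabolic-type rescaling $u\mapsto u(x_0+\lambda x)$ only after accounting for how the energy density transforms, so one must be careful. In the Allen--Cahn regime where $\ep_j\to 0$ after rescaling, the rescaled solutions have $\ep_j|\nabla\widetilde u_j|\sim 1$, and on balls of the new unit scale the integral $\int_{B_1(x)}\mathcal A_{\widetilde u_j}^2|\nabla\widetilde u_j|^2$ is comparable to $\ep_j^{-1}\cdot$(an $\ep_j$-scaled version of the same integral for $u_j$ over a tiny ball) — so the smallness of \eqref{prox1d} forces the \emph{limit} minimal hypersurface to satisfy $\int_{B_1(x)}|\II|^2\,d\cH^{n-1}\le \delta_{bad}'$ for all $x$, where $\delta_{bad}'\to 0$ as $\delta_{bad}\to 0$; in the minimal-surface alternative this is literally the hypothesis of an Allard-type $\ep$-regularity theorem, forcing $|\II|(0)$ to be small, contradicting $\mathcal A_{u_\infty}(0)=1$. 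In the alternative where $\ep_j$ stays bounded below after rescaling (no real zoom on the layer thickness), the limit $u_\infty$ is a global A--C solution with $\int_{B_1(x)}\mathcal A_{u_\infty}^2|\nabla u_\infty|^2\le\delta_{bad}$ everywhere; then \Cref{thm:ACallard} — or rather a local version of it, obtainable since small $\int\mathcal A^2|\nabla u|^2$ on all unit balls forces ${\bf M}_\infty(u_\infty)\le 1+\delta(n)$ via the Sternberg--Zumbrun inequality \eqref{eq:SZineq1} together with the monotonicity formula \Cref{lem:monformula} — yields that $u_\infty$ is one-dimensional, again contradicting $\mathcal A_{u_\infty}(0)=1$.

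The technical heart is therefore the bookkeeping of scalings for the quantity $\int_{B_1(x)}\mathcal A^2|\nabla u|^2$ across the zoom-in, and verifying that the limit genuinely inherits smallness of the curvature integral (as opposed to this information being lost in the rescaling, as can happen for energy-type quantities). A clean way to organize this: first show that \eqref{prox1d} plus ${\bf M}_{R}\le C_0$ and the monotonicity formula imply a \emph{doubling-with-small-excess} statement — at every point of $B_{R/2}\cap\{|u|\le 0.9\}$ and every scale $1\le r\le R/4$, the localized curvature energy $\int_{B_r(x)}\mathcal A^2|\nabla u|^2$ is controlled, so that after rescaling one still controls the analogous quantity on $B_1$ of the new scale. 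With that in hand, Arzelà--Ascoli via \Cref{thm:sheetimpc2alpha} delivers a limit on which either Allard-type $\ep$-regularity (minimal-surface case) or \Cref{thm:ACallard} (A--C case) applies to produce the contradiction. The choice $R_0$ enters to absorb the convergence $\ep/R\to 0$ needed to invoke \Cref{thm:sheetimpc2alpha}, and $C$ comes out of the uniform curvature bound on the rescaled sequence. I expect the point-selection step and the scaling bookkeeping to be the only nontrivial parts; everything else is the now-standard White-style machinery already used for \Cref{prop:curvestindeps}.
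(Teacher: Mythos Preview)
Your overall scaffold (White-style point selection, compactness via \Cref{thm:sheetimpc2alpha}, then classify the limit) is correct and matches the paper's. But there is a genuine gap in how you set up the contradiction and in the limit classification.

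\textbf{The contradiction should send $\delta_{bad}\to 0$.} You negate the statement by fixing a single small $\delta_{bad}$ along the whole sequence. The theorem asks you to \emph{produce} $\delta_{bad}$; its negation is ``for every $\delta_{bad}>0$ (and every $C$, $R_0$) the implication fails'', so you are free to take $\delta_{bad,j}\to 0$. Doing so, the limit $u_\infty$ (in the Allen--Cahn alternative, i.e.\ the analogue of Step 1 and Step 2 Case 1 of the proof of \Cref{prop:curvestindeps}) inherits $\int_{B_1}\mathcal A_{u_\infty}^2|\nabla u_\infty|^2=0$, hence $\mathcal A_{u_\infty}\equiv 0$ in $B_1$. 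Now \Cref{lem:A0imp1D} gives that $u_\infty$ is one-dimensional on all of $\R^n$, and \Cref{prop:1dclas} finishes exactly as in \Cref{prop:curvestindeps}. No scaling bookkeeping is needed here at all: in these cases the $\ep$-parameter does not degenerate, so the unit-ball integral passes to the limit verbatim.

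\textbf{Your proposed route via \Cref{thm:ACallard} does not work.} The claim that ``small $\int_{B_1(x)}\mathcal A^2|\nabla u|^2$ on every unit ball forces ${\bf M}_\infty\le 1+\delta$'' is false: a solution with $K$ well-separated, nearly parallel 1D transitions has $\mathcal A$ as small as one likes while ${\bf M}_\infty$ is close to $K$. Sternberg--Zumbrun \eqref{eq:SZineq1} and monotonicity do not bridge this gap. So you cannot reduce to \Cref{thm:ACallard}; you must use \Cref{lem:A0imp1D} (which requires $\mathcal A\equiv 0$, not merely small), and for that you need $\delta_{bad,j}\to 0$.

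\textbf{The minimal-surface alternative needs no new input.} In Step 2 Case 2 of \Cref{prop:curvestindeps} the subcritical-density assumption was never used: the limit is a complete two-sided stable minimal hypersurface with Euclidean area growth, hence flat by \cite{SS81}. The hypothesis \eqref{prox1d} is irrelevant here, so your appeal to an Allard-type $\ep$-regularity via small $\int|\II|^2$ --- and the scaling analysis you flag as ``the technical heart'' --- are unnecessary detours.
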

\begin{proof}[Sketch of proof (see detailed proof in  \Cref{subsec:regpfview} below)]
We run the same contradiction argument, with $\delta_{bad,i}\to 0$ for contradiction. The only difference is that, in the global Allen--Cahn limit case, passing \eqref{prox1d} also to the limit we would get a solution with $\mathcal A_{u_\infty}\equiv 0$ in $B_1$. But then it is one-dimensional by \Cref{lem:A0imp1D}, reaching a contradiction just as before.
\end{proof}

{\bf The critical solution.} We set:
\begin{definition}
    Let $n\leq 7$. We call $K_* : = \sup \{K>0 \mbox{ subcritical density in } \R^n\}$ the {\bf critical density}. We say that $u:\R^n\to(-1,1)$ is a {\bf critical solution} if $u$ is a stable solution to A--C such that ${\bf M}_\infty=K_*$ and $u$ is not 1D.
\end{definition}
Unless every $K>0$ is a subcritical density---as we ultimately want to establish---we must have $K_*<\infty$. We can then show:
\begin{proposition}[{\bf Critical density is attained}]\label{prop:critexist}
    Subcritical densities form an open set. In other words, assuming that $K_*<\infty$, there exists a critical solution.
\end{proposition}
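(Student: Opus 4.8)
The statement has two parts: (i) that the set of subcritical densities is open in $\R_+$, and (ii) that if $K_* < \infty$ then a critical solution exists. Part (ii) is essentially a compactness statement: take a sequence $K_j \uparrow K_*$ with each $K_j$ \emph{not} subcritical (which exists because otherwise $K_*$ itself, and a bit more, would be subcritical by part (i), contradicting the definition of the supremum). For each $j$ there is a stable, non-1D solution $u_j : \R^n \to (-1,1)$ with ${\bf M}_\infty(u_j) \le K_j \le K_*$. The plan is to extract a limit of the $u_j$ that is still non-1D and has density exactly $K_*$ at infinity.

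\medskip

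I would carry this out in the following steps. \textbf{Step 1 (normalization).} Since each $u_j$ is non-1D, $\mathcal A_{u_j} \not\equiv 0$; by \Cref{lem:A0imp1D} and unique continuation, $\mathcal A_{u_j}$ is somewhere positive on $\{|u_j| \le 0.9\}$. After a translation I may assume the origin witnesses a definite amount of ``non-flatness'': I would like $\int_{B_1} \mathcal A_{u_j}^2 |\nabla u_j|^2 \ge \delta_{bad}$ at the origin, where $\delta_{bad} = \delta_{bad}(K_*)$ is the constant from \Cref{thm:goodimpsheet}. This is where the argument needs care: a priori the non-flatness of $u_j$ could escape to infinity or spread out very thinly. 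I handle this using \Cref{thm:goodimpsheet} itself --- if \eqref{prox1d} held at \emph{every} point of $B_R \cap \{|u_j| \le 0.9\}$ for all large $R$, then the sheeting assumptions would hold on arbitrarily large balls, and then \Cref{thm:sheetimpc2alpha} plus the contradiction/blow-down argument (as in \Cref{prop:curvestindeps}) would force $u_j$ to be 1D, using that ${\bf M}_\infty(u_j) \le K_*$; but wait, that uses $K_*$ subcritical, which we do not know. Instead: the hypothesis that $K_j$ is not subcritical already gives us the non-1D solution $u_j$; I just need to recenter it so that some fixed-size ball carries non-trivial curvature. If $\sup_x \int_{B_1(x)} \mathcal A_{u_j}^2 |\nabla u_j|^2 < \delta_{bad}$, then \Cref{thm:goodimpsheet} gives sheeting on large balls and the blow-down (extracting a tangent solution/minimal hypersurface of density $\le K_*$, non-flat, contradicting nothing \emph{unless} we also argue the tangent has density $< K_*$ hence is 1D by subcriticality of densities $< K_*$, which \emph{is} known). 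So in that case a blow-down of $u_j$ is 1D with density $<K_*$... this does not immediately contradict $u_j$ being non-1D. So more honestly: I recenter $u_j$ at a point $x_j$ where $\int_{B_1(x_j)} \mathcal A_{u_j}^2 |\nabla u_j|^2 \ge \tfrac12 \sup_x(\cdots)$; the issue is only whether this sup is bounded below. I would instead argue that if $\sup_x \int_{B_1(x)}\mathcal A_{u_j}^2|\nabla u_j|^2 \to 0$ along a subsequence, then $u_j$ (after translating to near a transition point) subconverges in $C^2_{loc}$ to a stable solution $u_\infty$ with $\mathcal A_{u_\infty} \equiv 0$, hence 1D, \emph{and} with ${\bf M}_\infty(u_\infty) \le K_*$; this $u_\infty$ being 1D has ${\bf M}_\infty = 1$, so for large $j$, ${\bf M}_{R_0}(u_j)$ is close to $1$, whence by monotonicity and \Cref{thm:ACallard} (once we note ${\bf M}_\infty(u_j) \le 1 + \delta$, using the Villegas decay \Cref{thm:villegas} to upgrade local closeness to closeness at infinity, or simply monotonicity from a large enough radius) $u_j$ itself is 1D --- contradiction. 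So along our sequence the sup is bounded below by some $c(K_*) > 0$, and recentering gives $\int_{B_1} \mathcal A_{u_j}^2|\nabla u_j|^2 \ge c(K_*)$ at the origin, with $|u_j(0)| \le 0.9$ (transition point nearby, adjust by a bounded translation).

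\medskip

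\textbf{Step 2 (compactness).} The $u_j$ are bounded solutions of A--C with ${\bf M}_\infty(u_j) \le K_*$, hence by monotonicity ${\bf M}_r(u_j) \le K_*$ for all $r$; standard elliptic estimates then give uniform $C^{2,\alpha}_{loc}$ bounds, so a subsequence converges in $C^2_{loc}$ to a solution $u_*: \R^n \to [-1,1]$. The limit $u_*$ is stable (the Sternberg--Zumbrun inequality \eqref{eq:SZineq1} passes to the limit under $C^2_{loc}$ convergence, as does \eqref{stability_1stintro}), and $|u_*| < 1$ somewhere (indeed $|u_*(0)| \le 0.9$), so $u_*$ is not identically $\pm 1$. \textbf{Step 3 ($u_*$ is non-1D).} Passing $\int_{B_1}\mathcal A_{u_j}^2|\nabla u_j|^2 \ge c(K_*)$ to the limit gives $\int_{B_1}\mathcal A_{u_*}^2|\nabla u_*|^2 \ge c(K_*) > 0$, so $\mathcal A_{u_*} \not\equiv 0$ and $u_*$ is not 1D. \textbf{Step 4 (density equals $K_*$).} For each fixed $r$, ${\bf M}_r(u_*) = \lim_j {\bf M}_r(u_j) \le K_*$, so ${\bf M}_\infty(u_*) \le K_*$. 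On the other hand, $u_*$ is a non-1D stable solution, so ${\bf M}_\infty(u_*)$ cannot be $\le K$ for any subcritical $K$ (by definition of subcritical density), hence ${\bf M}_\infty(u_*) \ge K_*$ (taking $\sup$ over subcritical $K$, and using that for densities $<K_*$ subcriticality holds). Therefore ${\bf M}_\infty(u_*) = K_*$ and $u_*$ is a critical solution.

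\medskip

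\textbf{Openness (part (i)).} Finally I record why subcritical densities form an open set, which is needed to ensure the sequence $K_j$ in the first paragraph exists and, conversely, is exactly the first sentence of the Proposition. Suppose $K$ is subcritical but $K + 1/m$ is not subcritical for every $m$; then there are non-1D stable $v_m$ with ${\bf M}_\infty(v_m) \le K + 1/m$. Running Steps 1--3 above verbatim (the constant $c$ now depends on $K$, extracted exactly as in Step 1 using \Cref{thm:goodimpsheet}, \Cref{thm:sheetimpc2alpha}, \Cref{thm:ACallard} and \Cref{thm:villegas}) produces a non-1D stable limit $v_*$ with ${\bf M}_\infty(v_*) \le \lim (K + 1/m) = K$, contradicting that $K$ is subcritical. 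Hence every subcritical $K$ has a neighborhood $(0, K+\eta)$ of subcritical densities, i.e. the set is open (it is automatically a down-set, being defined by ${\bf M}_\infty \le K$).

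\medskip

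\textbf{Main obstacle.} The delicate point is Step 1: ruling out that the non-1D-ness of the $u_j$ ``disperses'' so that no fixed ball retains a quantitative amount of curvature. This is precisely where the regularity dichotomy of \Cref{thm:goodimpsheet} (good balls) versus the existence of a bad ball is used — together with the blow-down/compactness argument behind \Cref{prop:curvestindeps} and the Allard-type rigidity \Cref{thm:ACallard} to convert ``small curvature everywhere'' first into ``density near $1$'' and then into ``$u_j$ is 1D'', the desired contradiction. Everything else is routine elliptic compactness and lower semicontinuity.
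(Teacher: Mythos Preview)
Your overall strategy matches the paper's Proof~2 (compactness of non-1D solutions), but Step~1 contains a genuine error that you introduced by second-guessing a correct argument.

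Your first attempt at Step~1 was right: if \eqref{prox1d} held at every $x\in\{|u_j|\le 0.9\}\cap B_R$ for all large $R$, then \Cref{thm:goodimpsheet} gives the sheeting assumptions in $B_{R/2}$, which \emph{by definition} include $R|\mathcal A_{u_j}|\le C$ in $\{|u_j|\le 0.9\}\cap B_{R/2}$. Letting $R\to\infty$ gives $\mathcal A_{u_j}\equiv 0$, hence $u_j$ is 1D by \Cref{lem:A0imp1D} --- a contradiction. No blow-down and no subcriticality of $K_*$ are needed here; your worry ``but wait, that uses $K_*$ subcritical'' is simply mistaken, because \Cref{thm:goodimpsheet} has no subcriticality hypothesis. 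So each $u_j$ has a bad center $\zz_j$ with $\int_{B_1(\zz_j)}\mathcal A_{u_j}^2|\nabla u_j|^2\ge\delta_{bad}$, and translating $\zz_j$ to the origin finishes Step~1 in one line (this is exactly what the paper does).

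Your substitute argument, by contrast, has a real gap: from $C^2_{\rm loc}$-closeness of $u_j$ to a 1D profile you try to deduce ${\bf M}_\infty(u_j)\le 1+\delta$ in order to invoke \Cref{thm:ACallard}. But monotonicity of ${\bf M}_r$ goes the wrong way (it is nondecreasing in $r$), so ${\bf M}_{R_0}(u_j)$ close to $1$ gives only a \emph{lower} bound on ${\bf M}_\infty(u_j)$; and \Cref{thm:villegas} controls the discrepancy, not the density at infinity. So this route does not close.

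For openness, the paper's Proof~1 is shorter than your compactness rerun: if $K$ is subcritical, \Cref{prop:curvestindeps} (which legitimately uses subcriticality of $K$) gives $\delta=\delta(K)>0$ such that any stable $u$ with ${\bf M}_\infty(u)\le K+\delta$ satisfies $\mathcal A_u\le C/R$ on $B_{\delta R}$ for all large $R$, hence $\mathcal A_u\equiv 0$ and $u$ is 1D. Your version also works once Step~1 is repaired as above, but it is more roundabout.
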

\begin{proof}[Proof 1]
     We show that the set of subcritical densities in $\R^n$ is open. Let $K<\infty$ be subcritical. Then, \Cref{prop:curvestindeps} gives some $\delta=\delta(K,n)>0$ such that, for any stable solution with ${\bf M_\infty}(u)\leq K+\delta$, we have that $\mathcal A_u\leq \frac{C}{R}$ for every $R$ large enough. Then obviously $\mathcal A_u\equiv 0$, thus $u$ is one-dimensional, which shows that $K+\delta$ is subcritical as well.
\end{proof}
\begin{proof}[Proof 2] 
    We show that the complement is closed instead. Assume that $K_*<\infty$, so that there exists a sequence $K_i\searrow K_*$ of densities which are not subcritical; up to passing to a subsequence, $K_i\leq K_*+1$. By definition, there exist stable solutions $u_i:\R^n\to (-1,1)$ with ${\bf M}_\infty(u_i)\leq K_i$ but which are not 1D. By \Cref{thm:goodimpsheet} there need to be some $\zz_i\in\R^n$ such that 
    \begin{equation}\label{eq:eibpbgvpabbvv}
        \int_{B_1(\zz_i)}\mathcal A_{u_i}^2|\nabla u_i|^2\geq \delta_{bad}\,,
    \end{equation}
    as otherwise \Cref{thm:goodimpsheet} would give that $\mathcal A_{u_i}\leq \frac{C}{R}$ for every $R$ large enough, showing that $u_i$ is one-dimensional. Up to a translation, $\zz_i=0$. Passing to a subsequential limit (using standard interior $C^3$ estimates for solutions of Allen--Cahn), we obtain a bounded stable solution $u_\infty$ to A--C with
    $${\bf M_\infty}(u_\infty)=\lim_{r\to\infty}{\bf M}_r(u_\infty)=\lim_{r\to\infty}\lim_{i\to\infty}{\bf M}_r(u_i)\leq \lim_i K_i=K_*\,.$$
    On the other hand, by \eqref{eq:eibpbgvpabbvv} we see that
    \begin{equation*}
        \int_{B_1} \mathcal A_{u_\infty}^2|\nabla u_\infty|^2\geq \delta_{bad}\,.
    \end{equation*}
    But then $u_\infty$ is not one-dimensional, thus $K_*$ cannot be subcritical either.
\end{proof}
From the above, we are compelled to define:
\begin{definition}[\textbf{Bad centers and balls}]\label{def:badballintro}
    Fix $n\leq 7$, and let $u:\R^n\to (-1,1)$ be a critical solution. Let $\delta_{bad}>0$ be given by \Cref{thm:goodimpsheet}, with $C_0=K_*$.
    
    We say that $\zz\in\R^n$ is a {\bf bad center}, and that $B_1(\zz)$ is a {\bf bad ball} (of radius one), if $|u(\zz)|\leq 0.9$ and
    \begin{equation}\label{prox1d2intro2}
       \int_{B_1(\zz)} \mathcal A_u^2 |\nabla u|^2 \geq \delta_{bad}.
    \end{equation}
    Moreover, we call $\cZ(u):=\{\zz\in\R^n: \zz\mbox{ bad center}\}$ the {\bf bad set}, and denote with
    $$B_R(\cZ(u)\cap \Omega):=\bigcup_{\zz\in\cZ(u)\cap\Omega}B_R(\zz)=\{x:{\rm dist}(x,\cZ(u)\cap\Omega)\leq R\}$$
    the $R$-neighborhood of the bad centers in a set $\Omega$. We will write $\cZ$ instead of $\cZ(u)$ when there is no risk of confusion.
\end{definition}
Then, \Cref{thm:goodimpsheet} gives curvature estimates in the absence of bad centers. In particular:
\begin{lemma}[{\bf Existence of bad centers}]\label{lem:badnonempty}
    If $u$ is a critical solution, then $\cZ(u)\neq\emptyset$.
\end{lemma}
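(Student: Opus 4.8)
The plan is to argue by contradiction: suppose $u$ is a critical solution but $\cZ(u) = \emptyset$. Then, by definition of bad center, the hypothesis \eqref{prox1d} of \Cref{thm:goodimpsheet} holds at \emph{every} point $x\in\R^n$ with $|u(x)|\le 0.9$, and in particular on $B_R\cap\{|u|\le 0.9\}$ for every $R>0$. Since $u$ is a critical solution, ${\bf M}_\infty(u) = K_* < \infty$ (the case $K_*=\infty$ being excluded precisely because we are assuming a critical solution exists), so by the monotonicity formula (\Cref{lem:monformula}) we have ${\bf M}_R(u)\le K_*$ for all $R$; thus \Cref{thm:goodimpsheet} applies with $C_0 = K_*$. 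It produces constants $\delta_{bad}, C, R_0$ depending only on $K_*$ such that, for every $R\ge R_0$, the sheeting assumptions \eqref{eq:sheetassump} hold in $B_{R/2}$ with constant $C$. Here $\delta_{bad}$ is exactly the threshold used in \Cref{def:badballintro}, so there is no circularity.

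Next I would extract the pointwise curvature bound. The sheeting assumptions in $B_{R/2}$ give $|\mathcal A_u|\le C_1/(R/2)$ on $\{|u|\le 0.9\}\cap B_{R/2}$ with $C_1 = C$ (taking $\ep = 1$). Fix any point $x_0$ with $|u(x_0)|\le 0.9$. For all $R$ large enough that $x_0\in B_{R/2}$ and $R\ge R_0$, we get $|\mathcal A_u(x_0)|\le 2C/R$. Letting $R\to\infty$ yields $\mathcal A_u(x_0) = 0$. Since $x_0$ was an arbitrary point of the open set $\{|u|<0.9\}$ (which is nonempty: if $u$ were identically $\pm 1$ or had $|u|\ge 0.9$ everywhere, one checks via the strict Modica inequality \Cref{lem:modicaineq} or directly that $u$ would be trivial, contradicting that a critical solution is not $1$D), we conclude $\mathcal A_u\equiv 0$ on a nonempty open set. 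By \Cref{lem:A0imp1D}, $u$ is one-dimensional. This contradicts the definition of critical solution, so $\cZ(u)\ne\emptyset$.

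There is essentially no hard step here: the lemma is an immediate packaging of \Cref{thm:goodimpsheet} together with \Cref{lem:A0imp1D}, exactly as anticipated in the sentence preceding the statement (``\Cref{thm:goodimpsheet} gives curvature estimates in the absence of bad centers''). The only point requiring a line of care is checking that $\{|u|\le 0.9\}$ is nonempty for a critical solution, so that the contradiction argument has something to bite on; but this follows because a critical solution is by hypothesis not $1$D, hence in particular not identically $\pm1$, and then by continuity (and, if desired, the fact that $u$ is a nonconstant bounded A--C solution so it must cross every level between its inf and sup) the level set $\{|u|\le 0.9\}$ cannot be empty. Alternatively one can simply note that if $\{|u|\le 0.9\}=\emptyset$ then the hypothesis of \Cref{thm:goodimpsheet} is vacuously satisfied and the same conclusion $\mathcal A_u\equiv 0$ in fact $u\equiv\pm1$ follows a fortiori, still contradicting non-$1$D-ness.
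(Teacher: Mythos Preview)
Your proof is correct and follows exactly the same approach as the paper: assume $\cZ(u)=\emptyset$, apply \Cref{thm:goodimpsheet} for all large $R$ to get $\mathcal A_u\le C/R$ on $\{|u|\le 0.9\}\cap B_{R/2}$, let $R\to\infty$ to conclude $\mathcal A_u\equiv 0$, and invoke \Cref{lem:A0imp1D} to reach a contradiction with the non-1D hypothesis. Your write-up is more detailed (in particular, you carefully address why $\{|u|<0.9\}$ is nonempty), but the paper's one-line proof is the same argument.
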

\begin{proof}
    Otherwise, \Cref{thm:goodimpsheet} would give that $\mathcal A_u\leq \frac{C}{R}$ for every $R$ large enough. Then obviously $\mathcal A_u\equiv 0$, thus $u$ would be one-dimensional, a contradiction.
\end{proof}
{\bf Large scale flatness.}
The critical solution has an extremely rigid structure:
\begin{proposition}[\textbf{Large-scale flatness}]
\label{prop:W}
Let $n\leq 7$. There exists a {\bf dimensional} modulus of continuity $\omega$ such that the following holds: Let $u:\R^n\to (-1,1)$ be a critical solution, and let $\zz\in \mathcal Z(u)$. 

For any $R\geq 1$, there exists $e_{\zz,R}\in \Sp^{n-1}$ such that
\begin{equation}
\label{eq:omega_modintro}
	\{|u|\leq 0.9\}\cap B_R(\zz)\subset\{|e_{\zz,R}\cdot (x-\zz)|\leq \omega(R^{-1})R\}
	\quad \text{ and } \quad
	K_*-\omega(R^{-1})\leq {\bf M}_R(\zz)\leq K_*.
\end{equation}
Moreover, $K_*$ is an integer. We emphasise that, in particular, $\omega$ is {\bf independent of $\zz\in \cZ$}.
\end{proposition}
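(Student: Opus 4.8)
\smallskip
\noindent\textbf{Proof plan.} The strategy is to reduce everything, through blow-down and compactness, to recognising the relevant rescaled limits as single hyperplanes of multiplicity $K_*$. We may assume $K_*<\infty$, since otherwise there are no critical solutions and the statement is vacuous; by \Cref{prop:critexist} a critical solution then exists. Two elementary facts are used throughout. First, ${\bf M}_\infty$ is independent of the base point: sandwiching $B_{R-|x_0|}(0)\subset B_R(x_0)\subset B_{R+|x_0|}(0)$ and letting $R\to\infty$ gives ${\bf M}_\infty(u,x_0)={\bf M}_\infty(u)=K_*$, which combined with \Cref{lem:monformula} already yields the upper bound ${\bf M}_R(\zz)\le K_*$. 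Second, the set of subcritical densities is a down-set (by \Cref{def:induction}), so every integer $m<K_*$ is subcritical and \Cref{prop:curvestindeps} is available with $K=m$.

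\smallskip
\noindent\emph{Integrality of $K_*$.} I would blow down a fixed critical solution $u$ at a bad centre $\zz$: the functions $u(\zz+R_j\,\cdot)$, $R_j\to\infty$, solve $R_j^{-1}$-Allen--Cahn and have energy density ${\bf M}_r^{R_j^{-1}}(u(\zz+R_j\,\cdot))={\bf M}_{rR_j}(\zz)\to K_*$ for every fixed $r>0$. By the varifold convergence of \cite{HT00} (\Cref{thm:HutchTon}) and \cite{TonegawaWickramasekera2012} (\Cref{thm:tonwick}) --- the discrepancy vanishing by \Cref{thm:villegas}, and the singular set being empty for $n\le7$ \cite{Wic14} --- a subsequence converges to a smooth, embedded, two-sided, stable minimal hypersurface $V\subset\R^n$ with ${\bf M}_r(V;0)\equiv K_*$. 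A stationary integral varifold with constant density at a point is a cone, and a smooth cone through the vertex is a hyperplane; hence $V$ is a hyperplane of multiplicity $K_*$, so $K_*\in\N$ (and $K_*\ge2$, since $1+\delta$ is subcritical by \Cref{thm:ACallard}).

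\smallskip
\noindent\emph{Uniform density lower bound.} For a critical $u$, $\zz\in\cZ(u)$ and fixed $\eta\in(0,1)$, put $\rho(\zz):=\inf\{R:{\bf M}_R(\zz)\ge K_*-\eta\}\in(0,\infty)$; since ${\bf M}_R(\zz)\nearrow K_*$ it suffices to bound $\rho(\zz)$ by a dimensional constant (and then take $\eta=2^{-j}$ to produce the modulus $\omega$). If no such bound exists, choose $u_i,\zz_i$ with $\rho_i:=\rho(\zz_i)\to\infty$, so by continuity of $R\mapsto{\bf M}_R$ one has ${\bf M}_{\rho_i}(\zz_i)=K_*-\eta$. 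Blowing down, $w_i:=u_i(\zz_i+\rho_i\,\cdot)$ has energy density $\le K_*$ at all scales and exactly $K_*-\eta$ at scale $1$; a subsequential varifold limit $W$ is, as above and by \cite{SS81}, a finite union $\sum_k m_k[\{x\cdot e=c_k\}]$ of parallel hyperplanes of integer multiplicities with $\sum_k m_k\le K_*$, one of them (say $c_{k_0}=0$) through the origin since $|w_i(0)|\le0.9$, and with ${\bf M}_1(W;0)=K_*-\eta$. Because $K_*-\eta\notin\N$, $W$ is not a single hyperplane: the central plane has multiplicity $m_0$ with $1\le m_0\le K_*-1$, and another plane lies at distance $d\in(0,1)$. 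For $0<r<d$ only the central plane meets $B_r$, so with $T_i:=\tfrac d2\rho_i\to\infty$ one obtains ${\bf M}_{T_i}(u_i,\zz_i)={\bf M}_{d/2}^{\rho_i^{-1}}(w_i)\to{\bf M}_{d/2}(W;0)=m_0$. As $m_0<K_*$ is subcritical, \Cref{prop:curvestindeps} (with $K=m_0$) applies to $u_i$ at scale $T_i$ for large $i$: the sheeting assumptions hold on $B_{\delta(m_0)T_i}(\zz_i)\supset B_1(\zz_i)$, and the improved estimates of \cite{WW19} (\Cref{thm:sheetimpc2alpha}) then force $\mathcal A_{u_i}\le C(m_0)/(\delta(m_0)T_i)$ on $B_1(\zz_i)$; with $|\nabla u_i|\le1$ (\Cref{lem:modicaineq}) this gives $\int_{B_1(\zz_i)}\mathcal A_{u_i}^2|\nabla u_i|^2\to0$, contradicting $\zz_i\in\cZ(u_i)$ (\Cref{def:badballintro}). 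I expect this step --- the dichotomy between a rescaled limit of forbidden fractional density and one displaying a subcritical sub-cluster of sheets, which the bad-ball condition cannot tolerate --- to be the main obstacle, both to discover and to make rigorous: one has to verify that $T_i\to\infty$, that the curvature estimate controls the \emph{whole} of $\int_{B_1}\mathcal A^2|\nabla u|^2$ (including where $|u|>0.9$), and that the density convergence ${\bf M}_{T_i}\to m_0$ genuinely holds.

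\smallskip
\noindent\emph{Flatness.} If the inclusion fails along critical $u_i$, $\zz_i\in\cZ(u_i)$, $R_i\to\infty$, with $\{|u_i|\le0.9\}\cap B_{R_i}(\zz_i)$ lying in no $\delta_0R_i$-slab through $\zz_i$, set $v_i:=u_i(\zz_i+R_i\,\cdot)$. By the lower bound just proved, ${\bf M}_r^{R_i^{-1}}(v_i)={\bf M}_{rR_i}(\zz_i)\to K_*$ for every fixed $r$, so a varifold limit $V$ has constant density $K_*$ and is therefore, as in the integrality step, a single hyperplane $\{x\cdot e=0\}$ of multiplicity $K_*$. Since $\{|v_i|\le0.9\}$ converges to $\supp V$ locally in Hausdorff distance, $\{|v_i|\le0.9\}\cap B_1\subset\{|e\cdot x|\le\delta_0\}$ for large $i$, a contradiction; quantifying this compactness statement yields the flatness inclusion with a dimensional modulus, which may be merged with the one obtained for the density.
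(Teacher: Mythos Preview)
Your proposal is correct and would yield the result, but it differs from the paper's proof in two notable ways.

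First, your density lower bound is unnecessarily elaborate. You blow down at scale $\rho_i$, identify the limit as a union of parallel hyperplanes via \Cref{thm:tonwick} and \cite{SS81}, locate a central sheet of subcritical multiplicity $m_0\le K_*-1$, and only then apply \Cref{prop:curvestindeps} at the intermediate scale $T_i=(d/2)\rho_i$. The paper observes that this detour is superfluous: \emph{every} $K<K_*$ is subcritical by definition, so \Cref{prop:curvestindeps} applies directly at $\eps=1$. If ${\bf M}_R(\zz)\le K$ for some $R$ large, the sheeting assumptions already hold in $B_{\delta(K)R}(\zz)$, giving $\mathcal A_u\le C(K)/R$ on $B_1(\zz)$ and contradicting the bad-ball condition. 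No blow-down, no limit analysis, no identification of $m_0$ --- one line. The ``dichotomy'' you flag as the main obstacle is an artefact of your own route.

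Second, for integrality and flatness you invoke \Cref{thm:tonwick} (hence \cite{Wic14}) to conclude the blow-down varifold is smooth, and then that a smooth stable cone must be a hyperplane. The paper deliberately avoids \cite{Wic14}. Having already established (in Step~1) that the blow-down has constant density $K_*$ and is therefore a cone $\Sigma=C\times\R^{n-k}$, it notes that every point of $C\setminus\{0\}$ has density \emph{strictly below} $K_*$, hence subcritical; applying \Cref{prop:curvestindeps} at such points gives smooth convergence of the level sets there, so $C\setminus\{0\}$ is a smooth stable minimal cone. Then Simons' classification (for $k\ge3$) or \Cref{prop:stablejunction} (for $k=2$) forces $C$ to be a hyperplane. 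This is more work but keeps the argument internal to the stable Allen--Cahn theory, a point the paper stresses (see \Cref{sec:stabselfcont}). Your route is shorter but leans on a heavy black box, and it also splits integrality and flatness into two separate blow-downs where the paper handles both at once.
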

In other words, for a sufficiently large scale---around {\bf any} bad center---the transition level sets of our solution become close to a hyperplane of multiplicity $K_*$.
Needless to say, this is a vast improvement with respect to a general solution\footnote{Think, for instance, of a catenoidal-type solution with a neck at scale one, looking flat again for several larger scales, and then developing a much larger neck at scale $R\gg 1$.}, and we will make heavy use of this structure in the rest of the article.
\begin{proof}[Sketch of proof (see detailed proof in \Cref{subsec:indlargesc} below)]

The reason behind \eqref{eq:omega_modintro} is as follows:
\begin{itemize}
    \item The densities $K<K_*$ are subcritical. If ${\bf M}_{R}(\zz)\leq K$, \Cref{prop:curvestindeps} then yields curvature estimates (for $R$ large enough, and with constants depending on $K$). Letting $R\nearrow\infty$, since by definition a bad ball is a region with a definite amount of curvature, this forces ${\bf M}_{R}(\zz)\nearrow K_*$, and with a uniform rate!
    \item In other words, we have found a modulus of continuity $\omega$, independent of $\zz$, such that
    $$K_*-\omega(R^{-1})\leq {\bf M}_R(\zz)\leq K_* \quad \mbox{for any}\quad \zz\in\cZ\,.$$
    But in particular, the density is becoming constant at large scales.
    \item By (the equality case of) the monotonicity formula, as $R$ becomes larger our solution (rescaled) becomes quantitatively closer to some stable minimal cone $C=C_{\zz,R}$, with vertex density $K_*$.
    \item After a rotation, a minimal cone can always be written as $C=\widetilde C\times \R^{n-k}$, where $\R^{n-k}$ is its ``spine'' (i.e. directions of translation invariance), and $\widetilde C\subset \R^k$ has density {\bf strictly} less than the vertex density (i.e. $K_*$) at any given point $x$ outside the origin.
    \item But then, we can apply the ``inductive'' curvature estimates (\Cref{prop:curvestindeps}) in a small neighborhood around any such $x$, giving smooth convergence there -- and thus showing that $\widetilde C\setminus \{0\}$ is smooth.
    
    If $k\neq 2$, Simons' classification directly shows that $C$ is a hyperplane. If $k=2$, \Cref{prop:stablejunction} gives that $C$ is a hyperplane as well.
    
    Finally, \Cref{thm:HutchTon} shows that $K_*$ is an integer.
\end{itemize}
\end{proof}
This noticeably general argument, which reduces the general classification problem to ruling out a critical solution satisfying \eqref{eq:omega_modintro}, would likely lead to similar conclusions in many other Bernstein-type problems. \\

{\bf ``A-priori estimate philosophy''.} We believe it is worth highlighting that our proof of \Cref{mainthm} does \emph{not} use the powerful regularity theory in \cite{Wic14}. In fact, the use of the rich Allen--Cahn varifold theory developed in \cite{HT00, TonegawaWickramasekera2012} can interestingly be entirely bypassed in our article, and we elaborate on this point in \Cref{sec:stabselfcont}.

This renders the theory for stable solutions of the Allen--Cahn equation completely self-contained (not relying on the theory of critical points), with \cite{WW19} understood as a (crucial) component.\\

For later use, we record the following byproduct of the proof (see \Cref{subsec:indlargesc}):
\begin{proposition}[{\bf No gaps}]\label{prop:deltamod2} In the conclusions of \Cref{prop:W}, the following additionally holds:
    \begin{equation}\label{eq:deltamod2}
    K_*-\omega(R^{-1})\leq{\bf M}_{\omega(R^{-1})R}(y)\leq K_*  \quad \mbox{for every} \quad y\in \{e_{\zz,R}\cdot (x-\zz)=0\}\cap B_R(\zz)\,.
\end{equation}
\end{proposition}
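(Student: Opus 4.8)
The goal is to upgrade the large-scale flatness statement of \Cref{prop:W} from the center $\zz$ to every point $y$ lying on the "flat core" hyperplane slice $\{e_{\zz,R}\cdot(x-\zz)=0\}\cap B_R(\zz)$, at the slightly smaller scale $\omega(R^{-1})R$. The strategy I would take is to combine the containment half of \eqref{eq:omega_modintro} with the monotonicity formula (\Cref{lem:monformula}) and the density lower bound $K_*-\omega(R^{-1})\le {\bf M}_R(\zz)$.

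First, let me write $\rho:=\omega(R^{-1})$ and $e:=e_{\zz,R}$, and fix $y$ with $e\cdot(y-\zz)=0$ and $|y-\zz|\le R$; after translating we may as well center everything at $y$. The upper bound ${\bf M}_{\rho R}(y)\le K_*$ is immediate: it holds for {\bf every} point, since ${\bf M}_r(\cdot)$ is monotone and tends to a limit which, by the proof of \Cref{prop:W}, is at most $K_*$ at any point (indeed if ${\bf M}_\infty(v,y)>K_*$ for the critical solution $v$ at some $y$, one contradicts $K_*$ being critical by the usual blow-down/translation argument). So the content is the lower bound ${\bf M}_{\rho R}(y)\ge K_*-\rho$ (after possibly enlarging $\omega$). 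For this I would argue as follows: by the first inclusion in \eqref{eq:omega_modintro}, the transition region $\{|u|\le 0.9\}\cap B_R(\zz)$ sits inside the slab $\{|e\cdot(x-\zz)|\le \rho R\}$, and $y$ is on the midplane $e\cdot(x-\zz)=0$ of that slab. Therefore $B_{R/2}(\zz)\cap\{|u|\le 0.9\}\subset B_{R}(y)\cap\{|e\cdot(x-y)|\le\rho R\}$, i.e. the "interesting part" of the large ball around $\zz$ is essentially captured inside a ball of comparable radius around $y$. Since the energy density ${\bf M}_r(u,x_0)$ is, up to the $\sigma_{n-1}$ normalization, the $(n-1)$-dimensional Minkowski-type content of this transition region (plus the discrepancy term, controlled by \Cref{thm:villegas}), and the monotone limit of ${\bf M}_r(\zz)$ is $\ge K_*-\rho$, a covering/comparison argument transfers this lower bound: ${\bf M}_{2R}(y)\ge {\bf M}_{R/2}(\zz)\cdot c_n - (\text{error})\ge K_* - C\rho$ after possibly shrinking to scale $\rho R$ via monotonicity. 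Concretely, I would use: (i) monotonicity to compare ${\bf M}_{\rho R}(y)$ with ${\bf M}_{2R}(y)$, picking up an error controlled by the monotonicity-formula integrand $\int (W(u)-|\nabla u|^2/2)$, which is $O(\rho)$ by the density being near-constant between scales $\rho R$ and $2R$ (this is where \eqref{eq:omega_modintro} at scale $R$ and Theorem~\ref{thm:villegas} enter); and (ii) the slab containment to see that the density of $u$ at $y$ on scale $2R$ cannot drop below that at $\zz$ on scale $R/2$ by more than $O(\rho)$, because the mass is all trapped in a flat slab through both points.

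The key steps, in order: (1) reduce the upper bound to the general fact ${\bf M}_\infty(u,y)\le K_*$ for a critical solution at every point, already contained in the proof of \Cref{prop:W}; (2) use the slab inclusion from \eqref{eq:omega_modintro} to place the transition set of $u$ near $\zz$ inside a flat slab whose midplane passes through $y$; (3) invoke the monotonicity formula with the near-equality information (density between scales $\rho R$ and $R$ around $\zz$ is within $\omega$ of $K_*$) plus the discrepancy decay \Cref{thm:villegas} to bound the difference $|{\bf M}_{2R}(\cdot)-{\bf M}_{\rho R}(\cdot)|$ by a modulus of $R^{-1}$; (4) combine a covering estimate (the transition set near $\zz$ in $B_{R/2}(\zz)$ is contained in $B_{2R}(y)$) with these near-equalities to conclude ${\bf M}_{\rho R}(y)\ge K_*-\omega(R^{-1})$, relabeling $\omega$.

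The main obstacle I anticipate is step (3)–(4): making the covering/comparison rigorous so that the error genuinely collapses to a modulus of continuity in $R^{-1}$ and not merely a constant. The danger is that translating from $\zz$ to $y$ loses a fixed fraction of the density (e.g., if the slab, though thin, contains a boundary effect near $\partial B_R(\zz)$), so one must be careful to work at scale $2R$ around $y$ (which comfortably contains $B_{R/2}(\zz)$) and exploit that the entire mass of the level set is $\varepsilon$-close, in the flat (measure-theoretic) sense, to a single multiplicity-$K_*$ hyperplane — a hyperplane which, being flat, has exactly density $K_*$ at {\bf every} one of its points, in particular at $y$. That rigidity of the limiting flat plane is what forces the density at $y$ up to $K_*$; quantifying the "closeness to the flat plane" into the stated modulus $\omega$ is the technical heart, and it should follow by the same blow-up/compactness scheme used to prove \eqref{eq:omega_modintro} itself, now applied with base point $y$ instead of $\zz$ and feeding in the already-established flatness at $\zz$.
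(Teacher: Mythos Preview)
Your direct quantitative plan (steps (2)--(4)) has a genuine gap that you yourself flag but do not resolve. The covering comparison you propose, ``${\bf M}_{2R}(y)\ge c_n\,{\bf M}_{R/2}(\zz)-(\text{error})$'', is obtained from the inclusion $B_{R/2}(\zz)\subset B_{2R}(y)$ and therefore only yields ${\bf M}_{2R}(y)\ge 4^{-(n-1)}{\bf M}_{R/2}(\zz)$, i.e.\ a fixed constant multiple of $K_*$, not $K_*-O(\rho)$. The slab containment from \eqref{eq:omega_modintro} does not help recover this factor: knowing that all the mass lives in a thin slab through both $y$ and $\zz$ tells you nothing about how that mass is \emph{distributed along the slab}, and it is precisely this uniform distribution (density $\approx K_*$ at \emph{every} point of the midplane) that you are trying to establish. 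Neither monotonicity nor \Cref{thm:villegas} bridges this, since both are radial tools at a \emph{fixed} center and give no information about translating the center by a distance comparable to $R$. In short, the argument as written is circular: to avoid the $4^{-(n-1)}$ loss you would already need the conclusion.

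The paper's proof is exactly your final-paragraph fallback, promoted to the main argument: Propositions~\ref{prop:W} and~\ref{prop:deltamod2} are proved \emph{together} by a single compactness/contradiction scheme. One assumes that for some $\delta_1>0$ there are sequences $R_i\to\infty$, $\zz_i\in\cZ$ along which either the flatness \eqref{eq:omega_modintro} or the no-gaps bound \eqref{eq:deltamod2} fails; rescaling $\widetilde u_i(x)=u(\zz_i+R_ix)$ and passing to the varifold limit (via \Cref{thm:HutchTon}) produces a stationary stable cone of density $K_*$, which the cone analysis forces to be $K_*[\{x_n=0\}]$. Then the no-gaps conclusion is read off from varifold convergence of the energies \eqref{eq:varenconv}: for the flat limit, $\mathbb M_r(\Sigma,y)=K_*$ at \emph{every} $y$ on the hyperplane, and this passes back to the sequence by lower semicontinuity of the density plus a simple ball-inclusion argument at a subsequential accumulation point of the $y_i$. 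So your closing intuition (``a flat hyperplane has density $K_*$ at every one of its points; quantify closeness via the same blow-up'') is right on target---but it \emph{is} the proof, not a patch for the direct argument.
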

\subsection{A tangential form of stability}\label{sec:tanstabover}
We consider a tangential version of the stability inequality (introduced in \cite{Ton05}) which relates the behaviour of the bad set to the flatness. We first need some definitions:
\begin{definition}[\textbf{Tangential gradient and tangential Allen--Cahn 2nd fundamental form}]\label{def:A'intro}
    Let $e\in \Sp^{n-1}$ be a unit vector. We set $\nabla^{e'} u:=\nabla u-(e\cdot\nabla u) e$, and
    \begin{align*}
        \mathcal A_e'^2&=\begin{cases}\frac{|D\nabla^{e'}u|^2-|\nabla|\nabla^{e'} u||^2}{|\nabla u|^2}&\mbox{ if } \nabla^{e'} u\neq 0,\\
        0 &\mbox{ otherwise.}
        \end{cases}
    \end{align*}
Here $D\nabla^{e'}u$ denotes the differential matrix of the vector field $\nabla^{e'}u$.
\end{definition}
\begin{remark}\label{rmk:A'expansion}
    In a frame with $e=e_n$ and $\frac{\nabla^{e'} u}{|\nabla^{e'} u|}(x_0)=e_1$, we have $|\nabla u|^2\mathcal A_{e}'^2(x_0)
        =\sum_{j=2}^{n-1}\sum_{i=1}^n u_{ij}^2(x_0)$.
\end{remark}

\begin{definition}\label{def:excessesintro}
    We set the following dimensionless quantities.
    \begin{itemize}
    \item $L^2$-height excess:
    $${\bf H}_r^2(u,e):=\frac{1}{r^{n+1}}\int_{B_r} (x\cdot e)^2\left[\frac{|\nabla u|^2}{2}+W(u)\right], \quad \mbox{and} \quad \quad {\bf H}_r^2(u):=\inf_{e\in\Sp^{n-1}} {\bf H}_r^2(u,e).$$
    \item $L^2$-tilt excess:
    $${\bf T}_r^2(u,e):=\frac{1}{r^{n-1}}\int_{B_r} \Big(1-(e\cdot \frac{\nabla u}{|\nabla u|})^2\Big)|\nabla u|^2,\quad \mbox{and} \quad {\bf T}_r^2(u):=\inf_{e\in\Sp^{n-1}} {\bf T}_r^2(u,e).$$
    \item $L^2$-tangential curvatures:
    $${\bf K}_r^2(u,e):=\frac{1}{r^{n-3}}\int_{B_r(\zz)} \mathcal A_e'^2(u)|\nabla u|^2, \quad \mbox{and}\quad \quad {\bf K}_r^2(u):=\inf_{e\in\Sp^{n-1}}{\bf K}_r^2(u,e).$$
\end{itemize}
Naturally, ${\bf H}_r(u),{\bf T}_r(u),{\bf K}_r(u)$ denote the corresponding square roots. Moreover, we denote
$${\bf H}_r^2(u,e,x_0):={\bf H}_r^2(u(\cdot-x_0),e),\quad {\bf T}_r^2(u,e,x_0):={\bf T}_r^2(u(\cdot-x_0),e),\quad {\bf K}_r^2(u,e,x_0):={\bf K}_r^2(u(\cdot-x_0),e),
$$
and likewise for ${\bf T}_r^2(u,x_0)$, ${\bf K}_r^2(u,x_0), {\bf H}_r^2(u,x_0)$. We will mostly omit $u$ from the notation, as it will be clear from the context.
\end{definition}

\begin{proposition}[\textbf{Tangential stability inequality}]\label{prop:tanstab2}
    Let $u:\R^n\to\R$ be a stable solution to A--C, and let $e\in\Sp^{n-1}$. Then, there is $C=C(n)$ such that
    \begin{align*}
        {\bf K}_R^2(e)\leq C{\bf T}_{2R}^2(e).
    \end{align*}
\end{proposition}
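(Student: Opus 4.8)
The idea is to use the Sternberg--Zumbrun inequality \eqref{eq:SZineq1} with a test function that, instead of being purely a cutoff, incorporates a \emph{tangential component} of $\nabla u$. Concretely, fix $e\in\Sp^{n-1}$ and (working in a frame with $e=e_n$) consider the vector field $\nabla^{e'} u = (u_1,\dots,u_{n-1},0)$. For each fixed $j\in\{1,\dots,n-1\}$, the component $u_j$ is a solution of the linearized equation $-\Delta u_j + W''(u) u_j = 0$, so the stability inequality \eqref{stability_1stintro} can be applied to $\xi = \eta\, u_j$ for a cutoff $\eta\in C_c^1(B_{2R})$ with $\eta\equiv 1$ on $B_R$. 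Expanding $\int \eta^2 u_j \Delta u_j$ by parts and summing over $j=1,\dots,n-1$ yields, after cancellation of the $W''$ terms,
\[
	\sum_{j=1}^{n-1}\int |\nabla(\eta u_j)|^2 + \eta^2 W''(u) u_j^2 \;\ge\; 0
	\quad\Longrightarrow\quad
	\sum_{j=1}^{n-1}\int \eta^2 |\nabla u_j|^2 \;\le\; \sum_{j=1}^{n-1}\int |\nabla\eta|^2 u_j^2,
\]
exactly as in the standard derivation of \eqref{eq:SZineq1} but with the sum restricted to the $n-1$ tangential directions. The right-hand side is bounded by $\frac{C}{R^2}\int_{B_{2R}}|\nabla^{e'}u|^2 \le \frac{C}{R^2}\int_{B_{2R}}(1-(e\cdot\tfrac{\nabla u}{|\nabla u|})^2)|\nabla u|^2$, which after the dimensional normalization is precisely $C\,{\bf T}_{2R}^2(e)$ (with the $\tfrac{1}{R^{n-1}}$ factor absorbed).

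The remaining point is to recognize the left-hand side. By Remark~\ref{rmk:A'expansion}, in the chosen frame $|\nabla u|^2 \mathcal A_e'^2 = \sum_{j=2}^{n-1}\sum_{i=1}^{n}u_{ij}^2$ at a point where $\tfrac{\nabla^{e'}u}{|\nabla^{e'}u|}=e_1$; more invariantly, $\sum_{j=1}^{n-1}|\nabla u_j|^2 = |D\nabla^{e'}u|^2$, and the Cauchy--Schwarz / Kato-type inequality $|D\nabla^{e'}u|^2 \ge |\nabla|\nabla^{e'}u||^2$ gives $\sum_{j=1}^{n-1}|\nabla u_j|^2 \ge \mathcal A_e'^2\,|\nabla u|^2$. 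Hence $\int_{B_R}\mathcal A_e'^2|\nabla u|^2 \le \sum_{j}\int \eta^2|\nabla u_j|^2 \le C\int_{B_{2R}}|\nabla^{e'}u|^2/R^2$, and dividing by $R^{n-3}$ turns this into ${\bf K}_R^2(e)\le C\,{\bf T}_{2R}^2(e)$, as claimed. (One then optimizes over $e$ if the infimum form is desired, though the statement is for fixed $e$.)

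The main obstacle is purely bookkeeping rather than conceptual: one must be careful that the integration by parts producing $\sum_j\int|\nabla(\eta u_j)|^2 + \eta^2 W''(u)u_j^2$ is legitimate — this requires knowing $\nabla u \in L^2_{\rm loc}$ with enough regularity, which is standard for A--C solutions — and that the Kato inequality $|D V|^2 \ge |\nabla|V||^2$ is applied to the vector field $V=\nabla^{e'}u$ correctly (it holds pointwise wherever $V\ne 0$, and the set $\{V=0\}$ contributes nothing since $\mathcal A_e'^2$ is defined to vanish there). The constant $C=C(n)$ emerges from the cutoff gradient bound $|\nabla\eta|\le C/R$ and the dimensional scaling factors; no stability beyond \eqref{stability_1stintro} is needed, and the argument is dimension-independent. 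I would also double-check the precise power of $R$ in the normalizations of ${\bf K}_r$ and ${\bf T}_r$ (namely $r^{n-3}$ versus $r^{n-1}$) against the $R^{-2}$ gained from $|\nabla\eta|^2$ — these must match, and indeed $(n-3) = (n-1) - 2$, so the scaling is consistent.
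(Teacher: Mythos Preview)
There is a genuine gap in your argument at the very first step. Testing stability with $\xi=\eta\,u_j$ for each $j$ separately and summing does \emph{not} yield the inequality you claim. Since $u_j$ lies in the kernel of the Jacobi operator (i.e.\ $-\Delta u_j + W''(u)u_j=0$), the stability inequality applied to $\xi=\eta u_j$ is automatically an \emph{equality}: expanding both $\int|\nabla(\eta u_j)|^2$ and $\int W''(u)\eta^2 u_j^2 = \int \eta^2 u_j\Delta u_j = -\int\nabla(\eta^2 u_j)\cdot\nabla u_j$, every term cancels except $\int u_j^2|\nabla\eta|^2$, so the inequality $\int|\nabla(\eta u_j)|^2 + W''(u)\eta^2 u_j^2\ge 0$ reduces to the triviality $\int u_j^2|\nabla\eta|^2\ge 0$. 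In particular, the implication you wrote,
\[
\sum_{j=1}^{n-1}\int \eta^2 |\nabla u_j|^2 \;\le\; \sum_{j=1}^{n-1}\int |\nabla\eta|^2\, u_j^2,
\]
does \emph{not} follow; it is in fact strictly stronger than what is true and generically false (it would be a Caccioppoli inequality for each $u_j$, but $u_j$ solves $\Delta u_j = W''(u)u_j$ with $W''$ of indefinite sign).

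The paper's proof tests stability with the single \emph{nonlinear} function $\xi=|\nabla^{e'}u|\,\eta$. The stability inequality then reads
\[
-\int W''(u)|\nabla^{e'}u|^2\eta^2 \le \int |\nabla^{e'}u|^2|\nabla\eta|^2 + |\nabla|\nabla^{e'}u||^2\eta^2 + \tfrac12\nabla(|\nabla^{e'}u|^2)\cdot\nabla\eta^2,
\]
while the linearized equation, multiplied by $u_j\eta^2$ and summed over $j\le n-1$, gives an \emph{identity} for the left-hand side. Combining the two, the cross terms cancel and one is left with
\[
\int\big(|D\nabla^{e'}u|^2 - |\nabla|\nabla^{e'}u||^2\big)\eta^2 \le \int |\nabla^{e'}u|^2|\nabla\eta|^2,
\]
whose left integrand is exactly $\mathcal A_e'^2|\nabla u|^2$. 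The nonlinearity of $|\nabla^{e'}u|$ is essential: it is what produces the gap $|D\nabla^{e'}u|^2 - |\nabla|\nabla^{e'}u||^2$ rather than the full $|D\nabla^{e'}u|^2$ you claimed to control. Your remarks on the Kato inequality and on the matching of the $R^{n-3}$ vs.\ $R^{n-1}$ scalings are correct and do apply once this test function is used.
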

\begin{proof}[Sketch of proof]
    We test \eqref{stability_1stintro} with $\xi=|\nabla^{e'} u|\eta$, where $\eta$ is a standard cutoff. A detailed proof (including the simple computations which give the final inequality) is given in \Cref{subsec:tanpfview}.
\end{proof}
In our situations of interest, a Caccioppoli-type inequality will show\footnote{See \Cref{sec:excesses} for how the notions of excess that we will use are related.} that
\begin{equation}\label{eq:KL2Hintro}
        {\bf K}_{R}^2(e)\leq C{\bf H}_{4R}^2(e)\,,
    \end{equation}
thus we work exclusively with the latter (we can {\bf forget} about the tilt excess). Here are the main takeaways:
\begin{itemize}
    \item The geometric flatness in \eqref{eq:omega_modintro} immediately shows that ${\bf H}_R^2(e_{\zz,R},\zz)= o(1)$ as $R\to \infty$. Therefore, we have improved the right term from \eqref{eq:SZineq1} to a quantity which decays at infinity!
    \item  However, with a heavy drawback: $\mathcal A_e'^2$ does not control the curvatures of the level sets in the direction of $e$ anymore (see \Cref{rmk:A'expansion}).  This appears to be a major---and unresolved for now---difficulty in most problems about stable solutions, in striking contrast with the very special case of minimal hypersurfaces in which one can recover the full 2nd fundamental form on the left, yielding a stronger form of the stability inequality (Schoen's inequality, which is precisely \cite[Lemma 1]{SS81}).
    \item Since we lack a method to replicate \cite{SS81} (or \cite{SSY75}, which uses Simons' identity instead), we adopt a different strategy. In fact, \eqref{eq:KL2Hintro} will be used exclusively (but crucially) as a way to control the {\bf behaviour of the bad set}.
    \item By definition, each bad balls contributed a definite amount to the left term in stability---we show that each bad ball {\bf still contributes} a definite amount to the left term in \eqref{eq:KL2Hintro}, regardless of the choice of $e$.
\end{itemize}
\begin{proposition}[\textbf{$\mathcal A'$ detects bad balls}]\label{prop:A'badcent}
    Let $u$ be a critical\footnote{Criticality is not needed here: Assuming only $u$ stable and $\int_{B_1}\mathcal A_u^2|\nabla u|^2\geq c_0>0$ instead, we would deduce that $\int_{B_{\frac{1}{\delta}}}\mathcal A_e'^2|\nabla u|^2\geq \delta$ for every $e\in\Sp^{n-1}$, where $\delta=\delta(c_0,n)>0$.} solution, and let $\zz\in\cZ(u)$. Then, there is some $\delta_{bad}'>0$ depending only on $n$ such that 
    $$\int_{B_1(\zz)}\mathcal A_e'^2|\nabla u|^2\geq \delta_{bad}' \quad\mbox{for every}\quad e\in\Sp^{n-1}\,.$$
\end{proposition}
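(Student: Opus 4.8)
The plan is to argue by contradiction and compactness, exploiting the fact that the bad-ball condition $\int_{B_1(\zz)}\mathcal A_u^2|\nabla u|^2\geq \delta_{bad}$ is a \emph{closed} condition that cannot survive passing to a one-dimensional limit. Concretely, suppose the statement fails: then there exist stable solutions $u_i:\R^n\to\R$, unit vectors $e_i\in\Sp^{n-1}$, and points (which we translate to the origin, so $\zz_i=0$) with $\int_{B_1}\mathcal A_{u_i}^2|\nabla u_i|^2\geq \delta_{bad}$ (using the footnote's generality, with $c_0=\delta_{bad}$), yet
$$\int_{B_1}\mathcal A_{e_i}'^2(u_i)\,|\nabla u_i|^2\ \longrightarrow\ 0.$$
We have the a priori bound $|u_i|\le 1$ from the Modica inequality (\Cref{lem:modicaineq}), and since the $u_i$ solve A--C, standard interior elliptic estimates give uniform $C^3_{\loc}$ bounds; passing to a subsequence, $u_i\to u_\infty$ in $C^2_{\loc}(\R^n)$, where $u_\infty$ is again a stable A--C solution, and $e_i\to e_\infty\in\Sp^{n-1}$.

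The core of the argument is to understand what the vanishing of $\int_{B_1}\mathcal A_{e_i}'^2|\nabla u_i|^2$ forces on $u_\infty$. On the set where $\nabla u_\infty\neq 0$, the $C^2$ convergence and lower semicontinuity of the integrand (wherever $\nabla^{e_\infty'}u_\infty\ne 0$, using the formula of \Cref{rmk:A'expansion}: $|\nabla u|^2\mathcal A_e'^2=\sum_{j=2}^{n-1}\sum_{i=1}^n u_{ij}^2$, which is simply a quadratic polynomial in the second derivatives that is continuous under $C^2$ convergence) give $\int_{B_1}\mathcal A_{e_\infty'}'^2(u_\infty)\,|\nabla u_\infty|^2 = 0$. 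Hence $\sum_{j=2}^{n-1}\sum_{i=1}^n (u_\infty)_{ij}^2\equiv 0$ on $B_1\cap\{\nabla u_\infty\ne 0\}$, i.e.\ all second derivatives $\partial_{ij}u_\infty$ with $j\le n-1$ vanish there (in the $e_\infty=e_n$ frame). The remaining task is to upgrade this to $\mathcal A_{u_\infty}\equiv 0$ in $B_1\cap\{\nabla u_\infty\ne0\}$, which amounts to also killing the derivatives $\partial_{in}u_\infty$; note that $\mathcal A^2 = \sum_{j=2}^n\sum_{i=1}^n u_{ij}^2$ differs from $|\nabla u|^2\mathcal A_e'^2$ only by the column $j=n$. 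The point is that the A--C equation $\Delta u_\infty = W'(u_\infty)$ together with the already-established vanishing means $\partial_{nn}u_\infty = W'(u_\infty)$, and one then uses that $u_\infty$ depends only on the ``tilted'' variables consistently — more efficiently, one observes directly from $\partial_{ij}u_\infty=0$ for all $j\le n-1$ that $\nabla u_\infty$, hence $u_\infty$, is a function of $x_n$ alone on each connected component of $B_1\cap\{\nabla u_\infty\neq 0\}$, which immediately gives $\partial_{in}u_\infty=0$ for $i\le n-1$ and $\mathcal A_{u_\infty}\equiv 0$ there. By \Cref{lem:A0imp1D}, $u_\infty$ is one-dimensional on all of $\R^n$, so in particular $\mathcal A_{u_\infty}\equiv 0$ everywhere, and by $C^2_{\loc}$ convergence $\int_{B_1}\mathcal A_{u_i}^2|\nabla u_i|^2\to 0$, contradicting $\int_{B_1}\mathcal A_{u_i}^2|\nabla u_i|^2\ge\delta_{bad}$.

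The main obstacle is the potential degeneracy at points of $B_1$ where $\nabla u_\infty$ (or $\nabla^{e_\infty'}u_\infty$) vanishes, where the integrands are only defined by the ``otherwise $0$'' convention and lower semicontinuity must be handled with care. I would deal with this by noting that the critical set $\{\nabla u_\infty=0\}$ in $B_1$, if nonempty and not all of $B_1$, is where $u_\infty$ achieves $\pm1$ (by the strict Modica inequality, $|\nabla u_\infty|^2/2 \le W(u_\infty) = \frac14(1-u_\infty^2)^2$, so $\nabla u_\infty = 0$ forces $u_\infty=\pm1$ unless $u_\infty$ is non-constant there — more precisely one runs the argument on the open set $B_1\cap\{\nabla u_\infty\ne0\}$ and uses unique continuation as in \Cref{lem:A0imp1D} to propagate one-dimensionality); and if $\nabla u_\infty\equiv 0$ in $B_1$ then $u_\infty\equiv\pm1$ and again $\mathcal A_{u_\infty}\equiv 0$. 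A secondary technical point, needed to quote the footnote's generalized version with $c_0=\delta_{bad}$, is that $\delta_{bad}$ and $\delta_{bad}'$ are both \emph{dimensional} constants: the contradiction argument produces \emph{some} $\delta_{bad}'(n)>0$, and since for a critical solution the bad-center definition uses precisely $c_0=\delta_{bad}$, the proposition follows with this $\delta_{bad}'$.
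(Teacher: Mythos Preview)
Your compactness/contradiction setup matches the paper's, but the core deduction from $\mathcal A_{e_\infty}'\equiv 0$ to one-dimensionality is wrong. The formula in \Cref{rmk:A'expansion} reads $|\nabla u|^2\mathcal A_e'^2(x_0)=\sum_{j=2}^{n-1}\sum_{i=1}^n u_{ij}^2(x_0)$ in a frame with $e=e_n$ \emph{and} $e_1=\tfrac{\nabla^{e'}u}{|\nabla^{e'}u|}(x_0)$. The index $j$ runs only over $2,\dots,n-1$, not $1,\dots,n-1$; the direction $e_1$ (which is the tangential gradient direction, and varies with $x_0$) is \emph{not} killed. So $\mathcal A_{e_\infty}'\equiv 0$ only says that $\tfrac{\nabla^{e'}u_\infty}{|\nabla^{e'}u_\infty|}$ is locally constant, hence $u_\infty$ is \emph{two}-dimensional (a function of $x_n$ and one other direction), exactly as in \Cref{prop:Atanclassif}. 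Your jump to ``$u_\infty$ is a function of $x_n$ alone'' is unjustified and in general false.

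The missing ingredient is precisely the 2D stable classification (\Cref{thm:2dclas}, Ghoussoub--Gui): once you know $u_\infty$ is two-dimensional and stable, that theorem forces it to be $\pm1$ or $\phi(a\cdot x+b)$, and only then do you get $\mathcal A_{u_\infty}\equiv 0$ and the contradiction. This is how the paper proceeds, and it is not bypassable here---indeed the paper remarks just after the statement that ``the proof relies only on the classification of 2D stable solutions''. Your attempted shortcut through the PDE ($\partial_{nn}u_\infty=W'(u_\infty)$, etc.) does not close the gap, because the $j=1$ column of the Hessian remains entirely unconstrained by $\mathcal A_e'\equiv 0$.
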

The proof is given in \Cref{subsec:tanpfview}. Essentially, the reason behind the result is that $\mathcal A_e'^2\equiv 0$ implies that $u$ is 2D (and thus 1D by \Cref{thm:2dclas}).

Morally, ``curvature accumulates in all directions''. The proof relies only on the classification of 2D stable solutions, the best known result for many semilinear and free boundary problems, opening up the possibility of adapting our strategy to them. 

An important consequence of \Cref{prop:A'badcent}---combined with \eqref{eq:KL2Hintro}---is the following lower bound for the height excess around bad balls:
\begin{equation}\label{lowerboundheightR}
    C{\bf H}_{4R}^2(\zz) \ge \frac{c}{R^{n-3}}.
\end{equation}
We are now ready to delve into the contradiction argument.
\subsection{Reaching a contradiction -- A full overview of the argument}\label{sec:contradictionintro}
To prove \Cref{mainthm}, we may assume for contradiction that $K_*<\infty$. There exists then a critical solution $u:\R^4\to (-1,1)$, with (nonempty, by \Cref{lem:badnonempty}) bad set $\mathcal Z$ as in \Cref{def:badballintro}.\\
We will show that the existence of such $u$ gives a contradiction (equivalently, the set of subcritical densities is also closed). This may be seen as an ``inductive step'': the fact that any solution with density $<K_*$ must be 1D constrains the critical solution to have a special structure \eqref{eq:omega_modintro}. This information will be crucially used to set up our contradiction argument.

\vspace{3pt}
Interestingly, once we have combined the critical solution, the bad set inspired by Wang--Wei and its control via tangential stability, all of which are new and completely independent from \cite{CFFS25}, our overarching strategy will share striking analogies with the classification of 3D stable solutions to the Bernoulli problem in \cite{CFFS25}. As explained in \Cref{rem:analogies} this analogy is not obvious at all a priori. We choose on purpose notations and write our results in a way that such parallelisms---which would otherwise be difficult to grasp---become as apparent as possible.

\vspace{3pt}

{\bf Overarching strategy.} Perhaps optimistically, one might hope to exploit the closeness of Allen--Cahn level sets to minimal surfaces in order to perform a geometric improvement-of-flatness iteration (\`a la De Giorgi, Allard, or Savin). This would allow us to bring the flatness in \eqref{eq:omega_modintro} from scale $R\gg1$ down to scale one around some fixed $\zz \in \cZ$ and reach a contradiction.

Unfortunately, this naive approach fails. The point is that we have access to enhanced mean curvature bounds \emph{only} on good balls; meanwhile, nothing prevents the bad set from appearing dense along a hyperplane (at large scales). In other words, good balls may be of infinitesimal size relative to the scale under consideration. In such situations, the mean curvature estimates available on these good balls are useless for establishing regularity in the much larger ball of interest.

The previous obstruction also makes an intrinsic approach in the spirit of \cite{SSY75} or \cite{Bel25} essentially hopeless, apart from the fact that we do not have access to Schoen's or Simons' inequalities anyways.

To overcome this, our guiding philosophy will instead be to seek a contradiction from the very existence of the bad set as a whole. 
An outline of our argument in this article is the following:
\begin{itemize}
    \item We consider a carefully selected large bad ball $B_{R_k}(\zz_k)$, with $\ep_k^2:={\bf H}_{R_k}^2(\zz_k)\to 0$.
    \item Using the special properties of {\bf this} ball, we find $R_k^\flat$, with $1\ll R_k^\flat\ll R_k$ as $k\to \infty$, and a new center $\boldsymbol y_k$ such that $B_{R_k^\flat}(\boldsymbol y_k)\subset B_{R_k}(\zz_k)$ and
    \begin{equation}\label{eq:wluigasugs}
        {\bf H}_{R_k^\flat}^2(\boldsymbol y_k)\lesssim \left(R_k^\flat/R_k\right)^{\chi}\ep_k^2\quad \mbox{and}\quad K_*-{\bf M}_{R_k^\flat}(\boldsymbol y_k)\lesssim \left(R_k^\flat/R_k\right)^{\chi}\ep_k^2\qquad\mbox{for some tiny } \chi>0.
    \end{equation}
    \item Via a crucial monotonicity-type inequality, which relates the height excess\footnote{The motivation behind this definition of excess in our article was precisely that we found (a \textit{weighted} version of) it to satisfy this challenging monotonicity-type relation.} with the density pinching, we will bring the improved flatness {\bf back to the original scale $R_k$}, up to a logarithmic error:
    $$
    {\bf H}_{4R_k}^2(\boldsymbol y_k)\lesssim {\bf H}_{R_k^\flat}^2(\boldsymbol y_k)+ \log\big(R_k/R_k^\flat\big)\left[K_*-{\bf M}_{R_k^\flat}(\boldsymbol y_k)\right]\lesssim \left(R_k^\flat/R_k\right)^{\chi}\log\left(R_k/R_k^\flat\right)\ep_k^2\,,
    $$
    so that
    $$
    \ep_k^2={\bf H}_{R_k}^2(\zz_k)\lesssim {\bf H}_{4R_k}^2(\boldsymbol y_k)\lesssim \left(R_k^\flat/R_k\right)^{\chi}\log\left(R_k/R_k^\flat\right)\ep_k^2\,.
    $$
    For $k$ sufficiently large, since $R_k^\flat/R_k\to 0$ this yields a contradiction.
\end{itemize}
Very informally, we improve the flatness of $u$ in $B_{R_k}(\zz_k)$ with respect to itself, which is naturally a contradiction. We now explain our strategy in more detail.

{\bf Selection of center and scale.} Consider the following:
\begin{definition}[{\bf Size of the bad set at resolution $\theta R$}]\label{def:Ntheta}
    Given $\theta\in(0,1]$, define
    \begin{align}
        N(\theta,B_R(\zz)):=&(\theta R)^{-n}\Big|B_{\theta R}(\cZ\cap B_R(\zz)) \Big|\\
        =&(\theta R)^{-n}\Big|\cup_{\widetilde\zz\in \cZ\cap B_R(\zz)} B_{\theta R}(\widetilde\zz) \Big|\,,
    \end{align}
    i.e. essentially the number of balls of radius $\theta R$ needed to cover $\cZ\cap B_R(\zz)$ (by standard covering arguments).
\end{definition}
Set $N_\theta:= N(\theta,B_R(\zz))$. Stability easily gives $\Big|B_{1}(\cZ\cap B_R(\zz)) \Big|\lesssim R^{n-3}$, i.e. $N_{\frac{1}{R}}\lesssim R^{n-3}$. We may then hope at best that $\cZ$ behaves like a codimension-three submanifold of $\R^n$, or codimension-two in $\{u=0\}$. Now, if that were the case, we would have $N_\theta\sim \theta^{-(n-3)}$ for {\bf every} $\theta\in(0,1]$, a ``Minkowski-type'' bound; on the other hand, we only know a ``Hausdorff'' bound $N_{\frac{1}{R}}\lesssim R^{n-3}$, perfectly compatible with the bad set looking essentially $(n-1)$-dimensional at large resolutions $\theta R\gg 1$, leaving no big good regions available.

As explained before, there is then no hope to be able to perform a geometric iteration, and we will need to improve the excess via another argument. An instructive observation is the following: Assume that $N_\theta\geq \theta^{-(n-3+\beta)}$; by Vitali, we find $\sim N_\theta$ disjoint bad balls $B_{\theta R}(\zz_i)\subset B_R(\zz)$. Then, we can bound
$$
\sum_{i=1}^{\sim N_\theta} {\bf K}_{\theta R}^{2}(\zz_i)\leq \inf_{e\in\Sp^{n-1}}\sum_{i=1}^{\sim N_\theta} \frac{1}{(\theta R)^{n-3}}\int_{B_{\theta R}(\zz_i)} \mathcal A_e'^2|\nabla u|^2\leq \inf_{e\in\Sp^{n-1}}\frac{1}{(\theta R)^{n-3}}\int_{B_{R}(\zz)} \mathcal A_e'^2|\nabla u|^2= \theta^{-(n-3)}{\bf K}_{R}^{2}(\zz)\,;
$$
in particular, since $N_\theta\geq \theta^{-(n-3+\beta)}$, there is at least one $\zz_j$ in the sum such that ${\bf K}_{\theta R}^{2}(\zz_j)\leq  \theta^{\beta}{\bf K}_{R}^{2}(\zz)$.

In other words, we have improved our tangential curvatures with an algebraic rate! Changing the center may seem nonstandard in this class of problems---the point is that $\zz_j$ is a bad center again, just as good as $\zz$ to continue our contradiction argument thanks to the uniformity in \eqref{eq:omega_modintro}.\\

To transform this into an excess decay, we would need a sort of ``reverse stability relation''. This motivates (see \Cref{sec:seleccentrscale} for the short proofs):

\begin{lemma}[\textbf{Selection 1: Choice of $R_k$ and $\zz_k$}]\label{lem:radcentselec}
Let $\alpha\in(0,\frac{1}{4})$. There exist $R_k\to\infty$ and $\zz_k\in\cZ$ such that
    $$\ep_k^2:={\bf H}_{4R_k}^2(\zz_k)\to 0$$
    and
    \begin{equation}\label{eq:geiweghio}
        {\bf H}_{4R}^2(\zz)\leq 2\left(\frac{{\bf K}_R^{2}(\zz)}{{\bf K}_{R_k}^{2}(\zz_k)}\right)^{1+\alpha}\ep_k^2\quad \mbox{for every }\zz\in\cZ \mbox{ and } R\in[R_0, R_k].
    \end{equation}
\end{lemma}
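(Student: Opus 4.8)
The plan is to obtain \eqref{eq:geiweghio} as an almost-maximality property of the scale-invariant ratio
$$
Q(R,\zz)\ :=\ \frac{{\bf H}_{4R}^2(\zz)}{{\bf K}_R^2(\zz)^{1+\alpha}},\qquad \zz\in\cZ,\ R\ge R_0,
$$
where $R_0=R_0(n,K_*)$ is a fixed threshold above which \Cref{prop:W}, \Cref{prop:A'badcent} and \eqref{eq:KL2Hintro} are available; note $Q$ is positive and finite there, since the weight $\tfrac12|\nabla u|^2+W(u)$ never vanishes (as $|u|<1$) and ${\bf K}_R^2(\zz)>0$ by \Cref{prop:A'badcent}. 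Indeed, once one exhibits $R_k\to\infty$ and $\zz_k\in\cZ$ with $Q(R,\zz)\le 2\,Q(R_k,\zz_k)$ for all $\zz\in\cZ$ and all $R\in[R_0,R_k]$, it suffices to set $\ep_k^2:={\bf H}_{4R_k}^2(\zz_k)$ and multiply this inequality through by ${\bf K}_R^2(\zz)^{1+\alpha}$ to recover exactly \eqref{eq:geiweghio}; and $\ep_k^2\to0$ follows since $R_k\to\infty$ while, by \Cref{prop:W}, ${\bf H}_{4R}^2(\zz)\to0$ as $R\to\infty$ \emph{uniformly in $\zz\in\cZ$} (the modulus there being independent of $\zz$). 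So everything reduces to producing such a near-maximizing pair with $R_k\to\infty$.

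First I would record two a priori bounds on $Q$. Using $(x\cdot e)^2\le r^2$ on $B_r$, the definition of $\mathcal E^1$, monotonicity of ${\bf M}$, and ${\bf M}_\infty(u)=K_*$, one obtains ${\bf H}_{4R}^2(\zz)\le\sigma_{n-1}{\bf M}_{4R}(\zz)\le\sigma_{n-1}K_*$; combined with ${\bf K}_R^2(\zz)\ge\delta_{bad}'\,R^{-(n-3)}$ — which follows from \Cref{prop:A'badcent} and monotonicity of $R\mapsto\int_{B_R(\zz)}\mathcal A_e'^2|\nabla u|^2$ — this shows that for each finite $S\ge R_0$ the quantity $Q^*(S):=\sup\{Q(R,\zz):R\in[R_0,S],\ \zz\in\cZ\}$ satisfies $Q^*(S)\le\sigma_{n-1}K_*(\delta_{bad}')^{-1-\alpha}S^{(n-3)(1+\alpha)}<\infty$, and $Q^*$ is nondecreasing. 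On the other hand, taking infima over $e$ in \eqref{eq:KL2Hintro} gives ${\bf K}_R^2(\zz)\le C\,{\bf H}_{4R}^2(\zz)$, hence $Q(R,\zz)\ge C^{-1-\alpha}{\bf H}_{4R}^2(\zz)^{-\alpha}$; since $\alpha>0$ and ${\bf H}_{4R}^2(\zz_0)\to0$ for a fixed $\zz_0\in\cZ$ (one exists by \Cref{lem:badnonempty}), we conclude $Q^*(S)\ge Q(S,\zz_0)\to\infty$ as $S\to\infty$.

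With these two bounds the selection is routine. Fixing an integer $k>R_0$, since $Q^*(k)<\infty$ and $Q^*(S)\to\infty$ I would choose $S_k>k$ with $Q^*(S_k)>2Q^*(k)$ and then, by definition of supremum, a pair $(R_k,\zz_k)$ with $R_k\in[R_0,S_k]$, $\zz_k\in\cZ$ and $Q(R_k,\zz_k)>\tfrac12 Q^*(S_k)$. One checks $R_k>k$: otherwise $R_k\in[R_0,k]$ would force $Q(R_k,\zz_k)\le Q^*(k)<\tfrac12 Q^*(S_k)$, a contradiction. And by monotonicity of $Q^*$, for all $\zz\in\cZ$ and $R\in[R_0,R_k]$ one has $Q(R,\zz)\le Q^*(R_k)\le Q^*(S_k)<2Q(R_k,\zz_k)$, which rearranges to \eqref{eq:geiweghio}. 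Since $R_k>k\to\infty$ we get $R_k\to\infty$, and then $\ep_k^2={\bf H}_{4R_k}^2(\zz_k)\to0$ by the uniform flatness of \Cref{prop:W}.

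The hard part here is conceptual rather than computational: one must guarantee $R_k\to\infty$, i.e. rule out that the suprema $Q^*(S)$ are all (nearly) attained at a bounded scale, which would make the contradiction scheme of \Cref{sec:contradictionintro} vacuous. This is exactly what the divergence $Q^*(S)\to\infty$ prevents, and that divergence is where the interplay between the Caccioppoli-type bound ${\bf K}_R^2\lesssim{\bf H}_{4R}^2$ from \eqref{eq:KL2Hintro} and the large-scale flatness ${\bf H}_{4R}^2(\zz)\to0$ of \Cref{prop:W} enters: around any bad center the dimensionless tangential curvature ${\bf K}_R^2$ decays at least as fast as the already-infinitesimal height excess ${\bf H}_{4R}^2$, so the ratio $Q$ is forced to blow up at large scales.
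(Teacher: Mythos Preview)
Your proof is correct and follows essentially the same approach as the paper: both define the penalized ratio $Q(R,\zz)={\bf H}_{4R}^2(\zz)/{\bf K}_R^2(\zz)^{1+\alpha}$, show it is finite for bounded $R$ yet diverges as $R\to\infty$ via the tangential stability bound ${\bf K}_R^2\le C{\bf H}_{4R}^2$ together with ${\bf H}_{4R}^2\to0$, and then extract a near-maximizing sequence. Your two-step selection (first $S_k$, then $(R_k,\zz_k)$, with the observation that $R_k>k$ is forced) is a slightly cleaner packaging of the paper's argument with $\widetilde F(R)=\sup_{r\in[R_0,R]}F(r)$.
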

We obtain this by essentially choosing $z_k$ and $R_k$ which maximise $\frac{{\bf H}_{4R}^2(\zz)}{{\bf K}_R^{2(1+\alpha)}(\zz)}$, which is a slightly penalised (by $\alpha>0$) version of the tangential stability inequality, among all $\zz\in\cZ$ and $R\leq R_k$.\\

From the discussion above, it is then natural to consider, for a fixed $\beta\in (0,1)$:
\begin{equation}\label{eq:oirghlvbjintro}
    \widetilde\theta_k\simeq\inf \left\{\theta\mbox{ : exists } \zz\in \cZ \mbox{ with } B_{\theta R_k}(\zz)\subset B_{R_k}(\zz_k), {\bf K}_{\theta R_k}^2(\zz)\leq\theta^{\beta}{\bf K}_{R_k}^2(\zz_k)\right\},
\end{equation}
so that we find new $\Rk:=\widetilde \theta_k R_k$ and $\zk$ with (by \eqref{eq:geiweghio}) the desired excess decay ${\bf H}_{4\Rk}^2(\zk)\leq \widetilde\theta_k^{\beta(1+\alpha)}\ep_k^2$. On top of that, the bound we wanted for $N_\theta$ will actually now hold\footnote{As otherwise the argument before the lemma would allow the curvature decay to continue.} relative to $B_{\Rk}(\zk)$. The precise statement is:
\begin{lemma}[\textbf{Selection 2: Passing to $\Rk$ and $\zk$}]\label{lem:NbdRknew}
    Let $R_k,\zz_k,\ep_k$ be given by \Cref{lem:radcentselec}, and fix $\beta\in(0,1)$. For $k$ large enough, there exist $\widetilde\theta_k\in(\frac{R_0}{R_k},1]$ and $\zk\in \cZ\cap B_{R_k}(\zz_k)$ such that, putting $\Rk:=\widetilde \theta_k R_k$ and $\epk:=\widetilde\theta_k^{\beta(1+\alpha)}\ep_k$, the following hold:
    \begin{itemize}
        \item $B_{\Rk}(\zk)\subset B_{R_k}(\zz_k)$ and $\Rk\to\infty$.
        \item ${\bf H}_{4\Rk}^2(\zk)\leq 2\epk^2\to 0$.
        \item {\bf Excess bound:} \begin{equation}\label{eq:hjgagfas}
        {\bf H}_{4R}^2(\zz)\leq 2\left(\frac{{\bf K}_R^{2}(\zz)}{{\bf K}_{\Rk}^{2}(\zk)}\right)^{1+\alpha}\epk^2\leq 2\left(\frac{\Rk}{R}\right)^{(n-3)(1+\alpha)}\epk^2\quad \mbox{for any } \zz\in\cZ,\, R\geq R_0\ \mbox{ s.t. }B_R(\zz)\subset B_{\Rk}(\zk).
    \end{equation}
    \item {\bf Bad ball count:}
    \begin{equation}\label{eq:fhiowqrhgoqwg}
        N(\theta,B_{\Rk}(\zk))\leq C\theta^{-(n-3+\beta)}\quad \mbox{for all}\quad  \theta\in(\frac{R_0}{\Rk},1].
    \end{equation}
    \end{itemize}
\end{lemma}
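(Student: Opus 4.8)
The strategy is to obtain $\widetilde\theta_k$ and $\zk$ by infimizing the quantity appearing in \eqref{eq:oirghlvbjintro}, and then to extract the four bulleted conclusions by combining this choice with \Cref{lem:radcentselec} and the relation \eqref{lowerboundheightR}. More precisely, fix $\beta\in(0,1)$, and for each $k$ define
$$
\widetilde\theta_k := \inf\left\{\theta\in\left(\tfrac{R_0}{R_k},1\right] : \exists\, \zz\in\cZ \text{ with } B_{\theta R_k}(\zz)\subset B_{R_k}(\zz_k) \text{ and } {\bf K}_{\theta R_k}^2(\zz)\leq \theta^\beta {\bf K}_{R_k}^2(\zz_k)\right\},
$$
with the convention that $\widetilde\theta_k=1$ if the set is empty (note $\theta=1$, $\zz=\zz_k$ always qualifies, so the infimum is over a nonempty set, and $\widetilde\theta_k\leq 1$). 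Since the map $\zz\mapsto {\bf K}_r^2(\zz)$ is continuous and $\cZ$ is closed, the infimum is attained (after a diagonal/compactness argument, using that $\cZ\cap \overline{B_{R_k}(\zz_k)}$ is compact and the constraint set for fixed $\theta$ is closed); this gives $\zk\in\cZ\cap B_{R_k}(\zz_k)$ with $B_{\Rk}(\zk)\subset B_{R_k}(\zz_k)$ and ${\bf K}_{\Rk}^2(\zk)\leq \widetilde\theta_k^\beta {\bf K}_{R_k}^2(\zz_k)$, where $\Rk:=\widetilde\theta_k R_k$.

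\textbf{Deriving the conclusions.} First, $\Rk\to\infty$: indeed $\widetilde\theta_k > R_0/R_k$ by the constraint defining the infimum, so $\Rk=\widetilde\theta_k R_k > R_0$; to upgrade $\Rk>R_0$ to $\Rk\to\infty$ one argues that if $\Rk$ stayed bounded along a subsequence then $\widetilde\theta_k\to 0$, and one can rerun the pigeonhole argument sketched before \Cref{lem:radcentselec} at scale $\Rk$ (which is $\gtrsim R_0$) to produce a further center with even smaller ${\bf K}^2$ relative to the new data — contradicting minimality of $\widetilde\theta_k$ (this is the step where one uses that the Vitali-covering / pigeonhole estimate $\sum_i {\bf K}_{\theta R}^2(\zz_i)\leq \theta^{-(n-3)}{\bf K}_R^2(\zz)$ combined with a lower bad-ball count forces curvature decay). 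For the second bullet: plugging $\zz=\zk$, $R=\Rk$ into \eqref{eq:geiweghio} and using ${\bf K}_{\Rk}^2(\zk)\leq \widetilde\theta_k^\beta {\bf K}_{R_k}^2(\zz_k)$ gives
$$
{\bf H}_{4\Rk}^2(\zk)\leq 2\left(\frac{{\bf K}_{\Rk}^2(\zk)}{{\bf K}_{R_k}^2(\zz_k)}\right)^{1+\alpha}\ep_k^2 \leq 2\,\widetilde\theta_k^{\beta(1+\alpha)}\ep_k^2 = 2\epk^2,
$$
and $\epk\to 0$ since $\ep_k\to 0$ and $\widetilde\theta_k\leq 1$. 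The excess bound \eqref{eq:hjgagfas} follows the same way: for $\zz\in\cZ$, $R\geq R_0$ with $B_R(\zz)\subset B_{\Rk}(\zk)\subset B_{R_k}(\zz_k)$, apply \eqref{eq:geiweghio} to get ${\bf H}_{4R}^2(\zz)\leq 2({\bf K}_R^2(\zz)/{\bf K}_{R_k}^2(\zz_k))^{1+\alpha}\ep_k^2 = 2({\bf K}_R^2(\zz)/{\bf K}_{\Rk}^2(\zk))^{1+\alpha}({\bf K}_{\Rk}^2(\zk)/{\bf K}_{R_k}^2(\zz_k))^{1+\alpha}\ep_k^2\leq 2({\bf K}_R^2(\zz)/{\bf K}_{\Rk}^2(\zk))^{1+\alpha}\epk^2$; the second inequality in \eqref{eq:hjgagfas} uses the trivial monotone-type bound ${\bf K}_R^2(\zz)\leq (\Rk/R)^{n-3}{\bf K}_{\Rk}^2(\zz)$ together with the comparability ${\bf K}_{\Rk}^2(\zz)\lesssim {\bf K}_{\Rk}^2(\zk)$ coming from \eqref{lowerboundheightR} (each bad ball contributes a definite amount, so ${\bf K}$ at a fixed large scale is bounded below uniformly over $\cZ$, while ${\bf H}$ — hence by \eqref{eq:KL2Hintro} also ${\bf K}$ — is bounded above by $\ep_k^2\to 0$; more carefully one bounds ${\bf K}_R^2(\zz)$ directly by integrating $\mathcal A_e'^2|\nabla u|^2$ over $B_R(\zz)\subset B_{\Rk}(\zk)$ and scaling).

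\textbf{Bad-ball count.} The bound \eqref{eq:fhiowqrhgoqwg} is the crux and the step I expect to be the main obstacle. The idea is: for $\theta\in(R_0/\Rk,1]$ consider a maximal $\theta\Rk$-separated subset of $\cZ\cap B_{\Rk}(\zk)$, giving disjoint balls $B_{\theta\Rk/2}(\zz_i)$, $i=1,\dots,M$, with $M\gtrsim N(\theta,B_{\Rk}(\zk))$. By \Cref{prop:A'badcent} and \eqref{eq:KL2Hintro} each $\zz_i$ satisfies ${\bf K}_{\theta\Rk}^2(\zz_i)\gtrsim$ (something), but more usefully one runs the pigeonhole: summing the localized tangential stability over the disjoint balls,
$$
\sum_{i=1}^M {\bf K}_{\theta\Rk}^2(\zz_i)\leq \inf_{e}\frac{1}{(\theta\Rk)^{n-3}}\int_{B_{\Rk}(\zk)}\mathcal A_e'^2|\nabla u|^2 = \theta^{-(n-3)}{\bf K}_{\Rk}^2(\zk).
$$
If $M > C\theta^{-(n-3+\beta)}$ with $C$ large, then by pigeonhole some $\zz_j$ in $B_{\Rk}(\zk)$ with $B_{\theta\Rk}(\zz_j)\subset B_{\Rk}(\zk)$ has ${\bf K}_{\theta\Rk}^2(\zz_j)\leq \frac{1}{M}\theta^{-(n-3)}{\bf K}_{\Rk}^2(\zk) < \theta^\beta {\bf K}_{\Rk}^2(\zk)\leq \theta^\beta\widetilde\theta_k^\beta {\bf K}_{R_k}^2(\zz_k) = (\theta\widetilde\theta_k)^\beta {\bf K}_{R_k}^2(\zz_k)$; since $B_{\theta\widetilde\theta_k R_k}(\zz_j)\subset B_{R_k}(\zz_k)$ and $\theta\widetilde\theta_k < \widetilde\theta_k$, this contradicts the minimality of $\widetilde\theta_k$ (provided $\theta\widetilde\theta_k > R_0/R_k$, which holds in the stated range). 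Hence $M\leq C\theta^{-(n-3+\beta)}$, and a standard Vitali/covering comparison converts the packing bound into the Minkowski-content bound \eqref{eq:fhiowqrhgoqwg}. The delicate points — which the detailed proof in \Cref{sec:seleccentrscale} must handle — are: (i) that the balls $B_{\theta\Rk}(\zz_j)$ stay inside $B_{\Rk}(\zk)$ rather than merely inside $B_{R_k}(\zz_k)$, which may require shrinking the comparison radius by a fixed factor and absorbing it into $C$; (ii) the continuity/attainment of the infimum defining $\widetilde\theta_k$ so that the ``$<$'' in the pigeonhole genuinely violates minimality; and (iii) keeping track that the constant $C$ in \eqref{eq:fhiowqrhgoqwg} is uniform in $k$, which follows because all the structural estimates (\Cref{prop:A'badcent}, \eqref{eq:KL2Hintro}, and the monotone bounds) have dimensional or $K_*$-dependent constants only.
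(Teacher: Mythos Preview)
Your overall strategy matches the paper's, and the bad-ball-count pigeonhole argument is essentially identical. Two points deserve correction or simplification.

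\textbf{On $\Rk\to\infty$.} Your proposed argument (``rerun the pigeonhole at scale $\Rk$ to contradict minimality'') is vague and does not work as stated. The paper's argument is immediate: once you have ${\bf H}_{4\Rk}^2(\zk)\leq 2\epk^2\leq 2\ep_k^2\to 0$, the lower bound ${\bf H}_{4\Rk}^2(\zk)\geq c\Rk^{3-n}$ from \eqref{eq:awioufhiunoe} (i.e.\ \eqref{lowerboundheightR}) forces $\Rk\to\infty$ directly.

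\textbf{On the choice of $\widetilde\theta_k$.} Rather than arguing that the infimum is attained (which requires some care), the paper simply takes $\widetilde\theta_k\in[\theta_k,\min\{2\theta_k,1\}]$ slightly above the infimum $\theta_k$, choosing $\zk$ accordingly so that ${\bf K}_{\Rk}^2(\zk)\leq\widetilde\theta_k^\beta{\bf K}_{R_k}^2(\zz_k)$ holds by definition of infimum. This avoids any compactness discussion. In the bad-ball-count step the paper then restricts to $\theta\in(R_0/\Rk,\tfrac12)$ so that $\theta\widetilde\theta_k<\theta_k$, yielding the contradiction with the \emph{infimum} $\theta_k$ (not with $\widetilde\theta_k$ itself).

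\textbf{On the second inequality in \eqref{eq:hjgagfas}.} Your first explanation via ``${\bf K}_{\Rk}^2(\zz)\lesssim{\bf K}_{\Rk}^2(\zk)$ from \eqref{lowerboundheightR}'' is not the right route. As you note in your parenthetical correction, the inclusion $B_R(\zz)\subset B_{\Rk}(\zk)$ gives directly, for any fixed $e$,
\[
R^{3-n}\int_{B_R(\zz)}\mathcal A_e'^2|\nabla u|^2\leq R^{3-n}\int_{B_{\Rk}(\zk)}\mathcal A_e'^2|\nabla u|^2=(\Rk/R)^{n-3}{\bf K}_{\Rk}^2(e,\zk),
\]
and infimizing over $e$ yields ${\bf K}_R^2(\zz)\leq(\Rk/R)^{n-3}{\bf K}_{\Rk}^2(\zk)$. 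This is what the paper calls the ``trivial comparison''.

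Your concerns (i)--(iii) at the end are legitimate technicalities that the paper handles (or absorbs into constants) without much comment; they do not affect the structure of the argument.
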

Observe that we carry over \eqref{eq:hjgagfas} too; this will be just as important as \eqref{eq:fhiowqrhgoqwg}. This concludes \Cref{sec:seleccentrscale}.\\

{\bf Linearisation and improvement of flatness.} It is at this second center and scale that our setting will allow us to perform a linearisation procedure. Let us restrict to $n=4$ in what follows. The main steps will be:

\begin{enumerate}
    \item[(i)] By \eqref{eq:hjgagfas}, which guarantees flatness at many scales, we will be able to cover almost all of $\{u = 0\} \cap B_{\Rk/4}(\zk)$—except for a small neighbourhood of the bad set—by a union of $K_*$ graphs $x_n = \widetilde g_i(x')$, with $\widetilde g_i: B_{\Rk/4}'(\zk') \to \R$ (in suitable Euclidean coordinates after rotation). The prime notation $'$ will be used throughout to denote objects in $\R^{n-1}$, the first $n-1$ coordinates of a point, etc.---see more comments on this notation below.    
    
    We will then consider, for each $i=1,\dots, K_*$, the functions
    $$ h_i (x') : = \frac{\widetilde g_i(\zk'+ \Rk x')-(\zk)_n}{\Rk \epk}\, ,   \quad \mbox{which have }L^1\mbox{ norm of size }1\mbox{ in } B'_{1/4}.$$
    
    \item[(ii)] A crucial step will be to show that the mean curvatures of the $h_i$ are much smaller---in fact, bounded (in $L^1$) by a positive power of $\epk$. This will combine all the special properties of the pair center-scale $(\zk,\Rk)$ and of course will strongly rely on \cite{WW19}. Then, this result will imply (via a standard iteration), that at any $\bar\zz\in \cZ\cap B_{\Rk}$ we can find $K_*$ different affine functions $\bar \ell_i$  such that \begin{equation}\label{eq:quiefabfoab}| h_i-\bar \ell_i|
    \quad \mbox{is of size $O(\epk^{3\chi/2})$ on average over } B'_{\epk^\chi} (\bar \zz'/\Rk)\,,
    \end{equation}
    for some tiny exponent $\chi>0$.
    
    Up to scaling back, this amounts to a small improvement of flatness for each layer $\widetilde g_i$. Notice however that, a priori, there would be no reason why the $\bar \ell_i$ should be similar to one another (for example, the $\widetilde g_i$ could have all been linear functions themselves to begin with).
    
    \item[(iii)] Finally, by carefully leveraging the properties obtained from the continuous induction argument---more precisely, using \eqref{eq:omega_modintro}, which forces all layers of $\{u = 0\}$ to lie close to each other when viewed inside a sufficiently large ball centered at {\it any} bad center---we will show that the $\bar \ell_i$ must in fact be very close to one another.
 More precisely, we will find a {\it single} affine function $\ell$ (for example one can take $\ell:= R_k\bar \ell_1(\zk'+  \,\cdot\,)$) such that:
\begin{equation}\label{eq:quiefabfoab2} \frac{|\tilde g_i-\ell|}{\epk^\chi\Rk}\quad\mbox{is of size $O(\epk^{\chi/2})$ on average over}\  B'_{\epk^\chi\Rk}(\bar\zz)\,,\quad\mbox{for every}\quad i\in\{1,...,K_*\}\,.
\end{equation}
    This {\it collective} improvement of flatness of the layers will then be upgraded to an improvement on ${\bf H}^2$ and $K_*-{\bf M}$, leading to \eqref{eq:wluigasugs} and thus the final contradiction.
\end{enumerate}
    
\noindent We now explain in more detail the steps above. Throughout the paper, we use the prime notation~$'$ to denote projections of sets\footnote{For example, if $A \subset \R^n$ we set
$A' := \{x' \in \R^{n-1} \ :\ (x',t) \in A \ \text{for some } t \in \R\}$.}, points, and balls onto $\R^{n-1}= \R^3$.

Given $\lambda>0$, we set
\begin{equation}\label{eqdef:Omega2-gamm}
    \Omega_{\lambda}:=\{{\rm dist}(x',[\cZ\cap B_{\Rk}(\zk)]')\geq \epk^{\lambda}\Rk\}\cap B_{\frac{1}{2}\Rk}'(\zk')\subset\R^{n-1}\,.
\end{equation}
Notice that the set $\Omega_{\lambda}$ clearly depends on $k$, but we omit this dependence in the notation for readability, as there is no risk of confusion.

Throughout, $o_k(1)$ denotes a (positive) quantity that can be made arbitrarily small by taking $k$ sufficiently large (independently of all other variables involved in the statements).

Our starting point towards point (i) described above is:
\begin{proposition}[{\bf Graphical decomposition}]\label{prop:graphdec}
Let $\gamma : =1/4$.
For any given $k$, let us choose a Euclidean coordinate frame\footnote{This frame always exists thanks to the second bullet in \Cref{lem:NbdRknew}.} such that ${\bf H}_{4\Rk}^2(e_n,\zk)\leq 2\epk^2$. Then---for $k$ sufficiently large---there are $K_*$ smooth graphs $g_i:\Omega_{2-\gamma/2}\to \R$, $g_1<...<g_{K_*}$, such that
\begin{equation*}
        \{u=0\}\cap B_{\frac{1}{2}\Rk}(\zk)\cap (\Omega_{2-\gamma/2}\times\R)=\bigcup_{i=1}^{K_*} {\rm graph} \,g_i\,.
\end{equation*}
Furthermore, recalling that $\HH[f]:={\rm div}(\frac{\nabla f}{\sqrt{1+|\nabla f|^2}})$, we have
    \begin{equation}\label{eq:uagfabaf}
         |\nabla g_i|\le o_k(1)\quad \mbox{and}\quad |\HH[g_i]|(x')\leq \frac{C}{{\rm dist}^2(x',[\cZ\cap B_{\Rk}(\zk)]')}\,.
    \end{equation}
\end{proposition}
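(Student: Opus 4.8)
The plan is to combine the algebraic excess bound \eqref{eq:hjgagfas} with the regularity-in-good-balls machinery to produce the graphs away from the bad set, and then feed those into the Wang--Wei estimate \Cref{thm:sheetimpc2alpha} to get the mean curvature bound. First I would fix the coordinate frame with $e_n=e_{\zk,4\Rk}$, so that \eqref{eq:omega_modintro} gives $\{|u|\le 0.9\}\cap B_{4\Rk}(\zk)\subset\{|x_n-(\zk)_n|\le \omega((4\Rk)^{-1})\cdot 4\Rk\}$, and by \Cref{lem:NbdRknew} we may arrange ${\bf H}^2_{4\Rk}(e_n,\zk)\le 2\epk^2$. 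The key local input is that at any point $x_0\in\{|u|\le 0.9\}\cap B_{\Rk/2}(\zk)$ with $d:={\rm dist}(x_0',[\cZ\cap B_{\Rk}(\zk)]')$, the ball $B_{d/2}(x_0)$ contains no bad centers, so \Cref{thm:goodimpsheet} (rescaled to radius $\sim d$, using ${\bf M}_{\Rk}(\zk)\le K_*$ to control the energy on all such subballs) yields the sheeting assumptions in $B_{cd}(x_0)$: namely $|\nabla u|\gtrsim 1$ and $|\mathcal A_u|\lesssim 1/d$ there. Feeding this into \Cref{lem:rjbagbda} locally shows $\{u=0\}$ is, near $x_0$, a union of graphs over a ball of radius $\sim d$ in the $e_n^\perp$-hyperplane with gradient $o(1)$ and $D^2$-bound $\lesssim 1/d$; since this holds at every $x_0$ with $x_0'\in\Omega_{2-\gamma/2}$ (where $d\ge \epk^{2-\gamma/2}\Rk\to\infty$), these local graphs patch together into global $g_i:\Omega_{2-\gamma/2}\to\R$. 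The number of sheets must be exactly $K_*$: it is locally constant on the connected set $\Omega_{2-\gamma/2}$ (one checks $\Omega_{2-\gamma/2}$ is connected for $n=4$ since $[\cZ]'$ has codimension $\ge 1$ in $\R^3$, using the Hausdorff bound $N_{1/\Rk}\lesssim \Rk^{n-3}$ from stability), and the average height excess bound forces the count to match the density ${\bf M}_{\Rk}(\zk)\to K_*$ via \Cref{thm:HutchTon}-type counting, i.e. $K_*$ integer sheets collapsing onto the hyperplane $\{x_n=(\zk)_n\}$.

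Once the graphs exist with $|\nabla g_i|=o_k(1)$ and $|D^2 g_i|(x')\lesssim 1/{\rm dist}^2(x',[\cZ]')$ — wait, the stated bound is on $\HH[g_i]$, not $D^2 g_i$, so here is where \Cref{thm:sheetimpc2alpha} is essential rather than just \Cref{lem:rjbagbda}. The point is to apply \Cref{thm:sheetimpc2alpha} at scale $R\sim d={\rm dist}(x_0',[\cZ\cap B_{\Rk}(\zk)]')$ centered near $x_0$: the sheeting assumptions hold in $B_{cd}(x_0)$ (just established), $\ep=1$ and $\ep/R=1/(cd)\to 0$ since $d\ge \epk^{2-\gamma/2}\Rk\to\infty$, so \eqref{eq:meancurvsheet} gives $\|\HH[g_i^0]\|_{L^\infty}\lesssim 1/d^2$ on the local graph piece, where $g_i^0$ is the local graph over $B'_{cd}(x_0')$. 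Translating $\HH[g_i^0]$ into $\HH[g_i]$ is immediate since they agree as functions wherever both are defined (the graph of $\{u=0\}$ is intrinsic), giving $|\HH[g_i]|(x')\lesssim 1/{\rm dist}^2(x',[\cZ\cap B_{\Rk}(\zk)]')$ exactly as claimed in \eqref{eq:uagfabaf}. The $|\nabla g_i|=o_k(1)$ part follows from the flatness \eqref{eq:omega_modintro}: the graphs are trapped in a slab of thickness $\omega((4\Rk)^{-1})\cdot 4\Rk$ over a ball of radius $\sim\Rk/2$, and combined with the uniform curvature bound $|\mathcal A|\lesssim 1/d$ on good balls this gives $|\nabla g_i|\to 0$ (alternatively, use the tilt excess bound ${\bf T}^2_{4\Rk}(e_n,\zk)\lesssim {\bf H}^2\to 0$ together with the pointwise curvature control to propagate smallness of $|\nabla g_i|$ everywhere in $\Omega_{2-\gamma/2}$).

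The main obstacle, I expect, will be the patching and the sheet-count: one must verify that the local graphical pieces produced by \Cref{lem:rjbagbda} at different base points $x_0$ are consistent (same labeling $g_1<\cdots<g_{K_*}$ across overlaps), which requires that $\Omega_{2-\gamma/2}$ is connected and that the ordering of sheets is globally well-defined — this uses crucially that all sheets lie in a thin slab (from \eqref{eq:omega_modintro}), so "below/above" is unambiguous, but one must rule out that two sheets cross or merge away from $\cZ$, which follows because a crossing point would be a point of $\{u=0\}$ where the implicit function theorem fails, forcing $\nabla u$ tangent to $\{x_n=\text{const}\}$ and ultimately a bad center nearby via \Cref{prop:A'badcent}, contradiction. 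A secondary technical point is that \Cref{thm:goodimpsheet} is stated with a fixed radius-$1$ bad-ball condition \eqref{prox1d}, so to apply it at scale $d$ one rescales $u_d(x):=u(x_0+dx)$, which solves $d^{-1}$-Allen--Cahn; one then needs the rescaled version of the good-ball condition, i.e. that $\int_{B_1(x)}\mathcal A^2|\nabla u|^2$ over unit balls inside $B_{d/2}(x_0)$ — equivalently, no bad centers in $B_{d/2}(x_0)$ — which is exactly the definition of $d=2\,{\rm dist}$ to $[\cZ]'$ lifted back; the energy hypothesis ${\bf M}_R\le C_0=K_*$ on these subballs follows from monotonicity and ${\bf M}_{4\Rk}(\zk)\le K_*$. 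None of these are deep, but assembling them cleanly into the global statement is the bulk of the work.
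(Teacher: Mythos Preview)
Your outline is correct in spirit—good-ball regularity at scale $d={\rm dist}(x',[\cZ]')$ gives sheeting, \Cref{thm:sheetimpc2alpha} then yields $|\HH[g_i]|\lesssim 1/d^2$, and local pieces glue—but two substantive ingredients are missing. First, \Cref{lem:rjbagbda} and \Cref{thm:sheetpart2} produce graphs over \emph{some} frame, not over $e_n^\perp$; to force the frame to be $e_n$ at every intermediate scale $d\in[\epk^{2-\gamma/2}\Rk,\Rk]$ one needs ${\bf H}_R^2(e_n,\zz)=o_k(1)$ for all $R$ in that range and all nearby $\zz\in\cZ$. This does \emph{not} follow from large-scale flatness \eqref{eq:omega_modintro} alone: the slab width $\omega(\Rk^{-1})\Rk$ need not be $\ll\epk^{2-\gamma/2}\Rk$, since $\omega$ is only a modulus and there is no a priori relation forcing $\omega(\Rk^{-1})\ll\epk^{2-\gamma/2}$. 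The paper closes this in \Cref{lem:dxzggiuhg2} via a dyadic coefficient-comparison: \eqref{eq:hjgagfas} gives ${\bf H}^2_{4R_i}(v_i,\zz)\le C2^{i(1+\alpha)}\epk^2$ at each $R_i=2^{-i}\Rk$ for some $v_i$, and \Cref{lem:coefcompexc} then forces $|v_i-e_n|^2\lesssim 2^{i(1+\alpha)}\epk^2=o_k(1)$ provided $\alpha\le\gamma/8$. Second, your sheet-count argument is at the wrong center and scale: the local count over $B'_\rho(\bar x')$ with $\rho\sim d$ is governed by ${\bf M}_{\rho}(\bar x)$, not by ${\bf M}_{\Rk}(\zk)$, and monotonicity only gives the upper bound ${\bf M}_\rho(\bar x)\le K_*$. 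The missing lower bound $K_*-{\bf M}_{\delta\rho}(\bar x)\le o_k(1)$ is precisely the ``no-gaps'' estimate \Cref{prop:deltamod2}, transported to the $e_n$-frame via the same coefficient comparison; with it, \Cref{lem:addproperties} forces the count to be exactly $K_*$ at \emph{every} $\bar x'\in\Omega_{2-\gamma/2}$.

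This also dissolves your connectivity worry: because the count equals $K_*$ at every local patch, the ordering $g_1<\cdots<g_{K_*}$ is determined pointwise by the values and is automatically consistent on overlaps, so the $g_i$ are well-defined on $\Omega_{2-\gamma/2}$ whether or not that set is connected—the paper never uses connectivity. (Separately, no rescaling is needed to apply \Cref{thm:goodimpsheet}: it is already stated with variable $R$ and $\ep=1$, so one invokes it directly with $R\sim d\to\infty$.)
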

By \eqref{eq:hjgagfas} it can be seen that
$$\frac{1}{\Rk}\|g_i-(\zk)_n\|_{L^\infty(\Omega_{2-\gamma/2})}\lesssim \epk \quad\ \ (\mbox{i.e. we have initial collective flatness}\ \epk)\,.$$
As explained in (ii) above, we would like to say that these graphs look minimal/harmonic, to perform an iteration that improves this flatness. On the other hand, they are only defined on a ``punctured domain'' $\Omega_{2-\gamma/2}$, and with estimates from \eqref{eq:uagfabaf} degenerating as we approach the (projected) bad set. This forces us to work with:
\begin{itemize}
    \item $L^1$ measures of smallness and minimality/harmonicity, instead of $L^\infty$.
    \item New, delicate Whitney-type extensions $\widetilde g_i$ of our graphs (performed in \Cref{sec:extension}), with $\widetilde g_i\equiv g_i$ in $\Omega_{2-\gamma}$, which capture the {\it integral} information coming from \eqref{eq:hjgagfas}.
\end{itemize}

Let $
h_i(x') := \frac{\widetilde{g}_i(\zk' + \Rk x') - (\zk)_n}{\epk \Rk}.$
In \Cref{sec:linHeq} we will establish precise local $L^1$ bounds for $|\Delta h_i|$.  
In particular, we will prove---together with other, more refined estimates that are also needed but omit here---that
\[
\int_{B_{1/8}'} |\Delta h_i| = O\big(\epk^{1/10}\big).
\] Then,  given $\bar\zz\in B_{\frac{1}{8}\Rk}(\zk)$, a simple iteration then gives affine functions $\ell_i:\R^3\to \R$ so that
\begin{equation}\label{eq:ygagvbogao}
    \frac{1}{(\epk^\chi\Rk)}\fint_{B_{(\epk^\chi\Rk)}'(\bar\zz')} |\widetilde g_i-\ell_i|\lesssim\epk^{1+\chi/2}\,,
\end{equation}
i.e.  \eqref{eq:quiefabfoab} up to rescaling. 

To upgrade \eqref{eq:ygagvbogao} to \eqref{eq:quiefabfoab2}---which is point (iii) above---we will proceed as follows:
\begin{itemize}
    \item {\bf An $L^\infty$ idealisation:} Define $B^1 := B_{\epk^\chi \Rk}(\bar{\zz})$ and $B^2 := B_{\epk^{1+2\chi} \Rk}(\bar{\zz}) \subset B^1$. Set $\bar{\ep}_k := \epk^{1+\chi/2} (\epk^\chi \Rk)$. Let us imagine that we had \eqref{eq:ygagvbogao} in $L^\infty$ form rather than merely on average. For this idealisation, let us also forget about the distinction between $g_i$ are $\widetilde g_i$, thus pretending that they are both defined in the full domain without holes.
  We would then have: \begin{equation}\label{eq:ygagvbogaoinfty}
        |g_i-\ell_i|\lesssim \bar\ep_k\quad \mbox{in}\ {B'}^1,\quad \mbox{thus}\quad |g_i-\ell_i|\lesssim \bar\ep_k\quad \mbox{in}\ {B'}^2\subset {B'}^1\ \mbox{as well}.
    \end{equation}
    Moreover, by \eqref{eq:omega_modintro} all of the $g_i$ pass inside $B^2$; in particular, $|g_j-g_i|\lesssim {\rm diam}(B^2)\lesssim \bar\ep_k$ in ${B'}^2$.
    
    By the above and the triangle inequality, $|\ell_j-\ell_i|\lesssim \bar\ep_k$ in ${B'}^2$ too. Since the $g_i$ can't cross each other, the $\ell_i$ (which are affine) are forced to be always very close, and then $|\ell_j-\ell_i|\lesssim \bar\ep_k$ in ${B'}^1$ as well, as desired.
    
    \item {\bf Our case ($L^1$ with $L^\infty$ scaling):} As we will see, $B^2\setminus \Omega_{2-\gamma}$ is actually tiny, thus the distinction between $g_i$ and $\widetilde g_i$ is only a minor technical issue. On the other hand, compared to \eqref{eq:ygagvbogaoinfty},
    the implication \begin{equation}\label{eq:ygagvbogaoL1}
        \fint_{{B'}^1}|\widetilde g_i-\ell_i|\lesssim \bar\ep_k \quad \Longrightarrow \quad \fint_{{B'}^2}|\widetilde g_i-\ell_i|\lesssim \bar\ep_k
    \end{equation}
    is not automatic and hard to establish.
    
    We will show this implication by running (in \Cref{prop:decaysqrt3}) a second iteration (a geometric improvement of oscillation); however, this will require $|\Delta h_i|$ to be substantially smaller than what is needed to prove \eqref{eq:ygagvbogao}. This extra smallness ---obtained in \Cref{prop:linHeq}--- happens to break a natural criticality of the problem and must be obtained through a delicate  dichotomy argument.
    Once the implication \eqref{eq:ygagvbogaoL1} is established, arguing just as above for the rest we find that $|\ell_j-\ell_i|\lesssim \bar\ep_k$ in ${B'}^1$ for all $1\le i,j\le K_*$.
\end{itemize}
Combining the above, letting $\ell:=\ell_1$ we will find:
\begin{proposition}[{\bf Improvement of flatness at scale $\epk^\chi\Rk$}]\label{prop:impflatsinglin}
Let $n=4$ and fix $\chi\in(0,\frac{1}{20}]$, $\beta\in(0,\frac{1}{40}]$ and $\alpha\in(0,\frac{1}{40}]$. Given any $\bar \zz\in \cZ\cap B_{\Rk/8}(\zk)$, there exists---for $k$ large enough---an affine function $\ell:\R^{n-1}\to\R$ such that
    \begin{equation}
    \label{eq:flatlaystar}
    \frac{1}{(\epk^\chi\Rk)|B_{\epk^\chi\Rk}'(\bar\zz')|} \int_{B_{\epk^\chi\Rk}'(\bar\zz')\cap\Omega_{2-\gamma}} \big| g_i - \ell \big| \le C\epk^{1+\chi/2}  \qquad \mbox{for every}\quad i\in\{1,...,K_*\},
    \end{equation}
where $C$ depends only on $\chi$, $\beta$, and $\alpha$.
\end{proposition}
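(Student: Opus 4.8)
The plan is to combine the three ingredients already flagged in the overview: the $L^1$ bounds on $\Delta h_i$, the single-layer improvement of flatness obtained by iterating the Laplace estimate, and the ``collective collapse'' of the affine functions forced by the large-scale flatness \eqref{eq:omega_modintro}. First I would invoke \Cref{prop:graphdec} to produce the graphs $g_i:\Omega_{2-\gamma/2}\to\R$ in a frame with ${\bf H}_{4\Rk}^2(e_n,\zk)\le 2\epk^2$, then pass to the extensions $\widetilde g_i$ on the full ball and rescale to $h_i(x')=(\widetilde g_i(\zk'+\Rk x')-(\zk)_n)/(\epk\Rk)$. From \eqref{eq:hjgagfas} one reads off the collective flatness $\|h_i\|_{L^1(B'_{1/4})}=O(1)$, and from \Cref{prop:linHeq} (the refined version) the key nonlinear smallness $\int_{B'_{1/8}}|\Delta h_i|=O(\epk^{1/10})$, together with the more delicate local estimates needed for the second iteration.

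Second, I would run the first iteration: a standard Campanato-type excess-decay for functions with small $L^1$ Laplacian yields, at the scale $\epk^\chi\Rk$ around any $\bar\zz\in\cZ\cap B_{\Rk/8}(\zk)$, affine functions $\ell_i$ with $\frac{1}{\epk^\chi\Rk}\fint_{B'_{\epk^\chi\Rk}(\bar\zz')}|\widetilde g_i-\ell_i|\lesssim\epk^{1+\chi/2}$, which is \eqref{eq:ygagvbogao}. Third, the implication \eqref{eq:ygagvbogaoL1} — passing this $L^1$ bound from $B^1=B_{\epk^\chi\Rk}(\bar\zz)$ down to $B^2=B_{\epk^{1+2\chi}\Rk}(\bar\zz)$ with the \emph{same} $L^\infty$-scaled bound $\bar\ep_k=\epk^{1+\chi/2}(\epk^\chi\Rk)$ — I would establish via the second iteration of \Cref{prop:decaysqrt3}, an improvement-of-oscillation that uses the supplementary smallness of $|\Delta h_i|$ from \Cref{prop:linHeq}. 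Fourth, with \eqref{eq:ygagvbogaoL1} in hand I would execute the $L^\infty$-idealisation argument verbatim on $B^2$: since, by \eqref{eq:omega_modintro} applied at the bad center $\bar\zz$ with the moderate radius comparable to ${\rm diam}(B^2)$, \emph{all} $K_*$ graphs $g_i$ pass through a slab of width $\lesssim\bar\ep_k$ inside $B^2$, the triangle inequality gives $\fint_{B'^2}|\ell_j-\ell_i|\lesssim\bar\ep_k$; since the $g_i$ are ordered and non-crossing, two affine functions that are $L^1$-close on the small ball $B'^2$ and whose graphs cannot cross must in fact be close on all of $B'^1$, so $|\ell_j-\ell_i|\lesssim\bar\ep_k$ throughout $B'^1$. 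Setting $\ell:=\ell_1$ and combining with \eqref{eq:ygagvbogao} for each $i$ via the triangle inequality yields \eqref{eq:flatlaystar}, after noting that $B^2\setminus\Omega_{2-\gamma}$ is negligible by \eqref{eq:fhiowqrhgoqwg} (the bad-ball count), so the restriction of the integral to $\Omega_{2-\gamma}$ costs nothing.

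The main obstacle is unquestionably the implication \eqref{eq:ygagvbogaoL1}, i.e. the second iteration. In $L^\infty$ it is trivial ($B^2\subset B^1$), but in the $L^1$-with-$L^\infty$-scaling regime it is exactly critical: a naive Campanato iteration only propagates the average bound with a loss that is borderline logarithmically divergent over the $\sim\log(\epk^{-1})$ dyadic scales between $B^1$ and $B^2$. Breaking this criticality is what forces the dichotomy argument in \Cref{prop:linHeq}: at each scale one either already has the improved decay, or the Laplacian is so concentrated that a Caccioppoli/energy estimate buys a genuine power gain — and this is where all the structural input (the Wang--Wei $C^{2,\vartheta}$ and mean-curvature estimates of \Cref{thm:sheetimpc2alpha}, the excess bound \eqref{eq:hjgagfas}, the density pinching, and the separation \eqref{eq:wwsepbnd}) must be spent. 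A secondary but real technical nuisance is bookkeeping the passage between $g_i$ and its Whitney extension $\widetilde g_i$: one must check that the extension does not create spurious Laplacian mass and that $B^2\setminus\Omega_{2-\gamma}$ has measure $o_k(1)|B^2|$, which follows from \eqref{eq:fhiowqrhgoqwg} with the admissible ranges $\chi\le\frac1{20}$, $\beta\le\frac1{40}$.
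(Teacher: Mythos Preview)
Your proposal is correct and follows essentially the same approach as the paper's proof (via \Cref{prop:decaysqrt0}): invoke \Cref{prop:decaysqrt} for the individual affine $\ell_i$, then \Cref{prop:decaysqrt3} to carry the bound down to scale $\epk^{1+2\chi}\Rk$, then use large-scale flatness at $\bar\zz$ plus the ordering $g_1<\cdots<g_{K_*}$ to collapse the $\ell_i$ to a single $\ell$. The paper's execution is organized slightly differently---it first bounds each $|b_i|$ individually (via \Cref{lem:graphlargeflat}) and then $|a_i-a_j|$ from the one-sided inequality $g_j>g_i$ on $B'^1$, rather than bounding $|\ell_j-\ell_i|$ directly---but this is cosmetic; note also that the dichotomy in \Cref{prop:linHeq} is a single global split (few bad balls vs.\ $\Rk\epk^{2+1/4}\ge c$) rather than a scale-by-scale alternative.
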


\Cref{sec:improvlayers} is devoted {\it exclusively} to proving this result, following the strategy we just outlined. In \Cref{sec:conclusion} we use \Cref{prop:impflatsinglin} to conclude our contradiction argument:
\vspace{0.3cm}

{\bf Improvement of Allen--Cahn excess and density.} Choosing $\bar \zz$ in \Cref{prop:impflatsinglin} to be at the ``boundary'' of the bad set, we find a clean (meaning free of bad centers) ball $\widetilde B^1(\mathbf{\bar y})\subset B^1$ of comparable size, where we have uniform elliptic estimates. Then \eqref{eq:flatlaystar} transforms into an $L^\infty$ estimate, i.e. $\frac{1}{(\epk^\chi\Rk)}\|g_i-\ell\|_{L^\infty(\widetilde {B'}^1(\mathbf{\bar y}))}\lesssim \epk^{1+\chi/2}$. Since $\{u=0\}$ is given by the $K_*$ graphs $g_i$ in this ball, this will lead to\footnote{The bound on ${\bf H}$ is easy, since we will have exponential decay away from $\{u=0\}$. To go from the large concentration of $\{u=0\}$ to a lower bound on ${\bf M}$, we will combine a slicing argument and the behaviour of 1D periodic Allen--Cahn solutions (\Cref{sec:1DAC}).}

\[
{\bf H}_{(\epk^\chi\Rk)/4}^2({\bf \bar y}) \lesssim \epk^{2+  2\cttc/3}\,,\quad\mbox{ and moreover}\quad K_*-\epk^{2+2\chi/3}\lesssim {\bf M}_{(\epk^\chi\Rk)/4}({\bf \bar y})\leq K_*\,.
\]
\vspace{0.3cm}
    
{\bf A monotonicity-type relation and the final contradiction.} By the monotonicity formula, the above density pinching should mean $u$ is very conical. In \Cref{prop:moncontrexc} we exhibit a new, delicate monotonicity-type relation which {\bf quantifies} this. Letting ${\bf \widetilde H}$ and ${\bf \widetilde M}$ be the variants\footnote{Interestingly, we obtain the challenging formula for weighted (by a heat-type kernel) versions of ${\bf H}$ and ${\bf M}$, which will be just as good as the original versions (see \Cref{lem:compweighHM}) for our purposes.} of ${\bf H}$ and ${\bf  M}$ defined in \Cref{def:heathexc}, it says that
    \begin{equation}\label{excmonineqintro}
        {\bf \widetilde H}_r\leq {\bf \widetilde H}_{\lambda r}+C|\log(\lambda)|^{1/2}({\bf \widetilde M}_r-{\bf \widetilde M}_{\lambda r})^{1/2}\quad\mbox{for any}\ \ r>0\ \ \mbox{and}\ \ \lambda\in(0,\frac{1}{2})\,.
    \end{equation}
This inequality, which may be viewed as an outward epiperimetric-type relation, allows us to transport the improved height excess {\bf back} to the original $B_{R_k}(\zz_k)$ from Selection 1 (see \Cref{lem:radcentselec}). This improves ${\bf H}_{4R_k}(\zz_k)$ with respect to itself, reaching a contradiction as in the brief overview at the beginning of the section.

\subsection{Obstructions to the proof in higher dimensions}\label{obstructionshd}
Most results have been carefully optimised so that they hold in higher dimensions. The main obstruction to extending our proof comes from the mean curvature estimate in \Cref{thm:sheetimpc2alpha} (which is {\bf optimal} as stated).

Indeed, even in the very idealised case where one knew a priori that there is \textit{only one single bad ball}, say centered at 0, a natural obstruction arises in dimensions $ 5\le n \le 7$.

Here is an overview in this idealised setting: Using that ${\bf H}_{4\Rk}^2(\zk)\leq \epk^2$ and applying \eqref{lowerboundheightR}, we find a lower bound for the flatness in $B_{4\Rk}(\zk)$ for our solution: namely,
\begin{equation}\label{eq:7yhkagabshgbf}
    \epk^2\geq \frac{c}{\Rk^{n-3}}\,,\quad\mbox{or}\quad \epk^{2/(n-3)}\geq \frac{c}{\Rk}\,.
\end{equation}
This bound is moreover {\it sharp} in the case in which $\cZ\cap B_{4\Rk}(\zk)$ consisted exclusively of a single bad center, i.e. $\zk$.

Now, exactly as in (i) above, consider the rescalings
$$ h_i (x') : = \frac{\widetilde g_i(\zk'+ \Rk x')-(\zk)_n}{\Rk \epk}\, ,   \quad \mbox{which have }L^1\mbox{ norm of size }1\mbox{ in } B'_{1/4}\,.$$
To perform an improvement of flatness iteration---even just for a small number of scales---we need the mean curvatures of the $h_i$ to be small, as we described in (ii) above. More precisely, the mean curvatures should be bounded (in $L^1$) by some positive power of $\epk$.

Naturally, the mean curvature bound we have access to is the one in \Cref{thm:sheetimpc2alpha}, or \eqref{eq:uagfabaf} in our setting. Rescaled to the $h_i$, it says {\it at best} that
$$\int_{B_{1/4}'} |H[h_i]|\quad \mbox{is of size } O\left(\frac{1}{\Rk \epk}\right).$$
Combined with the bound \eqref{eq:7yhkagabshgbf}, we find then that
$$\int_{B_{1/4}'}|H[h_i]|\leq C\frac{\epk^{2/(n-3)}}{\epk}=C\epk^{\frac{5-n}{n-3}}\,.$$
Already for $n=5$ we do not obtain a positive power of $\epk$ anymore. This means that the mean curvature estimates on the $h_i$ are simply too large to allow for an improvement of flatness iteration.

It would appear then that, in order to overcome this obstruction, one cannot forget the more precise information of where \eqref{eq:meancurvsheet} comes from: namely, a Toda-type elliptic system governing the interaction between layers (see \cite{WW19}).\\

\section{Key preliminary results}\label{sec:oviewproofs}
\subsection{Regularity results -- Proofs of Theorems \ref{prop:curvestindeps} and \ref{thm:goodimpsheet}}\label{subsec:regpfview}
\begin{proof}[Proof of \Cref{prop:curvestindeps}]
Notice that the statement is scaling invariant. Hence, by considering the rescaled function $u(R\,\cdot\,)$, we can (and do) assume that $R=1$. 
    
\noindent \textbf{Step 1.} We first prove the lower bound for $|\nabla u_\ep|$.\\
    Assume for contradiction there are $\delta_{1,i},\delta_{2,i},\ep_i\to 0$ such that:\\
    We have
    \begin{equation}\label{eq:fiowgopgagop}
        {\bf M}_1^{\ep_i}(u_{\ep_i})\leq K+\delta_{1,i}\,,
    \end{equation}
    yet
    \begin{equation}\label{eq:fiowgopgagop2}
        \mbox{there is some}\quad x_i\in B_{\delta_{2,i}}\cap \{|u_{\ep_i}|\leq 0.9\} \quad\mbox{with}\quad|\nabla u_{\ep_i}|(x_i)<\frac{1}{i\ep_i}\,.
    \end{equation}
    Since $B_{1-\delta_{2,i}}(x_i)\subset B_1$, by \eqref{eq:fiowgopgagop} we can bound
    \begin{equation}\label{eq:figiuewhiu}
        {\bf M}_{1-\delta_{2,i}}^{\ep_i}(u_{\ep_i},x_i)=\frac{\mathcal E^{\ep}(u_{\ep_i},B_{1-\delta_{2,i}}(x_i))}{(1-\delta_{2,i})^{n-1}}\leq \frac{\mathcal E^{\ep}(u_{\ep_i},B_1)}{(1-\delta_{2,i})^{n-1}}\leq\frac{K+\delta_{1,i}}{(1-\delta_{2,i})^{n-1}}\,.
    \end{equation}
    Consider the rescalings $\widetilde u_i(x):=u_{\ep_i}(\ep_i(x-x_i))$. Letting $R_i=\frac{(1-\delta_{2,i})}{\ep_i}$, the $\widetilde u_i$ are now A--C solutions with parameter $1$ defined on $B_{R_i}$. Moreover, by \eqref{eq:figiuewhiu} and monotonicity we find that
    \begin{equation}\label{eq:781halavifa}
        {\bf M}_r(\widetilde u_i)\leq \frac{K+\delta_{1,i}}{(1-\delta_{2,i})^{n-1}} \quad \mbox{ for all } r\leq R_i\,,
    \end{equation}
    yet by \eqref{eq:fiowgopgagop2} we have $|\widetilde u_i|(0)\leq 0.9$ and $|\nabla \widetilde u_i|(0)<\frac{1}{i}$.
    Since $|\widetilde u_i|\leq 1$ and they all satisfy \eqref{eq:aceqintro}, standard $C^3$ estimates and Arzel\`a-Ascoli provide a subsequence converging in $C^2_{\rm loc}(\R^n)$ to $\widetilde u_\infty$, a bounded stable solution to A--C on all of $\R^n$, such that
    $${\bf M}_\infty(\widetilde u_\infty)=\lim_{r\to\infty} {\bf M}_r(\widetilde u_\infty)\leq K\qquad\mbox{yet}\qquad |\widetilde u_\infty|(0)\leq 0.9\quad\mbox{and}\quad|\nabla \widetilde u_\infty|(0)=0\,.$$
    On the other hand, since we are assuming that $K$ is a subcritical density, together with the fact that $|\widetilde u_\infty|(0)\leq 0.9<1$ we deduce that $\widetilde u_\infty=\phi(a\cdot x+b)$ for some appropriate $a,b$, but then $|\nabla \widetilde u_\infty|(0)=\phi'(b)\neq 0$, a contradiction.
    
    \noindent \textbf{Step 2.} We prove the bound for $\mathcal A_{u_{\ep}}$; \eqref{lvl2ffind} follows then directly.\\
    Recall that we are assuming $R=1$; we will find $C$ and $r\in(0,1)$, depending on $K$, such that
    $$
   \mathcal A_{u_{\ep}}\leq \frac{C}{r} \quad\mbox{in}\quad \{|u_{\ep}(x)|\leq 0.9\}\cap B_{r}\,,$$
   which is equivalent to proving the statement.\\
   We show this by contradiction. Assume there were $\delta_{1,i},\delta_{2,i},\ep_i\to 0$ such that ${\bf M}_1^{\ep_i}(u_{\ep_i})\leq K+\delta_{1,i}$\,, yet
    $$
   \delta_{2,i}\mathcal A_{u_{\ep_i}}(x_i)\to\infty \quad \mbox{ for some } x_i\in \{|u_{\ep_i}|\leq 0.9\}\cap B_{\delta_{2,i}}\,,$$
   which in particular gives that
   $$
   \sup_{x\in \{|u_{\ep_i}|\leq 0.9\}\cap B_{2\delta_{2,i}}}{\rm dist}(x,\partial B_{2\delta_{2,i}})|\mathcal A(u_i)|(x)\to\infty\,.
   $$
   Let $y_i\in \{|u_{\ep_i}|\leq 0.9\}\cap B_{2\delta_{2,i}}$ be such that 
   \begin{equation}\label{eq:phpaogpabg}
       2{\rm dist}(y_i,\partial B_{2\delta_{2,i}})|\mathcal A(u_i)|(y_i)\geq \sup_{x\in B_{2\delta_{2,i}}} {\rm dist}(x,\partial B_{2\delta_{2,i}})|\mathcal A(u_i)|(x)\,,
   \end{equation}
    and set
    $$R_i:={\rm dist}(y_i,\partial B_{2\delta_{2,i}}),\quad A_i:=|\mathcal A(u_i)|(y_i)\quad\mbox{and}\quad\widetilde\ep_i=\frac{\ep_i}{\mathcal A_{u_{\ep_i}}(x_i)}\,.$$
    Defining $\widetilde u_i(x):=u_{\ep_i}(y_i+\frac{1}{A_{u_{\ep_i}}(y_i)}x)$, we now have solutions with A--C parameters $\widetilde\ep_i$, defined on domains $B_{\frac{r_i}{2}A_i}$ converging to $\R^n$ (since $r_iA_i\to\infty$), and which (by the same computation as in Step 1, since $y_i\in B_{2\delta_{2,i}}$) satisfy
    \begin{equation}\label{eq:781halavifa2}
    {\bf M}_r^{\widetilde \ep_i}(\widetilde u_i)\leq \frac{K+\delta_{1,i}}{(1-2\delta_{2,i})^{n-1}} \quad \mbox{ for all } r\leq R_i\,.
    \end{equation}
    \textbf{Case 1.} We have $\widetilde\ep_i\to \widetilde\ep_\infty\in(0,\infty]$. The functions $\widetilde u_i$ then satisfy elliptic estimates, which (since $\mathcal A_{\widetilde u_{i}}(0)=1$) shows that necessarily $\widetilde\ep_\infty\in(0,\infty)$. But then, the same argument as in Step 1 combined with the fact that $\mathcal A_{\phi(a\cdot x+b)}\equiv 0$ yields a contradiction.
    
    \noindent\textbf{Case 2.} Otherwise, $\widetilde\ep_i\to 0$. Now, observe that we have $\mathcal A(\widetilde u_i)(0)=1$ and (thanks to \eqref{eq:phpaogpabg}) also $\mathcal A(\widetilde u_i)\leq 4$ in $B_{\frac{r_i}{2}A_i}$.
    
    Together with Step 1, this means precisely that the $\widetilde u_i$ satisfy the sheeting assumptions from \Cref{def:sheetassump} in balls of radius one, and therefore we have the conclusions of \Cref{thm:sheetimpc2alpha} as well. This gives uniform $C^{2,\vartheta}$ estimates for some fixed $\theta>0$, say $\theta=\frac{1}{2}$, which (by Ascoli--Arzel\`a) shows that the level sets $\{\widetilde u_i=\widetilde u_i(0)\}$ converge (up to passing to a subsequence, not relabeled) in $C^{2,\vartheta}_{loc}(\R^n)$, to a complete minimal hypersurface $\Sigma$. By \Cref{thm:TonStable}, it is stable; moreover, as in Step 1 using \eqref{eq:varenconv} we find that $\mathcal H^{n-1}(\Sigma\cap B_R)\leq CC_0R^{n-1}$ for every $R>0$. Then, by the stable Bernstein theorem with Euclidean area growth in $\R^n$ with $n\leq 7$, see \cite{SS81}, we deduce that $\Sigma$ is a union of parallel hyperplanes, and in particular $|{\rm I\negthinspace I}_{\Sigma}|\equiv 0$. By the $C^{2,\vartheta}$ estimates once again, which imply convergence in $C^2_{loc}$, we deduce that actually $\sup_{B_1}|{\rm I\negthinspace I}_{\{\widetilde u_i=\widetilde u_i(0)\}}|\to 0$.
    
    It then follows (see \cite{WW18,WW19} for more details) that
    $$\sup_{x\in\{\widetilde u_i=\widetilde u_i(0)\}\cap B_{1/2}}|\mathcal A(\widetilde u_i)|(x)\to 0.$$
   On the other hand, we had $|\mathcal A(\widetilde u_i)|(0)=1$, a contradiction for $i$ large enough.
\end{proof}

\begin{proof}[Proof of \Cref{thm:goodimpsheet}]
    This is essentially a variant of \cite[Corollary 1.3]{WW19}. Following exactly the same contradiction argument as in the previous proof, in Step 1 we would get solutions with the additional assumption that $\int_{B_1(x_i)}\mathcal A(u_{\ep_i})^2|\nabla u_{\ep_i}|^2\to 0$, so that the limit $\widetilde u_\infty$ would satisfy $\int_{B_1}\mathcal A(\widetilde u_{\infty})^2 |\nabla u_{\infty}|^2=0$. Then $\widetilde u_{\infty}$ would be one-dimensional by \Cref{lem:A0imp1D}, thus either $\pm 1$ or of the form $\phi(a\cdot x+b)$ by \Cref{prop:1dclas}, reaching a contradiction exactly as in the previous proof. In Step 2, the proof of Case 1 would follow this exact same reasoning, and Case 2 did not use the subcritical density assumption anyways.
\end{proof}

\subsection{Large-scale flatness -- Proofs of Propositions \ref{prop:W} and \ref{prop:deltamod2}}\label{subsec:indlargesc}

\begin{proof}[Proofs of Propositions \ref{prop:W} and \ref{prop:deltamod2}]
{\bf Step 1.} Density at large scales.\\
We first show that $K_*-\omega(\frac{1}{R})\leq{\bf M}_{R}(\zz)\leq K_*$. The second inequality holds by monotonicity. For the first one, let $K<K_*$; we want to see that there is some universal $R_0=R_0(K)$ such that ${\bf M}_{R}(\zz)> K$ if $R\geq R_0$. But indeed, by definition of $K_*$ we know that $K$ is subcritical, and in particular we can apply \Cref{prop:curvestindeps} (with $\eps=1$), from which it follows that if ${\bf M}_{R}(\zz)\leq K$ then $\mathcal A_u\leq \frac{C}{R}$ in $B_1(\zz)$ (as long as $R\geq R_0(K)$). 
On the other hand $\int_{B_1(\zz)}\mathcal A_u|\nabla u|^2\geq \delta_{bad}>0$ by assumption, and $|\nabla u|\leq C$ is bounded.
This implies that necessarily ${\bf M}_{R}(\zz)>K$ up to making $R_0(K)$ large enough, as desired.\\

We are ready to prove the rest of Propositions \ref{prop:W} and \ref{prop:deltamod2}. We assume then for contradiction that there were some $\delta_1>0$, and some $R_i\to\infty$, $u_i$ and $\zz_i$ as in the statement, but such that for any given $e\in \mathbb S^{n-1}$ either
\begin{equation}\label{eq:8qtuhabgbao}
	\{|u_i|\leq 0.9\}\cap B_{R_i}(\zz_i)\not\subset\{|e\cdot (x-\zz_i)|\leq \delta_1 R_i\}
\end{equation}
or
\begin{equation}\label{eq:8qtuhabgbao2}
{\bf M}_{\delta_1 R}(u_i,y)<K_*-\delta_1  \quad \mbox{for some} \quad y\in \{e\cdot (x-\zz_i)=0\}\cap B_{R_i}(\zz_i)\,.
\end{equation}

\noindent\textbf{Step 2.} Rescaling and cone analysis.\\
    Rescaling by $\frac{1}{R_i}$, we get solutions $\widetilde u_i(x):=u_i(\zz_i+R_i x)$ of $\widetilde\ep_i$-A--C with parameters $\widetilde \ep_i=\frac{1}{R_i}$. By \Cref{thm:HutchTon} and \Cref{thm:TonStable}, they will converge---up to subsequence---to a limit stationary stable integral varifold $V=(\Sigma,\theta)$. We will write $\Sigma$ instead of $V$ and ${\rm supp}\, V$, by slight abuse of notation; our simple arguments will be of very standard nature. Let $\mathbb M_R(\Sigma,y):=\frac{1}{\omega_{n-1}R^{n-1}}\|\Sigma\|(B_R(y))$, and $\mathbb M_R(\Sigma):=\mathbb M_R(\Sigma,0)$. Using \eqref{eq:varenconv} and Step 1 we see that $\mathbb M_{R}(\Sigma)=\lim_{i\to\infty}{\bf M}_{RR_i}(u_i)\equiv K_*$. Thus, by the monotonicity formula for stationary integral varifolds, $\Sigma$ will be a cone, meaning that $\lambda\Sigma=\Sigma$ for any $\lambda>0$. We want to see that $\Sigma$ is a hyperplane.\\
    
    All points in $\Sigma$ have density at most $K_*$, by upper semicontinuity. Define $\mathcal S$ to be the set of all points of $\Sigma$ with density exactly $K_*$; since ${\mathbb M}_\infty(\Sigma)=K_*$ as well, $\Sigma$ is also a cone around any point of $\mathcal S$. By standard GMT\footnote{Geometric Measure Theory; see for instance \cite{Sim18}.} arguments, $\mathcal S$ is actually a linear subspace, the ``spine'' of $\Sigma$, and $\Sigma+x=\Sigma$ for any $x\in\mathcal S$. In an appropriate Euclidean frame, we can decompose $\Sigma=C\times\R^{n-k}$, $1\leq k\leq n$, so that $C\subset\R^k$ is still a stable minimal hypercone. Moreover, letting $\Theta_{\Sigma}(x):=\lim_{R\to 0} \mathbb M_R(\Sigma, x)$, we have that
    \begin{equation}\label{eq:spinedens}
        \{0\}\times\R^{n-k}=\mathcal S=\{\Theta_{\Sigma}=K_*\}=\{\Theta_{\Sigma}\geq K_*\}\,. 
    \end{equation}
    The takeaway here is that all points in $C\setminus\{0\}$ have density below $K_*$. In fact, we have a strict drop, i.e. there exists\footnote{Indeed, notice that since $C$ is a cone we have $\sup_{x\in (C\setminus\{0\})\times B_1^{n-k}}\Theta_\Sigma(x)=\sup_{(x^k,x^{n-k})\in (C\cap \Sp^{k-1})\times B_1^{n-k}}\Theta_\Sigma(x^k,x^{n-k})$. If the strict drop were false, we would find a limit point $x=(x^k,x^{n-k})\in C\cap \Sp^{k-1}\times B_1^{n-k}$ with (by upper semicontinuity of densities) $\Theta_\Sigma(x^k,x^{n-k})= K_*$, but (by \eqref{eq:spinedens}) then $x^k=0$, a contradiction with $x^k\in\Sp^{n-1}$.} some $K=K(\Sigma)<K_*$ such that $\sup_{x\in (C\setminus\{0\})\times B_1^{n-k}}\Theta_\Sigma(x)\leq K$. \\
    Let $\delta=\delta(K)>0$ be given by \Cref{prop:curvestindeps}. Let $A=B_1^k\setminus B_{1/2}^k\subset\R^k$.
    By the convergence of the A--C energies to the limit and upper semicontinuity, we easily see that up to making $\delta>0$ smaller, for $i$ large enough we have ${\bf M}_{\delta}^{\widetilde \ep_i}(\widetilde u_i,X)\leq K+\delta$ for every $X\in (C\cap A)\times B_1^{n-k}$. Moreover, up to making $i$ larger, we have $\widetilde \ep_i\leq \delta^2$. This means that (by \Cref{prop:curvestindeps}) the sheeting assumptions are satisfied in $B_{\delta^2}(X)$ for some $C=C(K)$, for every $X\in A\cap \Sp^{k-1}\times B_1$, which gives uniform curvature estimates for $\{u=t\}$ for every $|t|\leq 0.9$. Passing to the limit (via Ascoli--Arzel\`a and Hausdorff convergence), we conclude these level sets converge in $C^{1,\alpha}$ to $(C\cap A)\times B_1^{n-k}$, and in particular $C\cap A$ is $C^{1,\alpha}$. Since $C$ is a cone, we deduce that $C\setminus\{0\}$ is $C^{1,\alpha}$ as well, and therefore smooth by standard bootstrapping results for minimal graphs.
    
    \textbf{Case 1.} If $3\leq k\leq n$, then Simons' classification \Cite{Simons68} of hypercones $C\subset\R^k$, $3\leq k \leq 7$, which are smooth and stable outside of the origin gives that $C$ (and thus $\Sigma$) is a hyperplane.
    
    \textbf{Case 2.} If $k=1,2$: If $k=1$, $\Sigma$ is trivially a hyperplane. If $k=2$, i.e. $\Sigma=C\times \R^{n-2}$, applying \Cref{prop:stablejunction} (given that $\Sigma$ is an $\ep$-limit of stable $\ep$-A--C solutions), we conclude that $\Sigma$ needs to be a hyperplane as well.

\noindent \textbf{Step 3.} Conclusion.\\
    Choosing an appropriate Euclidean coordinate frame, by the Constancy Theorem (\cite[p. 243]{Sim18}) we have $\Sigma=K_*[\{x_n=0\}]$, and $K_*\in\N$ by integrality. In particular, by \eqref{eq:varhausconv} applied to the $\widetilde u_i$, $\{|\widetilde u_i|\leq 0.9\}\cap B_1\subset\{|x_n|\leq \delta_1\}$
for $i$ large enough, which scaling back gives \eqref{eq:8qtuhabgbao} for the $u_i$.

We are left with showing that, given $\delta_1>0$, we have
\begin{equation*}
{\bf M}_{\delta_1}^{\widetilde \ep_i}(\widetilde u_i,y)\geq K_*- \delta_1 \quad \mbox{for every} \quad y\in \{x_n=0\}\cap B_1
\end{equation*}
for $i$ large enough, since it gives then \eqref{eq:8qtuhabgbao2} after scaling and thus yields a contradiction.\\

Now, since $\Sigma=K_*[\{x_n=0\}]$, obviously $\mathbb M_r(\Sigma,y)=K_*$ for any such $y$ and $r>0$. Assume there were however $y_i\in \{x_n=0\}\cap B_1$ such that ${\bf M}_{\delta_1}^{\widetilde \ep_i}(\widetilde u_i,y_i)<K_*-\delta_1$ instead, and let $y_\infty=\lim_k y_{i_k}$ be an accumulation point.

Let $\lambda<1$; by \eqref{eq:varenconv}, we see that $\lim_{i\to\infty} {\bf M}_{\lambda \delta_1}^{\widetilde \ep_{i_k}}(\widetilde u_i,y_\infty)=\mathbb M_{\lambda \delta_1}(\Sigma,y)=K_*$. In particular, ${\bf M}_{\lambda \delta_1}^{\widetilde \ep_{i_k}}(\widetilde u_{i_k},y_\infty)\geq K_*-\frac{1}{10}\delta_1$ for all $k$ large enough.

Now, up to making $k$ even larger, we additionally have $B_{\delta_1}(y_{i_k})\supset B_{\lambda \delta_1}(y_\infty)$. Fixing $\lambda$ close enough to $1$ (depending on $K_*$ and $\delta_1$), we can then ensure that ${\bf M}_{\delta_1}^{\widetilde \ep_{i_k}}(\widetilde u_{i_k},y_{i_k})\geq \frac{K_*-\delta_1}{K_*-\frac{1}{10}\delta_1}{\bf M}_{\lambda \delta_1}^{\widetilde \ep_{i_k}}(\widetilde u_{i_k},y_\infty)$, just by direct comparison. Combining the above we reach ${\bf M}_{\delta_1}^{\widetilde \ep_{i_k}}(\widetilde u_{i_k},y_{i_k})\geq K_*-\delta_1$, a contradiction.
\end{proof}
\subsection{Tangential stability and bad balls -- Proofs of Propositions \ref{prop:tanstab2} and \ref{prop:A'badcent}}\label{subsec:tanpfview}
\begin{proof}[Proof of \Cref{prop:tanstab2}]
We will show that, for every $\eta\in C_c^1(\R^n)$,
    \begin{align*}
        \int\mathcal A_e'^2\eta^2\,|\nabla u|^2&\leq\int \left(1-\left(e\cdot \frac{\nabla u}{|\nabla u|}\right)^2\right)|\nabla \eta|^2 |\nabla u|^2=\int |\nabla \eta|^2 |\nabla^{e'} u|^2\,,
    \end{align*}
which plugging in a standard linear cutoff shows \Cref{prop:tanstab2}.

We test stability with $\xi=|\nabla^{e'} u|\eta$, getting
    \begin{align}\label{eq:iqhrgiua}
        -\int W''(u)(|\nabla^{e'} u|\eta)^2&\leq\int |\nabla(|\nabla^{e'} u|\eta) |^2=\int |\nabla^{e'} u|^2|\nabla\eta|^2+|\nabla|\nabla^{e'} u||^2\eta^2+\frac{1}{2}\nabla(|\nabla^{e'} u|^2)\cdot\nabla\eta^2\,.
    \end{align}
    On the other hand, differentiating \eqref{eq:aceqintro} in a direction $\tau\in\Sp^{n-1}$ and multiplying by the directional derivative $u_\tau$ gives that $u_{\tau}\Delta u_{\tau}=W''(u)u_{\tau}^2$, which adding over an orthonormal basis $\{\tau_j\}_{j=1}^{n-1}$ of $\{\tau\cdot e=0\}$ and multiplying by $\eta^2$ shows
    $$
    \sum_j u_{\tau_j}\Delta u_{\tau_j}\eta^2=W''(u)|\nabla^{e'} u|^2\eta^2\,.
    $$
    Integrating by parts, we obtain that
    \begin{align*}
        -\int W''(u)|\nabla^{e'} u|^2\eta^2=\int \sum_j |\nabla u_{\tau_j}|^2\eta^2+\frac{1}{2}\int \sum_j \nabla u_{\tau_j}^2\cdot\nabla \eta^2=\int \sum_j |\nabla u_{\tau_j}|^2\eta^2+\frac{1}{2}\int \nabla (|\nabla^{e'} u|^2)\cdot\nabla \eta^2\,.
    \end{align*}
    Combining this with \eqref{eq:iqhrgiua}, we find that
    \begin{align*}
        \int  \Big(\sum_j|\nabla u_{\tau_j}|^2-|\nabla|\nabla^{e'} u||^2\Big)\eta^2\leq \int |\nabla^{e'} u|^2|\nabla\eta|^2\,,
    \end{align*}
    which since $\sum_j|\nabla u_{\tau_j}|^2=\sum_j\sum_{i=1}^n (\partial_i u_{\tau_j})^2
        =|D\nabla^{e'}u|^2$ shows the desired result.
\end{proof}

\begin{proof}[Proof of \Cref{prop:A'badcent}]
    Assume the proposition is false for contradiction. Then, we find appropriate $e_k,u_k$ such that $\int_{B_1(x_k)} \mathcal A^2(u_k)|\nabla u_k|^2\geq \delta_{bad}$ but $\int_{B_1(x_k)} \mathcal A_{e_k}'^2(u_k)|\nabla u_k|^2\leq \frac{1}{k}$. Letting $\widetilde u_k(x):=u_k(x-x_k)$, up to a subsequence we obtain a stable solution $\widetilde u_\infty=\lim_k \widetilde u_k$ and some $e_\infty\in\Sp^{n-1}$ such that $\int_{B_1} \mathcal A^2(\widetilde u_\infty)|\nabla u_\infty|^2\geq \delta_{bad}$ but $\mathcal A_{e_\infty}'(\widetilde u_\infty)\equiv 0$ in $B_1$. Then, by \Cref{prop:Atanclassif} below, we deduce that $\widetilde u_\infty$ is two-dimensional. By the 2D stable De Giorgi conjecture (\Cref{thm:2dclas}) then $\widetilde u_\infty$ is either $\pm 1$ or of the form $\phi(a\cdot x+b)$, but then $\mathcal A(\widetilde u_\infty)\equiv 0$, and we reach a contradiction.
\end{proof}

\begin{proposition}\label{prop:Atanclassif}
    Let $u:\R^n\to \R$ be a solution to A--C, and assume that $\mathcal A_e'\equiv 0$ for some $e\in\Sp^{n-1}$ in some open set $\Omega$. Then $u$ is two-dimensional in $\R^n$, i.e. $u=v(a_1\cdot x,a_2\cdot x)$ for some orthogonal directions $a_1,a_2\in\Sp^{n-1}$ and $v:\R^2\to\R$.
\end{proposition}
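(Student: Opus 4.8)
The plan is to follow, \emph{mutatis mutandis}, the proof of \Cref{lem:A0imp1D}, replacing the full gradient $\nabla u$ by its tangential part $\nabla^{e'}u$. Set $w:=\nabla^{e'}u$, and note that $w\perp e$ pointwise by construction and that $w(x)\neq 0$ forces $\nabla u(x)\neq 0$. The first step is an algebraic identity: wherever $w\neq 0$, using $\partial_i|w|=(w\cdot\partial_i w)/|w|$ one computes
\[
D\frac{w}{|w|}=\frac{Dw-\nabla|w|\otimes\frac{w}{|w|}}{|w|},
\qquad
\Big\|D\tfrac{w}{|w|}\Big\|^2=\frac{|Dw|^2-|\nabla|w||^2}{|w|^2},
\]
so that, recalling \Cref{def:A'intro},
\[
\mathcal A_e'^2=\frac{|w|^2}{|\nabla u|^2}\,\Big\|D\tfrac{w}{|w|}\Big\|^2
\qquad\text{on }\{\nabla^{e'}u\neq 0\}.
\]
Since the factor $|w|^2/|\nabla u|^2$ is strictly positive there, $\mathcal A_e'\equiv 0$ on a connected open set on which $w\neq 0$ is equivalent to the unit field $\nabla^{e'}u/|\nabla^{e'}u|$ being constant on it.

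I then distinguish two cases, exactly as in \Cref{lem:A0imp1D}. If $\nabla^{e'}u\equiv 0$ on $\Omega$, pick an orthonormal basis $a_1,\dots,a_{n-1}$ of $e^\perp$; then $\partial_{a_j}u\equiv 0$ on $\Omega$ for every $j$, and since each $\partial_{a_j}u$ solves the linearized equation $-\Delta\phi+W''(u)\phi=0$, unique continuation forces $\partial_{a_j}u\equiv 0$ on all of $\R^n$, so $u=v(e\cdot x)$ is one-dimensional and in particular of the claimed form. Otherwise, by continuity there is an open cube $Q\subset\Omega$ on which $w$ is nowhere zero, so by the equivalence above $\nabla^{e'}u/|\nabla^{e'}u|\equiv a$ on $Q$ for some fixed $a\in\Sp^{n-1}$, and $a\perp e$ because $w\perp e$ pointwise. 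Complete $a=a_1$ to an orthonormal basis $a_1,\dots,a_{n-1}$ of $e^\perp$: the relation $w\parallel a_1$ on $Q$ reads $\partial_{a_j}u\equiv 0$ on $Q$ for $j=2,\dots,n-1$, and unique continuation again upgrades this to $\partial_{a_j}u\equiv 0$ on $\R^n$ for $j=2,\dots,n-1$. Hence $u=v(a_1\cdot x,\,e\cdot x)$ with $a_1\perp e$, which is exactly the asserted two-dimensionality (take $a_2=e$).

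There is no genuine obstacle here: the statement is the tangential counterpart of \Cref{lem:A0imp1D}, and the Allen--Cahn equation enters only through unique continuation for the gradient components. The only two points needing a little care are, first, that $\mathcal A_e'$ carries the normalization $|\nabla u|^{-2}$ rather than $|\nabla^{e'}u|^{-2}$, so its vanishing is detected by $\|D(\nabla^{e'}u/|\nabla^{e'}u|)\|^2$ only up to the positive factor $|\nabla^{e'}u|^2/|\nabla u|^2$ — harmless, since we only need to locate the zero set of $\mathcal A_e'$; and second, that $\nabla^{e'}u$ may vanish on part of $\Omega$ without vanishing identically, which is dealt with — just as in \Cref{lem:A0imp1D} — by passing to a subcube on which it is nowhere zero.
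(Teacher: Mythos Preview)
Your proof is correct and follows essentially the same approach as the paper: compute the identity relating $\mathcal A_e'^2$ to $\|D(\nabla^{e'}u/|\nabla^{e'}u|)\|^2$ and then run the argument of \Cref{lem:A0imp1D} verbatim with $\nabla^{e'}u$ in place of $\nabla u$. You are in fact slightly more careful than the paper about the extra factor $|\nabla^{e'}u|^2/|\nabla u|^2$ in the identity, which is harmless but good to note.
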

\begin{proof}
Assume withot loss of generality $e=\boldsymbol e_n$. Assuming that $\nabla' u\neq 0$, we can compute
\begin{align*}
    D \frac{\nabla' u}{|\nabla' u|}=\frac{D\nabla' u -\nabla|\nabla'u|\otimes \frac{\nabla' u}{|\nabla' u|}}{|\nabla' u|}\,,\quad\mbox{thus}\quad \mathcal A_e'^2=\left\|D \frac{\nabla' u}{|\nabla' u|}\right\|^2\,.
\end{align*}
The same argument as in \Cref{lem:A0imp1D} then gives the result.
\end{proof}
\subsection{Height and tilt excesses}\label{sec:excesses}
The first result is a Caccioppoli-type inequality for the level sets of $u$ (rather than $u$ itself).
\begin{lemma}[\textbf{Caccioppoli}]\label{lem:Cacc-ineq-AC}
Let $u:\R^n\to\R$ be an A--C solution, and let $\eta\in C_c^1(\R^n)$. Then, there is $C=C(n)$ such that
    \begin{align*}
        \int |\nabla^{e_n'} u|^2\eta^2\leq 4\int |x_n|^2|\nabla u|^2|\nabla \eta|^2+C\int W(u)|x_n\eta\eta_n|\,.
    \end{align*} 
\end{lemma}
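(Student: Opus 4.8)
The plan is to test the stability-type inequality for the level sets — or equivalently, to run the standard Caccioppoli trick directly on the differentiated Allen--Cahn equation. Recall that differentiating \eqref{eq:aceqintro} in the direction $e_n$ gives $\Delta u_n = W''(u) u_n$. The idea is to multiply this by $u_n \psi$ for a suitable test function $\psi$ built out of $\eta$ and $x_n$, integrate by parts, and absorb the ``bad'' terms. A cleaner route, which I expect to be the one the authors follow, is to use the function $v:= x_n |\nabla u|$ or rather to work with the components $u_i$ for $i \le n-1$ (i.e.\ with $\nabla^{e_n'} u$) and exploit that $x_n$ is, up to a harmless correction, ``almost a null Lagrangian'' for the relevant operator. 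Concretely, I would start from the pointwise identity
\[
\sum_{i=1}^{n-1} u_i \,\Delta u_i = W''(u)\,|\nabla^{e_n'} u|^2,
\]
obtained by differentiating the equation in each horizontal direction $e_i$, multiplying by $u_i$, and summing over $i=1,\dots,n-1$.

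**Key steps, in order.** First, fix the test function: I would use $\varphi = x_n\, u_i\, \eta^2$ (one for each horizontal $i$), integrate the identity $u_i \Delta u_i = W''(u) u_i^2$ against $x_n \eta^2$ after summing, and integrate by parts to move one derivative off $\Delta u_i$. This produces $\int \nabla u_i \cdot \nabla(x_n u_i \eta^2)$, which expands into $\int |\nabla u_i|^2 x_n \eta^2 + \int u_i \,\partial_n u_i\, \eta^2 + \int x_n u_i \,\nabla u_i\cdot\nabla \eta^2$. The first term is the good term we want to keep (it sums to $\int |D\nabla^{e_n'}u|^2 x_n \eta^2$), but notice it comes weighted by $x_n$ and not $x_n^2$ — so the actual play must be slightly different. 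The correct choice, to get the clean $|x_n|^2$ weight on the right, is to test with $\varphi_i = |\nabla^{e_n'}u|\,|x_n|\,\eta$ (or rather to run the Caccioppoli on the ``height function'' $|x_n|$ cut against $|\nabla^{e_n'}u|$): second, after the integration by parts one is left with a term $\int |\nabla^{e_n'}u|^2 |x_n|^2 |\nabla\eta|^2$ (this is the $4\int |x_n|^2|\nabla u|^2|\nabla\eta|^2$ on the right, using $|\nabla^{e_n'}u|\le |\nabla u|$ and the factor $4$ from a Cauchy--Schwarz splitting of the cross terms), plus a cross term involving $\partial_n(x_n^2)=2x_n$, which pairs $|\nabla^{e_n'}u|$ with $\eta\eta_n$; third, to control that cross term and the $W''$ term, I would invoke the Modica inequality (\Cref{lem:modicaineq}), $|\nabla u|^2/2 \le W(u)$, together with $|W''(u)|\le C$ (since $|u|\le 1$ and $W$ is a fixed quartic polynomial), which converts the potentially-signed contributions into $C\int W(u) |x_n \eta \eta_n|$; fourth, absorb any remaining term of the form $\int |\nabla^{e_n'}u|^2|x_n|^2|\nabla\eta|^2$ with a small constant into the left and collect constants, landing on the stated inequality.

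**The main obstacle.** The delicate point is the handling of the cross terms generated by differentiating the weight $|x_n|^2$ (or $|x_n|$) against the cutoff — i.e.\ terms of the shape $\int x_n\,|\nabla^{e_n'}u|\,|\nabla u|\,\eta\,\eta_n$ and $\int x_n\,|\nabla^{e_n'}u|\,|\nabla|\nabla^{e_n'}u||\,\eta\,\eta_n$. These are not sign-definite, and a naive Cauchy--Schwarz would reproduce a term $\int |\nabla|\nabla^{e_n'}u||^2|x_n|^2\eta^2$ which one cannot absorb (it is essentially of the same strength as what we are trying to bound — indeed $\mathcal A_e'^2$ is built precisely from $|D\nabla^{e'}u|^2 - |\nabla|\nabla^{e'}u||^2$, so this term would cancel the good contribution rather than be controlled by it). The resolution, which is the genuinely careful part of the computation, is to split the cross term using Young's inequality with a weight chosen so that the $|\nabla|\nabla^{e_n'}u||^2|x_n|^2\eta^2$ piece is bounded by the good term $\int |D\nabla^{e_n'}u|^2|x_n|^2\eta^2$ (using $|\nabla|\nabla^{e_n'}u||\le |D\nabla^{e_n'}u|$ pointwise) and then absorbed, while the remaining piece carries a pure $|x_n|^2|\nabla\eta|^2$ or $W(u)|x_n\eta\eta_n|$ structure. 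One also needs to be slightly careful that $|\nabla^{e_n'}u|$ is only Lipschitz, not $C^1$, where it vanishes — but this is routine: approximate by $\sqrt{|\nabla^{e_n'}u|^2+\delta^2}$ and let $\delta\downarrow 0$, or note the set $\{\nabla^{e_n'}u=0\}$ contributes nothing to any of the integrals. Everything else is bookkeeping of constants; the $4$ and the dimensional constant $C$ in the statement leave ample room for crude estimates.
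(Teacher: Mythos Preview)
Your proposal conflates this lemma (a purely first-order Caccioppoli estimate) with the tangential stability inequality of Proposition~\ref{prop:tanstab2}. The target inequality involves only first derivatives of $u$; quantities like $|D\nabla^{e_n'}u|$, $|\nabla|\nabla^{e_n'}u||$ and $\mathcal A_e'^2$ have no business appearing, yet your entire ``main obstacle'' paragraph is devoted to absorbing precisely such second-order terms. Your differentiated-equation route ($\Delta u_i = W''(u)\,u_i$ tested against $x_n u_i\eta^2$ or against $|\nabla^{e_n'}u|\,|x_n|\,\eta$) inevitably produces $\int |\nabla u_i|^2 \cdot(\text{weight})$ as the principal term --- a second-order quantity that is neither the left-hand side you want nor controllable by the stated right-hand side. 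Note also that stability is not assumed in this lemma, so ``testing the stability-type inequality'' is not available.

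The paper's proof is a Pohozaev identity: multiply $\Delta u = W'(u)$ by $X\cdot\nabla u$ with $X = x_n\eta^2 e_n$ and integrate by parts to get
\[
\int \Big[\operatorname{div} X - \sum_{i,j}\tfrac{u_iu_j}{|\nabla u|^2}\,\partial_i X^j\Big]|\nabla u|^2
= \int \Big[\tfrac{|\nabla u|^2}{2}-W(u)\Big]\operatorname{div} X.
\]
Since $\operatorname{div}X = \eta^2+2x_n\eta\eta_n$, the combination $(\eta^2)|\nabla u|^2 - u_n^2\eta^2 = |\nabla^{e_n'}u|^2\eta^2$ appears directly on the left; the horizontal cross terms $2u_iu_n x_n\eta\eta_i$ (for $i<n$) are split by Young as $\tfrac12 u_i^2\eta^2 + 2u_n^2x_n^2\eta_i^2$, the first piece absorbed on the left; and Modica's inequality $\tfrac{|\nabla u|^2}{2}\le W(u)$ converts the remaining terms on the right into $C\int W(u)\,|x_n\eta\eta_n|$. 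The computation stays first-order throughout.
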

\begin{proof}
    Obtained as in \cite[Remark 4.7]{Wang17}; we give the proof in \Cref{app:standres} for convenience of the reader.
\end{proof}

Recall the cylinder notation $\mathcal C_r:=B_{r}'\times[-r,r]\subset\R^{n-1}\times\R$. The following well-known lemma asserts that most of the energy is concentrated around the intermediate transition layers.
\begin{lemma}[{\bf Exponential decay}]\label{lem:expdecay}
    Let $u:\R^n\to\R$ be an A--C solution, and assume that $\{|u|\leq 0.85\}\cap \mathcal C_{4R}\subset \{|x_n|\leq \delta R\}$, where $\delta \in(0,1)$ and $R\ge 1$. Then, there are dimensional constants $C$ and $c_0>0$ such that
    \begin{equation}\label{eq:expdecay}
        \sup_{\mathcal C_{2R}\cap\{|x_n|\geq 2\delta R\}}\left[\frac{|\nabla u|^2}{2}+W(u)\right]\leq Ce^{-c_0\delta R}\,.
    \end{equation}
\end{lemma}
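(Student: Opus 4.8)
The plan is to combine the Modica inequality with a differential inequality for the ``discrepancy-free'' energy density $e(u) := \tfrac{|\nabla u|^2}{2} + W(u)$, which under the hypothesis on the level sets satisfies a linear ODE lower bound in the region $\{|x_n| \ge 2\delta R\}$. First I would note that by Modica's inequality (\Cref{lem:modicaineq}) we have $\tfrac{|\nabla u|^2}{2} \le W(u)$ everywhere, so $e(u) \le 2W(u)$; conversely $W(u) \le e(u)$. Hence it suffices to prove exponential decay for $W(u)$, or equivalently for $1 - |u|$, since $W(s) = \tfrac14(1-s^2)^2 \asymp (1-|s|)^2$ near $s = \pm 1$. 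In the region $\mathcal C_{3R} \cap \{|x_n| > \delta R\}$ we have $|u| > 0.85$ by hypothesis, so $W''(u) \ge c_1 > 0$ there for a dimensional constant $c_1$ (indeed $W''(s) = 3s^2 - 1 \ge 3(0.85)^2 - 1 > 1$).

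Next, working in the open set $U := \mathcal C_{3R} \cap \{|x_n| > \delta R\}$ where, say, $u > 0.85$ (the case $u < -0.85$ is symmetric, and $U$ splits into the two pieces $\{x_n > \delta R\}$ and $\{x_n < -\delta R\}$ on each of which $u$ has a fixed sign by continuity and connectedness together with $\{|u|\le 0.85\}\cap U = \emptyset$), set $w := 1 - u \in (0, 0.15)$. From the equation $-\Delta u + W'(u) = 0$ and $W'(u) = u^3 - u = -w(1-u)(2-w)$ one gets $\Delta w = W'(u) = -(1-u)(2-w)\,w \le -c_1 w$ on $U$, i.e. $w$ is a nonnegative supersolution of $\Delta w + c_1 w = 0$. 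The standard way to extract exponential decay is then a barrier argument: on the slab $\{\delta R < x_n < 3R\}$ (intersected with the cylinder) compare $w$ with a multiple of the one-dimensional barrier $\Phi(x_n) := e^{-\sqrt{c_1}\,(x_n - \delta R)} + e^{-\sqrt{c_1}\,(3R - x_n)}$, which satisfies $\Delta \Phi - c_1 \Phi = 0$; since $w \le 0.15 \le \tfrac{1}{4}\Phi$ on the two hyperplanes $x_n = \delta R$ and $x_n = 3R$ bounding the slab (where $\Phi \ge 1$) and on the lateral boundary $\partial B_{3R}' \times (\delta R, 3R)$ (using $w \le 0.15$ and that the lateral boundary is outside $\mathcal C_{2R}$ — or, more simply, one first runs the argument on a slightly larger cylinder so the lateral contribution is negligible), the maximum principle for $\Delta + c_1$ applied to $\tfrac14\Phi - w$ (valid once $c_1 (3R)^2$ is controlled, or after splitting into unit-size sub-slabs) yields $w \le \tfrac14 \Phi$, hence $w(x) \le Ce^{-\sqrt{c_1}\,(|x_n| - \delta R)} \le C e^{-c_0 \delta R}$ on $\mathcal C_{2R} \cap \{|x_n| \ge 2\delta R\}$ with $c_0 = \tfrac12\sqrt{c_1}$. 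Finally, interior gradient estimates for the semilinear equation (Schauder, using $|u| \le 1$) upgrade the bound on $w = 1-|u|$, hence on $W(u)$, to a bound on $|\nabla u|^2$, giving \eqref{eq:expdecay}.

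A cleaner alternative I would actually prefer, avoiding the need to track the lateral boundary, is to use the auxiliary function $P := \tfrac{|\nabla u|^2}{2} - W(u) \le 0$ (nonpositive by Modica) together with the known fact (from the proof of Modica's inequality / the $P$-function method) that $P$ is a subsolution of a linear elliptic equation; combined with $e(u) = |\nabla u|^2 - P = 2W(u) - (-P)$ and $0 \le -P$, it suffices again to bound $W(u)$. One then applies the barrier argument to $w = 1 - |u|$ purely on nested unit-width slabs $\{j \le x_n \le j+2\}$ for $\delta R \le j \le 2R$, at each step gaining a fixed factor $e^{-c}$ from the one-dimensional ODE comparison — this iterated ``de Giorgi-type'' gain over $\sim \delta R$ slabs produces the exponential rate $e^{-c_0 \delta R}$ directly, with the lateral boundary never entering because each sub-slab comparison only uses the two bounding hyperplanes and the a priori bound $w \le 0.15$.

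The main obstacle is the geometric bookkeeping at the lateral boundary of the cylinder: the hypothesis only constrains $\{|u|\le 0.85\}$ inside $\mathcal C_{4R}$, so one must make sure the comparison region stays well inside $\mathcal C_{4R}$ while still covering $\mathcal C_{2R}$, and one must know $u$ does not change sign within each connected component of $\{|x_n| > \delta R\} \cap \mathcal C_{4R}$ — this is why the slab-by-slab version, which localizes everything away from $\partial B'_{4R}$, is the robust choice. Everything else (the sign of $W''$ away from the wells, the equivalence of $W(u)$, $(1-|u|)^2$ and $|\nabla u|^2$ up to dimensional constants via Modica and interior gradient estimates, the maximum principle on a thin slab) is routine.
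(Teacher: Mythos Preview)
Your overall strategy---derive a linear differential inequality for a quantity measuring closeness to $\pm 1$, then compare with an exponential barrier---is the same as the paper's, but you have a sign slip that propagates. From $-\Delta u + W'(u)=0$ and $w=1-u$ you get $\Delta w = -\Delta u = -W'(u)$, not $\Delta w = W'(u)$. Since $-W'(u)=u(1-u)(1+u)=u\,w\,(2-w)\ge c_1 w$ for $u>0.85$, the correct inequality is $\Delta w \ge c_1 w$, i.e.\ $w$ is a \emph{subsolution} of $(\Delta - c_1)$. The operator $\Delta - c_1$ satisfies the maximum principle on any bounded domain (the zeroth-order coefficient is negative), so your worry about ``$c_1(3R)^2$ being controlled'' and the slab-by-slab workaround are artifacts of the sign error: once corrected, a single comparison with $\Phi$ works on the whole region.

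The paper's proof is essentially the same idea but tidier in two ways. First, it works with $v=1-u^2$ rather than $1-u$: a short computation using Modica gives $\Delta(1-u^2)\ge c(1-u^2)$ directly in $\{|u|\ge 0.85\}$, with no need to split into the two signs of $u$. Second, instead of slabs it compares on balls: for any $x$ in the target region one has $B_{\delta R}(x)\subset \mathcal C_{4R}\cap\{|u|\ge 0.85\}$, and comparison with a radial exponential on that ball gives $(1-u^2)(x)\le C e^{-c_0\delta R}$ immediately---the lateral boundary of the cylinder never enters. The gradient bound then follows from Modica alone ($\tfrac12|\nabla u|^2\le W(u)$), without invoking Schauder.
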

\begin{proof}
    Standard; a proof is given in \Cref{app:standres} for convenience of the reader.
\end{proof}
We now relate $L^\infty$-flatness (``height'') and the $L^2$-height excess.
\begin{lemma}[\textbf{Height controls height excess}]\label{lem:hconthexc}
    Given $\lambda\in(0,1)$, there exist $c_0>0$ and $C$ depending on $\lambda$ such that the following holds. Assume that $\{|u|\leq 0.9\}\cap \mathcal C_{4R}\subset \{|x_n|\leq \delta R\}$ for some $\delta>0$ and $R\ge 1$. Then,
    $${\bf H}_{R}^2(e_n)\leq C\max\{\delta^2,R^{-2(1-\lambda)}\}{\bf M}_R+Ce^{-c_0R^{\lambda}}.$$
\end{lemma}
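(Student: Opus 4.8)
\medskip

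\textbf{Setup and splitting of the domain.} Recall that, by definition,
\[
{\bf H}_R^2(e_n) \le {\bf H}_R^2(u,e_n) = \frac{1}{R^{n+1}}\int_{B_R} x_n^2\Big[\tfrac{|\nabla u|^2}{2}+W(u)\Big]\,dx,
\]
so it suffices to bound the right-hand side with $e=e_n$. The integrand is controlled by $x_n^2$ times the energy density $\tfrac{|\nabla u|^2}{2}+W(u)$. The plan is to split $B_R$ into the region near the transition layer, where $x_n^2$ is small (of order $\delta^2 R^2$), and the region far from it, where the energy density is exponentially small by Lemma~\ref{lem:expdecay}. Concretely, fix a threshold $h := R^{\lambda}$ (so $h\le \delta R$ or not — one has to treat the two cases, but the key split is at height $\max\{\delta R, R^\lambda\}$) and write $B_R = A_{\rm near}\cup A_{\rm far}$ with $A_{\rm near} = B_R\cap\{|x_n|\le \max\{\delta R, R^\lambda\}\}$ and $A_{\rm far}$ the complement.

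\textbf{The near region.} On $A_{\rm near}$ we simply bound $x_n^2 \le \max\{\delta^2 R^2, R^{2\lambda}\} = \max\{\delta^2, R^{-2(1-\lambda)}\}\,R^2$, so
\[
\frac{1}{R^{n+1}}\int_{A_{\rm near}} x_n^2\Big[\tfrac{|\nabla u|^2}{2}+W(u)\Big]
\le \max\{\delta^2, R^{-2(1-\lambda)}\}\cdot \frac{1}{R^{n-1}}\int_{B_R}\Big[\tfrac{|\nabla u|^2}{2}+W(u)\Big]
= C\max\{\delta^2, R^{-2(1-\lambda)}\}\,{\bf M}_R,
\]
using the definition \eqref{eq:densitydef} of ${\bf M}_R$ (up to the dimensional constant $\sigma_{n-1}$ absorbed in $C$).

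\textbf{The far region.} On $A_{\rm far}$ we have $|x_n| \ge \max\{\delta R, R^\lambda\} \ge \delta R$ (say, after possibly enlarging the window so that $|x_n|\ge 2\delta R$ to match the exact hypothesis of Lemma~\ref{lem:expdecay}; one applies the lemma with the given $\delta$ and a slightly smaller radius, which only changes constants). Since the hypothesis $\{|u|\le 0.9\}\cap\mathcal C_{4R}\subset\{|x_n|\le\delta R\}$ gives exactly the inclusion needed for Lemma~\ref{lem:expdecay} (after replacing $0.9$ by $0.85$, which is weaker), we get $\tfrac{|\nabla u|^2}{2}+W(u)\le Ce^{-c_0\delta R}$ on the portion of $A_{\rm far}$ inside the cylinder $\mathcal C_{2R}$; since $B_R\subset\mathcal C_R\subset\mathcal C_{2R}$, this covers all of $A_{\rm far}$. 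Also $|x_n|^2\le R^2$ on $B_R$. Hence
\[
\frac{1}{R^{n+1}}\int_{A_{\rm far}} x_n^2\Big[\tfrac{|\nabla u|^2}{2}+W(u)\Big]
\le \frac{1}{R^{n+1}}\cdot R^2\cdot |B_R|\cdot Ce^{-c_0\delta R}
\le C e^{-c_0\delta R}.
\]
This is not yet the claimed bound $Ce^{-c_0 R^\lambda}$: when $\delta$ is small compared to $R^{\lambda-1}$, the exponent $\delta R$ is worse than $R^\lambda$. The fix is to run Lemma~\ref{lem:expdecay} not with the raw height $\delta R$ but with the effective height $\max\{\delta R, R^\lambda\}$: since $\{|u|\le 0.85\}\cap\mathcal C_{4R}\subset\{|x_n|\le\delta R\}\subset\{|x_n|\le \max\{\delta R,R^\lambda\}\}$, applying the exponential-decay estimate (with a rescaled/translated version, or simply iterating it across the slab of width $\sim R^\lambda$) yields $\tfrac{|\nabla u|^2}{2}+W(u)\le Ce^{-c_0 R^\lambda}$ on $\{|x_n|\ge 2R^\lambda\}\cap\mathcal C_{2R}$, hence on $A_{\rm far}$. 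Then the far-region contribution is $\le Ce^{-c_0R^\lambda}$, as desired.

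\textbf{The main obstacle.} The only genuinely non-routine point is the last one: extracting the exponential rate $e^{-c_0 R^\lambda}$ rather than $e^{-c_0\delta R}$ from Lemma~\ref{lem:expdecay}, in the regime $\delta \ll R^{\lambda - 1}$ where $\delta R$ could even be bounded. This is handled because Lemma~\ref{lem:expdecay}'s proof is a barrier/comparison argument: once $|u|>0.85$ everywhere in a slab, the energy density decays like $e^{-c_0\,\mathrm{dist}}$ from the boundary of that slab, and the set $\{|u|\le 0.85\}$ sits inside $\{|x_n|\le\delta R\}\subset\{|x_n|\le R^\lambda\}$, so at distance $\ge R^\lambda$ from the hyperplane $\{x_n=0\}$ we gain a factor $e^{-c_0 R^\lambda}$ regardless of how small $\delta$ is. Everything else — the two-region split, the $L^\infty$ bound $x_n^2\le R^2$, and identifying $\tfrac{1}{R^{n-1}}\int_{B_R}[\tfrac{|\nabla u|^2}{2}+W(u)]$ with (a constant times) ${\bf M}_R$ — is bookkeeping. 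One should double-check the interplay of the constants $0.85$ versus $0.9$ in the two lemmas, and that $R\ge 1$ is used to make $R^\lambda\ge 1$ so the decay is nontrivial.
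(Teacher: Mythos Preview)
Your proposal is correct and follows essentially the same approach as the paper: split at height $2\widetilde\delta R$ with $\widetilde\delta:=\max\{\delta,R^{-(1-\lambda)}\}$, bound $x_n^2\le(2\widetilde\delta R)^2$ on the near region to produce the ${\bf M}_R$ term, and use exponential decay on the far region. The paper's write-up is slightly cleaner in that it introduces $\widetilde\delta$ at the outset and applies Lemma~\ref{lem:expdecay} \emph{directly} with parameter $\widetilde\delta$ (valid since $\{|u|\le 0.85\}\subset\{|x_n|\le\delta R\}\subset\{|x_n|\le\widetilde\delta R\}$), so no ``iteration across the slab'' is needed---but your reasoning arrives at the same place.
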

\begin{proof}
Let $\widetilde \delta=\max\{\delta,R^{-(1-\lambda)}\}$. If $\widetilde \delta\geq 1$, the inequality is trivial. Otherwise, letting $P=\left[\frac{|\nabla u|^2}{2}+W(u)\right]$, using \Cref{lem:expdecay} we can estimate
\begin{align*}
    {\bf H}_R^2(u,e)&=\frac{1}{R^{n+1}}\int_{B_R} |x\cdot e|^2P=\frac{1}{R^{n+1}}\int_{B_R\cap \{|x\cdot e|\leq 2\widetilde\delta R\}} |x\cdot e|^2P + \frac{1}{R^{n+1}}\int_{B_R\cap \{|x\cdot e|> 2\widetilde\delta R\}} |x\cdot e|^2P\\
    &\leq \frac{1}{R^{n+1}}(2\widetilde\delta R)^2\int_{B_R\cap \{|x\cdot e|\leq 2\widetilde\delta R\}} P + \frac{C}{R^{n-1}}e^{-c_0\widetilde \delta R}\leq C\widetilde\delta^2 {\bf M}_R + Ce^{-c_0R^{\lambda}}\,.
\end{align*}
\end{proof}

\begin{lemma}[{\bf Height excess controls height}]\label{lem:Hbdhtez}
    Let $\delta_1\in(0,1)$. Then, there is $\delta_2>0$ depending on $\delta_1,n$ such that if ${\bf H}_R^2(e_n)\leq \delta_2^2$, then, for all $R\ge 1$,
    $$\{|u|\leq 0.9\}\cap B_{R/2}\subset \{|x_n|\leq \max\{\delta_1,\frac{1}{R}\}R\}\,.$$
    More generally, given additionally $c_0>0$, up to making $\delta_2>0$ smaller we have that
    $$\{x:{\bf M}_{\frac{\delta_1}{2} R}(x)\geq c_0\}\cap B_{R/2}\subset \{|x_n|\leq \delta_1 R\}\,.$$
\end{lemma}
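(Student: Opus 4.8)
Both assertions will follow from a single elementary ball estimate, chosen so that the radius $R$ cancels out of the final bound. The preliminary ingredient I would record is the standard density lower bound at transition points: if $|u(x_0)|\le 0.9$, then there are dimensional constants $r_0\in(0,\tfrac12]$ and $c_0^*>0$ with ${\bf M}_{r_0}(u,x_0)\ge c_0^*$, and hence, by the monotonicity formula (\Cref{lem:monformula}) applied to $u(\cdot-x_0)$, ${\bf M}_r(u,x_0)\ge c_0^*$ for every $r\ge r_0$. This is immediate from \Cref{lem:modicaineq}, which forces $|\nabla u|\le \tfrac1{\sqrt2}$: thus $|u|\le 0.95$ on $B_{r_0}(x_0)$ once $r_0$ is a sufficiently small dimensional constant, so $\tfrac{|\nabla u|^2}{2}+W(u)\ge W(0.95)>0$ there, and integrating and dividing by $r_0^{n-1}$ gives the claim.

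The core step is the following. Suppose ${\bf H}_R^2(e_n)\le \delta_2^2$; I claim that if $x_0\in B_{R/2}$ satisfies $|x_0\cdot e_n|>\delta_1 R$, then necessarily ${\bf M}_{\delta_1 R/2}(u,x_0)<c_0$, provided $\delta_2$ is small in terms of $\delta_1,c_0,n$. Indeed, put $r:=\delta_1 R/2<R/2$, so that $B_r(x_0)\subset B_R$. On $B_r(x_0)$ one has $|x\cdot e_n|\ge |x_0\cdot e_n|-r>\tfrac{\delta_1}{2}R$, while by definition of ${\bf M}$, $\int_{B_r(x_0)}\big(\tfrac{|\nabla u|^2}{2}+W(u)\big)=\sigma_{n-1}r^{n-1}{\bf M}_r(u,x_0)$. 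Hence
\[
\delta_2^2\ \ge\ {\bf H}_R^2(e_n)\ \ge\ \frac{1}{R^{n+1}}\Big(\tfrac{\delta_1}{2}R\Big)^{2}\,\sigma_{n-1}\,r^{n-1}\,{\bf M}_r(u,x_0)\ =\ \sigma_{n-1}\Big(\tfrac{\delta_1}{2}\Big)^{n+1}{\bf M}_r(u,x_0),
\]
using $r^{n-1}=(\delta_1 R/2)^{n-1}$ in the last equality, so that ${\bf M}_{\delta_1 R/2}(u,x_0)\le 2^{n+1}\delta_2^2\big(\sigma_{n-1}\delta_1^{n+1}\big)^{-1}$, which is $<c_0$ for $\delta_2$ small enough. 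The ``more general'' statement is exactly the contrapositive of this claim.

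Finally, the first statement follows by applying the core step with $c_0:=c_0^*$ the dimensional constant above. Let $x_0\in B_{R/2}$ with $|u(x_0)|\le 0.9$. If $\delta_1 R/2\ge r_0$, then ${\bf M}_{\delta_1 R/2}(u,x_0)\ge c_0^*$ by the preliminary step, so the core step forces $|x_0\cdot e_n|\le \delta_1 R\le \max\{\delta_1,\tfrac1R\}R$. If instead $\delta_1 R/2<r_0$ (so $\max\{\delta_1,\tfrac1R\}R=1\ge 2r_0$, since $r_0\le\tfrac12$), I apply the core step with $\delta_1$ replaced by $\widetilde\delta:=2r_0/R\in(\delta_1,1)$, using ${\bf M}_{\widetilde\delta R/2}(u,x_0)={\bf M}_{r_0}(u,x_0)\ge c_0^*$, to conclude $|x_0\cdot e_n|\le \widetilde\delta R=2r_0\le 1$. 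Since $\widetilde\delta>\delta_1$, the smallness required of $\delta_2$ in this second case is only relaxed, so one fixed choice $\delta_2=\delta_2(\delta_1,n)$ (small in terms of $\sigma_{n-1}c_0^*(\delta_1/2)^{n+1}$) handles both cases, and correspondingly for the general statement with the given $c_0$ one takes $\delta_2$ small in terms of $\sigma_{n-1}c_0(\delta_1/2)^{n+1}$.

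The argument is essentially routine; the only point I would flag as genuinely important — and the place where a naive attempt goes wrong — is the choice of the ball radius $r\sim\delta_1 R$ in the core step: the powers $R^{2}\cdot R^{n-1}\cdot R^{-(n+1)}$ cancel precisely, which is exactly what makes the resulting lower bound on ${\bf H}_R^2$ independent of $R$ (a fixed-size ball around the offending point would only give a bound degenerating like $R^{-(n-1)}$, useless for large $R$).
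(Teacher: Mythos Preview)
Your proof is correct and follows essentially the same approach as the paper: both establish a dimensional lower density bound ${\bf M}_{r_0}(x)\ge c_0^*$ at transition points via the Lipschitz bound on $u$ and monotonicity, then run the identical ball computation showing ${\bf H}_R^2(e_n)\gtrsim (\delta_1/2)^{n+1}{\bf M}_{\delta_1 R/2}(x)$ whenever $|x_n|>\delta_1 R$. The only cosmetic difference is that the paper absorbs the small-$R$ case by working directly with $\widetilde\delta_1=\max\{\delta_1,1/R\}$ (so that $\widetilde\delta_1 R/2\ge 1/2>r_0$ always), whereas you handle it via an explicit case split on $\delta_1 R/2\gtrless r_0$; your inclusion of the $\sigma_{n-1}$ factor is in fact more careful than the paper's write-up.
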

\begin{proof}
    \textbf{Step 1.} Density of intermediate layers.\\
    Let $\widetilde \delta_1:= \max\{\delta_1,\frac{1}{R}\}$.
    We first see that $\{|u(x)|\leq 0.9\}\subset \{{\bf M}_{\frac{\widetilde\delta_1}{2}R}(x)\geq c_0\}$, for some $c_0>0$ depending on $n$.\\
    Indeed, since $\|\nabla u\|_{L^\infty}\leq C(n)$ and $W(s)>0$ for $s\neq \pm 1$, there are $c>0$ and $r_0\in(0,\frac{1}{2})$ depending on $n$ such that $W(u)\geq c>0$ in $B_{r_0}(x)$. Therefore, ${\bf M}_{r_0}(x)=\frac{1}{r_0^{n-1}}\int_{B_{r_0}(x)}\frac{1}{\sigma_{n-1}}\left[\frac{|\nabla u|^2}{2}+W(u)\right]\geq c_0$. On the other hand, since $\frac{\widetilde\delta_1}{2}R\geq \frac{1}{2}>r_0$, \Cref{lem:monformula} gives that ${\bf M}_{\frac{\widetilde\delta_1}{2}R}(x)\geq c_0$ as well.\\
    \textbf{Step 2.} Conclusion.\\
    By Step 1, it suffices to prove the second part of the result. If by contradiction there is $x\in B_{R/2}\setminus \{|x_n|\leq \delta_1 R\}$ with $M_{\frac{\delta_1}{2} R}(x)\geq c_0$, then (since $B_{\frac{\delta_1}{2}R}(x)\subset \{|x_n|\geq \frac{\delta_1}{2}R\}\cap B_R$) we have that
    \begin{align*}
        {\bf H}_R^2(e_n)&\geq \frac{1}{R^{n+1}}\int_{B_{\frac{\delta_1}{2}R}(x)}|y_n|^2\left[\frac{|\nabla u|^2}{2}+W(u)\right]\,dy \geq \left(\frac{\delta_1}{2}\right)^2\frac{1}{R^{n-1}}\int_{B_{\frac{\delta_1}{2}R}(x)}\left[\frac{|\nabla u|^2}{2}+W(u)\right]\,dy\\
        &= \left(\frac{\delta_1}{2}\right)^{n+1}{\bf M}_{\frac{\delta_1}{2}R}(x)\geq c_0\left(\frac{\delta_1}{2}\right)^{n+1}\,,
    \end{align*}
    and then if ${\bf H}_R^2(e_n)\leq \delta_2^2$ with $\delta_2$ small enough we reach a contradiction.
\end{proof}
We can finally give:
\begin{proposition}[\textbf{Height excess controls tilt excess}]\label{lem:Cacc-ineq-AC2} Let $u:\R^n\to\R$ be an A--C solution, with ${\bf M}_\infty\leq C_0$. Then, there are $C$, $R_0$ and $c>0$, depending on $C_0$ and $n$, such that if $R\geq R_0$, then
\begin{align*}
        {\bf T}_R^2(e_n)\leq C{\bf H}_{2R}^2(e_n)+e^{-cR}\,.
\end{align*}
    
\end{proposition}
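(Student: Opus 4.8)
The plan is to deduce this from the level-set Caccioppoli inequality of \Cref{lem:Cacc-ineq-AC}, tested against an appropriate cutoff. The one structural difficulty — which I expect to be the only real obstacle — is the zeroth-order term $C\int W(u)\,|x_n\eta\eta_n|$ appearing there: for a generic cutoff supported on $B_{2R}\setminus B_R$ it is only of order $R^{n-1}{\bf H}_{2R}(e_n)$ (a single power of the height excess), which is too weak to close the estimate. The remedy I would use is to choose the cutoff \emph{radial}: taking $\eta(x)=\zeta(|x|)$ with $\zeta\equiv 1$ on $[0,R]$, $\zeta\equiv 0$ on $[2R,\infty)$ and $|\zeta'|\le 2/R$, one has $\eta_n=\partial_n\eta=\zeta'(|x|)\,x_n/|x|$, so that on the annulus $\{R\le |x|\le 2R\}$ one gets $|x_n\,\eta\,\eta_n|=\tfrac{x_n^2}{|x|}\,|\zeta(|x|)\zeta'(|x|)|\le \tfrac{2}{R^2}\,x_n^2$. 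Thus this term acquires exactly the extra factor $x_n^2/R^2$ that makes it comparable to ${\bf H}_{2R}^2(e_n)$ rather than ${\bf H}_{2R}(e_n)$.

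Concretely, I would first plug this $\eta$ into \Cref{lem:Cacc-ineq-AC} and invoke Modica's inequality (\Cref{lem:modicaineq}), which gives $|\nabla u|^2\le \tfrac12|\nabla u|^2+W(u)=:P$, together with the trivial $W(u)\le P$. Since $|\nabla\eta|=|\zeta'(|x|)|\le 2/R$ is supported in $B_{2R}\setminus B_R$, the first term on the right of \Cref{lem:Cacc-ineq-AC} is bounded by $\tfrac{C}{R^2}\int_{B_{2R}}x_n^2|\nabla u|^2\le \tfrac{C}{R^2}\int_{B_{2R}}x_n^2 P$; by the bound on $|x_n\eta\eta_n|$ above, the second term is bounded by $\tfrac{C}{R^2}\int_{B_{2R}}W(u)\,x_n^2\le\tfrac{C}{R^2}\int_{B_{2R}}x_n^2 P$ as well. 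Recalling that $\int_{B_{2R}}x_n^2P=(2R)^{n+1}{\bf H}_{2R}^2(e_n)$, and that the left-hand side of \Cref{lem:Cacc-ineq-AC} dominates $\int_{B_R}|\nabla^{e_n'}u|^2$ since $\eta\equiv 1$ on $B_R$, I would divide by $R^{n-1}$ to obtain
\[
  {\bf T}_R^2(e_n)=\frac{1}{R^{n-1}}\int_{B_R}|\nabla^{e_n'}u|^2\le C(n)\,{\bf H}_{2R}^2(e_n),
\]
which in particular implies the stated inequality (with this choice of cutoff the exponentially small term and the hypotheses $R\ge R_0$, ${\bf M}_\infty\le C_0$ are not actually needed).

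If instead one wants to work with a cylindrical cutoff — which is presumably why the statement carries an $e^{-cR}$ error and the density bound — the alternative is a dichotomy on the size of ${\bf H}_{2R}(e_n)$: when ${\bf H}_{2R}^2(e_n)\ge\delta_2^2$ one simply uses the crude bound ${\bf T}_R^2(e_n)\le\sigma_{n-1}{\bf M}_R\le\sigma_{n-1}C_0$, which is $\lesssim_{C_0,\delta_2}{\bf H}_{2R}^2(e_n)$; and when ${\bf H}_{2R}^2(e_n)<\delta_2^2$, \Cref{lem:Hbdhtez} confines $\{|u|\le 0.9\}\cap B_{2R}$ to a thin slab $\{|x_n|\le\delta_1 R\}$, \Cref{lem:expdecay} makes the contribution of $W(u)$ from outside a slightly larger slab exponentially small, and inside the slab one again uses the factor $x_n^2\le(\delta_1 R)^2$ together with $W(u)\le P$. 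In all cases the entire difficulty is localized in the $W(u)$-term; the $|\nabla u|^2$-term is immediately of the right size by Modica and $|\nabla\eta|\le C/R$. I would present the radial-cutoff argument, as it is the cleanest.
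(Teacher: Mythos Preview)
Your radial-cutoff argument is correct and in fact yields the sharper inequality ${\bf T}_R^2(e_n)\le C(n)\,{\bf H}_{2R}^2(e_n)$ with no error term and no need for the hypotheses ${\bf M}_\infty\le C_0$ or $R\ge R_0$. The observation that $\eta_n=\zeta'(|x|)\,x_n/|x|$ automatically carries an extra factor of $x_n/|x|\le x_n/R$ on the annulus is exactly what makes the $W(u)$-term scale like ${\bf H}_{2R}^2$ rather than ${\bf H}_{2R}$.

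The paper takes a genuinely different route---essentially the alternative you outline in your second paragraph. It uses a cylindrical (tensor-product) cutoff $\eta(x',x_n)=\xi(|x'|/r)\,\xi(x_n/r)$, so that $\eta_n$ vanishes on the slab $\{|x_n|\le r\}$ and the problematic term is supported only in $B_{2r}\cap\{|x_n|\ge r\}$. It then runs exactly the dichotomy you describe: if the transition region is contained in a thin slab, \Cref{lem:expdecay} makes the offending term exponentially small; otherwise \Cref{lem:Hbdhtez} gives a lower bound on ${\bf H}^2$ and one falls back on the trivial bound ${\bf T}_r^2\le C{\bf M}_\infty\le CC_0$. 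A final finite-covering step passes from the resulting $B_{16r}$ to $B_{2R}$. Your approach is cleaner and more self-contained; the paper's route is the natural one if one has already committed to a cutoff whose $x_n$-derivative is localized away from the slab, and it explains why the statement is recorded with the (ultimately unnecessary) density hypothesis and exponential error.
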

\begin{proof}
We will show that if $r\geq R_0$ then
    \begin{equation}\label{eq:4uiaglavsg}
        {\bf T}_r^2(e_n)\leq C{\bf H}_{16r}^2(e_n)+e^{-cr}\,,
    \end{equation}
which up to a finite covering argument shows the desired result as well (with some different $C,c,R_0$).

    Let $\xi\in C_c^1(\R)$ be a standard cutoff satisfying $\chi_{[-1,1]}\leq \xi\leq \chi_{[-4/3,4/3]}$, so that $\xi\equiv 1$ in $[-1,1]$. Plugging $\eta(x',x_n)=\xi(\frac{|x'|}{r})\xi(\frac{x_n}{r})$ into \Cref{lem:Cacc-ineq-AC}, since $\chi_{B_r}\leq \eta\leq\chi_{B_{2r}}$ and $\eta_n\equiv 0$ in $\{|x_n|\leq r\}$ we find
    \begin{align*}
        \int_{B_r} |\nabla^{e_n'} u|^2\leq \frac{C}{r^2}\int_{B_{2r}} |x_n|^2|\nabla u|^2+C\int_{B_{2r}\cap\{|x_n|\geq r\}} \left[\frac{|\nabla u|^2}{2}+W(u)\right]|x_n\eta\eta_n|\,.
    \end{align*}
    We consider two different cases:\\
    If $\{|u|\leq 0.85\}\cap \mathcal C_{4r}\subset \{|x_n|\leq \frac{r}{2}\}$, then applying \Cref{lem:expdecay} to the last term we obtain \eqref{eq:4uiaglavsg}.\\
    Otherwise, \Cref{lem:Hbdhtez} shows that ${\bf H}_{16r}^2(e_n)\geq c>0$ as long as $R\geq R_0$ is large enough. Since ${\bf T}_r^2(e_n)\leq C{\bf M}_r\leq C{\bf M}_\infty$ is uniformly bounded, we conclude \eqref{eq:4uiaglavsg} as well.
\end{proof}

We end this section with a general comparison result. Morally, it says that if the level sets can be approximated well by two hyperplanes $x+e_x^\perp$ and $y+e_y^\perp$, then they are both close in direction and ``height''.
\begin{lemma}[{\bf Coefficient comparison}]\label{lem:coefcompexc}
Given $C_0\ge1$, there exist $C$ and $R_0$ depending on $C_0$,$n$ such that the following holds.

Let $u:\R^n\to\R$ be an A--C solution. Then, for any $x,y\in\R^n$ satisfying  $\frac{1}{C_0}\leq {\bf M}_R(x)\leq C_0$ and $R\geq \max\{\frac{1}{2}{\rm dist}(x,y), R_0\}$, for all $e_x,e_y\in\Sp^{n-1}$ we have:
    \begin{align}\label{eq:aigpharspga}
        \min_{\sigma\in\{+1,-1\}} |e_x-\sigma e_y|\leq C[{\bf H}_{6R}(e_x,x)+{\bf H}_{6R}(e_y,y)] + Ce^{-cR}
    \end{align}
    and
    \begin{align}\label{eq:aigpharspga2}
        |e_x\cdot(x-y)|\leq C[{\bf H}_{6R}(e_x,x)+{\bf H}_{6R}(e_y,y)] + Ce^{-cR}.
    \end{align}
\end{lemma}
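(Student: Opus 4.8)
The plan is to extract from the hypotheses two facts about the cylinder $\mathcal{C}_{cR}$ for a suitable small dimensional constant $c$: first, that the transition region $\{|u|\le 0.9\}$ near $x$ is trapped in a thin slab $\{|e_x\cdot(z-x)|\le \widetilde\delta_x R\}$, and symmetrically for $y$; and second, that the density $\mathbf{M}_R(x)$ is bounded above and below (given), so exponential decay of the energy density off the slab is available via \Cref{lem:expdecay}. The key quantitative input is \Cref{lem:Hbdhtez}: from $\mathbf{H}_{6R}^2(e_x,x)\le \delta_2^2$ we get the slab confinement with $\widetilde\delta_x\lesssim \delta_1$ (and a matching statement for the sublevel sets of $\mathbf{M}$), and conversely \Cref{lem:hconthexc} shows the slab thickness is controlled by $\mathbf{H}$ up to exponential error. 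So morally $\widetilde\delta_x \sim \mathbf{H}_{6R}(e_x,x)/R + e^{-cR}$, and likewise for $y$. If either $\mathbf{H}_{6R}(e_x,x)$ or $\mathbf{H}_{6R}(e_y,y)$ fails to be small (say exceeds a fixed threshold $\delta_2$), then the right-hand sides of \eqref{eq:aigpharspga} and \eqref{eq:aigpharspga2} are $\gtrsim 1$ after absorbing constants, while the left-hand sides are bounded ($|e_x-\sigma e_y|\le 2$, and $|e_x\cdot(x-y)|\le |x-y|\le 2R \lesssim \mathbf{M}_R(x)^{1/2}\cdot R$... here one uses the lower density bound and that $R_0$ is large to make the inequality trivial), so in that regime the estimates hold for free. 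Thus we may assume both height excesses are below the threshold $\delta_2$ of \Cref{lem:Hbdhtez}.

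In the main regime, fix $c_0>0$ as in \Cref{lem:Hbdhtez} applied with the given lower density bound $\tfrac1{C_0}$; then the set $\{z:\mathbf{M}_{\delta_1 R}(z)\ge \tfrac{1}{C_0}\}\cap B_{3R}(x)$ lies in the slab $S_x:=\{|e_x\cdot(z-x)|\le \delta_1 R\}$, and similarly it lies in $S_y:=\{|e_y\cdot(z-y)|\le \delta_1 R\}$ (using $R\ge \tfrac12\mathrm{dist}(x,y)$ so the ball $B_{3R}(x)$ is comfortably centered around both). But by Step 1 of the proof of \Cref{lem:Hbdhtez}, every point of $\{|u|\le 0.9\}$ has $\mathbf{M}_{\delta_1 R/2}\ge c_0'>0$, so the nonempty portion of $\{|u|\le 0.9\}$ inside $B_{2R}$ — which is nonempty because the Modica inequality plus the density lower bound force the transition set to be substantial there — lies in $S_x\cap S_y$. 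Choosing two points $p,q$ in this intersection that are far apart in directions transverse to $e_x$ (concretely, using that $\{|u|\le0.9\}\cap B_{R}$ has diameter $\gtrsim R$ in at least $n-1$ independent directions, which follows from the energy lower bound and the structure of the slab), one gets that the two affine hyperplanes $x+e_x^\perp$ and $y+e_y^\perp$ both $\delta_1 R$-approximate these points. Elementary linear algebra — comparing the normal directions of two hyperplanes that are uniformly close on a set of diameter $\sim R$ spanning nearly all directions — yields $\min_\sigma|e_x-\sigma e_y|\lesssim \delta_1$, and plugging $z=x$ into "$z\in S_y$ up to error" gives $|e_x\cdot(x-y)| = |e_x\cdot(x-y) - e_y\cdot(x-y)| + |e_y\cdot(x-y)| \lesssim \delta_1 R$... — wait, the scaling: one wants $|e_x\cdot(x-y)|\lesssim \delta_1 R$ is the wrong homogeneity; the correct statement is obtained by noting $x\in S_x$ trivially and $x\in S_y$ (as $x$ itself is in the transition region, or can be connected to it within $B_R$), giving $|e_y\cdot(x-y)|\le \delta_1 R$, and then since $\delta_1$ can be taken $\sim \mathbf{H}/R + e^{-cR}$ this reads $|e_y\cdot(x-y)|\lesssim \mathbf{H}_{6R}(e_y,y) + Re^{-cR}$, and symmetrically with $e_x$; combining gives \eqref{eq:aigpharspga2}. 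One then re-optimizes: taking $\delta_1$ to be (a fixed multiple of) $\max\{\mathbf{H}_{6R}(e_x,x),\mathbf{H}_{6R}(e_y,y)\}/R + e^{-cR}$ in the invocations of \Cref{lem:Hbdhtez} (valid precisely because we are in the small-excess regime), the slab thicknesses become proportional to the height excesses, and \eqref{eq:aigpharspga}–\eqref{eq:aigpharspga2} follow with the exponential remainder absorbed.

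The main obstacle I anticipate is making rigorous the step "the transition set inside a macroscopic ball has diameter $\gtrsim R$ in $n-1$ transverse directions" — i.e., that the level set genuinely spans the slab rather than being a small blob near one face. This needs a lower bound on $\int_{B_R}W(u)$ coming from the density lower bound $\mathbf{M}_R(x)\ge \tfrac1{C_0}$ together with Modica's monotonicity and the slab confinement, plus a covering/continuity argument (the transition set is connected enough, or at least present in enough sub-balls of $B_R$, by the Modica inequality and a barrier argument forcing $u$ to oscillate). A cleaner route, which I would actually take, avoids asserting anything about the geometry of the level set and instead argues purely with the weighted energy: if $\min_\sigma|e_x-\sigma e_y|$ were large (say $\ge \Lambda(\mathbf{H}_{6R}(e_x,x)+\mathbf{H}_{6R}(e_y,y)) + \Lambda e^{-cR}$ with $\Lambda$ huge), then the two slabs $S_x,S_y$ of thickness $\lesssim$(those excesses) would intersect $B_{R}$ in a region so thin that the confined energy $\int_{B_R}[\tfrac{|\nabla u|^2}{2}+W(u)]$ would be forced below $c(n)R^{n-1}$, contradicting the density lower bound; similarly a large value of $|e_x\cdot(x-y)|$ forces $x$ to be far from the slab $S_y$, hence (by the $\mathbf{M}$-sublevel version of \Cref{lem:Hbdhtez}) $\mathbf{M}_{\delta_1 R}(x)$ small, again contradicting $\mathbf{M}_R(x)\ge \tfrac1{C_0}$ via monotonicity. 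This reduces everything to the two confinement lemmas already in hand, the exponential decay lemma, and a volume estimate for the intersection of two thin slabs, which is elementary.
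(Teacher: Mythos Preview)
Your approach has a genuine gap: it does not yield the \emph{linear} dependence on ${\bf H}$ asserted in \eqref{eq:aigpharspga}--\eqref{eq:aigpharspga2}, and the paper's proof takes a completely different route.

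The core issue is your claim that the slab thickness $\widetilde\delta_x$ can be taken $\sim {\bf H}_{6R}(e_x,x)$. Neither of the two lemmas you invoke gives this. \Cref{lem:hconthexc} goes the wrong direction (slab $\Rightarrow$ excess). \Cref{lem:Hbdhtez} goes the right direction, but if you trace its proof the quantitative relation is $\delta_2^2 \sim c_0(\delta_1/2)^{n+1}$, so from ${\bf H}=\delta_2$ you only get $\delta_1 \sim {\bf H}^{2/(n+1)}$. Your ``cleaner route'' via energy in the intersection $S_x\cap S_y$ fares no better: a Chebyshev argument gives slabs of thickness $t_x\sim R\,{\bf H}_x$, $t_y\sim R\,{\bf H}_y$ containing all but an $\varepsilon$-fraction of the energy; since the energy density is $O(1)$ pointwise, the volume bound $|S_x\cap S_y\cap B_R|\lesssim t_xt_y R^{n-2}/\sin\theta$ forces only $\theta\lesssim R\,{\bf H}_x{\bf H}_y$, which is not the claimed $\theta\lesssim {\bf H}_x+{\bf H}_y$.

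The paper instead passes through the \emph{tilt excess} ${\bf T}$. The key pointwise inequality (a triangle inequality plus Cauchy--Schwarz) is
\[
\min_{\sigma\in\{\pm1\}}|e_x-\sigma e_y|^2\,|\nabla u|^2 \le 4\big(|\nabla^{e_x'}u|^2+|\nabla^{e_y'}u|^2\big).
\]
Integrating over $B_R(x)$ and using the lower bound $\tfrac{1}{R^{n-1}}\int_{B_R(x)}|\nabla u|^2\ge c$ (from the density lower bound combined with the discrepancy decay of \Cref{thm:villegas}) gives $\min_\sigma|e_x-\sigma e_y|^2\lesssim {\bf T}_{3R}^2(e_x,x)+{\bf T}_{3R}^2(e_y,y)$. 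The Caccioppoli-type inequality \Cref{lem:Cacc-ineq-AC2} then converts ${\bf T}$ to ${\bf H}$ with an exponential remainder, yielding \eqref{eq:aigpharspga}. For \eqref{eq:aigpharspga2} one writes $|e_x\cdot(x-y)|\le |e_x\cdot(X-x)|+|e_x-\sigma e_y|\,|X-y|+|e_y\cdot(X-y)|$, integrates against the energy density over $B_R(x)$, and uses \eqref{eq:aigpharspga} on the middle term. The passage through ${\bf T}$ and the Caccioppoli step are what produce the sharp rate; your geometric slab picture does not.
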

\begin{proof}
{\bf Step 1.} Proof of \eqref{eq:aigpharspga}.\\
Combining \eqref{eq:deltamod3} and the lower density assumption, we see that $\frac{1}{R^{n-1}}\int_{B_{R}(x)} |\nabla u|^2\geq c$ as long as $R\geq R_0$, so that in particular
   \begin{equation}\label{eq:tfuagflkjasg}
        \min_{\sigma\in\{+1,-1\}} |e_x-\sigma e_y|^2\leq \frac{C}{R^{n-1}}\int_{B_R(x)} \min_{\sigma\in\{+1,-1\}} |e_x-\sigma e_y|^2 |\nabla u|^2\,.
    \end{equation}
    By the triangle inequality we can compute \begin{align*}
        \min_{\sigma\in\{+1,-1\}}|e_x-\sigma e_y|^2|\nabla u|^2&\leq 2\min_{\sigma\in\{+1,-1\}} \big|e_x|\nabla u|-\sigma \nabla u\big|^2+2\min_{\sigma\in\{+1,-1\}}\left|e_y|\nabla u|-\sigma\nabla u|\right|^2\\
    &= 2\left[ \big(2|\nabla u|^2-2 |\nabla u\cdot e_x||\nabla u|\big)+\big(2|\nabla u|^2-2|\nabla u\cdot e_y||\nabla u|\big)\right]\,,
    \end{align*}
    thus by Cauchy--Schwarz we have
    \begin{align*}
    \min_{\sigma\in\{+1,-1\}}|e_x-\sigma e_y|^2|\nabla u|^2&\leq 4\big[(|\nabla u|^2 -  |\nabla u\cdot e_x|^2)+(|\nabla u|^2 -  |\nabla u\cdot e_y|^2)\big]=4\big[|\nabla^{e_x'} u|^2+|\nabla^{e_y'} u|^2 \big]\,.
\end{align*}
Combining this with \eqref{eq:tfuagflkjasg} and using that $B_R(x)\subset B_{3R}(y)$, we find that
\begin{align*}
        \min_{\sigma\in\{+1,-1\}} |e_x-\sigma e_y|^2&\leq C\Big[\int_{B_{3R}(x)} |\nabla^{e_x'} u|^2+\int_{B_{3R}(y)} |\nabla^{e_y'} u|^2\Big]=C\left[{\bf T}_{3R}^2(e_x,x)+{\bf T}_{3R}^2(e_y,y)\right]\,.
\end{align*}
Using \Cref{lem:Cacc-ineq-AC2} we obtain \eqref{eq:aigpharspga}.\\

\noindent {\bf Step 2.} Proof of \eqref{eq:aigpharspga2}.\\
Recall that
$${\bf H}_R^2(e_x,x)=\frac{1}{R^{n+1}}\int_{B_R(x)} |e_x\cdot (X-x)|^2\left[\frac{|\nabla u|^2}{2}+W(u)\right]\,dX\,,$$
and likewise for ${\bf H}_R^2(e_y,y)$. On the other hand, we can bound
$$|e_x\cdot (x-y)|\leq |e_x\cdot(X-x)|+|e_x-\sigma e_y||X|+|e_y\cdot (X-y)|\,.$$
Integrating, and using \eqref{eq:aigpharspga} and ${\bf M}_R(x)\leq C_0$ to bound the second term on the right, we readily find \eqref{eq:aigpharspga2}.
\end{proof}

\subsection{Wang--Wei in a flat setting}
We rewrite \Cref{thm:sheetimpc2alpha} in a flat, large-scale setting. This is the version we will use in the rest of the article:
\begin{theorem}[\textbf{\cite{WW19} in a flat setting}]\label{thm:sheetpart2}
     Let $n\leq 10$. Let $u:B_R\to (-1,1)$ be a stable A--C solution, and assume the sheeting assumptions hold in $B_R$ for some $C_1$. Let $\delta_1>0$ and $\theta\in(0,1)$. Then, there exist $C_2$ depending on $C_1,\theta$, as well as $R_0$ depending on $\delta_1,C_1,\theta$ such that the following hold.\\
    Assume additionally that
    $$\varnothing \neq \{u=0\}\cap B_R\subset \{|e_n\cdot x|\leq \delta_2 R\}$$ 
    for some $\delta_2\in(0,1/16)$  and $R\geq R_0$. Then, there is some $N\in\N$ such that:
    \begin{itemize}
        \item There are $C^{\infty}$ graphs $g_i:B_{\frac{3}{5}R}'\to[-R/8,R/8]$, $g_1<...<g_N$, such that $\{u=0\}\cap \mathcal C_{\frac{3}{5}R}=\bigcup_{i=1}^N {\rm graph} \,g_i$.
    \item The estimates \eqref{eq:c2alphasheet}--\eqref{eq:wwsepbnd} hold for the $g_i$.
    \end{itemize}
\end{theorem}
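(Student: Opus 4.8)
Since all hypotheses and conclusions are invariant under the rescaling $u\mapsto u(R\,\cdot\,)$ (which turns $u$ into a stable $\ep$-Allen--Cahn solution on $B_1$ with $\ep=1/R\le 1/R_0$), we may assume $R=1$. The strategy has three steps: (a) extract from \Cref{lem:rjbagbda} the \emph{local} graph structure of $\{u=0\}$ near each point of $\{u=0\}\cap B_{3/5}$; (b) use the flatness hypothesis $\{u=0\}\cap B_1\subset\{|x_n|\le\delta_2\}$, together with the curvature bound from the sheeting assumptions, to pin the graphing direction to $e_n$ and patch the local graphs into \emph{global} graphs $g_1<\dots<g_N$ over $B_{3/5}'$; (c) feed these into \Cref{thm:sheetimpc2alpha} to obtain \eqref{eq:c2alphasheet}--\eqref{eq:wwsepbnd}.

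For step (b): the bound $|\mathcal A_u|\le C_1$ controls the rate of rotation of the unit normal $\nu$ of the level sets of $u$ in $\{|u|\le 0.9\}\cap B_1$. Combined with the slab confinement, a clearing-out/continuation argument rules out vertical tangents of $\{u=0\}$ in $\mathcal C_{3/5}$ and keeps $\nu$ close to $\pm e_n$ there: if $\nu(p)$ deviated from $\pm e_n$ at some $p\in\{u=0\}\cap\mathcal C_{3/5}$, then following $\{u=0\}$ in a direction with positive $x_n$-component the curvature bound would prevent the normal from turning back quickly, so the path would leave the slab $\{|x_n|\le\delta_2\}$ while staying inside $B_1$ --- contradicting the flatness hypothesis. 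Consequently the projection $\pi\colon\{u=0\}\cap\mathcal C_{3/5}\to B_{3/5}'$ is a local diffeomorphism (immersive since $\nu$ is close to $e_n$, using that $\nabla u\neq 0$ on $\{u=0\}$ by the sheeting assumptions), and it is proper, since the preimage of a compact $K\subset B_{3/5}'$ is a closed bounded subset of $K\times[-\delta_2,\delta_2]\subset\mathcal C_{3/5}$. Hence $\pi$ is a covering map of the simply connected ball $B_{3/5}'$, therefore trivial, and $\{u=0\}\cap\mathcal C_{3/5}=\bigcup_{i=1}^N{\rm graph}\,g_i$ for some $N\in\N$ (with $N=0$ if the cylinder is free of zeros), where $g_i\in C^\infty(B_{3/5}')$ by the implicit function theorem, $g_1<\dots<g_N$, and the $g_i$ take values in $[-\delta_2,\delta_2]\subset(-1/8,1/8)$ by the slab confinement. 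Finiteness of $N$ is a local matter: infinitely many sheets would accumulate at a value where $\{u=0\}$ is, by the implicit function theorem, a single graph --- a contradiction (it is also contained in the uniform sheet count of \Cref{lem:rjbagbda}).

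For step (c): in the frame $e_n$ each $g_i$ coincides near every point of $B_{3/5}'$ with one of the local graphs produced by \Cref{lem:rjbagbda}, so \Cref{thm:sheetimpc2alpha} --- taken with $\vartheta:=\theta$, which is permissible once $\ep/R=1/R\le\delta_0(\theta)$ (this is what the dependence of $R_0$ on $\theta$ encodes) --- provides \eqref{eq:c2alphasheet}, \eqref{eq:meancurvsheet} and the separation bound \eqref{eq:wwsepbnd} for those local graphs, hence for the $g_i$ on all of $B_{3/5}'$, these being local, scale-invariant estimates. Undoing the rescaling yields the theorem. The one genuinely new ingredient --- and the main obstacle --- is step (b): promoting the purely local decomposition of \Cref{lem:rjbagbda} to a global one over $B_{3/5}'$ with the prescribed vertical direction $e_n$; the flatness hypothesis is exactly what excludes folds and prevents sheets from terminating before reaching $\partial B_{3/5}'$, while the ordering, finiteness, and all quantitative control are inherited from \Cref{lem:rjbagbda} and \Cref{thm:sheetimpc2alpha}.
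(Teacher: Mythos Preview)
Your approach is the paper's own --- their proof is the single line ``Immediate from \Cref{lem:rjbagbda} and \Cref{thm:sheetimpc2alpha}, by making $\delta_2>0$ sufficiently small'' --- and your covering-map argument (proper local diffeomorphism over a simply-connected base, hence a trivial cover) is a clean way to spell out the passage from the local decomposition of \Cref{lem:rjbagbda} to a global one over $B_{3/5}'$.

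The one point that needs care is the clearing-out argument in step~(b). After rescaling to $R=1$ the sheeting assumption gives only $|\mathcal A_u|\le C_1$, so along an arc of length $s$ the unit normal may rotate by as much as $C_1 s$. If $C_1$ is large, the hypothesis $\delta_2<1/16$ alone does not prevent the level set from turning vertical and back again within the slab while staying inside $B_1$ (a piece of a cylinder of radius much smaller than $\delta_2$ does exactly this). Your continuation argument only forces $\nu$ close to $\pm e_n$ once $\delta_2$ is small compared to $1/C_1$ --- equivalently, compared to the radius $\delta(C_1)$ of the local charts in \Cref{lem:rjbagbda}: then those local graphs, having horizontal extent $\sim\delta(C_1)$ while confined to a slab of width $2\delta_2\ll\delta(C_1)$, must be nearly horizontal, and the rest of your plan runs. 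This is precisely the paper's proviso ``by making $\delta_2>0$ sufficiently small'', which you should make explicit.
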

\begin{proof}
    Immediate from \Cref{lem:rjbagbda} and \Cref{thm:sheetimpc2alpha}, by making $\delta_2>0$ sufficiently small.
\end{proof}
Moreover, we collect some additional properties.
\begin{lemma}[{\bf Additional properties}]\label{lem:addproperties}
    Let $\delta_1>0$. In the setting of \Cref{thm:sheetpart2}, up to making $\delta_2>0$ smaller and $R_0$ larger depending also on $\delta_1$, the following additionally hold:
    \begin{itemize}
    \item The graphs satisfy $\|\nabla g_i\|_{L^\infty}\leq \delta_1$.
    \item There holds $\{|u|\leq 0.9\}\subset \{|e_n\cdot x|\leq \delta_2 R +C_2\}$ in $\mathcal C_{R/2}$.
    \item We have $|{\bf M}_{R/2}(u)-N|\leq \delta_1 N$.
    \end{itemize}
\end{lemma}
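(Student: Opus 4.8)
All three items sit inside the conclusion of \Cref{thm:sheetpart2}, so the plan is simply to refine its a~priori bounds; I would prove them in order, the second relying on the sheeting bound \eqref{eq:sheetassump} and Modica's inequality, the third on the first two. \emph{First item.} By \eqref{eq:c2alphasheet}, $\|D^2 g_i\|_{L^\infty(B_{3R/5}')}\le C_2/R$, while the slab hypothesis $\{u=0\}\cap B_R\subset\{|x_n|\le\delta_2 R\}$ forces $\|g_i\|_{L^\infty(B_{3R/5}')}\le\delta_2 R$. Inserting these into the scale-invariant interpolation inequality $\|\nabla f\|_{L^\infty(U)}\le C(n)\big(\|f\|_{L^\infty(V)}^{1/2}\|D^2 f\|_{L^\infty(V)}^{1/2}+\operatorname{dist}(U,\partial V)^{-1}\|f\|_{L^\infty(V)}\big)$ with $U=B_{R/2}'\Subset V=B_{3R/5}'$ gives $\|\nabla g_i\|_{L^\infty(B_{R/2}')}\le C(n)\big(\sqrt{C_2\delta_2}+\delta_2\big)$, which can be made $\le\delta_1$ (indeed as small as we like) as soon as $\delta_2=\delta_2(\delta_1,C_1)$ is taken small enough.

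\emph{Second item.} On $\{|u|\le 0.9\}\cap B_R$ one has $|\nabla u|\ge 1/C_1$ by \eqref{eq:sheetassump} and $|\nabla u|\le C(n)$ by \Cref{lem:modicaineq}. Given $x\in\{|u|\le 0.9\}\cap\mathcal C_{R/2}$, follow the integral curve of the vector field $-(\sign u)\nabla u$: along it $\tfrac{d}{dt}|u|=-|\nabla u|^2\le -1/C_1^2$ as long as $|u|\le 0.9$, so the curve meets $\{u=0\}$ after time $\le 0.9\,C_1^2$, hence after a path of length $\le 0.9\,C_1^2\,C(n)=:C_2$ (enlarge $C_2$ if needed); for $R\ge R_0$ this path stays in $B_R$, so its endpoint lies in $\{u=0\}\cap B_R\subset\{|x_n|\le\delta_2 R\}$, whence $|x_n|\le\delta_2 R+C_2$. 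Thus $\{|u|\le 0.9\}\cap\mathcal C_{R/2}\subset\{|x_n|\le\delta_2 R+C_2\}$.

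\emph{Third item --- the crux.} The task is to show that $\sigma_{n-1}^{-1}(R/2)^{1-n}\int_{B_{R/2}}\big(\tfrac12|\nabla u|^2+W(u)\big)\,dx$ equals $N$ up to relative error $\le\delta_1$. Split $B_{R/2}$ into the slab $S:=\{|x_n|\le 2(\delta_2 R+C_2)\}$ and its complement. On $B_{R/2}\setminus S$ the second item legitimises the exponential-decay bound (\Cref{lem:expdecay}, used around suitable centers), giving integrand $\le Ce^{-c\delta_2 R}$ and hence a contribution $O\big(Re^{-c\delta_2 R}\big)$ to ${\bf M}_{R/2}(u)$, negligible once $R\ge R_0(\delta_1)$. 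On $S$, the graphs $g_1<\dots<g_N$ are pairwise $\ge c\log R$ apart by \eqref{eq:wwsepbnd}, and by the sheeting theory of \cite{WW18,WW19} the solution is $C^2$-close (error $O(R^{-c})$) to the translated heteroclinic $\phi\big(\pm(x_n-g_i(x'))\big)$ in the ``tube'' around each $g_i$ (the $N$ such tubes partitioning $S$); since $\int_{\mathbb R}\big(\tfrac12(\phi')^2+W(\phi)\big)=\int_{-1}^1\sqrt{2W}=\sigma_{n-1}/\omega_{n-1}$, integrating first in $x_n$ over the (long, by separation) vertical extent of each tube --- using the first item to replace $|\nabla u|^2$ by $(\phi')^2$ at the cost of $O(\delta_1^2)$, and that the horizontal slices of $B_{R/2}$ at heights $|x_n|\ll R$ are discs of radius $(R/2)\big(1+O(\delta_2)\big)$ --- yields $\int_S\big(\tfrac12|\nabla u|^2+W(u)\big)=N\,\sigma_{n-1}(R/2)^{n-1}\big(1+O(\delta_2)+O(\delta_1^2)+O(R^{-c})\big)$. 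Adding the two pieces and choosing $\delta_2$ small and $R_0$ large in terms of $\delta_1$ gives $|{\bf M}_{R/2}(u)-N|\le\delta_1 N$ (note $N\ge 1$). The genuine obstacle is precisely this energy quantisation: one must invoke the full Wang--Wei structure to know each interface carries exactly one heteroclinic's worth of energy, and then carefully bound (a) the energy hidden in the bulk between far-apart layers, by \Cref{lem:expdecay}, (b) the profile tails, which live on scale $\log R$, and (c) the flatness gain of the first item, which is what makes $|\nabla u|^2\approx|\partial_n u|^2$ and the tubes nearly cylindrical --- so the third item genuinely rests on the first two. A variant runs the same count through the coarea formula $\int_{B_{R/2}}|\nabla u|\sqrt{2W(u)}=\int_{-1}^{1}\sqrt{2W(t)}\,\mathcal H^{n-1}(\{u=t\}\cap B_{R/2})\,dt$, trading $\tfrac12|\nabla u|^2+W(u)$ for $|\nabla u|\sqrt{2W(u)}$ up to the discrepancy $\int_{B_{R/2}}\big(W(u)-\tfrac12|\nabla u|^2\big)$, which is $o(R^{n-1})$ by \Cref{thm:villegas}, using $\mathcal H^{n-1}(\{u=t\}\cap B_{R/2})=N\omega_{n-1}(R/2)^{n-1}(1+o(1))$ for $|t|\le 0.9$ and a crude bound showing the range $|t|\in(0.9,1)$ contributes $o(R^{n-1})$.
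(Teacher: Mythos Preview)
Your proof is correct, and items one and three follow essentially the same route as the paper (interpolation for the first; exponential decay off the slab plus comparison to the one-dimensional profile on each tube for the third). The paper packages the profile comparison slightly differently, defining $v_i(x)=(\sign u)\,\phi\big(\dist(x,\mathrm{graph}\,g_i)\big)$ and invoking a qualitative $L^\infty$ one-dimensional approximation lemma (proved by compactness) rather than the quantitative $O(R^{-c})$ closeness you cite from Wang--Wei; both are adequate here.

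Your second item is genuinely different: you flow along $-(\sign u)\nabla u$ and use only the sheeting lower bound $|\nabla u|\ge 1/C_1$ together with $|\nabla u|\le C(n)$ to bound the arclength to $\{u=0\}$ by a constant depending only on $C_1,n$. The paper instead deduces the same conclusion from its one-dimensional approximation lemma (if $u$ is uniformly close to some $\phi(a\cdot x+b)$ on a ball of controlled radius, it must change sign there). Your argument is more elementary and avoids the compactness step; the paper's approach has the advantage that the same approximation lemma is reused in the third item, so it economises on tools. Your coarea-plus-discrepancy variant for the third item is also a legitimate alternative not taken in the paper.
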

For the proof, we first need:
\begin{lemma}[{\bf 1D approximation}]\label{lem:1dapprox}
    Let $u:B_R\to (-1,1)$ be a stable A--C solution, and assume the sheeting assumptions hold in $B_R$ for some $C_1$. Given $b_1\in(0,1)$ and $\delta>0$, there exists $R_0=R_0(C_1,b_1,\delta,n)$ such that if $R\geq R_0$, then
    $$
    \inf_{a\in \Sp^{n-1},\,b\in\R} \|u-\phi(a\cdot x+b)\|_{L^\infty(B_\frac{1}{\delta}(x_0))}\leq \delta \quad\mbox{for any}\quad x_0\in \{|u|\leq 1-b_1\}\cap B_{R/2}\,.
    $$
    In other words, making $R$ large enough then $u$ is close to a $1D$ solution around any intermediate point.
\end{lemma}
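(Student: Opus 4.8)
The plan is a compactness/contradiction argument in the spirit of the proofs of \Cref{prop:curvestindeps} and \Cref{thm:goodimpsheet}. Suppose the conclusion fails for some fixed $b_1\in(0,1)$, $\delta>0$: there are $R_k\to\infty$, stable A--C solutions $u_k:B_{R_k}\to(-1,1)$ satisfying the sheeting assumptions in $B_{R_k}$ with the given constant $C_1$, and points $x_k\in\{|u_k|\le 1-b_1\}\cap B_{R_k/2}$ with $\inf_{a\in\Sp^{n-1},\,b\in\R}\|u_k-\phi(a\cdot x+b)\|_{L^\infty(B_{1/\delta}(x_k))}>\delta$. Set $\widetilde u_k(y):=u_k(y+x_k)$; this is a stable solution on $B_{R_k/2}$ (which exhausts $\R^n$), and by \Cref{def:sheetassump} it satisfies $|\nabla\widetilde u_k|\ge 1/C_1$ and $|\mathcal A_{\widetilde u_k}|\le C_1/R_k$ on $\{|\widetilde u_k|\le 0.9\}\cap B_{R_k/2}$. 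Since $|\widetilde u_k|\le 1$, standard interior $C^3$ estimates and Arzel\`a--Ascoli yield, along a subsequence, $\widetilde u_k\to u_\infty$ in $C^2_{\loc}(\R^n)$, where $u_\infty:\R^n\to[-1,1]$ is an entire A--C solution, stable as a $C^2_{\loc}$-limit of stable solutions, with $|u_\infty(0)|=\lim_k|u_k(x_k)|\le 1-b_1<1$; by \Cref{lem:modicaineq}, $|u_\infty|<1$ on all of $\R^n$. Once we show $u_\infty$ is a one-dimensional heteroclinic, $u_\infty=\phi(a\cdot x+b)$, the contradiction is immediate: for $k$ large $B_{1/\delta}(0)$ lies in the domain of $\widetilde u_k$ and $\|\widetilde u_k-u_\infty\|_{L^\infty(B_{1/\delta}(0))}<\delta$ by local uniform convergence, which after undoing the translation contradicts the choice of $x_k$.

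The main point --- and the step I expect to require the most care --- is to guarantee that the limit genuinely transitions, i.e.\ that the open set $\Omega:=\{|u_\infty|<0.9\}$ is nonempty (ruling out that $u_\infty\equiv\pm1$ or is trapped in a collar of $\pm1$, in which case the identification below would fail). Suppose $\Omega=\varnothing$, so $|u_\infty|\ge 0.9$ on $\R^n$; by continuity and connectedness, and replacing $u_\infty$ by $-u_\infty$ if necessary (again an A--C solution since $W$ is even), we may assume $u_\infty\ge 0.9$ everywhere. Then $m:=\inf_{\R^n}u_\infty$ satisfies $0.9\le m\le u_\infty(0)\le 1-b_1<1$. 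Pick $p_k$ with $u_\infty(p_k)\to m$ and extract, via interior estimates and Arzel\`a--Ascoli, a $C^2_{\loc}$ subsequential limit $\bar u$ of $u_\infty(\cdot+p_k)$: it is an entire A--C solution with $\bar u\ge m$ everywhere and $\bar u(0)=m$, so $0$ is a global minimum of $\bar u$, whence $\nabla\bar u(0)=0$ and $\Delta\bar u(0)\ge 0$. But the equation forces $\Delta\bar u(0)=W'(m)=-m(1-m^2)<0$ (recall $W(t)=\frac14(1-t^2)^2$), a contradiction. Hence $\Omega\neq\varnothing$.

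It remains to identify $u_\infty$ on $\Omega$. For every $x\in\Omega$ we have $|\widetilde u_k(x)|<0.9$ for $k$ large (and $x\in B_{R_k/2}$), hence $|\nabla\widetilde u_k(x)|\ge 1/C_1$ and $|\mathcal A_{\widetilde u_k}(x)|\le C_1/R_k$; passing to the limit in the quotient defining $\mathcal A^2$ --- which is legitimate since $\widetilde u_k\to u_\infty$ in $C^2$ near $x$ and the denominators $|\nabla\widetilde u_k|^2\ge C_1^{-2}$ are bounded below --- we obtain $\mathcal A_{u_\infty}\equiv 0$ in $\Omega$. By \Cref{lem:A0imp1D}, $u_\infty=v(e\cdot x)$ for some $e\in\Sp^{n-1}$ and a stable one-dimensional A--C solution $v$ (stability of the profile being a standard consequence of stability of $u_\infty$); since $\Omega\neq\varnothing$ we have $v\not\equiv\pm1$, so \Cref{prop:1dclas} gives $v=\pm\phi(\cdot-s_0)$, i.e.\ $u_\infty=\phi(a\cdot x+b)$ for suitable $a\in\Sp^{n-1}$ and $b\in\R$ (using $-\phi(t)=\phi(-t)$). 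This is exactly what was needed to close the argument as described in the first paragraph.
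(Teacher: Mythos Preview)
Your proof is correct and follows the same compactness-and-classification strategy as the paper, which in fact only sketches the argument and defers to \cite[Lemma~2.1]{WW19}. The one substantive difference is in how you ensure the limit actually transitions: the paper first uses exponential decay (\Cref{lem:expdecay}) to reduce to $1-b_1=0.85$, after which $|u_\infty(0)|\le 0.85<0.9$ and hence $\Omega=\{|u_\infty|<0.9\}$ is automatically nonempty; you instead handle arbitrary $b_1\in(0,1)$ directly via the maximum-principle argument in your second paragraph. Both routes are valid; the paper's reduction is a bit slicker, while your version is fully self-contained within the paper's framework.
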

\begin{proof}[Proof of \Cref{lem:1dapprox}]
    Notice that, by exponential decay (\Cref{lem:expdecay}), there is $C=C(n,b_1)$ such that $\{|u|\leq 1-b_1\}\cap B_{R/2}\subset \{x:{\rm dist}(x,\{|u|\leq 0.85\})\leq C\}$ for $R_0$ large enough. It easily follows that it suffices to show the Lemma just with $1-b_1=0.85$.
    
     Observe now that, for any $x_0\in \{|u|\leq 0.85\}\cap B_{R/2}$, there is $r_0=r_0(n)>0$ such that---up to making $R_0$ larger---we have $B_{r_0}(x_0)\subset \{|u|\leq 0.9\}\cap B_R$. In particular, the bounds in \eqref{eq:sheetassump} are satisfied at all such points. We conclude by arguing exactly as in \cite[Lemma 2.1]{WW19}. 
\end{proof}

\begin{proof}[Proof of \Cref{lem:addproperties}]
    The first bullet follows directly by interpolation, up to making $\delta_2>0$ small.

    For the second one, observe that if $R_0$ and $C_2$ are large enough, \Cref{lem:1dapprox} ensures that $u$ changes sign in $B_{C_2}(x)$ for any $x\in \{|u|\leq 0.9\}\cap \mathcal C_{R/2}$. In particular
    \begin{equation}\label{eq:lqjhghagpa}
        {\rm dist}(\{|u|\leq 0.9\}\cap \mathcal C_{R/2},\cup_{i=1}^N{\rm graph}\, g_i\cap \mathcal C_{\frac{3}{4}R})={\rm dist}(\{|u|\leq 0.9\}\cap \mathcal C_{R/2},\{u=0\}\cap \mathcal C_{\frac{3}{4}R})\leq C_2\,,
    \end{equation}
    which gives the second bullet.

    For the third one, define $v_i(x)=[\sign(u)]\phi({\rm dist}(x,{\rm graph}\, g_i))$. By the first bullet, $|\nabla g_{i}|\leq \delta_1$ as long as $R_0$ is large enough and $\delta_2$ is small enough. Up to updating these values, this easily shows that $|{\bf M}_{R/2}(v_i)-1|\leq \delta_1$, since $|{\bf M}_{\infty}(\phi)|=1$ and $g_i$ is almost flat.\\
    On the other hand, given $\delta>0$, set $\Omega_i:=\{{\rm dist}(x,{\rm graph}\, g_i)\leq \frac{1}{\delta}\}\cap B_{R/2}$. Thanks to \eqref{eq:lqjhghagpa} and \Cref{lem:expdecay}, we have exponential decay away from $\cup_i {\rm graph}\, g_i$, thus up to making $\delta$ small enough we easily obtain that $\mathcal E(u,B_{R/2}\setminus \cup_{i=1}^N \Omega_i)\leq \delta_1 N$. Moreover, using \Cref{lem:1dapprox}, it is easy to see that $|u-v_i|\leq \delta_1$ in $\Omega_i$ as long as $R_0$ is large enough.\\
    Adding up these facts,  the third bullet readily follows.
\end{proof}

\section{Setting up the contradiction}\label{sec:contbigs}
\subsection{General setting}\label{sec:contrsetsub}
We assume that $n\leq 7$ in this section; we will restrict to $n=4$ from \Cref{sec:graphdec} on.\\
From now on, we fix a critical solution $u:\R^n\to(-1,1)$, which has density at infinity ${\bf M}_\infty = K_*<\infty$. In particular, $u$ is not 1D.

We will use $C,C_i,R_0$ (resp. $c,c_i,r_0$) to denote large (resp. small) positive universal constants---unless otherwise stated---and which may change from line to line.

\begin{lemma}[{\bf Excess goes to $0$}]\label{lem:excgoto0}
    Let $\zz\in\cZ$. Then, ${\bf H}_R(\zz)\leq \widetilde\omega(R^{-1})$, where $\widetilde \omega$ is a dimensional modulus of continuity.
\end{lemma}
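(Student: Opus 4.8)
The plan is to derive the bound directly from two results already established: the large-scale flatness of a critical solution around any bad center (Proposition~\ref{prop:W}) and the elementary estimate bounding the $L^2$-height excess by the geometric flatness (Lemma~\ref{lem:hconthexc}), together with the a priori density bound ${\bf M}_R(\zz)\le K_*$.

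First, fix $\zz\in\cZ$ and a radius $R\ge 1$. Applying Proposition~\ref{prop:W} at scale $6R$ yields a unit vector $e:=e_{\zz,6R}$ with
\[
\{|u|\le 0.9\}\cap B_{6R}(\zz)\subset\{\,|e\cdot(x-\zz)|\le 6R\,\omega((6R)^{-1})\,\}.
\]
Working in a Euclidean frame centred at $\zz$ with $e=e_n$, and using that any point of the cylinder $\mathcal C_{4R}$ has Euclidean norm at most $4\sqrt2\,R<6R$, so that $\mathcal C_{4R}\subset B_{6R}$, the hypothesis of Lemma~\ref{lem:hconthexc} (applied to $u(\cdot-\zz)$) holds with $\delta:=6\,\omega((6R)^{-1})$. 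Choosing $\lambda=\tfrac{1}{2}$ there and bounding ${\bf M}_R(\zz)\le K_*$ (from Proposition~\ref{prop:W}, or simply Modica monotonicity), I obtain
\[
{\bf H}_R^2(\zz)\le {\bf H}_R^2(e_n,\zz)\le C K_*\,\max\bigl\{36\,\omega((6R)^{-1})^2,\ R^{-1}\bigr\}+C\,e^{-c_0\sqrt R}.
\]

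Second, for $0<R\le 1$ I would use the crude bound: on $B_R(\zz)$ one has $|x\cdot e|\le R$, hence ${\bf H}_R^2(\zz)\le \sigma_{n-1}{\bf M}_R(\zz)\le\sigma_{n-1}K_*$ by monotonicity. Combining the two regimes, the displayed right-hand sides are both dominated by $\widetilde\omega(R^{-1})^2$, where $\widetilde\omega$ is a monotone majorant of $t\mapsto\bigl(CK_*\max\{36\,\omega(t/6)^2,\,t\}+Ce^{-c_0 t^{-1/2}}\bigr)^{1/2}$ on $(0,1]$, extended by the constant $\sqrt{\sigma_{n-1}K_*}$ on $(1,\infty)$. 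This $\widetilde\omega$ tends to $0$ as $t\to 0^+$ because $\omega$ does, it is independent of $\zz$, and it depends only on $n$ since $K_*$, $\omega$, $\sigma_{n-1}$ and the constants of Lemma~\ref{lem:hconthexc} with $\lambda=\tfrac{1}{2}$ are dimensional; this gives ${\bf H}_R(\zz)\le\widetilde\omega(R^{-1})$ for every $R>0$.

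I do not expect a genuine obstacle here — the statement is a soft corollary of Proposition~\ref{prop:W} and Lemma~\ref{lem:hconthexc}. The only points needing care are (a) reconciling the cylinder in Lemma~\ref{lem:hconthexc} with the ball in Proposition~\ref{prop:W} through the inclusion $\mathcal C_{4R}\subset B_{6R}$, at the harmless cost of passing from scale $R$ to scale $6R$, and (b) packaging the three error contributions (the flatness modulus $\omega$, the algebraic term $R^{-2(1-\lambda)}$, and the exponential tail), together with the crude small-scale bound, into a single $\zz$-independent modulus of continuity — which is exactly the uniformity that Proposition~\ref{prop:W} was designed to furnish.
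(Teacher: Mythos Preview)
Your proof is correct and follows essentially the same route as the paper: both combine the slab containment from Proposition~\ref{prop:W} with the height-to-excess estimate of Lemma~\ref{lem:hconthexc} (at $\lambda=\tfrac12$), together with the density bound ${\bf M}_R(\zz)\le K_*$. The only cosmetic difference is bookkeeping --- you apply Proposition~\ref{prop:W} at scale $6R$ and use $\mathcal C_{4R}\subset B_{6R}$ to land directly at scale $R$, whereas the paper applies it at scale $R$, deduces the bound at scale $R/(4\sqrt2)$, and then relabels.
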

\begin{proof}
    By \Cref{prop:W}, there is $e_{\zz,R}$ such that
    $$\{|u|\leq 0.9\}\cap B_R(\zz)\subset\{x:|e_{\zz,R}\cdot (x-\zz)|\leq \omega(R^{-1}) R\}.$$
    But then, applying \Cref{lem:hconthexc} (appropriately translated and rotated) with, say, $\lambda =1/2$, we deduce that ${\bf H}_{\frac{1}{4\sqrt{2}}R}(e_{\zz,R},\zz)= \widetilde\omega(R^{-1})$, for some new modulus of continuity $\widetilde\omega$.
    
    Thus ${\bf H}_{\frac{1}{4\sqrt{2}}R}(\zz)= \widetilde\omega(R^{-1})$ as well, and considering $4\sqrt{2}R$ instead of $R$ (and conveniently updating $\widetilde\omega$) gives the result.
\end{proof}
\begin{proposition}[{\bf Tangential stability and bad ball count}]\label{prop:geiohfah}
Let $\zz\in\cZ$. If $R\geq R_0$, then
    \begin{equation}\label{eq:awioufhiu}
        R^{3-n}|B_1(\cZ)\cap B_{\frac{3R}{4}}(\zz)|\leq C{\bf K}_{R}^2(e,\zz)\leq C{\bf H}_{4R}^2(e,\zz) \quad \mbox{for any}\quad e\in\Sp^{n-1},
    \end{equation}
    thus
    \begin{equation}\label{eq:awioufhiunoe}
       R^{3-n}|B_1(\cZ)\cap B_{\frac{3R}{4}}(\zz)|\leq C{\bf K}_{R}^2(\zz)\leq C{\bf H}_{4R}^2(\zz)\,.
    \end{equation}
\end{proposition}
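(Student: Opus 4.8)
I would prove \eqref{eq:awioufhiu} by two independent arguments---a packing argument for the left inequality, and the tangential stability inequality combined with a Caccioppoli bound for the right one---and then obtain \eqref{eq:awioufhiunoe} by taking the infimum over $e$. Throughout, $R_0$ is a universal constant depending only on the fixed critical solution, enlarged a finite number of times; we may already take $R_0\geq 8$.

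\textbf{First inequality (packing).} Fix $e\in\Sp^{n-1}$ and $R\geq R_0$. Every $x\in B_1(\cZ)\cap B_{3R/4}(\zz)$ lies within distance $1$ of some $\widetilde\zz\in\cZ$, which then satisfies $|\widetilde\zz-\zz|\leq 3R/4+1$; hence $B_1(\cZ)\cap B_{3R/4}(\zz)\subseteq\bigcup_{\widetilde\zz\in\cZ\cap \overline B_{3R/4+1}(\zz)}B_1(\widetilde\zz)$. Choosing a maximal pairwise disjoint subfamily $\{B_1(\widetilde\zz_j)\}_{j=1}^{M}$ of this cover, every ball in the family meets some $B_1(\widetilde\zz_j)$, so $B_1(\cZ)\cap B_{3R/4}(\zz)\subseteq\bigcup_{j=1}^M B_3(\widetilde\zz_j)$ and thus $|B_1(\cZ)\cap B_{3R/4}(\zz)|\leq 3^{n}M\,|B_1|$. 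Since each $\widetilde\zz_j\in\cZ$ and $u$ is a critical solution, \Cref{prop:A'badcent} gives $\int_{B_1(\widetilde\zz_j)}\mathcal A_e'^2|\nabla u|^2\geq\delta_{bad}'$ for our fixed $e$; the balls $B_1(\widetilde\zz_j)$ being disjoint and contained in $B_{3R/4+2}(\zz)\subseteq B_R(\zz)$, summation yields $M\,\delta_{bad}'\leq\int_{B_R(\zz)}\mathcal A_e'^2|\nabla u|^2=R^{n-3}\,{\bf K}_R^2(e,\zz)$. Combining the two displays gives $R^{3-n}|B_1(\cZ)\cap B_{3R/4}(\zz)|\leq (3^{n}|B_1|/\delta_{bad}')\,{\bf K}_R^2(e,\zz)$, i.e. the left inequality in \eqref{eq:awioufhiu}.

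\textbf{Second inequality (stability + Caccioppoli).} Applying \Cref{prop:tanstab2} to the translate $u(\,\cdot\,-\zz)$, and then \Cref{lem:Cacc-ineq-AC2} to the same translate (legitimate since ${\bf M}_\infty(u)=K_*<\infty$) at radius $2R$, and using rotation invariance to work with an arbitrary $e\in\Sp^{n-1}$, I obtain for $R\geq R_0$ the chain
\[
{\bf K}_R^2(e,\zz)\ \leq\ C\,{\bf T}_{2R}^2(e,\zz)\ \leq\ C'\,{\bf H}_{4R}^2(e,\zz)+C'e^{-c'R}.
\]
To absorb the exponential tail, note that $B_1(\zz)\subseteq B_R(\zz)$, so \Cref{prop:A'badcent} also yields the crude polynomial lower bound ${\bf K}_R^2(e,\zz)\geq\delta_{bad}'\,R^{3-n}$. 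Enlarging $R_0$ so that $C'e^{-c'R}\leq\tfrac12\delta_{bad}'R^{3-n}\leq\tfrac12{\bf K}_R^2(e,\zz)$ for all $R\geq R_0$, the error term is absorbed into the left side, leaving ${\bf K}_R^2(e,\zz)\leq 2C'\,{\bf H}_{4R}^2(e,\zz)$. Together with the first inequality this proves \eqref{eq:awioufhiu}. Finally, taking the infimum over $e\in\Sp^{n-1}$ in the left inequality of \eqref{eq:awioufhiu} gives $R^{3-n}|B_1(\cZ)\cap B_{3R/4}(\zz)|\leq C\,{\bf K}_R^2(\zz)$, and, letting $e_\star$ attain the infimum defining ${\bf H}_{4R}^2(\zz)$ (attained since $e\mapsto{\bf H}_{4R}^2(e,\zz)$ is continuous on the compact sphere), ${\bf K}_R^2(\zz)\leq{\bf K}_R^2(e_\star,\zz)\leq C\,{\bf H}_{4R}^2(e_\star,\zz)=C\,{\bf H}_{4R}^2(\zz)$; this is \eqref{eq:awioufhiunoe}.

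\textbf{Main obstacle.} All of the real analytic content is already packaged in \Cref{prop:A'badcent}, \Cref{prop:tanstab2} and \Cref{lem:Cacc-ineq-AC2}; what is left is a packing argument and bookkeeping. The one point that could look like a difficulty---the exponential error inherited from \Cref{lem:Cacc-ineq-AC2}---is harmless precisely because a bad center forces the lower bound ${\bf K}_R^2(e,\zz)\gtrsim R^{3-n}$, which dominates $e^{-cR}$ for $R$ large.
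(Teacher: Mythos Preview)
Your proof is correct and follows essentially the same route as the paper: a Vitali/packing argument using \Cref{prop:A'badcent} for the left inequality, then \Cref{prop:tanstab2} combined with \Cref{lem:Cacc-ineq-AC2} for the right one, with the exponential error absorbed via the polynomial lower bound coming from $\zz\in\cZ$. The only cosmetic difference is that you absorb $e^{-cR}$ into the ${\bf K}$ side (using ${\bf K}_R^2(e,\zz)\gtrsim R^{3-n}$), whereas the paper absorbs it into the ${\bf H}$ side (using the first inequality to get ${\bf H}_{4R}^2(e,\zz)\gtrsim R^{3-n}$); both are equivalent.
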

\begin{remark}\label{rmk:awioufhiu4n}
    In particular, for $n=4$ we see that
    \begin{equation}\label{eq:awioufhiu4n}
        {\bf H}_{R}^2(\zz)\geq \frac{c}{R}\,.
    \end{equation}
\end{remark}
\begin{proof}
{\bf Step 1.} Bad ball count.\\
We first show that
\begin{equation}\label{eq:bseta'conv}
        R^{3-n}|B_1(\cZ\cap B_{\frac{3R}{2}})|\leq C{\bf K}_{2R}^2(e).
    \end{equation}
    By \Cref{prop:A'badcent}, given any bad center $\zz$ and any $e\in\Sp^{n-1}$ we know that $\int_{B_1(\zz)}\mathcal A_e'^2|\nabla u|^2\geq \delta_{bad}'$. Consider the collection of balls $\{B_1(\zz)\}_{z\in \cZ\cap B_{\frac{3R}{2}}}$ for such $\zz$, and pass to a disjoint Vitali subcollection $\{B(\zz_i)\}_{i=1}^N$, so that $B_1(\cZ\cap B_{\frac{3R}{2}})\subset \bigcup_i B_5(\zz_i)$. Since the $B_1(\zz_i)$ are disjoint, we can bound
    \begin{align*}
        |B_1(\cZ\cap B_{\frac{3R}{2}})|\leq 5^n\sum_{i=1}^N |B_1(\zz_i)|\leq \frac{5^n}{\delta_{bad}'}\sum_{i=1}^N\int_{B_1(\zz_i)} \mathcal A_e'^2|\nabla u|^2 \leq \frac{5^n}{\delta_{bad}'}\int_{B_{2R}} \mathcal A_e'^2|\nabla u|^2\,,
    \end{align*}
    where in the last inequality we used that $B_1(\zz_i)\subset B_{1+\frac{3R}{2}}\subset B_{2R}$ as long as $R\geq 2$. Multiplying by $R^{3-n}$ we conclude \eqref{eq:bseta'conv}.\\
    
\noindent {\bf Step 2.} Comparison of ${\bf T}$ and ${\bf H}$.\\
    Recentering \eqref{eq:bseta'conv} and combining this with \Cref{prop:tanstab2} and \Cref{lem:Cacc-ineq-AC2}, we see that
    \begin{equation}\label{eq:awioufhiu2}
        R^{3-n}|B_1(\cZ)\cap B_{\frac{3R}{4}}(\zz)|\leq C{\bf K}_{R}^2(e,\zz)\leq C{\bf T}_{2R}^2(e,\zz)\leq C{\bf H}_{4R}^2(e,\zz)+e^{-cR}
    \end{equation}
    for $R\geq R_0$ large enough.
    Since $\zz\in \cZ$ we obviously have $|B_1(\cZ)\cap B_{\frac{3R}{4}}(\zz)|>c$, and then \eqref{eq:awioufhiu2} shows in particular that ${\bf H}_{4R}^2(e,\zz)\geq \frac{c}{R^{n-3}}$. Thus, 
    \begin{equation}\label{eq:84tbbgobfsg}
        {\bf T}_{2R}^2(e,\zz)\leq C{\bf H}_{4R}^2(e,\zz)\,.
    \end{equation}
    We see then that
    \begin{equation}\label{eq:qiogbog}
        R^{3-n}|B_1(\cZ)\cap B_{\frac{3R}{4}}(\zz)|\leq C{\bf K}_{R}^2(e,\zz)\leq {\bf T}_{2R}^2(e,\zz)\leq C{\bf H}_{4R}^2(e,\zz)\,,
    \end{equation}
    which gives \eqref{eq:awioufhiu}; taking infima among $e\in \mathbb S^{n-1}$, we obtain \eqref{eq:awioufhiunoe}.
\end{proof}

\subsection{Selection of center and scale -- Proof of Lemmas \ref{lem:radcentselec} and \ref{lem:NbdRknew}}\label{sec:seleccentrscale}

We first give:

\begin{proof}[Proof of \Cref{lem:radcentselec}]
Given $R\geq R_0$, set 
$$F(R):=\sup_{\\z\in \cZ}\frac{{\bf H}_{4R}^2(\zz)}{{\bf K}_R^{2(1+\alpha)}(\zz)},$$ which corresponds to penalising the stability inequality \eqref{eq:awioufhiu} by introducing an $\alpha>0$ exponent. The lower bound for ${\bf K}$ in \eqref{eq:awioufhiunoe}, together with the fact that ${\bf H}$ is uniformly bounded (since ${\bf M}$ is), shows that $F(R)\leq CR^{(n-3)(1+\alpha)}$. Moreover, tangential stability (i.e. \eqref{eq:awioufhiunoe}) gives that
    $$F(R)=\sup_{z\in Z}\frac{{\bf H}_{4R}^{2(1+\alpha)}(\zz)}{{\bf K}_R^{2(1+\alpha)}(\zz)}\frac{1}{{\bf H}_{4R}^{2\alpha}(\zz)}\geq c \sup_{z\in Z}\frac{1}{{\bf H}_{4R}^{2\alpha}(\zz)}.$$
\Cref{lem:excgoto0} then shows that $\lim_{R\to\infty} F(R)=\infty$; defining $\widetilde F(R)=\sup_{r\in[R_0,R]} F(r)$, which is finite and nondecreasing, naturally $\lim_{R\to\infty} \widetilde F(R)=\infty$ as well.\\
By definition, we can find $r\in[R_0,R]$ with $\frac{2}{3}\widetilde F(R)\leq  F(r)\leq  \widetilde F(R)$, thus trivially $\frac{2}{3}\widetilde F(r)\leq  F(r)\leq  \widetilde F(r)$ as well. It is then clear that we can find a sequence $R_k\to\infty$ and associated $\zz_k\in \cZ$ with
$$
\frac{1}{2}\widetilde F(R_k)\leq\frac{{\bf H}_{4R_k}^2(\zz_k)}{{\bf K}_{R_k}^{2(1+\alpha)}(\zz_k)}\leq \widetilde F(R_k)\,,
$$
and setting $\epsilon_k:={\bf H}_{4R_k}^2(\zz_k)$, which tends to zero by \Cref{lem:excgoto0}, we conclude the proof.
\end{proof}

Recall the definition of $N(\theta,B_R(\zz))$, the size of the bad set at resolution $\theta R$, in \Cref{def:Ntheta}.
\begin{lemma}\label{lem:Ndeflem}
    Let $\zz\in\cZ$, $R>0$ and $\theta\in(0,1]$. Then, we can find disjoint bad balls $\{B_{\theta R}(\widetilde\zz_i)\}_{i=1}^Q$ such that $\{\widetilde\zz_i\}_{i=1}^Q\subset \cZ\cap B_R(\zz)$, and
    \begin{equation}
      cN(\theta,B_R(\zz))\leq Q\leq CN(\theta,B_R(\zz))\quad \mbox{and}\quad   \bigcup_{i=1}^Q B_{\theta R}(\widetilde\zz_i)\subset B_{\theta R}(\cZ\cap B_R(\zz))\subset \bigcup_{i=1}^Q B_{5\theta R}(\widetilde\zz_i)\,.
    \end{equation}
\end{lemma}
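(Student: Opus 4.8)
The plan is to run the standard Vitali $5r$-covering argument. First I would consider the family of bad balls $\mathcal F := \{B_{\theta R}(\widetilde\zz) : \widetilde\zz \in \cZ \cap B_R(\zz)\}$. All of these balls share the radius $\theta R$ and have centers in the bounded set $B_R(\zz)$, so any pairwise disjoint subfamily is contained in $B_{(1+\theta)R}(\zz)$ and is therefore finite (one can pack only finitely many disjoint balls of radius $\theta R$ into a ball of finite volume). Hence there is a subfamily $\{B_{\theta R}(\widetilde\zz_i)\}_{i=1}^Q \subseteq \mathcal F$ which is pairwise disjoint and maximal with respect to inclusion; its centers satisfy $\widetilde\zz_i \in \cZ \cap B_R(\zz)$, so these are genuine bad balls, and $Q \ge 1$ since $\zz \in \cZ \cap B_R(\zz)$. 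By maximality, every $B_{\theta R}(\widetilde\zz) \in \mathcal F$ intersects some $B_{\theta R}(\widetilde\zz_i)$; two balls of equal radius $\theta R$ that intersect satisfy $B_{\theta R}(\widetilde\zz) \subset B_{3\theta R}(\widetilde\zz_i) \subset B_{5\theta R}(\widetilde\zz_i)$. Taking the union over $\widetilde\zz \in \cZ \cap B_R(\zz)$ gives $B_{\theta R}(\cZ \cap B_R(\zz)) \subset \bigcup_{i=1}^Q B_{5\theta R}(\widetilde\zz_i)$, while the inclusion $\bigcup_{i=1}^Q B_{\theta R}(\widetilde\zz_i) \subset B_{\theta R}(\cZ \cap B_R(\zz))$ is immediate from $\widetilde\zz_i \in \cZ \cap B_R(\zz)$.

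Next I would compare $Q$ with $N := N(\theta, B_R(\zz))$, recalling $N = (\theta R)^{-n}\,|B_{\theta R}(\cZ \cap B_R(\zz))|$ and writing $\omega_n$ for the volume of the unit ball in $\R^n$. For the upper bound, the disjoint balls $B_{\theta R}(\widetilde\zz_i)$ all lie inside $B_{\theta R}(\cZ \cap B_R(\zz))$, so
$$
Q\,\omega_n(\theta R)^n = \sum_{i=1}^Q |B_{\theta R}(\widetilde\zz_i)| \le \big|B_{\theta R}(\cZ \cap B_R(\zz))\big| = (\theta R)^n N,
$$
hence $Q \le \omega_n^{-1} N$. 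For the lower bound, using the covering $B_{\theta R}(\cZ \cap B_R(\zz)) \subset \bigcup_{i=1}^Q B_{5\theta R}(\widetilde\zz_i)$ just established,
$$
(\theta R)^n N = \big|B_{\theta R}(\cZ \cap B_R(\zz))\big| \le \sum_{i=1}^Q |B_{5\theta R}(\widetilde\zz_i)| = Q\,\omega_n 5^n (\theta R)^n,
$$
so $Q \ge (5^n\omega_n)^{-1} N$. Together these give the asserted two-sided bound $cN \le Q \le CN$, with $c = (5^n\omega_n)^{-1}$ and $C = \omega_n^{-1}$, concluding the proof.

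There is no genuine obstacle here; the lemma is a routine Vitali-type covering statement, and the only points needing a word of care are elementary: that a maximal pairwise-disjoint subfamily of equal-radius balls with centers in a bounded region is finite, and that two intersecting balls of equal radius $r$ are each contained in the concentric ball of radius $3r$ about the other's center. One could also note in passing that $\cZ$ is relatively closed---the conditions $|u(\widetilde\zz)| \le 0.9$ and $\int_{B_1(\widetilde\zz)} \mathcal A_u^2 |\nabla u|^2 \ge \delta_{bad}$ pass to limits by interior regularity of $u$---but this fact is not actually used in the argument.
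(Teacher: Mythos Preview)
Your proposal is correct and follows essentially the same approach as the paper: both run the standard Vitali $5r$-covering argument on the family $\{B_{\theta R}(\widetilde\zz):\widetilde\zz\in\cZ\cap B_R(\zz)\}$ to extract a finite disjoint subcollection whose $5$-enlargements still cover. The paper simply says ``the result is then immediate'' after obtaining the subcover, whereas you spell out the volume comparison giving $c=(5^n\omega_n)^{-1}$ and $C=\omega_n^{-1}$; this is just a matter of detail, not of strategy.
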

\begin{proof}
    The collection of balls $B_{\theta R}(\widetilde\zz)$ with $\widetilde\zz\in \cZ\cap B_R(\zz)$ covers $B_{\theta R}(\cZ\cap B_R(\zz))=\cup_{\widetilde\zz\in \cZ\cap B_R(\zz)} B_{\theta R}(\widetilde\zz)$ by definition. We can then pass to a $5$-Vitali disjoint subcover, i.e. a finite disjoint subcollection $\{B_{\theta R}(\widetilde\zz_i)\}_{i=1}^Q$ with the property that the $\{B_{5\theta R}(\widetilde\zz_i)\}_{i=1}^{Q}$ still cover $B_{\theta R}(\cZ\cap B_R(\zz))$. The result is then immediate.
\end{proof}
We now give:

\begin{proof}[Proof of \Cref{lem:NbdRknew}]
Let
\begin{equation}\label{eq:oirghlvbj}
    \theta_k:=\inf \left\{\theta\geq \frac{R_0}{R_k}\mbox{ : exists } \zz\in \cZ \mbox{ with } B_{\theta R_k}(\zz)\subset B_{R_k}(\zz_k), {\bf K}_{\theta R_k}^2(\zz)\leq\theta^{\beta}{\bf K}_{R_k}^2(\zz_k)\right\}\,.
\end{equation}
Combining \eqref{eq:awioufhiunoe} (with $R=R_0$) and \eqref{eq:geiweghio}, for $k$ large enough we see that $0<\frac{10R_0}{R_k}\leq \theta_k \leq 1$. By definition of infimum, we can choose $\widetilde \theta_k\in[\theta_k,\min\{2\theta_k,1\}]$ and $\zk\in \cZ$ such that, letting $\Rk:=\widetilde \theta_k R_k$,
\begin{equation}\label{eq:guighiugh}
    B_{\Rk}(\zk)\subset B_{R_k}(\zz_k)\quad \mbox{and}\quad {\bf K}_{\Rk}^2(\zk)\leq\widetilde\theta_k^{\beta}{\bf K}_{R_k}^2(\zz_k).
\end{equation}
Combining \eqref{eq:guighiugh} with \eqref{eq:geiweghio}, in particular we see that
\begin{equation}\label{eq:gaghsdlg}
    {\bf H}_{4R}^2(\zz)\leq 2\left(\frac{{\bf K}_R^{2}(\zz)}{{\bf K}_{\Rk}^{2}(\zk)}\right)^{1+\alpha}\epk^2\quad \mbox{for any}\quad B_R(\zz)\subset B_{\Rk}(\zk),
\end{equation}
so that the second inequality in \eqref{eq:hjgagfas} follows by using the trivial comparison ${\bf K}_R^{2}(\zz)\leq (\Rk/R)^{n-3}{\bf K}_{\Rk}^{2}(\zk)$.
Applying \eqref{eq:gaghsdlg} with $\zz=\zk$ and $R=\Rk$, we deduce that ${\bf H}_{4\Rk}^2(\zk)\leq 2\epk^2\leq 2\ep_k^2\to 0$, so that moreover $\Rk\to\infty$ by \eqref{eq:awioufhiunoe}.\\
It remains to see \eqref{eq:fhiowqrhgoqwg}. Consider $\theta = t\widetilde\theta_k$ with $t\in(\frac{R_0}{\Rk},\frac{1}{2})$, and assume that $N_\theta\geq C\theta^{-(n-3+\beta)}$ for some $C$ large for contradiction; by \Cref{lem:Ndeflem}, we find $Q\geq\theta^{-(n-3+\beta)}$ disjoint bad balls with $\bigcup_{i=1}^Q B_{\theta R}(\widetilde\zz_i)\subset B_{\theta R}(\cZ\cap B_R(\zz))$. Then, by the disjoint property, we can bound
\begin{equation}\label{eq:yasgpbasougaao}
\sum_{i=1}^{Q} {\bf K}_{\theta \Rk}^{2}(\widetilde\zz_i)\leq \inf_{e\in\Sp^{n-1}}\sum_{i=1}^{Q} \frac{1}{(\theta \Rk)^{n-3}}\int_{B_{\theta\Rk}(\zz_i)} \mathcal A_e'^2|\nabla u|^2\leq \inf_{e\in\Sp^{n-1}}\frac{1}{(\theta \Rk)^{n-3}}\int_{B_{\Rk}(\zk)} \mathcal A_e'^2|\nabla u|^2= \theta^{-(n-3)}{\bf K}_{\Rk}^{2}(\zk)\,;
\end{equation}
in particular, since $Q\geq\theta^{-(n-3+\beta)}$ there is at least one $j\in\{1,...,Q\}$ such that ${\bf K}_{\theta \Rk}^{2}(\widetilde\zz_j)\leq  \theta^{\beta}{\bf K}_{\Rk}^{2}(\zk)$,
which contradicts the definition of $\widetilde \theta_k$ and $\theta_k$.
\end{proof}

\section{Graphical decomposition}\label{sec:graphdec}
We restrict to $n=4$ in the remainder of the article.
\subsection{Graphical direction}
We first show that, thanks to \eqref{eq:hjgagfas}, a single direction dominates at all scales of interest. Recall that $\gamma:=\frac{1}{4}$. 
\begin{lemma}[\bf{Graphical direction}]\label{lem:dxzggiuhg2}
    Let $\alpha\in (0,\frac{\gamma}{8}]=(0,\frac{1}{32}]$. For any given $k$, let us choose a Euclidean coordinate frame\footnote{This frame always exists thanks to the second bullet in \Cref{lem:NbdRknew}.} such that ${\bf H}_{4\Rk}^2(e_n,\zk)\leq 2\epk^2$. Then,
    \begin{equation}\label{eq:keiaisbgfliab}
        \epk^{2-\gamma/2}\Rk\to\infty\quad \mbox{and}\quad\sup_{\zz\in\cZ\cap B_{\frac{1}{2}\Rk}(\zk),\,R\in[\epk^{2-\gamma/2}\Rk,\Rk]} {\bf H}_{R}^2(e_n,\zz) \to 0\qquad \quad \mbox{as}\quad k\to\infty\,.
    \end{equation}
\end{lemma}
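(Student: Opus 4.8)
\emph{Plan.} I would prove the two assertions separately, the first being elementary and the second the heart of the matter.

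\textbf{The divergence $\epk^{2-\gamma/2}\Rk\to\infty$.} Here I would combine two bounds on ${\bf H}_{4\Rk}^2(\zk)$: from above, ${\bf H}_{4\Rk}^2(\zk)\le 2\epk^2$ by the second bullet of \Cref{lem:NbdRknew} (which also gives $\epk\to0$ and $\Rk\to\infty$); from below, since $n=4$, $\zk\in\cZ$, and $\Rk\ge R_0$ for $k$ large, the bad-ball lower bound \eqref{eq:awioufhiu4n} gives ${\bf H}_{4\Rk}^2(\zk)\ge c/\Rk$. Thus $\epk^2\Rk\ge c>0$, hence $\epk^{2-\gamma/2}\Rk=(\epk^2\Rk)\,\epk^{-\gamma/2}\ge c\,\epk^{-\gamma/2}\to\infty$ because $\epk\to0$. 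In particular $R\ge\epk^{2-\gamma/2}\Rk\to\infty$ uniformly over the range in the statement, which licenses all the ``for $k$ large'' below.

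\textbf{Reduction of the uniform decay.} Fix the frame with ${\bf H}_{4\Rk}^2(e_n,\zk)\le 2\epk^2$, a center $\zz\in\cZ\cap B_{\Rk/2}(\zk)$, and $R\in[\epk^{2-\gamma/2}\Rk,\Rk]$, and let $e_{\zz,R}\in\Sp^{n-1}$ realize the infimum ${\bf H}_R^2(\zz)=\inf_e{\bf H}_R^2(e,\zz)$ (attained by compactness of $\Sp^{n-1}$). The pointwise inequality $\big((x-\zz)\cdot e_n\big)^2\le 2\big((x-\zz)\cdot e_{\zz,R}\big)^2+2R^2\min_{\sigma=\pm1}|e_n-\sigma e_{\zz,R}|^2$ on $B_R(\zz)$, integrated against $\tfrac12|\nabla u|^2+W(u)$ and divided by $R^{n+1}$, using ${\bf M}_R(\zz)\le K_*$, yields
\[
{\bf H}_R^2(e_n,\zz)\ \lesssim\ {\bf H}_R^2(\zz)\ +\ \min_{\sigma=\pm1}|e_n-\sigma e_{\zz,R}|^2 .
\]
The first term is controlled at once by \eqref{eq:hjgagfas} (with $n=4$, so $(n-3)(1+\alpha)=1+\alpha$), applied on radius $R/4$: it gives ${\bf H}_R^2(\zz)\lesssim(\Rk/R)^{1+\alpha}\epk^2\lesssim\epk^{\,2-(2-\gamma/2)(1+\alpha)}$, and since $\alpha\le\gamma/8$ the exponent exceeds $\gamma/4>0$, so this term is $o_k(1)$ uniformly. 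Everything then reduces to proving $\min_\sigma|e_n-\sigma e_{\zz,R}|\to0$, uniformly in $\zz$ and $R$.

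\textbf{The direction comparison: main obstacle and chaining.} This is the step I expect to be hard. The obstruction is a mismatch of scales: the only information pinning $e_n$ sits at the large scale $4\Rk$ and at $\zk$, which may lie at distance $\sim\Rk$ from $\zz$, whereas $R$ can be smaller than $\Rk$ by a factor as large as $\epk^{-(2-\gamma/2)}$. Because the two centers are far apart, \Cref{lem:coefcompexc} can compare directions at $\zk$ and $\zz$ only at a scale $\gtrsim\Rk$; and transferring that comparison down to scale $R$ in one step via the elementary monotonicity-with-loss ${\bf H}_{r'}^2(e,\zz)\le (r/r')^{n+1}{\bf H}_{r}^2(e,\zz)$ for $r'\le r$ costs a power $(\Rk/R)^{n+1}$, which is ruinous. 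Instead I would chain down the dyadic scales $\rho_j:=2^jR$ ($j=0,\dots,J$, with $J$ minimal so that $\rho_J\ge 2\Rk$, hence $\rho_J<4\Rk$) at the \emph{fixed} center $\zz$: at each $j$ compare the optimizers $e_{\zz,\rho_j}$ and $e_{\zz,\rho_{j+1}}$ at the common scale $\rho_j$ via \Cref{lem:coefcompexc} — legitimate since $\zz\in\cZ$ keeps ${\bf M}_{\rho_j/6}(\zz)$ in a fixed interval $[c_0,K_*]$ (bad-center density bound plus monotonicity) and $\rho_j\ge R\ge R_0$ — bounding each ${\bf H}_{\rho_j}^2(\zz)$ by \eqref{eq:hjgagfas} when $\rho_j\le\tfrac14\Rk$ and by \Cref{lem:excgoto0} for the at most four scales with $\tfrac14\Rk<\rho_j<4\Rk$ (where $\widetilde\omega(\rho_j^{-1})$ is small since $\Rk\to\infty$); together with monotonicity-with-loss (scales differing by $2$, loss $2^{n+1}$) this gives a per-step error $\lesssim(\Rk/\rho_j)^{(1+\alpha)/2}\epk+e^{-c\rho_j}$ on the small scales and $\lesssim\bar\omega_k^{1/2}+e^{-c\Rk}$ on the large ones, with $\bar\omega_k:=\sup_{0<t\le 4/\Rk}\widetilde\omega(t)^2\to0$. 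Finally a single cross-center application of \Cref{lem:coefcompexc} with $x=\zk$, $y=\zz$ at scale $\rho_J\in[2\Rk,4\Rk)$ (legitimate as ${\rm dist}(\zk,\zz)\le\tfrac12\Rk\le\tfrac13\rho_J$ and ${\bf M}_{\rho_J/6}(\zk)\in[c_0,K_*]$), using ${\bf H}_{\rho_J}^2(e_n,\zk)\lesssim\epk^2$ (monotonicity-with-loss from scale $4\Rk$), links $e_n$ to $e_{\zz,\rho_J}$ with error $\lesssim\epk+\bar\omega_k^{1/2}+e^{-c\Rk}$.

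\textbf{Assembling and the role of $\alpha\le\gamma/8$.} Telescoping the sign choices along the chain,
\[
\min_\sigma|e_n-\sigma e_{\zz,R}|\ \lesssim\ \epk+\bar\omega_k^{1/2}+e^{-cR}+\epk\sum_{j\ge0}(\Rk/\rho_j)^{(1+\alpha)/2}\ \lesssim\ \epk+\bar\omega_k^{1/2}+e^{-cR}+\epk\,(\Rk/R)^{(1+\alpha)/2},
\]
the geometric sum being summable. The decisive point is that $\epk\,(\Rk/R)^{(1+\alpha)/2}\lesssim\epk^{\,1-(2-\gamma/2)(1+\alpha)/2}\lesssim\epk^{\gamma/8}\to0$ thanks to $R\ge\epk^{2-\gamma/2}\Rk$: this is precisely the balance that forces $\alpha\le\gamma/8$, i.e.\ the power of $\epk$ gained at each dyadic scale from \eqref{eq:hjgagfas} just beats the geometric blow-up $(\Rk/R)^{(1+\alpha)/2}$. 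Since $R\to\infty$ by the first part, also $e^{-cR}\to0$. All constants appearing depend only on $K_*$ and $n$, not on $\zz$ or $R$, so both terms in the reduction are $o_k(1)$ uniformly, and the supremum in \eqref{eq:keiaisbgfliab} tends to $0$.
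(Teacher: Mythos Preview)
Your proposal is correct and follows essentially the same approach as the paper. Both prove $\epk^{2-\gamma/2}\Rk\to\infty$ via $\epk^2\Rk\ge c$ from \eqref{eq:awioufhiu4n}, and both establish the direction comparison by chaining \Cref{lem:coefcompexc} along dyadic scales at the center $\zz$, bounding each step by \eqref{eq:hjgagfas}; the only cosmetic differences are that the paper runs the chain top-down (from $R_1=\Rk/2$ to $R_l$) and folds your cross-center step into the terse claim ``we can take $v_1=e_n$'', whereas you run it bottom-up, isolate the cross-center comparison explicitly, and invoke \Cref{lem:excgoto0} for the few largest scales.
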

\begin{proof}
Let $\Rh:=\epk^{2-\gamma/2}\Rk$.\\
{\bf Step 1.} Growth of the smallest scale $\Rh$.\\
    Combining \Cref{lem:NbdRknew} and \eqref{eq:awioufhiu4n} we have that $2\epk^2\geq {\bf H}_{4\Rk}^2(\zk)\geq\frac{c}{\Rk}$, or $\Rk\geq c\epk^{-2}$. Thus, $\Rh\geq c\epk^{-\gamma/2}\xrightarrow[]{\epk\to 0}\infty$.
    
\noindent {\bf Step 2.} Coefficient comparison.\\
Let $l\in\N$ be such that $R_l:=2^{-l}\Rk\geq \frac{\Rh}{2}$. By Step 1, we have that $R_l \gg R_0$ for $k$ large enough, so that we can apply \eqref{eq:hjgagfas}. Hence,  given $\zz\in\cZ\cap B_{\frac{1}{2}\Rk}(\zk)$ for every $i=1,...,l$ there exists some vector $v_i$ such that
\begin{equation}
\label{eq:Azbound23}
{\bf H}_{4R_i}^2(v_i,\zz) \le C (\Rk/R_i)^{1+\alpha} \epk^2 = C 2^{(1+\alpha) i}\epk^2.
\end{equation}
Observe that we can take $v_1=e_n$ in \eqref{eq:Azbound23}, since ${\bf H}_{4\Rk}^2(e_n,\zk)\leq 2\epk^2$.

\noindent {\bf Claim.} We have the bound ${\bf H}_{4R_i}^2(e_n,\zz) \le C (\Rk/R_i)^{1+\alpha} \epk^2$, for every $i=1,...,l$.
\vspace{0.2cm}

Assuming the claim the second property in \eqref{eq:keiaisbgfliab} follows, thanks to the bound
$$(\Rk/R_l)^{1+\alpha}\epk^2\leq C\epk^{-(2-\gamma/2)(1+\alpha)} \epk^2 = C\epk^{\gamma/2-2\alpha+\gamma\alpha/2}\leq C\epk^{\gamma/4}\xrightarrow[]{\epk\to 0} 0\,.$$

To show the claim, observe that \eqref{eq:Azbound23} gives that ${\bf H}_{4R_i}^2(v_i,\zz)+{\bf H}_{4R_i}^2(v_{i+1},\zz) \le  C 2^{(1+\alpha) i}\epk^2$.
Applying \Cref{lem:coefcompexc}, and adding over $i$ the corresponding geometric sum, we find that
$$
|v_j-v_i|^2\leq C 2^{(1+\alpha)i}\epk^2=C(\Rk/R_i)^{1+\alpha} \epk^2\quad \mbox{for any}\quad  1\leq j<i\leq l.
$$
Taking $j=1$ (recall that $v_1=e_n$) we find that $|e_n-v_i|^2\leq C(\Rk/R_i)^{1+\alpha} \epk^2$, which easily shows that
\begin{equation*}
{\bf H}_{4R_i}^2(e_n,\zz)\le C{\bf H}_{4R_i}^2(v_i,\zz)+C|e_k-v_i|^2\leq C (\Rk/R_i)^{1+\alpha} \epk^2
\end{equation*}
as desired.
\end{proof}

We have only used the second inequality of \eqref{eq:hjgagfas}, which is a ``worst case bound'' that holds for every $\zz$. For later use, we note the following ``stronger'' result :
\begin{lemma}[{\bf Sum of height excesses}] \label{lem:sum_of_height_ex}
Assume that $R\in[\epk^{2-\gamma/2}\Rk,\Rk]$ and $k$ is large enough. Then, given $\{\zz_j\}_{j\in J}\subset \cZ\cap B_{\frac{1}{2}\Rk}(\zk)$ with $\{B_{R}(\zz_j)\}_{j\in J}$ pairwise disjoint, there exist $v_j\in\Sp^{n-1}$ such that
\begin{equation}
\label{eq:Azboundsum}
|v_j-e_n|\le o_k(1) \quad \mbox{and}\quad \sum_{j\in J} {\bf H}_{4R}^2(v_j,\zz_j) \le C \left(\frac{\Rk}{R}\right)^{1+\alpha} \epk^2. 
\end{equation}
\end{lemma}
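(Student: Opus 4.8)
The plan is to combine the \emph{self\-normalized} excess bound \eqref{eq:hjgagfas} with the pairwise disjointness of the balls, together with the elementary superadditivity
\[
\textstyle\sum_{j} a_j^{1+\alpha}\le\big(\sum_{j} a_j\big)^{1+\alpha}\qquad(a_j\ge 0),
\]
which holds because $1+\alpha\ge 1$. We may assume $4R\le \Rk$: for $R\in(\Rk/4,\Rk]$ the disjoint balls $\{B_R(\zz_j)\}$ are centered in $B_{\frac12\Rk}(\zk)$ and have radius $>\tfrac14\Rk$, so a packing argument gives $|J|\le C(n)$, and each of those boundedly many balls is handled by a minor modification (applying the case $4R=\Rk$ and the exponential decay of \Cref{lem:expdecay}). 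So assume $4R\le\Rk$; then $\zz_j\in B_{\frac12\Rk}(\zk)$ forces $B_R(\zz_j)\subset B_{\frac34\Rk}(\zk)\subset B_{\Rk}(\zk)$, so \eqref{eq:hjgagfas} applies at each $\zz_j$ at scale $R$.

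I would then estimate, using \eqref{eq:hjgagfas}, then superadditivity with exponent $1+\alpha$, and finally $\sum_j\inf_e f_j(e)\le\inf_e\sum_j f_j(e)$ together with the disjointness of the $B_R(\zz_j)$ inside $B_{\Rk}(\zk)$:
\begin{equation*}
\sum_{j\in J}{\bf H}_{4R}^2(\zz_j)\le 2\epk^2\left(\frac{\sum_{j\in J}{\bf K}_R^2(\zz_j)}{{\bf K}_{\Rk}^2(\zk)}\right)^{1+\alpha},\qquad
\sum_{j\in J}{\bf K}_R^2(\zz_j)\le\inf_{e\in\Sp^{n-1}}\frac{1}{R^{n-3}}\int_{B_{\Rk}(\zk)}\mathcal A_e'^2|\nabla u|^2=\left(\frac{\Rk}{R}\right)^{n-3}{\bf K}_{\Rk}^2(\zk).
\end{equation*}
The decisive point is that the factor ${\bf K}_{\Rk}^2(\zk)$ in the denominator of \eqref{eq:hjgagfas} is exactly what disjointness produces in the numerator, so it cancels: inserting the second bound into the first and using $n=4$ (so $(n-3)(1+\alpha)=1+\alpha$) gives
\begin{equation*}
\sum_{j\in J}{\bf H}_{4R}^2(\zz_j)\le 2\left(\frac{\Rk}{R}\right)^{(n-3)(1+\alpha)}\epk^2= 2\left(\frac{\Rk}{R}\right)^{1+\alpha}\epk^2.
\end{equation*}

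To obtain the $v_j$, I would let $v_j\in\Sp^{n-1}$ be a minimizer of $e\mapsto{\bf H}_{4R}^2(e,\zz_j)$ (it exists by compactness), chosen in the hemisphere $\{v\cdot e_n\ge 0\}$, which is legitimate since ${\bf H}_{4R}^2(\cdot,\zz_j)$ is even in $e$. Then ${\bf H}_{4R}^2(v_j,\zz_j)={\bf H}_{4R}^2(\zz_j)$, so the last display immediately yields the second part of \eqref{eq:Azboundsum} with $C=2$. For the first part: since $R\ge\epk^{2-\gamma/2}\Rk$ and $\gamma=\frac14$ with $\alpha$ small, the same display gives ${\bf H}_{4R}^2(v_j,\zz_j)={\bf H}_{4R}^2(\zz_j)\le C\epk^{c}=o_k(1)$ for some $c>0$, uniformly in $j$; moreover ${\bf H}_{4R}^2(e_n,\zz_j)=o_k(1)$ uniformly by \Cref{lem:dxzggiuhg2} (applicable since $4R\in[\epk^{2-\gamma/2}\Rk,\Rk]$). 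Finally I would invoke the coefficient comparison \Cref{lem:coefcompexc} with $x=y=\zz_j$, $e_x=e_n$, $e_y=v_j$ and radius $\frac{2R}{3}$ — for which $\frac{1}{C_0}\le{\bf M}_{2R/3}(\zz_j)\le C_0$ holds because $|u(\zz_j)|\le 0.9$ gives a uniform lower density bound and ${\bf M}\le K_*$ — to conclude $|v_j-e_n|=\min_\sigma|e_n-\sigma v_j|\le C[{\bf H}_{4R}(e_n,\zz_j)+{\bf H}_{4R}(v_j,\zz_j)]+Ce^{-cR}=o_k(1)$.

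The step I expect to require the most care is the first: one must phrase the excess estimate in the self\-normalized form \eqref{eq:hjgagfas} (centered at $\zk$, scale $\Rk$, amplitude $\epk$) rather than in the form of \Cref{lem:radcentselec}, precisely so that the common denominator matches what disjointness yields and cancels; using the \Cref{lem:radcentselec} normalization would instead require comparing ${\bf K}$ at scale $\Rk$ about $\zk$ with ${\bf K}$ at scale $R_k$ about $\zz_k$, a comparison we do not have (it would be \emph{stronger} than the relation ${\bf K}_{\Rk}^2(\zk)\le\widetilde\theta_k^{\beta}{\bf K}_{R_k}^2(\zz_k)$ of \Cref{lem:NbdRknew}). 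The superadditivity with exponent $1+\alpha>1$ is exactly what lets the direction-infima, which do not commute with the sum, be absorbed, and the hypothesis $n=4$ enters only to make $(n-3)(1+\alpha)$ equal to the target exponent $1+\alpha$.
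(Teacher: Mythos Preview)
Your proof is correct and follows essentially the same route as the paper: the superadditivity $\sum a_j^{1+\alpha}\le(\sum a_j)^{1+\alpha}$, the self-normalized excess bound \eqref{eq:hjgagfas}, the disjointness estimate $\sum_j{\bf K}_R^2(\zz_j)\le(\Rk/R)^{n-3}{\bf K}_{\Rk}^2(\zk)$ (as in \eqref{eq:yasgpbasougaao}), and then \Cref{lem:dxzggiuhg2} plus \Cref{lem:coefcompexc} for $|v_j-e_n|=o_k(1)$. You are in fact slightly more careful than the paper in explicitly handling the range $R\in(\Rk/4,\Rk]$ where $B_R(\zz_j)\not\subset B_{\Rk}(\zk)$ via a packing bound on $|J|$.
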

\begin{proof}
We will use\footnote{To show this: Dividing by $(\sum_{j\in J} a_j)^{1+\alpha}$, we can assume $\sum_{j\in J} a_j=1$, thus in particular $a_j\leq 1$ for every $j\in J$. But then $a_j^{1+\alpha}\leq a_j$, so that $\sum_{j\in J} a_j^{1+\alpha}\leq \sum_{j\in J} a_j=1=(\sum_{j\in J} a_j)^{1+\alpha}$
as desired.}: Given positive numbers $\{a_j\}_{j\in J}$ and some $\alpha>0$, we have $\sum_{j\in J} a_j^{1+\alpha}\leq (\sum_{j\in J} a_j)^{1+\alpha}$.

By this fact and \eqref{eq:hjgagfas},  we can estimate
\begin{equation}\label{eq:hgahgpgnog}
        \sum_{j\in J} {\bf H}_{4R}^2(\zz_j)\leq  2\sum_{j\in J}\left(\frac{{\bf K}_R^{2}(\zz_j)}{{\bf K}_{\Rk}^{2}(\zk)}\right)^{1+\alpha}\epk^2\leq  2\left(\sum_{j\in J}\frac{{\bf K}_R^{2}(\zz_j)}{{\bf K}_{\Rk}^{2}(\zk)}\right)^{1+\alpha}\epk^2.
    \end{equation}
Since the $\{B_{R}(\zz_j)\}_{j\in J}$ are pairwise disjoint, arguing as in \eqref{eq:yasgpbasougaao} we can bound
$$\sum_{j\in J}{\bf K}_{R}^{2}(\zz_j)\leq (\Rk/R_l)^{n-3}{\bf K}_{\Rk}^{2}(\zk)=(\Rk/R_l){\bf K}_{\Rk}^{2}(\zk)\,,$$
so that together with \eqref{eq:hgahgpgnog} and the definition of ${\bf H}$ we find $v_j\in\Sp^{n-1}$ with
\begin{equation}\label{eq:hgahgpgnog3}
        \sum_{j\in J} {\bf H}_{4R}^2(v_j,\zz_j)\leq C(\Rk/R)^{1+\alpha}\epk^2\,.
    \end{equation}
By \Cref{lem:dxzggiuhg2}, since $R\geq \epk^{2-\gamma/2}\Rk$ we know that ${\bf H}_{4R}^2(e_n,\zz_j) = o_k(1)$; likewise, \eqref{eq:hgahgpgnog3} shows that ${\bf H}_{4R}^2(v_j,\zz_j)= o_k(1)$. Applying \Cref{lem:coefcompexc}, up to possibly changing the sign of the $v_j$ then $|v_j-e_n|= o_k(1)$ as well.
\end{proof}

\subsection{Decomposing $\{u=0\}$ into $K_*$ layers -- Proof of \Cref{prop:graphdec}}
We can finally show that the level sets of our solution decompose into exactly $K_*$ graphs away from the bad set.

\begin{proof}[Proof of \Cref{prop:graphdec}]
Let $\Rh:=\epk^{2-\gamma/2}\Rk$.\\
Observe first that, combining \Cref{prop:W} with \eqref{eq:keiaisbgfliab} and \Cref{lem:coefcompexc}, we immediately deduce:\\
Let $\delta>0$. Then, for $k$ large enough, for any $\zz\in\cZ\cap B_{\frac{1}{2}\Rk}(\zk)$ and $R\in[\Rh,\Rk]$ we have
\begin{equation}\label{eq:gwiongown1}
    \{u=0\}\cap B_R(\zz)\subset\{|e_n\cdot (x-\zz)|\leq o_k(1) R\}
\end{equation}
and
\begin{equation}\label{eq:gwiongown2}
    K_*-{\bf M}_{\delta R}(y)\leq o_k(1) \quad \mbox{for every} \quad y\in \{e_n\cdot (x-\zz)=0\}\cap B_R(\zz)\,.
\end{equation}
Given $\bar x'\in\Omega_{2-\gamma/2}$, let $\zz(\bar x')\in\cZ\cap B_{\Rk/2}(\zk)$ be such that $\rho:=\dist(\bar x',\cZ')=|\bar x' - \zz'(\bar x)|$, and put $\bar x=(\bar x',(\zz(\bar x'))_n)$. $B_\rho(\bar x)$ is obviously a good ball, i.e. $B_\rho(\bar x)\cap \cZ=\emptyset$. For $k$ large enough, the sheeting assumptions are then satisfied in $B_{3\rho/4}(\bar x)$ thanks to \Cref{thm:goodimpsheet}, and moreover \eqref{eq:gwiongown1}--\eqref{eq:gwiongown2} give that
\begin{equation}\label{eq:gwiongown12}
    \{u=0\}\cap B_\rho(\bar x)\subset\{|e_n\cdot (x-\bar x)|\leq o_k(1) \rho\} \quad\mbox{and}\quad
    K_*-{\bf M}_{\delta\rho}(\bar x)\leq o_k(1)\,.
\end{equation}
Set $\mathcal C_{\rho/2}(\bar x)=B_{\rho/2}'(\bar x')\times [\bar x_n-\rho/2,\bar x_n+\rho/2]$. For $k$ large enough, by \Cref{thm:sheetpart2} and \Cref{lem:addproperties} we see that there are exactly $K_*$ smooth graphs $g_i:B_{\rho/2}'(\bar x')\to[-\rho/8,\rho/8]$, $g_1<...<g_{K_*}$, such that $\{u=0\}\cap \mathcal C_{\rho/2}(\bar x)=\bigcup_{j=1}^{K_*} {\rm graph} \,g_i$ and $|\nabla g_i|\le o_k(1)$.\\

Vertically, these graphs actually cover all of the zero level set: Indeed, given $y\in\{u=0\}\cap B_{\frac{1}{2}\Rk}(\zk)$, with $y'\in B_{\rho/2}'(\bar x')$, assume for contradiction that $y\notin \bigcup_{j=1}^{K_*} {\rm graph} \,g_i$, so that $y\notin \mathcal C_{\rho/2}(\bar x)$. This means that $|y_n-\bar x_n|\geq \rho/2$, thus letting $R=2|y_n-\bar x_n|$ we see that
\begin{equation}
    y\in \{u=0\}\cap B_R(\bar x)\quad\mbox{but}\quad |e_n\cdot(y-\bar x)|=|y_n-\bar x_n|\geq \frac{R}{2}\,,
\end{equation}
which contradicts \eqref{eq:gwiongown12} for $k$ large enough.\\

Summing up our argument up to now, given any point in $\Omega_{2-\gamma/2}$, we have obtained a local (horizontally) decomposition of the intermediate level sets as exactly (vertically) $K_*$ graphs. In particular, given any $x', y'\in \Omega_{2-\gamma/2}$, in case their associated decompositions have an overlap then they obviously need to match. Taking the union over $\bar x'\in \Omega_{2-\gamma/2}$ of these graphs, we have shown:\\
For $k$ large enough there are $K_*$ smooth graphs $g_i:\Omega_{2-\gamma/2}\to \R$, $g_1<...<g_{K_*}$, such that
\begin{equation*}
        \{u=0\}\cap B_{\frac{1}{2}\Rk}(\zk)\cap (\Omega_{2-\gamma/2}\times\R)=\bigcup_{i=1}^{K_*} {\rm graph} \,g_i\,.
\end{equation*}
Finally, we already saw that $|\nabla g_i|\le o_k(1)$, which together with \Cref{thm:sheetpart2} (i.e. \eqref{eq:meancurvsheet}) gives \eqref{eq:uagfabaf}.
\end{proof}
We record moreover the following byproduct of the proof (which follows directly from \eqref{eq:gwiongown1}):
\begin{lemma}[{\bf Large scale flatness}]\label{lem:graphlargeflat} We have
$|g_i(x')-\zz_n|\leq o_k(1)|x'-\zz'|$ in $\Omega_{2-\gamma/2}$,
for every $\zz\in \cZ\cap B_{\frac{1}{2}\Rk}(\zk)$ and $i\in\{1,...,K_*\}$.
\end{lemma}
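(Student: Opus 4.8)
The plan is to make precise the remark that this follows from the interior flatness estimate \eqref{eq:gwiongown1} obtained inside the proof of \Cref{prop:graphdec}: for every $\zz\in\cZ\cap B_{\frac12\Rk}(\zk)$ and every $R\in[\Rh,\Rk]$ (with $\Rh:=\epk^{2-\gamma/2}\Rk$) one has $\{u=0\}\cap B_R(\zz)\subset\{|e_n\cdot(x-\zz)|\le o_k(1)R\}$, the $o_k(1)$ being uniform in $\zz$ and $R$. Fix $\zz\in\cZ\cap B_{\frac12\Rk}(\zk)$, $i\in\{1,\dots,K_*\}$, and $x'\in\Omega_{2-\gamma/2}$, and set $d:=|x'-\zz'|$ and $x:=(x',g_i(x'))\in\{u=0\}$. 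Since $\zz'\in[\cZ\cap B_{\Rk}(\zk)]'$ we have $d\ge\mathrm{dist}(x',[\cZ\cap B_{\Rk}(\zk)]')\ge\Rh$, and since $x',\zz'\in B_{\Rk/2}'(\zk')$ we have $d\le\Rk$. The goal is $|g_i(x')-\zz_n|\le o_k(1)d$. By \eqref{eq:gwiongown1} this would be immediate if we knew that $x$ lies in $B_R(\zz)$ (resp. in $B_{\Rk}(\zk)$) for some $R$ comparable to $d$; so the only genuine point is to control $|g_i(x')-\zz_n|$ a priori and then bootstrap.

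For the a priori bound, apply the local graphical construction from the proof of \Cref{prop:graphdec} with base point $x'$. Let $\zz(x')\in\cZ\cap B_{\Rk/2}(\zk)$ be a nearest bad center, $\rho:=\mathrm{dist}(x',[\cZ\cap B_{\Rk/2}(\zk)]')=|x'-\zz'(x')|\le d$; by \Cref{thm:sheetpart2} and \Cref{lem:addproperties} the graph $g_i$ stays within $C\rho\le\tfrac14 d$ of $(\zz(x'))_n$ near $x'$, so $|g_i(x')-(\zz(x'))_n|\le\tfrac14 d$ for $k$ large. Moreover, since ${\bf H}_{4\Rk}^2(e_n,\zk)\le 2\epk^2\to 0$, \Cref{lem:Hbdhtez} centered at $\zk$ at scale $4\Rk$ gives $\{|u|\le0.9\}\cap B_{2\Rk}(\zk)\subset\{|e_n\cdot(x-\zk)|\le o_k(1)\Rk\}$; applied to the bad centers $\zz(x'),\zz\in B_{\Rk/2}(\zk)$ this yields $|(\zz(x'))_n-\zz_n|\le o_k(1)\Rk$, hence
\[
|g_i(x')-\zz_n|\le \tfrac14 d + o_k(1)\Rk .
\]

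We split on the size of $d$. If $d\ge\tfrac13\Rk$: the estimate above together with $|x'-\zk'|\le\tfrac12\Rk$ places $x\in B_{\Rk}(\zk)\cap\{u=0\}$ for $k$ large, so \eqref{eq:gwiongown1} at $\zk$ and radius $\Rk$ gives $|g_i(x')-(\zk)_n|\le o_k(1)\Rk$; combined with $|(\zk)_n-\zz_n|\le o_k(1)\Rk$ and $d\ge\tfrac13\Rk$ this is $\le o_k(1)d$. If $d\le\tfrac13\Rk$: we iterate. Set $b_0:=\tfrac14 d+o_k(1)\Rk$; assuming $|g_i(x')-\zz_n|\le b_{j-1}$, the point $x$ lies in $B_{R_j}(\zz)$ with $R_j:=d+b_{j-1}$, and one checks that $R_j\in[\Rh,\Rk]$ (the $b_j$ remain bounded by $C d+\tfrac14\Rk$ by the recursion below, so $R_j\le\Rk$ when $d\le\tfrac13\Rk$ and $k$ is large). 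Then \eqref{eq:gwiongown1} at $\zz$, radius $R_j$, gives $|g_i(x')-\zz_n|\le o_k(1)R_j=:b_j=o_k(1)(d+b_{j-1})$. For $k$ large the map $t\mapsto o_k(1)(d+t)$ is a contraction with fixed point $\tfrac{o_k(1)}{1-o_k(1)}d\le 2\,o_k(1)d$, so $b_j$ converges to it; letting $j\to\infty$ in $|g_i(x')-\zz_n|\le b_j$ yields $|g_i(x')-\zz_n|\le o_k(1)d$, as desired.

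The only delicate point is the one flagged in the first paragraph: making sure $x=(x',g_i(x'))$ actually lies in the ball on which \eqref{eq:gwiongown1} is being applied. This is what forces the detour through the local graphical structure and the flatness of the bad set (\Cref{lem:Hbdhtez}); once the rough a priori bound $|g_i(x')-\zz_n|\lesssim d+o_k(1)\Rk$ is in hand, the bootstrap closes the argument at once (it is a contraction precisely because the $o_k(1)$ in \eqref{eq:gwiongown1} is uniform in the center and the radius), and everything else is just bookkeeping of absolute constants.
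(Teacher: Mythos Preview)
Your proof is correct and fleshes out what the paper records tersely as ``follows directly from \eqref{eq:gwiongown1}''; the key observation you isolate---that one must first place $x=(x',g_i(x'))$ inside a ball $B_R(\zz)$ with $R\in[\Rh,\Rk]$---is the only genuine content, and you handle it cleanly.

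One simplification: the bootstrap in your Case~2 is not needed. Once your a priori bound $|g_i(x')-\zz_n|\le\tfrac14 d+o_k(1)\Rk$ guarantees (for $d\le\tfrac13\Rk$, $k$ large) that $R:=|x-\zz|=\sqrt{d^2+h^2}\in[\Rh,\Rk]$ with $h:=|g_i(x')-\zz_n|$, a single application of \eqref{eq:gwiongown1} at this $R$ yields $h\le o_k(1)\sqrt{d^2+h^2}$, and squaring gives $h^2(1-o_k(1)^2)\le o_k(1)^2 d^2$, i.e.\ $h\le o_k(1)d$ directly. This one-step algebraic closure is presumably what the paper has in mind by ``directly''.
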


\section{Improvement of excess for the graphs -- Proof of \Cref{prop:impflatsinglin}}\label{sec:improvlayers}
The goal of this section is to prove \Cref{prop:impflatsinglin}. It will be a consequence of the following:
\begin{proposition}\label{prop:decaysqrt0}
    Fix $\chi\in(0,\frac{1}{20}]$, $\beta\in(0,\frac{1}{40}]$ and $\alpha\in(0,\frac{1}{40}]$.  There exist a constant $C$ and smooth extensions  $\widetilde g_i : B_{\frac 1 4 \Rk}'(\zk') \to \R$, with $\widetilde g_i\equiv g_i$ in $B_{\frac 1 4 \Rk}'(\zk')\cap\Omega_{2-\gamma}$  such that the following holds for every $k$ large enough: 
    For any given $\bar\zz\in \cZ\cap B_{\frac{1}{4}\Rk}(\zk)$, there is an affine function $\ell:\R^{n-1}\to\R$ such that
    \begin{equation}
    \label{eq:flatlaystar22}
     \frac{1}{(\epk^\chi\Rk)}\fint_{B_{\epk^\chi\Rk}'(\bar\zz')} \big| \widetilde g_i(x') - \ell \big| \, dx \le C\epk^{1+\chi/3} \qquad \mbox{for every}\quad i\in\{1,...,K_*\}\,,
    \end{equation}
    In particular, \eqref{eq:flatlaystar} follows.
\end{proposition}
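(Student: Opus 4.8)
The plan is to carry out the three-step scheme sketched in \Cref{sec:contradictionintro}, turning \eqref{eq:flatlaystar22} into a quantitative ``improvement of flatness over one jump of scales'' for the $K_*$ layers produced by \Cref{prop:graphdec}; note that \eqref{eq:flatlaystar} then follows at once by restricting the integral to $\Omega_{2-\gamma}$, where $\widetilde g_i\equiv g_i$. First I would fix the coordinate frame of \Cref{lem:dxzggiuhg2}, so that $e_n$ is the graphical direction uniformly on the scales $[\epk^{2-\gamma/2}\Rk,\Rk]$, and work with the layers $g_1<\dots<g_{K_*}$ on $\Omega_{2-\gamma/2}$ together with the pointwise mean curvature control $|\HH[g_i]|(x')\le C\,{\rm dist}^{-2}(x',[\cZ\cap B_{\Rk}(\zk)]')$ from \eqref{eq:uagfabaf}. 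The first genuine task is to construct the extensions $\widetilde g_i$ on $B'_{\frac14\Rk}(\zk')$, agreeing with $g_i$ on $\Omega_{2-\gamma}$: this is a Whitney-type construction over (the projection of) the bad set, engineered so that $\widetilde g_i$ does not acquire extra oscillation on the holes and still records, on every sub-ball, the integral flatness encoded in \eqref{eq:hjgagfas}. The bad-ball count \eqref{eq:fhiowqrhgoqwg}, which bounds the Minkowski content of $\cZ$ at \emph{every} resolution (not just at resolution one), is precisely what makes such an extension possible with only an $L^1$-sized error.

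With the $\widetilde g_i$ at hand I would pass to the rescalings $h_i(x'):=\bigl(\widetilde g_i(\zk'+\Rk x')-(\zk)_n\bigr)/(\epk\Rk)$, which have $L^1$-norm of order one on $B'_{1/4}$ and collective flatness of order one there. The heart of the matter is the bound on $\Delta h_i$: combining the rescaled form of \eqref{eq:uagfabaf}, the integral flatness \eqref{eq:hjgagfas}, the bad-ball count \eqref{eq:fhiowqrhgoqwg}, and the $C^{2,\vartheta}$ and mean-curvature estimates of \Cref{thm:sheetimpc2alpha} (valid on the good balls covering $\Omega_{2-\gamma}$ via \Cref{thm:goodimpsheet}), one should obtain $\int_{B'_{1/8}}|\Delta h_i|=O(\epk^{1/10})$ and --- crucially for the second iteration below --- a \emph{strictly supercritical} local version of this estimate. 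Establishing the latter is, I expect, where essentially all the difficulty lies: as explained in \Cref{obstructionshd}, feeding the lower bound for the flatness \eqref{lowerboundheightR} into \eqref{eq:uagfabaf} gives, in dimension $n=4$, exactly a critical (power-zero) contribution, so gaining a genuine power of $\epk$ with a margin to spare requires a delicate dichotomy --- either the layers are already much flatter than $\epk$ at the scale under consideration (and there is nothing to prove), or enough curvature is concentrated on good balls to improve the estimate --- carried out at the level of the Toda-type system governing the interaction of the layers. Making the Whitney extension compatible with this dichotomy is a further source of technical work.

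Granting the $L^1$ Laplacian bounds, the remainder is a two-iteration argument. Starting from the initial collective flatness $\|g_i-(\zk)_n\|_{L^\infty(\Omega_{2-\gamma/2})}\lesssim\epk\Rk$, a standard improvement-of-flatness iteration for functions that are $L^1$-close to harmonic produces, around any $\bar\zz\in\cZ\cap B_{\frac14\Rk}(\zk)$, affine functions $\ell_i$ with $(\epk^\chi\Rk)^{-1}\fint_{B'_{\epk^\chi\Rk}(\bar\zz')}|\widetilde g_i-\ell_i|\lesssim\epk^{1+\chi/2}$, i.e. \eqref{eq:ygagvbogao}. To collapse the $K_*$ affine functions $\ell_i$ into a single $\ell$, I would first use the supercritical Laplacian bound to run a second, geometric improvement-of-oscillation iteration --- the $L^1$ analogue of the $L^\infty$ idealisation of \Cref{sec:contradictionintro} --- which upgrades $\fint_{{B'}^1}|\widetilde g_i-\ell_i|\lesssim\bar\ep_k$ to $\fint_{{B'}^2}|\widetilde g_i-\ell_i|\lesssim\bar\ep_k$ on the much smaller ball $B^2=B_{\epk^{1+2\chi}\Rk}(\bar\zz)\subset B^1=B_{\epk^\chi\Rk}(\bar\zz)$, with $\bar\ep_k=\epk^{1+\chi/2}(\epk^\chi\Rk)$; since $B^2\setminus\Omega_{2-\gamma}$ is negligible this passes from $\widetilde g_i$ to $g_i$ on $B^2$. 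On $B^2$, \Cref{prop:W} (recorded for the graphs in \Cref{lem:graphlargeflat}) forces all $g_i$ to pass within $O({\rm diam}\,B^2)=O(\bar\ep_k)$ of one another, hence $|\ell_i-\ell_j|\lesssim\bar\ep_k$ on $B^2$; because the $g_i$ cannot cross and the $\ell_i$ are affine, this closeness propagates outward, so $|\ell_i-\ell_j|\lesssim\bar\ep_k$ on all of $B^1$. Taking $\ell:=\ell_1$ and using the triangle inequality yields \eqref{eq:flatlaystar22}, hence \eqref{eq:flatlaystar}. The admissible ranges $\chi\le\frac1{20}$, $\beta\le\frac1{40}$, $\alpha\le\frac1{40}$ are exactly what is needed to keep all the exponents produced by the two iterations, the Laplacian estimate, and the excess bound \eqref{eq:hjgagfas} strictly positive.
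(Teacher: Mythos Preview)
Your plan mirrors the paper's approach closely: Whitney extension, $L^1$ Laplacian bounds, a first iteration producing per-layer affine functions $\ell_i$, a second (oscillation-preserving) iteration down to scale $\epk^{1+2\chi}\Rk$, and then the collapse $\ell_i\to\ell$ via ordering plus affinity. The structure and the identification of where the difficulty lies are both correct.

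The one substantive misconception is in the mechanism for the supercritical Laplacian bound. You propose a dichotomy ``either the layers are already much flatter \dots\ or enough curvature is concentrated on good balls'', carried out via the Toda-type system. The paper's actual dichotomy (proof of \Cref{prop:linHeq}, Step~3) is purely a \emph{bad-ball count}: Case~1 assumes $|B_1(\cZ)\cap B_{3\Rk/4}(\zk)|\le\epk^{-1/4}$, which forces $N_\theta\le\theta^{-1}\epk^{1/4}$ for $\theta\le\epk^{1/2}$ and makes the mean-curvature integral small directly from \eqref{eq:uagfabaf}; Case~2 assumes many bad balls, which via \eqref{eq:awioufhiunoe} yields $\Rk\epk^{2+1/4}\ge c$, i.e.\ $\Rk$ exceeds the generic lower bound $\epk^{-2}$ by an extra factor $\epk^{-1/4}$, and this feeds straight into the estimate \eqref{eq:90wahgoiah}. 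No Toda analysis enters; \cite{WW19} is used only as a black box via \eqref{eq:meancurvsheet}. The Toda system is mentioned in \Cref{obstructionshd} precisely as what \emph{might} be needed in dimensions $5\le n\le 7$, where this counting dichotomy no longer breaks criticality. Also, the Whitney extension (\Cref{prop:extension}) is constructed once and independently of the dichotomy; there is no compatibility issue to manage.
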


\subsection{A Whitney-type extension}\label{sec:extension}
We construct new Whitney-type extensions which capture \eqref{eq:hjgagfas}. We need:
\begin{definition}[{\bf Projected bad set and distance}]
    In what follows, we set $N_\theta:= N\big(\theta,B_{\Rk}(\zk)\big)$. Moreover, for $x'\in\R^n$ we set $d(x'):={\rm dist}\big(x',[\cZ\cap B_{\Rk}(\zk)]'\big)$ and
    \begin{align*}
        N_\theta':=(\theta \Rk)^{-3}\Big|B_{\theta \Rk}'([\cZ\cap B_{\Rk}(\zk)]') \Big|=(\theta \Rk)^{-3}\Big|{\textstyle \bigcup}_{\widetilde\zz'\in [\cZ\cap B_{\Rk}(\zk)]'} B_{\theta \Rk}'(\widetilde\zz') \Big|\,.
    \end{align*}
\end{definition}
\begin{lemma}
    Let $\theta\in(0,1]$. We have $N_\theta'\leq CN_\theta$, so that
    \begin{equation}\label{eq:uibobdvadvb}
        |\{d\leq \theta \Rk\}\cap B_{\frac{1}{2}\Rk}'(\zk)|\leq C(\theta\Rk)^3N_\theta\,.
    \end{equation}
\end{lemma}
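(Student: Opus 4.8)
The plan is to derive both inequalities at once from a single Vitali-type covering of the \emph{full} bad set, exploiting the fact that the orthogonal projection $x\mapsto x'$ is $1$-Lipschitz and hence does not enlarge balls.

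First I would apply \Cref{lem:Ndeflem} with $\zz=\zk$, $R=\Rk$ and the given $\theta\in(0,1]$: it yields disjoint bad balls $\{B_{\theta\Rk}(\widetilde\zz_i)\}_{i=1}^{Q}$ with $\{\widetilde\zz_i\}_{i=1}^{Q}\subset\cZ\cap B_{\Rk}(\zk)$, with $Q\le C N_\theta$, and such that
\[
\cZ\cap B_{\Rk}(\zk)\ \subset\ B_{\theta\Rk}\!\big(\cZ\cap B_{\Rk}(\zk)\big)\ \subset\ \bigcup_{i=1}^{Q}B_{5\theta\Rk}(\widetilde\zz_i).
\]
Projecting onto $\R^{n-1}$ and using $|x'-\widetilde\zz_i'|\le|x-\widetilde\zz_i|$, this gives $[\cZ\cap B_{\Rk}(\zk)]'\subset\bigcup_{i=1}^{Q}B_{5\theta\Rk}'(\widetilde\zz_i')$, so any point $x'$ lying within distance $\theta\Rk$ (in $\R^{n-1}$) of the projected bad set satisfies $x'\in\bigcup_{i=1}^{Q}B_{6\theta\Rk}'(\widetilde\zz_i')$. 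In particular $B_{\theta\Rk}'\big([\cZ\cap B_{\Rk}(\zk)]'\big)\subset\bigcup_{i=1}^{Q}B_{6\theta\Rk}'(\widetilde\zz_i')$, and therefore
\[
\Big|B_{\theta\Rk}'\big([\cZ\cap B_{\Rk}(\zk)]'\big)\Big|\ \le\ Q\,\omega_{n-1}(6\theta\Rk)^{n-1}\ \le\ C\,N_\theta\,(\theta\Rk)^{3},
\]
using $n-1=3$. Dividing by $(\theta\Rk)^{3}$ gives $N_\theta'\le C N_\theta$.

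For \eqref{eq:uibobdvadvb} I would observe that $\{d\le\theta\Rk\}$ is exactly the closed $\theta\Rk$-neighborhood of $[\cZ\cap B_{\Rk}(\zk)]'$; since that projected set is bounded (so the infimum defining $d$ is attained), the containment just established upgrades to $\{d\le\theta\Rk\}\subset\bigcup_{i=1}^{Q}\overline{B_{6\theta\Rk}'(\widetilde\zz_i')}$, whence
\[
\Big|\{d\le\theta\Rk\}\cap B_{\frac12\Rk}'(\zk')\Big|\ \le\ Q\,\omega_{n-1}(6\theta\Rk)^{n-1}\ \le\ C\,(\theta\Rk)^{3}N_\theta,
\]
as claimed. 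I do not expect any real obstacle here: the only things requiring care are the radius bookkeeping in the Vitali step (the $\theta\Rk$-neighborhood of a union of $5\theta\Rk$-balls sits inside the union of the concentric $6\theta\Rk$-balls) and the trivial remark that projection does not break the covering; everything else is routine.
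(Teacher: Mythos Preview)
Your proof is correct and follows the same approach as the paper: project the Vitali covering from \Cref{lem:Ndeflem} and use that the projection is $1$-Lipschitz to cover $B_{\theta\Rk}'([\cZ\cap B_{\Rk}(\zk)]')$ by $CN_\theta$ balls of radius comparable to $\theta\Rk$. The only cosmetic difference is that the paper first notes $|\{d\le\theta\Rk\}\cap B_{\frac12\Rk}'(\zk')|\le(\theta\Rk)^3N_\theta'$ holds by definition and then reduces to $N_\theta'\le CN_\theta$, whereas you obtain both bounds directly from the same covering; the substance is identical.
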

\begin{proof}
    Since $|\{x':d(x')\leq \theta \Rk\}\cap B_{\frac{1}{2}\Rk}'(\zk)|\leq (\theta\Rk)^3N_\theta'$ by definition, it suffices to show $N_\theta'\leq CN_\theta$. But indeed, projecting the simple result in \Cref{lem:Ndeflem}, we find that we can cover $B_{\theta \Rk}'([\cZ\cap B_{\Rk}(\zk)]')$ with at most $CN(\theta,B_{\Rk}(\zk))=CN_\theta$ projected bad balls of radius $5\theta\Rk$, which gives $N_\theta'\leq CN_\theta$.
\end{proof}
    
\noindent The main result of this section is:

\begin{proposition}[{\bf Extension}]\label{prop:extension}There exist smooth functions $\widetilde g_i:B_{\frac{1}{4}\Rk}'(\zk')\to \R$ such that the following hold:

\begin{enumerate}
    \item[(i)] $\widetilde g_i\equiv g_i$ in $B_{\frac{1}{4}\Rk}'(\zk') \cap \Omega_{2-\gamma}= B_{\frac{1}{4}\Rk}'(\zk')\setminus B_{\epk^{2-\gamma}\Rk}'([\cZ\cap B_{\Rk}(\zk)]')$.
    \item[(ii)] $\fint_{B_{\frac{1}{4}\Rk}'(\zk')}|\widetilde g_i-(\zk)_n|\leq C\epk\Rk$.
    \item[(iii)] We have $|\nabla \widetilde g_i|\le  o_k(1)$  in $B'_{\frac 1 4 \Rk}(\zk')$.

    \item[(iv)] The mean curvature of $\widetilde g_i$ has the following bound in $L^1$, in the region $B'_{\frac 1 4 \Rk}(\zk')\setminus \Omega_{2-\gamma}$ where $\widetilde g_i$ and $g_i$ differ:
    \begin{equation}\label{eq:euaglfgqd}
          \int_{B'_{\frac 1 4 \Rk}(\zk')\setminus \Omega_{2-\gamma}} |\HH[\widetilde g_i]|\leq C N_{\epk^{2-\gamma}}\,\epk^{2-\gamma}\Rk + C \sqrt{N_{\epk^{2-\gamma}}}\, \epk^{1+(2-\gamma)(3-\alpha)/2 } \Rk^2\,.
    \end{equation}   
   \end{enumerate}
\end{proposition}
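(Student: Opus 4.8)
The plan is to build the extension by a dyadic Whitney decomposition of the "bad region'' $D := B'_{\frac14\Rk}(\zk')\cap \overline{B'_{\epk^{2-\gamma}\Rk}\big([\cZ\cap B_{\Rk}(\zk)]'\big)}$ of the projected bad set, gluing the genuine graph $g_i$ (defined on $\Omega_{2-\gamma}$) to a locally affine model across each Whitney cube. First I would cover $D$ by a family $\{Q_j\}$ of dyadic cubes with $\mathrm{diam}(Q_j)\sim d_j := \dist(Q_j,[\cZ]')$ (so each $Q_j$ lives at scale $\sim d_j\in[\epk^{2-\gamma}\Rk,\ c\Rk]$), together with a subordinate partition of unity $\{\varphi_j\}$ with $|\nabla\varphi_j|\lesssim d_j^{-1}$, $|D^2\varphi_j|\lesssim d_j^{-2}$. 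On the "good'' side ($\Omega_{2-\gamma}$) I set $\widetilde g_i := g_i$; on $D$ I define $\widetilde g_i := \sum_j \varphi_j\, \ell_{i,j}$ where $\ell_{i,j}$ is the best affine approximation to $g_i$ on the "collar'' annulus $A_j := \{x' : d(x')\sim d_j\}\cap B'_{Cd_j}(\text{center of }Q_j)$ —- essentially the affine function produced by the $L^1$ improvement-of-flatness bound \eqref{eq:hjgagfas}/\eqref{eq:Azbound23} applied at the bad center nearest to $Q_j$ at scale $d_j$. Property (i) is immediate, and a standard Whitney gluing argument gives smoothness and, via \Cref{lem:graphlargeflat} plus \eqref{eq:uagfabaf}, the gradient bound (iii): $|\nabla\widetilde g_i|\lesssim \sup_j(|\nabla\ell_{i,j}| + \mathrm{osc}_{A_j} g_i / d_j)\le o_k(1)$, since on each collar $g_i$ is $o_k(1)$-flat by \Cref{lem:dxzggiuhg2} and the graphical direction $e_n$ is common.

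Next, for (ii) I would estimate $\fint_{B'_{\frac14\Rk}}|\widetilde g_i-(\zk)_n|$ by splitting into $\Omega_{2-\gamma}$, where the bound comes from the height-excess control \eqref{eq:hjgagfas} at scale $\Rk$ (giving $\fint |g_i-(\zk)_n|\lesssim \epk\Rk$ by \Cref{lem:Hbdhtez} / \Cref{lem:hconthexc}), and the complement $D$, whose measure is controlled by \eqref{eq:uibobdvadvb} with $\theta=\epk^{2-\gamma}$, i.e. $|D|\lesssim (\epk^{2-\gamma}\Rk)^3 N_{\epk^{2-\gamma}}$, while on $D$ the extension is bounded by $o_k(1)\cdot\epk^{2-\gamma}\Rk\cdot N_{\epk^{2-\gamma}}^{1/3}$ pointwise (again \Cref{lem:graphlargeflat}); since $N_{\epk^{2-\gamma}}\lesssim (\epk^{2-\gamma})^{-3}$ by \eqref{eq:fhiowqrhgoqwg}, this contribution is $\lesssim \epk\Rk$ as well, in fact with room to spare. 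The main work is (iv), the $L^1$ mean-curvature bound on $D$. Here I would write $\HH[\widetilde g_i] = \dive\!\big(\nabla\widetilde g_i/\sqrt{1+|\nabla\widetilde g_i|^2}\big)$ and, since $|\nabla\widetilde g_i|=o_k(1)$, reduce (up to harmless lower-order factors) to $\sum_j \Delta\widetilde g_i$ cube by cube. On each $Q_j$, $\widetilde g_i - \ell_{i,j} = \sum_{j'}\varphi_{j'}(\ell_{i,j'}-\ell_{i,j})$ on the (finitely many) neighbouring cubes, so $|\Delta\widetilde g_i| \lesssim d_j^{-2}\max_{j'\sim j}|\ell_{i,j'}-\ell_{i,j}|_{L^\infty(Q_j)}$, and the difference of consecutive affine approximations is controlled, via the telescoping/Campanato-type argument already used in \Cref{lem:dxzggiuhg2} and \Cref{lem:coefcompexc}, by the height excesses at scales $\sim d_j$: $|\ell_{i,j'}-\ell_{i,j}|_{L^\infty(Q_j)} \lesssim d_j\,{\bf H}_{\sim d_j}(\zz_j) \lesssim d_j (\Rk/d_j)^{(1+\alpha)/2}\epk$ by \eqref{eq:hjgagfas}, plus the "boundary'' contribution from matching $\widetilde g_i$ to the true $g_i$ along $\partial\Omega_{2-\gamma}$, which brings in the genuine mean curvature $|\HH[g_i]|\lesssim d_j^{-2}$ from \eqref{eq:uagfabaf} integrated over $\Omega_{2-\gamma}\cap\{d\sim \epk^{2-\gamma}\Rk\}$.

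Finally I would sum over $j$. Grouping the cubes by dyadic scale $d_j = 2^{-m}\cdot(\text{stuff})$ running from $\epk^{2-\gamma}\Rk$ up to $\sim\Rk$, the number of cubes at scale $\rho$ is $\lesssim N_\rho (\Rk/\rho)^{?}$... more precisely $|\{d\le\rho\}|/\rho^3 \lesssim N_\rho$, and each contributes $\lesssim \rho^3 \cdot \rho^{-2}\cdot \rho (\Rk/\rho)^{(1+\alpha)/2}\epk = \rho^{3/2}\Rk^{(1+\alpha)/2}\epk$ in rough terms; the innermost scale $\rho = \epk^{2-\gamma}\Rk$ dominates (as $\epk\to0$ this is the smallest scale and carries all the bad-set mass), and collecting constants one gets the two terms on the right of \eqref{eq:euaglfgqd}: the first, $N_{\epk^{2-\gamma}}\epk^{2-\gamma}\Rk$, from the "$\Delta\ell$-jump'' type terms estimated crudely, and the second, $\sqrt{N_{\epk^{2-\gamma}}}\,\epk^{1+(2-\gamma)(3-\alpha)/2}\Rk^2$, from the sharper height-excess bound combined with Cauchy--Schwarz over the $\lesssim N_{\epk^{2-\gamma}}$ innermost cubes (this is where the $\sqrt{N}$ and the exponent $(2-\gamma)(3-\alpha)/2$ come from: $\rho^{3/2}\Rk^{(1+\alpha)/2}\epk$ with $\rho = \epk^{2-\gamma}\Rk$ gives $\epk^{1+3(2-\gamma)/2}\Rk^2$, and interpolating against the $(1+\alpha)$-power excess decay in \eqref{eq:hjgagfas} shifts the exponent to $(2-\gamma)(3-\alpha)/2$, with the Cauchy--Schwarz over cubes producing $\sqrt{N}$ rather than $N$). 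The hard part will be bookkeeping the scale-by-scale sum so that the two stated terms genuinely bound everything and no intermediate scale produces a worse contribution — this requires using both \eqref{eq:fhiowqrhgoqwg} (Minkowski-type bound on $N_\theta$) and the full strength of \eqref{eq:hjgagfas} (the excess decay at every scale and every bad center, with the uniform exponent $1+\alpha$), and carefully handling the transition region where the Whitney extension meets the genuine graph.
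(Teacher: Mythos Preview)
Your plan has a genuine geometric inconsistency that, as written, breaks the construction. You say you Whitney-decompose $D=\{d\le \epk^{2-\gamma}\Rk\}$ into cubes $Q_j$ with $\mathrm{diam}(Q_j)\sim d_j:=\dist(Q_j,[\cZ]')$ and $d_j\in[\epk^{2-\gamma}\Rk,c\Rk]$; but every point of $D$ has $d\le \epk^{2-\gamma}\Rk$, so the scale range is inverted. If instead you mean genuine Whitney cubes inside $D$ (sizes $\sim d_j\to 0$ near $[\cZ]'$), the construction still fails: for small $d_j$ the ``collar'' $A_j=\{d\sim d_j\}$ lies entirely outside $\Omega_{2-\gamma/2}$, so $g_i$ is not defined there and your $\ell_{i,j}$ (``best affine approximation to $g_i$ on $A_j$'') cannot be formed. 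The affine data must come from the height excess of $u$ itself around nearby bad centers---not from $g_i$---and only at scales $\gtrsim\epk^{2-\gamma}\Rk$. Your estimate for (ii) is also garbled (the ``$o_k(1)\cdot\epk^{2-\gamma}\Rk\cdot N^{1/3}$ pointwise'' bound is not meaningful; the correct pointwise bound on $|\widetilde g_i-(\zk)_n|$ in $D$ is just $o_k(1)\Rk$).

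The paper does something structurally different from Whitney: it covers the bad region by cubes of a \emph{single fixed} scale $2^{-l_\star}\Rk\sim\epk^{2-\gamma}\Rk$. To each such $Q$ it assigns an affine $\ell_Q$ coming from the height excess of $u$ (Lemma~\ref{lem:prep_extension}), whose crucial output is the \emph{square-summable} control $\sum_{Q\in\cX^l}(h_Q/(2^{-l}\Rk))^2\le C\,2^{l(1+\alpha)}\epk^2$ at every scale $l$ (this is where \Cref{lem:sum_of_height_ex} enters, not merely the pointwise \eqref{eq:hjgagfas}). The extension is $\widetilde g_i=\psi_{l_\star}\eta_X+g_i(1-\eta_X)$ with $\psi_{l_\star}=\sum_Q\eta_Q\ell_Q$. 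To bound $\int_X|D^2\psi_{l_\star}|$, the paper telescopes $\psi_{l_\star}=\sum_{l\le l_\star}(\psi_l-\psi_{l-1})$ across \emph{all} dyadic scales and runs a Chebyshev-and-rearrangement argument at each $l$ (the Case~1/Case~2 dichotomy, according to whether $N_{l_\star}\lessgtr 2^{3(l_\star-l)}$) to extract the factor $\sqrt{N_{l_\star}}$. Your Cauchy--Schwarz instinct for $\sqrt{N}$ is the right mechanism in spirit, but it acts on the $\ell^2$ sum of the $h_Q$, not on the worst-case bound per cube; and the first term $N_{\epk^{2-\gamma}}\epk^{2-\gamma}\Rk$ in \eqref{eq:euaglfgqd} comes not from ``crude $\Delta\ell$-jumps'' but from the transition shell $TX\setminus X$ where $\psi_{l_\star}$ is glued to the true $g_i$ (cutoff second derivatives hit $|g_i-\ell_Q|$, and the residual $|\HH[g_i]|\lesssim(\epk^{2-\gamma}\Rk)^{-2}$ is integrated there).
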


\noindent {\bf Preliminaries.} Before giving the proof we introduce some definitions and a preparatory lemma. Set
\begin{equation}\label{C0-and-lstar}
    C_0 : = \ceils{\tfrac 12 \log_2 (n-1)} +3 =4\qquad  (n=4) \qquad \mbox{and} \qquad  l_\star : = \ceils{ -\log_2 \epk^{2-\gamma} }+3. 
\end{equation}
{\bf Dyadic cubes.} For $l\ge C_0$, consider the grid of {\em dyadic cubes} of the form
\begin{equation}\label{form_of_Q}
    Q =  2^{-l} \Rk([0,1)^{n-1} + y'),\quad\mbox{where } y'\in \mathbb Z^{n-1}\,.
\end{equation}
We say that $Q$ has {\em vertex} at $2^{-l} \Rk y'$. We define the {\em triple cube} $TQ$ by $TQ = 2^{-l} \Rk([-1,2)^{n-1} + y')$.\\
We denote $\boldsymbol Q^l:=\{Q\mbox{ of the form \eqref{form_of_Q}}: TQ\cap B_{\frac{1}{4}\Rk}'(\zk')\neq\emptyset\}$. We define the {\em predecessor} of $Q$, denoted $PQ$, as the cube in $\boldsymbol Q^{l-1}$ which contains $Q$.\\
{\it Note:} By the choice of $C_0$, since $l\ge C_0$, if $Q\in\boldsymbol Q^l$ then $TQ\subset B_{\frac{1}{2}\Rk}'(\zk')$. Furthermore, the Euclidean ball of radius equal to the diameter of $TQ$ centered at any point of $TQ$ is still contained in  $B_{\frac{1}{2}\Rk}'(\zk')$.\\
{\bf Cutoffs and partition of unity.} Fix a smooth cutoff $\eta_\circ\in C^\infty_c(  (-1/2,3/2)^{n-1})$ such that $\eta_\circ \ge 1$ in $[0,1]^{n-1}$. We define 
\[
\widetilde \eta_\circ(x') = 
\begin{cases}
\displaystyle\frac{\eta_\circ(x')}{\sum_{\zeta' \in \{-1,0,1\}^{n-1}}  \eta_\circ(x'+\zeta')} \quad &\mbox{ if }x'\in [-1,2]^{n-1}\,,
\vspace{3pt}
\\
0 & \mbox{ otherwise}\,,
\end{cases}
\quad\quad\quad\mbox{so that}\quad \sum_{\zeta' \in \Z^{n-1}} \widetilde\eta_\circ(\,\cdot\,+ \zeta') \equiv 1 \,.
\]
We can get a rescaled dyadic version in the obvious way: For $Q \in \boldsymbol Q^l$ with vertex $2^{-l} \Rk y'$, we put
\[
\eta_Q(x')  : = \widetilde \eta_\circ\left(\frac{x'}{2^{-l} \Rk } -y'\right)\,,\quad\quad\mbox{so that}\qquad \sum_{\widetilde Q\subset TQ} \eta_{\widetilde Q} \equiv 1 \mbox{ in }Q\quad \mbox{and}\quad   \sum_{Q\in \boldsymbol Q^l}\eta_Q \equiv 1 \mbox{ in } B'_{\frac 1 4 \Rk}(\zk)\,.
\]
We emphasise that $\eta_Q$ is supported in the triple cube $TQ$. Moreover, we obtain\footnote{Standard by scaling. The constant depends on $\widetilde\eta_\circ$, which is fixed depending on the dimension.}
\begin{equation}\label{dimboundsetas}
    |\eta_Q| +2^{-l}\Rk |\nabla \eta_Q| + (2^{-l}\Rk)^2 |D^2 \eta_Q| \le C(n)\quad \mbox{ for all }Q \in \boldsymbol Q^l.
\end{equation}
{\bf Bad dyadic cubes.} Recall that $2^{-4}\epk^{2-\gamma}\leq 2^{-l_\star}\leq 2^{-3}\epk^{2-\gamma}$. Let us define
\[
X :=  \bigcup \big\{ Q\in \boldsymbol Q^{l_\star} \  : \ TQ\cap [\cZ\cap B_{\Rk}(\zk)]' \neq \varnothing \big\} \quad \mbox{and}\quad   TX : = \bigcup \big\{ TQ\  :\ Q \in \boldsymbol Q^{l_\star}, \ Q\subset X \big\}.
\]
In particular, we have the inclusions\footnote{For the last inclusion, notice that any point in $TX$ must be at most at distance 
$ 3 \sqrt{n-1} \, 2^{-l_*}\Rk\leq 3 \sqrt{n-1} \, 2^{-3}\epk^{2-\gamma}\Rk< \epk^{2-\gamma}\Rk$ from some point in $\big[\cZ\cap B_{\Rk}(\zk)\big]'$. We have used that $3 \sqrt{n-1}\, 2^{-3}<1$ for $n=4$.}
\begin{equation}\label{inclusions_in_extpf}
B_{\frac{1}{4}\Rk}'(\zk') \cap B_{2^{-4}\epk^{2-\gamma}\Rk}\left(\big[\cZ\cap B_{\Rk}(\zk)\big]'\right) \subset T X \subset B_{\epk^{2-\gamma}\Rk}\left(\big[\cZ\cap B_{\Rk}(\zk)\big]'\right).
\end{equation}
We also consider a larger resolution version: For $C_0\le l \le l_*$ we define $\cX^l := \left\{Q\in \boldsymbol Q^l \, : \,   TQ\cap TX \neq \varnothing
\right\}$.

\begin{lemma}\label{lem:prep_extension}
    Let $C_0\le l \le l_\star$. To each $Q\in \cX^l$, we can assign an affine map $\ell_Q: \R^{n-1}\to \R$ with
    \begin{equation}\label{eq:8qgblgzudgfa}
         R_k^{-1}|\ell_Q(\zk') - (\zk)_n| + |\nabla \ell_Q| \le o_k(1)
    \end{equation}
    such that the following hold. Put 
    \begin{equation}\label{condintion_ell_Q}
        h_Q^2 : =  \frac{1}{(2^{-l}\Rk)^{n-1}}\int_{TQ\times\big((\zk)_n-\tfrac 1 4\Rk, (\zk)_n+\tfrac 1 4 \Rk\big)} |x_n-\ell_Q(x')|^2  \left[\frac{|\nabla u|^2}{2}+W(u)\right] dx' dx_n\,;
    \end{equation}
    assuming $TQ\cap \Omega_{2-\gamma/2}\neq\emptyset$ then
      \begin{equation}\label{condintion_ell_Q_g_i}
         \frac{1}{(2^{-l}\Rk)^{n-1}}\int_{TQ\cap \Omega_{2-\gamma/2}}|g_i -\ell_Q(x')|^2  dx'  \le  Ch_Q^2\quad \quad\mbox{for all }1\le i \le K_*\,.
    \end{equation}
    We then have the bound
    \begin{equation}\label{trappinghQ}
        \sum_{Q\in \cX^l} \left(\frac{h_Q}{2^{-l}\Rk}\right)^2 \le  C \left(\frac{\Rk}{2^{-l}\Rk}\right)^{1+\alpha} \epk^2.
    \end{equation}
    
    Moreover, we have the following comparisons:
    For $l\ge C_0+1$, we have
    \begin{equation}\label{comparison_previous}
        \| \ell_Q-\ell_{PQ}\|_{L^\infty(Q_1)} \le C(h_Q+ h_{PQ}) ;
     \end{equation}
     whenever $Q_1,Q_2 \in \cX^l$ and $TQ_1\cap TQ_2 \neq \varnothing$, then 
    \begin{equation}\label{comparison_contiguous}
        \| \ell_{Q_1}-\ell_{Q_2}\|_{L^\infty(Q_1)} \le C(h_{Q_1}+ h_{Q_2}) .
     \end{equation}
    Finally, for $Q\in \cX^{C_0}$ we can take $\ell_Q(x')= (\zk)_n$.
\end{lemma}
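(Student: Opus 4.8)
\textbf{Proof plan for Lemma~\ref{lem:prep_extension}.}

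The plan is to construct the affine maps $\ell_Q$ by a downward induction on the dyadic level $l$, starting from the coarsest level $l=C_0$ where we simply take $\ell_Q(x')=(\zk)_n$, which is admissible by the second bullet of \Cref{lem:NbdRknew} and the choice of coordinate frame. For the general step, given $Q\in\cX^l$, we attach to each cube a bad center: since $Q\in\cX^l$ means $TQ\cap TX\neq\varnothing$, and by \eqref{inclusions_in_extpf} every point of $TX$ is within $\epk^{2-\gamma}\Rk$ of $[\cZ\cap B_{\Rk}(\zk)]'$, there exists $\zz_Q\in\cZ\cap B_{\Rk}(\zk)$ with $|\zz_Q' - c_Q'|\lesssim 2^{-l}\Rk$ where $c_Q'$ is the center of $Q$ (using $2^{-l}\ge 2^{-l_\star}\gtrsim\epk^{2-\gamma}$). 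We then define $\ell_Q$ to be the affine function realizing (up to a fixed constant) the infimum defining ${\bf H}_{cR}^2(\cdot,\zz_Q)$ at the scale $R=2^{-l}\Rk$ — i.e. we take the optimal linear fit to $\{u=0\}$ at this scale and center. The estimate \eqref{eq:8qgblgzudgfa} then follows from \Cref{lem:dxzggiuhg2} (which gives that $e_n$ is the dominant direction at scales $\ge\epk^{2-\gamma/2}\Rk$, hence $|\nabla\ell_Q|=o_k(1)$) together with \Cref{lem:graphlargeflat} to control $|\ell_Q(\zk')-(\zk)_n|$; I expect one also needs to propagate the $o_k(1)$-closeness along the induction chain via the comparison estimates below, adding up a geometric series.

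For \eqref{trappinghQ}: the cubes $Q\in\cX^l$ have pairwise boundedly-overlapping triple cubes $TQ$, each of which contains (after the $\lesssim$-translation above) a ball $B_{c2^{-l}\Rk}(\zz_Q)$ with $\zz_Q\in\cZ\cap B_{\frac12\Rk}(\zk)$. Passing to a bounded-overlap subfamily of these balls (Vitali), $h_Q^2/(2^{-l}\Rk)^2$ is comparable to ${\bf H}_{c2^{-l}\Rk}^2(\zz_Q)$, and \Cref{lem:sum_of_height_ex} applied at scale $R=2^{-l}\Rk\in[\epk^{2-\gamma/2}\Rk,\Rk]$ (valid since $l\le l_\star$ means $2^{-l}\ge 2^{-4}\epk^{2-\gamma}$, which however is \emph{smaller} than $\epk^{2-\gamma/2}\Rk$ — so one must instead invoke \eqref{eq:hjgagfas} directly and sum via the superadditivity trick $\sum a_j^{1+\alpha}\le(\sum a_j)^{1+\alpha}$ exactly as in the proof of \Cref{lem:sum_of_height_ex}, and then use the trivial bound ${\bf K}_R^2(\zz_Q)\le(\Rk/R)^{n-3}{\bf K}_{\Rk}^2(\zk)$) to obtain the right-hand side $C(\Rk/2^{-l}\Rk)^{1+\alpha}\epk^2$. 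The estimate \eqref{condintion_ell_Q_g_i} relating the $L^2$-oscillation of the graphs $g_i$ to $h_Q$ follows from exponential decay away from $\{u=0\}$ (\Cref{lem:expdecay}), which localizes the weight $\tfrac12|\nabla u|^2+W(u)$ to a bounded neighborhood of the $K_*$ graphs, together with the fact that on $\Omega_{2-\gamma/2}$ the level set $\{u=0\}$ is exactly $\bigcup{\rm graph}\,g_i$ (\Cref{prop:graphdec}) and $|\nabla g_i|=o_k(1)$, so vertical integration of $|x_n-\ell_Q(x')|^2$ against the weight controls $\sum_i|g_i-\ell_Q|^2$ from above.

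For the comparisons \eqref{comparison_previous} and \eqref{comparison_contiguous}: when $TQ_1\cap TQ_2\neq\varnothing$ (or $Q\subset PQ$), the attached balls $B_{c2^{-l}\Rk}(\zz_{Q_1})$ and $B_{c2^{-l}\Rk}(\zz_{Q_2})$ are at mutual distance $\lesssim 2^{-l}\Rk$, so \Cref{lem:coefcompexc} gives both $|\nabla\ell_{Q_1}-\nabla\ell_{Q_2}|\lesssim (h_{Q_1}+h_{Q_2})/(2^{-l}\Rk)$ and $|\ell_{Q_1}(\zz_{Q_1}')-\ell_{Q_2}(\zz_{Q_1}')|\lesssim h_{Q_1}+h_{Q_2}$ (after translating \eqref{eq:aigpharspga2} appropriately and using that both height excesses are $o_k(1)$ so the exponential error is negligible); combined with the diameter bound $|Q_1|\lesssim 2^{-l}\Rk$ this yields the stated $L^\infty(Q_1)$ bound. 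The main obstacle I anticipate is the bookkeeping in \eqref{eq:8qgblgzudgfa}: because $\ell_Q$ is defined at the \emph{local} center $\zz_Q$ rather than at $\zk$, controlling $|\ell_Q(\zk')-(\zk)_n|$ requires summing the comparison estimates \eqref{comparison_previous} along the chain of predecessors from level $C_0$ down to level $l$, and one must check that $\sum_{m=C_0}^{l} h_{Q^{(m)}}$ (the telescoped oscillations) stays $o_k(1)\cdot\Rk$ — this follows from \eqref{trappinghQ} via Cauchy–Schwarz and the geometric decay of $(2^{-m}\Rk)$, but requires care that the exponents $\alpha,\gamma$ are in the stated ranges so that $\epk^{\gamma/4}$-type gains beat the number of levels $l_\star\sim|\log\epk|$.
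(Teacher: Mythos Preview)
Your approach is essentially the paper's: assign to each $Q\in\cX^l$ a nearby bad center $\zz_Q$, take $\ell_Q$ to be the affine function whose graph is the optimal hyperplane for the height excess at that center and scale, then use a covering argument plus the superadditivity trick from the proof of \Cref{lem:sum_of_height_ex} to get \eqref{trappinghQ}, and \Cref{lem:coefcompexc} for the comparisons. You also correctly spotted the scale-range mismatch with \Cref{lem:sum_of_height_ex} and the right fix (go straight to \eqref{eq:hjgagfas}; the paper does exactly this in its Step~1, working on the range $R\in[\epk^{2-\gamma}\Rk,\Rk]$). The paper uses Besicovitch rather than Vitali, but this is cosmetic.

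Two points where you are overcomplicating or slightly off. First, the ``downward induction'' and the anticipated obstacle for \eqref{eq:8qgblgzudgfa} are non-issues: once $\ell_Q$ is defined directly from the optimal direction $v_Q$ at center $\zz_Q$, the bound $|\nabla\ell_Q|+\Rk^{-1}|\ell_Q(\zk')-(\zk)_n|=o_k(1)$ follows \emph{immediately} from $|v_Q-e_n|=o_k(1)$ together with $|(\zz_Q)_n-(\zk)_n|\le o_k(1)\Rk$ (the latter from \Cref{lem:Hbdhtez} applied at scale $\Rk$). No chaining along predecessors, no Cauchy--Schwarz over levels, is needed. Second, you are missing one genuine (though easy) step: $h_Q^2$ is defined by an integral over the \emph{cylinder} $TQ\times\big((\zk)_n-\tfrac14\Rk,(\zk)_n+\tfrac14\Rk\big)$, whereas ${\bf H}^2$ lives on balls. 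To pass from the ball-based bound \eqref{eq:Azboundsum} to \eqref{trappinghQ} you must control the cylinder-minus-ball contribution, which the paper does via \Cref{lem:expdecay} (the resulting exponential error is absorbed since $2^{-l}\Rk\geq c\epk^{2-\gamma}\Rk\to\infty$). Finally, for \eqref{condintion_ell_Q_g_i} the clean direction is the one the paper takes: $W(u)\ge\lambda>0$ in a $\lambda$-tube around each graph $g_i$, so the weighted integral $h_Q^2$ is bounded \emph{below} by $c\int_{TQ\cap\Omega_{2-\gamma/2}}|g_i-\ell_Q|^2$ via Fubini; your exponential-decay reasoning, as phrased, goes in the wrong direction (it would bound $h_Q^2$ from above, not below).
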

\begin{proof}
It is convenient to assume---after a translation---that $\zk=0$. Write $P=\left[\frac{|\nabla u|^2}{2}+W(u)\right]$. We divide the proof into three steps. 

\vspace{3pt}

\noindent {\bf Step 1.} We first establish the following claim: Let
 $R\in[\epk^{2-\gamma}\Rk,\Rk]$. For $k$ large enough, suppose that $\{\zz_j\}_{j\in J}\subset \cZ\cap B_{\frac{1}{2}\Rk}$ is a collection with $\{B_{R}'(\zz_j')\}_{j\in J}$  pairwise disjoint; in particular, the unprojected balls $\{B_{R}(\zz_j)\}_{j\in J}$ are pairwise disjoint too. We claim then that 
 \begin{equation}\label{cylinders1}
\sum_{j\in J}  \frac{1}{R^{n+1}} \int_{B'_{R}(\zz_j') \times \big(\tfrac \Rk 4, \tfrac \Rk 4\big)} |x_n-\ell_j(x')|^2
	P \le C(\Rk/R)^{1+\alpha}\epk^2 + C(\Rk/R)^{n} e^{-cR},
\end{equation}
where $\ell_j: \R^{n-1}\to \R$ are affine functions satisfying 
\begin{equation}\label{controlell}
    R_k^{-1}|\ell_j(0)| + |\nabla \ell_j| \le o_k(1).
\end{equation}

Indeed, \Cref{lem:sum_of_height_ex} gives unit vectors $v_j\in\Sp^{n-1}$ such that
\begin{equation}
\label{eq:uafaigaj}
|v_j-e_n|\le o_k(1) \quad \mbox{and}\quad \sum_{j\in J} {\bf H}_{4R}^2(v_j,\zz_j) \le C (\Rk/R)^{1+\alpha} \epk^2,
\end{equation}
where
$$
{\bf H}_{4R}^2(v_j,\zz_j)=\frac{1}{(4R)^{n+1}}
\int_{B_{4R}(\zz_j)}|v_j\cdot(x-\zz_j)|^2
	P
\,dx.
$$
Using \Cref{lem:Hbdhtez,lem:dxzggiuhg2}, we have 
\begin{equation}\label{cylinder_control}
   |(\zz_j)_n|=|\zz_j\cdot e_n| \le o_k(1) \Rk \quad\mbox{and}\quad\{|u|\le 0.9\} \subset  \{x \ :  \ |x_n-(\zz_j)_n|\le o_k(1) R\} \quad \mbox{in }B'_{4R}(\zz_j') \times \big(\tfrac \Rk 2 ,\tfrac \Rk 2  \big)\,.
\end{equation}   

\noindent Set $a_j=\frac{v_j-(v_j\cdot e_n)e_n}{v_j\cdot e_n}\in \R^{n-1}$ and $b_j=\frac{v_j\cdot\zz_j}{v_j\cdot e_n}\in\R$. Define $\ell_j(x')=a_j\cdot x'+b_j$, so that ${\rm graph}\, \ell_j=\{v_j\cdot (x-\zz_j)=0\}$.

Since $|\zz_j\cdot e_n| \le o_k(1) \Rk$ and $|v_j-e_n|\le o_k(1)$, we get \eqref{controlell}.

Moreover, notice that $|x_n-\ell_j(x')|\leq 2|v_j\cdot(x-\zz_j)|$  for all  $x\in\R^n$ (for $k$ large), thus we have:
$$
\frac{1 }{R^{n+1}}
\int_{B_{4R}(\zz_j)}|x_n-\ell_j(x')|^2
	P\,dx \le C{\bf H}_{4R}^2(v_j,\zz_j)\,.
$$
We estimate the rest by exponential decay: by \eqref{cylinder_control} and  \Cref{lem:expdecay}, we have
\[
P \le  Ce^{-cR} \quad \mbox{in }B'_{R}(\zz_j')\times \big(\tfrac \Rk 4, \tfrac \Rk 4\big) \setminus \{|x_n|  \le R  \}\,,
\]
and thus 
\[
\frac{1 }{R^{n+1}}
\int_{B'_{R}(\zz_j) \times \big(\tfrac \Rk 4, \tfrac \Rk 4\big)}|x_n-\ell_j(x')|^2
	P \le C {\bf H}_{4R}^2(v_j,\zz_j) + Ce^{-cR}.
\]

Then \eqref{cylinders1} follows recalling \eqref{eq:uafaigaj} and using that at most  $C(\Rk/R)^{n}$ disjoint balls of radius $R$ fit inside the cylinder $B'_{\R_k}\times(-\Rk, \Rk)$---a brutal bound that is sufficient here because it is multiplied by the exponential.

\vspace{3pt}

\noindent {\bf Step 2.}  We now combine the claim in Step 1 and a covering argument.

To each $Q \in \mathcal{X}^l$, assign a point $\zz_Q \in \mathcal{Z}$ such that $Q \subset B'_{60\, 2^{-l} \Rk}(\zz_Q')$. Applying the Besicovitch covering lemma to the family of balls $\{ B'_{60 \, 2^{-l} \Rk}(\zz_Q') \}_{Q \in \cX^l}$, we obtain a dimensional constant $b_n$ (with $n = 4$) and a decomposition into $b_n$ subfamilies, each consisting of pairwise disjoint balls, such that their union still covers the union of the cubes in $\cX^l$. By Step 1 (with $R= 2^{-l}\Rk$) applied to each of these subfamilies, \eqref{condintion_ell_Q} and \eqref{trappinghQ} follow\footnote{After noticing that for $l\le l_\star$ we have $(2^l)^n e^{-c2^{-l}\Rk } \le C (\ep_k)^{(\gamma-2)n} e^{-c\ep_k^{-\gamma}} \le C \epk^2$ for $k$ large enough (since $\epk^2 \Rk\ge 1$).}. 

Finally, \eqref{comparison_previous}-\eqref{comparison_contiguous} can be obtained as straightforward consequences of  \eqref{eq:aigpharspga}-\eqref{eq:aigpharspga2} in \Cref{lem:coefcompexc}.

\vspace{3pt}

\noindent {\bf Step 3.} Let us finally prove \eqref{condintion_ell_Q_g_i}.

Let $\lambda=\lambda(n)>0$ be such that, given any $x\in \{u=0\}$, $W(u)\geq \lambda$ in $B_\lambda(x)$. Put $R= 2^{-l}\Rk$. By Fubini,
\begin{align*}
h_Q^2 &\geq\frac{c}{R^{n+1}}
\int_{Q\cap\Omega_{2-\gamma}}\,dx'\int_{g_i(x')-\lambda}^{g_i(x')+\lambda}\,dx_n|x_n-\ell_Q(x')|^2
	P\\
&\geq\frac{c\lambda}{R^{n+1}}
\int_{Q\cap\Omega_{2-\gamma}}\,dx'\int_{g_i(x')-\lambda}^{g_i(x')+\lambda}\,dx_n|x_n-\ell_Q(x')|^2\geq\frac{c}{R^{n+1}}
\int_{Q\cap\Omega_{2-\gamma}}\,dx'|g_i(x')-\ell_j(x')|^2\,.
\end{align*} 
\end{proof}

We can now give the:
\begin{proof} [Proof of \Cref{prop:extension}]

For $C_0\le l \le l_\star$, for each cube $Q\in \cX^l$ let $\ell_Q:\R^{n-1}\to \R$  be the affine function provided by Lemma \ref{lem:prep_extension}.
In particular,
\begin{equation}\label{eq:keycubes}
 \sum_{Q\in \cX^l} (2^{-l} \Rk)^{-2-(n-1)} \int_{\Omega_{2-\gamma/2}\cap TQ} |g_i - \ell_Q|^2 \le C 2^{l(1+\alpha)}\epk^2   \quad\mbox{for all }1\le i \le K_*.
\end{equation}

We extend the definition of $\ell_Q$ to the rest of cubes $Q\in \boldsymbol Q^l\setminus \cX^l$:
For $Q\in \boldsymbol Q^{C_0}$ we set $\ell_Q(x') = (\zk)_n$, and for $Q\in \boldsymbol Q^l\setminus \cX^l$ with $l\ge C_0+1$ we define recursively $
 \ell_Q := \ell_{PQ} \quad \mbox{for } Q\in \boldsymbol Q^l\setminus \cX^l$.

\vspace{4pt}

\noindent {\bf Approximating functions:} For $l\ge C_0$, define\footnote{Notice that, in particular, the domain of $\psi_l$ contains the ball $B_{\frac{1}{4}\Rk}'(\zk')$.}
$\psi_l :  {\textstyle \bigcup \boldsymbol Q^l}   \to \R$ by $
\psi_{l} := \sum_{Q\in \boldsymbol Q^{l}}  \eta_{Q} \ell_Q$.
\vspace{2pt}

\noindent {\bf Definition of $\widetilde g_i$.} Consider the cutoff function $\eta_X = \sum_{Q\in \boldsymbol Q^{l_\star} , \ Q\subset X} \eta_Q$,
which satisfies 
\[
\eta_X\equiv  1 \quad \mbox{in  }X,  \qquad \eta_X\equiv 0 \quad \mbox{in }\R^{n-1}\setminus TX, \qquad   \epk^{2-\gamma}\Rk|\nabla \eta_X| + (\epk^{2-\gamma}\Rk)^2 |D^2\eta_X| \le C.
\]

We define $\widetilde g_i$ by gluing $\psi_{l_\star}$ and $g_i$ via a cutoff:
\[
\widetilde g_i  := \psi_{l_\star} \eta_X +  g_i(1-\eta_X )  = \psi_{l_\star} + \sum_{Q\in \boldsymbol Q^{l_\star}}   \eta_{Q} (1-\eta_X) (g_i -\ell_Q). 
\]
Notice that (i) holds automatically by \eqref{inclusions_in_extpf}. To see (ii), observe first that it holds for $g_i$ and over $B_{\frac{1}{4}\Rk}'(\zk')\cap \Omega_{2-\gamma}$ instead (directly by \eqref{condintion_ell_Q_g_i}, taking $l_Q(x')=(\zk)_n$ for $Q\in \boldsymbol Q^{C_0}$). Now, by \eqref{eq:8qgblgzudgfa} we get the brutal bound $|\widetilde g_i(x')-(\zk)_n|\leq o_k(1)\Rk$ in $B_{\frac{1}{4}\Rk}'(\zk')\setminus \Omega_{2-\gamma}$; using that this set is small easily gives (ii). By \eqref{eq:uagfabaf} and \eqref{eq:8qgblgzudgfa} we also get (iii).\\

\noindent {\bf Mean curvature estimate.} The main work is then in obtaining (iv).

\noindent We first focus on bounding $D^2 \psi_{l_\star}$. By construction, for all $l\ge C_0+1$ and $Q\in \boldsymbol Q^l$, we have 
\begin{equation}\label{support_2345uygv}
    (\psi_{l}-\psi_{l-1})|_{Q} \equiv   0\quad \mbox{whenever }TQ\cap {\textstyle \bigcup } (\cX^l) = \varnothing. 
\end{equation}

Then, combining \eqref{trappinghQ} (applied at scales $l$ and $l-1$) with \eqref{comparison_previous}-\eqref{comparison_contiguous} and using \eqref{dimboundsetas}, we obtain\footnote{To see this, a useful observation is the following: if $TQ_1 \cap (\bigcup \cX^l) \neq \varnothing$ for some $Q_1 \in \cX^l$, and if $Q_2 \in \boldsymbol{Q}^l$ is a contiguous cube such that $Q_2 \subset TQ_1$, then the triples of their respective predecessors, $TPQ_1$ and $TPQ_2$, must both belong to $\cX^{l-1}$ and necessarily intersect.}
\begin{equation}\label{eq:keycubes2}
\sum_{Q \in \boldsymbol{Q}^l} (2^{-l} \Rk)^{-2} \sup_{Q} |\psi_{l} - \psi_{l-1}|^2 \le C\, 2^{l(1+\alpha)} \epk^2.
\end{equation}

Similarly:
\begin{equation}\label{eq:keycubes4}
 \sum_{Q\in \boldsymbol Q^l}  (2^{-l} \Rk )^2\sup_{Q} |D^2(\psi_{l} - \psi_{l-1})|^2 \le C  2^{l(1 + \alpha)}  \epk^2\,.
\end{equation}

Then, Chebyshev's inequality gives
\begin{equation}\label{Chebyshev}
    \#\big\{Q \in \boldsymbol{Q}^l \ : \   \Rk \sup_Q |D^2(\psi_{l} - \psi_{l-1})| \ge t\big\} \le\, \frac{1}{t^2}  {2^{l(3 + \alpha)}}   \epk^2, \quad \mbox{for all }t>0\,.
\end{equation}

Next we use the above to show that for given $l$  with $C_0\le l\le l_*$, setting $N_{l_*} : = \# \cX^{l_*}$ we have
\begin{equation}\label{new_geom}
    \sum_{Q\in \cX^{l_*}}  \Rk \sup_Q|D^2(\psi_{l} - \psi_{l-1})|  \le 
    CN_{l_*}^{1/2} 2^{3l_*/2} 2^{l\alpha/2} \epk.
\end{equation}

We distinguish two cases. Notice that $2^{3(l_*-l)}$ is the number of cubes of $Q\in \cX^{l_*}$ that fit in a single cube of $\boldsymbol{Q}^l$. 

\vspace{4pt}
- {\em Case 1.} Suppose that $N_{l_*} \le 2^{3(l_*-l)}$. Then, we bound the maximum by the sum in \eqref{eq:keycubes4}, obtaining $\Rk\sup_{Q} |D^2(\psi_{l} - \psi_{l-1})| \le C  2^{l(3 + \alpha)/2}  \epk$ for every $Q\in \boldsymbol{Q}^l$. Using this bound in the $Q\in \cX^{l_*}$ and summing,
\[
\sum_{Q\in \cX^{l_*}}  \Rk \sup_Q |D^2(\psi_{l} - \psi_{l-1})|  \le C N_{l_*} 2^{l(3 + \alpha)/2} \epk\le CN_{l_*}^{1/2} 2^{3l_*/2} 2^{l\alpha/2} \epk\,.
\]

- {\em Case 2.} Suppose that $N_{l_*} \ge 2^{3(l_*-l)}$. The worse case scenario is then that the $N_{l_*}$ ``small'' cubes in $\cX^{l_*}$ keep filling the large cubes in $\boldsymbol{Q}^l$, starting from the ones contributing the most to the sum \eqref{eq:keycubes4}, and then continuing in decreasing order according to their contribution.  Let $p^*\in \N$ be such that  $2^{p_*-1}\le N_{l_*}/ 2^{3(l_*-l)} \le 2^{p_*}$. We then use \eqref{Chebyshev} with 
$\frac{1}{t^2}  {2^{l(3 + \alpha)}}  \, \epk^2 = 2^{p-1}$ to estimate the maximal possible contribution of the $2^{p-1}2^{3(l_*-l)}$ cubes that occupy positions $2^{p-1}2^{3(l_*-l)}$ to $2^p 2^{3(l_*-l)}$ in \eqref{new_geom}. Doing so we obtain:
\[
\sum_{Q\in \cX^{l_*}}  \Rk \sup_Q|D^2(\psi_{l} - \psi_{l-1})|   \le C\sum^{p_*}_{p=0} 2^{3(l_*-l)}2^p \frac{{2^{l(3 + \alpha)/2}}  \, \epk}{2^{p/2}} \le   C  2^{3(l_*-l)}  2^{p_*/2}2^{l(3+\alpha)/2}  \, \epk \le C N_{l_*}^{1/2}  2^{3l_*/2}2^{l\alpha/2} \epk.
\]
\vspace{4pt}

\noindent Combining the two cases gives \eqref{new_geom}. Summing \eqref{new_geom} for $C_0\le l \le l_\star$, and using that $\psi_{C_0}\equiv 0$, we reach: 
\[
 \sum_{Q\in \cX^{l_\star}}   \Rk \sup_{Q} |D^2\psi_{l_\star} | \le C N_{l_*}^{1/2} 2^{l_*(3+\alpha)/2} \epk\,.
\]
We now transform this into an integral bound.
Note that cubes $Q\in \boldsymbol Q^{l_\star}$ have volume $(2^{-l_\star}\Rk)^3 \le (2\epk^{2-\gamma}\Rk)^3$ and cover $B_{\frac{1}{4}\Rk}'(\zk')$. Moreover,  $2^{l_\star} \le \epk^{-(2-\gamma)}$ and $N_{l_*}\le CN_{\epk^{2-\gamma}}$. We then obtain
\begin{equation}\label{eq:keycubes5}
\begin{split}
\int_{\bigcup \cX^l}  \Rk|D^2\psi_{l_\star} |  &\le  \sum_{Q\in \cX^{l_\star}} 
(2\epk^{2-\gamma}\Rk)^3\sup_{Q}  \Rk|D^2\psi_{l_\star} | \le C N_{l_*}^{1/2}\epk^{-(2-\gamma)(3+\alpha)/2}\epk   (\epk^{2-\gamma}\Rk)^3 
\\ 
&\le C N_{\epk^{2-\gamma}}^{1/2} \epk^{1+(2-\gamma)(3-\alpha)/2 } \Rk^3\,.
\end{split}
\end{equation}
Notice that  $\HH[\widetilde{g}_i] = L_i \widetilde {g}_i:=  {\rm tr}( A_i(x') D^2 \widetilde{g}_i)$,
for some $|A_i(x')-{\rm Id}| \le  o_k(1)$ by (iii). We split our desired estimate into regions.\\
{\bf Region 1:} Since $\eta_X\equiv 1$ in $X$, by \eqref{eq:keycubes5} we reach
\[
\int_{X\cap B_{\frac{1}{4} \Rk'}(\zk') \setminus \Omega_{2 - \gamma}}  |\HH[\widetilde g_i] |= \int_{B'_{\frac{1}{4} \Rk}(\zk') \setminus \Omega_{2 - \gamma}} |L_i \psi_{l_\star} | \le C \int_{B'_{\frac{1}{4} \Rk'}(\zk') \setminus \Omega_{2 - \gamma}} |D^2 \psi_{l_\star}| \le  C N_{\epk^{2-\gamma}}^{1/2} \epk^{1+(2-\gamma)(3-\alpha)/2 } \Rk^2.
\]

\vspace{5pt}
The remaining estimates will involve $g_i$ as well.\\
\noindent {\bf Region 2:} Since $\eta_X\equiv 0$ in $\R^{n-1}\setminus TX$ and all points in this domain are at distance $\ge 2^{-4}\epk^{2-\gamma}\Rk$ from $[\cZ\cap B_{\Rk}(\zk)]'$, using \eqref{eq:meancurvsheet} and then \eqref{eq:uibobdvadvb} we obtain 
\[
\int_{(B'_{\frac{1}{4} \Rk'}(\zk') \setminus \Omega_{2 - \gamma}) \setminus TX} |\HH[\widetilde g_i] |
= 
\int_{(B'_{\frac{1}{4} \Rk'}(\zk') \setminus \Omega_{2 - \gamma}) \setminus TX} \big|\HH[g_i]\big|
\le C \frac{|B'_{\frac{1}{4} \Rk'}(\zk') \setminus \Omega_{2 - \gamma}|}{ (\epk^{2-\gamma}\Rk)^{2}}\leq CN_{\epk^{2-\gamma}}(\epk^{2-\gamma}\Rk)\,.
\]
\noindent {\bf Region 3:} We conclude by bounding $
\int_{TX\setminus X} |L_i (\widetilde g_i-\psi_{l_\star})|$. Observe that \eqref{eq:keycubes} implies that\footnote{To go from the first inequality to the second one, we use that the number of cubes is bounded by $CN_{\epk^{2-\gamma}}$, the inequality between the arithmetic and quadratic means, and the Cauchy--Schwarz inequality.}
\[
\sum_{\substack{Q\in \boldsymbol Q^{l_*}\\ Q\subset TX\setminus X}}   \frac{1}{(2^{-l_\star}\Rk)^2}\fint_{TQ}| g_i-\ell_Q|^2 \le C2^{l_\star(1+\alpha)}\epk^2\,,\quad\mbox{thus}\quad \sum_{\substack{Q\in \boldsymbol Q^{l_*}\\ Q\subset TX\setminus X}}   \frac{1}{(\epk^{2-\gamma}\Rk)}\fint_{TQ}| g_i-\ell_Q| \le CN_{\epk^{2-\gamma}}^{1/2} 2^{l_\star(1 +\alpha)/2}\epk\,.
\]
{\it Note:} Similar estimates hold replacing $\ell_Q$ by $\ell_{Q_1}$, where $Q_1$ is any adjacent cube with $Q_1\subset TQ$, by \eqref{comparison_contiguous}.

Since $g_i$ is almost a minimal graph, verifying 
$\|\HH[g_i]\|_{C^\theta} \le C (\epk^{2-\gamma}\Rk)^{-2}$ by \eqref{eq:meancurvsheet}, standard elliptic estimates give
\[
\sup_{Q} \bigg( \frac{|g_i -\ell_Q|}{\epk^{2-\gamma}\Rk} + |\nabla(g_i-\ell_Q)| + \epk^{2-\gamma}\Rk|D^2(g_i-\ell_Q)| \bigg)
\le  C \fint_{TQ}\frac{|g_i-\ell_Q|}{\epk^{2-\gamma}\Rk} +  \frac{C}{\epk^{2-\gamma}\Rk}\,,
\]
for every  cube $Q$ in  $\boldsymbol Q^{l_\star}$, with $Q\subset TX\setminus X$.

Using these estimates, noticing that in $TX\setminus X$ we have $\widetilde g_i- \psi_{l_\star} =  \sum_{Q\in \boldsymbol Q^{l_\star}}
\eta_Q (1-\eta_X)  (g_i-\ell_Q)$,
and using the standard product rules to compute the second derivatives of products we obtain: 
\[
\sum_{Q\subset TX\setminus X} \sup_{Q}\epk^{2-\gamma}\Rk |L_i (\widetilde g_i-\psi_{l_\star})| \le C N_{\epk^{2-\gamma}}^{1/2} 2^{l_\star(1+\alpha)/2}\epk   + \frac{C}{\epk^{2-\gamma}\Rk} \#\{Q\subset TX\setminus X\}.
\]

Finally, since the volume of each cube $Q\in \boldsymbol Q^{l_\star}$ is  $(2^{-l_\star}\Rk)^{n-1}$, with  $n=4$, and $2^{-\ell_*}$ is comparable to $\epk^{2-\gamma}$ we obtain:
\[
\begin{split}
\int_{TX\setminus X} |L_i (\widetilde g_i-\psi_{l_\star})|  & \le \frac{(C\epk^{2-\gamma}\Rk)^{n-1}}{\epk^{2-\gamma}\Rk} \bigg(N_{\epk^{2-\gamma}}^{1/2} 2^{l_\star(1 +\alpha)/2}\epk   + \frac{ \#\{Q\subset TX\setminus X\}}{(\epk^{2-\gamma}\Rk)} \bigg)
\\& \le C\Rk^2 N_{\epk^{2-\gamma}}^{1/2}\epk^{1+ (2-\gamma)(3-\alpha)/2} + CN_{\epk^{2-\gamma}}\epk^{2-\gamma}\Rk.
\end{split}
\]
Adding all the previous estimates completes the proof.
\end{proof}

\subsection{Linearised equation}\label{sec:linHeq}
Recall that $\gamma=\frac{1}{4}$. We show that our extended graphs are very harmonic in an $L^1$ sense.
\begin{proposition}[{\bf Linearised equation}]\label{prop:linHeq} Fix $\chi\in(0,\frac{1}{20}]$, $\beta\in(0,\frac{1}{40}]$ and $\alpha\in(0,\frac{1}{40}]$. For every $\bar \zz'\in (\cZ\cap B_{\frac{1}{8}\Rk}(\zk))'$ we have that
\begin{equation}\label{eq:ikwhlogbwo}
    \frac{1}{(\epk\Rk)}\rho^2\fint_{B_{\rho}'( \bar\zz')} |\HH[\widetilde g_i]|\leq C\epk^{\frac{1}{10}} \quad\mbox{for every}\quad \rho\in[\epk^{1+2\chi}\Rk,\frac{1}{8}\Rk]\,,
\end{equation}
provided that $k$ is large enough.
\end{proposition}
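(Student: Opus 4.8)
\textbf{Proof plan for Proposition \ref{prop:linHeq}.}
The plan is to decompose the integral of $|\HH[\widetilde g_i]|$ over $B_\rho'(\bar\zz')$ according to whether we are inside or outside the ``hole region'' $B'_{\frac14\Rk}(\zk')\setminus\Omega_{2-\gamma}$, where $\widetilde g_i$ differs from $g_i$. On the part of $B_\rho'(\bar\zz')$ lying in $\Omega_{2-\gamma}$ we have $\widetilde g_i=g_i$, so $\HH[\widetilde g_i]=\HH[g_i]$ is controlled pointwise by the Wang--Wei mean curvature estimate \eqref{eq:uagfabaf}, namely $|\HH[g_i]|(x')\le C/d(x')^2$ where $d(x')=\dist(x',[\cZ\cap B_{\Rk}(\zk)]')$. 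The contribution of this part is therefore bounded by $C\int_{\{d\ge\epk^{2-\gamma}\Rk\}\cap B_\rho'(\bar\zz')} d^{-2}$, which we estimate by slicing over dyadic annuli $\{2^j\epk^{2-\gamma}\Rk\le d<2^{j+1}\epk^{2-\gamma}\Rk\}$ and using the Minkowski-type bound \eqref{eq:fhiowqrhgoqwg} (through \eqref{eq:uibobdvadvb}) on the size of the $t$-neighbourhood of the projected bad set: $|\{d\le t\}\cap B'_{\frac12\Rk}(\zk')|\le C t^{n-1-\beta}\Rk^{\beta}$ after rewriting $N_\theta\le C\theta^{-(n-3+\beta)}$ in terms of $t=\theta\Rk$. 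Since $n=4$, each annulus at scale $t$ has measure $\lesssim t^{3-\beta}\Rk^{\beta}$ and contributes $\lesssim t^{1-\beta}\Rk^{\beta}$; summing the geometric series from $t\sim\epk^{2-\gamma}\Rk$ up to $t\sim\rho$ gives a total $\lesssim\rho^{1-\beta}\Rk^{\beta}$ (the sum is dominated by the largest scale since $1-\beta>0$). Dividing by $\rho^2/(\epk\Rk)$ and $\rho^{n-1}=\rho^3$ (the normalising volume) turns this into $\lesssim \epk\Rk^{1+\beta}\rho^{-(2+\beta)}$, wait—one must track the powers carefully: after the $\fint$ and the $\rho^2$ weight and the $1/(\epk\Rk)$ factor one gets a bound of the form $C\,\epk^{-1}\Rk^{\beta-1}\rho^{-\beta}$, and using $\rho\ge\epk^{1+2\chi}\Rk$ together with $\Rk\ge c\epk^{-2}$ (from Remark \ref{rmk:awioufhiu4n} applied at scale $\Rk$) this is a positive power of $\epk$ once $\chi,\beta$ are small; choosing them $\le\frac1{40}$ comfortably yields an exponent $\ge\frac1{10}$.

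For the contribution of the hole region $B_\rho'(\bar\zz')\setminus\Omega_{2-\gamma}$, where $\widetilde g_i\ne g_i$, I would use the $L^1$ mean-curvature bound (iv) of Proposition \ref{prop:extension}. That estimate is stated over all of $B'_{\frac14\Rk}(\zk')\setminus\Omega_{2-\gamma}$ and reads
$\int |\HH[\widetilde g_i]|\le C N_{\epk^{2-\gamma}}\epk^{2-\gamma}\Rk + C\sqrt{N_{\epk^{2-\gamma}}}\,\epk^{1+(2-\gamma)(3-\alpha)/2}\Rk^2$.
Plugging $N_{\epk^{2-\gamma}}\le C\epk^{-(2-\gamma)(n-3+\beta)}=C\epk^{-(2-\gamma)(1+\beta)}$ (valid by \eqref{eq:fhiowqrhgoqwg} with $\theta=\epk^{2-\gamma}$) and simplifying with $\gamma=\frac14$, both terms become $\Rk$ (resp. $\Rk^2$) times a positive power of $\epk$: the first is $\lesssim\epk^{(2-\gamma)(1-(1+\beta))}\Rk=\epk^{-(2-\gamma)\beta}\Rk$—wait, this has the wrong sign, so one must instead keep the $(2-\gamma)(3-\alpha)/2$ gain from the second term and note the first term has a \emph{net} positive power once divided by $\Rk$: indeed $N_{\epk^{2-\gamma}}\epk^{2-\gamma}\Rk/\Rk=N_{\epk^{2-\gamma}}\epk^{2-\gamma}$, and $N_{\epk^{2-\gamma}}\le C\epk^{-(2-\gamma)(1+\beta)}$ so this is $\lesssim\epk^{-(2-\gamma)\beta}$, which is \emph{not} a positive power. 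The resolution is that this global estimate must be localised: since $\bar\zz'\in(\cZ\cap B_{\frac18\Rk}(\zk))'$, only the bad centres within $B'_\rho(\bar\zz')$ matter, and $N(\epk^{2-\gamma},B_\rho(\bar\zz))$ obeys the \emph{same} Minkowski bound $\lesssim(\rho/(\epk^{2-\gamma}\Rk))^{n-3+\beta}$ by \eqref{eq:fhiowqrhgoqwg} applied at the centre $\zk$, since $B_\rho(\bar\zz)\subset B_{\Rk}(\zk)$. Re-running the extension estimate localised to $B'_\rho(\bar\zz')$—which is legitimate because (iv) is proved by summing over dyadic cubes, and one just restricts the sum to cubes meeting $B'_\rho(\bar\zz')$—replaces $\Rk$ by $\rho$ and $N_{\epk^{2-\gamma}}$ by $N(\epk^{2-\gamma},B_\rho(\bar\zz))$ throughout, and then dividing by $\rho^2/(\epk\Rk)$ and $\rho^3$ gives a genuine positive power of $\epk$.

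\textbf{Main obstacle.} The delicate point, and the one I expect to be hardest, is the bookkeeping of exponents: the estimate \eqref{eq:ikwhlogbwo} is nearly \emph{critical}, and the exponent $\frac1{10}$ on the right is not arbitrary—it is what survives after the competition between the Minkowski dimension defect $\beta$ of the bad set (which costs powers of $\epk$), the Wang--Wei $C^{2,\vartheta}$-type gain $(2-\gamma)(3-\alpha)/2$ in the Whitney extension (which supplies powers of $\epk$), and the lower bound $\Rk\gtrsim\epk^{-2}$ forced by \eqref{eq:awioufhiu4n}. One must verify that for all admissible $\chi,\beta,\alpha\le\frac1{40}$ and all $\rho\in[\epk^{1+2\chi}\Rk,\frac18\Rk]$ the \emph{worst} case—which is the smallest admissible $\rho$, i.e. $\rho=\epk^{1+2\chi}\Rk$, since the normalising volume $\rho^{n-1}$ is smallest there—still leaves an exponent at least $\frac1{10}$. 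A secondary technical nuisance is that Proposition \ref{prop:extension}(iv) is stated globally; one should either re-derive it localised to $B'_\rho(\bar\zz')$ (straightforward, as the proof is a sum over dyadic cubes that one simply restricts) or, more cheaply, cover $B'_\rho(\bar\zz')$ by a bounded number of translates and invoke translation-invariance of the construction. Everything else—the dyadic annulus decomposition for the $g_i$ part, the exponential decay away from $\{u=0\}$ if one needs it, and the passage from \eqref{eq:fhiowqrhgoqwg} to \eqref{eq:uibobdvadvb}—is routine.
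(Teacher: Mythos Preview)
Your decomposition into the good region and the hole region, together with the dyadic-annulus treatment of $\int d^{-2}$, is exactly how the paper's Step~1 is organised. The gap is in the exponents and, crucially, a missing dichotomy argument.

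First, your measure estimate is off by one power of $t/\Rk$. From \eqref{eq:uibobdvadvb} and \eqref{eq:fhiowqrhgoqwg} with $n=4$ one gets $|\{d\le t\}\cap B'_{\frac12\Rk}(\zk')|\le C(\theta\Rk)^3 N_\theta\le C\theta^{2-\beta}\Rk^3 = Ct^{2-\beta}\Rk^{1+\beta}$, not $t^{3-\beta}\Rk^\beta$. Consequently each annulus contributes $\lesssim t^{-\beta}\Rk^{1+\beta}$ to $\int d^{-2}$, the geometric sum is dominated by the \emph{smallest} scale $t\sim\epk^{2-\gamma}\Rk$, and the total is $\lesssim\epk^{-\beta(2-\gamma)}\Rk$, independent of $\rho$. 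After normalising by $\epk\Rk\rho$ and taking $\rho=\epk^{1+2\chi}\Rk$ with only $\Rk\ge c\epk^{-2}$, this yields $C\epk^{-2\chi-\beta(2-\gamma)}$, a \emph{negative} power of $\epk$. So the approach fails below $\rho\sim\epk\Rk$; the paper explicitly flags this as the ``critical threshold''.

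The paper resolves it not by localising but by a dichotomy on the total bad-set count $|B_1(\cZ)\cap B_{3\Rk/4}(\zk)|$. If this is $\le\epk^{-1/4}$, then $N_\theta\le CN_{1/\Rk}\le C\epk^{-1/4}\le\theta^{-1}\epk^{1/4}$ for $\theta\in[1/\Rk,\epk^{1/2}]$, which turns each annulus contribution into a constant and the sum into a log---giving $\int_{B'_{\epk^{1/2}\Rk}(\bar\zz')}|\HH[\widetilde g_i]|\le C\epk^{2+1/5}\Rk^2$. If instead the count exceeds $\epk^{-1/4}$, then \eqref{eq:awioufhiunoe} forces $\Rk\ge c\epk^{-9/4}$, and rerunning the naive bound with this improved lower bound on $\Rk$ gives the same conclusion (this is precisely where $\beta\le 1/40$ is used). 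Your proposed fix of localising Proposition~\ref{prop:extension}(iv) cannot substitute for this: the Minkowski bound \eqref{eq:fhiowqrhgoqwg} is only available at scale $\Rk$, so the count of bad cubes inside $B'_\rho(\bar\zz')$ admits no better bound than the global $N_{\epk^{2-\gamma}}$, and the $\Rk^2$ in the second term of (iv) does not become $\rho^2$ upon restriction.
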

\begin{remark}
    This should be interpreted as follows: up to scale $\epk^{1 + 2\chi} \Rk$, the deviation of the $\widetilde g_i$ from an approximating minimal graph is small\footnote{In fact, it is of size $\epk^{1+\frac{1}{10}}\Rk$, thus smaller than the size of the $\widetilde g_i$ by rate of a small power $\epk^{\frac{1} {10}}$.} relative to the size of the $\widetilde g_i$ themselves, which is $\epk \Rk$. A key point is that our analysis penetrates the ``critical scale'' $\epk \Rk$, as explained below in the proof.
\end{remark}
\begin{proof}
We will show that
\begin{equation}\label{eq:vhwofiugwi}
    \int_{B_{\frac{1}{2}\Rk}'(\zk)} |\HH[\widetilde g_i]|\leq C \epk^{2-2\beta}\Rk^2
\end{equation}
and moreover
\begin{equation}\label{eq:vhwofiugwi2}
    \int_{B_{\epk^{1/2}\Rk}'( \bar\zz)} |\HH[\widetilde g_i]|\leq C\epk^{2+\frac{1}{5}}\Rk^2\quad\mbox{for every}\quad  \bar\zz'\in (\cZ\cap B_{\frac{1}{8}\Rk}(\zk))'\,.
\end{equation}
Combining them immediately implies \eqref{eq:ikwhlogbwo}: Indeed, considering \eqref{eq:vhwofiugwi} and the fact that $\beta\leq\frac{1}{8}$, we immediately see that
$$ \frac{1}{(\epk\Rk)}\rho^2\fint_{B_{\rho}'( \bar\zz)} |\HH[\widetilde g_i]|\leq C\epk^{1-\frac{1}{4}}\Rk\rho^{-1}\leq C\epk^{\frac{1}{4}}  \quad\mbox{for every}\quad \rho\in[\epk^{1/2}\Rk,\Rk]\,.$$
Likewise,multiplying \eqref{eq:vhwofiugwi2} by $\rho^{-1}$ and using $\chi\leq \frac{1}{20}$ we find that
$$\frac{1}{(\epk\Rk)}\rho^2\fint_{B_{\rho}'( \bar\zz)} |\HH[\widetilde g_i]|\leq C\epk^{1+\frac{1}{5}} \Rk\rho^{-1}\leq C\epk^{\frac{1}{10}} \Rk^2\quad\mbox{for every}\quad \rho\in[\epk^{1+2\chi}\Rk,\epk^{1/2}\Rk]\,.$$

It remains to establish  \eqref{eq:vhwofiugwi}-\eqref{eq:vhwofiugwi2}. We divide the proof into three steps.\\
\noindent {\bf Step 1.} Estimating $\HH[\widetilde g_i]$.\\
The second term in \eqref{eq:euaglfgqd} is small: By \eqref{eq:fhiowqrhgoqwg}, $\gamma=\frac{1}{4}$, and the bounds for $\alpha$ and $\beta$, we have
\begin{equation}
    \sqrt{N_{\epk^{2-\gamma}}}\, \epk^{1+(2-\gamma)(3-\alpha)/2 } \Rk^2\leq \epk^{-\frac{7}{8}(1+\frac{1}{40})}\epk^{1+\frac{7}{8}(3-\frac{1}{40}) } \Rk^2=\epk^{1+\frac{7}{8}(2-\frac{2}{40})}\Rk^2\leq \epk^{2+\frac{1}{5}}\Rk^2\,.
\end{equation}
Given $V\subset B_{\frac{1}{4}\Rk}(\zk)$, \eqref{eq:euaglfgqd} then gives
\begin{align*} 
\int_{V} |\HH[\widetilde g_i]|&\leq \int_{V\cap \Omega_{2-\gamma}} |\HH[g_i]|+C\epk^{2-\gamma}\Rk N_{\epk^{2-\gamma}}+ C \epk^{2+\frac{1}{5}}\Rk^2\,.
\end{align*}
By \Cref{thm:sheetimpc2alpha} we know that $|\HH[g_i]|\leq Cd^{-2}$, so that given $\theta\in(0,1/2]$, together with \eqref{eq:uibobdvadvb} we can bound
\begin{align*}  
\int_{\{\theta\Rk\leq d\leq 2\theta\Rk\}\cap B_{\frac{1}{2}\Rk}'(\zk')} |\HH[g_i]|\leq C\frac{(\theta\Rk)^3N_\theta}{(\theta \Rk)^2}= C\theta\Rk N_\theta\,.
\end{align*}
Denoting $\theta_j=2^{-j}$, and letting $1\leq j_1<j_2$ with $\theta_{j_2}\leq \epk^{2-\gamma}\leq \theta_{j_2-1}$, by summing we find that
\begin{equation*}
\int_{\{\epk^{2-\gamma}\Rk\leq d\leq \theta_{j_1}\Rk\}\cap B_{\frac{1}{2}\Rk}'(\zk')} |\HH[g_i]|\leq C\Rk\sum_{j_1\leq j\leq j_2}\theta_j N_{\theta_j}\,,
\end{equation*}
or (letting $V=\{d\leq \theta_{j_1}\Rk\}\cap B_{\frac{1}{2}\Rk}'(\zk)$, so that $V\cap \Omega_{2-\gamma}=\{\epk^{2-\gamma}\Rk\leq d\leq \theta_{j_1}\Rk\}\cap B_{\frac{1}{2}\Rk}'(\zk)$)
\begin{equation}\label{eq:qfiou2} 
\int_{\{d\leq \theta_{j_1}\Rk\}\cap B_{\frac{1}{2}\Rk}'(\zk')} |\HH[\widetilde g_i]|
\leq C\Rk\sum_{j_1\leq j\leq j_2}\theta_j N_{\theta_j}+ C \epk^{2+\frac{1}{5}}\Rk^2\,.
\end{equation}

\noindent {\bf Step 2.} Proof of \eqref{eq:vhwofiugwi}.\\
We will use the following bounds:
\begin{itemize}
    \item $N_\theta\leq \theta^{-1-\beta}$. This follows from \eqref{eq:fhiowqrhgoqwg}.
    \item $c\leq \Rk\epk^2$. This follows from \eqref{eq:awioufhiu4n} with $R=4\Rk$ combined with ${\bf H}_{4\Rk}^2(\zk)\leq 2\epk^2$ (see \Cref{lem:NbdRknew}).
\end{itemize}
Choosing $j_1=1$ in \eqref{eq:qfiou2},
\begin{equation}\label{eq:90wahgoiah}
    \int_{B_{\frac{1}{2}\Rk}'(\zk)} |\HH[\widetilde g_i]|
\leq C\Rk\theta_{j_2}^{-\beta}+C \epk^{2+\frac{1}{5}}\Rk^2\leq C(\Rk\epk^2)\Rk\epk^{-\beta(2-\gamma)}+C \epk^{2+\frac{1}{5}}\Rk^2\leq C\epk^{2-2\beta}\Rk^2\,,
\end{equation}
as long as $k$ is large enough.\\

We notice that at scale $\Rk$ the estimate \eqref{eq:90wahgoiah} is sharp (as, even in the absence of bad cubes, the Wang-Wei estimates do not allow to improve this).
One can see (similarly as above) that this estimate can imply \eqref{eq:ikwhlogbwo} at most for $\rho \gg \epk \Rk$. To go below this ``critical threshold''---as required later in our proofs---we need to run a dichotomy argument.

\noindent {\bf Step 3.} Proof of \eqref{eq:vhwofiugwi2}.\\

\noindent {\bf Case 1.} Assume that $|B_1(\cZ)\cap B_{3\Rk/4}(\zk)|\leq\epk^{-\frac{1}{4}}$; we claim that $N_{\theta}\leq \theta^{-1}\epk^{1/4}$ for $\theta\in(\frac{1}{\Rk},\epk^{1/2}]$.

Indeed, by assumption $N_{\frac{1}{\Rk}}\leq C\epk^{-\frac{1}{4}}$; since the property $ N_{\theta_2}\leq C N_{\theta_1}$ for $0<\theta_1<\theta_2\leq 1$ is always true\footnote{This can be seen using first \Cref{lem:Ndeflem}, which gives $\{B_{5\theta_1 \Rk}(\widetilde\zz_i)\}_{i=1}^{Q_1}$ with $Q_1\leq CN_{\theta_1}$ such that $B_{\theta_1 \Rk}(\cZ\cap B_{\Rk}(\zk))\subset \bigcup_{i=1}^{Q_1} B_{5\theta_1 \Rk}(\widetilde\zz_i)$, and then enlarging these balls to cover $B_{\theta_2 \Rk}(\cZ\cap B_{\Rk}(\zk))$.}, we find that $N_{\theta}\leq C\epk^{-\frac{1}{4}}$ for $\theta\in [\frac{1}{\Rk},\epk^{1/2}]$ as well, which using $\theta\leq \epk^{1/2}$ gives $N_{\theta}\leq \theta^{-1}\epk^{1/4}$.

Then, choosing $j_1$ with $\theta_{j_1}\leq \epk^{1/2}\leq \theta_{j_1-1}$ in \eqref{eq:qfiou2}, using $N_{\theta}\leq \theta^{-1}\epk^{1/4}$ instead of $N_\theta\leq \theta^{-1-\beta}$ we find
\begin{equation*}
    \int_{\{d\leq \epk^{1/2}\Rk\}\cap B_{\frac{1}{2}\Rk}'(\zk')} |\HH[\widetilde g_i]|
\leq C\Rk\epk^{1/4}\log{\theta_{j_2}} +C\epk^{2+\frac{1}{5}}\Rk^2\leq C\Rk\epk^{\frac{1}{5}}\,.
\end{equation*}
Since obviously $B_{\epk^{1/2}\Rk}(\bar\zz)\subset \{d\leq \epk^{1/2}\Rk\}\cap B_{\frac{1}{2}\Rk}'(\zk')$, we conclude \eqref{eq:vhwofiugwi2} in this case.\\

\noindent {\bf Case 2.} Assume we are not in Case 1, so that $|B_1(\cZ)\cap B_{3\Rk/4}(\zk)|>\epk^{-\frac{1}{4}}$ instead; we claim that $c\leq \Rk\epk^{2+\frac{1}{4}}$.

Indeed, by \eqref{eq:awioufhiunoe} with $R=\Rk$ and $n=4$, we deduce that $\frac{\epk^{-1/4}}{\Rk}\leq C{\bf H}_{4\Rk}^2(\zk)$, which combined with ${\bf H}_{4\Rk}^2(\zk)\leq 2\epk^2$ from \Cref{lem:NbdRknew} shows $c\leq \Rk\epk^{2+\frac{1}{4}}$.

Then, computing as in \eqref{eq:90wahgoiah} we directly obtain an extra factor $\epk^{1/4}$ in the second inequality, so that in particular
\begin{equation*}
     \int_{\{d\leq \epk^{1/2}\Rk\}\cap B_{\frac{1}{2}\Rk}'(\zk')} |\HH[\widetilde g_i]|
\leq C(\Rk\epk^{2+1/4})\Rk\epk^{-\beta(2-\gamma)}+C \epk^{2+\frac{1}{5}}\Rk^2\leq C\epk^{2+1/4-2\beta}\Rk^2\,.
\end{equation*}
This gives \eqref{eq:vhwofiugwi2} as before as long as $1/4-2\beta\geq 1/5$, which is precisely the condition $\beta\in(0,\frac{1}{40}]$.
\end{proof}
\subsection{Improvement for each layer}

\begin{proposition}
\label{prop:decaysqrt}
Fix $\chi\in(0,\frac{1}{20}]$, $\beta\in(0,\frac{1}{40}]$ and $\alpha\in(0,\frac{1}{40}]$. Given any $\bar \zz\in B_{\Rk/8}(\zk)\cap \cZ$, there are coefficients $a_i\in \R^{n-1},b_i\in\R$ for every $i\in\{1,...,K_*\}$ such that, setting $\ell_i:=a_i\cdot(x'-\bar\zz')+b_i$, we have that
\begin{equation}
    \label{eq:flatlaystardiffi}
    \frac{1}{(\epk^\chi \Rk)}\fint_{ B_{\epk^\chi\Rk}'(\bar\zz')} \big| \widetilde g_i(x') - \ell_i \big| \, dx \le C\epk^{1+\chi/2}, \qquad \mbox{for every}\quad i\in\{1,...,K_*\},
\end{equation}
provided $k$ is large enough.
\end{proposition}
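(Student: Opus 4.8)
\textbf{Proof plan for Proposition \ref{prop:decaysqrt}.}

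The statement is a standard ``improvement of flatness'' iteration: starting from the collective flatness $\frac{1}{\Rk}\|\widetilde g_i - (\zk)_n\|_{L^1(B'_{\Rk/4}(\zk'))}\lesssim \epk$ coming from \Cref{prop:extension}(ii) and the smallness of the mean curvature from \Cref{prop:linHeq}, one wants to descend from scale $\Rk/8$ down to scale $\epk^\chi\Rk$ while improving the $L^1$-approximation by an affine function by a definite power of $\epk$. The plan is to prove the following one-step dichotomy and then iterate it $O(\log(\epk^{-\chi}))$ times: there exist a dimensional $\mu\in(0,1)$ and, for each scale $\rho\in[\epk^{1+2\chi}\Rk,\Rk/8]$ and each $\bar\zz'\in(\cZ\cap B_{\Rk/8}(\zk))'$, an affine $\ell_{i,\rho}$ with
\[
\frac{1}{\mu\rho}\fint_{B'_{\mu\rho}(\bar\zz')}|\widetilde g_i-\ell_{i,\mu\rho}|\;\le\;\frac12\,\frac{1}{\rho}\fint_{B'_{\rho}(\bar\zz')}|\widetilde g_i-\ell_{i,\rho}|\;+\;C\,\frac{\rho}{\epk\Rk}\,\Big(\fint_{B'_\rho(\bar\zz')}|\HH[\widetilde g_i]|\Big)\cdot(\epk\Rk),
\]
i.e.\ a harmonic-type decay of the oscillation up to a forcing term controlled by $\rho^2\fint|\HH[\widetilde g_i]|$. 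The first step is to fix the single layer $i$ and the base point $\bar\zz'$; by \Cref{prop:extension}(iii) the graph $\widetilde g_i$ has $|\nabla\widetilde g_i|\le o_k(1)$, so $\HH[\widetilde g_i]={\rm tr}(A_i(x')D^2\widetilde g_i)$ with $|A_i-\mathrm{Id}|\le o_k(1)$ is a uniformly elliptic linear operator in non-divergence form with continuous, nearly constant coefficients. Thus $\widetilde g_i$ solves $L_i\widetilde g_i=f_i$ with $f_i=\HH[\widetilde g_i]$, and $\|f_i\|_{L^1(B'_\rho(\bar\zz'))}$ is governed by \Cref{prop:linHeq}.

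The one-step estimate itself is the usual comparison argument. Rescaling $B'_\rho(\bar\zz')$ to the unit ball, write $w=\widetilde g_i-\ell_{i,\rho}$ (with $\ell_{i,\rho}$ the best affine $L^1$-approximant at scale $\rho$, so $\fint_{B'_\rho}|w|=\mathrm{osc}_\rho$ up to constants and $w$ has vanishing affine part in an $L^1$ sense); let $v$ solve $L_i v=0$ in $B'_{\rho/2}(\bar\zz')$ with $v=w$ on the boundary. By interior estimates for constant-coefficient (after freezing) elliptic operators $v$ is approximated at scale $\mu\rho$ by its first-order Taylor polynomial with error $\lesssim\mu^2\|v\|_{L^\infty(B'_{\rho/2})}\lesssim\mu^2\,\mathrm{osc}_\rho$ — here one absorbs the $o_k(1)$ perturbation of the coefficients into a slightly worse, still summable, constant and uses an $L^1$-to-$L^\infty$ Poisson bound for $w$ on $B'_{\rho/2}$ together with $W^{2,1}$/Alexandrov–Bakelman–Pucci-type control $\|w-v\|_{L^\infty(B'_{\rho/2})}\lesssim \rho^2\fint_{B'_\rho}|f_i|$ for the difference. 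Choosing $\mu$ once, dimensionally, so that $C\mu^2\le \tfrac12$ gives the displayed inequality, with the forcing term exactly $C\rho^2\fint_{B'_\rho}|f_i|$. Summing the geometric series over the dyadic scales $\rho_m=\mu^m\,(\Rk/8)$ down to $\rho\simeq\epk^\chi\Rk$, and plugging in the bound $\rho^2\fint_{B'_\rho(\bar\zz')}|\HH[\widetilde g_i]|\le C\epk^{1+1/10}\Rk$ from \eqref{eq:ikwhlogbwo} (valid precisely because $\epk^\chi\Rk\ge\epk^{1+2\chi}\Rk$ for $k$ large, using $\chi\le\tfrac{1}{20}$), one gets
\[
\frac{1}{\epk^\chi\Rk}\fint_{B'_{\epk^\chi\Rk}(\bar\zz')}|\widetilde g_i-\ell_i|\;\lesssim\;\Big(\tfrac{\epk^\chi\Rk}{\Rk}\Big)\frac{1}{\epk^\chi\Rk}\Big(\frac{1}{\Rk}\fint_{B'_{\Rk/8}}|\widetilde g_i-\ell_{i,\Rk/8}|\Big)\cdot\Rk\;+\;C\epk^{1+1/10},
\]
and since the first term is $\lesssim\epk$ times a fixed power of $\epk^\chi$ while the second is $\le C\epk^{1+\chi/2}$ (as $\tfrac1{10}\ge\tfrac\chi2$ for $\chi\le\tfrac15$), this yields \eqref{eq:flatlaystardiffi} after setting $\ell_i:=\ell_{i,\epk^\chi\Rk}$.

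The main obstacle, and the reason \Cref{prop:linHeq} was stated with the sharper exponent $\epk^{1/10}$ rather than the ``critical'' $\epk^{0}$ one gets from \eqref{eq:vhwofiugwi} alone, is that the forcing term in the iteration is genuinely \emph{borderline}: at the top scale $\Rk$ the Wang--Wei mean curvature bound only gives $\Rk^2\fint|\HH[\widetilde g_i]|\lesssim\epk^{2-2\beta}\Rk^2=(\epk\Rk)\cdot\epk^{1-2\beta}\Rk$, which is \emph{not} small compared to the flatness $\epk\Rk$ unless one has already passed below the scale $\epk\Rk$. The dichotomy of Step 3 of \Cref{prop:linHeq} is exactly what breaks this criticality: once $\rho\ll\epk^{1/2}\Rk$ one gains the extra power and the geometric sum of the forcing terms converges with room to spare. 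Care is also needed because $\widetilde g_i$ is only the Whitney extension — it agrees with the true graph $g_i$ on $\Omega_{2-\gamma}$ but not inside the bad set — yet this is harmless for \eqref{eq:flatlaystardiffi} since the estimate is stated for $\widetilde g_i$ itself; it will only matter when one later restricts back to $\Omega_{2-\gamma}$ to deduce \eqref{eq:flatlaystar22} and compares the different $\ell_i$ across layers, which is the content of the subsequent propositions rather than of this one.
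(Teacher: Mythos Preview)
Your overall strategy---iterate an affine improvement-of-flatness using the $L^1$ mean-curvature bound from \Cref{prop:linHeq}---matches the paper's. The gap is in your one-step estimate. You invoke an ``$L^1$-to-$L^\infty$ Poisson bound'' for $w$ and an ``ABP/$W^{2,1}$-type control $\|w-v\|_{L^\infty}\lesssim\rho^2\fint|f_i|$''. Neither holds: ABP requires the forcing in $L^{n-1}$ (here $L^3$), not $L^1$; Calder\'on--Zygmund in $L^1$ is false (only weak-$L^1$); and one cannot pass from $L^1$ control of $w$ on $B_\rho'$ to $L^\infty$ control of its harmonic replacement $v$---indeed you have no trace control of $w$ on $\partial B_{\rho/2}'$ at all, so the Dirichlet problem for $v$ is not even well-posed in your setting. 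Since the whole point of working with $\widetilde g_i$ (rather than $g_i$) and with $L^1$ excesses is precisely that no pointwise control is available near the bad set, these $L^\infty$ steps cannot be recovered.

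The paper's fix is to replace the direct comparison by a compactness-based harmonic approximation (\Cref{lem:linapprox}): one rescales to the unit ball and verifies not just a one-scale bound but a \emph{growth} condition $\fint_{B_\rho'}|\tilde h_l|\le C\rho^{3/2}$ over a long range $\rho\in(1,2^{l_0})$, obtained from the previous $l_0$ steps of the induction. A $W^{1,1}$ estimate (\Cref{lem:calzygL1}, from \cite{LSW}) plus Rellich--Kondrachov then gives $L^1_{\rm loc}$ compactness; the limit is harmonic with sub-$\rho^{3/2}$ growth, hence affine by Liouville. This yields the one-step decay purely in $L^1$, with output exponent $1/2$ in \eqref{eq:prop62p}. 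A minor secondary issue in your write-up: with your choice ``$C\mu^2\le\tfrac12$'' the homogeneous decay rate is $\log 2/\log(1/\mu)$, which need not reach $1/2$ if $C$ is large; and your forcing sum is $\sim\epk^{1+1/10-\chi}$, not $\epk^{1+1/10}$ (still sufficient for $\chi\le\tfrac{1}{20}$, but not for the reason you state).
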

We first need a standard lemma, which we prove in \Cref{app:standres}.

\begin{lemma}[{\bf Harmonic approximation}]
\label{lem:linapprox}
Let $p > 1$ and $d \ge 0$. Given $\lambda > 0$, there exists $\delta = \delta(\lambda, n, p, d)>0$ such that the following holds. 

Let $v\in C^2(B_{1/\delta}')$, $B_{1/\delta}'\subset\R^{n-1}$, be such that
\[
\begin{split}
\rho^2\fint_{B_\rho'} |{\rm div}(A\nabla v)| <\delta\quad\text{and}\quad \fint_{B_\rho'} |v| & \le \rho^{d+1/2}\quad \text{for}\quad 1\le \rho \le \tfrac{1}{\delta}\,,
\end{split} 
\]
where $A\in C^1(B_{1/\delta}')$ and $|A(x')-1|\leq \delta$.\\
Then, 
\[
\int_{B_1'} |v-p_d|\,dx \le \lambda,
\]
where $p_d$ is a harmonic polynomial of degree $\le d$  and such that $\|p_d\|_{L^1(B_1)}\le |B_1'|$.
\end{lemma}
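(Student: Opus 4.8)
\textbf{Proof strategy for Lemma \ref{lem:linapprox} (harmonic approximation).}

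The plan is to argue by compactness and contradiction, exactly in the spirit of De Giorgi's harmonic approximation lemma, but adapted to the variable-coefficient divergence-form operator and to the normalization $\fint_{B_\rho'}|v|\le \rho^{d+1/2}$ that is specific to this setup. First I would suppose, for contradiction, that the statement fails for some fixed $\lambda>0$, $p>1$, $d\ge 0$: then there exist $\delta_j\downarrow 0$, matrices $A_j\in C^1(B_{1/\delta_j}')$ with $|A_j-\mathrm{Id}|\le\delta_j$, and functions $v_j\in C^2(B_{1/\delta_j}')$ satisfying
\[
\rho^2\fint_{B_\rho'}|\mathrm{div}(A_j\nabla v_j)|<\delta_j,\qquad \fint_{B_\rho'}|v_j|\le\rho^{d+1/2}\quad\text{for }1\le\rho\le\tfrac1{\delta_j},
\]
yet $\int_{B_1'}|v_j-p|\,dx>\lambda$ for every harmonic polynomial $p$ of degree $\le d$ with $\|p\|_{L^1(B_1')}\le|B_1'|$. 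The hypotheses give uniform local $L^1$ bounds $\fint_{B_R'}|v_j|\le R^{d+1/2}$ on every fixed ball $B_R'$ once $j$ is large; combined with the smallness of $\mathrm{div}(A_j\nabla v_j)$ in $L^1_{\mathrm{loc}}$ and interior $L^1$-type estimates for divergence-form elliptic operators with coefficients converging to the identity in $C^0$, one obtains (after passing to a subsequence) $v_j\to v_\infty$ in $L^1_{\mathrm{loc}}(\R^{n-1})$, where $v_\infty$ is harmonic on all of $\R^{n-1}$.

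The second step is to identify $v_\infty$. Passing the growth bound to the limit gives $\fint_{B_\rho'}|v_\infty|\le\rho^{d+1/2}$ for all $\rho\ge1$; by the mean value property and a standard Liouville-type argument, a harmonic function on $\R^{n-1}$ with at most $\rho^{d+1/2}$ average growth (hence sub-polynomial of order $d+1/2$, so really order $\le d$) must be a polynomial of degree $\le d$. Thus $v_\infty=p_\infty$ for some harmonic polynomial of degree $\le d$, and the growth bound at $\rho=1$ forces $\fint_{B_1'}|p_\infty|\le1$, i.e.\ $\|p_\infty\|_{L^1(B_1')}\le|B_1'|$. But $\int_{B_1'}|v_j-p_\infty|\to\int_{B_1'}|v_\infty-p_\infty|=0$ by $L^1_{\mathrm{loc}}$ convergence, contradicting $\int_{B_1'}|v_j-p|>\lambda$ for all admissible $p$ once $j$ is large enough. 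Setting $p_d:=p_\infty$ closes the argument.

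The main obstacle, and the only point requiring genuine care, is the compactness step: one must produce strong $L^1_{\mathrm{loc}}$ convergence and harmonicity of the limit from an $L^1$ bound on $v_j$ together with only an $L^1$ (not $L^\infty$ or $L^2$) bound on $\mathrm{div}(A_j\nabla v_j)$, with coefficients that are merely $C^1$ and $\delta_j$-close to the identity. The clean way is to fix a ball $B_R'$, subtract off the solution $w_j$ of the Dirichlet problem $\mathrm{div}(A_j\nabla w_j)=\mathrm{div}(A_j\nabla v_j)$ in $B_R'$ with $w_j=0$ on $\partial B_R'$ (so $\|w_j\|_{L^1(B_R')}\to0$ by the $L^1\to W^{1,q}$, $q<\frac{n-1}{n-2}$, estimate or simply the maximum-principle bound against $\|\mathrm{div}(A_j\nabla v_j)\|_{L^1}$ scaled appropriately); then $v_j-w_j$ solves the homogeneous equation $\mathrm{div}(A_j\nabla(v_j-w_j))=0$, for which interior a priori $C^{1,\alpha}$ estimates (Schauder/De Giorgi–Nash–Moser, uniform since $A_j\to\mathrm{Id}$ in $C^0$ and, if needed, a mollification of $A_j$ for higher regularity) give equicontinuity and compactness in $C^0_{\mathrm{loc}}$. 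The limit solves $\Delta v_\infty=0$ since $A_j\to\mathrm{Id}$, and a diagonal argument over $R\to\infty$ finishes. Everything else — the Liouville step, passing the inequalities to the limit, and checking the normalization of $p_d$ — is routine.
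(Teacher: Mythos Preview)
Your proposal is correct and follows essentially the same compactness/contradiction route as the paper: contradict, extract a subsequence converging in $L^1_{\mathrm{loc}}$ to an entire harmonic function with growth $\rho^{d+1/2}$, invoke Liouville to identify it as a polynomial of degree $\le d$, and conclude. The only organizational difference is that the paper packages your compactness step (the split into the Dirichlet solution $w_j$ plus a homogeneous remainder, with the Littman--Stampacchia--Weinberger $L^1$ bound on $w_j$ and De Giorgi--Nash--Moser on $v_j-w_j$) into a separate $W^{1,1}$ estimate (Lemma~\ref{lem:calzygL1}), then applies Rellich--Kondrachov; you run the same decomposition inline and extract $C^0_{\mathrm{loc}}$ compactness directly from the homogeneous part.
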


\begin{proof}[Proof of \Cref{prop:decaysqrt}]
We will show the existence of some $\bar C\geq 1$ such that the following holds:
Let $\bar\zz\in B_{\frac{1}{4}\Rk}(\zk)$. Fix $i\in\{1,...,K_*\}$. Then, we have the algebraic decay
\begin{equation}
\label{eq:prop62p}
\min_{a_i \in B_1', \, b_i \in \mathbb{R}}  \frac{1}{R}\fint_{B_R'(\bar \zz')} \big| \widetilde g_i(x') - a_i \cdot (x' - \bar\zz') - b_i \big| \, dx'\le \bar C \epk\left(\frac{R}{\Rk}\right)^{1/2}\,,
\end{equation}
for all $R\in  \left[\epk^\chi\Rk, \frac{1}{8}\Rk\right]$.

The desired statement immediately follows.\\

Let us denote $R_l := 2^{-l}\Rk$. We assume that \eqref{eq:prop62p} holds for $R = R_1, R_2, R_3, \dots, R_l$ and we will show it holds for $R\in [R_{l+1}, R_l]$ as well, as long as $l \ge l_0$ for some $l_0$ universal to be chosen, and $2^{-l}\ge \epk^\cttc$. Notice that, up to making $\bar C$ larger depending only on $l_0$, we can always assume the statement holds up to $R_{l_0}$ indeed, since (ii) in \Cref{prop:extension} shows that
\begin{equation}
    \label{eq:powfjopwv}
     \fint_{B_{\frac{1}{4}\Rk}'(\zk')} \big| \widetilde g_i(x') - (\zk)_n \big| \, dx \le C\epk\Rk\,.
\end{equation}

Denote $h:=\widetilde g_i(\bar\zz'+x')$ in what follows. By assumption, for every $1\le m \le l$ there are $a_m,b_m$ such that
\begin{equation}\label{eq:iubfgsdbg}
    \frac{1}{R_m}\fint_{B_{R_m}'} |h-a_m\cdot x'-b_m|\leq \bar C 2^{-m/2}\epk\,.
\end{equation}

By the triangle inequality,
\[
|a_m-a_{m+1}| +\frac{1}{R_m}|b_m-b_{m+1}| \le C_1 \bar C 2^{-m/2}\epk,\quad\mbox{therefore}\quad |a_m-a_l|+ \frac{1}{R_m}|b_m-b_l| \le C_2 \bar C 2^{-m/2}\epk.
\]
Then, for $l-l_0\le m \le l$, \eqref{eq:iubfgsdbg} transforms into
\[
\frac{1}{R_m}\fint_{B_{R_m}'} |h-a_l\cdot x'-b_l|\le    C_3 \bar C2^{-m/2}\epk, 
\]
for some universal $C_3$. If we define 
\[
\tilde h_l(x'):= \frac{h (R_{l} x')-R_{l}\,a_l\cdot x'-b_l}{C_3 \bar C \epk2^{-l/2} R_l},
\]
we get  
\begin{equation}
    \label{eq:twocond}
  \fint_{ B_{2^{l-m}}'} |\tilde h_l|\le  \frac{2^{-m/2}R_m}{2^{-l/2}R_l}=2^{3/2(l-m)},\quad\mbox{or}\quad \fint_{ B_{\rho}'} |\tilde h_l|\le C_4\rho^{3/2} \qquad\text{for $\rho \in  (1, 2^{l_0})$.}
\end{equation}
\vspace{0.4cm}

On the other hand, letting $A=\frac{1}{\sqrt{1+|\nabla h|^2}}$, \Cref{prop:linHeq} and (iii) in \Cref{prop:extension} give that
\begin{align*}
    \frac{1}{(\epk\Rk)}R_m^2\fint_{B_{R_m}'} |{\rm div}(A\nabla h)|\leq C\epk^{\frac{1}{10}}\,,\quad \mbox{where}\quad |A(x')-1|\leq o_k(1)\,,
\end{align*}
which letting $\widetilde A_l(x'):=A(R_l x')$ and rescaling gives that
\begin{align*}
    \rho^2\fint_{B_{\rho}'} |{\rm div}(\widetilde A_l\nabla \tilde h_l)|\leq C\epk^{\frac{1}{10}}2^{3l/2}\leq C\epk^{\frac{1}{40}}\,,\quad \mbox{where}\quad |\widetilde A_l(x')-1|\leq o_k(1)\,.
\end{align*}
We have used that $2^{3/2l}\leq(\epk^{-\chi})^{3/2}\leq \epk^{-\frac{3}{40}}$ for $\chi\in (0,\frac{1}{20}]$ and $k$ large enough in the last inequality.

By \Cref{lem:linapprox} with $d=1$, given $\lambda>0$, for $k$ and $l_0$ large enough we have that $\int_{B_1'} |\tilde h_l - a\cdot x' - b|\le \lambda$, for some $a\in \R^{n-1}$ and $b\in \R$. Scaling back, we have 
\[
\frac{1}{R_l} \fint_{B_{R_l}'} |h(x) -a_l\cdot x' - b_l - C_3\bar C  2^{-l/2}\epk a\cdot x' - C_3\bar C  R_l 2^{-l/2}\epk b|\, dx'\le   C_4\bar C  2^{-l/2}\lambda \epk\,.
\]
Denoting $b_{l+1} := b_l + C_3\bar C  R_l 2^{-l/2}\epk\, b$ and $a_{l+1} = a_l + C_3\bar C  2^{-l/2}\epk\, a$, we obtain 
\[
\frac{1}{R_l}\fint_{B_{R_l}'} |h(x) -a_{l+1}\cdot x' - b_{l+1}|\, dx'\le  C_4 \bar C 2^{-l/2}\lambda\epk.
\]
We now choose $\lambda$ small so that $C_4\lambda \le \frac{1}{32}$, which in turn fixes $l_0$, $\bar C$, and a lower bound for $k$. We then get \eqref{eq:prop62p} for $R\in[R_{l+1},R_l]$ as desired.
\end{proof}

\subsection{Preservation of mean oscillation}
We can interpret \Cref{prop:decaysqrt} as saying that $\widetilde g_{i}-l_i$ has average size $O(\epk^{1+3\chi/2}\Rk)$ in $B_{\epk^\chi\Rk}(\bar \zz')$. We now show that $\widetilde g_{i}-l_i$ still has average size $O(\epk^{1+3\chi/2}\Rk)$ in the much smaller ball $B_{\epk^{1+2\chi}\Rk}(\bar \zz')$.
\begin{proposition}[{\bf Preservation of layer mean oscillation up to $\epk^{1+2\chi}\Rk$}] \label{prop:decaysqrt3}
Fix $\chi\in(0,\frac{1}{20}]$, $\beta\in(0,\frac{1}{40}]$ and $\alpha\in(0,\frac{1}{40}]$.

    Then, we have
    \begin{equation}\label{eq:weakestdecay}
     \fint_{B_{\epk^{1+2\chi} \Rk}'(\bar\zz')} |\widetilde g_i-\ell_i| \le C \epk^{1+\cttc/2} (\epk^\chi\Rk)\quad\mbox{for all}\quad i\in \{1,...,K_*\}\,,
    \end{equation}
provided $k$ is large enough.
\end{proposition}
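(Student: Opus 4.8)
The statement \eqref{eq:weakestdecay} is precisely the implication \eqref{eq:ygagvbogaoL1} discussed in the overview: we already know from \Cref{prop:decaysqrt} that $\widetilde g_i - \ell_i$ has average size $O(\epk^{1+\chi/2}(\epk^\chi\Rk))$ on the large ball $B_{\epk^\chi\Rk}'(\bar\zz')$, and we must propagate this bound (with the same $L^\infty$-type scaling) down to the much smaller ball $B_{\epk^{1+2\chi}\Rk}'(\bar\zz')$. The natural approach is a second geometric iteration -- an improvement-of-oscillation argument -- using the crucial extra smallness of the mean curvature provided by \Cref{prop:linHeq}, which is valid all the way down to scale $\epk^{1+2\chi}\Rk$ (this is exactly why the dichotomy in \Cref{prop:linHeq} was needed, to ``break criticality'' and push below the threshold $\epk\Rk$).

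Concretely, I would set $h(x') := \widetilde g_i(\bar\zz' + x') - \ell_i(x')$, so $h$ satisfies $\mathrm{div}(A\nabla h) = \HH[\widetilde g_i]$ with $|A - \mathrm{Id}| \le o_k(1)$ by (iii) of \Cref{prop:extension}, together with the initial bound $\tfrac{1}{\epk^\chi\Rk}\fint_{B_{\epk^\chi\Rk}'} |h| \le C\epk^{1+\chi/2}$ from \Cref{prop:decaysqrt}. Writing $R_m := 2^{-m}(\epk^\chi\Rk)$ and defining the rescalings $h_m(x') := h(R_m x')/(C\epk^{1+\chi/2} R_m)$, one checks that $\HH$ rescales so that $\rho^2 \fint_{B_\rho'}|\mathrm{div}(\widetilde A_m \nabla h_m)| \le C\epk^{1/10}\, 2^{m}\,\epk^{-\chi - \chi/2}$ by \eqref{eq:ikwhlogbwo}; as long as $2^{-m} \ge \epk$ (i.e. $R_m \ge \epk^{1+\chi}\Rk$, which certainly covers the target scale $\epk^{1+2\chi}\Rk$), the factor $2^m \le \epk^{-1}$, and since $\chi \le \tfrac1{20}$ the right-hand side is $O(\epk^{1/10 - 1 - 3\chi/2}) \cdot$ -- wait, this is where the bookkeeping matters: one needs $1/10$ to beat $1 + 3\chi/2$ times the number of halvings, so the iteration can only run for $O(\log(1/\epk))$ steps with a small loss per step. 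The clean way is to run a Dini-type/telescoping iteration: at each dyadic scale, \Cref{lem:linapprox} (applied with $d=0$, i.e. approximation by constants, since $\ell_i$ has already absorbed the linear part) produces a constant $c_m$ with $\fint_{B_{R_{m+1}}'}|h - c_m| \le \tfrac12 \fint_{B_{R_m}'}|h| + (\text{error from }\HH)$; summing the geometric series of halvings against the (small, summable after rescaling) mean-curvature errors yields $\fint_{B_{R_M}'}|h| \le C\epk^{1+\chi/2} R_M + C\epk^{1/10}(\cdots)$, and one verifies the mean-curvature contribution stays below $\epk^{1+\chi/2} R_M$ at $R_M = \epk^{1+2\chi}\Rk$ using $\epk^{1/10} \ll \epk^{\chi}$ (valid since $\chi \le \tfrac1{20} < \tfrac1{10}$) together with the lower bound $R_M/(\epk^\chi\Rk) = \epk^{1+\chi}$. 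Note the constant terms $c_m$ telescope to modify $\ell_i$ only by an amount $O(\epk^{1+\chi/2}(\epk^\chi\Rk))$, which is within the allowed error, so the \emph{same} $\ell_i$ works in \eqref{eq:weakestdecay} (or one updates it negligibly).

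The one subtlety is that $\widetilde g_i$ is only an \emph{extension}: on the small ball $B_{\epk^{1+2\chi}\Rk}'(\bar\zz')$ we are genuinely in the Whitney-extended region $B' \setminus \Omega_{2-\gamma}$ near the bad center $\bar\zz$, so the ``equation'' $\mathrm{div}(A\nabla h) = \HH[\widetilde g_i]$ holds with $\HH[\widetilde g_i]$ being the extended mean curvature controlled in $L^1$ by \Cref{prop:linHeq} -- but that is exactly the content of \eqref{eq:ikwhlogbwo}, which was proved for $\rho \in [\epk^{1+2\chi}\Rk, \tfrac18\Rk]$ precisely so this step goes through. So no separate treatment of $\Omega_{2-\gamma}$ versus its complement is needed here; the extension and its $L^1$ curvature bound are used as a black box.

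\textbf{Main obstacle.} The delicate point is the exponent arithmetic: one must ensure that the accumulated mean-curvature error across $\sim \log(1/\epk)$ dyadic scales -- each step contributing roughly $R_m^2 \fint |\HH| \lesssim \epk^{1/10} R_m$ after the scaling in \eqref{eq:ikwhlogbwo} -- remains strictly smaller than the target size $\epk^{1+\chi/2} R_M$ at the bottom scale $R_M = \epk^{1+2\chi}\Rk$. Since $\epk^{1/10}$ is a genuinely small power while the number of scales only contributes a logarithm, and since $\chi \le \tfrac1{20}$ is chosen small enough that $1+\chi/2 < \tfrac1{10} + 1$ with room to spare, this closes; but it is exactly the kind of criticality-breaking estimate that \Cref{prop:linHeq}'s dichotomy was designed to supply, and getting the powers of $\epk$ to line up (recall $\epk^2\Rk \ge c$, so $\epk^\chi\Rk \ge c\,\epk^{\chi-2}$) is the crux of the computation.
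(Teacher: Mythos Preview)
Your overall strategy is correct and matches the paper: iterate an improvement-of-oscillation using \Cref{lem:linapprox} with $d=0$ (approximation by constants, since the linear part is already in $\ell_i$), feeding in the mean curvature bound \eqref{eq:ikwhlogbwo} from \Cref{prop:linHeq}, which is valid down to $\rho = \epk^{1+2\chi}\Rk$.

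However, there is a genuine gap in the normalization, and you notice it yourself (``wait, this is where the bookkeeping matters'') without resolving it. Your rescaling $h_m(x') := h(R_m x')/(C\epk^{1+\chi/2} R_m)$ divides by $R_m = 2^{-m}(\epk^\chi\Rk)$, so the rescaled mean curvature picks up a factor $2^m$; since $m$ runs up to $\sim (1+\chi)\log_2(1/\epk)$, this factor is of size $\epk^{-1}$ and the bound $\epk^{1/10 - 3\chi/2}\cdot 2^m$ does \emph{not} stay small. The ``Dini-type telescoping'' you pivot to does not fix this: \Cref{lem:linapprox} requires a growth condition over a full window of scales, so a one-scale contraction of the form $\fint_{B_{R_{m+1}}'}|h-c_m|\le \tfrac12 \fint_{B_{R_m}'}|h| + (\text{error})$ is not what that lemma provides. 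Also, the per-step error you quote in the ``Main obstacle'' paragraph is wrong: by \eqref{eq:ikwhlogbwo}, $R_m^2 \fint_{B_{R_m}'}|\HH[\widetilde g_i]| \le C\epk^{1+1/10}\Rk$, a \emph{constant independent of $m$}, not $\epk^{1/10}R_m$.

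The paper resolves this by proving a slightly \emph{stronger} inductive statement: there exists a tiny exponent $\bar\beta\in(0,\tfrac{1}{160}]$ such that
\[
\inf_{c\in\R}\ \fint_{B_r'}|w-c|\ \le\ \bar C\,\epk^{1+\chi/2}(\epk^\chi\Rk)\,\big(r/(\epk^\chi\Rk)\big)^{\bar\beta},\qquad r\in[\epk^{1+2\chi}\Rk,\,\epk^\chi\Rk],
\]
where $w:=(\widetilde g_i-\ell_i)(\bar\zz'+\,\cdot\,)$; the desired \eqref{eq:weakestdecay} then follows by telescoping the constants. The point is that the correct normalization at step $l$ divides by $\epk^{1+\chi/2}(\epk^\chi\Rk)\,2^{-\bar\beta l}$, \emph{not} by $r_l=2^{-l}(\epk^\chi\Rk)$. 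With this, the rescaled mean curvature is bounded by $C\epk^{1/10}\epk^{-3\chi/2}\,2^{\bar\beta l}\le C\epk^{1/10-3\chi/2-2\bar\beta}\le C\epk^{1/80}$ (using $\chi\le\tfrac{1}{20}$, $2^l\le\epk^{-2}$, and $\bar\beta\le\tfrac{1}{160}$), and the rescaled function satisfies the $\rho^{1/2}$ growth hypothesis of \Cref{lem:linapprox} with $d=0$ because $\bar\beta<\tfrac12$. The iteration then closes exactly as in the proof of \Cref{prop:decaysqrt}. The missing idea in your proposal is precisely this: aim for a tiny positive decay rate $\bar\beta$ rather than mere boundedness, so that only $2^{\bar\beta l}$ (not $2^l$) enters the rescaled equation.
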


\begin{proof}
We argue similarly to the proof of \Cref{prop:decaysqrt}. We fix $i\in\{1,..,K_*\}$ in all of the proof. Let $a_i\in \R^{n-1},b_i\in\R$ be given by \Cref{prop:decaysqrt}, so that setting $\ell_i:=a_i\cdot(x'-\bar\zz')+b_i$ we have that
    \begin{equation}
    \label{eq:ihohbogbzbg}
     \fint_{B_{\epk^\chi\Rk}'(\bar\zz')} \big| \widetilde g_i - \ell_i \big| \, dx \le C\epk^{1+\chi/2}(\epk^\chi\Rk).
    \end{equation}
    Set $w :=  (\widetilde g_i-\ell_i)(\bar\zz' +\, \cdot\,)$. To obtain \eqref{eq:weakestdecay}, we will actually show that for some (tiny) ${\bar \beta}>0$ and some $\bar C$, we have 
\begin{equation}\label{weakdecay11}
   \inf_{c\in \R} \fint_{B_r'} |w-c|  \le \bar C \epk^{1+\cttc/2} (\epk^\chi\Rk)  \left(r/(\epk^\chi\Rk)\right)^{\bar \beta} \quad \mbox{for all}\quad r \in [\epk^{1+2\cttc} \Rk, \epk^\chi\Rk]\,.
\end{equation}
Then, a direct application of the triangle inequality and the geometric decay in \eqref{weakdecay11} give that
\begin{equation}\label{weakdecay12}
   \fint_{B_r'} |w|  \le C\bar C \epk^{1+\cttc/2} (\epk^\chi\Rk)  \quad \mbox{for all}\quad r \in [\epk^{1+2\cttc} \Rk, \epk^\chi\Rk]
\end{equation}
as well, as desired.\\
Set $r_l := 2^{-l}(\epk^\chi\Rk)$, and assume that \eqref{weakdecay11} holds for $r = r_1, r_2, r_3, \dots, r_l$. We will show it holds for $r\in [r_{l+1}, r_l]$ as well as long as $l \ge l_0$ and $2^{-l}\ge \epk^{1+\chi}$.\\
Now, under these assumptions\footnote{Up to making $\bar C$ larger depending on $l_0$, by \eqref{eq:ihohbogbzbg} this will be the case.}, we have $c_m\in \R$ such that, for any $1\le m \le l$,
\begin{equation}\label{industep2}
w_{m}(x) := w(x) -c_m\quad\mbox{satisfies}\quad\fint_{B_{r_m}'} |w_{m}| \le \bar C 2^{-{\bar \beta} m} \epk^{1+\cttc/2} (\epk^\chi\Rk).
\end{equation}
The triangle inequality shows then that $|c_{l}-c_m|\le C_1 \bar C 2^{-{\bar \beta} m}\epk^{1+\cttc/2}  (\epk^\chi\Rk)$ as well, where $C_1$ depends on $\bar\beta$. This gives
\begin{equation}\label{industep22}
\fint_{B_{r_m}'} |w_l| \le C_2\bar C 2^{-{\bar \beta} m} \epk^{1+\cttc/2} (\epk^\chi\Rk)\,,
\end{equation}
which defining $\tilde w_l(x):= \frac{w_l (r_{l} x)}{ C_2\widetilde C_0\epk^{1+\cttc/2} (\epk^\chi\Rk) 2^{-{\bar \beta} l}}$ transforms (taking $\bar\beta<\frac{1}{2}$) into
\begin{equation}
    \label{eq:twocondbis}
\fint_{B_{2^{l-m}}'} |\tilde w_l|\le 2^{\bar\beta (l-m)}\,,\quad\mbox{or}\quad   \fint_{B_{\rho/2}'} |\tilde w_l|\le \rho^{1/2}\quad \text{for}\quad \rho \in  (1, 2^{l_0}).
\end{equation}
Likewise, \Cref{prop:linHeq} and (iii) in \Cref{prop:extension} give that
\begin{align*}
    \rho^2\fint_{B_{\rho}'} |{\rm div}(\tilde A_l\nabla \tilde h_l)|\leq C\epk^{\frac{1}{10}}\epk^{-3\chi/2}2^{\bar\beta l}\leq C\epk^{\frac{1}{80}}\,,\quad \mbox{where}\quad |\widetilde A_l(x')-1|\leq o_k(1)\,.
\end{align*}
We have used that $\epk^{-3\chi/2}2^{\bar\beta l}\leq \epk^{-3/40}(\epk^{-2})^{\bar\beta}\leq \epk^{-\frac{7}{80}}$ up to choosing $\bar\beta\in (0,\frac{1}{160}]$ and $k$ large enough in the last inequality.

We can then apply \Cref{lem:linapprox} with $d=0$ and obtain that for any $\lambda>0$, as long as $l_0$ and $k$ are large enough we have that $\int_{B_{1}'} |\tilde w_l - c|\, dx\le \lambda$, for some $c\in \R$.  Choosing $\lambda$ small enough, after rescaling we conclude that \eqref{weakdecay11} holds for $r\in [r_{l+1}, r_l]$ as well, concluding the proof.
\end{proof}

\subsection{Approximation by a single linear function -- Proof of \Cref{prop:impflatsinglin}}
Combining the previous results and the proximity of our graphs at small scales \eqref{eq:omega_modintro}, we can now give:
\begin{proof}[Proof of \Cref{prop:impflatsinglin} and \Cref{prop:decaysqrt0}]
We will show \Cref{prop:decaysqrt0}, which is a strictly stronger result. To simplify the notation, after a translation we can (and do) assume that $\bar \zz=0$. We divide the proof into three steps. 
    \medskip 

\noindent\textbf{Step 1.} Improvement of the $|b_i|$.\\
Here is where we will use \eqref{eq:weakestdecay}. Let $\rho=\epk^{1+2\cttc} \Rk$. By \eqref{eq:weakestdecay} and the fact that $\widetilde g_i\equiv g_i$ in $\Omega_{2-\gamma}$, we deduce that
\begin{equation}\label{eq:flatlayclean22}
    \frac{1}{|B_{\rho}'|}\int_{B_{\rho}'\cap\Omega_{2-\gamma}} \big| g_i(x') - a_i \cdot x' - b_i \big| \, dx' \le C\epk^{1+\chi/2}(\epk^\chi\Rk) , \qquad i\in\{1,...,K_*\}.
    \end{equation}
Now, by \Cref{lem:graphlargeflat} we can bound (for $k$ large enough)
$$
\frac{1}{|B_{\rho}'|}\int_{B_{\rho}'\cap \Omega_{2-\gamma}} \big| g_i(x')\big| \, dx' \le o_k(1)\rho \leq \epk^{1+\chi/2}(\epk^\chi\Rk)\quad \mbox{for every} \qquad i\in\{1,...,K_*\};$$
intuitively, all of the graphs pass inside $B_\rho$ thanks to \eqref{eq:omega_modintro}. Together with \eqref{eq:flatlayclean22}, then
\begin{equation}
    \frac{1}{|B_{\rho}'|}\int_{B_{\rho}'\cap\Omega_{2-\gamma}} \big|a_i \cdot x' + b_i \big| \, dx' \le C\epk^{1+\chi/2}(\epk^\chi\Rk)\qquad \mbox{for every} \quad i\in\{1,...,K_*\}\quad\mbox{as well}.
    \end{equation}
Restricting to either $\{a_i \cdot x'>0\}$ or $\{a_i \cdot x'<0\}$ in the integral immediately gives that 
\begin{equation}
    \big|b_i\big| \le C\epk^{1+\chi/2}(\epk^\chi\Rk)\,,
\end{equation}
as long as $|B_{\rho}'\cap\Omega_{2-\gamma}|\geq 0.9|B_{\rho}'|$. But this is indeed true: Using \eqref{eq:uibobdvadvb} (and the notation there), by \eqref{eq:fhiowqrhgoqwg} we can bound
\begin{equation}\label{eq:iagsaogbaovg}
    |\{d\leq \epk^{2-\gamma} \Rk\}\cap B_{\frac{1}{2}\Rk}'(\zk)|\leq(\epk^{2-\gamma}\Rk)^3(N_{\epk^{2-\gamma}})^{-1-\beta}\leq C(\epk^{2-\gamma}\Rk)^3(\epk^{2-\gamma})^{-1-\beta}=C\epk^{(2-\gamma)(2-\beta)}\Rk^3\,,
\end{equation}
which for $k$ large enough and $\gamma\in(0,\frac{1}{4}]$, $\beta\in(0,\frac{1}{16}]$, $\chi \in(0,\frac{1}{20}]$ is strictly smaller than $|B_{\rho}'|=c\rho^3=c\epk^{3(1+2\chi)}\Rk^3$.
\medskip

\noindent\textbf{Step 2.} Improvement of the $|a_j-a_i|$.\\
Let $\rho=\epk^\chi\Rk$ now; all of our next arguments will be at scale $\epk^\chi\Rk$. By \eqref{eq:flatlaystar},
\begin{equation}
    \label{eq:lduydjg}
    \frac{1}{|B_{\rho}'|}\int_{B_\rho'\cap \Omega_{2-\gamma}} \big| g_i(x') - a_i \cdot x' - b_i \big| \, dx \le C\epk^{1+\chi/2}\rho , \qquad i\in\{1,...,K_*\}.
    \end{equation}
Given $i<j$ we know that $g_j-g_i>0$. Applying the triangle inequality and \eqref{eq:lduydjg}, and then Step 1,
\begin{equation*}
    \frac{1}{|B_\rho'|}\int_{B_\rho'\cap \Omega_{2-\gamma}} (a_i-a_j) \cdot x' + (b_i-b_j) \, dx' \le C\epk^{1+\chi/2}\rho\,,\quad\mbox{thus}\quad \frac{1}{|B_\rho'|}\int_{B_\rho'\cap \Omega_{2-\gamma}} (a_i-a_j) \cdot x' \, dx' \le C\epk^{1+\chi/2}\rho \,.
\end{equation*}
Assuming that $a_i\neq a_j$, setting $A:=B_{\rho}'\cap \{\frac{a_i-a_j}{|a_i-a_j|}\cdot x'\geq \frac{\rho}{10}\}$ we find that
\begin{equation*}
    \frac{|A\cap \Omega_{2-\gamma}|}{|B_{\rho}'|}|a_i-a_j|\frac{\rho}{10}\leq\frac{1}{|B_{\rho}'|}\int_{A\cap \Omega_{2-\gamma}} (a_i-a_j) \cdot x' \, dx \le C\epk^{1+\chi/2}\rho\,,\quad\mbox{or}\quad \frac{|A\cap \Omega_{2-\gamma}|}{|B_{\rho}'|}|a_i-a_j|\leq C\epk^{1+\chi/2}\,.
\end{equation*}
Arguing exactly as in the end of Step 1, we see that $|A\cap\Omega_{2-\gamma}|\geq 0.9|A| = c|B_{\rho}'|$ for $k$ large enough, thus we conclude that
\begin{equation}
    \label{eq:lduydjg6}
    |a_i-a_j|\leq C\epk^{1+\chi/2}\,.
\end{equation}

\medskip

\noindent\textbf{Step 3.} Conclusion.\\
Steps 1 and 2 show in particular that $\frac{1}{\epk^\chi\Rk}\fint_{B_{\epk^\chi \Rk}'(\bar y')}|\ell_i-\ell_1|\leq C\epk^{1+\chi/2}$ for every $i\in\{1,...,K_*\}$, so that simply setting $\ell:=\ell_1$ we see that \eqref{eq:flatlaystardiffi} transforms into \eqref{eq:flatlaystar22}, showing \Cref{prop:decaysqrt0}. Since (by \Cref{prop:extension}) we have that $g_i\equiv \widetilde g_i$ in $\Omega_{2-\gamma}$, then \eqref{eq:flatlaystar} immediately follows, proving \Cref{prop:impflatsinglin} as desired.
\end{proof}

\section{Conclusion of argument}\label{sec:conclusion}
For simplicity, let us fix $\chi=\frac{1}{20}$, $\beta=\frac{1}{40}$ and $\alpha=\frac{1}{40}$ in what follows.
\subsection{Improvement of Allen--Cahn excess and density deficit}\label{sec:impexcdens}
We want to transform the $L^1$ bounds from \Cref{prop:impflatsinglin} into $L^\infty$ ones, for which we first need:
\begin{lemma}[{\bf Clean cylinder}]\label{lem:cleancyl}
    There exists $\bar\zz\in B_{\Rk/8}(\zk)\cap \cZ$ with the following property: There is some ${\bf \bar y}'$ such that
    $${\rm dist}({\bf \bar y'},[\cZ\cap B_{\Rk}(\zk)]')=|{\bf \bar y'}-\bar\zz'|=\frac{1}{2}\epk^\chi\Rk\,.$$
\end{lemma}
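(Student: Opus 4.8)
\textbf{Proof plan for Lemma \ref{lem:cleancyl}.}

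The plan is to find a bad center $\bar\zz\in\cZ\cap B_{\Rk/8}(\zk)$ together with a point ${\bf \bar y}'\in\R^{n-1}$ lying at distance exactly $\tfrac12\epk^\chi\Rk$ from the projected bad set $[\cZ\cap B_{\Rk}(\zk)]'$, and such that this distance is realized precisely at $\bar\zz'$. First I would fix any reference bad center $\zz_\circ\in \cZ\cap B_{\Rk/16}(\zk)$, which exists since $\cZ\ne\emptyset$ and, by \Cref{lem:NbdRknew} together with \Cref{prop:W}, the bad set is nonempty at all the relevant scales around $\zk$ (indeed $\zk\in\cZ$ itself works). Consider the continuous function $F(x'):={\rm dist}\big(x',[\cZ\cap B_{\Rk}(\zk)]'\big)$ on $\R^{n-1}$. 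We have $F(\zz_\circ')=0$, while for a point $x'$ at Euclidean distance, say, $\tfrac1{10}\Rk$ from $\zz_\circ'$ and staying inside $B_{\Rk/8}'(\zk')$, we must have $F(x')>0$ — this is exactly where I would invoke the bad ball count \eqref{eq:fhiowqrhgoqwg} (or, more crudely, \eqref{eq:awioufhiunoe} and the codimension bound $|B_1(\cZ)\cap B_{3R/4}(\zz)|\lesssim R^{n-3}$), which forces $[\cZ\cap B_{\Rk}(\zk)]'$ to have Lebesgue measure $\ll (\tfrac1{10}\Rk)^{n-1}$ in that ball, so it cannot fill it and there is a point $x_1'$ with $F(x_1')\ge \epk^\chi\Rk$, say, for $k$ large. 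By the intermediate value theorem applied along the segment from $\zz_\circ'$ to $x_1'$, there is a point ${\bf \bar y}'$ on this segment with $F({\bf \bar y}')=\tfrac12\epk^\chi\Rk$; by construction ${\bf \bar y}'\in B_{\Rk/8}'(\zk')$.

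Next I would let $\bar\zz\in \cZ\cap B_{\Rk}(\zk)$ be a center realizing this distance, i.e. with $|{\bf \bar y}'-\bar\zz'|=F({\bf \bar y}')=\tfrac12\epk^\chi\Rk$ (the infimum defining $F$ is attained because $\cZ\cap B_{\Rk}(\zk)$ is closed — $\cZ$ is closed since $\mathcal A_u$ and $\nabla u$ are continuous and the inequality \eqref{prox1d2intro2} is non-strict — and projection of a compact set is compact). Then ${\rm dist}\big({\bf \bar y}',[\cZ\cap B_{\Rk}(\zk)]'\big)=|{\bf \bar y}'-\bar\zz'|$, which is the claimed identity. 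It remains to check the location constraint $\bar\zz\in B_{\Rk/8}(\zk)$: since ${\bf \bar y}'\in B_{\Rk/8}'(\zk')$ and $|{\bf \bar y}'-\bar\zz'|=\tfrac12\epk^\chi\Rk = o_k(1)\Rk\to 0$ relatively, we get $|\bar\zz'-\zk'|\le \Rk/8 + o_k(1)\Rk$; to land exactly inside $B_{\Rk/8}$ one should choose the reference center and the target radius a bit more carefully (e.g. start from $\zz_\circ\in \cZ\cap B_{\Rk/16}(\zk)$ and only move a distance $\le \Rk/32$ along the segment), so that $|\bar\zz'-\zk'|<\Rk/8$, and then bound the vertical coordinate $|\bar\zz_n-\zk_n|$ using \Cref{lem:dxzggiuhg2} / \Cref{lem:graphlargeflat} (which give $|\bar\zz_n-\zk_n|\le o_k(1)\Rk$, since $\bar\zz$ is a bad center hence on $\{|u|\le 0.9\}$, and these lie within $o_k(1)\Rk$ of the hyperplane $\{x_n=(\zk)_n\}$ inside $B_{\Rk/2}(\zk)$). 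Combining, $\bar\zz\in B_{\Rk/8}(\zk)$ for $k$ large.

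The main obstacle is the quantitative non-filling step: one genuinely needs that the projected bad set does not occupy a definite fraction of $B_{\Rk/8}'(\zk')$, so that a point at distance $\gtrsim \epk^\chi\Rk$ from it exists. This is where the Minkowski-type control \eqref{eq:fhiowqrhgoqwg} from Selection 2 is essential — the naive Hausdorff bound $N_{1/\Rk}\lesssim \Rk^{n-3}$ would only give a measure bound of order $\Rk^{n-3}$ for a $1$-neighborhood, which (for $n=4$) is $\Rk\ll \Rk^{n-1}$ and would still suffice here, but having \eqref{eq:fhiowqrhgoqwg} makes the estimate robust and clean at the scale $\epk^\chi\Rk$ (cf. the computation \eqref{eq:iagsaogbaovg}). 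Everything else (continuity, intermediate value theorem, closedness of $\cZ$, the flatness bounds on the vertical coordinate) is routine given the results already established.
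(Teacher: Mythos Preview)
Your proof is correct and follows essentially the paper's own argument: bound the volume of the $\tfrac12\epk^\chi\Rk$-neighborhood of the projected bad set via \eqref{eq:uibobdvadvb} and \eqref{eq:fhiowqrhgoqwg} (exactly the computation \eqref{eq:iagsaogbaovg} you cite, at a different scale) to get $|\{d\le\tfrac12\epk^\chi\Rk\}\cap B'_{\Rk/16}(\zk')|\le C\epk^{(2-\beta)\chi}\Rk^3\ll\Rk^3$, deduce a point with $d>\tfrac12\epk^\chi\Rk$, then use continuity to hit $d=\tfrac12\epk^\chi\Rk$ and let $\bar\zz$ realize the distance. You also spell out details the paper leaves implicit (closedness of $\cZ$, attainment of the infimum, the vertical coordinate of $\bar\zz$ via flatness).

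One correction to your side remark: the claim that the naive Hausdorff bound $N_{1/\Rk}\lesssim\Rk$ ``would still suffice here'' is not right at scale $\epk^\chi\Rk$. It would only bound the $\epk^\chi\Rk$-neighborhood volume by $C\Rk\cdot(\epk^\chi\Rk)^3=C\epk^{3\chi}\Rk^4$, and since $\Rk\gtrsim\epk^{-2}\gg\epk^{-3\chi}$ (recall $\chi\le\tfrac{1}{20}$), this need not be $\ll\Rk^3$. So the Minkowski-type control \eqref{eq:fhiowqrhgoqwg} is genuinely needed, not merely convenient---which is consistent with your final paragraph, but not with the parenthetical.
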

\begin{proof}
    Using \eqref{eq:uibobdvadvb} and \eqref{eq:fhiowqrhgoqwg}, we can bound
    $$|\{d\leq \frac{1}{2}\epk^\chi \Rk\}\cap B_{\frac{1}{2}\Rk}'(\zk)|\leq(\frac{1}{2}\epk^\chi\Rk)^3(N_{\frac{1}{2}\epk^\chi})^{-1-\beta}\leq C(\frac{1}{2}\epk^\chi\Rk)^3(\frac{1}{2}\epk^\chi)^{-1-\beta}=C\epk^{(2-\beta)\chi}\Rk^3\,.$$
    For $k$ large enough this is strictly smaller than $|B_{\frac{1}{16}\Rk}'(\zk)|=c\Rk^3$, thus $\{d> \frac{1}{2}\epk^\chi \Rk\}\cap B_{\frac{1}{16}\Rk}'(\zk)\neq\emptyset$. This clearly means that there is ${\bf \bar y'}\in \{d= \frac{1}{2}\epk^\chi \Rk\}\cap B_{\frac{1}{16}\Rk}'(\zk)$ as well, and then we can choose some $\bar \zz$ such that $|{\bf \bar y'}-\bar\zz'|=d({\bf \bar y'})=\frac{1}{2}\epk^\chi \Rk$.
\end{proof}
Our main result here is:
\begin{proposition}[{\bf Improvement of total excess and density deficit}]
\label{prop:ACexcdens}
There exist $\bar\zz$ and ${\bf \bar y'}$ (given by \Cref{lem:cleancyl}) such that the following holds. Let $\ell$ be given by \Cref{prop:impflatsinglin} (applied with this $\bar\zz$). Setting ${\bf \bar y}:=({\bf \bar y}',\ell({\bf \bar y}'))$, then---for $k$ large enough--- ${\bf \bar y}\in B_{\epk^\chi\Rk}(\bar \zz)$. Moreover, we have:
\begin{itemize}
    \item Height excess bound:\qquad \ \ ${\bf H}_{(\epk^\chi\Rk)/64}^2({\bf \bar y}) \le C \epk^{2+ \cttc}$.
    \item Density deficit bound:\qquad $K_* - C \left[\frac{\ep_k^{1+4\chi/3}}{\theta}\right]^2 \leq {\bf M}_{\theta\Rk}({\bf \bar y}) \leq K_*\quad\mbox{for every}\quad \theta\in[\epk,\epk^\chi/8]$.
\end{itemize}
\end{proposition}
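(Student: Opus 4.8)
The plan is to feed the collective $L^1$ flatness of the $K_*$ layers from \Cref{prop:impflatsinglin} into a clean (bad-set free) cylinder, upgrade it to an $L^\infty$ bound via interior elliptic estimates, and then read off the two desired quantities. First I would fix $\bar\zz\in\cZ\cap B_{\Rk/8}(\zk)$ and ${\bf \bar y}'$ as provided by \Cref{lem:cleancyl}, so that $d({\bf \bar y}')=\tfrac12\epk^\chi\Rk$ and hence the cylinder $\mathcal C_{\tfrac14\epk^\chi\Rk}({\bf \bar y}')$ is at distance $\gtrsim \epk^\chi\Rk$ from $[\cZ\cap B_{\Rk}(\zk)]'$; in particular it lies in $\Omega_{2-\gamma}$ for $k$ large, so all $K_*$ graphs $g_i=\widetilde g_i$ there. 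Setting $\rho:=\epk^\chi\Rk$, \Cref{prop:impflatsinglin} gives $\tfrac1{\rho}\fint_{B_\rho'(\bar\zz')\cap\Omega_{2-\gamma}}|g_i-\ell|\le C\epk^{1+\chi/2}$, and since the mean-curvature control \eqref{eq:uagfabaf} together with $|\nabla g_i|\le o_k(1)$ makes each $g_i-\ell$ satisfy a uniformly elliptic equation with right-hand side of size $\lesssim \rho^{-2}$ in a ball of radius $\sim\tfrac14\rho$ around ${\bf \bar y}'$, standard interior estimates turn the $L^1$ bound into $\|g_i-\ell\|_{L^\infty(B_{\rho/16}'({\bf \bar y}'))}\le C\epk^{1+\chi/2}\rho$, hence $\tfrac1\rho\|g_i-\ell\|_{L^\infty}\le C\epk^{1+\chi/2}$ for every $i$ — in fact one should be a bit careful and extract the slightly better exponent $\epk^{1+2\chi/3}$ or so that the statement's exponents require, which is why the statement only claims $\epk^{2+\chi}$ and $\epk^{1+4\chi/3}$ rather than the naive powers; the loss comes exactly from the gap between $\rho=\epk^\chi\Rk$ and the scale on which one evaluates.

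Next, for the height excess bound: with ${\bf \bar y}=({\bf \bar y}',\ell({\bf \bar y}'))$, the $L^\infty$ closeness says $\{u=0\}\cap \mathcal C_{\rho/16}({\bf \bar y}')\subset\{|x_n-\ell(x')|\le C\epk^{1+2\chi/3}\rho\}$, and since $\ell$ is affine with $|\nabla\ell|\le o_k(1)$ we may rotate so that, near ${\bf \bar y}$, $\{u=0\}\cap B_{\rho/64}({\bf \bar y})\subset\{|e\cdot(x-{\bf \bar y})|\le C\epk^{1+2\chi/3}\rho\}$ for a suitable $e\in\Sp^{n-1}$. Applying \Cref{lem:expdecay} we get $\{|u|\le 0.9\}\cap B_{\rho/64}({\bf \bar y})\subset\{|e\cdot(x-{\bf \bar y})|\le C\epk^{1+2\chi/3}\rho+C\}$, and then \Cref{lem:hconthexc} (with $\lambda=1/2$, say, using $\epk\Rk\ge c$ so the constant term is lower order) yields ${\bf H}_{\rho/64}^2({\bf \bar y})\le C\max\{\epk^{2+4\chi/3},(\rho/64)^{-1}\}^2\,{\bf M}+Ce^{-c(\rho/64)^{1/2}}\le C\epk^{2+\chi}$, absorbing the $\rho^{-1}$ term via $\rho^{-1}\lesssim \epk^{2-\chi}\le \epk^{1+\chi/2}$ and the exponential via $\rho\to\infty$.

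For the density deficit, the upper bound ${\bf M}_{\theta\Rk}({\bf \bar y})\le K_*$ is immediate from monotonicity and \Cref{prop:W} (density at infinity is $K_*$, and ${\bf M}$ is nondecreasing). For the lower bound I would argue as follows: on the clean cylinder we know $\{u=0\}$ consists of exactly $K_*$ nearly parallel, nearly flat graphs, all within vertical distance $C\epk^{1+2\chi/3}\rho$ of the hyperplane $\{x_n=\ell(x')\}$ through ${\bf \bar y}$, with layer separations governed by \eqref{eq:wwsepbnd}. A slicing argument in the $x_n$-direction, comparing $u$ restricted to vertical segments with the $K_*$-layer periodic 1D Allen--Cahn profiles of \Cref{sec:1DAC} (whose energy approaches $K_*$ as the period grows), shows that $\tfrac1{(\theta\Rk)^{n-1}}\int_{\mathcal C_{\theta\Rk}({\bf \bar y}')}(\tfrac12|\nabla u|^2+W(u))\ge K_*-C(\epk^{1+2\chi/3}\rho/(\theta\Rk))^2$ for $\theta\in[\epk,\epk^\chi/8]$; since $\rho=\epk^\chi\Rk$ this error is $C(\epk^{1+4\chi/3}/\theta)^2$, and converting the cylinder integral to the ball ${\bf M}_{\theta\Rk}({\bf \bar y})$ costs only a controlled constant (or one works directly with cylinders and passes to balls via exponential decay off $\{u=0\}$). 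The main obstacle is precisely this last step: extracting a \emph{quantitative} lower bound on the Allen--Cahn density from the flatness and separation of the layers, since one must rule out energy being ``lost'' in the transverse directions and must correctly book-keep the powers of $\epk$ through the 1D comparison and the layer-separation bound \eqref{eq:wwsepbnd} — getting the exponent $2+\chi$ (rather than something worse) out of this is the delicate point, and it is where the precise choices $\chi=\beta=\alpha=\tfrac1{40}$, $\tfrac1{20}$ enter.
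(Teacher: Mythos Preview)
Your overall strategy matches the paper's: upgrade the $L^1$ flatness from \Cref{prop:impflatsinglin} to $L^\infty$ on the clean ball around ${\bf \bar y}'$ via interior elliptic estimates, then read off ${\bf H}$ via \Cref{lem:hconthexc}, and obtain the density lower bound by a vertical slicing/1D-comparison using \Cref{sec:1DAC} and the layer separation \eqref{eq:wwsepbnd}. For Step~1 (height excess) your sketch is essentially the paper's proof; the paper uses $\lambda=1/4$ in \Cref{lem:hconthexc} and the second bullet of \Cref{lem:addproperties} (rather than \Cref{lem:expdecay} directly) to pass from $\{u=0\}$ to $\{|u|\le 0.9\}$, but these are cosmetic differences. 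Your claim that one must ``extract a slightly better exponent $\epk^{1+2\chi/3}$'' from the $L^\infty$ bound is however a misdiagnosis: the paper uses exactly $\epk^{1+\chi/2}$ for the flatness throughout.

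The real gap in your proposal is in the density-deficit step, specifically the sentence ``converting the cylinder integral to the ball ${\bf M}_{\theta\Rk}({\bf \bar y})$ costs only a controlled constant''. This is false and is precisely where the exponent $(\epk^{1+4\chi/3}/\theta)^2$ comes from. The slicing argument gives an energy lower bound on a \emph{cylinder} $B'_{a}\times[-h,h]$ containing the $K_*$ layers; to compare with ${\bf M}_{\theta\Rk}$ you need this cylinder inside the ball $B_{\theta\Rk}$, and the area ratio $|B'_a|/|B'_{\theta\Rk}|$ is what determines the deficit. The paper's Step~2 takes the Pythagorean choice $a=\sqrt{(\theta\Rk)^2-h^2}$ with $h=\epk^{1+\chi/3}\cdot(\epk^\chi\Rk)=\epk^{1+4\chi/3}\Rk$ (a slight thickening of the actual flatness scale $\epk^{1+\chi/2}\rho$), which yields a \emph{quadratic} loss $|B'_a|/|B'_{\theta\Rk}|\ge 1-C(h/(\theta\Rk))^2=1-C(\epk^{1+4\chi/3}/\theta)^2$. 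A careless cylinder-to-ball comparison would give only a linear loss $h/(\theta\Rk)$, which is far too weak. The 1D comparison itself (Step~3) contributes only an $O(\epk^3)$ error per slice via \Cref{prop:1Dasympt}, so the final error is dominated by this geometric inscribed-cylinder term. Your arithmetic $\epk^{1+2\chi/3}\cdot\epk^\chi=\epk^{1+4\chi/3}$ also contains a slip (it would give $1+5\chi/3$); the correct factors are $\epk^{1+\chi/3}\cdot\epk^\chi$.
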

 
\begin{proof}
The fact that ${\bf \bar y}\in B_{\epk^\chi\Rk}(\bar \zz)$ for $k$ large enough follows easily from \Cref{prop:impflatsinglin} and \Cref{lem:graphlargeflat}.

\noindent {\bf Step 1.} Improvement of ${\bf H}$.\\
From \Cref{prop:impflatsinglin} and $|{\bf \bar y}'-\bar\zz'|=\frac{1}{2}\epk^{\chi}\Rk$, for each $i\in\{1,...,K_*\}$ we have $\frac{1}{(\epk^\chi\Rk)}\fint_{B_{\frac{1}{2}\epk^\chi\Rk}'({\bf \bar y'})} |g_i-\ell| \le C \epk^{1+\cttc/2}$.\\
Since $B_{\frac{1}{2}\epk^\chi\Rk}'({\bf \bar y'})$ is a (projected) good ball, by elliptic estimates (using \eqref{eq:meancurvsheet} with $R= \frac 12 \epk^\chi \Rk$) we can upgrade this into
\[
\frac{1}{(\epk^\chi\Rk)}|g_i-\ell| 
\le C \epk^{1+\cttc/2} \quad\text{in}\quad B_{\frac{1}{4}\epk^\chi\Rk}'({\bf \bar y'}).
\]
Set ${\bf \bar y}_n:=\ell({\bf \bar y'})$; choosing an appropriate Euclidean frame, we can assume that ${\bf \bar y}=0$ and $l\equiv 0$ (up to restricting to a smaller cylinder). Then
\begin{align*}
    \{u=0\}=\bigcup_{i=1}^{K_*} {\rm graph}\,g_i\subset \{|x_n|\leq \epk^{1+\cttc/2} (\epk^\chi\Rk)\} \quad\mbox{in the cylinder}\quad \mathcal C_{(\epk^\chi\Rk)/6}\,.
\end{align*}
Using the second bullet in \Cref{lem:addproperties} (since $c\leq \epk^2\Rk\ll \epk^{1+\cttc/3} (\epk^\chi\Rk)$) this can be upgraded to:
$$\{|u|\leq 0.9\}\subset \{|x_n|\leq C\epk^{1+\cttc/2} (\epk^\chi\Rk)\} \quad\mbox{in}\quad \mathcal C_{(\epk^\chi\Rk)/8}.$$
We can then apply \Cref{lem:hconthexc} with $\delta=\epk^{1+\chi/2}$ and $\lambda=1/4$. Choosing $\chi>0$ small enough so that---since $cR_k^{-1}\le \epk^2$ by \eqref{eq:awioufhiu4n}---we have $\delta^2\gg \epk^{(2-\chi)(3/2)}\geq c(\epk^\chi\Rk)^{-2(1-\lambda)}$, for $k$ large, we find that ${\bf H}_{\frac{1}{64}(\epk^\chi\Rk)}^2(\bar{\bf y})\leq C\epk^{2+\cttc}$.

\noindent {\bf Step 2.} Largest cylinder.

Fix $\theta\in[\epk,\epk^\chi/8]$, and set $\bar\ep_k:=\epk^{1+\chi/3}\gg \epk^{1+\chi/2}$. We want to consider a cylinder in $B_{\theta \Rk}$ containing as much of the area of our graphs as possible: Set $\mathcal C^\theta:=B_{a_\theta\Rk}'\times [-\bar\ep_k(\epk^\chi\Rk),\bar\ep_k(\epk^\chi\Rk)]$ (recall ), where
$$
a_\theta=\sqrt{\theta^2-[\bar\ep_k\epk^\chi]^2}=\theta\sqrt{1-[\frac{\bar\ep_k\epk^\chi}{\theta}]^2}\,,\quad\mbox{thus}\quad [a_\theta\Rk]^2+[\bar\ep_k(\epk^\chi\Rk)]^2=\theta^2\Rk^2\,.
$$
Then, we see that
$$\bigcup_i {\rm graph}\, g_i|_{B_{a_\theta\Rk}'}\subset \{|x_n|\leq C\epk^{1+\cttc/2} (\epk^\chi\Rk)\} \cap \mathcal C^\theta \subset B_{\theta\Rk/8}\,.$$
Moreover, observe that
\begin{equation}\label{eq:0hipgbabggar}
    |B_{a_\theta \Rk}'|=|B_{\theta\sqrt{1-\Big[\frac{\bar\ep_k\epk^\chi}{\theta}\Big]^2}\Rk}'|=|B_{\theta\Rk}'|\left(1-\Big[\frac{\bar\ep_k\epk^\chi}{\theta}\Big]^2\right)^{\frac{n-1}{2}}\geq |B_{\theta\Rk}'|\left(1-C\Big[\frac{\bar\ep_k\epk^\chi}{\theta}\Big]^2\right)\,.
\end{equation}

\noindent {\bf Step 3.} Slicing argument---we show that each sheet of $\{u=0\}$, $x_n = g_i (x')$, forces a contribution of almost $
\frac{|B_{a_\theta \Rk}'|}{\omega_{n-1}}$ (with a quantified error) to the total A--C energy of $u$ in $B_{\theta \Rk}$.\\
Let $\Lambda:=\frac{1+3/4}{4}\sqrt{2}\log{\frac{(\epk^\chi\Rk)}{8}}$, and define for every $i\in\{1,...,K_*\}$ the ``transition regions'' 
$$L_i^{-}:=\left\{(x',x_n):x'\in B_{a_\theta\Rk}'\,,\,x_n\in\left(g_i(x')-\Lambda, g_i(x')\right)\right\}$$
and
$$L_i^{+}:=\left\{(x',x_n):x'\in B_{a_\theta\Rk}'\,,\,x_n\in\left(g_i(x'),g_i(x')+\Lambda\right)\right\}\,.$$
They are all contained in $\mathcal C^\theta$. The point is that, by \eqref{eq:wwsepbnd}, we know that $g_{i+1}-g_i\geq \Lambda$, thus $L_i^+\cap L_{i+1}^-=\emptyset$ for every $1\leq i<K_*$ (eventually in $k$). This shows that the transition regions are all disjoint (for $k$ large enough).\\
Fix $i\in\{1,...,K_*\}$; applying Fubini and $|\nabla u|^2\geq u_{x_n}^2$, we see that
\begin{align*}
    \sigma_{n-1}\mathcal E(u,L_i^+)=\int_{B_{a_\theta\Rk}'}dx'\int_{g_i(x')}^{g_i(x')+\Lambda}dx_n\, \left[\frac{|\nabla u|^2}{2} +W(u)\right]\geq \int_{B_{a_\theta\Rk}'}dx'\int_{g_i(x')}^{g_i(x')+\Lambda}dx_n\, \left[\frac{u_{x_n}^2}{2} +W(u)\right]\,.
\end{align*}
Define the 1D function $f_{x'}(t):=u\left(x',g_i(x')+t\right)$, $t\in [0,\Lambda]$. Then, letting $t:=x_n-g_i(x')$ in the integral,
\begin{align*}
    \sigma_{n-1}\mathcal E(u,L_i^+)&\geq \int_{B_{a_\theta\Rk}'}dx'\int_{0}^{\Lambda}dt\,  \left[\frac{(\frac{d}{dt}f_{x'})^2(t)}{2} +W(f_{x'}(t))\right]\,.
\end{align*}
Fixed $x'$, the inner integral is (up to a factor $\frac{1}{\sigma_0}$) precisely the A--C energy in $[0,\Lambda]$ of the 1D function $f_{x'}(t)$. Moreover, $f_{x'}(0)=u(x',g_i(x'))=0$. Now, $f_{x'}(t)$ is not necessarily a solution to A--C in 1D; on the other hand, letting $\kappa=3/4$ and $R=\frac{(\epk^\chi\Rk)}{8}$, its A--C energy is at least
$$P(\kappa,R):= \inf\left\{\mathcal E_{1D}\left (v,\frac{1+\kappa}{4}\sqrt{2}\log{R}\right ):v\in C_c^1([0,\frac{1+\kappa}{4}\sqrt{2}\log{R}])\text{ and } v(0)=0\right\},
$$
where we denote \[\mathcal E_{1D}(v,r)=\frac{1}{\sigma_0}\int_0^r \frac{1}{2} (v'(t))^2 + W(v(t))\,dt\,,\] 
and the infimum is attained by a true 1D Allen--Cahn solution. In \Cref{prop:1Dasympt}, it is shown that $P(\kappa,R)\geq \frac{1}{2}-CR^{-(1+\kappa)}$, which since $\Rk\geq c\epk^{-2}$ (by \eqref{eq:awioufhiu4n}) shows that $P(\kappa,R)\geq \frac{1}{2}-C\epk^{(2-\chi)(1+\kappa)} \geq \frac{1}{2}-C\epk^{3}$.\\
Combined with the above and \eqref{eq:0hipgbabggar}, we find that
\begin{align*}
    \omega_{n-1}\mathcal E(u,L_i^+)&\geq \int_{x'\in B_{a_\theta\Rk}'}P\left(\kappa,R\right)\,dx'\geq \left|B_{a_\theta\Rk}'\right|\left[\frac{1}{2}-C\epk^{3}\right]\geq |B_{\theta\Rk}'|\left[\frac{1}{2}-C[\frac{\bar\ep_k\epk^\chi}{\theta}]^2\right]\,,
\end{align*}
so that
\begin{align*}
    {\bf M}_{\theta\Rk}=\frac{\omega_{n-1}}{|B_{\theta\Rk}'|}\mathcal E(u, B_{\frac{(\epk^\chi\Rk)}{8}})\geq \frac{\omega_{n-1}}{|B_{\theta\Rk}'|}\sum_{i=1}^{K_*}\left[\mathcal E(u,L_i^-)+\mathcal E(u,L_i^+)\right]\geq K_*-C[\frac{\bar\ep_k\epk^\chi}{\theta}]^2
\end{align*}
as desired.
\end{proof}
\subsection{A monotonicity-type formula: Density deficit controls growth of excess}
\begin{definition}[{\bf Weighted excess and density}]\label{def:heathexc} Let $f_{r,d}:\R\to\R$ and $G_{r,d}:\R^n\to\R$ be defined by
$$
    f_{r,d}(t):=\frac{1}{c_d r^d}e^{\frac{-t^2}{r^2}}\quad\mbox{and}\quad G_{r,d}(x):=f_{r,d}(|x|)\,,\qquad\mbox{where}\quad c_d=d\int_0^\infty t^{d-1}e^{-t^2}\,.
$$
We set
    $${\bf \widetilde H}_r^2(e):=\int_{\R^n} (x\cdot e)^2\left[\frac{|\nabla u|^2}{2}+W(u)\right]G_{r,n+1},\quad {\bf \widetilde M}_r^2:=\frac{1}{\sigma_{n-1}}\int_{\R^n} \left[\frac{|\nabla u|^2}{2}+W(u)\right]G_{r,n-1}\,.$$
    Moreover, we denote ${\bf \widetilde H}_r^2:=\inf_{e\in\Sp^{n-1}}{\bf \widetilde H}_r^2(e)$, ${\bf \widetilde H}_r^2(e,x_0):={\bf \widetilde H}_r^2(u(\cdot-x_0),e)$, and ${\bf \widetilde M}_r^2(x_0):={\bf \widetilde M}_r^2(u(\cdot-x_0))$.
\end{definition}
The weighted versions are just as good for our purposes, up to a going to a slightly smaller scale:
\begin{proposition}[{\bf Improvement of weighted excess and density deficit}]\label{lem:compweighHM}
    There exists ${\bf \bar y}$ (given by \Cref{prop:ACexcdens}) such that the following holds. Let $\chi_\sharp=\frac{31}{30}\chi$. Then, we have
\begin{equation}
        {\bf \widetilde H}_{\epk^{\chi_\sharp}\Rk}^2({\bf \bar y}) \leq C\epk^{2+\chi_\sharp/2}\quad\mbox{and}\quad K_*- \epk^{2+\chi_\sharp/2}\leq {\bf \widetilde M}_{\epk^{\chi_\sharp}\Rk}({\bf \bar y})\leq  {\bf \widetilde M}_{\infty}= K_*
\end{equation}
for $k$ large enough.
\end{proposition}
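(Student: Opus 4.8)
The statement is a dictionary-translation result: it converts the improved estimates of \Cref{prop:ACexcdens}, phrased for the hard-cutoff quantities ${\bf H}_r$ and ${\bf M}_r$, into estimates for the heat-kernel-weighted quantities ${\bf \widetilde H}_r$ and ${\bf \widetilde M}_r$ of \Cref{def:heathexc}, at the slightly smaller scale $r=\epk^{\chi_\sharp}\Rk$ with $\chi_\sharp=\tfrac{31}{30}\chi$. The plan is to prove two elementary comparison inequalities relating the weighted and unweighted versions and then plug in. Throughout we center at ${\bf \bar y}$, so we may translate so that ${\bf \bar y}=0$, and we recall the basic facts: the energy density $P:=\tfrac{|\nabla u|^2}{2}+W(u)$ is nonnegative, ${\bf M}_r$ is monotone nondecreasing with ${\bf M}_\infty=K_*$, and the kernels $G_{r,n+1},G_{r,n-1}$ are normalised (by the choice of $c_d$) so that for the \emph{exact} constant density one recovers $K_*$ in the limit $r\to\infty$; in particular ${\bf \widetilde M}_\infty=K_*$, which gives the rightmost equality in the statement for free.

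\textbf{Step 1: from weighted to dyadic-annular sums.} Write the Gaussian weight as a sum over dyadic annuli $A_j:=\{2^{j-1}r\le |x|<2^{j}r\}$ for $j\ge 1$ together with the core ball $B_r$. On $A_j$ one has $G_{r,n+1}(x)\le C r^{-(n+1)} e^{-c4^{j}}$, and likewise $G_{r,n-1}(x)\le C r^{-(n-1)}e^{-c 4^j}$. Hence
\[
{\bf \widetilde H}_r^2(e)\le C\sum_{j\ge 0} (2^j r)^2\,\frac{e^{-c4^j}}{r^{n+1}}\int_{B_{2^jr}}P
= C\sum_{j\ge 0} 4^{j}\,e^{-c4^j}\,{\bf H}_{2^jr}^2(e)\,(2^j)^{n-1}\,,
\]
after recognising $\tfrac{1}{(2^jr)^{n+1}}\int_{B_{2^jr}}(x\cdot e)^2 P=:{\bf H}_{2^jr}^2(e)$ up to the $(2^j)^{n-1}$ Jacobian bookkeeping; a symmetric computation gives ${\bf \widetilde M}_r\le C\sum_{j\ge 0} e^{-c4^j}(2^j)^{n-1}{\bf M}_{2^jr}$. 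The exponential factors beat the polynomial growth, so both sums converge and are dominated (up to a universal constant) by a \emph{finite} linear combination of the hard-cutoff quantities at scales $2^jr$ for $j=0,\dots,J$, plus a tail term bounded by $e^{-c 4^J}\,{\bf M}_\infty$ which, since ${\bf M}_\infty=K_*$ is bounded, is smaller than any power of $\epk$ once $J\sim\log(1/\epk)$; crucially the scales $2^j r$ with $2^j\le \epk^{-\chi/100}$, say, all stay well below $\Rk/64$ because $\chi_\sharp>\chi$, so \Cref{prop:ACexcdens} applies to each of them.

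\textbf{Step 2: plug in \Cref{prop:ACexcdens}.} For the height excess: from Step 1 and the first bullet of \Cref{prop:ACexcdens} (applied to each scale $2^j r\le (\epk^\chi\Rk)/64$, which holds since $\epk^{\chi_\sharp}=\epk^{\chi}\epk^{\chi/30}$ leaves room for a fixed power of $\epk$ worth of dyadic scaling) we get ${\bf H}_{2^jr}^2\le C\epk^{2+\chi}$ for all relevant $j$, so ${\bf \widetilde H}_{\epk^{\chi_\sharp}\Rk}^2\le C\epk^{2+\chi}\le C\epk^{2+\chi_\sharp/2}$ (since $\chi>\chi_\sharp/2$). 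Actually one must also control scales $2^j r$ that exceed $(\epk^\chi\Rk)/64$; but those occur only for $2^j\gtrsim \epk^{\chi-\chi_\sharp}=\epk^{-\chi/30}$, and there the exponential weight $e^{-c4^j}$ is already smaller than any power of $\epk$, so the trivial bound ${\bf H}_{2^jr}^2\le C{\bf M}_\infty\le CK_*$ (via \Cref{lem:hconthexc} or just ${\bf H}_r^2\le C{\bf M}_r$) suffices to absorb them. For the density deficit: by monotonicity ${\bf M}_{2^jr}\le K_*$, so ${\bf \widetilde M}_r\le C\sum e^{-c4^j}(2^j)^{n-1}K_* = K_*$ (using the normalisation of $c_d$); for the lower bound write $K_*-{\bf M}_{2^jr}\le K_* - {\bf M}_{\theta\Rk}({\bf \bar y})\le C[\epk^{1+4\chi/3}/\theta]^2$ from the second bullet of \Cref{prop:ACexcdens} with $\theta$ chosen comparable to $2^j\epk^{\chi_\sharp}$ (valid as long as $\theta\in[\epk,\epk^\chi/8]$, i.e. for $j$ up to roughly $\chi\log_2(1/\epk)$; beyond that, use the core ball contribution $\theta=\epk^{\chi_\sharp}$ which already gives the bound and the tail is exponentially negligible), obtaining $K_*-{\bf \widetilde M}_r\le C[\epk^{1+4\chi/3}/\epk^{\chi_\sharp}]^2 = C\epk^{2+8\chi/3-2\chi_\sharp}$; the arithmetic $2+8\chi/3-2\cdot\tfrac{31}{30}\chi = 2 + \tfrac{80\chi - 62\chi}{30} = 2+\tfrac{18\chi}{30}=2+\tfrac{3\chi}{5}>2+\chi_\sharp/2$ since $\chi_\sharp/2=\tfrac{31}{60}\chi<\tfrac{3}{5}\chi$, so we conclude $K_*-\epk^{2+\chi_\sharp/2}\le {\bf \widetilde M}_{\epk^{\chi_\sharp}\Rk}({\bf \bar y})$.

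\textbf{Main obstacle.} The analysis is essentially bookkeeping once the dyadic-annulus decomposition is set up, so no single step is deep; the point requiring care is \emph{tracking the exponents} so that the slightly larger scale $\epk^{\chi_\sharp}\Rk$ (rather than $\epk^\chi\Rk$) leaves exactly enough slack for (a) the finitely many dyadic scales below the validity threshold of \Cref{prop:ACexcdens} to still fall inside $[\epk,\epk^\chi/8]$ in the density bullet and below $(\epk^\chi\Rk)/64$ in the height bullet, and (b) the leftover loss $\epk^{2\chi_\sharp - 2\cdot(4\chi/3)}$ or $\epk^{2\chi-2\chi_\sharp/2}$ to remain a strictly positive power, i.e. the inequalities $\chi_\sharp/2<\chi$ and $\chi_\sharp < 4\chi/3$ and $\chi_\sharp/2<3\chi/5$ must all be compatible with $\chi_\sharp=\tfrac{31}{30}\chi$, which they are. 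The other mild subtlety is that the Gaussian tails must be shown to contribute less than the target power of $\epk$; this is automatic because $e^{-c4^J}$ decays faster than any power of $\epk$ once $J\gtrsim \log\log(1/\epk)$, and the energy densities summed there are uniformly bounded by ${\bf M}_\infty=K_*<\infty$.
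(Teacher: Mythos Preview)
Your approach is correct in spirit and rests on the same idea as the paper's (Gaussian tail decay lets one compare the weighted and hard-cutoff quantities), but the paper's packaging is cleaner and sidesteps two imprecisions in your write-up.

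For ${\bf \widetilde H}$, the paper isolates a one-line comparison lemma
\[
{\bf \widetilde H}_{\lambda r}^2 \le C\lambda^{-(n+1)}{\bf H}_r^2 + Ce^{-1/(2\lambda)}{\bf M}_\infty,
\]
obtained from the pointwise bound $G_{\lambda r,n+1}\le C(\lambda r)^{-(n+1)}\chi_{B_r}+C(\lambda r)^{-(n+1)}e^{-|x|^2/(\lambda r)^2}\chi_{B_r^c}$, and applies it with $r=(\epk^\chi\Rk)/64$ and $\lambda=\epk^{\chi/30}$. This needs ${\bf H}$ at a \emph{single} scale, which is exactly what \Cref{prop:ACexcdens} provides. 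Your version instead asserts ${\bf H}_{2^jr}^2\le C\epk^{2+\chi}$ for a whole range of $j$, but \Cref{prop:ACexcdens} as stated gives this only at one scale; to justify your claim you would have to go back into its proof, extract the $L^\infty$ flatness of the level sets, and reapply \Cref{lem:hconthexc} at each scale. That works, but it is not what you wrote.

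For ${\bf \widetilde M}$, your upper bound ${\bf \widetilde M}_r\le K_*$ does not follow from a dyadic overestimate with unspecified constants $C,c$ (those do not sum to one). The paper proves the exact layer-cake identity ${\bf \widetilde M}_r=-\int_0^\infty t^{n-1}f'_{r,n-1}(t){\bf M}_t\,dt$ with weights integrating to $1$, so ${\bf \widetilde M}_r$ is a genuine convex combination of ${\bf M}_t\le K_*$; equivalently, it uses the monotonicity of ${\bf \widetilde M}_r$ (proved via a weighted Pohozaev computation) together with ${\bf \widetilde M}_\infty=K_*$. For the lower bound the paper uses the same layer-cake formula, splitting the $t$-integral at $a=\epk\Rk$ and $r=\epk^{\chi_\sharp}\Rk$ and plugging in the second bullet of \Cref{prop:ACexcdens}; your annular sum with $\theta=2^j\epk^{\chi_\sharp}$ is morally equivalent, and your exponent arithmetic is correct.
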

We will show this later. We first state the crucial monotonicity-type formula that will allow to propagate the smallness of ${\bf \widetilde H}$ from scale $\epk^{\chi_\sharp} \Rk$ to the ``original scale'' $R_k$.
\begin{theorem}[\textbf{Density deficit controls excess growth}]\label{prop:moncontrexc}
    For any $r>0$ and  $\lambda\in(0,\frac{1}{2})$, we have\footnote{This result works for any A--C solution $u:\R^n\to[-1,1]$, and not just for the critical solution and $n=4$.}
    \begin{equation}\label{excmonineq}
        {\bf \widetilde H}_r\leq {\bf \widetilde H}_{\lambda r}+C|\log(\lambda)|^{1/2}({\bf \widetilde M}_r-{\bf \widetilde M}_{\lambda r})^{1/2}\,.
    \end{equation} 
\end{theorem}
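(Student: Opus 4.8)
\textbf{Proof plan for Theorem \ref{prop:moncontrexc}.}

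The plan is to exploit the equality case of Modica's monotonicity formula at the level of the weighted quantities. First I would record the basic differential identities obtained by differentiating ${\bf \widetilde M}_r$ and ${\bf \widetilde H}_r^2(e)$ in $r$. Writing $P=\tfrac12|\nabla u|^2+W(u)$ and $\xi=\tfrac{|\nabla u|^2}{2}-W(u)$ for the discrepancy, a direct computation (integration by parts against the Gaussian weight $G_{r,d}$, using $-\Delta u+W'(u)=0$) gives that $\tfrac{d}{dr}{\bf \widetilde M}_r$ equals a positive multiple of $\tfrac{1}{r}\int (x\cdot\nabla u - \text{(radial term)})^2$-type quantity \emph{plus} a term involving $\int \xi\, G$, exactly as in \eqref{eq:monfor}; crucially, by Theorem \ref{thm:villegas} (discrepancy decay) the latter is controlled, so ${\bf \widetilde M}_r$ is ``almost monotone'' and ${\bf \widetilde M}_\infty={\bf M}_\infty$. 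The key algebraic point is that the derivative of the \emph{excess} ${\bf \widetilde H}_r(e)$ can be written, after completing the square in the $r$-derivative, as a sum of a ``good'' (sign-definite, coercive) term and an error that is pointwise bounded by $\tfrac{C}{r}$ times the quantity whose integral gives $\tfrac{d}{dr}{\bf \widetilde M}_r$. Schematically one arrives at an inequality of the form
\begin{equation*}
    \Big|\frac{d}{dr}{\bf \widetilde H}_r(e_r)\Big| \le \frac{C}{r}\Big(\frac{d}{dr}{\bf \widetilde M}_r\Big)^{1/2} + \frac{C}{r}(\text{discrepancy error}),
\end{equation*}
where $e_r$ is (a measurable selection of) the optimal direction at scale $r$; one has to be slightly careful since $e_r$ varies, but since we only need an \emph{upper} bound on ${\bf \widetilde H}_r$ in terms of ${\bf \widetilde H}_{\lambda r}$ we may freeze $e=e_{\lambda r}$, bound ${\bf \widetilde H}_r\le {\bf \widetilde H}_r(e_{\lambda r})$, and integrate the differential inequality for the single-direction quantity $\rho\mapsto {\bf \widetilde H}_\rho(e_{\lambda r})$ on $[\lambda r, r]$.

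Second I would integrate. From $\tfrac{d}{d\rho}{\bf \widetilde H}_\rho(e) \le \tfrac{C}{\rho}\big(\tfrac{d}{d\rho}{\bf \widetilde M}_\rho\big)^{1/2} + \tfrac{C}{\rho}(\cdots)$ and the fundamental theorem of calculus,
\begin{equation*}
    {\bf \widetilde H}_r(e) - {\bf \widetilde H}_{\lambda r}(e) \le C\int_{\lambda r}^{r} \frac{1}{\rho}\Big(\frac{d}{d\rho}{\bf \widetilde M}_\rho\Big)^{1/2}\,d\rho + (\text{error}),
\end{equation*}
and then Cauchy--Schwarz on the first integral with respect to the measure $\tfrac{d\rho}{\rho}$ gives
\begin{equation*}
    \int_{\lambda r}^{r}\frac{1}{\rho}\Big(\frac{d}{d\rho}{\bf \widetilde M}_\rho\Big)^{1/2}d\rho \le \Big(\int_{\lambda r}^{r}\frac{d\rho}{\rho}\Big)^{1/2}\Big(\int_{\lambda r}^{r}\frac{d}{d\rho}{\bf \widetilde M}_\rho\,d\rho\Big)^{1/2} = |\log\lambda|^{1/2}\,({\bf \widetilde M}_r-{\bf \widetilde M}_{\lambda r} + \text{err})^{1/2}.
\end{equation*}
This produces exactly the claimed inequality, modulo the discrepancy error term. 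Finally I would absorb that error: by Theorem \ref{thm:villegas} the discrepancy contribution over the dyadic-type window $[\lambda r, r]$ decays like a negative power of $r$ (and is dominated by $({\bf \widetilde M}_r-{\bf \widetilde M}_{\lambda r})^{1/2}$ up to constants, or can simply be added to the right-hand side and then shown to be negligible against ${\bf \widetilde H}_{\lambda r}$ in the application); since the statement as written allows the constant $C=C(n)$ and since ${\bf \widetilde M}$ is genuinely monotone up to this controlled discrepancy term, the error is harmless. One should double-check the Gaussian weight bookkeeping: the specific normalizations $c_d$ and the choice of exponent $n+1$ for ${\bf \widetilde H}$ versus $n-1$ for ${\bf \widetilde M}$ are precisely what make the two $r$-derivatives compatible (the extra $|x\cdot e|^2$ factor shifts the effective dimension by $2$), so the completing-the-square step closes.

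The main obstacle I expect is the completing-the-square/coercivity step: showing that $\tfrac{d}{d\rho}{\bf \widetilde H}_\rho(e)$ can genuinely be written as (sign-definite good term) $+$ (error bounded by $\rho^{-1}(\tfrac{d}{d\rho}{\bf \widetilde M}_\rho)^{1/2}$). This requires computing $\tfrac{d}{d\rho}{\bf \widetilde H}_\rho(e)$ explicitly, recognizing that the ``bad'' part is the cross term $\int (x\cdot e)(e\cdot\nabla u)(x\cdot\nabla u)\,\partial_\rho G$ type expression, and estimating it by Cauchy--Schwarz against the quantity $\int(x\cdot\nabla u - \ldots)^2\partial_\rho G$ that appears in the monotonicity of ${\bf \widetilde M}$ — this is where the weighted (heat-kernel) choice of ${\bf \widetilde H}$, ${\bf \widetilde M}$ is essential, because for the unweighted spherical versions the boundary terms do not align and one cannot close the estimate. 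A secondary technical point is handling the measurable selection of optimal directions $e_\rho$; as noted, this is circumvented by proving the differential inequality for each fixed $e$ and only taking the infimum at the two endpoints $\rho=r$ (trivially, since ${\bf \widetilde H}_r\le{\bf \widetilde H}_r(e)$ for all $e$) and $\rho=\lambda r$ (choosing $e=e_{\lambda r}$ optimal there).
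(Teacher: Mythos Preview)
Your plan is essentially the paper's own approach: compute $r\frac{d}{dr}{\bf\widetilde H}_r^2(e)$ via a Pohozaev identity against the Gaussian weight, compare with $r\frac{d}{dr}{\bf\widetilde M}_r$, bound the cross term by Cauchy--Schwarz, then integrate. Two corrections are needed.

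First, the discrepancy/Villegas detour is unnecessary. The weighted density is \emph{genuinely} monotone: one obtains exactly
\[
r\frac{d}{dr}{\bf\widetilde M}_r \;=\; \frac{1}{\sigma_{n-1}}\int \Big[Q+\tfrac{2}{r^2}(x\cdot\nabla u)^2\Big]G_{r,n-1},\qquad Q:=W(u)-\tfrac12|\nabla u|^2\ge 0
\]
by Modica's inequality, so there is no error to absorb.

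Second, and more substantively, the ``good sign-definite term'' you hope to discard does not exist with the right sign. The Pohozaev computation gives
\[
r\frac{d}{dr}{\bf\widetilde H}_r^2(e_n)=\int x_n^2\Big[Q+\tfrac{2}{r^2}(x\cdot\nabla u)^2\Big]G_{r,n+1}\;-\;2\int x_n u_n\,(x\cdot\nabla u)\,G_{r,n+1},
\]
and the first integral is \emph{nonnegative}, hence bad for an upper bound. After bounding it by $C\,r\frac{d}{dr}{\bf\widetilde M}_r$ and applying Cauchy--Schwarz to the cross term, the actual differential inequality is
\[
r\frac{d}{dr}{\bf\widetilde H}_r^2(e_n)\;\le\; C\,r\frac{d}{dr}{\bf\widetilde M}_r \;+\; C\,{\bf\widetilde H}_r(e_n)\,\big[r\tfrac{d}{dr}{\bf\widetilde M}_r\big]^{1/2}.
\]
Dividing by ${\bf\widetilde H}_r$ leaves a term $\tfrac{C}{{\bf\widetilde H}_r}\frac{d}{dr}{\bf\widetilde M}_r$ that blows up where ${\bf\widetilde H}_r$ is small, so one cannot simply integrate. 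The paper handles this by a short Gr\"onwall-type step: set ${\bf A}_r$ equal to the right-hand side of the integrated inequality, use ${\bf A}_r\ge C({\bf\widetilde M}_r-{\bf\widetilde M}_a)$ to control the dangerous denominator, and only then integrate $\tfrac{d}{dr}{\bf A}_r^{1/2}$. (Relatedly, your Cauchy--Schwarz bookkeeping is off by a factor of $\sqrt\rho$: the term that survives is $[\tfrac{1}{\rho}\tfrac{d}{d\rho}{\bf\widetilde M}_\rho]^{1/2}$, whose integral against $d\rho$ gives $|\log\lambda|^{1/2}({\bf\widetilde M}_r-{\bf\widetilde M}_{\lambda r})^{1/2}$ via Cauchy--Schwarz with $\tfrac{1}{\sqrt\rho}$.) Your handling of the optimal direction $e$ by freezing it is exactly what the paper does.
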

\begin{proof}
It suffices to show that for every $e\in\Sp^{n-1}$ and $\lambda\in(0,1)$, we have
\begin{equation}\label{excmonineq2}
        {\bf \widetilde H}_r(e)\leq {\bf \widetilde H}_{\lambda r}(e)+C[1+|\log(\lambda)|^{1/2}]({\bf \widetilde M}_r-{\bf \widetilde M}_{\lambda r})^{1/2}\,.
    \end{equation}
Up to choosing the right frame we can assume that $e=e_n$. Define $P:=\left[\frac{|\nabla u|^2}{2}+W(u)\right]$ and $Q:=\left[W(u)-\frac{|\nabla u|^2}{2}\right]$.

We will use the following:
\begin{itemize}
    \item Kernel properties: \qquad $
        \nabla G_{r,d}=\frac{-2}{r^2}G_{r,d} x\quad\mbox{and}\quad r\frac{d}{dr} G_{r,d}=-x\cdot \nabla G_{r,d}-dG_{r,d}\,.$
    \item Pohozaev identity\footnote{Proved via a standard computation using $\Delta u=W'(u)$.}: \qquad \ \ ${\rm div}(Px)={\rm div}((x\cdot \nabla u)\nabla u)+nP-|\nabla u|^2$.
\item $\nabla x_n^2 =2x_n e_n$\,.
\end{itemize}
\textbf{Step 1.} Pohozaev--type computations.\\ 
Observe that
\begin{align*}
     r\frac{d}{dr} {\bf\widetilde H}_r^2(e_n)=r\frac{d}{dr} \int x_n^2PG_{r,n+1}&=\int x_n^2P\left(-x\cdot \nabla G_r-(n+1)G_r\right)\,.
\end{align*}
Integrating by parts,
\begin{align*}
     r\frac{d}{dr} {\bf\widetilde H}_r^2(e_n)&=\int\nabla x_n^2\cdot xPG_{r,n+1}+\int x_n^2{\rm div}(Px)G_{r,n+1}-(n+1)\int x_n^2 P G_r\\
    &=2\int x_n^2PG_{r,n+1}+\int x_n^2\left[{\rm div}((x\cdot \nabla u)\nabla u)-P-|\nabla u|^2\right]G_{r,n+1}\\
    &=\int x_n^2QG_{r,n+1}+\int x_n^2{\rm div}((x\cdot \nabla u)\nabla u)G_{r,n+1}\,.
\end{align*}

Integrating the new divergence term by parts, relating $G_{r,n+1}$ and $G_{r,n-1}$, and using Cauchy--Schwarz we arrive at
\begin{align*}
      r\frac{d}{dr} {\bf\widetilde H}_r^2(e_n)&=\int x_n^2[Q+\frac{2}{r^2}(x\cdot \nabla u)^2]G_{r,n+1}-2\int x_nG_{r,n+1}(x\cdot \nabla u)u_n\\
    &=\frac{c_1}{r^2}\int x_n^2[Q+\frac{2}{r^2}(x\cdot \nabla u)^2]G_{r,n-1}-2c_2\int (x_nu_n G_{r,n+1}^{1/2})(\frac{x\cdot \nabla u}{r}G_{r,n-1}^{1/2})\\
    &\leq C\int [Q+\frac{2}{r^2}(x\cdot \nabla u)^2]G_{r,n-1}+C[\int x_n^2P G_{r,n+1}]^{1/2}[\int \frac{1}{r^2}(x\cdot \nabla u)^2G_{r,n-1}]^{1/2}\,.
\end{align*}
Now, analogous computations show that
\begin{equation}\label{eq:weighmonM}
      r\frac{d}{dr} {\bf\widetilde M}_r= \frac{1}{\sigma_{n-1}}\int [Q+\frac{2}{r^2}(x\cdot \nabla u)^2]G_{r,n-1}\,.
\end{equation}
In particular, ${\bf \widetilde M}_r$ is monotone nondecreasing by \Cref{lem:modicaineq}. Moreover, we see that
\begin{align*}
    r\frac{d}{dr} {\bf\widetilde H}_r^2(e_n)\leq Cr\frac{d}{dr} {\bf\widetilde M}_r+C[{\bf\widetilde H}_r^2(e_n)]^{1/2}[r\frac{d}{dr} {\bf\widetilde M}_r]^{1/2}\,.
\end{align*}

\textbf{Step 2.} Gr\"onwall-type argument.\\
Given $0<a<r$, integrating we find that
\begin{align*}
    {\bf\widetilde H}_r^2(e_n)\leq {\bf\widetilde H}_a^2(e_n)+C[{\bf\widetilde M}_r-{\bf\widetilde M}_a]+C\int_a^r[{\bf\widetilde H}_r^2(e_n)]^{1/2}[\frac{1}{r}\frac{d}{dr} {\bf\widetilde M}_r]^{1/2}=:{\bf A}_r\,.
\end{align*}

We then see that
$$\frac{d}{dr}{\bf A}_r= C\frac{d}{dr} {\bf\widetilde M}_r+C[{\bf\widetilde H}_r^2(e_n)]^{1/2}[r\frac{d}{dr} {\bf\widetilde M}_r]^{1/2}\leq  C\frac{d}{dr} {\bf\widetilde M}_r+C[{\bf\widetilde A}_r]^{1/2}[\frac{1}{r}\frac{d}{dr} {\bf\widetilde M}_r]^{1/2}\,,$$
thus dividing by ${\bf A}_r^{1/2}$ and using ${\bf A}_r\geq C[{\bf\widetilde M}_r-{\bf\widetilde M}_a]$ we reach\footnote{We can assume that $[{\bf\widetilde M}_r-{\bf\widetilde M}_a]>0$, as otherwise \eqref{eq:weighmonM} forces $u$ to be constant along rays, thus $u\equiv u(0)$.}
\begin{align*}
    2\frac{d}{dr}{\bf A}_r^{1/2}\leq C\frac{\frac{d}{dr} {\bf\widetilde M}_r}{{\bf A}_r^{1/2}}+C[\frac{1}{r}\frac{d}{dr} {\bf\widetilde M}_r]^{1/2}&\leq C\frac{\frac{d}{dr} [{\bf\widetilde M}_r-{\bf\widetilde M}_a]}{[{\bf\widetilde M}_r-{\bf\widetilde M}_a]^{1/2}}+C[\frac{1}{r}\frac{d}{dr} {\bf\widetilde M}_r]^{1/2}=2C\frac{d}{dr} [{\bf\widetilde M}_r-{\bf\widetilde M}_a]^{1/2}+C[\frac{1}{r}\frac{d}{dr} {\bf\widetilde M}_r]^
{1/2}\,.
\end{align*}

Integrating and using Cauchy--Schwarz, we find that 
\begin{align*}
    {\bf A}_r^{1/2}-{\bf A}_a^{1/2}&\leq C[{\bf\widetilde M}_r-{\bf\widetilde M}_a]^{1/2}+C[\int_a^r\frac{1}{s}ds]^{1/2}[\int_a^r\frac{d}{ds} {\bf\widetilde M}_s ds]^{1/2} = C[{\bf\widetilde M}_r-{\bf\widetilde M}_a]^{1/2}+C\log^{1/2}{(r/a)}[{\bf\widetilde M}_r-{\bf\widetilde M}_a]^{1/2}\,.
\end{align*}
Using that ${\bf H}_r(e_n)\leq {\bf A}_r^{1/2}$ and ${\bf H}_a(e_n)={\bf A}_a^{1/2}$, we conclude that
\begin{equation*}
    {\bf \widetilde H}_r(e_n)\leq {\bf \widetilde H}_a(e_n)+C[1+\log^{1/2}{(r/a)}]({\bf \widetilde M}_r-{\bf \widetilde M}_a)^{1/2}\,,
\end{equation*}
which putting $a=\lambda r$ gives \eqref{excmonineq2}.
\end{proof}

We will now show \Cref{lem:compweighHM}. We first need a couple of lemmas.
\begin{lemma}[{\bf Density properties}]\label{lem:weighdensprop}
    ${\bf \widetilde M}_r$ is monotone nondecreasing. Moreover, we have the layer cake-type formula
\begin{equation}\label{eq:laycakedensweight}
    {\bf \widetilde M}_{r}=-\int_0^\infty dt\, t^{n-1}f_{r,n-1}'(t){\bf  M}_{t}\,,
\end{equation}
where $-\int_0^\infty dt\, t^{n-1}f_{r,n-1}'(t)=1$.
\end{lemma}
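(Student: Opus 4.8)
The plan is to reduce everything to the unweighted monotonicity of $r\mapsto {\bf M}_r$ (Modica's formula, \Cref{lem:monformula}) via a single Fubini/layer-cake manipulation. Write $P:=\tfrac12|\nabla u|^2+W(u)\ge 0$, so that by definition $\tfrac{1}{\sigma_{n-1}}\int_{B_t}P\,dx=t^{n-1}{\bf M}_t$ for every $t>0$, and ${\bf \widetilde M}_r=\tfrac{1}{\sigma_{n-1}}\int_{\R^n}P\,G_{r,n-1}$ with $G_{r,n-1}(x)=f_{r,n-1}(|x|)$.

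First I would record the elementary pointwise identity $f_{r,n-1}(|x|)=\int_{|x|}^{\infty}\bigl(-f_{r,n-1}'(t)\bigr)\,dt$, valid because $f_{r,n-1}$ is smooth, strictly decreasing and has Gaussian decay at $+\infty$; note $-f_{r,n-1}'\ge 0$. Substituting this into the definition of ${\bf \widetilde M}_r$ and applying Tonelli's theorem (legitimate since every integrand is nonnegative) to swap the order of integration yields
\[
{\bf \widetilde M}_r=\int_0^\infty\bigl(-f_{r,n-1}'(t)\bigr)\Bigl(\tfrac{1}{\sigma_{n-1}}\int_{B_t}P\,dx\Bigr)dt=-\int_0^\infty t^{n-1}f_{r,n-1}'(t)\,{\bf M}_t\,dt,
\]
which is \eqref{eq:laycakedensweight}. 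The normalization $-\int_0^\infty t^{n-1}f_{r,n-1}'(t)\,dt=1$ then follows by one integration by parts: the boundary terms vanish (at $0$ because $n\ge2$, at $+\infty$ by Gaussian decay), leaving $(n-1)\int_0^\infty t^{n-2}f_{r,n-1}(t)\,dt$, and the substitution $t=rs$ rewrites this as $\tfrac{n-1}{c_{n-1}}\int_0^\infty s^{n-2}e^{-s^2}\,ds$, which is $1$ by the definition of $c_{n-1}$.

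For the monotonicity I would exploit a scaling invariance hidden in the weight. A direct computation gives $-t^{n-1}f_{r,n-1}'(t)=\tfrac{2}{c_{n-1}}\,\tfrac{t^n}{r^{n+1}}e^{-t^2/r^2}$, so under $t=rs$ the measure $d\mu_r(t):=-t^{n-1}f_{r,n-1}'(t)\,dt$ becomes the fixed probability measure $d\nu(s):=\tfrac{2}{c_{n-1}}s^n e^{-s^2}\,ds$ on $(0,\infty)$, independent of $r$ (its total mass being $1$ by the previous paragraph). Hence the layer-cake identity reads ${\bf \widetilde M}_r=\int_0^\infty {\bf M}_{rs}\,d\nu(s)$, and since $t\mapsto{\bf M}_t$ is nondecreasing by \Cref{lem:monformula}, for $0<r_1\le r_2$ we get ${\bf M}_{r_1s}\le{\bf M}_{r_2s}$ for every $s>0$; integrating against $d\nu$ gives ${\bf \widetilde M}_{r_1}\le{\bf \widetilde M}_{r_2}$.

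There is no real obstacle here; the only points requiring a little care are justifying the exchange of integrals in the layer-cake step (handled by nonnegativity/Tonelli) and noticing the change of variables $t=rs$ that trivializes the monotonicity — without it one would instead have to differentiate ${\bf \widetilde M}_r$ in $r$ directly and recognize the result (as in \eqref{eq:weighmonM}) as a nonnegative quantity, which is doable but less transparent.
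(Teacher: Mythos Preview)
Your proof is correct. The layer-cake derivation and the normalization computation are essentially the same as the paper's (the paper writes the layer-cake step via polar coordinates and an integration by parts in $t$, you via the fundamental-theorem-of-calculus representation of $f_{r,n-1}$ and Tonelli; these are the same manipulation in a different order).

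The genuine difference is in the monotonicity argument. The paper obtains it from the differential identity \eqref{eq:weighmonM}, $r\tfrac{d}{dr}{\bf\widetilde M}_r=\tfrac{1}{\sigma_{n-1}}\int [Q+\tfrac{2}{r^2}(x\cdot\nabla u)^2]G_{r,n-1}\ge 0$, derived via a Pohozaev computation inside the proof of \Cref{prop:moncontrexc}. Your route is more elementary: you observe that the change of variables $t=rs$ turns the layer-cake weight into a fixed probability measure $d\nu(s)=\tfrac{2}{c_{n-1}}s^n e^{-s^2}\,ds$ independent of $r$, so ${\bf\widetilde M}_r=\int_0^\infty{\bf M}_{rs}\,d\nu(s)$ and monotonicity is inherited directly from Modica's formula for ${\bf M}_t$. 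This is cleaner and avoids any PDE identity. The tradeoff is that the paper's approach yields the explicit derivative formula \eqref{eq:weighmonM}, which is independently needed in the Gr\"onwall step of \Cref{prop:moncontrexc}; your argument does not produce it, but for the purposes of this lemma alone it is not required.
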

\begin{proof}
    We already discussed the monotonicity of ${\bf \widetilde M}_r$ (recall \eqref{eq:weighmonM}). Moreover, we can compute
    $$-\int_0^\infty dt\, t^{n-1}f_{r,n-1}'(t)=(n-1)\int_0^\infty dt\, t^{n-2}f_{r,n-1}(t)=\frac{n-1}{c_nr^{n-1}}\int_0^\infty dt\, t^{n-2}e^{\frac{-t^2}{r^2}}=\frac{n-1}{c_n} \int_0^\infty dt\, t^{n-2}e^{-t^2}=1\,.$$
    Finally, letting $P:=\left[\frac{|\nabla u|^2}{2}+W(u)\right]$, using polar coordinates and $G_{r,d}(x)=f_{r,d}(|x|)$ we can compute
\begin{align*}
    \int P G_{r,n-1}=\int_0^\infty dt\, f_{r,n-1}(t)\int_{\partial B_t} P=\int_0^\infty dt\, f_{r,n-1}(t) \frac{d}{dt}\int_{B_t} P=-\int_0^\infty dt\, t^{n-1}f_{r,n-1}'(t)\frac{1}{t^{n-1}}\int_{B_t} P\,,
\end{align*}
which gives precisely \eqref{eq:laycakedensweight}.
\end{proof}

\begin{lemma}[{\bf Height excess properties}]\label{lem:compH}
We have
\begin{equation}\label{eq:8gsjgbss}
        {\bf H}_{r}^2\leq C{\bf \widetilde H}_{r}^2\,.
    \end{equation}
Moreover, given $\lambda\in(0,1)$ we have
    \begin{equation}\label{eq:8gsjgbss2}
        {\bf \widetilde H}_{\lambda r}^2 \leq \lambda^{-(n+1)} C{\bf H}_{r}^2+Ce^{-\frac{1}{2\lambda}}{\bf M}_{\infty}\,.
    \end{equation}
\end{lemma}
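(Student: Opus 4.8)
The two inequalities in \Cref{lem:compH} are pure weighted-versus-unweighted comparisons for the integral $\int_{\R^n}(x\cdot e)^2 P$, with $P=\tfrac{|\nabla u|^2}{2}+W(u)\ge 0$ by \Cref{lem:modicaineq}. Throughout, fix a direction $e\in\Sp^{n-1}$; after a rotation assume $e=e_n$. Recall $G_{r,n+1}(x)=f_{r,n+1}(|x|)=\frac{1}{c_{n+1}r^{n+1}}e^{-|x|^2/r^2}$, a positive kernel.

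\textbf{Step 1: the lower bound \eqref{eq:8gsjgbss}.} The point is simply that $G_{r,n+1}$ is bounded below by a multiple of $r^{-(n+1)}\mathbf{1}_{B_r}$. Indeed, for $|x|\le r$ we have $e^{-|x|^2/r^2}\ge e^{-1}$, so $G_{r,n+1}(x)\ge \frac{e^{-1}}{c_{n+1}}\,r^{-(n+1)}$ on $B_r$. Since $x_n^2 P\ge 0$ everywhere, we can discard the integrand outside $B_r$ and estimate
\[
{\bf\widetilde H}_r^2(e_n)=\int_{\R^n} x_n^2 P\,G_{r,n+1}\ \ge\ \frac{e^{-1}}{c_{n+1}}\,\frac{1}{r^{n+1}}\int_{B_r} x_n^2 P\ =\ \frac{e^{-1}}{c_{n+1}}\,{\bf H}_r^2(e_n).
\]
Taking infima over $e\in\Sp^{n-1}$ on both sides gives \eqref{eq:8gsjgbss} with $C=e\,c_{n+1}$.

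\textbf{Step 2: the upper bound \eqref{eq:8gsjgbss2}.} Fix $\lambda\in(0,1)$ and decompose $\R^n=B_r\cup(\R^n\setminus B_r)$ in the integral defining ${\bf\widetilde H}_{\lambda r}^2(e_n)=\int x_n^2 P\,G_{\lambda r,n+1}$. On the inner region $B_r$ we use $G_{\lambda r,n+1}(x)\le \frac{1}{c_{n+1}(\lambda r)^{n+1}}=\lambda^{-(n+1)}\frac{1}{c_{n+1}r^{n+1}}$, which yields the contribution $\le \lambda^{-(n+1)}\frac{1}{c_{n+1}}{\bf H}_r^2(e_n)$. On the outer region we bound $x_n^2\le|x|^2$ and use the Gaussian decay: for $|x|\ge r$ we have $e^{-|x|^2/(\lambda r)^2}\le e^{-|x|^2/(2\lambda r^2)}\,e^{-|x|^2/(2\lambda^2 r^2)}\le e^{-|x|^2/(2\lambda r^2)}e^{-1/(2\lamba^2)}$—wait, I should split more carefully. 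Write, for $|x|\ge r$,
\[
|x|^2\,G_{\lambda r,n+1}(x)=\frac{|x|^2}{c_{n+1}(\lambda r)^{n+1}}e^{-|x|^2/(\lambda r)^2}\le \frac{C(n)}{(\lambda r)^{n-1}}\,e^{-|x|^2/(2(\lambda r)^2)}\,e^{-1/(2\lambda^2)},
\]
using $\frac{|x|^2}{(\lambda r)^2}e^{-|x|^2/(2(\lambda r)^2)}\le C(n)$ (as $\lambda<1$, $(\lambda r)^{-2}|x|^2\ge$ is absorbed), and $e^{-|x|^2/(2(\lambda r)^2)}\le e^{-|x|^2/(2\lambda^2 r^2)}\le$—hmm, I realize I need $e^{-1/(2\lambda)}$ not $e^{-1/(2\lambda^2)}$ to match the statement. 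The cleaner route: for $|x|\ge r$, $\ \frac{|x|^2}{(\lambda r)^2}\ge \frac{1}{\lambda^2}\ge\frac1\lambda$, and since $t e^{-t}\le \max(1,\,2e^{-t/2})$, one gets $|x|^2 G_{\lambda r,n+1}(x)\le \frac{C(n)}{(\lambda r)^{n-1}}e^{-|x|^2/(2(\lambda r)^2)}\le \frac{C(n)}{(\lambda r)^{n-1}}e^{-1/(2\lambda)}$, and then $\int_{\R^n\setminus B_r}P\cdot\frac{C(n)}{(\lambda r)^{n-1}}e^{-1/(2\lambda)}\le C(n)\lambda^{-(n-1)}e^{-1/(2\lambda)}\,{\bf M}_\infty$ (covering $\R^n\setminus B_r$ by a controlled number of unit-density balls, or simply using $\frac{1}{\rho^{n-1}}\int_{B_\rho}P={\bf M}_\rho\le {\bf M}_\infty$ for all $\rho$ together with a dyadic summation of the Gaussian tail, which still gives $e^{-c/\lambda}$ up to adjusting the constant in the exponent). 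Absorbing $\lambda^{-(n-1)}e^{-1/(2\lambda)}\le Ce^{-1/(4\lambda)}$ and then, after taking infima over $e$, slightly worsening the exponent to $e^{-1/(2\lambda)}$ is harmless; alternatively one simply states the bound with a possibly different but still explicit constant $c>0$ in the exponent. Combining the two contributions and taking $\inf_{e\in\Sp^{n-1}}$ gives \eqref{eq:8gsjgbss2}.

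\textbf{Main obstacle.} Neither step is deep; the only mild subtlety is making the Gaussian tail estimate in Step 2 clean enough to land a constant of the exact form $e^{-1/(2\lambda)}$ (rather than $e^{-c/\lambda}$ for some unspecified $c$), which is a matter of choosing the splitting $e^{-|x|^2/(\lambda r)^2}=e^{-|x|^2/(2(\lambda r)^2)}\cdot e^{-|x|^2/(2(\lambda r)^2)}$ and bounding the polynomial prefactor $|x|^2/(\lambda r)^{n+1}$ by one Gaussian factor while extracting $e^{-1/(2\lambda)}$ from the minimum of the other on $\{|x|\ge r\}$. If one is willing to accept a constant $e^{-c/\lambda}$, the whole argument is a three-line estimate; I would write it with $e^{-1/(2\lambda)}$ as stated and absorb the polynomial losses into the implicit constant $C$.
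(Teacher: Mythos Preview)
Your approach is essentially the same as the paper's: both inequalities come from the pointwise comparisons $r^{-(n+1)}\chi_{B_r}\le CG_{r,n+1}$ and $G_{\lambda r,n+1}\le C(\lambda r)^{-(n+1)}\chi_{B_r}+C(\lambda r)^{-(n+1)}e^{-|x|^2/(\lambda r)^2}\chi_{B_r^c}$, followed by integration and (for the tail) a dyadic summation against ${\bf M}_\rho\le {\bf M}_\infty$. Step~1 is clean and correct.

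Step~2, however, is garbled. You first bound $|x|^2 G_{\lambda r,n+1}(x)\le C(\lambda r)^{-(n-1)}e^{-1/(2\lambda)}$ uniformly on $\{|x|\ge r\}$ and then try to integrate $P$ over $\R^n\setminus B_r$ against this constant --- that integral is \emph{not} controlled by ${\bf M}_\infty$ (it grows like $\rho^{n-1}$ at scale $\rho$), so this step as written fails. You do catch yourself and mention the dyadic summation in parentheses, which is the correct fix and exactly what the paper does: keep the full factor $e^{-|x|^2/(\lambda r)^2}$, split into annuli $B_{2^{j+1}r}\setminus B_{2^j r}$, and use $\int_{B_{2^{j+1}r}}P\le C(2^j r)^{n-1}{\bf M}_\infty$ together with the super-polynomial Gaussian decay. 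Commit to that argument from the start; the back-and-forth and the attempted shortcut obscure an otherwise trivial estimate. Also note the typo \texttt{\textbackslash lamba} and that the exponent one naturally obtains this way is $e^{-c/\lambda^2}$ (hence a fortiori $e^{-1/(2\lambda)}$), so there is no need to fuss over the precise constant.
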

\begin{proof}
    We clearly have the pointwise inequalities
    \begin{equation*}
        \frac{1}{r^{n+1}}\chi_{B_{r}}\leq CG_{r,n+1}\quad\mbox{and}\quad G_{\lambda r}\leq C\frac{1}{(\lambda r)^{n+1}}\chi_{B_r}+C\frac{1}{(\lambda r)^{n+1}}e^{-\frac{|x|^2}{\lambda^2 r^2}}\chi_{B_r^c}\,.
    \end{equation*}
    Integrating the first one we get \eqref{eq:8gsjgbss}. Integrating the second one we get
    \begin{equation*}
        {\bf \widetilde H}_{\lambda r}^2 \leq \lambda^{-(n+1)} C{\bf H}_{r}^2+C\lambda^{-(n+1)}\frac{1}{r^{n+1}}\int_{B_r^c} |x|^2\left[\frac{|\nabla u|^2}{2}+W(u)\right]e^{-\frac{|x|^2}{\lambda^2 r^2}}\,,
    \end{equation*}
    which splitting the integral into dyadic scales and using that ${\bf M}_R\leq {\bf M}_\infty$ easily gives \eqref{eq:8gsjgbss2}.
\end{proof}

\begin{proof}[Proof of \Cref{lem:compweighHM}]
Up to a translation we can assume that ${\bf \bar y}=0$. Taking $\lambda=\epk^{\frac{1}{30}\chi}$ in \eqref{eq:8gsjgbss2}, by \Cref{prop:ACexcdens} we find that
    \begin{equation}
        {\bf \widetilde H}_{\epk^{\chi_\sharp}\Rk}^2 \leq \epk^{-\chi/6} C{\bf H}_{\epk^\chi \Rk}^2+Ce^{-\epk^{\chi/60}}\leq C\epk^{2+\chi_\sharp/2}\,.
\end{equation}
It remains to estimate ${\bf \widetilde M}_{\epk^{\chi_\sharp}\Rk}$.
Since $-\int_0^\infty dt\, t^{n-1}f_{r,n-1}'(t)=1$ and ${\bf M}_t\xrightarrow[]{t\to\infty}{\bf M}_\infty$, it is clear that
$${\bf \widetilde M}_\infty=-\lim_{r\to\infty}\int_0^\infty dt\, t^{n-1}f_{r,n-1}'(t){\bf M}_t={\bf M}_\infty=K_*\,.$$\\
For the lower bound, using $-\int_0^\infty dt\, t^{n-1}f_{r,n-1}'(t)=1$ again and the monotonicity of ${\bf M}_t$, given $0<a<r$ we can estimate
\begin{align*}
    K_*-{\bf \widetilde M}_r&=-\int_0^\infty t^{n-1}f_{r,n-1}'(t) [K_*-{\bf M}_t]dt\\
    &\leq  -\int_{r}^\infty t^{n-1}f_{r,n-1}'(t) [K_*-{\bf M}_{r}]dt-\int_0^{r} t^{n-1}f_{r,n-1}'(t) [K_*-{\bf M}_t]dt\\
    &\leq [K_*-{\bf M}_r]-\left[\int_0^{a}+\int_a^{r}\right] t^{n-1}f_{r,n-1}'(t) [K_*-{\bf M}_t]dt\,.
\end{align*}
Bounding $f_{r,n-1}'(t)\leq C\frac{t}{r^{n+1}}$ in the integrals, taking $r=\epk^{\chi_\sharp} \Rk$ and $a=\epk\Rk$, and using the density lower bound from \Cref{prop:ACexcdens}, we reach
\begin{align*}
    K_*-{\bf \widetilde M}_r&\leq [K_*-{\bf M}_{r}] +C(a/r)^{n+1}+\frac{C}{r^{n+1}}\int_{a}^{r} t^n [K_*-{\bf M}_t]dt\\
    &\leq (\ep_k^{1+4\chi/3}\Rk/r)^2 +C(a/r)^{n+1}+\frac{C}{r^{n+1}}\int_{a}^{r} t^n (\ep_k^{1+4\chi/3}\Rk/t)^2dt\leq (\ep_k^{1+4\chi/3}\Rk/r)^2 +C(a/r)^{n+1}\,,
\end{align*}
thus $K_*-{\bf \widetilde M}_r\leq \epk^{2+\chi_\sharp/2}$.
\end{proof}

\subsection{Back to the original scale -- Proof of \Cref{mainthm}}

\begin{proof}[Proof of \Cref{mainthm}]
Proving \Cref{mainthm} is equivalent to showing that every $K\in\R_+$ is a subcritical density in $\R^4$ (recall \Cref{def:induction}). We assume then for contradiction that
$K_* = \sup \{K>0 \mbox{ subcritical density in } \R^4\}<\infty$.

By \Cref{prop:critexist}, there is then a stable solution $u:\R^4\to(-1,1)$ with ${\bf M}_\infty=K_*$ which is not 1D. Let $\cZ(u)$ be its bad set, defined in \Cref{def:badballintro}; it is non-empty by \Cref{lem:badnonempty}.\\

Fix $\gamma=\frac{1}{4}$. Fix some $\chi\in(0,\frac{1}{20}]$, $\beta\in(0,\frac{1}{40}]$ and $\alpha\in(0,\frac{1}{40}]$.\\
 
\noindent Let $R_k\to \infty$ and $\zz_k\in \cZ$ be given by \Cref{lem:radcentselec}, with associated $\eps_k^2 = {\bf H}_{4R_k}^2({\zz_k}) \to 0$ as $k \to \infty$.\\
\noindent Let $\Rk=\widetilde \theta_kR_k$ and $\tilde\zz_k\in \cZ\cap B_{R_k}(\zz_k)$ be given by \Cref{lem:NbdRknew}, where $\tilde \theta_k \in (0, 1]$, and let $\epk = \widetilde \theta_k^{\cttb(1+\alpha)} \ep_k$. 
Recall that:
\begin{itemize}
    \item we have $\Rk \to \infty$ and ${\bf H}_{4\Rk}^2({\zk})\leq 2\epk^2 \to 0$ as $k \to \infty$.
    \item moreover \eqref{eq:hjgagfas} and \eqref{eq:fhiowqrhgoqwg} hold.
\end{itemize}

\noindent Combining \Cref{lem:compweighHM} and the monotonicity (by \Cref{lem:weighdensprop}) of ${\bf \widetilde M}_{r}$, there is some ${\bf \bar y}\in B_{\frac{1}{2}\Rk}(\zk)$ such that
\begin{equation*}
        K_*- C\epk^{2+\chi_\sharp/2}\leq {\bf \widetilde M}_r({\bf \bar y})\leq K_*\quad\mbox{for every}\quad r\geq \epk^{\chi_\sharp}\Rk\,,
\end{equation*}
with $\chi_\sharp=\frac{31}{30}\chi$ and for $k$ large enough.
Applying \Cref{prop:moncontrexc} with $r=4R_k$ and $\lambda r = \epk^{\chi_\sharp}\Rk$, and using \Cref{lem:compweighHM} once again,
\begin{align*}
    {\bf \widetilde H}_{4R_k}({\bf \bar y})&\leq {\bf \widetilde H}_{\epk^{\chi_\sharp}\Rk}({\bf \bar y})+C|\log(\lambda)|^{1/2}({\bf \widetilde M}_{4R_k}({\bf \bar y})-{\bf \widetilde M}_{\epk^{\chi_\sharp}\Rk}({\bf \bar y}))^{1/2}\leq C\epk^{1+\chi_\sharp/4}+C|\log(\lambda)|^{1/2}\epk^{1+\chi_\sharp/4}\,.
\end{align*}
Now, we can easily beat the log factor, by using (for the first and only time) that $\beta_0>0$: Since $\lambda=\frac{\epk^{\chi_\sharp}\Rk}{4R_k}=\frac{1}{4}\widetilde \theta_k\ep_k^{\chi_\sharp}$ and $\epk=\widetilde \theta_k^{\cttb(1+\alpha)} \ep_k$, we find that $|\log(\lambda)|^{1/2}\epk^{\chi_\sharp/8}\to 0$ as $\ep_k\to 0$. In particular, we deduce that
\begin{align*}
    {\bf \widetilde H}_{4R_k}({\bf \bar y})&\leq  C\ep_k^{1+\chi_\sharp/8}\qquad\mbox{for all } k \mbox{ large enough}.
\end{align*}
But then, since $B_{R_k}(\zz_k)\subset B_{2 R_k}(\zk)\subset B_{4 R_k}({\bf \bar y})$, together with \eqref{eq:8gsjgbss} we find that
\begin{align*}
    \ep_k={\bf H}_{R_k}(\zz_k)\leq C {\bf \widetilde H}_{R_k}(\zk)\leq C{\bf \widetilde H}_{4R_k}({\bf \bar y})&\leq  C\ep_k^{1+\chi_\sharp/8}\qquad\mbox{for all } k \mbox{ large enough}.
\end{align*}
\noindent Recalling once again that $\ep_k\to 0$ as $k\to\infty$, for $k$ large enough we get a contradiction.
\end{proof}

\subsection{Proof of \Cref{thm:curvestseps}}\label{sec:pfmainthm}
\begin{proof}[Proof of \Cref{thm:curvestseps}]
Let $u_\ep: B_1\subset\R^4\to (-1,1)$ and $\Lambda$ be as in the statement, so that ${\bf M}_1(u_\ep)\leq \Lambda$. If $u_\ep$ were a solution on all of $\R^4$ instead of just $B_1$, since we now known that every density is subcritical in $\R^4$ we would easily conclude by \Cref{prop:curvestindeps}. The only difference is that we cannot directly use \Cref{lem:monformula}; nevertheless, the local monotonicity formula with errors in \Cite[Proposition 3.4]{HT00} gives universal $C_1,C_2$ such that
\begin{equation}\label{eq:894galgbalbg}
    {\bf M}_r(u_\ep,x)\leq C_1\Lambda+C_2\quad\mbox{for every}\quad x\in B_{3/4}\,.
\end{equation}
Moreover, by \Cref{mainthm} we know that $K:=C_1\Lambda+C_2$ is a subcritical density. Then, up to using \eqref{eq:894galgbalbg} in place of \eqref{eq:781halavifa} and \eqref{eq:781halavifa2}, the rest of the proof of \Cref{prop:curvestindeps} follows exactly as written and gives the result.
\end{proof}

\appendix
\part*{Appendix}
\addcontentsline{toc}{part}{Appendix}
\section{The varifold theory for Allen--Cahn}

A central insight of the classical theory is that stable solutions to $\varepsilon$-A--C  converge, as $\varepsilon \downarrow 0$, to stable integral varifolds. We provide here in a restricted setting the most basic definitions and results underlying this convergence (in particular, we do not deal with general varifolds, and we state simpler versions of known results whenever they suffice).

\begin{definition}[{\bf Integral varifold}]\label{def:varifold}
    \begin{itemize}
        \item Let $\Omega\subset\R^n$ open. An {\bf integral} (or integer rectifiable) {\bf varifold} $V$ is a pair $V=(\Sigma,\theta)$, where $\Sigma\subset \Omega$ is an $(n-1)$-rectifiable set, and $\theta: \Sigma \to \N$ is an $\cH_{n-1}$--integrable function. We can naturally associate a {\bf weight measure} $\|V\|$ to it by $\|V\|(A)=\frac{1}{\omega_{n-1}}\int_{A\cap \Sigma} \theta \,d\mathcal H^{n-1}$.
        \item We define its support via ${\rm\supp} V:=\supp \|V\|$. Its regular part ${\rm reg}\, V$ is defined as the set of all points $x\in \supp V$ such that $\Sigma$ is actually a smooth hypersurface in some open neighbourhood around $x$. In particular, ${\rm reg}\, V$ is open. We define ${\rm sing}\, V=\supp V \setminus {\rm reg}\, V$.
        \item We say that $V$ is \textbf{stationary} in $\Omega$ if $\frac{d}{dt}\big|_{t=0}\|(\Phi_X^t)_\sharp V\|(\Omega)=0$, where $X$ is a smooth vector field with ${\rm supp}\, X\Subset \Omega$, $\Phi_X^t$ is its flow at time $t$, and $(\Phi_X^t)_\sharp V=(\Phi_X^t(\Sigma), \theta\circ \Phi_X^{-t})$. Moreover, it is called {\bf stable} if its regular part is a stable smooth minimal hypersurface, meaning that $\frac{d^2}{dt^2}\big|_{t=0}\|(\Phi_X^t)_\sharp V\|(\Omega)\geq 0$ for every $X$ with ${\rm supp}\, X\Subset{\rm reg}\, V$.
    \end{itemize}
\end{definition}
\begin{definition}[\textbf{Allen--Cahn weight measure, \cite{HT00}}]\label{def:varAC}
    Given a solution $u_\ep:\Omega\to[-1,1]$ to $\ep$-A--C, we define the Allen--Cahn weight measure $\|V_\ep\|$ on $\Omega$ via, given $A\subset\Omega$,
\begin{equation*}
    \|V_\ep\|(A):=\frac{1}{\sigma_{n-1}}\int_{-1}^{1} \sqrt{2W(t)}\,\mathcal H^{n-1}(\{u=t\}\cap A)\,dt\,.
\end{equation*}
We are using that, by Sard's theorem, $\{u=t\}$ is a smooth submanifold for almost every $t$.
\end{definition}
As a motivation for the definition above, note that by the coarea theorem we can write
\begin{align*}
    \|V_\ep\|(A)=\frac{1}{\sigma_{n-1}}\int_A \sqrt{2W(u_\ep)}|\nabla u_\ep|\,dx\,.
\end{align*}
Let $a:=\sqrt{\frac{2W(u_\ep)}{\ep}}$ and $b:=\sqrt{\ep}|\nabla u_\ep|$. Using that $2ab=a^2+b^2-(b-a)^2\geq a^2+b^2-|b^2-a^2|$ we readily deduce that
\begin{equation*}
    \mathcal E^\ep(u_\ep,A) - D(u_\ep,A)\leq \|V_\ep\|(A)\leq \mathcal E^\ep(u_\ep,A)\,,
\end{equation*}
where $D(u_\ep,A):=\frac{1}{\sigma_{n-1}}\int_A \left|\frac{W(u_\ep)}{\ep}-\frac{\ep|\nabla u_\ep|^2}{2}\right|\,dx$ is a discrepancy term.

Huchinson and Tonegawa established a deep connection between stationary points of $\ep$-A--C and integral stationary varifolds. The following is a partial statement of their results.
\begin{theorem}[\cite{HT00}]\label{thm:HutchTon}
Let $u_\ep:\Omega\to [-1,1]$ be a sequence of solutions to the $\ep$-A--C equation, with $\ep\to 0$, and assume that $\mathcal E^\ep(u_\ep,\Omega)\leq \Lambda$. Define the A--C weight measures $\|V_\ep\|$ as in \Cref{def:varAC}.\\
Then, there is a subsequence $u_{\ep_k}$ with $\ep_k\to 0$, and a stationary integer rectifiable varifold $V$, such that the following hold:
\begin{itemize}
    \item $\|V_{\ep_k}\|\to \|V\|$ as measures.
    \item $D(u_{\ep_k},A)\to 0$ for any $A\Subset\Omega$. In particular,
    \begin{equation}\label{eq:varenconv}
        \|V\|(\Omega')=\lim_k \|V_{\ep_k}\|(\Omega')=\lim_k\mathcal E^{\ep_k}(u_{\ep_k},\Omega') \quad \mbox{for any } \Omega'\Subset\Omega \mbox{ with } \|V\|(\partial \Omega')=0.
    \end{equation}
    \item As $\ep\to 0$,
    \begin{equation}\label{eq:varhausconv}
        \{|u_\ep|\leq 0.9\} \mbox{ converges locally in the Hausdorff distance sense to } {\rm spt}\, V.
    \end{equation}
\end{itemize}
\end{theorem}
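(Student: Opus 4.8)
The plan is to reproduce, in the restricted setting stated here, the argument of Hutchinson--Tonegawa \cite{HT00}; I outline the main steps. First I would extract compactness: the uniform bound $\mathcal E^\ep(u_\ep,\Omega)\le\Lambda$ together with the elementary estimate $\|V_\ep\|(A)\le\mathcal E^\ep(u_\ep,A)$ recorded above shows that the weight measures $\|V_\ep\|$ are uniformly bounded on compact subsets of $\Omega$, so that a subsequence $\|V_{\ep_k}\|$ converges weakly-$\ast$ to a Radon measure $\mu$, which will turn out to be the weight measure $\|V\|$ of the desired varifold.

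The technical core is the vanishing of the discrepancy, $D(u_{\ep_k},A)\to0$ for $A\Subset\Omega$. For this I would use a localized Modica-type almost-monotonicity formula for the rescaled energy $r\mapsto e^{Cr}r^{1-n}\mathcal E^\ep(u_\ep,B_r(x))$, whose $r$-derivative controls, modulo the discrepancy, the radial term $\int_{\partial B_r}(x\cdot\nabla u_\ep)^2$; combined with the Pohozaev/stress--energy identity ${\rm div}(P_\ep\,x)={\rm div}((x\cdot\nabla u_\ep)\nabla u_\ep)+nP_\ep-|\nabla u_\ep|^2$, where $P_\ep=\tfrac\ep2|\nabla u_\ep|^2+\tfrac1\ep W(u_\ep)$, this forces the positive part of the discrepancy density to tend to zero as a measure in the interior, the negative part being controlled in the same way (or, for entire solutions, by Modica's inequality). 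Once $D(u_{\ep_k},A)\to0$ is known, \eqref{eq:varenconv} is immediate from weak-$\ast$ convergence and the identity $\|V_\ep\|(A)=\mathcal E^\ep(u_\ep,A)-D(u_\ep,A)$. I expect this step, together with the integrality step below, to be the most delicate of the proof.

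Next, stationarity of $V$: testing the equation against $X\cdot\nabla u_\ep$ with $X$ a compactly supported $C^1$ vector field on $\Omega$ gives the exact identity $\int\big(P_\ep\,{\rm div}\,X-\ep\,DX[\nabla u_\ep,\nabla u_\ep]\big)=0$, i.e.\ the first variation of $\mathcal E^\ep(u_\ep,\cdot)$ differs from that of $\|V_\ep\|$ only by a term involving the discrepancy, which vanishes in the limit; hence $\delta V=0$. For rectifiability and integrality of $V$ — the main obstacle — I would argue as follows: the (now monotone) density ratio gives a positive lower density bound for $\mu$ at $\cH^{n-1}$-a.e.\ point of its support, whence rectifiability by the standard criterion for stationary varifolds with locally bounded density; then, blowing up at a $\cH^{n-1}$-a.e.\ point, the vanishing discrepancy together with the one-dimensional classification (\Cref{prop:1dclas}: the model profile $\phi(s)=\tanh(s/\sqrt2)$, for which ${\bf M}_\infty(\phi)=1$) shows that the blow-up limit is a hyperplane of some integer multiplicity $m$, so that the density of $\mu$ at that point equals $m$; a covering argument then upgrades this to integrality of $V$ everywhere.

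Finally, the Hausdorff convergence \eqref{eq:varhausconv} follows from two one-sided bounds. If $x_k\in\{|u_{\ep_k}|\le0.9\}$ and $x_k\to x$, a uniform lower bound for $\|V_{\ep_k}\|$ on small balls centered at $x_k$ (by comparison with the one-dimensional profile, in the spirit of \Cref{lem:Hbdhtez}) forces $\mu$ to charge every neighborhood of $x$, so $x\in\supp V$. Conversely, if $x\in\supp V$ then $\mu(B_\rho(x))>0$ for every $\rho>0$, so $\{|u_{\ep_k}|\le0.9\}$ must meet $B_\rho(x)$ for $k$ large: otherwise exponential decay of the energy away from the transition layer (\Cref{lem:expdecay}) would give $\|V_{\ep_k}\|(B_\rho(x))\to0$, contradicting weak-$\ast$ convergence. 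Combining the two directions yields local convergence in the Hausdorff distance, completing the outline.
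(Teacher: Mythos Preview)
The paper does not provide a proof of this theorem: it is stated in the appendix as a known result from \cite{HT00}, with the citation appearing in the theorem header itself, and no argument is given. So there is no ``paper's own proof'' to compare against.

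Your outline is a faithful high-level summary of the Hutchinson--Tonegawa strategy: compactness of the weight measures, vanishing of the discrepancy via the Pohozaev/monotonicity machinery, stationarity from the first-variation identity, rectifiability and integrality by blow-up and the one-dimensional classification, and Hausdorff convergence from two-sided density/clearing-out arguments. One small caution: the parenthetical ``or, for entire solutions, by Modica's inequality'' does not apply in the local setting of a bounded domain $\Omega$, where Modica's inequality is unavailable and both signs of the discrepancy must be handled via the local monotonicity formula with errors (as in \cite[Proposition 3.4]{HT00}). Otherwise the sketch is sound as an outline, though of course the discrepancy-vanishing and integrality steps each require substantial work to make rigorous.
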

We now focus on stable solutions.
\begin{remark}
    In the stable case, rescaling \Cref{thm:villegas} shows directly (and quantifies) the discrepancy decay.
\end{remark}
\begin{theorem}[\cite{Ton05}]\label{thm:TonStable}
    In the setting of \Cref{thm:HutchTon}, assume moreover that the $u_\ep$ are stable. Then, $V$ is also stable.\\
    Assume now furthermore that $\Omega\subset\R^2$, so that ${\rm supp}\, V$ is a union of straight segments. Then ${\rm supp}\, V$ is actually a union of non-intersecting straight lines.
\end{theorem}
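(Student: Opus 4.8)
\textbf{Proof proposal for Theorem~\ref{thm:TonStable}.}

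The plan is to prove the two assertions separately, the first being a direct application of the stable varifold regularity theory and the second a self-contained argument in the plane. For the stability of the limit varifold $V$, I would start from the second variation inequality for the $\ep$-Allen--Cahn energy along a normal vector field. Given $X$ with $\supp X \Subset {\rm reg}\, V$, one builds a test function $\xi_\ep$ for the stability inequality \eqref{stability_1stintro} of $u_\ep$ by multiplying a suitable power of $|\nabla u_\ep|$ (so that the level sets move according to $X$ near ${\rm reg}\, V$) by the scalar component of $X$; this is exactly the construction underlying the Sternberg--Zumbrun inequality \eqref{eq:SZineq1}. Passing to the limit $\ep \downarrow 0$ using the measure convergence $\|V_{\ep_k}\| \to \|V\|$ from \Cref{thm:HutchTon}, the discrepancy decay $D(u_{\ep_k},A)\to 0$, and the $C^2_{\rm loc}$ convergence of the level sets on the regular part (which holds there because near ${\rm reg}\, V$ one can invoke the curvature estimates of \Cref{prop:curvestindeps} or \Cref{thm:goodimpsheet}, or simply Savin/Wang regularity since the density is close to $1$ locally), the first term $\int \ep|\nabla \xi_\ep|^2$ converges to the Jacobi-field Dirichlet energy $\int_{{\rm reg}\, V} |\nabla^\Sigma \phi|^2$ and the potential term $\int \frac1\ep W''(u_\ep)\xi_\ep^2$ converges to $-\int_{{\rm reg}\, V} |{\rm I\negthinspace I}_\Sigma|^2 \phi^2$. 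This yields $\int_{{\rm reg}\, V}\big(|\nabla^\Sigma \phi|^2 - |{\rm I\negthinspace I}_\Sigma|^2 \phi^2\big)\,d\|V\| \ge 0$, i.e. the second variation of area is nonnegative, which is precisely stability of ${\rm reg}\, V$ in the sense of \Cref{def:varifold}.

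For the planar case $\Omega \subset \R^2$: since $V$ is a stationary integral $1$-varifold in the plane, $\supp V$ is a union of line segments meeting only at finitely many junction points, with integer multiplicities satisfying the balancing (first variation) condition at each junction. I would argue that no genuine junction can occur. Suppose a junction point $p$ exists where $m \ge 3$ segments (counted with multiplicity as rays) emanate; by the balancing condition the unit tangent rays sum to zero with positive integer weights, so they cannot all be collinear, hence $p$ is a true singular point of $V$. But $V$ arises as an $\ep$-limit of stable solutions, so I would apply \Cref{prop:stablejunction} (the planar junction classification referenced in Step~2, Case~2 of the proof of \Cref{prop:W}) to rule this out; alternatively, one blows up at $p$ to a stable minimal cone in $\R^2$ which is an $\ep$-limit of stable solutions, and such a cone must be a multiplicity-$K$ line --- a contradiction with having a junction. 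This leaves only the possibility that each connected component of $\supp V$ is a full straight line. Two such lines cannot cross, since a transversal crossing point would again be a genuine singular junction of the stationary varifold, excluded by the same argument. Hence $\supp V$ is a disjoint union of straight lines.

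The main obstacle I expect is the passage to the limit in the second variation near ${\rm reg}\, V$, specifically making rigorous that the test function $\xi_\ep = |\nabla u_\ep|\,\tilde\phi$ (with $\tilde\phi$ an extension of the scalar part of $X$ along the approximate normal direction) produces, in the limit, exactly the Jacobi operator quadratic form on the limit hypersurface, with the correct sign and constant in front of $|{\rm I\negthinspace I}_\Sigma|^2$. This requires: (i) uniform $C^{2}$ (indeed $C^{2,\vartheta}$) control of the level sets near ${\rm reg}\, V$, which is available because locally there the density is bounded and the no-gaps/subcritical-regularity machinery applies; (ii) controlling the discrepancy term so that $\int \ep|\nabla u_\ep|^2$ and $\int \frac1\ep W(u_\ep)$ have the same limit (this is exactly $D(u_{\ep_k},A)\to 0$); and (iii) handling the interaction of $\nabla |\nabla u_\ep|$ with the tangential gradient of $\tilde\phi$, which after the standard one-dimensional profile analysis contributes the $|{\rm I\negthinspace I}|^2$ term. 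All three ingredients are classical in this setting (this is essentially the content of \cite{Ton05}), so I would present them compactly, citing the $C^{2,\vartheta}$ convergence from \Cref{thm:sheetimpc2alpha}/\Cref{prop:curvestindeps} and the discrepancy control from \Cref{thm:HutchTon} and \Cref{thm:villegas} rather than re-deriving the one-dimensional reduction in full.
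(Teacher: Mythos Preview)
The paper does not prove this theorem; it is stated in the appendix as a citation from \cite{Ton05}, so there is no paper-side proof to compare against. Your task reduces to sketching an independent proof, and on that count there are two genuine circularity issues.

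For the stability of $V$ near ${\rm reg}\, V$, you invoke \Cref{prop:curvestindeps} and \Cref{thm:goodimpsheet} to obtain $C^{2,\vartheta}$ control of the level sets. But the proof of \Cref{prop:curvestindeps} (Step~2, Case~2) explicitly uses \Cref{thm:TonStable} to conclude that the limit hypersurface is stable, and \Cref{thm:goodimpsheet} follows the same contradiction scheme. So you cannot use either as an input here. The original argument in \cite{Ton05} works directly at the varifold level, passing the Sternberg--Zumbrun inequality \eqref{eq:SZineq1} to the limit without any a priori curvature bounds on the approximating solutions; regularity on ${\rm reg}\, V$ comes from Allard's theorem for the limiting stationary varifold, not from estimates on the $u_\ep$.

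For the planar junction exclusion, you appeal to \Cref{prop:stablejunction}. That is the $n$-dimensional Tonegawa--Wickramasekera junction theorem from \cite{TonegawaWickramasekera2012}, which is later than and a strict generalisation of the $2$D statement you are trying to prove; in the paper it is another cited black box placed immediately after \Cref{thm:TonStable}. Invoking it here is logically permissible but vacuous as a proof of the $2$D case. A self-contained planar argument (as in \cite{Ton05}) rules out junctions by a direct stability computation: a configuration of $m\ge 3$ half-lines meeting at a point has a variation that strictly decreases length to second order, contradicting stability of the limit.

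Note also that \Cref{sec:stabselfcont} explains how the paper bypasses \Cref{thm:TonStable} entirely in the one place it is invoked: once $C^{2,\alpha}$ convergence has already been secured by other means (the sheeting assumptions plus Wang--Wei), one passes \eqref{eq:SZineq1} to the limit using $|{\rm I\negthinspace I}_{\{u=u(x)\}}|^2 \le \mathcal A^2(u)$. That is the argument you have in mind, but it only establishes stability of the specific limit arising in that proof, not \Cref{thm:TonStable} in full generality.
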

The latter is a general property and holds also in higher dimensions, as shown by Tonegawa and Wickramasekera:
\begin{theorem}[{\cite[Proposition 3.2]{TonegawaWickramasekera2012}}]\label{prop:stablejunction}
    In the setting of \Cref{thm:HutchTon}, assume moreover that the $u_\ep$ are stable. Assume that $0\in\Omega$ and ${\rm spt}\|V\|=C\cap \Omega$, where $C\subset\R^n$ is a cone with $n-2$ directions of translation invariance. Then $C$ is a hyperplane.
\end{theorem}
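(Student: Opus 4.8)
The statement to prove is \Cref{prop:stablejunction}: if $V$ arises as a limit of stable $\varepsilon$-Allen--Cahn solutions and $\supp\|V\| = C\cap\Omega$ with $C$ a cone having $n-2$ directions of translation invariance, then $C$ is a hyperplane.

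\textbf{Setup and reduction.} Writing $C = C_0\times\R^{n-2}$ after a rotation, where $C_0\subset\R^2$ is a one-dimensional cone (a union of rays from the origin), the problem reduces to a statement purely about the two-dimensional cross section $C_0$: I must show that $C_0$ is a single line through the origin. Since $C$ has exactly $n-2$ directions of translation invariance and is a minimal cone, $C_0$ is a finite union of rays $\{t\,\omega_j : t\ge 0\}$, $j=1,\dots,m$, with $\sum_j \omega_j = 0$ by the stationarity (first variation) condition — this is the ``balancing'' condition at the junction point. If $m=2$ the two rays must be antipodal, giving a line; so the whole point is to rule out $m\ge 3$, i.e. a genuine triple (or higher) junction.

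\textbf{Plan of the argument.} The natural approach is a blow-down / dimension-reduction together with the stability inequality for limit varifolds, invoking the structure theory already available in the excerpt. First I would record that, by \Cref{thm:TonStable}, $V$ is a \emph{stable} integral stationary varifold; its regular part is a smooth stable minimal hypersurface, and the singular set is contained in the spine $\{0\}\times\R^{n-2}$. Restricting to a $2$-plane transversal to the spine, the cross-section is a stationary, stable ``network'' of rays meeting at one point. The key is that a stable (two-sided, or orientable) minimal junction of three or more rays in $\R^2$ cannot occur: one can produce an admissible compactly supported normal variation supported near the junction that strictly decreases length, contradicting stability — equivalently, this is the Allen--Cahn analogue of the fact that triple junctions are not stable critical points of the area functional (they are only stable for the \emph{partition} problem with the $120^\circ$ condition, which is not the setting here since all sheets carry the same ``phase'' structure). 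Concretely, I would either (a) cite the argument of Tonegawa--Wickramasekera directly, where this is Proposition 3.2, noting that the stability passes to the limit via the second-variation convergence built into \Cref{def:varifold} and \Cref{thm:TonStable}; or (b) give the variational argument: pick a smooth vector field $X$ supported in a small ball around $0$ in the transversal plane that pushes the three rays so as to ``open'' the junction into a configuration with strictly smaller total length while keeping the translation-invariant directions fixed, compute that the first variation vanishes (stationarity) but the second variation is strictly negative, contradicting stability of $\mathrm{reg}\,V$ extended across the junction.

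\textbf{Main obstacle.} The delicate point is that stability of $V$ only constrains the \emph{regular} part $\mathrm{reg}\,V$ — variations must be supported away from $\mathrm{sing}\,V$ — so one cannot naively test second variation with a bump centered at the junction. The real content of the argument (and where I expect to spend the effort, or to lean on the cited result) is showing that the stability inequality nonetheless \emph{extends} across the codimension-one-in-the-cross-section singular set: this is a capacity/cutoff argument exploiting that the singular set here is small (a single point in the $2$-plane, hence the spine is codimension $2$ in $\R^n$), so one can approximate the destabilizing variation by variations supported in $\mathrm{reg}\,V$ with a logarithmic cutoff, paying only a vanishing error. Once the extended stability inequality is available, the strict negativity of the second variation of length for an $m\ge 3$ junction (a short explicit computation, e.g. opening one of the rays) gives the contradiction, forcing $m=2$ and hence $C$ a hyperplane. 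I would present the proof by reducing to the $2$-dimensional cross-section, invoking \Cref{thm:TonStable} for stability, and then either citing \cite[Proposition 3.2]{TonegawaWickramasekera2012} for the junction-removal step or sketching the cutoff-plus-second-variation contradiction as above.
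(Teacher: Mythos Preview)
There is a genuine gap in your approach (b). You propose to exhibit an ambient deformation $X$ supported near the junction with strictly negative second variation of length, and then use a log-cutoff to approximate it by variations supported in $\mathrm{reg}\,V$. But no such destabilizing ambient deformation exists: for a balanced junction of $m$ rays $\{t\omega_j:t\ge 0\}$ in $\R^2$, a direct computation gives, for any smooth compactly supported $X$,
\[
\left.\frac{d^2}{dt^2}\right|_{t=0}\|(\Phi_X^t)_\sharp V\|=\sum_{j=1}^m\int_0^\infty\Big|P_{\omega_j^\perp}\,\tfrac{d}{ds}X(s\omega_j)\Big|^2\,ds\ \ge\ 0,
\]
so the junction is second-variation stable under ambient flows for every $m$. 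Your intuition that ``opening'' the junction decreases length is correct, but that operation changes the combinatorial structure of the varifold and is not induced by any ambient diffeomorphism. Hence the limit-varifold stability from \Cref{thm:TonStable}---which concerns only $\mathrm{reg}\,V$, where the half-hyperplanes are flat and trivially stable---cannot rule out $m\ge3$, with or without a capacity argument across the spine. The obstacle you flag (extending stability across the codimension-$2$ spine) is not the real issue; even the extended inequality yields nothing, since $|A|\equiv 0$ on the regular part.

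The paper does not prove \Cref{prop:stablejunction} in the main text (it is quoted from \cite{TonegawaWickramasekera2012}), but \Cref{sec:stabselfcont} supplies a self-contained substitute for the application, and that is where the correct mechanism becomes visible. The argument works at the $\varepsilon$-level, not at the varifold level: the level sets $\{\widetilde u_{i_l}=t\}$ are smooth embedded hypersurfaces, so near a would-be junction their unit normal must rotate by a definite angle along each two-dimensional slice, forcing $\int_{S_{y_0,t}\cap B_\sigma^2}|{\rm I\!I}_{\{\widetilde u_{i_l}=t\}}|\,d\mathcal H^1\ge c>0$ uniformly in $l$. On the other hand, the Sternberg--Zumbrun inequality \eqref{eq:SZineq1}, combined with H\"older and coarea, bounds the infimum of this same slice integral from above by $C\sigma^{1/2}$, a contradiction for small $\sigma$. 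This is the Schoen--Simon argument \cite[pp.~785--787]{SS81} transplanted to Allen--Cahn; the essential input is the \emph{pre-limit} stability of the smooth $u_\varepsilon$, which your sketch never invokes.
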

\begin{remark}
    Cones $C$ with $n-2$ directions of translation invariance are, choosing an appropriate Euclidean coordinate frame, of the form $C_0\times\R^{n-2}$, where $C_0\subset\R^2$ is a union of half-lines intersecting at the origin. In other words, they are unions of half-hyperplanes meeting along a common boundary.
\end{remark}
The above are all the results we will use in the article. The following remark is in order:

\begin{remark}
The motivation for showing \Cref{prop:stablejunction} in \cite{TonegawaWickramasekera2012} is the following:
Wickramasekera developed a deep regularity theory for codimension $1$ stable integral varifolds in \cite{Wic14}, which shows that they cannot have branch points (i.e. flat singularities) as long as they do not have classical singularities\footnote{$V$ is said to have a classical singularity at $x\in {\rm sing}\,V$ if there is some $r>0$ such that ${\rm spt}\,V\cap B_r(x)$ is a $C^{1,\alpha}$ perturbation of a union of half-hyperplanes meeting along a common boundary.} either. In fact, such varifolds need to be then of optimal regularity (i.e. as regular as in the case of area-minimisers). This theory can then (by the property in \Cref{prop:stablejunction}) be applied in particular to limits of stable A--C solutions, which shows:
\begin{theorem}[\cite{TonegawaWickramasekera2012, Wic14}]\label{thm:tonwick}
    In the setting of \Cref{thm:HutchTon}, assume moreover that the $u_\ep$ are stable. Then, ${\rm sing}\,V$ is empty if $n\leq 7$, discrete if $n=8$, and of Hausdorff codimension at least $8$ if $n\geq 9$.
\end{theorem}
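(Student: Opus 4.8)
\textbf{Proof proposal for Theorem \ref{thm:tonwick}.} The statement to prove is the dimensional bound on $\mathrm{sing}\,V$ for varifolds $V$ arising as $\varepsilon$-limits of stable Allen--Cahn solutions. The plan is to reduce this to Wickramasekera's structure theorem for codimension-one stable integral varifolds \cite{Wic14}, whose hypotheses are (i) stationarity, (ii) stability of the regular part, and (iii) the absence of classical singularities (triple junctions, etc.). The first two hold by Theorems \ref{thm:HutchTon} and \ref{thm:TonStable}: the limiting varifold $V$ is stationary and integral by \cite{HT00}, and its regular part is a stable minimal hypersurface by \cite{Ton05}. So the entire content of the proof is verifying that (iii) holds, i.e. that $V$ has no classical singular points.

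First I would recall the definition of a classical singularity: $V$ has one at $x\in\mathrm{sing}\,V$ if in some ball $B_r(x)$ the support of $V$ is a $C^{1,\alpha}$ perturbation of a union of three or more half-hyperplanes meeting along a common $(n-2)$-dimensional boundary. Suppose, for contradiction, that $x$ is such a point. Blowing up $V$ at $x$ (using the monotonicity formula for stationary integral varifolds and compactness of stationary varifolds with a density bound), one obtains a tangent cone $C$ at $x$ which, by the $C^{1,\alpha}$-closeness assumption, must be a genuine (nonplanar) union of half-hyperplanes through the origin, hence a cone with $n-2$ directions of translation invariance. The key point is that $C$ is still a limit of stable $\varepsilon$-Allen--Cahn solutions — this follows because the blow-up commutes with the $\varepsilon\downarrow 0$ limit in the sense needed (one takes a suitable diagonal sequence of rescaled solutions $u_{\varepsilon}(x+\rho\,\cdot\,)$, which remain stable solutions of $(\varepsilon/\rho)$-Allen--Cahn with uniformly bounded energy on expanding balls, and whose weight measures converge to $\|C\|$). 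Then Proposition \ref{prop:stablejunction} — the Tonegawa--Wickramasekera junction result — applies and forces $C$ to be a hyperplane, contradicting that $C$ was a nonplanar union of half-hyperplanes. Hence $V$ has no classical singularities.

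With (i)--(iii) verified, I would then invoke \cite[Theorem]{Wic14}: a codimension-one stable integral varifold with no classical singularities has the regularity of an area-minimizing hypersurface, so its singular set is empty for $n\le 7$, is a discrete set of points for $n=8$, and has Hausdorff dimension at most $n-8$ (equivalently, codimension at least $8$) for $n\ge 9$. This is exactly the stated conclusion.

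\textbf{Main obstacle.} The routine-sounding but genuinely delicate step is the blow-up argument showing that a tangent cone to $V$ at a putative classical singularity is again a limit of stable $\varepsilon$-Allen--Cahn solutions, so that Proposition \ref{prop:stablejunction} can be applied. One must carefully choose the diagonal sequence in $(\varepsilon,\rho)$ so that $\varepsilon/\rho\to 0$, control the energy on the rescaled balls via monotonicity (Lemma \ref{lem:monformula} and its $\varepsilon$-version in Remark \ref{rmk:rescdens}, together with the a priori density bound coming from $\mathcal E^\varepsilon(u_\varepsilon,\Omega)\le\Lambda$), and pass the stability inequality \eqref{stability_1stintro} to the limit. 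All of this is standard in the Allen--Cahn varifold literature, and since the statement is explicitly attributed to \cite{TonegawaWickramasekera2012, Wic14}, in the paper itself it suffices to cite those works for the full argument; the proposal above is the conceptual skeleton one would write out.
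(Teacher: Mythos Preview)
Your proposal is correct and matches the paper's own explanation: the paper does not give a standalone proof of this theorem but states it within a remark, attributing it to \cite{TonegawaWickramasekera2012, Wic14} and explaining (exactly as you do) that the result follows by combining Wickramasekera's regularity theory \cite{Wic14} for stable codimension-one integral varifolds without classical singularities with \Cref{prop:stablejunction}, which rules out such singularities for Allen--Cahn limits. Your fleshing out of the blow-up/diagonal-sequence step is the standard argument from \cite{TonegawaWickramasekera2012} and is precisely the content behind the paper's one-line citation.
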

\end{remark}
We believe it is worth highlighting---as explained in \Cref{sec:critsol}---that our proof of \Cref{mainthm} does \emph{not} use the powerful \Cref{thm:tonwick}.

\section{A self-contained theory for stable solutions}\label{sec:stabselfcont}

In our current arguments, the results from \cite{HT00, TonegawaWickramasekera2012} are used only in two places:

The first instance is at the end of the proof of \Cref{prop:curvestindeps}. There, we need to argue that a minimal hypersurface, obtained as a $C^{2,\alpha}$ limit of (the level sets of) stable $\ep$-A--C solutions which satisfy the sheeting assumptions in balls of radius one, is also stable. We appealed to \Cref{thm:TonStable} for efficiency, but we could alternatively easily pass \eqref{eq:SZineq1} to the limit to recover the stability inequality for minimal hypersurfaces, using the $C^{2,\alpha}$ convergence and the fact that $|{\rm I\negthinspace I}_{\{u=u(x)\}}|^2(x)\leq\mathcal A^2(u)(x)$.

The second instance is in the proof of Propositions \ref{prop:W} and \ref{prop:deltamod2}, where we used Theorems \ref{thm:HutchTon}, \ref{thm:TonStable} and \ref{prop:stablejunction} to provide a short and direct argument. We explain in what follows how to avoid their use completely.

The fact that $K_*\in \N$, which is obained in the proof of \Cref{prop:W} via \Cref{thm:HutchTon}, is not needed at that point. We can instead proceed up until \Cref{sec:graphdec} without knowing that \( K_* \) is an integer, and only then---when proving \Cref{prop:graphdec}---{\it deduce} the integrality of \( K_* \) in a straightforward way: By \eqref{eq:gwiongown12}, we find a large ``clean'' ball with ${\bf M}_{\delta\rho}(\bar x)=K_*+o_k(1)$. Moreover, applying \Cref{lem:addproperties}, we find $N\in\N$ with ${\bf M}_{\delta\rho}(\bar x)=N+o_k(1)$. Combining both facts, obviously $K_*\in\N$. Such approach perfectly aligns with the ``a-priori philosophy'' that is present throughout the work.

The other properties in \Cref{prop:W,prop:deltamod2} can be obtained (without appealing to \cite{HT00, TonegawaWickramasekera2012}) as follows:
\begin{proof}[Alternative proof of \Cref{prop:W,prop:deltamod2} (sketch)]
    This alternative proof gives all the conclusions of  \Cref{prop:W,prop:deltamod2} except the integrality of $K_*$ (which as explained above can be deduced later in our argument).
    
    We adhere to the same notation and setup as in the original proof (\Cref{subsec:indlargesc}). Repeating the argument from Step 1 there, which does not use  \cite{HT00, TonegawaWickramasekera2012},  we obtain:
    \begin{equation}\label{pinch}
         K_*-\omega(\frac{1}{R})\leq \widetilde{\bf M}_{R}(\zz)\leq K_*\quad\mbox{for all}\quad \zz\in\cZ\,,
    \end{equation}
    where $\omega$ is a dimensional modulus of continuity. Notice that $\widetilde{\bf M}_{R}$ is the weighted excess from \Cref{def:heathexc}. 

     We then continue the proof verbatim, arguing by contradiction exactly as in the paragraph before Step 2---recall that we put $\widetilde u_i(x):=u_i(\zz_i+R_i x)$, and $\widetilde u_i$ is a solution of  $\widetilde\ep_i$-A--C with  $\widetilde \ep_i=\frac{1}{R_i}$. Letting $\mathcal Z(u_i)$ denote the bad centers of $u_i$, set $\widetilde Z_i : = \frac{1}{R_i}(\mathcal Z(u_i)-\zz_i) \cap B_1$, the sequence of rescaled (and translated) bad sets.

    \noindent {\bf Construction of a spine.} Define the  ``minimal spine dimension''
    \[
        d_* := {\rm min} \Big\{ d\in [0,n]\cap \mathcal N\quad |\quad   \liminf_{i\to \infty} \min_{L\in \mathrm{Gr}(d, n)}  \sup_{y\in \widetilde Z_i}{\rm dist}(y, L) =0\Big\},
    \]
    where $\mathrm{Gr}(d, n)$ denotes the Grassmannian of $d$-dimensional linear subspaces of $\R^n$.

    By definition of $d_*$ there exists a subsequence of $(u_{i_l})_{l\ge 0}$ such that for all $l$ we have: 
    \begin{itemize}
        \item There is $L\in \mathrm{Gr}(d_*, n)$ such that $\sup_{y \in \widetilde Z_{i_l}}{\rm dist}(y, L)\to 0$ as $l\to\infty$.
        \item There exist $d_*$ points in $\widetilde Z_{i_l}$, labeled $y^1_l,  y^2_l \dots, y^{d_*}_l$, such that the gram determinant of the $d_*$ vectors is bounded by below by the same positive number for all $l$.
    \end{itemize}

    By \eqref{pinch}, combined with the monotonicity formula (see \eqref{eq:weighmonM}) centered at $0$ and at $y^j_l$, we obtain
    \begin{equation*}
        \widetilde \eps_{i_l}\int_{B_2} (x \cdot \nabla \widetilde u_{i_l})^2  = o(1)  \quad \mbox{and} \quad    \widetilde \eps_{i_l}\int_{B_2} ((x-y^j_l) \cdot \nabla \widetilde u_{i_l})^2  = o(1) \quad \mbox{as }l\to \infty\,.
    \end{equation*}
    Given any $e\in L\cap B_{4}$, which we can then write as a linear combination of the $y_l^j$,  we deduce that
    \begin{equation}\label{trinv}
      \widetilde \eps_{i_l}\int_{B_2} (e \cdot \nabla \widetilde u_{i_l})^2  = o(1) \quad \mbox{and} \quad    \widetilde \eps_{i_l}\int_{B_2} ((x-e) \cdot \nabla \widetilde u_{i_l})^2  = o(1) \quad \mbox{as }l\to \infty.
    \end{equation}
    From now on, ${\bf \widetilde M}_r$ will denote the natural rescaled version of the original density (exactly as in \Cref{rmk:rescdens}). By the above, we can then deduce (for instance, arguing as in \cite[pp. 116-118]{Sim18}):
    \begin{lemma}\label{lem:89h1igbiob}
        Fix $x$ and $r>0$ such that $B_{2r}(x)\subset B_1\setminus L$. Then, the following hold:
        \begin{itemize}
            \item Dilation invariance: ${\bf \widetilde M}_r(x)={\bf \widetilde M}_r(x+\lambda (x-e))+o(1)$ for any $e\in L\cap B_1$ and $\lambda \in(0,1)$.
            \item Translation invariance: ${\bf \widetilde M}_r(x)={\bf \widetilde M}_r(x+e)+o(1)$ for any $e\in L\cap B_1$.
            \item Constancy in $L$: ${\bf \widetilde M}_r(0)={\bf \widetilde M}_r(e)+o(1)$ for any $e\in L\cap B_1$.
        \end{itemize}
    \end{lemma}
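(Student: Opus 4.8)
\textbf{Proof proposal for Lemma \ref{lem:89h1igbiob}.}

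The plan is to derive the three invariance properties directly from the two identities in \eqref{trinv}, which encode (in the $\ep$-weighted sense) translation invariance of the limiting object along $L$ and dilation invariance with respect to any point of $L$. The organizing principle is that ${\bf \widetilde M}_r(x)$ is, for a solution of $\ep$-Allen--Cahn, a smooth quantity whose variation under translation and scaling is controlled by exactly the quadratic expressions appearing in \eqref{trinv} (via the weighted monotonicity computation \eqref{eq:weighmonM} and its analogue for translations), so that smallness of those expressions forces the corresponding near-invariance. Since all three statements are of the form ``${\bf \widetilde M}_r$ evaluated at two nearby centers agrees up to $o(1)$'', the argument will be a short ``integrate the derivative and bound by Cauchy--Schwarz'' computation, repeated in three variants.

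First I would establish \emph{translation invariance}. Fix $x, r$ with $B_{2r}(x)\subset B_1\setminus L$ and fix $e\in L\cap B_1$. Consider the one-parameter family $t\mapsto {\bf \widetilde M}_r(x+te)$ for $t\in[0,1]$. Differentiating in $t$ and using $\nabla G_{r,n-1} = -\frac{2}{r^2}G_{r,n-1}\,(\,\cdot\,)$ together with the Pohozaev-type identity (exactly as in the computation of $r\frac{d}{dr}{\bf\widetilde M}_r$ in the proof of \Cref{prop:moncontrexc}), one obtains that $\frac{d}{dt}{\bf \widetilde M}_r(x+te)$ is, up to fixed constants, a weighted integral of $(e\cdot\nabla u)(\text{(recentered }x)\cdot\nabla u)$ plus a term carrying $Q = W(u) - \tfrac12|\nabla u|^2$ paired with $(e\cdot\nabla u)$; a Cauchy--Schwarz bound turns this into $C\,\big(\widetilde\ep\!\int_{B_{2r}(x)}(e\cdot\nabla\widetilde u)^2\big)^{1/2}$ times a quantity controlled by ${\bf \widetilde M}_r$ (uniformly bounded, since $K_*<\infty$), plus a discrepancy contribution that is likewise $o(1)$ by \Cref{thm:villegas}. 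Integrating in $t\in[0,1]$ and invoking the first identity of \eqref{trinv} yields $|{\bf \widetilde M}_r(x) - {\bf \widetilde M}_r(x+e)| = o(1)$. The \emph{dilation invariance} statement is obtained in the same way: the relevant one-parameter family is $\lambda\mapsto {\bf \widetilde M}_r\big(x+\lambda(x-e)\big)$, whose $\lambda$-derivative, after the same Pohozaev manipulation, is controlled by $C\big(\widetilde\ep\!\int_{B_{2r}}((x-e)\cdot\nabla\widetilde u)^2\big)^{1/2}$, which is $o(1)$ by the second identity of \eqref{trinv}; one must keep the family of centers inside $B_1\setminus L$, which is exactly why the hypothesis $B_{2r}(x)\subset B_1\setminus L$ is imposed (and one can shrink $r$ slightly along the path if needed, at the cost of another $o(1)$ via the weighted monotonicity in $r$). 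Finally, \emph{constancy in $L$}: for $e\in L\cap B_1$, the segment $t\mapsto te$, $t\in[0,1]$, stays in $L$; one cannot evaluate ${\bf \widetilde M}_r$ at centers on $L$ and directly differentiate there (the monotonicity machinery is cleanest off the spine), so instead I would take an auxiliary point $x_0$ with $B_{2r}(x_0)\subset B_1\setminus L$, apply translation invariance to move $x_0$ to $x_0 + e$ along $L$, and then let $x_0 \to 0$ and $x_0 + e \to e$ using continuity of ${\bf \widetilde M}_r$ in the center together with a further application of dilation invariance toward the spine; the combination gives ${\bf \widetilde M}_r(0) = {\bf \widetilde M}_r(e) + o(1)$.

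The main obstacle I anticipate is \emph{bookkeeping the $o(1)$ uniformly} over the relevant range of centers and scales: each differentiation introduces an error term (a discrepancy piece $\propto Q$ and a boundary-type piece from the Gaussian weight's tail), and one must check these are all $o(1)$ as $l\to\infty$ with constants independent of $x$, $r$, $e$, $\lambda$ within the stated constraints. The discrepancy terms are handled cleanly by the rescaled form of \Cref{thm:villegas} (as noted in the Remark following \Cref{thm:villegas}), and the weight-tail terms decay because $G_{r,d}$ is Gaussian and ${\bf M}_t \le {\bf M}_\infty = K_*$ is bounded, so in the end the estimates close; but making the dependence on the subspace $L$ and on the path of centers genuinely uniform — rather than just pointwise — is the delicate point, and it is precisely where following the template of \cite[pp.~116--118]{Sim18} (adapted from stationary varifolds to the $\ep$-Allen--Cahn weighted density) does the real work.
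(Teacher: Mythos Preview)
Your translation argument (second bullet) is correct: differentiating $t\mapsto {\bf \widetilde M}_r(x+te)$ and integrating by parts via $\partial_e P = {\rm div}(\widetilde\ep(\partial_e u)\nabla u)$ yields, up to constants, $\widetilde\ep\int(e\cdot\nabla u)\big((y-x-te)\cdot\nabla u\big)\,G_{r,n-1}$, which Cauchy--Schwarz and the first identity in \eqref{trinv} bound by $o(1)$. (There is in fact no separate $Q$ term here---discrepancy enters only when you differentiate in the scale $r$.) This computation works verbatim at $x=0\in L$, so the third bullet follows directly; your detour through an auxiliary off-spine point and ``dilation toward the spine'' is unnecessary.

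The dilation argument (first bullet), however, has a real gap. Differentiating $\lambda\mapsto {\bf \widetilde M}_r(x+\lambda(x-e))$ at \emph{fixed} $r$ produces a factor $\big(\widetilde\ep\int((x-e)\cdot\nabla u)^2\,G\big)^{1/2}$, where $x-e$ is a \emph{fixed} vector. The second identity of \eqref{trinv} controls instead $\widetilde\ep\int((y-e)\cdot\nabla u(y))^2\,dy$, the \emph{radial} derivative from $e$; these are different, and the $L^\perp$-component of $x-e$ is not recoverable from \eqref{trinv}. The correct move---and this is what Simon's argument actually does---is to vary center \emph{and} scale simultaneously: differentiating $\mu\mapsto {\bf \widetilde M}_{\mu r}\big(e+\mu(x-e)\big)$ naturally produces the radial term $(y-e)\cdot\nabla u$ plus a $Q$ contribution, both $o(1)$ by \eqref{trinv} and the rescaled \Cref{thm:villegas}, yielding ${\bf \widetilde M}_r(x) = {\bf \widetilde M}_{(1+\lambda)r}(x+\lambda(x-e)) + o(1)$. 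Closing the gap between scales $(1+\lambda)r$ and $r$ at the new center then requires combining this with the analogous scaling relation from a \emph{second} vertex in $L$ (say $0$) together with the translation invariance you already have along $L$; your fixed-$r$ differentiation misses this structure entirely.
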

    
    \noindent {\bf Convergence to a hyperplane.} To conclude, we need to deduce that---up to a subsequence---there are hyperplanes $H_i$ such that, given $r>0$, for $i$ large enough the following hold:
    \begin{equation}\label{eq:782pibvwpfvb}
        \{|u|\leq 0.9\}\cap B_1\subset B_r(H_i)\qquad \quad\mbox{and}\qquad \quad {\bf \widetilde M}_r(u_i,x)\geq K_*-o(1)\quad\mbox{for every}\quad x\in H_i\cap B_1\,.
    \end{equation}
    Notice that \eqref{trinv} implies $d_*< n$: if we had $d_*=n$, then  \eqref{trinv} would imply $\widetilde \ep_{i_l}\int_{B_1} |\nabla \widetilde u_{i_l}|^2  = o(1)$ as $l\to \infty$, which readily gives a contradiction with lower density estimates.
    We distinguish then two cases $d_*=n-1$ and $d_* <n-1$.
    
    \noindent {\bf Case $d_*=n-1$.} Let $s>0$. By the third bullet in \Cref{lem:89h1igbiob} and ${\bf \widetilde M}_s(0)=K_*-o(1)$, we have ${\bf \widetilde M}_s(x)=K_*-o(1)$ $\forall x\in L\cap B_1$.  The second part of \eqref{eq:782pibvwpfvb} follows.
    
    To see the first one, let $r>0$. It is easy to see (by a cover of $L\cap B_1$ with disjoint balls of different radii) that $\mathcal E(u_i,B_{r/2}(L)\cap B_1)\geq K_*-o(1)$. Assume there were some $x\in\{|\widetilde u_{i_l}|\leq 0.9\}\cap B_{1/2}\setminus B_r(L)$ for contradiction; then
    $$K_*\geq \mathcal E(\widetilde u_{i_l},B_1)\geq \mathcal E(\widetilde u_{i_l},B_{r/2}(L)\cap B_1)+\mathcal E(\widetilde u_{i_l},B_{r/2}(x)\cap B_1)\geq K_*-o(1)+\mathcal E(\widetilde u_{i_l},B_{r/2}(x)\cap B_1)\,,$$
    which (by lower density estimates around $x$, see Step 1 in the proof of \Cref{lem:Hbdhtez}) is a contradiction for $\widetilde \eps_{i_l}>0$ small enough.
    
    \noindent {\bf Case $d_*<n-1$.} By \Cref{thm:goodimpsheet} we have the sheeting assumptions away from $L$ (as the bad set is contained in a $o(1)$ tube around $L$). In particular, the sets $\{\widetilde u_{i_l}=0\}$ enjoy $C^{2,\alpha}$ estimates. By  Arzel\`a--Ascoli---plus standard covering and diagonal arguments---we obtain some subsequence (not relabeled) converging (in $C^{2,\alpha}_{\rm loc}(B_1\setminus L)$) to a smooth minimal surface $\Sigma \subset B_1\setminus L$ (with $\partial \Sigma\subset L\cup\partial B_1$). By the argument at the beginning of the section, the $C^{2,\alpha}_{\rm loc}$ convergence implies moreover that $\Sigma$ is stable away from $L$.
    
    The following is immediate from \eqref{lem:1dapprox}:
    \begin{lemma}\label{lem:6713ifvoi}
        There is $R_0$ such that the following holds: Let $\delta>0$. Let $s,t$ be such that $|s|,|t|\leq 0.9$. Then, given $x\in\{\widetilde u_{i_l}=s\}\cap B_1\setminus B_{\delta}(L)$, we have---for $l$ large enough---that $\{\widetilde u_{i_l}=t\}\cap B_{R_0\widetilde \eps_{i_l}}(x)\neq \emptyset$ as well.\\
        In particular (by considering $s=0$), we deduce that $\{\widetilde u_{i_l}=t\}$ also converges both in the $C^{2,\alpha}$ and Hausdorff senses to $\Sigma$ in $B_1\setminus B_{\delta}(L)$ (with a rate independent of $t$).
    \end{lemma}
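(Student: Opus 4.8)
The plan is to deduce everything from the one-dimensional approximation of \Cref{lem:1dapprox}, after rescaling to parameter $1$. First I would set $\widehat u_{i_l}(y):=\widetilde u_{i_l}(\widetilde\eps_{i_l}y)$; this is a translate of $u_{i_l}$, hence a stable solution of the $1$-Allen--Cahn equation with ${\bf M}_\infty(\widehat u_{i_l})\le K_*+1$, and a point $x\in B_1\setminus B_\delta(L)$ with $\widetilde u_{i_l}(x)=s$ corresponds to $y_l:=x/\widetilde\eps_{i_l}$, at which $\widehat u_{i_l}(y_l)=s$. Since the rescaled bad set $\widetilde Z_{i_l}$ lies in an $o(1)$-neighbourhood of $L$, for $l$ large (depending only on $\delta$) we have ${\rm dist}(x,\widetilde Z_{i_l})\ge\delta/2$, so no bad center of $\widehat u_{i_l}$ lies in $B_{(\delta/4)R_{i_l}}(y_l)$; as already observed just before the statement, \Cref{thm:goodimpsheet} then gives that $\widehat u_{i_l}$ satisfies the sheeting assumptions in $B_{(\delta/8)R_{i_l}}(y_l)$ with a universal constant $C_1$ (recall $\widetilde\eps_{i_l}=R_{i_l}^{-1}\to0$, so this radius tends to $\infty$).

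Next I would fix, once and for all, $b_1:=\tfrac{1}{10}$ and a small \emph{fixed} threshold $\delta':=\tfrac{1}{100}$, and let $R_0'=R_0'(C_1,b_1,\delta',n)$ be the radius from \Cref{lem:1dapprox}. For $l$ large that lemma applies at $y_l$ and produces $a\in\Sp^{n-1}$, $b\in\R$ with $\|\widehat u_{i_l}-\phi(a\cdot y+b)\|_{L^\infty(B_{1/\delta'}(y_l))}\le\delta'$. Since $\phi(r)=\tanh(r/\sqrt2)$ maps $\R$ strictly increasingly onto $(-1,1)$, with $\phi'\ge c_\star>0$ on the compact set $\phi^{-1}([-0.95,0.95])$, the values $\{\phi(a\cdot y+b):|y-y_l|\le C_\star\}$ cover $[-0.95,0.95]$, where $C_\star:=\phi^{-1}(0.95)-\phi^{-1}(-0.95)$ is an absolute constant with $C_\star+1\le 1/\delta'$. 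Combining this with $|\widehat u_{i_l}-\phi(a\cdot y+b)|\le\delta'$ on $B_{1/\delta'}(y_l)$ and the intermediate value theorem along the segment through $y_l$ in direction $\pm a$, I would find $y_l'$ with $|y_l'-y_l|\le R_0:=C_\star+1$ and $\widehat u_{i_l}(y_l')=t$; scaling back, $x':=\widetilde\eps_{i_l}y_l'$ satisfies $\widetilde u_{i_l}(x')=t$ and $|x'-x|\le R_0\widetilde\eps_{i_l}$, which is the first assertion. Here $R_0$ is a fixed constant, and only the threshold on $l$ depends on $\delta$.

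For the ``in particular'' part, applying the first assertion and its version with the roles of $s$ and $t$ exchanged (each time with one of the two values equal to $0$) shows that, in $B_1\setminus B_\delta(L)$, the level sets $\{\widetilde u_{i_l}=t\}$ and $\{\widetilde u_{i_l}=0\}$ are within Hausdorff distance $R_0\widetilde\eps_{i_l}$ of one another, uniformly in $t\in[-0.9,0.9]$. Since $R_0\widetilde\eps_{i_l}\to0$ and $\{\widetilde u_{i_l}=0\}\to\Sigma$ in $C^{2,\alpha}_{\rm loc}(B_1\setminus L)$, each $\{\widetilde u_{i_l}=t\}$ converges to $\Sigma$ in the Hausdorff sense with a rate independent of $t$. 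To upgrade this to $C^{2,\alpha}$ convergence, I would invoke \Cref{lem:rjbagbda} and \Cref{thm:sheetimpc2alpha} for the $\widehat u_{i_l}$ (which, by the first paragraph, satisfy the sheeting assumptions on balls of radius $\to\infty$ around every point of $B_1\setminus B_\delta(L)$): these provide $C^{2,\vartheta}$ graph bounds for all the level sets $\{\widetilde u_{i_l}=t\}$, uniform in $t\in[-0.9,0.9]$ and in $l$. A routine Arzel\`a--Ascoli argument, combined with the uniform Hausdorff convergence, then forces $\{\widetilde u_{i_l}=t\}\to\Sigma$ in $C^{2,\alpha}_{\rm loc}(B_1\setminus L)$ with a rate independent of $t$.

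The one point that needs care is that the one-dimensional approximation must hold with a \emph{fixed} error $\delta'$ on a \emph{fixed}-size ball $B_{1/\delta'}(y_l)$ about the rescaled point: this is exactly where one uses both that $d_*<n$ (so that, after rescaling, the bad set is trapped in an $o(1)$-tube around the proper subspace $L$, leaving definite-size good regions at distance $\ge\delta$ from $L$) and that $\widetilde\eps_{i_l}\to0$ (so that, in the parameter-$1$ picture, those good regions grow without bound and eventually contain $B_{1/\delta'}(y_l)$). The rest is bookkeeping plus the already established Wang--Wei estimates.
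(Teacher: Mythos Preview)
Your proof is correct and follows exactly the approach indicated by the paper, which simply states that the lemma ``is immediate from \Cref{lem:1dapprox}'' (the 1D approximation). You have carefully unpacked this one-line remark: rescaling to parameter $1$, using \Cref{thm:goodimpsheet} to verify the sheeting assumptions away from $L$, and then applying \Cref{lem:1dapprox} with a fixed threshold to produce a nearby point on the $t$-level set via the intermediate value theorem; the ``in particular'' clause is handled by combining the resulting uniform Hausdorff proximity with the uniform $C^{2,\vartheta}$ bounds from \Cref{lem:rjbagbda} and \Cref{thm:sheetimpc2alpha}.
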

    In particular, $\Sigma$ must be nonempty: Otherwise, cover $B_1\cap B_r(L)$ with $\sim r^{-d_*}$ balls of radius $r>0$ small; the contributions of these balls (using that ${\bf M}$ is bounded) to the total energy in $B_1$  amount to at most $C r$. Moreover, by \Cref{lem:6713ifvoi}, $|u|\leq 0.9$ is contained in this cover (for $\widetilde \eps_{i_l}$ small enough), thus by \Cref{lem:expdecay} the energy of $\widetilde u_{i_l}$ decays exponentially away from these balls. Making $r$ small, the lower bound for the energy density in $B_1$ (in fact, it is almost $K_*$) yields a contradiction.\\

    Now, passing \eqref{trinv} to the limit, $\Sigma$ is conical and invariant in any direction $e\in L$. Thus, in an appropriate frame $\Sigma=C\times \R^{d_*}$, where $C\subset \R^{n-d_*}$ is a minimal cone which is smooth and stable outside the origin. We have then two subcases:

    {\bf Subcase 1:} If $d_*\le n-3$ (thus $3\leq n-d_*\leq 7$), $C$ (and thus $\Sigma$) is a hyperplane by \cite{Simons68}.
    
    {\bf Subcase 2:} If $ d_*=n-2$, $C\subset \R^2$ is a union of half-lines intersecting at the origin. We argue as follows:
    \begin{lemma}\label{lem:53dfitkdgo}
        There exists $U_{i_l}\subset B_1^{n-2}$, with ${\rm Vol}_{n-2}(U_{i_l})\geq c_0>0$, such that for every $x_0\in U_{i_l}$ and $t$ with $|t|\leq 0.9$ we have: Let $S_{y_0,t}:=\{x:(x,y_0)\in\{u=t\}\}\cap B_1^2$. Then $S_{y_0,t}\neq \emptyset$, and if $x\in S_{y_0,t}$ then $\{\widetilde u_{i_l}=t\}$ is a smooth hypersurface around $(x,y_0)$. Moreover, $|\nabla \widetilde u_{i_l}|\geq \frac{c}{\widetilde \eps_{i_l}}$.
    \end{lemma}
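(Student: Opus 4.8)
The plan is to produce the set $U_{i_l}$ as the complement, inside a ball $B_{1/2}^{\,n-2}$ of the $\R^{n-2}$--factor, of the projection of (a fixed neighbourhood of) the rescaled bad set, and then to invoke \Cref{thm:goodimpsheet} along the resulting ``clean fibers''. First I would quantify the size of the bad set in the rescaled picture: $\widetilde u_{i_l}$ is a global stable solution of $\widetilde\eps_{i_l}$--A--C with $\widetilde\eps_{i_l}=1/R_{i_l}$ and $\mathbf M_\infty(\widetilde u_{i_l})\le K_*+1$, and since (by \Cref{prop:SZstab}, exactly as in the remark preceding \Cref{lem:radcentselec}) the bad centers of $u_{i_l}$ in $B_{R_{i_l}}$ are covered by $\lesssim R_{i_l}^{\,n-3}$ balls of radius $1$, the rescaled bad set $\widetilde Z_{i_l}\cap B_1$ is covered by $\lesssim R_{i_l}^{\,n-3}$ balls of radius $\widetilde\eps_{i_l}$. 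Fixing $K_0:=2R_0$ with $R_0$ the constant of \Cref{thm:goodimpsheet} applied with $C_0=K_*+1$, set $\mathcal B_{i_l}:=B_{K_0\widetilde\eps_{i_l}}(\widetilde Z_{i_l})\cap B_1$: this is a union of $\lesssim R_{i_l}^{\,n-3}$ balls of radius $\lesssim R_{i_l}^{-1}$, so its orthogonal projection $\pi(\mathcal B_{i_l})\subset\R^{n-2}$ satisfies $\big|\pi(\mathcal B_{i_l})\big|\lesssim R_{i_l}^{\,n-3}\cdot R_{i_l}^{-(n-2)}=R_{i_l}^{-1}\to0$. Hence $U_{i_l}:=B_{1/2}^{\,n-2}\setminus\pi(\mathcal B_{i_l})$ satisfies ${\rm Vol}_{n-2}(U_{i_l})\ge c_0>0$ for $l$ large.

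Next, for $y_0\in U_{i_l}$ I would check the conclusions. By construction the fiber $\{(x,y_0):x\in B_1^2\}$ is disjoint from $\mathcal B_{i_l}$, so every point $q=(x,y_0)$ of it has ${\rm dist}(q,\widetilde Z_{i_l})\ge K_0\widetilde\eps_{i_l}$; rescaling $\widetilde u_{i_l}$ by $\widetilde\eps_{i_l}^{-1}$ around $q$ gives an A--C solution with no bad centers in $B_{R_0}$ and density $\le K_*+1$, to which \Cref{thm:goodimpsheet} applies and yields (after rescaling back) the sheeting assumptions at $q$. Thus whenever $|\widetilde u_{i_l}(q)|\le 0.9$ we get $|\nabla\widetilde u_{i_l}(q)|\ge c/\widetilde\eps_{i_l}$ and $\{\widetilde u_{i_l}=\widetilde u_{i_l}(q)\}$ smooth near $q$; taking $q=(x,y_0)$ with $x\in S_{y_0,t}$ and $|t|\le 0.9$ gives the smoothness and gradient claims. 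For $S_{y_0,t}\neq\varnothing$: since $\Sigma=C\times\R^{n-2}$ is nonempty and $C$ is a union of rays from the origin, fix $p\in C$ with $|p|\in(\tfrac14,\tfrac12)$; then for any $y_0\in B_{1/2}^{\,n-2}$ the point $(p,y_0)\in\Sigma\cap B_1$ lies at distance $\ge\tfrac14$ from the spine $L$. By \Cref{lem:6713ifvoi}, $\{\widetilde u_{i_l}=t\}$ converges near $(p,y_0)$ to $\Sigma$ in $C^{2,\alpha}$, uniformly in $|t|\le 0.9$; and $\Sigma$, being translation--invariant in the $\R^{n-2}$--directions, is there a graph $\{\xi\cdot\nu=\phi(\,\cdot\,)\}$ over a fixed neighbourhood in its tangent plane, which contains the whole $y_0$--fiber direction. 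Hence the $C^{2,\alpha}$--close graph for $\{\widetilde u_{i_l}=t\}$ is defined over the same base and meets the fiber over $y_0$, producing a point $x\in B_1^2$ with $(x,y_0)\in\{\widetilde u_{i_l}=t\}$.

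The main obstacle is the measure estimate of the first step: one must carefully juggle the two scalings --- the unit bad balls of $u_{i_l}$ become balls of radius $\widetilde\eps_{i_l}=R_{i_l}^{-1}$ after rescaling, while their number stays $\lesssim R_{i_l}^{\,n-3}$ --- and combine this with the codimension--$3$ Hausdorff bound on the bad set to see that the \emph{projection} of $\mathcal B_{i_l}$ onto $\R^{n-2}$ has measure $\lesssim R_{i_l}^{-1}\to 0$. Once this is in place the two remaining conclusions are a direct application of \Cref{thm:goodimpsheet} and of the graphical structure of $\Sigma$ away from $L$ provided by \Cref{lem:6713ifvoi}; a minor technical point along the way is the uniformity in $t\in[-0.9,0.9]$ of the convergence $\{\widetilde u_{i_l}=t\}\to\Sigma$, which is exactly the content of \Cref{lem:6713ifvoi}.
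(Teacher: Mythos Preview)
Your proposal is correct and follows essentially the same approach as the paper: define $U_{i_l}$ as a ball in $\R^{n-2}$ minus the projection of an $O(\widetilde\eps_{i_l})$--neighbourhood of the rescaled bad set, use the stability bound $\#\{\text{bad balls}\}\lesssim \widetilde\eps_{i_l}^{-(n-3)}$ to see this projection has $(n-2)$--measure $\lesssim \widetilde\eps_{i_l}\to 0$, apply \Cref{thm:goodimpsheet} along the resulting clean fibers, and obtain $S_{y_0,t}\neq\varnothing$ from \Cref{lem:6713ifvoi}. The only cosmetic differences are that the paper takes the base ball $B_1^{n-2}$ rather than $B_{1/2}^{n-2}$ and is terser about the non-emptiness step.
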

    \begin{proof}
        The fact that $S_{y_0,t}\neq \emptyset$ is immediate by applying \Cref{lem:6713ifvoi} around any point in $\Sigma\cap [B_1\setminus B_{\frac{1}{10}}(L)]\cap \{y=y_0\}\neq\emptyset$. Now, let $\Pi_{n-2}$ denote projection onto $\{0\}\times\R^{n-2}$, and define $U_{i_l}=B_1^{n-2}\setminus \Pi_{n-2}[B_{R_0\widetilde \eps_{i_l}}(\widetilde Z_i\cap  B_4)]$, $R_0$ universal to be chosen. By stability and Vitali, $B_{R_0\widetilde \eps_{i_l}}(\widetilde Z_i\cap B_4)$ can be covered with at most $C \widetilde \eps_{i_l}^{-(n-3)}$ balls of radius $\widetilde \eps_{i_l}$. Projecting these balls, ${\rm Vol}_{n-2}(\Pi_{n-2}[B_{R_0\widetilde \eps_{i_l}}(\widetilde Z_i\cap B_4)])\leq C\widetilde \eps_{i_l}^{-(n-3)}\widetilde \eps_{i_l}^{n-2}=C\widetilde \eps_{i_l}$, which can be made arbitrarily small as $\widetilde \eps_{i_l}\to 0$.
        
        Fixing $R_0$ (universal) large enough, for any $y_0\in U_{i_l}$ we can apply \Cref{thm:goodimpsheet} (appropriately rescaled) in $B_{R_0\widetilde \eps_{i_l}}(0,y_0)$, thus \eqref{eq:sheetassump} holds in $B_{\frac{R_0}{2}\widetilde \eps_{i_l}}(0,y_0)$. This gives $|\nabla \widetilde u_{i_l}|\geq \frac{c}{\widetilde \eps_{i_l}}$ as desired. In particular, if $|t|\leq 0.9$, then $\{\widetilde u_{i_l}=t\}$ is a smooth hypersurface around $S_{y_0,t}$.
    \end{proof}
    The rest is essentially the classical argument in \cite[pp. 785-787]{SS81}; we give a sketch of the proof, divided into two main lemmas. We have:
    \begin{lemma}
    Let $\sigma>0$. Then, for all $\widetilde \eps_{i_l}>0$ small enough, we have
        \begin{equation*}
            \inf_{t\in[-0.9,0.9],\,y_0\in U_{i_l}}\int_{S_{y_0,t}\cap B_\sigma^2} |{\rm I\negthinspace I}_{\{\widetilde u_{i_l}=t\}}|\,d\cH^{1}\leq \sigma^{1/2}. 
        \end{equation*}
    \end{lemma}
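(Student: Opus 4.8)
The plan is to extract an $L^2$ bound on the curvature of the level sets from the stability inequality, push it through two successive coarea formulas — first in the level parameter $t\in[-0.9,0.9]$, then along the fibers $\{y=y_0\}$, $y_0\in U_{i_l}$ — average over $(t,y_0)$, and finally recover the sharp constant $\sigma^{1/2}$ by splitting $B_\sigma^2$ into a tiny ball $B_\delta^2$ around the spine $L$ and the complementary annulus, exploiting that the limit cone $C$ is flat away from its vertex. First I would apply \Cref{prop:SZstab} — in the scale-invariant form $\int\mathcal A^2\xi^2|\nabla v|^2\le\int|\nabla\xi|^2|\nabla v|^2$, which holds verbatim for $\widetilde\eps_{i_l}$-A--C solutions — to $v=\widetilde u_{i_l}$ with $\xi$ equal to $1$ on $B_1$ and supported in $B_2$. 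Since the bounded energy density of $u_{i_l}$ forces $\mathcal E^{\widetilde\eps_{i_l}}(\widetilde u_{i_l},B_2)\le C$, hence $\int_{B_2}|\nabla\widetilde u_{i_l}|^2\le C\widetilde\eps_{i_l}^{-1}$, this gives $\int_{B_1}\mathcal A^2|\nabla\widetilde u_{i_l}|^2\le C\widetilde\eps_{i_l}^{-1}$, and the pointwise inequality $\mathcal A^2\ge|{\rm I\negthinspace I}_{\{\widetilde u_{i_l}=t\}}|^2$ on level sets lets me replace $\mathcal A^2$ by $|{\rm I\negthinspace I}|^2$. On the good region $\R^2\times U_{i_l}$ we have $|\nabla\widetilde u_{i_l}|\ge c\,\widetilde\eps_{i_l}^{-1}$ on $\{|\widetilde u_{i_l}|\le0.9\}$ by \Cref{lem:53dfitkdgo}, so the coarea formula for $\widetilde u_{i_l}$ turns this into $\int_{-0.9}^{0.9}\int_{\{\widetilde u_{i_l}=t\}\cap B_1\cap(\R^2\times U_{i_l})}|{\rm I\negthinspace I}|^2\,d\cH^{n-1}\,dt\le C$.

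Next I would apply, for each fixed $t$, the coarea formula to the coordinate projection $\pi(x,y)=y$ onto $\R^{n-2}$; its coarea factor is $\le1$, and for $y_0\in B_{1/2}^{n-2}$ one has $\pi^{-1}(y_0)\cap\{\widetilde u_{i_l}=t\}\supset S_{y_0,t}\cap B_\delta^2$ whenever $\delta\le\tfrac12$. Integrating also in $t$ this gives
\[
\int_{-0.9}^{0.9}\int_{U_{i_l}\cap B_{1/2}^{n-2}}\int_{S_{y_0,t}\cap B_\delta^2}|{\rm I\negthinspace I}|^2\,d\cH^{1}\,dy_0\,dt\ \le\ C ,\qquad\text{for every }\delta\in(0,\tfrac12].
\]
A parallel coarea computation with integrand $1$, together with the energy density bound and the fact that each $\{\widetilde u_{i_l}=t\}$, $|t|\le0.9$, converges to $C\times\R^{n-2}$ — a union of at most $K_*$ half-hyperplanes through $L$ — uniformly in $t$ by \Cref{lem:6713ifvoi}, yields the companion average length bound $\int_{-0.9}^{0.9}\int_{U_{i_l}\cap B_{1/2}^{n-2}}\cH^{1}(S_{y_0,t}\cap B_\delta^2)\,dy_0\,dt\le C\delta$. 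Since the domain of integration has measure bounded below by a universal constant (as $|U_{i_l}\cap B_{1/2}^{n-2}|\ge c>0$ for $l$ large), a Chebyshev argument applied simultaneously to the two bounds produces a pair $(t^*,y_0^*)$, $|t^*|\le0.9$, $y_0^*\in U_{i_l}$, with $\int_{S_{y_0^*,t^*}\cap B_\delta^2}|{\rm I\negthinspace I}|^2\le C$ and $\cH^1(S_{y_0^*,t^*}\cap B_\delta^2)\le C\delta$, whence Cauchy--Schwarz gives $\int_{S_{y_0^*,t^*}\cap B_\delta^2}|{\rm I\negthinspace I}|\le C_*\,\delta^{1/2}$ with $C_*$ universal.

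To conclude I would control the contribution of the annulus $B_\sigma^2\setminus B_\delta^2$. This region sits at distance $\ge\delta$ from $L$, and there, by \Cref{thm:goodimpsheet} and \Cref{lem:6713ifvoi}, the level sets $\{\widetilde u_{i_l}=t\}$ converge in $C^{2}$ — uniformly in $t\in[-0.9,0.9]$ — to the flat set $C\times\R^{n-2}$; hence $\sup|{\rm I\negthinspace I}_{\{\widetilde u_{i_l}=t\}}|\to0$ while $\cH^1(S_{y_0,t}\cap(B_\sigma^2\setminus B_\delta^2))\le C\sigma$, so $\int_{S_{y_0,t}\cap(B_\sigma^2\setminus B_\delta^2)}|{\rm I\negthinspace I}|\to0$ uniformly over $(t,y_0)$ with $y_0\in U_{i_l}$. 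Given $\sigma>0$, I would fix $\delta=\delta(\sigma)<\sigma$ with $C_*\delta^{1/2}\le\tfrac12\sigma^{1/2}$; then for $l$ large (depending on $\sigma$) the annular term for the pair $(t^*,y_0^*)$ is $\le\tfrac12\sigma^{1/2}$, so $\int_{S_{y_0^*,t^*}\cap B_\sigma^2}|{\rm I\negthinspace I}|\le\sigma^{1/2}$, which proves the claimed bound on the infimum.

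The main obstacle is the sharp constant: stability only supplies a curvature bound of size $\widetilde\eps_{i_l}^{-1}$ — exactly because the $\eps$-solutions must resolve the singular junction along $L$ at scale $\widetilde\eps_{i_l}$ — and this cost does not shrink under the averaging, so the naive estimate at scale $\sigma$ produces merely a fixed ($n$- and $K_*$-dependent) multiple of $\sigma^{1/2}$. The genuine smallness has to be harvested at the much finer scale $\delta\ll\sigma$, where the flatness of $C$ away from its vertex renders the annulus negligible; the delicate point is the order of limits (first $\delta\to0$ depending on $\sigma$, then $\widetilde\eps_{i_l}\to0$ depending on $\delta$). A secondary, routine technical point is the justification of the coarea factor bound $J_\pi\le1$ in the fiber slicing and the uniformity in $t$ of the convergence, both of which follow from \Cref{lem:6713ifvoi}.
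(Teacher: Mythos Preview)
Your proposal is correct but takes a more elaborate route than the paper. Both arguments rest on the same ingredients --- the Sternberg--Zumbrun inequality, the gradient lower bound $|\nabla\widetilde u_{i_l}|\ge c\,\widetilde\eps_{i_l}^{-1}$ on the good region, and coarea in $t$ and then in $y_0$ --- but they differ in where Cauchy--Schwarz is applied. The paper applies it once at the bulk level:
\[
\widetilde\eps_{i_l}\int_{B_\sigma^2\times B_1^{n-2}}\mathcal A|\nabla\widetilde u_{i_l}|^2
\le\Big(\widetilde\eps_{i_l}\int_{B_1}\mathcal A^2|\nabla\widetilde u_{i_l}|^2\Big)^{1/2}
\Big(\widetilde\eps_{i_l}\int_{B_\sigma^2\times B_1^{n-2}}|\nabla\widetilde u_{i_l}|^2\Big)^{1/2}
\le C\sigma^{1/2},
\]
the $\sigma^{1/2}$ coming directly from the energy bound on the thin cylinder (via a covering by $\sigma$-balls); then one coarea/projection step lower-bounds the left side by the infimum of the slice integrals. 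You instead push to slice-level $L^2$ and length bounds first, apply Cauchy--Schwarz on the slice (forcing a Chebyshev selection of a good pair $(t^*,y_0^*)$), and then add a separate annulus argument at scale $\delta\ll\sigma$ to kill the constant. Your worry about the ``sharp constant'' is misplaced: the paper's proof in fact only delivers $\le C\sigma^{1/2}$, and that is all the application needs (a contradiction with the next lemma's uniform lower bound $\ge c>0$, obtained by taking $\sigma$ small). So your annulus step is correct but unnecessary --- taking $\delta=\sigma$ in your argument already yields $C_*\sigma^{1/2}$, which suffices. The paper's route is considerably shorter; yours has the marginal advantage of literally matching the stated constant.
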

    \begin{proof}
        By Holder's inequality, \Cref{prop:SZstab} and the boundedness of ${\bf M}$ (on balls of radius $\sigma$), we easily see that
        $$\widetilde \eps_{i_l}\int_{B_\sigma^2\times B_1^{n-2}} \mathcal A|\nabla \widetilde u_{i_l}|^2\leq \left(\widetilde \eps_{i_l}\int_{B_1} \mathcal A^2|\nabla \widetilde u_{i_l}|^2\right)^{1/2}\left(\widetilde \eps_{i_l}\int_{B_\sigma^2\times B_1^{n-2}} |\nabla \widetilde u_{i_l}|^2\right)^{1/2}\leq C\sigma^{1/2}.$$
        On the other hand, by \Cref{lem:53dfitkdgo} and the coarea formula, we can estimate
    \begin{align*}
        \widetilde \eps_{i_l}\int_{B_\sigma^2\times B_1^{n-2}} \mathcal A|\nabla \widetilde u_{i_l}|^2&\geq \int_{\{|\widetilde u_{i_l}|\leq 0.9\}\cap (B_\sigma^2\times U_{i_l})} \mathcal A|\nabla \widetilde u_{i_l}|=\int_{-0.9}^{0.9} dt \int_{\{\widetilde u_{i_l}=t\}\cap (U_{i_l}\times B_\sigma^2)} \mathcal A \,d\cH^{n-1}\\
        &\geq\int_{-0.9}^{0.9} dt \int_{U_{i_l}}dy_0\int_{S_{y_0,t}\cap B_\sigma^2} |{\rm I\negthinspace I}_{\{\widetilde u_{i_l}=t\}}|\,d\cH^{1} \geq \inf_{t\in[-0.9,0.9],\,y_0\in U_{i_l}}\int_{S_{y_0,t}\cap B_\sigma^2} |{\rm I\negthinspace I}_{\{\widetilde u_{i_l}=t\}}|\,d\cH^{1}\,.
    \end{align*}
    \end{proof}
    On the other hand, we have:
    \begin{lemma}
        Assume that $\Sigma$ were not a hyperplane. Then, $\int_{S_{y_0,t}\cap B_\sigma^2} |{\rm I\negthinspace I}_{\{\widetilde u_{i_l}=t\}}|\,d\cH^{1}\geq c>0$ for every $\widetilde \eps_{i_l}>0$ small enough and $t\in[-0.9,0.9],\,y_0\in U_{i_l}$.
    \end{lemma}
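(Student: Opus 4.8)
The plan is to reduce to the classical lower bound for the total curvature of a non-flat minimal cone in $\R^2$ (a union of at least two half-lines through the origin, necessarily with a genuine corner), and then transfer this quantitative information from the limiting cone $C$ down to the level sets $\{\widetilde u_{i_l}=t\}$ via the $C^{2,\alpha}_{\rm loc}$ and Hausdorff convergence established in \Cref{lem:6713ifvoi}. Concretely: since $\Sigma=C\times\R^{d_*}$ with $d_*=n-2$ and $C\subset\R^2$ a union of half-lines meeting at $0$, if $\Sigma$ is not a hyperplane then $C$ contains at least two distinct half-lines meeting at an angle $\neq\pi$. Slicing $C$ by the plane $\{y=y_0\}$ (for $y_0$ in the good set $U_{i_l}$, which is where we have the uniform graphical/curvature control of \Cref{lem:53dfitkdgo}) shows that the slice $S^\Sigma_{y_0}:=C\cap B_\sigma^2$ is exactly this corner configuration, which has a fixed positive amount of total curvature \emph{concentrated at the vertex} — more precisely, smoothing the corner at the scale of the transition layer $\widetilde\eps_{i_l}$ produces a smooth curve whose integrated curvature over $B_\sigma^2$ is bounded below by the exterior angle $\theta_0>0$ of the corner, \emph{independently of the smoothing scale}.

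The key steps, in order, would be: (1) Use \Cref{lem:6713ifvoi} to deduce that for $y_0\in U_{i_l}$ and $|t|\le 0.9$, the slice $S_{y_0,t}$ converges, in Hausdorff distance and in $C^{2,\alpha}$ away from a shrinking neighbourhood $B_{\delta_l}(0)$ of the vertex (with $\delta_l\to 0$), to $C\cap B_\sigma^2$, uniformly in $t$ and $y_0\in U_{i_l}$. (2) On the smooth part $S_{y_0,t}\cap (B_\sigma^2\setminus B_{\delta_l})$, the $C^{2,\alpha}$ convergence gives $\int |{\rm I\negthinspace I}_{\{\widetilde u_{i_l}=t\}}| \ge \theta_0/2$ for $l$ large, since along each of the (at least two) nearly-straight branches the unit tangent must rotate by an amount close to the exterior angle in order to connect them up into the single embedded curve $S_{y_0,t}$ — here one uses that $\{\widetilde u_{i_l}=t\}$ is a \emph{connected embedded} smooth hypersurface near the slice (a consequence of $|\nabla\widetilde u_{i_l}|\ge c/\widetilde\eps_{i_l}>0$ from \Cref{lem:53dfitkdgo}, so the level set has no spurious components collapsing at the origin), and the total turning of a connected embedded curve interpolating between two straight segments at angle $\pi-\theta_0$ is at least $\theta_0$. (3) Combine with Step (2) being \emph{uniform} over $t\in[-0.9,0.9]$ and $y_0\in U_{i_l}$ to obtain the claimed $\int_{S_{y_0,t}\cap B_\sigma^2}|{\rm I\negthinspace I}_{\{\widetilde u_{i_l}=t\}}|\,d\cH^1\ge c>0$ with $c=c(\theta_0)$ depending only on the opening angle of the cone $C$ (which is fixed once the subsequence is fixed), for all $\widetilde\eps_{i_l}$ small.

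Combining this lemma with the preceding one is then immediate and yields the contradiction: the previous lemma gives $\inf_{t,y_0}\int_{S_{y_0,t}\cap B_\sigma^2}|{\rm I\negthinspace I}|\le \sigma^{1/2}$ for all small $\widetilde\eps_{i_l}$, while this lemma gives the same infimum is $\ge c(\theta_0)>0$; choosing $\sigma$ with $\sigma^{1/2}<c(\theta_0)$ is a contradiction. Hence $\Sigma$ must be a hyperplane in Subcase 2 as well, and (combining with Subcase 1) we conclude that in Case $d_*<n-1$ the surface $\Sigma$ is always a hyperplane, which as in the $d_*=n-1$ case yields \eqref{eq:782pibvwpfvb} and the desired contradiction with \eqref{eq:8qtuhabgbao}--\eqref{eq:8qtuhabgbao2}.

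\textbf{Main obstacle.} The delicate point is Step (2): one must argue that the total curvature \emph{near the vertex} does not leak away to zero as $\widetilde\eps_{i_l}\to 0$ — i.e.\ that the smoothing of the corner at scale $\widetilde\eps_{i_l}$ cannot be performed ``for free'' by a level set with arbitrarily small integrated curvature. This is where connectedness and embeddedness of the slice $S_{y_0,t}$ (equivalently the non-degeneracy $|\nabla\widetilde u_{i_l}|\gtrsim 1/\widetilde\eps_{i_l}$ near the good points) is essential, together with the elementary fact that a smooth embedded arc joining two fixed directions must turn by at least the angle between them; without embeddedness one could imagine the level set ``wiggling'' so as to reduce $\int|{\rm I\negthinspace I}|$, but that is ruled out precisely by \Cref{lem:53dfitkdgo}. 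Making the ``total turning'' lower bound rigorous and uniform in $(t,y_0)$ — as opposed to merely pointwise — is the part that needs care, but it follows from the uniform (in $t$, $y_0$) Hausdorff$+C^{2,\alpha}$ convergence of \Cref{lem:6713ifvoi}.
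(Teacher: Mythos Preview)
Your approach is essentially the same as the paper's---both reduce the lower bound to a ``total turning'' argument along a connected arc of the slice joining two different branches of $C$, invoking the fundamental theorem of calculus. Two points deserve correction, though.

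First, you track the \emph{tangent of the slice curve} and argue that its total rotation is at least $\theta_0$, hence $\int|\kappa_{\text{slice}}|\geq\theta_0$. But the integrand in the lemma is the \emph{hypersurface} second fundamental form $|{\rm I\negthinspace I}_{\{\widetilde u_{i_l}=t\}}|$, and the inequality $|\kappa_{\text{slice}}|\leq |{\rm I\negthinspace I}|$ fails in general: one has $|\kappa_{\text{slice}}|\,|\langle N,\nu\rangle|=|{\rm I\negthinspace I}(T,T)|$, and near the vertex there is no control on the angle between the hypersurface normal $\nu$ and the slicing $2$-plane. The paper avoids this by tracking $\nu$ directly: since $|\nabla_T\nu|\leq|{\rm I\negthinspace I}|$ always, one gets $|\nu(x_1,y_0)-\nu(x_2,y_0)|\leq\int_\gamma|{\rm I\negthinspace I}|$, and the left-hand side is bounded below because $x_1,x_2$ lie in the annulus $B_\sigma^2\setminus B_{\sigma/2}^2$ (where $C^1$ convergence to $\Sigma$ holds) near two different half-lines of $C$. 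This is a simple fix but it is the correct object to track.

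Second, your justification that the slice is \emph{connected} via $|\nabla\widetilde u_{i_l}|\geq c/\widetilde\eps_{i_l}$ is not right: the gradient lower bound gives smoothness and embeddedness of the level hypersurface, not connectedness of its planar slice. What is actually needed---and what the paper asserts ``by elementary considerations'' (following \cite{SS81})---is the weaker statement that \emph{some} connected component of $S_{y_0,t}\cap B_\sigma^2$ meets the annulus near at least two distinct half-lines of $C$. This follows from a parity/sign argument: the endpoints of each arc component lie on $\partial B_\sigma^2$, clustered near the points $\ell_j\cap\partial B_\sigma^2$; if every arc had both endpoints near the same $\ell_j$, the sign of $\widetilde u_{i_l}-t$ would be constant along $\partial B_\sigma^2\times\{y_0\}$ away from a neighbourhood of $C$, contradicting the limiting sign pattern forced by convergence to the (non-planar) cone $\Sigma$. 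Your identification of this as the main obstacle is apt, but the resolution is combinatorial, not the gradient bound.
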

    \begin{proof}
        By assumption, $\Sigma$ is a union of half-planes intersecting at $\{0\}^2\times \R^{n-2}$ and which form some angle. Moreover, $\{\widetilde u_{i_l}=t\}$ is smoothly embedded and it converges in $C^1_{loc}$ to $\Sigma$ away from $\{0\}^2\times \R^{n-2}$---thus, in particular, in $(B_\sigma^2\setminus B_{\sigma/2}^2)\times B_1^{n-2}$.
        
        By elementary considerations, for $\widetilde \eps_{i_l}>0$ small enough there are then $x_1,x_2\in B_{\sigma}^2\setminus B_{\sigma/2}^2$, both belonging to the same connected component of $S_{y_0,t}\cap B_\sigma^2$, and such that $|\nu_{\{\widetilde u_{i_l}=t\}}(x_1,y_0)-\nu_{\{\widetilde u_{i_l}=t\}}(x_2,y_0)|\geq c$; in other words, the normal vector to $\{\widetilde u_{i_l}=t\}$ needs to rotate by a definite positive angle along some connected component of $S_{y_0,t}$.
        
        Let $\gamma\subset S_{y_0,t}\cap B_\sigma^2$ be a curve segment connecting $(x_1,y_0)$ and $(x_2,y_0)$. Since $|\mathcal {\rm I\negthinspace I}_{\{u=t\}}|$ bounds any tangential derivative of the normal vector $\nu_{\{\widetilde u_{i_l}=t\}}$, the fundamental theorem of calculus then gives
        $$|\nu_{\{\widetilde u_{i_l}=t\}}(x_1,y_0)-\nu_{\{\widetilde u_{i_l}=t\}}(x_2,y_0)|=\int_{\gamma} \frac{d}{d\gamma} \nu_{\{u=t\}}\leq \int_{S_{y_0,t}\cap B_\sigma^2} |\mathcal {\rm I\negthinspace I}_{\{u=t\}}|\,.$$
    \end{proof}

    Combining the two preceding lemmas we deduce that $\Sigma$ must be a hyperplane, as otherwise fixing $\sigma>0$ small enough and then taking $\widetilde \eps_{i_l}>0$ small enough we get a contradiction.\\

    {\bf Conclusion.} In either of the cases $d_*=n-1$ and $d_* <n-1$, we found that $\Sigma$ is a hyperplane. The first property in \eqref{eq:782pibvwpfvb} immediately follows, by \Cref{lem:6713ifvoi}. To see the second one, let $r>0$. Since $\Sigma\setminus 
    B_{2r}(L)$ is connected, by \Cref{lem:addproperties} we find some $K\in\N$ such that ${\bf M}_r(x)=K+o(1)$ for any $x\in \Sigma\setminus B_{2r}(L)$. But then necessarily $K=K_*$: This follows easily since ${\bf M}_1(0)=K_*+o(1)$, and (as we already saw) $B_{s}(L)$ contributes at most $Cs$ to the total energy, which can be made arbitrarily small.
    
    Given that also ${\bf M}_r(y)=K_*+o(1)$ for any $y\in L$, we easily conclude (by combining different values of $r$) that ${\bf M}_r(x)=K_*+o(1)$ for {\it any} $x\in \Sigma$, which is precisely the second property in \eqref{eq:782pibvwpfvb}.
\end{proof}

\section{Proofs of some known or standard results}\label{app:standres}
\noindent {\bf Lemmas \ref{lem:Cacc-ineq-AC} and \ref{lem:expdecay}.}
\begin{proof}[Proof of \Cref{lem:Cacc-ineq-AC}]
This can be proved as in \cite[Remark 4.7]{Wang17}. Given a vector field $X\in C_c^1(\R^n;\R^n)$, multiplying \eqref{eq:aceqintro} by $X\cdot\nabla u$ and integrating by parts we get
    \begin{align*}
        \int \left[\frac{|\nabla u|^2}{2}+W(u)\right]{\rm div} X-(\nabla u)^T \cdot DX\cdot \nabla u=0\,,
    \end{align*}
    or (adding $\frac{|\nabla u|^2}{2}$ on both sides):
    \begin{align*}
        \int \Big[{\rm div} X-\sum_{1\leq i,j\leq n}\frac{u_i}{|\nabla u|}\frac{u_j}{|\nabla u|}\partial_i X^j\Big]|\nabla u|^2=\int \left[\frac{|\nabla u|^2}{2}-W(u)\right]{\rm div} X\,.
    \end{align*}
    We choose $X=x_{n}\eta^2e_n$, getting
    \begin{align*}
        \int \Big[\eta^2+2x_{n}\eta\eta_n-\frac{u_n}{|\nabla u|}\frac{u_n}{|\nabla u|}\eta^2-2\sum_{1\leq i\leq n}\frac{u_i}{|\nabla u|}\frac{u_n}{|\nabla u|}x_n\eta\eta_i\Big]|\nabla u|^2=\int \left[\frac{|\nabla u|^2}{2}-W(u)\right](\eta^2+2x_{n}\eta\eta_n)\,.
    \end{align*}
    The first and third terms add up to $|\nabla^{e_n'} u|^2\eta^2$. Moreover, the terms with $i<n$ in the sum can be estimated by 
    $$2\sum_{i<n} u_iu_nx_n\eta\eta_i\leq \frac{1}{2}\sum_{i<n} [u_i^2\eta^2+4u_n^2x_n^2\eta_i^2]\leq \frac{1}{2}|\nabla^{e_n'} u|^2\eta^2+2x_n^2|\nabla u|^2|\nabla \eta|^2\,.$$
    Bringing all remaining terms to the right hand side and using $\frac{|\nabla u|^2}{2}\le W(u)$ by \Cref{lem:modicaineq}, we obtain
    \begin{align*}
        \int |\nabla^{e_n'} u|^2\eta^2\leq 4\int x_n^2|\nabla u|^2|\nabla \eta|^2+C\int W(u)|x_n\eta\eta_n|
    \end{align*}
    as desired.
\end{proof}

\begin{proof}[Proof of \Cref{lem:expdecay}]
    Using \Cref{lem:modicaineq}, we can compute
    \begin{align*}
        \Delta (1-u^2)=-2[|\nabla u|^2+uW'(u)]\geq -2[2W(u)+uW'(u)]=-2(1-u^2)[\frac{1}{4}(1-u^2)-u^2].
    \end{align*}
    For $|u|\geq 0.85$ we find that $(1-u^2)$ is a strong subsolution, i.e.
    \begin{align*}
        \Delta (1-u^2)\geq c(1-u^2)
    \end{align*}
    (we can actually put $c=1$). By the maximum principle (using an exponential for comparison), this shows that
    \begin{align*}
        \sup_{B_r(x)} 1-u^2\leq e^{-c_0r} \quad \mbox{in any}\quad B_{2r}(x)\subset \{|u|\geq 0.85\}\,,
    \end{align*}
    which immediately gives the bound for $W(u)$; \Cref{lem:modicaineq} then bounds the gradient term as well.
\end{proof}
\noindent {\bf Lemma \ref{lem:linapprox}.}
We first need:
\begin{lemma}[\textbf{$W^{1,1}$ estimate}]\label{lem:calzygL1}
    Assume that $v\in C^2(B_{2R}')$, $B_{2R}'\subset \R^{n-1}$. Set $f:={\rm div} (A\nabla v)$, where $A\in C^1(B_{2R}')$ and $|A(x')-1|\leq 1/2$. Then, there exists $C$ depending only on $n$ such that
    $$R\fint_{B_R'}|\nabla v|\leq C\left(R^2\fint_{B_{2R}'} |f| + \fint_{B_{2R}'} |v|\right).$$
\end{lemma}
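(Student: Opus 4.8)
The plan is to prove this local $W^{1,1}$-type estimate by the standard device of comparing $v$ with a solution of a constant-coefficient equation and combining an interior $W^{1,p}$ bound (for some $p<\tfrac{n-1}{n-2}$, so that the right-hand side $f$ is controlled in $L^1$ and the boundary data in $L^1$) with a duality/test-function argument to pick up the $L^1$-smallness of $v$ itself. Concretely, after rescaling we may take $R=1$; write the equation as $\Delta v = f - \operatorname{div}((A-\mathrm{Id})\nabla v)$ and treat the extra term perturbatively using $|A-\mathrm{Id}|\le 1/2$. First I would recall (or cite, since it is routine) the interior estimate for the Laplacian in $L^1$: if $\Delta w = g$ in $B_2'$ then $\|\nabla w\|_{L^p(B_{3/2}')}\le C(\|g\|_{L^1(B_2')}+\|w\|_{L^1(B_2')})$ for every $p<\tfrac{n-1}{n-2}$; this follows from the Riesz-potential/weak-$(1,1)$ bounds for $\nabla(-\Delta)^{-1}$ together with Poisson-kernel estimates for the harmonic part. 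Iterating once (going from $L^1$ control of $\nabla v$ to $L^p$ control) absorbs the perturbation term $\operatorname{div}((A-\mathrm{Id})\nabla v)$.

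The key steps, in order, are: (i) rescale to $R=1$ and set up the equation $\Delta v = f - \operatorname{div}((A-\mathrm{Id})\nabla v) =: F$ on $B_2'$, noting $\|(A-\mathrm{Id})\nabla v\|_{L^1}\le \tfrac12\|\nabla v\|_{L^1}$; (ii) by a first, crude, application of the interior $L^1\to L^p$ estimate on a chain of shrinking balls $B_{2}'\supset B_{7/4}'\supset B_{3/2}'$, obtain $\|\nabla v\|_{L^p(B_{3/2}')}\le C(\|f\|_{L^1(B_2')}+\|v\|_{L^1(B_2')}+\|\nabla v\|_{L^1(B_{7/4}')})$; (iii) close the estimate by absorbing the last term: either interpolate $\|\nabla v\|_{L^1}$ between $\|\nabla v\|_{L^p}$ and a lower-order quantity, or — cleaner — run the argument with a small parameter in front of the perturbation by localizing to a ball of radius $\rho$ small enough that $\sup|A-\mathrm{Id}|$ times the operator norm is $<\tfrac12$ and then covering $B_1'$ by finitely many such balls, so the $\|\nabla v\|_{L^1}$ on the right comes with a coefficient $<1$ and can be moved to the left; (iv) undo the rescaling, which restores the homogeneous factors $R$ and $R^2$ as stated. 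Throughout, the averaged $\fint$ normalization is just bookkeeping with the volume factors $|B_R'|\sim R^{n-1}$.

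The main obstacle is the borderline nature of the $L^1$ theory: the estimate $\|\nabla(-\Delta)^{-1}g\|_{L^1}\lesssim \|g\|_{L^1}$ is \emph{false} (the relevant operator is only weak-$(1,1)$), so one cannot iterate directly in $L^1$. The fix — working in $L^p$ for $p$ slightly above $1$ on the gradient and exploiting that $\tfrac{n-1}{n-2}>1$ — is standard but must be threaded carefully so that the perturbation term $\operatorname{div}((A-\mathrm{Id})\nabla v)$, which is only in $L^1$ a priori, does not destroy the gain; this is exactly why the localization-to-small-balls trick (step (iii)) is the right way to organize the absorption, since it avoids ever needing an $L^1\to L^1$ gradient bound and instead only ever uses $L^1\to L^p_{\mathrm{weak}}\hookrightarrow L^1_{\mathrm{loc}}$ on successively smaller balls. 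A secondary, purely technical, point is handling the $C^1$ (rather than merely bounded) hypothesis on $A$: it is not actually needed for this $L^1$ estimate — only $|A-\mathrm{Id}|\le 1/2$ and measurability enter — but since the statement assumes it, one may simply ignore the extra regularity.

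\medskip

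\noindent\emph{Remark.} This lemma is exactly the ingredient needed in \Cref{lem:linapprox}: there one has $\rho^2\fint|{\rm div}(A\nabla v)|<\delta$ and $\fint_{B_\rho'}|v|\le \rho^{d+1/2}$ on a large range of scales, and \Cref{lem:calzygL1} upgrades this to an $L^1$ bound on $\nabla v$ at unit scale, after which a standard compactness/Campanato iteration against constant-coefficient harmonic polynomials of degree $\le d$ finishes the proof.
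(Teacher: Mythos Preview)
The paper's proof is considerably simpler and takes a different route: rather than freezing coefficients at the identity and perturbing, it solves the full variable-coefficient Dirichlet problem ${\rm div}(A\nabla w)=f$ in $B_2'$ with $w=0$ on $\partial B_2'$, and invokes the classical $L^1$-elliptic theory (Littman--Stampacchia--Weinberger, \cite[Theorem 5.1]{LSW}) to obtain $\|w\|_{L^1}+\|\nabla w\|_{L^1}\le C\|f\|_{L^1}$ with $C$ depending only on the ellipticity ratio, hence only on $n$. The $A$-harmonic remainder $v-w$ is then handled by Caccioppoli plus De Giorgi--Nash--Moser ($L^1\to L^\infty$), giving $\|\nabla(v-w)\|_{L^1(B_1')}\le C\|v-w\|_{L^1(B_2')}$. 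The whole argument is five lines.

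Your perturbation-from-Laplacian approach has a genuine gap at step~(iii), and already at~(ii). Writing $\Delta v=f-{\rm div}((A-1)\nabla v)$, the divergence piece is governed by the Calder\'on--Zygmund operator $\nabla(-\Delta)^{-1}{\rm div}$, which is $L^p\to L^p$ with norm $C_p$ but \emph{not} $L^1\to L^p$ for any $p>1$. Hence your claimed inequality $\|\nabla v\|_{L^p}\le C(\ldots+\|\nabla v\|_{L^1})$ is unjustified; the correct version has $\tfrac12 C_p\|\nabla v\|_{L^p}$ on the right, and absorption requires $C_p<2$ for some $p$ in the narrow window $(1,\tfrac{n-1}{n-2})$---which is not at all clear, and fails for large $n$ since $C_p\to\infty$ as $p\to 1^+$. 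Your proposed fix (``localize so that $\sup|A-\mathrm{Id}|$ times the operator norm is $<\tfrac12$'') does not work: localization cannot shrink $|A-1|$ (take $A\equiv 3/2$). Freezing at $A(x_0)$ instead would require continuity of $A$ and make the final constant depend on its modulus of continuity, violating the stated ``$C=C(n)$''. Ironically, your closing remark that the $C^1$ hypothesis is unnecessary is correct for the paper's approach---which uses only ellipticity---but it is precisely the regularity your own strategy would need, and even then would not deliver a uniform constant.
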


\begin{proof}[Proof of \Cref{lem:calzygL1}]
Since the estimate is scaling invariant we can (and do) assume that $R=1$. Consider $w$ which solves
\begin{equation*}
    \begin{cases}
        {\rm div}(Aw) &= f\quad \mbox{in}\ B_{2}'\\
        w &= 0 \quad \mbox{on}\ \partial B_{2}'\,.
    \end{cases}
\end{equation*}

Since $1/2\leq A(x')\leq 3/2$ is bounded and uniformly elliptic, \cite[Theorem 5.1]{LSW} shows that
$$\|w\|_{L^1(B_2)}+\|\nabla w\|_{L^1(B_2)}\leq C\|f\|_{L^1(B_2)}\,.$$
Moreover, $v-w$ satisfies ${\rm div}(A\nabla(v-w))=0$, therefore (applying Cauchy--Schwarz, the Caccioppoli inequality, and De Giorgi--Nash--Moser estimates)
$$\|\nabla(v-w)\|_{L^1(B_1')}\leq C\|\nabla(v-w)\|_{L^2(B_1')}\leq C\|v-w\|_{L^2(B_{3/2}')}\leq C\|v-w\|_{L^1(B_{2}')}\,.$$
Bounding $\|v-w\|_{L^1(B_1')}\leq \|v\|_{L^1(B_1')}+\|w\|_{L^1(B_1')}\leq \|v\|_{L^1(B_1')}+\|f\|_{L^1(B_2')}$, and adding up the gradient estimates for $w$ and $v-w$ above, we conclude the result.
\end{proof}

\begin{proof}[Proof of \Cref{lem:linapprox}]
We argue by compactness/contradiction. Suppose that the statement is not true; then, there are sequences $v_k,A_k$ satisfying the previous hypotheses for $\delta_k \downarrow 0$ but such that the conclusion fails for a certain $\lambda>0$.\\
Applying \Cref{lem:calzygL1} rescaled to the ball of radius $\rho$ we see that
$\rho\fint_{B_\rho'} |\nabla v_k|  \le \rho^{d+1/2}$, for $1\le \rho \le \tfrac{1}{2\delta_k}$. We can then apply Rellich--Kondrachov, deducing then that a subsequence (not relabeled) of the $v_k$ converges strongly in $L_{loc}^1$, and weakly in $W^{1,1}_{\rm loc}$, to some $v_{\infty}$, and with growth bound $\fint_{B_\rho} |v_{\infty}| \,dx \le \rho^{d+1/2}$.

Now, for any $\varphi\in C_c^2(\R^{n-1})$, the previous together with $|A_k(x')-1|\leq\delta_k\to 0$ give that
$$\int v_\infty \Delta\varphi=\lim_k\int v_k \Delta\varphi=\lim_k\int \nabla v_k \nabla\varphi=\lim_k\int A_k\nabla v_k \nabla\varphi=\lim_k\int {\rm div}(A_k\nabla v_k) \varphi=0\,,$$
thus $v_\infty$ is smooth and harmonic by the Weyl Lemma.\\
By the standard Liouville-type theorem for harmonic functions in $\R^{n-1}$ with polynomial growth, $v_\infty$ must be a harmonic polynomial $p_d$ of degree $\le d$. Moreover, from the growth bound with $\rho=1$ we see that $\|p_d\|_{L^1(B_1')}\le |B_1'|$. Since $v_k\to v_\infty=p_d$ in $L^1(B_1')$, this gives a contradiction for $k$ large enough.
\end{proof}

\section{Asymptotic behaviour of 1D periodic Allen--Cahn solutions}\label{sec:1DAC}
Let $\alpha\in[0,1]$ and
$$P(\alpha,R):= \inf\left\{\mathcal E\left (v,[0,\frac{1+\alpha}{4}\sqrt{2}\log{R}]\right ):v\in C_c^1([0,\frac{1+\alpha}{4}\sqrt{2}\log{R}])\text{ and } v(0)=0\right\}.
$$
\begin{proposition}\label{prop:1Dasympt}
There are universal constants $C>0$ and $R_0>1$ such that $P(\alpha,R)\geq \frac{1}{2}-CR^{-(1+\alpha)}$ for every $R\geq R_0$.
\end{proposition}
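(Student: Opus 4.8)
The quantity $P(\alpha,R)$ is the minimal Allen--Cahn energy (in the $\frac{1}{\sigma_0}$-normalised $1$D functional $\mathcal E_{1D}$, i.e. with the factor $\frac{1}{\sigma_0}$ absorbed) of a function on the interval $I_L:=[0,L]$, with $L:=\frac{1+\alpha}{4}\sqrt{2}\log R$, subject only to the single constraint $v(0)=0$ (and compact support, which here only means $v$ is allowed to be any $C^1$ function, since the endpoint $L$ is free). The plan is to first identify the minimiser via the Euler--Lagrange equation and the first integral of the $1$D problem, and then estimate its energy asymptotically. The key point is that the only pinning condition is at the left endpoint; at the right endpoint the natural (Neumann) boundary condition $v'(L)=0$ holds for the minimiser, and since there is no constraint forcing $v$ to climb all the way to $\pm 1$, the minimiser will be the orbit of the $1$D pendulum that starts at height $0$ with exactly the right (small) speed so that it is asymptotically at rest as the time variable runs the length $L$; in the limit $L\to\infty$ this orbit becomes the standard heteroclinic $\phi$ restricted to a half-line, whose energy on $[0,\infty)$ is exactly $\frac12\mathbf M_\infty(\phi)\cdot\frac{\sigma_0}{\sigma_0}=\tfrac12$. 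Hence $P(\alpha,R)\to\tfrac12$ and the content of the proposition is the algebraic rate $R^{-(1+\alpha)}$.

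Concretely, I would argue as follows. First, by a standard rearrangement/reflection argument one reduces to monotone $v$ with $v(0)=0$, $v'\ge0$; by the conservation law for $\Delta u=W'(u)$ in one dimension, any critical point satisfies $\tfrac12 (v')^2 - W(v) = -c$ for a constant $c\ge 0$ (with $c=0$ giving exactly $v=\phi(\cdot-s_0)$). The minimiser of the energy on $I_L$ with free right endpoint satisfies $v'(L)=0$, forcing $c = W(v(L))$, and the constraint $v(0)=0$ fixes the remaining free parameter. Along such an orbit, $\mathcal E_{1D}(v,I_L)=\int_0^L\big(\tfrac12(v')^2+W(v)\big)=\int_0^L\big((v')^2 + c\big)\,dt$. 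Changing variables to $v$, using $v' = \sqrt{2(W(v)-c)}$, gives
\begin{equation*}
    \mathcal E_{1D}(v,I_L) = \frac{1}{\sigma_0}\int_0^{v(L)} \sqrt{2(W(s)-c)}\,ds + \frac{c L}{\sigma_0}\cdot\text{(a correction)},
\end{equation*}
and $L$ itself is recovered from $L=\int_0^{v(L)} \frac{ds}{\sqrt{2(W(s)-c)}}$. The strategy is then purely asymptotic analysis of these integrals as $c\downarrow 0$: near $s=v(L)\approx 1$ we have $W(s)\approx \tfrac12(1-s)^2$ (since $W(u)=\tfrac14(1-u^2)^2$ and near $s=1$, $W(s)\sim (1-s)^2$ up to constants), which is exactly the linearisation $W''(\pm1)=2$ that produces the $\sqrt2$ in the definition of $L$; the integral defining $L$ then behaves like $\frac{1}{2}\log\frac{1}{c} + O(1)$, so $c \asymp R^{-(1+\alpha)}$, and plugging this back into the energy integral, together with $\frac{1}{\sigma_0}\int_0^1\sqrt{2W(s)}\,ds = \frac12$ (from $\sigma_0 = 2\int_{-1}^1\sqrt{2W}$, so $\int_0^1\sqrt{2W}=\sigma_0/2$... up to the precise normalisation one should double-check, but this is the origin of the $\tfrac12$), yields $P(\alpha,R)=\tfrac12 - O(c) = \tfrac12 - O(R^{-(1+\alpha)})$.

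The main obstacle — and the only genuinely delicate part — is making the asymptotics rigorous and one-sided in the right direction: we need a \emph{lower} bound $P(\alpha,R)\ge \tfrac12 - CR^{-(1+\alpha)}$, so I must show that \emph{no} competitor does substantially better than the model orbit, not merely that the model orbit has this energy. The clean way to do this without fully solving the ODE is: for any admissible $v$, estimate $\mathcal E_{1D}(v,I_L) \ge \frac{1}{\sigma_0}\int_{0}^{L} |v'|\sqrt{2W(v)}\,dt \ge \frac{1}{\sigma_0}\big|\int_{v(0)}^{v(L)}\sqrt{2W(s)}\,ds\big| = \frac{1}{\sigma_0}\int_0^{M}\sqrt{2W(s)}\,ds$ where $M:=\sup_{I_L}|v|\le 1$ (Modica-type/Young's inequality, using $v(0)=0$); this reduces matters to showing that $M$ cannot be too far below $1$, i.e. $M \ge 1 - C' R^{-(1+\alpha)/2}$ or so. That bound comes from the following: if $M$ were much smaller, then $W(v)\ge c_0 >0$ would be bounded below throughout a long portion of $I_L$ of length $\gtrsim \log R$, forcing $\mathcal E_{1D}(v,I_L)\gtrsim \log R \gg \tfrac12$, contradicting that the infimum is clearly $\le \tfrac12$ (test with a truncated/translated copy of $\phi$). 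Quantifying ``$M$ close to $1$'' precisely enough to get the exponent $1+\alpha$ (rather than just $1+\alpha$ minus a loss) requires tracking how fast $v$ must traverse the region $\{|v|\ge 1 - \eta\}$ where $W$ is quadratically small — this is exactly the linearised exponential relaxation $e^{-\sqrt{2}\,t}$, and the length budget $L=\frac{1+\alpha}{4}\sqrt2\log R$ is chosen so that $e^{-\sqrt2 L} = R^{-(1+\alpha)/2}$, whose square is $R^{-(1+\alpha)}$ — so one should set this up carefully, e.g. by comparison with solutions of $w'' = 2w$ near the equilibrium, to land on the stated rate with a universal constant $C$ and threshold $R_0$.
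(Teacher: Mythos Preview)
Your outline in the first two paragraphs is essentially the paper's proof: identify the minimiser via the first integral $v'=\sqrt{2(W(v)-c)}$ with $v(0)=0$, $v'(L)=0$, apply the Modica/Young inequality to that minimiser to get $P\ge\tfrac12-\frac{1}{\sigma_0}\int_{1-\delta}^1\sqrt{2W}=\tfrac12-O(\delta^2)$ where $\delta:=1-v(L)$, and bound $\delta$ via the period integral $L=\int_0^{1-\delta}\frac{ds}{\sqrt{2(W(s)-W(1-\delta))}}$. One slip: since $W(s)\approx(1-s)^2$ near $s=1$ (not $\tfrac12(1-s)^2$), the correct asymptotic is $L\le\tfrac{1}{\sqrt2}\log\tfrac1\delta+O(1)$, i.e.\ $L\sim\tfrac{1}{2\sqrt2}\log\tfrac1c$ rather than $\tfrac12\log\tfrac1c$; this gives $\delta\le Ce^{-\sqrt2 L}=CR^{-(1+\alpha)/2}$ and deficit $\le C\delta^2=CR^{-(1+\alpha)}$, which you do recover at the very end.

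Your third paragraph's ``clean'' alternative, however, does not reach the sharp rate. Young's inequality applied to an \emph{arbitrary} competitor gives $P\ge\tfrac12-O((1-M)^2)$ with $M:=\sup|v|$, but without the ODE the only available control on $M$ is $\sigma_0 P\ge L\,W(M)$ combined with $P\le\tfrac12$, which yields merely $(1-M)^2\lesssim L^{-1}$ and hence a deficit $\lesssim(\log R)^{-1}$, not $R^{-(1+\alpha)}$. The ``comparison with solutions of $w''=2w$'' you invoke has no content for a generic $v$ that satisfies no equation; it is precisely the Euler--Lagrange ODE for the \emph{minimiser} that produces the exponential relation between $\delta$ and $L$ via the period integral. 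So the sharp exponent genuinely requires working with the minimiser, as you already do in the first approach---the paper does exactly this (showing the minimiser is the restriction of a periodic orbit with quarter-period $L$), and you should drop the digression.
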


Considering
\begin{equation}\label{eq:aulgoagaog}
    P(R):= \inf\left\{\mathcal E\left (v,[0,\log{R}]\right ):v\in C_c^1([0,\log{R}])\text{ and } v(0)=0\right\},
\end{equation}
so that $P(\alpha, R)= P(R^{\frac{1+\alpha}{4}\sqrt{2}})
$,
it suffices to show:
\begin{proposition}\label{prop:qiogriogb}
There are universal constants $C>0$ and $R_0>1$ such that $P(R)\geq \frac{1}{2}-CR^{-2\sqrt{2}}$ for every $R\geq R_0$.
\end{proposition}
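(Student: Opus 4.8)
\textbf{Proof proposal for Proposition \ref{prop:qiogriogb}.}

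The plan is to analyze the one-dimensional minimization problem \eqref{eq:aulgoagaog} by an explicit ODE/variational argument. First I would set up notation: write $\sigma_0 = \int_{-1}^1\sqrt{2W(s)}\,ds$, and recall that $\phi(t) = \tanh(t/\sqrt{2})$ is the heteroclinic connecting $-1$ to $+1$, with $\mathcal E(\phi,\R) = 1$ and exponential convergence $|1 - \phi(t)| \sim C e^{-\sqrt{2}\,|t|}$ as $|t| \to \infty$ (since $W''(\pm 1) = 2$). The key structural point is that any competitor $v\in C^1_c([0,L])$ with $v(0)=0$ and $L = \log R$ has $v(L) = 0$, so $v$ is forced to return to $0$ at the right endpoint; the cheapest way to spend energy while satisfying $v(0)=0$ is to follow (a piece of) the heteroclinic. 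Heuristically, an optimal competitor rises from $0$ like $\phi$, flattens out near $+1$ in the bulk of $[0,L]$, and then comes back down to $0$ near $t=L$; but the compact support forces a definite ``cost'' near the right endpoint. The bound $P(R) \ge \tfrac12 - CR^{-2\sqrt 2}$ reflects that the cheapest single ``bump'' costing roughly $\tfrac12\sigma_0$ (half a heteroclinic transition, from $0$ to $1$) has a correction of order $e^{-\sqrt 2 L} = R^{-\sqrt 2}$ per endpoint, and squaring-type considerations or the two endpoints give the stated $R^{-2\sqrt 2}$.

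Concretely, the steps I would carry out are: (1) Reduce to studying the ODE minimizer. A minimizer $v$ of $\mathcal E(\cdot,[0,L])$ subject to $v(0)=v(L)=0$ (the compact support constraint forces this) satisfies $v'' = W'(v)$ in $(0,L)$ with $v(0)=v(L)=0$, hence has the first integral $\tfrac12 (v')^2 - W(v) = E$ constant. By symmetry one expects $v$ to be symmetric about $L/2$, rising from $0$ to a maximum value $v_{\max} = m < 1$ and descending back; $E = -W(m)$. (2) Express the energy and the length in terms of $m$. Using $v' = \sqrt{2(W(v) - W(m))}$ on the ascending branch,
\[
L = 2\int_0^m \frac{ds}{\sqrt{2(W(s) - W(m))}}, \qquad
\sigma_0\,\mathcal E(v,[0,L]) = 2\int_0^m \sqrt{2(W(s)-W(m))}\,ds + 2W(m)\,L.
\]
(3) Analyze the asymptotics as $m \uparrow 1$. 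Since $W(s) = \tfrac14(1-s^2)^2$, near $s=1$ we have $W(s) \approx (1-s)^2$, so $W(m) \approx (1-m)^2$ and the integral defining $L$ has a logarithmic divergence: $L \approx \sqrt 2\,\log\frac{1}{1-m} + O(1)$, i.e. $1 - m \approx c\,e^{-L/\sqrt 2} = c\,R^{-1/\sqrt 2}$. Meanwhile $\sigma_0\,\mathcal E(v,[0,L]) = 2\int_0^1\sqrt{2W(s)}\,ds + (\text{correction}) = \sigma_0 - 2\int_m^1\sqrt{2W}\,ds + 2W(m)L + (\text{lower order})$; the leading correction is $-2\int_m^1\sqrt{2W(s)}\,ds \approx -2\cdot\sqrt2\,(1-m)^2/2 \sim -(1-m)^2 \sim R^{-\sqrt 2 \cdot \sqrt 2/\ldots}$ — here I would track the exponents carefully. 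Combined with $2W(m)L \sim (1-m)^2 \log R$, the net correction to $\tfrac12$ (not $1$, since $v$ only does ``half'' a transition $0\to m$, not a full $-1\to 1$) is negative of size comparable to $(1-m)^2 \sim R^{-2/\sqrt2} = R^{-\sqrt2}$; a more careful expansion, using that the optimal profile actually uses the full symmetric bump and that both the $0\to m$ descent and the geometry near $s=0$ contribute, yields the exponent $2\sqrt 2$ after optimizing over $m$ (or over the position/width of the bump).

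\emph{The main obstacle} I anticipate is pinning down the exact exponent $2\sqrt 2$ rather than a weaker power. The rough argument above gives a correction of polynomial size in $R^{-1}$ with \emph{some} positive exponent, which already suffices to conclude $P(R) \ge \tfrac12 - o(1)$, but obtaining precisely $R^{-2\sqrt 2}$ requires being careful about: (a) which endpoint configuration is genuinely optimal (a single bump of height $m$ versus other competitors — a short convexity/rearrangement argument rules out anything but a single symmetric bump), and (b) the precise constant in $1-m \sim c\,R^{-1/\sqrt 2}$ and how it feeds through the two quadratic-in-$(1-m)$ terms, one of which carries a $\log R$. I would handle (b) by substituting $s = 1 - e^{-\tau/\sqrt2}$ or a similar change of variables to make the integrals near $s=1$ explicit, Taylor-expanding $W$ to the needed order, and then minimizing the resulting one-variable expression $\tfrac12 - A(1-m)^2 + B(1-m)^2\log\frac{1}{1-m} + \dots$ over $m$. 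Since only a lower bound is required, I can afford to be generous: it suffices to exhibit \emph{any} constant $C$ and threshold $R_0$, so I would simply bound each correction term from below by $-C R^{-2\sqrt 2}$ using crude estimates on the integrals, provided the exponent bookkeeping confirms $2\sqrt 2$ is attainable (and if the honest exponent turned out larger, the stated inequality with $2\sqrt 2$ would still hold a fortiori for $R_0$ large). The remaining routine verification is that the infimum in \eqref{eq:aulgoagaog} is attained (standard, by the direct method and $|v|\le$ bounded via truncation, since pushing $v$ above $1$ only increases $W$) so that the ODE analysis applies.
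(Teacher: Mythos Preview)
Your overall strategy --- identify the Euler--Lagrange ODE minimizer, use the first integral $v'=\sqrt{2(W(v)-W(m))}$, relate $L$ to $m$ via $\int_0^m ds/\sqrt{2(W(s)-W(m))}$, and bound the energy deficit by $\int_m^1\sqrt{2W}\sim C(1-m)^2$ --- is exactly the paper's. But you have misread the boundary condition at the right endpoint, and this error propagates through everything.

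The class $C^1_c([0,\log R])$ here is just $C^1$ on the compact interval; the only constraint is $v(0)=0$. There is \emph{no} constraint at $L=\log R$, so the minimizer satisfies the natural (Neumann) condition $v'(L)=0$ and is \emph{monotone} on $[0,L]$, reaching its maximum $m=C_\lambda$ at $t=L$. Consequently
\[
L=\int_0^m\frac{ds}{\sqrt{2(W(s)-W(m))}}\quad(\text{no factor of }2),
\]
and the asymptotics give $L\le \tfrac{1}{\sqrt{2}}\log\tfrac{1}{1-m}+O(1)$, hence $1-m\le C R^{-\sqrt{2}}$, and the deficit $\int_m^1\sqrt{2W}\le C(1-m)^2\le C R^{-2\sqrt{2}}$. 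This is precisely how the paper obtains the exponent $2\sqrt{2}$.

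Under your reading ($v(L)=0$), the minimizer is a symmetric bump, your formula $L=2\int_0^m(\cdots)$ is correct for that problem, and you get $1-m\sim R^{-1/\sqrt{2}}$, so $(1-m)^2\sim R^{-\sqrt{2}}$ --- the wrong exponent by a factor of $2$. Worse, a bump $0\to m\to 0$ costs two half-transitions, so its energy is close to $1$, not $\tfrac12$; your sentence ``the net correction to $\tfrac12$ (not $1$, since $v$ only does `half' a transition $0\to m$)'' is inconsistent with your own setup. The vague appeal to ``a more careful expansion\ldots yields the exponent $2\sqrt 2$'' cannot recover from this: with Dirichlet at both ends the exponent is genuinely $\sqrt{2}$, not $2\sqrt{2}$. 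Once you correct the boundary condition, your steps (1)--(3) go through cleanly and coincide with the paper's argument; in particular the clean lower bound $P(R)\ge \tfrac12-\tfrac{1}{\sigma_0}\int_{m}^1\sqrt{2W}$ follows from $a^2+b^2\ge 2ab$ and coarea, with no need to analyze the $2W(m)L$ term.
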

This should be read as: Any function with $v(0)=0$ needs to accumulate almost as much A--C energy as the 1D solution $\phi$ in long intervals. Indeed, we have:
\begin{lemma}\label{lem:pm12}
    $P(R)\leq \frac{1}{2}$.
\end{lemma}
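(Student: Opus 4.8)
\textbf{Proof plan for Lemma \ref{lem:pm12}.} The claim is that the infimum $P(R)$ in \eqref{eq:aulgoagaog}, taken over $v\in C_c^1([0,\log R])$ with $v(0)=0$, is at most $\tfrac12$. The natural competitor is (a truncation of) the standard 1D profile $\phi(s)=\tanh(s/\sqrt 2)$, which is a solution of 1D Allen--Cahn with ${\bf M}_\infty(\phi)=1$, i.e.\ $\tfrac{1}{\sigma_0}\int_{\R}\big[\tfrac12(\phi')^2+W(\phi)\big]=1$; since $\phi$ is even in the integrand sense (the energy density is even), $\tfrac{1}{\sigma_0}\int_0^\infty\big[\tfrac12(\phi')^2+W(\phi)\big]=\tfrac12$. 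So the half-line energy of $\phi$ itself is exactly $\tfrac12$, which is the target constant.

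First I would shift the profile so that it vanishes at the left endpoint: for $\tau>0$ set $v_\tau(s):=\phi(s-\tau)$, so $v_\tau(0)=\phi(-\tau)\to -1$ as $\tau\to\infty$, which is close to $0$ only modulo the fact that we actually want $v(0)=0$ exactly. The cleanest fix is to take instead $v_\tau(s):=\phi(s-\tau)-\phi(-\tau)$, or simply to note that $\phi(0)=0$ and use the \emph{un}shifted profile on $[0,\log R]$ directly: $v(s)=\phi(s)$ satisfies $v(0)=0$, and
\[
\mathcal E\big(\phi,[0,\log R]\big)=\frac{1}{\sigma_0}\int_0^{\log R}\Big[\tfrac12(\phi')^2+W(\phi)\Big]\,ds \le \frac{1}{\sigma_0}\int_0^{\infty}\Big[\tfrac12(\phi')^2+W(\phi)\Big]\,ds=\tfrac12,
\]
because the integrand is nonnegative. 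The only remaining issue is that $\phi\notin C_c^1([0,\log R])$: it does not vanish near the right endpoint $s=\log R$ (there $\phi\approx 1$). To repair this I would glue $\phi$ to the constant $1$ and then cut off: on $[0,\log R-2]$ keep $\phi$, on $[\log R-2,\log R-1]$ interpolate smoothly from $\phi(\log R-2)$ to $1$ staying within $[\phi(\log R - 2),1]$, and on $[\log R-1,\log R]$ take the value $1$ — wait, that is still not compactly supported. Since the definition requires $v\in C_c^1$, I would instead modify the endpoint condition reading: if the space truly is $C_c^1([0,\log R])$ meaning $v$ and $v'$ vanish at \emph{both} endpoints, then one glues $\phi$ down to $0$ near $s=\log R$ as well (say linearly on a unit interval), paying only an $O(e^{-c\log R})=O(R^{-c})$ correction to the energy from the region where $W(\phi)$ and $(\phi')^2$ are already exponentially small, plus the energy of the small gluing bump, which is itself $O(R^{-c})$. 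This still gives $\mathcal E(v,[0,\log R])\le \tfrac12 + O(R^{-c})$, and one then observes that for the gluing near $s=0$ nothing is needed since $\phi(0)=0$ already; the bound $P(R)\le\tfrac12$ then follows for all $R$ (the $O(R^{-c})$ error is absorbed because one can optimize the gluing interval length, or simply because the statement $P(R)\le\tfrac12$ is what \Cref{prop:qiogriogb} contrasts with, and any competitor sequence with energy $\to\tfrac12$ suffices).

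\emph{The main obstacle} is the precise reconciliation of the function-space convention: whether $C_c^1([0,\log R])$ demands vanishing at one endpoint or both, and how the unavoidable gluing of $\phi$ to a compactly supported function is arranged without exceeding $\tfrac12$. The key point making this harmless is the exponential decay of the energy density of $\phi$, $\tfrac12(\phi')^2+W(\phi)\le Ce^{-\sqrt2\,|s|}$ (this is exactly the kind of estimate behind \Cref{lem:expdecay}), so truncating $\phi$ at distance $O(1)$ from $\log R$ and appending a bounded-slope descent to $0$ costs only $O(R^{-\sqrt2})$ — in fact this is precisely the slack quantified in \Cref{prop:qiogriogb}, and here we only need the crude conclusion $P(R)\le\tfrac12$, which follows by letting the truncation point recede. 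I would therefore present the argument as: exhibit the truncated-and-glued profile, bound its energy by $\tfrac12$ plus an exponentially small error, and conclude $P(R)\le\tfrac12$ by taking the infimum (or by noting the error can be made to vanish, so the infimum is $\le\tfrac12$).
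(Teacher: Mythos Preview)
Your initial direct approach—taking $v=\phi$ restricted to $[0,\log R]$—is exactly the paper's proof and is already complete. The worry about compact support is a misreading: on a compact interval, every continuous function has compact support, so $C_c^1([0,\log R])=C^1([0,\log R])$; the only constraint beyond $C^1$ regularity is the explicit condition $v(0)=0$, which $\phi$ satisfies. This is corroborated by the next lemma in the appendix, where the minimizer is shown to satisfy the Neumann condition $v'(\log R)=0$, not $v(\log R)=0$.

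Your fallback gluing argument, by contrast, contains a genuine error. Bringing $\phi$ (which is near $1$ at $s=\log R-1$) linearly down to $0$ over a unit interval does \emph{not} cost $O(R^{-c})$: the descent passes through values where $W$ is of order one (e.g.\ $W(1/2)=9/64$) with slope of order one, so the gluing contributes an $O(1)$ amount to the energy. In fact, if the admissible class truly required $v(\log R)=0$, the lemma would be false: any competitor would have to transition from $0$ up toward $\pm 1$ and back down to $0$, and each transition costs essentially $\tfrac12$, giving $P(R)\approx 1$ rather than $\tfrac12$. So the ``obstacle'' you identify is both nonexistent and, had it been real, fatal to the statement rather than repairable.
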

\begin{proof}
    The monotone 1D solution $\phi$ is a competitor in \eqref{eq:aulgoagaog}, and it has $\mathcal E(\phi,[0,\infty))=\frac{1}{2}$, since $\mathcal E(\phi,(-\infty,\infty))=1$ and $\phi$ is antisymmetric.
\end{proof}
We argue in several steps.
\begin{lemma}
    There is $R_0>1$ such that, if $R>R_0$, the following hold:\\
    The infimum in \eqref{eq:aulgoagaog} is attained by an A--C solution $v\in C_c^1([0,\log{R}])$, satisfying $v(0)=0$, $v'(0)>0$, and $v'(\log{R})=0$. Moreover, $v$ is the restriction of a (not renamed) global, periodic  A--C solution $v:\R\to(-1,1)$ with quarter-period $\frac{T}{4} =\log{R}$. Furthermore, $v$ is nondecreasing in $[0,\frac{T}{4}]$.
\end{lemma}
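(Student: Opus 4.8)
The statement concerns the one-dimensional minimization problem \eqref{eq:aulgoagaog}; the goal is to show that for $R$ large the infimum is attained, that the minimizer is a portion of a periodic Allen--Cahn solution with quarter-period exactly $\log R$, and that it is monotone on $[0,T/4]$. I would proceed by a standard direct-method plus ODE-phase-plane analysis. First I would establish existence: the class of competitors $v\in C^1_c([0,\log R])$ with $v(0)=0$ is not closed, so I would instead minimize over $H^1((0,\log R))$ with the single constraint $v(0)=0$ (the trace being well-defined in one dimension), noting that the energy $\mathcal E(v,[0,\log R])=\tfrac1{\sigma_0}\int_0^{\log R}\tfrac12(v')^2+W(v)$ is coercive and weakly lower semicontinuous on this affine subspace, and that by truncation we may restrict to $|v|\le 1$ since $W$ is increasing for $|s|>1$; hence a minimizer $v$ exists, is $\le 1$ in absolute value, and equals the infimum. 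The density of $C^1_c$ in this class (again, one dimension, so truncation and mollification are harmless near the free endpoint $\log R$) shows this infimum coincides with $P(R)$.

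Next I would derive the Euler--Lagrange equation and the boundary conditions. Interior variations give $-v''+W'(v)=0$ on $(0,\log R)$; the free (natural) boundary condition at $x=\log R$, where no constraint is imposed, yields $v'(\log R)=0$; at $x=0$ the constraint $v(0)=0$ is active, so no condition there beyond $v(0)=0$. Standard elliptic (here ODE) regularity makes $v$ smooth. To see $v'(0)>0$: if $v'(0)=0$ then, since $v(0)=0$ and $v$ solves the autonomous second-order ODE, uniqueness of the initial value problem forces $v\equiv 0$ on $[0,\log R]$, which has energy $\tfrac{W(0)}{\sigma_0}\log R=\tfrac{1}{4\sigma_0}\log R\to\infty$, contradicting $P(R)\le\tfrac12$ from \Cref{lem:pm12} once $R>R_0$. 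Replacing $v$ by $-v$ if necessary we arrange $v'(0)>0$.

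Then I would invoke the conserved Hamiltonian of the autonomous equation: $E:=\tfrac12(v')^2-W(v)$ is constant along $v$. Evaluating at $x=\log R$ gives $E=-W(v(\log R))$, so $E\le 0$ and $E<0$ unless $v(\log R)=\pm1$ (impossible for a bounded solution on a finite interval reaching $\pm1$ with zero derivative, by uniqueness — that would force $v\equiv\pm1$ and $v(0)\ne0$). Thus $-W(0)<E<0$ (the lower bound because $\tfrac12(v'(0))^2=E+W(0)>0$), which is precisely the energy regime of the bounded \emph{periodic} orbits of the pendulum-type equation $-v''+W'(v)=0$ in phase space: closed curves encircling the origin, symmetric under $(v,v')\mapsto(v,-v')$ and $(v,v')\mapsto(-v,-v')$. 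Consequently the solution extends to a global periodic solution $v:\R\to(-1,1)$ with some period $T$, and the conditions $v(0)=0$, $v'(0)>0$, $v'(\log R)=0$ identify $x=0$ as a zero-crossing (ascending node) and $x=\log R$ as the first turning point after it, i.e. $\log R=T/4$; monotonicity of $v$ on $[0,T/4]$ is immediate from the phase portrait, since $v'$ does not vanish on the open quarter-period between a node and the adjacent extremum.

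The main obstacle — modest, but the point requiring care — is the rigorous identification $\log R = T/4$ rather than $\log R$ being an odd multiple of $T/4$, or $v$ oscillating several times inside $[0,\log R]$. A competitor that oscillates picks up extra energy on each additional quarter-period (each full period contributes a fixed positive amount $\sim\oint_{E} \text{``}\,v'\,\text{''}$), so minimality forces exactly the shortest admissible configuration: a single monotone run from the node at $0$ to the first turning point. Making this precise requires comparing the energy of a multi-oscillation solution with the monotone one of the same length, using the conserved-$E$ formula $v' = \sqrt{2(E+W(v))}$ to write $\mathcal E$ as a phase-space integral and observing the energy is strictly increasing in the number of turning points crossed; I would carry this out to conclude the claimed structure, and note it is exactly here that the choice of the parameter range (large $R$, hence $T/4=\log R$ large, hence $E$ close to $0$) is used to keep everything in the periodic regime away from the separatrix.
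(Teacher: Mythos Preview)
Your approach is correct and closely parallels the paper's, with one real difference in how monotonicity on $[0,\log R]$ is obtained (equivalently, how multi-oscillation is ruled out). The paper bypasses phase-plane analysis at that step: it observes directly that if the minimizer $v$ were not nondecreasing, the running maximum $\tilde v(x)=\sup_{[0,x]} v$ would be an admissible competitor with strictly smaller energy (since $\tilde v'\equiv 0$ on any descent interval and $W(\tilde v)\le W(v)$ there, using that $W$ is even and decreasing on $[0,1]$); monotonicity on all of $[0,\log R]$ then forces $v'$ to vanish first at $\log R$, hence $\log R=T/4$ after extending by alternating even/odd reflections. Your route---first placing $v$ on a closed orbit via the Hamiltonian, then excluding odd multiples $kT/4$ with $k>1$ by comparing energies of different orbits---also works, but the last comparison you sketch is the genuinely nontrivial step, and the paper's monotone-envelope competitor is a cleaner way to close it. One small correction: the largeness of $R$ is used only to exclude $v\equiv 0$ (via $P(R)\le \tfrac12 < \tfrac{1}{4\sigma_0}\log R$), not ``to keep everything in the periodic regime away from the separatrix''---the latter follows automatically from $v'(0)>0$ and $v'(\log R)=0$ with $|v(\log R)|<1$, which give $-W(0)<E<0$ regardless of $R$.
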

\begin{proof}
    It is elementary to see that there is an $v$ which attains the infimum. Since $v$ solves a minimisation problem, with zero Dirichlet condition at $0$ and no constraint at $\log{R}$, we get that $v(0)=0$ and $v'(\log{R})=0$. Moreover, $v'(0)\neq 0$ (as otherwise $v\equiv 0$, but then $P(R)=\int_0^{\log R} W(0)= \frac{1}{4}\log R$, which is a contradiction for $R$ large enough with \Cref{lem:pm12}), so that $v'(0)>0$ up to perhaps considering $-v$ instead.
    
    Since $v$ is a minimizer of the A--C energy in $[0,\log R]$ among all functions with $v(0)=0$ and $v'(0)>0$, and since the potential $W$ is decreasing in $[0,1]$ and $W(1)=0$, we obtain that:
    \begin{itemize}
        \item $v$ is nondecreasing in $[0, \log R]$ (as otherwise the smallest nondecreasing function above $v$ would have less energy).
        \item $v\le 1$ (as otherwise the function $\min (v,1)$ would have less energy).
    \end{itemize}

    Moreover, a symmetrisation by hand alternating even and odd reflections (using that $v'(\log R)=0$) immediately shows that $v$ can be extended periodically, with quarter period $\frac{T}{4}=\log{R}$ for some $k\in\N$.  
\end{proof}
\begin{lemma} 
    In $[0,\frac{T}{4}]$, $v'=\sqrt{2W(v)-2\lambda}$  for some $\lambda\in(0,\frac 14)$, so that $v$ coincides with the solution to
    \begin{equation}\label{eq:lamb1dsolcases}
        \begin{cases}
            u_\lambda''=W'(u_\lambda) \\
            u_\lambda(0)=0 \quad \mbox{and} \quad u_\lambda'(0)=\sqrt{\frac{1}{2}-2\lambda}.\\
            
        \end{cases}
    \end{equation}
    Moreover, $v$ has amplitude $C_\lambda := \sqrt{1-2\sqrt{\lambda}}$, attained at $\frac{T}{4}$.
\end{lemma}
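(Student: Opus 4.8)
The statement is a classical ODE analysis of the periodic Allen--Cahn solution, so the plan is to work with the first integral of \eqref{eq:lamb1dsolcases}. First I would note that since $v$ solves $v''=W'(v)$ on $[0,\frac{T}{4}]$, multiplying by $v'$ and integrating gives the conserved quantity $\frac{1}{2}(v')^2-W(v)\equiv -\lambda$ for some constant $\lambda\in\R$, i.e. $(v')^2=2W(v)-2\lambda$. Evaluating at the endpoint $\frac{T}{4}$, where $v'=0$, shows $\lambda=W(v(\tfrac{T}{4}))\geq 0$; and since $v$ is strictly increasing on an open interval and $v(0)=0$ with $v'(0)>0$ we get $(v'(0))^2=2W(0)-2\lambda=\tfrac12-2\lambda>0$, hence $\lambda<\tfrac14$. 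The degenerate cases $\lambda=0$ (which would force $v\to\pm1$, contradicting periodicity / finite quarter-period $\log R<\infty$) and $\lambda=\tfrac14$ (which forces $v\equiv0$, already excluded in the previous lemma) are ruled out, so $\lambda\in(0,\tfrac14)$ strictly. On the increasing branch $[0,\frac{T}{4}]$ one has $v'\geq 0$, hence $v'=\sqrt{2W(v)-2\lambda}$, which is exactly the ODE in \eqref{eq:lamb1dsolcases} together with the initial data $v(0)=0$, $v'(0)=\sqrt{\tfrac12-2\lambda}$; uniqueness for this first-order separable equation (away from the zero set of the right-hand side, and by standard ODE uniqueness up to it) identifies $v$ with $u_\lambda$.

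For the amplitude, the maximum of $v$ on $[0,\frac{T}{4}]$ is attained at the endpoint $\frac{T}{4}$ (where $v'=0$) since $v$ is nondecreasing there; call it $m=v(\tfrac{T}{4})$. At that point $0=(v')^2=2W(m)-2\lambda$, so $W(m)=\lambda$, i.e. $\tfrac14(1-m^2)^2=\lambda$, giving $(1-m^2)^2=4\lambda$, hence $1-m^2=2\sqrt{\lambda}$ (taking the sign that keeps $m\in(0,1)$, which holds because $\lambda<\tfrac14$ implies $2\sqrt{\lambda}<1$). Therefore $m=\sqrt{1-2\sqrt\lambda}=C_\lambda$, as claimed. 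One should also check $m$ is indeed an interior-or-endpoint max and not merely a critical point: since $v$ is nondecreasing on $[0,\frac{T}{4}]$ and $v(0)=0<m$, and since on $(0,\frac{T}{4})$ we have $v'>0$ (the right-hand side $\sqrt{2W(v)-2\lambda}$ vanishes only at $v=m$, which by strict monotonicity is reached only at $\frac{T}{4}$), the value $m$ is the supremum, i.e. the amplitude.

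The only mildly delicate point — the main obstacle, such as it is — is justifying that $v'$ does not vanish on the open interval $(0,\frac{T}{4})$, so that $v$ is genuinely strictly increasing there and the separation-of-variables / uniqueness argument applies cleanly. This follows from the previous lemma's conclusion that $v$ is nondecreasing together with the conserved energy: if $v'(t_0)=0$ for some $t_0\in(0,\frac{T}{4})$ then $W(v(t_0))=\lambda$, and by uniqueness for the autonomous second-order ODE with data $(v(t_0),0)$ the solution would be symmetric about $t_0$ and constant in the degenerate case, forcing either $v\equiv v(t_0)$ (contradicting $v(0)=0\neq v(t_0)$, unless $v(t_0)=0$, but then $\lambda=\tfrac14$ and $v\equiv 0$, already excluded) or a shorter quarter-period (contradicting minimality/maximality of $\frac{T}{4}=\log R$ as the first point where $v'=0$). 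Once strict monotonicity on $(0,\frac{T}{4})$ is in hand, everything else is the routine first-integral computation sketched above.
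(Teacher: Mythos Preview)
Your proposal is correct and follows essentially the same approach as the paper: both arguments use the first integral $(v')^2-2W(v)\equiv -2\lambda$, determine the range $\lambda\in(0,\tfrac14)$ from the boundary data $v'(0)>0$ and $v'(T/4)=0$, and compute the amplitude from $W(m)=\lambda$. The only minor difference is that the paper invokes Modica's inequality (applicable since the previous lemma extends $v$ to a global periodic solution) to obtain $\lambda\ge 0$, whereas you get it directly from $\lambda=W(v(T/4))\ge 0$; your extra discussion of strict monotonicity on $(0,T/4)$ is more detailed than the paper's treatment but not essential to the conclusion.
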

\begin{proof}
    Since \[
\frac{d}{dt} \left( v'^2 - 2 W(v) \right) = 2 v' v'' - 2 v' W'(v) = 0
\]
and $v'> 0$ in $[0,\frac{T}{4})$,
$v' = \sqrt{2 W(u) - 2\lambda}$ for some $\lambda \in \mathbb{R}$ in this interval. Since $|v'|\leq \sqrt{2W(v)}$ by \Cref{lem:modicaineq}, $\lambda>0$. If $\lambda = 0$ we would get $v=\phi$, but $\phi'$ never vanishes; since $|W|\leq \frac{1}{4}$ we deduce that $\lambda\in(0,1/4)$. Moreover, since $v$ attains its maximum $\max v$ at  $\frac{T}{4}$ and $v'(\frac{T}{4})=0$, we see that $0=\sqrt{2W(\max v)-2\lambda}$, which since $W(v)=\frac{1}{4}(1-v^2)^2$ shows that $\max v = v(T/4) =\sqrt{1-\sqrt{4\lambda}}$.
\end{proof}

We will also need the following
\begin{lemma}
    We have
    \begin{align*}
    P(R)\geq \frac{1}{2}-\frac{1}{\sigma_0}\int_{C_\lambda}^1\sqrt{2W(s)}\,ds.
\end{align*}
\end{lemma}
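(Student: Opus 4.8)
The plan is to bound the energy $\mathcal E(v,[0,\log R])$ of the minimizer $v$ from below by relating it to the energy of the one-dimensional heteroclinic $\phi$. Recall that we have identified $v$ with the solution $u_\lambda$ of \eqref{eq:lamb1dsolcases}, that $v'=\sqrt{2W(v)-2\lambda}$ on $[0,\tfrac{T}{4}]$, that $v$ is nondecreasing there with amplitude $C_\lambda=\sqrt{1-2\sqrt\lambda}$ attained at $\tfrac{T}{4}=\log R$. The key computation: by the equipartition $v'^2/2 = W(v)-\lambda$, we have
\[
\sigma_0\,\mathcal E(v,[0,\tfrac T4]) = \int_0^{\log R}\Big(\tfrac{v'^2}{2}+W(v)\Big)\,dt = \int_0^{\log R}\big(2W(v)-\lambda\big)\,dt = \int_0^{\log R} v'\sqrt{2W(v)-2\lambda}\,dt - \lambda\log R + \text{(correction)}.
\]
Changing variables $s=v(t)$ in the first integral over $[0,\tfrac T4]$ gives $\int_0^{C_\lambda}\sqrt{2W(s)-2\lambda}\,ds$, which is at most $\int_0^{C_\lambda}\sqrt{2W(s)}\,ds = \tfrac{\sigma_0}{2}-\int_{C_\lambda}^1\sqrt{2W(s)}\,ds$ (using $\int_0^1\sqrt{2W}=\sigma_0/2$). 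So the point is to show the ``extra'' terms (the $-\lambda\log R$ and any contribution I have sloppily written as a correction) are nonnegative, i.e. that dropping them only decreases the energy; then $P(R)=\mathcal E(v,[0,\log R])\ge \mathcal E(v,[0,\tfrac T4])\ge \tfrac12-\tfrac1{\sigma_0}\int_{C_\lambda}^1\sqrt{2W(s)}\,ds$, which is exactly the claimed inequality.

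More carefully, I would avoid the messy ``correction'' by writing directly
\[
\sigma_0\,\mathcal E(v,[0,\tfrac T4])=\int_0^{\log R}\Big(\tfrac{v'^2}{2}+W(v)\Big)dt
= \int_0^{\log R} v'\,\sqrt{2W(v)-2\lambda}\;dt \;+\;\int_0^{\log R}\Big(W(v)-\sqrt{2W(v)-2\lambda}\cdot\tfrac{v'}{\cdot}\Big)\dots
\]
— this is getting circular, so instead the clean route is: since $\tfrac{v'^2}{2}+W(v)= (2W(v)-\lambda) \ge \big(\sqrt{2W(v)-2\lambda}\big)^2 \cdot\frac{2W(v)-\lambda}{2W(v)-2\lambda}$ — again messy. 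The genuinely clean approach is the AM–GM step used elsewhere in the paper: $\tfrac{v'^2}{2}+W(v)\ge v'\sqrt{2W(v)}$ pointwise with equality iff $v'=\sqrt{2W(v)}$; but here $v'=\sqrt{2W(v)-2\lambda}<\sqrt{2W(v)}$, so this only gives $\sigma_0\mathcal E(v,[0,\tfrac T4])\ge \int_0^{C_\lambda}\sqrt{2W(s)}\,ds\cdot(\text{something})$ — not tight enough because we lose the $\lambda$. So the right move is to keep $\tfrac{v'^2}{2}+W(v)=2W(v)-\lambda$ exactly and estimate $\int_0^{\log R}(2W(v)-\lambda)\,dt$ by splitting: change variable to get $\int_0^{C_\lambda}\frac{2W(s)-\lambda}{\sqrt{2W(s)-2\lambda}}\,ds$. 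Then I compare the integrand $\frac{2W-\lambda}{\sqrt{2W-2\lambda}}$ with $\sqrt{2W}$: one checks $\frac{2W-\lambda}{\sqrt{2W-2\lambda}}\ge \sqrt{2W-2\lambda}+\tfrac{\lambda}{2\sqrt{2W-2\lambda}}\ge\sqrt{2W-2\lambda}$, hence $\sigma_0\mathcal E(v,[0,\tfrac T4])\ge\int_0^{C_\lambda}\sqrt{2W(s)-2\lambda}\,ds$. Hmm, that's weaker than $\int_0^{C_\lambda}\sqrt{2W(s)}\,ds$.

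So actually the honest statement to prove is exactly the displayed lemma, and the correct comparison is the \emph{other} direction: I want a \emph{lower} bound, and $\int_0^{C_\lambda}\frac{2W(s)-\lambda}{\sqrt{2W(s)-2\lambda}}\,ds \ge \int_0^{C_\lambda}\sqrt{2W(s)}\,ds$? Let me check: is $\frac{2W-\lambda}{\sqrt{2W-2\lambda}}\ge\sqrt{2W}$, i.e. $(2W-\lambda)^2\ge 2W(2W-2\lambda)$, i.e. $4W^2-4W\lambda+\lambda^2\ge 4W^2-4W\lambda$, i.e. $\lambda^2\ge 0$. Yes! So the step is: with $s=v(t)$, $ds=v'\,dt=\sqrt{2W(s)-2\lambda}\,dt$, we get $\sigma_0\mathcal E(v,[0,\tfrac T4])=\int_0^{\log R}(2W(v)-\lambda)\,dt=\int_0^{C_\lambda}\frac{2W(s)-\lambda}{\sqrt{2W(s)-2\lambda}}\,ds\ge\int_0^{C_\lambda}\sqrt{2W(s)}\,ds$. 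Since $\int_0^1\sqrt{2W(s)}\,ds=\sigma_0/2$ and $\mathcal E(v,[0,\log R])\ge\mathcal E(v,[0,\tfrac T4])$ (energy is a sum of quarter-period contributions, all equal by periodicity, and $\log R$ is exactly one quarter period here so actually equality — but in general $\ge$), we conclude
\[
P(R)=\mathcal E(v,[0,\log R])\;\ge\;\frac1{\sigma_0}\int_0^{C_\lambda}\sqrt{2W(s)}\,ds=\frac12-\frac1{\sigma_0}\int_{C_\lambda}^1\sqrt{2W(s)}\,ds,
\]
which is the claim. The only subtlety, which I would spell out, is that $\tfrac T4=\log R$ is \emph{exactly} one quarter period by the previous lemma, so $\mathcal E(v,[0,\log R])=\mathcal E(v,[0,\tfrac T4])$ and no information is lost; alternatively if one only used $\tfrac T4=\tfrac{\log R}{m}$ for integer $m\ge1$ one still gets $\mathcal E(v,[0,\log R])=m\,\mathcal E(v,[0,\tfrac T4])\ge\mathcal E(v,[0,\tfrac T4])$. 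The main (and essentially only) obstacle is getting the change-of-variables bookkeeping right and verifying the pointwise inequality $(2W-\lambda)^2\ge 2W(2W-2\lambda)$, which reduces to $\lambda^2\ge0$; everything else is routine.

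I should note this lemma does \emph{not yet} give Proposition~\ref{prop:qiogriogb}: one still needs to control $\int_{C_\lambda}^1\sqrt{2W(s)}\,ds$ in terms of $R$, which requires relating $\lambda$ (equivalently $C_\lambda=\sqrt{1-2\sqrt\lambda}$, so $1-C_\lambda\sim\sqrt\lambda$ as $\lambda\to0$) to the quarter-period condition $\int_0^{C_\lambda}\frac{ds}{\sqrt{2W(s)-2\lambda}}=\log R$. A standard computation near the turning point $s=C_\lambda$ shows $\log R\sim -\tfrac1{\sqrt2}\log\lambda + O(1)$, hence $\sqrt\lambda\lesssim R^{-\sqrt2}$ and then, since $W(s)=\tfrac14(1-s^2)^2\sim(1-s)^2$ near $1$, $\int_{C_\lambda}^1\sqrt{2W(s)}\,ds\sim(1-C_\lambda)^2\sim\lambda\lesssim R^{-2\sqrt2}$. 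That estimate is part of the subsequent argument (not the lemma under consideration here), so for the present statement I only carry out the two paragraphs above.
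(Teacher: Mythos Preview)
Your final argument is correct and is in fact the same as the paper's, though you arrived at it by a longer route. The paper simply applies AM--GM pointwise, $\tfrac{1}{2}v'^2+W(v)\ge v'\sqrt{2W(v)}$, and then changes variables $s=v(t)$ (so $ds=v'\,dt$) to obtain $\sigma_0 P(R)\ge\int_0^{C_\lambda}\sqrt{2W(s)}\,ds$ directly. You considered exactly this step and dismissed it, thinking a spurious factor would appear because $v'<\sqrt{2W(v)}$; but the change of variables uses $ds=v'\,dt$, not $ds=\sqrt{2W(v)}\,dt$, so no factor arises and the two-line argument goes through cleanly.

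Your alternative---using the exact equipartition $\tfrac12 v'^2+W(v)=2W(v)-\lambda$, changing variables, and then checking $\frac{2W-\lambda}{\sqrt{2W-2\lambda}}\ge\sqrt{2W}$---is valid, and the pointwise inequality you verify is precisely the AM--GM inequality above with $v'=\sqrt{2W(v)-2\lambda}$ substituted in and both sides divided by $\sqrt{2W(v)-2\lambda}$. So the two approaches are literally the same inequality, viewed before versus after the substitution.
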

\begin{proof}
Recall that $\frac{T_\lambda}{4}=\log R$. By $a^2+b^2\geq 2ab$ and coarea, we can bound
\begin{align*}
    P(R)=\frac{1}{\sigma_0}\int_0^{\frac{T_\lambda}{4}}\left[\frac{1}{2}|\nabla u_\lambda|^2+W(u_\lambda)\right]\geq \frac{1}{\sigma_0}\int_0^{\frac{T_\lambda}{4}}|\nabla u_\lambda|\sqrt{2W(u_\lambda)}=\frac{1}{\sigma_0}\int_0^{C_\lambda}\sqrt{2W(s)}\,ds\,,
\end{align*}
which since $\sigma_0=\int_{-1}^1 \sqrt{2W(s)}\,ds=2\int_{0}^1 \sqrt{2W(s)}\,ds$ gives
\begin{align*}
    P(R)\geq \frac{1}{2}-\frac{1}{\sigma_0}\int_{C_\lambda}^1\sqrt{2W(s)}\,ds\,.
\end{align*}
\end{proof}
We can now prove \Cref{prop:qiogriogb}.
\begin{proof}[Proof of \Cref{prop:qiogriogb}]
Let $\delta= \delta_\lambda$ be such that $C_\lambda  = u_\lambda(T_\lambda/4)= 1-\delta$. We want to see that $\delta$ is small.

Recall that \( u_\lambda' = \sqrt{2W(u_\lambda) - \lambda} \).
Dividing by $\sqrt{2W(u_\lambda) - \lambda}$, integrating from $0$ to $\frac{T_\lambda}{4}$, and setting $s=u(r)$ gives
\begin{equation}
     \frac{T_{\lambda}}{4}= \int_0^{\frac{T_\lambda}{4}} \frac{u'(r)}{\sqrt{2W(u(r)) - 2\lambda}}\,dr=\int_0^{1-\delta} \frac{ds}{\sqrt{2W(s) - 2W(1-\delta)}}\,,
\end{equation}
where we used that  (since $u_\lambda'(T_\lambda/4)=0$) we have $2\lambda = 2W(u_\lambda(T_\lambda/4))=2W(1-\delta)$. 

Changing variables $s\xrightarrow[]{} 1-s$, bounding $2-s\geq  2-\delta$, and changing variables $s\xrightarrow[]{} \delta s$, we can estimate
\[
\begin{split}
\frac T 4&=\frac{1}{\sqrt{2}}\int_0^{1-\delta}  \frac{ds }{\sqrt{W(s)-W(1-\delta)}}  = \frac{2}{\sqrt{2}}\int_\delta^1 \frac{ds}{\sqrt{(2-s)^2s^2 - (2-\delta)^2\delta^2}} 
\\
&\le \frac{\sqrt{2}}{2-\delta}\int_\delta^1 \frac{ds}{\sqrt{s^2 - \delta^2}}= \frac{\sqrt{2}}{2-\delta}\int_1^{1/\delta} \frac{ds}{\sqrt{s^2 - 1}}\,.
\end{split}
\]
Therefore, since $\frac{1}{\sqrt{s^2 - 1}}=\frac{1}{s}+O(\frac{1}{s^2})$ for $s\geq 2$, we conclude that
$$
\frac T 4\le \frac{\sqrt{2}}{2-\delta}\int_2^{1/\delta} \frac{ds}{s} +O(1)\le \frac{1}{\sqrt{2}} \log \frac 1 \delta  +O(1)\,.
$$
Since $T/4=\log R$, exponentiating on both sides we deduce that $\delta \leq CR^{-\sqrt{2}}$.\\

Now that we have bounded $\delta$, since 
\begin{align*}
    P(R)\geq \frac{1}{2}-\int_{C_\lambda}^1\sqrt{2W(s)}\,ds,
\end{align*}
it remains to estimate $\int_{C_\lambda}^1\sqrt{2W(s)}\,ds$. From $\sqrt{2W(s)}=\frac{1}{\sqrt{2}}(1-s^2)\leq C(1-s)$ for $s\in[0,1]$, we deduce that
\begin{align*}
    \int_{C_\lambda}^1\sqrt{2W(s)}\,ds&\leq C\int_{C_\lambda}^1 (1-s)\,ds=\frac{C}{2} (1-C_\lambda)^2 = \frac{C}{2}\delta^2 \le CR^{-2\sqrt 2},
\end{align*}
and the proposition follows.
\end{proof}


\addcontentsline{toc}{section}{\large References}
\printbibliography

\end{document}